\numberwithin{equation}{chapter}
\newtheorem{theorem}{Theorem}[section]
\newtheorem{lemma}[theorem]{Lemma}
\newtheorem{proposition}[theorem]{Proposition}
\newtheorem{corollary}[theorem]{Corollary}
\newtheorem{conjecture}[theorem]{Conjecture}
\theoremstyle{definition}
\newtheorem{definition}[theorem]{Definition}
\newtheorem{remark}[theorem]{Remark}
\newtheorem{ipotesi}[theorem]{Assumption}
\numberwithin{equation}{section}
\newcommand{\exc}{\mathrm{exc}}
\newcommand{\VV}{\mathscr{V}}
\newcommand{\ello}{\mathscr{V}_0}
\newcommand{\To}{\mathcal{T}_0}
\newcommand{\baraj}{a_j}
\newcommand{\barbj}{b_j}
\newcommand\supp{{\rm spt}\,}
\newcommand\MM{\mathcal{M}}
\newcommand\R{\mathbb{R}}
\newcommand\HH{\mathcal{H}}
\newcommand\N{{\mathbb N}}
\newcommand\e{\varepsilon}
\newcommand\dist{{\rm dist}}
\newcommand\Lip{{\rm Lip}}
\newcommand{\restr}{\mathbin{\vrule height 1.6ex depth 0pt width
0.13ex \vrule height 0.13ex depth 0pt width 1.3ex}}
\newcommand{\restrsmall}{\mathbin{\vrule height 1.1ex depth 0pt width
0.07ex \vrule height 0.07ex depth 0pt width 0.9ex}}
\newcommand\param{\lambda}
\newcommand\Rad{\mathrm{Rsq}}
\newcommand\rad{\mathrm{rsq}}
\newcommand\varthetad{\dot{\vartheta}}
\newcommand\varthetadd{\ddot{\vartheta}}
\newcommand\etad{\dot{\varrho}}
\newcommand\etadd{\ddot{\varrho}}
\newcommand\Isq{\mathrm{Isq}}
\newcommand\isq{\mathrm{isq}}
\title[Regularity theory for Mumford-Shah functional]{The regularity theory for the Mumford-Shah functional on the plane}
\author[C. De Lellis]{Camillo De Lellis}
\address{School of Mathematics, Institute for Advanced Study, 1 Einstein Dr., Princeton NJ 05840, USA}
\email{camillo.delellis@math.ias.edu}
\author[M. Focardi]{Matteo Focardi}
\address{Dipartimento di Matematica e Informatica ``Ulisse Dini'', Universit\`a degli Studi di Firenze, Viale Morgagni 67/A, 50134 Firenze, Italy}
\email{matteo.focardi@unifi.it}
\begin{document}

\frontmatter

\maketitle


\hspace{7cm} {\small to Laura and Lorenza}

\medskip

\hspace{7cm} {\small for allowing us to share so much time}

\hspace{7cm} {\small in the company of the Mumford and Shah problem}

\tableofcontents

\mainmatter

\chapter*{Preface}

This book delves into the fascinating world of the Mumford-Shah functional, a cornerstone of image processing and a rich source of challenging mathematical problems. Introduced by Mumford and Shah in their seminal 1989 paper, this functional seeks to capture the essence of an image by decomposing it into a piecewise smooth function and a set of discontinuities. While its practical applications in image segmentation and reconstruction are undeniable, the functional also presents profound theoretical questions. Many of these regard the regularity of its minimizers: the central theme of this book is the exploration of these regularity questions, focusing specifically on the planar case. We embark on a comprehensive journey through the existing literature, revisiting classic results with fresh perspectives and incorporating new advancements. Our goal is to provide a self-contained and detailed account of the state-of-the-art, accessible to both aspiring researchers seeking a thorough introduction and experts interested in the latest developments. 

\medskip

The Mumford-Shah conjecture, a long-standing open problem, predicts a remarkably simple structure for the discontinuity set of minimizers: a collection of smooth curves meeting at triple junctions forming equal angles. While this conjecture remains elusive in its full generality, significant progress has been made. We present a detailed exposition of the $\varepsilon$-regularity theory, a powerful tool that establishes the local regularity of the discontinuity set under certain assumptions. This theory allows us to understand the behavior of minimizers near points that resemble the conjectured structures. Beyond $\varepsilon$-regularity, we explore the classification of global minimizers, the properties of critical points, and various structural results that shed light on the nature of the discontinuity set. 

Throughout the book we emphasize clarity and completeness, providing detailed proofs for all the presented theorems. We assume a solid background in measure theory and basic elliptic PDEs, while we provide supplementary material in the appendices to aid readers less familiar with specific technical aspects. We hope that this book will serve as a valuable resource for anyone intrigued by the beauty and complexity of the Mumford-Shah functional, inspiring further research and deeper understanding of this remarkable mathematical object. 

\chapter{Introduction}

The aim of these notes is to give a complete self-contained account of the current state of the art in the regularity for planar minimizers and critical points of the Mumford-Shah functional. {The latter is an energy functional introduced by Mumford and Shah in their seminal paper \cite{MS} on image reconstruction. Assuming that $g$ is some function on the plane whose values represent a noisy greyscale image, Mumford and Shah proposed to identify the geometric content of the image as a pair $(u,K)$, given by a set $K$ of finite length and a smooth function $u$ on the complement $K^c$ of $K$, which minimizes the functional
\[
\int_{K^c} |\nabla u|^2 + {\rm length}\, (K) + \int_{K^c} (u-g)^2\, .
\]
They also advanced the conjecture that, for such minimizers, the set $K$ is described by a collection of finitely many $C^1$ arcs which do not cross and with the property that if an endpoint belongs to more than one arc, then it is in fact a common endpoint of three arcs meeting at equal angles. This conjecture is to date still open, even though there has been considerable progress towards its resolution.

\medskip

The question of existence of minimizers has been solved in the mathematical literature by allowing suitable weaker formulations than the original one of Mumford and Shah. One approach, due to De Giorgi, is to consider $K$ as the set of ``jump discontinuities'' of $u$ and then to apply the direct methods of the calculus of variations in an appropriate functional-analytic setting to an appropriate generalization of the Mumford-Shah functional. This led De Giorgi and Ambrosio to introduce the space of ``special functions of bounded variations'' in \cite{DGA88}, and we refer the reader to the monograph \cite{AFP00} for a detailed account of the various variational problems that found proper treatment in this fruitful setting, which is not limited to the planar case. The other approach is to allow $K$ to be a general closed rectifiable set with finite Hausdorff measure and $u$ to be a $W^{1,2}$ function on its complement, and then to work out a suitable compactness theory for appropriate minimizing sequences, see the work of Dal Maso, Morel, and Solimini \cite{DME92}. The two theories are in fact equivalent thanks to a fundamental result of De Giorgi, Carriero, and Leaci, cf. \cite{DGCL89}, a fact which is also carefully explained in one of the appendices of this book.\index{SBV@SBV}\index{special functions of bounded variations@special functions of bounded variation}
\index{functions of bounded variation, special@functions of bounded variation, special}

Having established existence, the natural question is now whether minimizers fulfill the prediction of Mumford and Shah. Note that over $K^c$ the function $u$ is a minimizer of the quadratic functional $\int (|\nabla u|^2 + (u-g)^2)$ and thus a solution to a very elementary linear elliptic differential equation. Therefore, the relevant question here is how regular the set $K$ is and how $u$ behaves ``at $K$'': this is what we understand as the {\em regularity theory}. 

\medskip

Our ambition is to cover all the major results in the planar setting. However, we will not only include the most recent theorems but also give some new takes on the older ones. In several places we will present alternative proofs (e.g., we will give several simplified arguments for many portions of \cite{bonnetdavid2001}) and other parts will include material which is, strictly speaking, completely new (e.g., we will generalize the results of \cite{DFG} to include the fidelity term in the energy). 

At the same time, we will include detailed and complete arguments for all theorems. This book can therefore be used both by readers with a solid background in measure theory and elementary elliptic PDE who intend to learn the theory in detail for the first time, and by experts who are interested in the most recent developments. Concerning the first category of readers, we, however, assume that they are familiar with the notion of rectifiable $1$-dimensional sets and with their most basic properties (for instance the sections 2.8, 2.9, 2.10, 2.11, and 2.12 in \cite{AFP00}, specialized to the case of $1$-dimensional sets in the plane, are more than enough).}

\section{Setup} 
As already mentioned, the relevant energies are computed on pairs $(u, K)$\index{admissible pairs@admissible pairs}\index[simb]{aaluK@$(u, K)$} where $K$ is a closed $1$-rectifiable subset of some planar open set $\Omega$ and $u$ is an element of $W^{1,2}_{{\rm loc}} (\Omega\setminus K)$. For every bounded open set $U\subset \Omega$, every $g\in L^\infty (\Omega)$\index[simb]{aalg@$g$}, and every $\param \geq 0$\index[simb]{aagl@$\param$} we define \index{Mumford Shah energy@ Mumford Shah energy}\index[simb]{aalE_p@$E_\param$}\index[simb]{aalE_0@$E_0$}
\begin{align}
E_\param (u, K, U, g) &:= \int_{U\setminus K} |\nabla u|^2 +
\mathcal{H}^1 (K\cap U) + \lambda \int_U |u-g|^2 \, .\label{e:MS+fidelity}
\end{align}
where $\mathcal{H}^1$ denotes the $1$-dimensional Hausdorff measure.\index[simb]{aalHcal@$\mathcal{H}^1$}\index{Hausdorff measure@Hausdorff measure} The open set $U$ will be omitted if it coincides with $\Omega$ or it is obvious from the context and likewise $g$ will be omitted when $\param =0$ or when it is obvious from the context.

The term 
\[
\mathscr{F} (u, U, g) = \int_U (u-g)^2
\]
will be often called ``fidelity term'', \index[simb]{aalfscr@$\mathscr{F}$}\index{fidelity term@fidelity term}
while the remaining portion of the functional will be consistently denoted by $E_0$. 
In particular, we have the obvious identity $E_\lambda = E_0 + \lambda \mathscr{F}$. 
Under our assumption (i.e. $g\in L^\infty$) $E_0$ dominates the fidelity term at small scales. 
In particular, the case $\param>0$ adds mainly technical complications and the reader who wishes to avoid these technicalities could just focus on the case $\param =0$. 
Quite a few constants in the statements need to be adjusted if $\|g\|_{\infty}$ becomes high. 
In order to avoid discussing the size of these constants we will assume once and for all that 
\begin{equation}\label{e:g-and-lambda}
\|g\|_{\infty} \leq M_0 \footnote{Throughout the notes we will omit the reference domain in the $L^\infty$ norm if it will be clear from the context.}
\qquad \mbox{and} \qquad 0\leq \lambda \leq 1\, ,    
\end{equation}
where $M_0$\index[simb]{aalM@$M_0$} is any given positive number. In particular, in the rest of the notes we will omit the dependence of the constants from $M_0$.

The energies can be naturally extended to the class of functions of special bounded variation (shortly $SBV (\Omega)$) and in that case we can regard $K$ as the jump set of the $SBV$ function $u$ (cf. the monograph \cite{AFP00} for the definitions). This is not really relevant for our discussions, as long as the reader is willing to accept some compactness statements which pertain to the {\em existence theory} (of minimizers and critical points) of the functionals. Such statements will play anyway an important role and for the sake of completeness we will include short proofs in the appendix: since we believe that it makes them conceptually easier and cleaner, those arguments take advantage of the $SBV$ theory. \index{SBV@SBV}
\index{special functions of bounded variation@special functions of bounded variation}\index{functions of bounded variation, special@functions of bounded variation, special}

\section{Minimizers} 

We will consider three different notions of minimizers associated with the above energy functionals. We start by defining the first two.

\begin{definition}\label{d:definitions}
A pair $(u,K)$ in an open set $\Omega\subset \mathbb R^2$ will be called:
\begin{itemize}
\item an {\em (absolute) minimizer} \index{absolute minimizer@absolute minimizer} of the functional $E_\param$ if for every bounded open $U\subset \Omega$ and for every pair $(v, J)$ such that $\overline{J\Delta K} \cup \supp (u-v) \subset \subset U$ we have
\begin{align}\label{e:minimizer}
E_\param (u,K,U, g) \leq E_\param  (v,J,U, g)\, ;
\end{align}
\item a {\em restricted minimizer} \index{restricted minimizer@restricted minimizer} if \eqref{e:minimizer} holds for those $(v,J)$ with the additional property that the number of connected components of $J$ does not exceed that of $K$.  
\end{itemize}
\end{definition}

\begin{remark}
Strictly speaking our definition of restricted minimizers in $\Omega$ can be made sense even if $K$ has an infinite number of connected components: in the latter case we understand that any competitor is allowed, and thus $(u,K)$ is an absolute minimizer. Clearly the interest in restricted minimizers lies in the cases where the number of connected components is finite. In many statements this assumption is however not needed and we will omit it: it will instead be spelled out explicitly when it is needed.  
\end{remark}
Absolute minimizers are generally just called minimizers in the literature and we will often use the same convention: we will add the adjective absolute only when we want to emphasize that we can compare its energy with any competitor.
Obviously a minimizer is also a restricted minimizer, while the converse is instead false in general. Note that the value of the functional is invariant under the addition of $\mathcal{H}^1$-null sets, which creates an annoying technicality when dealing with regulary issues. Assume for instance that 
$(u,\emptyset)$ is an absolute minimizer of $E_0$. Then $u$ is a classical harmonic function and so our pair is as smooth as desirable. However, if $K$ is any $\mathcal{H}^1$ null set, then $(u,K)$ is as well an absolute minimizer, so that we have several other minimizers with sets $K$ which can be quite irregular. In order to avoid this issue we wish to normalize minimizers in an appropriate way, throwing away ``unnecessary pieces'' of $K$.

\begin{definition}\label{d:normalized}
Following \cite{DavidBook} a pair $(u,K)$ will be called reduced if, for every open set $U$, $\mathcal{H}^1 (K\cap U)=0$ implies $K\cap U = \emptyset$. \index{reduced pair@reduced pair}
\end{definition}

It is easy to see (cf. Corollary~\ref{c:normalized} below) that if $(u,K)$ is a minimizer or a restricted minimizer in $\Omega$, then it can ``be normalized'' to a reduced pair in the following sense: if we set $\mathcal{N} := \{U \mbox{ open }: \mathcal{H}^1 (U\cap K)=0\}$ and let 
\[
K':=K \setminus \bigcup_{U\in \mathcal{N}} U\, , 
\] 
then $u$ can be extended to a $C^1$ function $u'$ on $\Omega\setminus K'$ (in fact an harmonic function when $\param =0$). Clearly the energy of $(u',K')$ coincides with that of $(u,K)$. For this reason, from now on we will always assume that a minimizing pair (or restricted minimizing pair) is a reduced, unless we explicitly say that it might not be.

\section{Epsilon regularity}\label{ss:eps reg}

The celebrated Mumford-Shah conjecture, which is still unsolved, states the following.

\begin{conjecture}[Mumford-Shah conjecture]\label{c:MS}
If $(u,K)$ is a minimizer of $E_\lambda$ then $K$ can be described as the union of finitely many closed $C^1$ arcs $\gamma_i$ which do not cross but can meet at their endpoints at 120 degrees in ``triple junctions''. In particular, if we fix a point $x\in K$, then in any sufficiently small disk $B_r (x)$ the set $K\cap B_r (x)$ is diffeomorphic to one of the following special types of closed sets:
\begin{itemize}
\item[(a)] a radius of $B_r (x)$;
\item[(b)] a diameter of $B_r (x)$;
\item[(c)] the union of three radii of $B_r (x)$ forming angles of $120$ degrees.
\end{itemize} 
\end{conjecture}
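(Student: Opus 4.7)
The statement is the Mumford--Shah conjecture itself, which the introduction has flagged as open, so a genuine proposal can only sketch the program that is currently believed to have a chance of working: blow-up analysis paired with epsilon-regularity at each of the candidate local models. The plan is to show that for every $x\in K$ there is a radius $r>0$ on which $K\cap B_r(x)$ is $C^{1,\alpha}$-diffeomorphic to one of the configurations (a), (b), (c) or to a crack-tip endpoint of an arc, and then deduce the global finiteness of the $\gamma_i$ by a covering argument on compactly contained subdomains.

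First, for a normalized minimizer $(K,u)$ and a point $x\in K$, I would rescale via $u_r(y):=r^{-1/2}(u(x+ry)-c_{x,r})$ and $K_r:=r^{-1}(K-x)$ and use the $SBV$ compactness alluded to in the introduction to extract a global minimizer $(K_\infty,u_\infty)$ of $E_0$ on $\mathbb{R}^2$; the fidelity term drops out under the rescaling since $0\le \param\le 1$. Second, I would classify all such global minimizers up to isometry: the conjecturally complete list consists of the empty set (with $u$ harmonic, hence constant under the growth enforced by minimality), a straight line (with $u$ piecewise harmonic on the two sides), a triple junction of three half-lines meeting at $120^\circ$ (again with $u$ piecewise harmonic), and the crack-tip, i.e.\ a half-line with $u(\varrho,\vartheta)=c\pm\sqrt{2\varrho/\pi}\,\sin(\vartheta/2)$ in polar coordinates based at the tip. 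Third, to each of these four models I would attach an epsilon-regularity statement: if on some $B_r(x)$ the pair $(K,u)$ is quantitatively close to one of the four canonical models, in a combination of flat (or Hausdorff) distance on $K$ and a Dirichlet-energy deficit, then on $B_{r/2}(x)$ the set $K$ is a $C^{1,\alpha}$ arc, triple junction, or arc ending at $x$. Combining the three steps pointwise and using that crack-tips can only occur at endpoints of the $\gamma_i$ yields the theorem.

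The decisive obstacle is the second step. Epsilon-regularity at the line model (Ambrosio--Fusco--Pallara, David) and at the triple junction (David) is by now well understood, and the crack-tip variant, while subtle, is in reach via refined monotonicity arguments together with uniqueness of the blow-up at the tip. What is missing, and what prevents the conjecture from being a theorem, is the exclusion of any \emph{other} global minimizer in the plane. Bonnet carried this out under the additional hypothesis that $K_\infty$ is connected, where a monotonicity formula for the normalized Dirichlet energy $r\mapsto r^{-1}\int_{B_r} |\nabla u_\infty|^2$ is available; without the connectedness hypothesis no such monotone quantity is known, and it is precisely the failure of this monotonicity that leaves the door open to hypothetical exotic global minimizers and keeps the statement a conjecture.
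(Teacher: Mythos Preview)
Your assessment is correct: the statement is Conjecture~\ref{c:MS}, explicitly flagged in the paper as open, and the paper does not prove it. Your three-step program (blow-up to global generalized minimizers, classification of these, $\varepsilon$-regularity at each model) is exactly the strategy the paper develops and formalizes in Theorem~\ref{t:equivalence}, which states that the conjecture is \emph{equivalent} to the classification of global generalized minimizers as elementary or cracktip.

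Two small refinements relative to what you wrote. First, the paper records more than Bonnet's result under connectedness: Theorem~\ref{t:uniqueness} gives the full classification under the weaker hypothesis that all but at most one connected component of $K$ are bounded (Bonnet--David), and separately shows that any global minimizer whose $K$ disconnects the plane is elementary (David--L\'eger). So the open case is narrower than ``$K$ disconnected'': it is the situation where $K$ has at least two unbounded connected components but does not disconnect $\mathbb{R}^2$---and in fact Corollary~\ref{c:global-two-connected-components} rules out two unbounded components, so what remains is the possibility of infinitely many bounded components accumulating in a bad way. Second, your list of models has a small slip: the crack-tip corresponds to case (a) (a radius), not to a separate fourth case; the paper's $\varepsilon$-regularity theorem (Theorem~\ref{t:main}) covers exactly (a), (b), (c), with (a) being the delicate crack-tip regularity handled in Chapter~\ref{ch:crack}.
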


The main conclusion of the regularity theory developed thus far is that if $K$ is close in the Hausdorff distance\index{Hausdorff distance@Hausdorff distance}\footnote{Due to suitable compactness properties of minimizers, it actually suffices that $K$ is close in a weaker sense.} to one of the three examples (a), (b) and (c) at some given scale, then it is indeed diffeomorphic to the corresponding model at a slightly smaller scale: a precise statement is given in Theorem~\ref{t:main} below. These results are typically called ``$\varepsilon$-regularity theorems'', the $\varepsilon$ referring to the fact that the key assumption is the closeness (at a certain scale and in an appropriate sense) to a given model. {In geometric analysis $\varepsilon$-regularity theorems appeared first in the pioneering work of De Giorgi on area-minimizing hypersurfaces, cf. \cite{DG61}. In fact many of our arguments in this book are heavily inspired by analogous ones in the theory of minimal surfaces and we will highlight many parallels in the sequel.}

Before coming to the precise statements we introduce the following terminology.

\begin{definition}\label{d:regular-points}
Let $x\in K$. The point will be called, respectively, a {\em regular loose end}\index{regular loose end@regular loose end}\index{loose end, regular@loose end, regular}, a {\em pure jump}\index{pure jump@pure jump}, or a {\em triple junction}\index{triple junction@triple junction} if for some $r>0$ there is a $C^1$ diffeomorphism $\Phi$ of $B_r (x)$ with $\Phi (x)=x$, $D\Phi (x) = {\rm Id}$, and such that:
\begin{itemize} 
\item[(i)] $\Phi (B_r (x)\cap K)$ is a radius of $B_r (x)$;
\item[(ii)] $\Phi (B_r (x)\cap K)$ is a diameter of $B_r (x)$;
\item[(iii)] $\Phi (B_r (x) \cap K)$ is the union of three radii of $B_r (x)$ meeting at equal angles.
\end{itemize}
A maximal $C^1$ arc of $K$ is a closed (relatively to the domain $\Omega$) $C^1$ arc \index{maximal $C^1$ arc@maximal $C^1$ arc} contained in $K$ which is not contained in any larger $C^1$ arc. An extremum \index{maximal $C^1$ arc, extremum of@maximal $C^1$ arc, extremum of}\index{extremum of a maximal $C^1$ arc@extremum of a maximal $C^1$ arc} of a maximal $C^1$ arc is an endpoint belonging to the domain $\Omega$.
\end{definition}

If $K\cap B_r (x)$ is a single continuous arc with endpoints $x$ and $y\in \partial B_r (x)$ which is $C^1$ away from $\{x\}$ but not necessarily up to $x$, we then call the point $x$ a {\em loose end}\index{loose end@loose end}. In fact a substantial portion of these notes will be dedicated to show that, if $(u,K)$ is a minimizer (no matter whether absolute or restricted), then every loose end is regular. Likewise, an outcome of the regularity theory is also that, if $K\cap B_r (x)$ consists of three continuous arcs which meet at $x$, then these arcs are necessarily $C^1$ up to their common endpoint $x$. 

\medskip

The focus of this book is then proving the following theorem which summarizes several results in the literature. { Proper credits about the various statements will be given in Section~\ref{s:plan of the notes}, while in the remainder of this introduction we will discuss the content, in particular its motivations and consequences}.

\begin{theorem}\label{t:main}\label{T:MAIN}
There are $\alpha>0$ and $\varepsilon >0$ with the following property. Assume:
\begin{enumerate}
\item $(u,K)$ is an absolute minimizer of $E_\param$ in $B_r (x)\subset \mathbb R^2$;
\item \eqref{e:g-and-lambda} holds and $r\leq 1$;
\item the Hausdorff distance of $K\cap B_{2r} (x)$ to a set $K_0$ as in Conjecture~\ref{c:MS}(a), (b) or (c) is smaller than $\varepsilon r$. 
\end{enumerate}
Then:
\begin{itemize}
\item[(i)] there is a $C^{1,\alpha}$ diffeomorphism $\Phi$ of $B_r (x)$ such that $\Phi (K\cap B_r (x)) = K_0$;
\item[(ii)] if $\param =0$ or $g\in C^0$, then $K$ is $C^2$ in some neighborhood of any pure jump;
\item[(iii)] if $\param =0$ each maximal $C^1$ arc is $C^2$ up to its extrema, where its curvature vanishes. 
\end{itemize}
Moreover:
\begin{itemize}
\item[(iv)] the same conclusions hold in cases (b) and (c) if $(u,K)$ is a restricted minimizer (in particular $\varepsilon$ is independent of the number of connected components of $K$);
\item[(v)] in case (a) the same conclusions hold if $(u,K)$ is a restricted minimizer and $K\cap B_{2r} (x)$ is connected;
\item[(vi)] in case (c) the three maximal $C^1$ arcs forming $K\cap B_r (x)$ meet at equal angles.
\end{itemize}
\end{theorem}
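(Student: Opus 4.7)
The proof follows the classical epsilon-regularity paradigm for free-discontinuity problems: define a suitable scale-invariant excess measuring the deviation of the pair $(K,u)$ from the model configuration, prove that this excess decays geometrically under rescaling, and convert the decay into $C^{1,\alpha}$ regularity through a Campanato-type characterization. Concretely, for each of the three models I would introduce a ``total excess''
\[
\mathbf{E}(K,u,B_\rho(y)):=\rho^{-3}\int_{K\cap B_\rho(y)}\dist(z,K_{0,y})^2\,d\mathcal{H}^1(z)+\rho^{-1}\int_{B_\rho(y)\setminus K}|\nabla u-\nabla u_{0,y}|^2,
\]
where $K_{0,y}$ and $u_{0,y}$ denote an optimal affine model of type (a), (b) or (c) and a corresponding harmonic competitor centered at $y$. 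The Hausdorff hypothesis (3), together with the density lower and upper bounds for normalized minimizers that follow from the compactness statements of the appendix, quantitatively controls the flatness part of $\mathbf{E}$ by $\varepsilon$.

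The core step is an excess-decay lemma: there exist $\sigma\in(0,1/4)$ and $\eta>0$ such that $\mathbf{E}(K,u,B_r(x))\le\eta$ implies $\mathbf{E}(K,u,B_{\sigma r}(y))\le\sigma^{2\alpha}\,\mathbf{E}(K,u,B_r(x))$ at a suitable new center $y$. I would prove this by contradiction and blow-up: given a sequence of minimizers with $\mathbf{E}_n\to 0$ violating the decay, rescale the normal height of $K_n$ over $K_0$ by $\mathbf{E}_n^{1/2}$ and extract a limit $\varphi_\infty$ using the $SBV$-compactness machinery invoked in the appendix. Exploiting both outer variations of $u$ and inner variations of the pair, $\varphi_\infty$ turns out to solve a linear transmission problem on $B_1\setminus K_0$: harmonic on each side of $K_0$ with Neumann-type conditions coming from the linearization of the curvature identity $\kappa=[|\nabla u|^2]$, and in case (c) with a further compatibility condition at the tripod center encoding preservation of the $120^\circ$ angle to first order. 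Classical elliptic regularity for these linear problems yields arbitrarily sharp Campanato decay of $\varphi_\infty$, contradicting the blow-up assumption.

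Iterating the decay lemma on concentric balls gives a uniform $C^{1,\alpha}$ graphical representation of $K$ over $K_0$, from which (i) follows by extending the graph map to a diffeomorphism $\Phi$. Once $K$ is $C^{1,\alpha}$, $u$ solves a classical mixed transmission problem on a $C^{1,\alpha}$ domain, and combining Schauder estimates with the curvature identity $\kappa=[|\nabla u|^2]$ (or its inhomogeneous analogue involving $g$) gives (ii). Claim (iii) follows because at an extremum of a maximal arc the one-sided traces of $|\nabla u|^2$ must match across the tip, forcing $\kappa=0$; claim (vi) is the first-order optimality condition for the location of the triple-junction point, which an inner variation supported near it converts into the balancing-to-zero of the three unit tangents.

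The step I expect to be the main obstacle is case (a), the regular loose end. Both the restricted-minimality hypothesis and the connectedness of $K\cap B_{2r}(x)$ in (v) are indispensable here, since otherwise one could compete by shortening or branching the terminal arc, and the naive excess decay breaks down at the tip because the linearized problem there lives on a slit disk whose solutions have only $C^{1/2}$ regularity. One is therefore forced to couple the bulk decay away from the tip with a Bonnet-type monotonicity of an ad hoc ``tip energy'', and with topological arguments ruling out hairs, small loops, or additional branches of $K$ near $x$, before concluding that the single arc is $C^{1,\alpha}$ up to its endpoint with vanishing curvature there.
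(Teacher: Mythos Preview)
Your outline for case (b) is essentially the paper's: a contradiction/blow-up argument yields decay of a combined flatness-plus-Dirichlet excess (the paper's $m(x,r)=\max\{d,\beta,\lambda\|g\|_\infty^2 r^{1/2}\}$), and iteration plus Campanato gives the $C^{1,\alpha}$ graph. For case (c) the paper does something different from your direct linearization at the tripod: it uses a ``blow-\emph{in}'' argument, showing via the David--L\'eger monotonicity formula that if the configuration is $\varepsilon$-close to a triple junction at one scale, the same persists at all smaller scales around a nearby point; the triple-junction structure then follows by classifying the blow-down of the resulting global object. Your approach might also work, but the linearized transmission problem at the junction is not written down or analyzed in the paper.

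The real gap is case (a). Two substantial ideas are missing, and ``Bonnet-type monotonicity plus topological arguments'' does not capture either. First, ruling out hairs, loops, and extra components near the tip is not a soft topological step: for absolute minimizers the paper devotes an entire chapter to the Bonnet--David rigidity theorem (any global minimizer whose $K$ is a single unbounded arc outside a ball must be exactly the cracktip), which it proves via the harmonic conjugate, a fine level-set analysis, and the David--L\'eger separation theorem. Only with this Liouville result in hand does a decay-iteration argument give that $K\cap B_{2r}(x)$ is a single connected arc (Corollary~\ref{c:connectedness}). Second, even once $K$ is a single $C^{1,1}$ arc away from the tip, Bonnet's monotonicity alone gives nothing better than the $C^{1/2}$ behavior you flagged. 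The paper instead performs an exponential reparametrization $r=e^{-t}$ \`a la Simon, linearizes to a PDE on $[0,2\pi]\times(0,\infty)$ with a Ventsel boundary condition coupling the angular trace to the curve parameter $\vartheta$, and carries out a spectral analysis of this non-self-adjoint problem. The crucial point is that one spectral mode ($\zeta_0=(\phi-\pi)\sin\frac{\phi}{2}$) would obstruct decay; it is killed only by an \emph{additional} integral identity (equation~\eqref{e:condizione_extra}) coming from a specific inner variation (Corollary~\ref{c:AM-variations}) that your excess functional does not see. A three-annuli argument then converts the spectral gap into exponential decay of $\dot\vartheta$, hence $C^{1,\alpha}$ (and $C^{2,\alpha}$ when $\lambda=0$) at the tip.

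Finally, your justification of (iii) is off: at a loose end there is no ``other side'' for traces to match, and at a triple junction the vanishing of curvature comes from a sharpened energy decay $D(\rho)\lesssim\rho^{12/5}$ (via Poincar\'e--Wirtinger on the three arcs of $\partial B_\rho$) feeding into the curvature equation, not from trace matching.
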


Note that in case (c) the meeting point $y$ of the three maximal $C^1$ arcs forming $K\cap B_r (x)$ is not necessarily $x$. Likewise, in case (a) the interior regular loose end is not necessarily located at $x$.

As it is known from the work of Bonnet \cite{B96}, if $(u,K)$ is a restricted minimizer of $E_\lambda$ and $K$ has a finite number of connected components, then for every $x\in K$ the assumptions of Theorem~\ref{t:main} are satisfied in a sufficiently small disk $B_r (x)$. We thus get a quite satisfactory description of the regularity of the set $K$, namely the conclusions of Conjecture~\ref{c:MS} hold. 

\begin{corollary}\label{c:Bonnet-plus}
Conjecture~\ref{c:MS} hold for restricted minimizers $(u,K)$ such that $K$ has a finite number of connected components. 
\end{corollary}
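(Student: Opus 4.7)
The plan is to promote the pointwise regularity given by Theorem \ref{t:main} to the global structural statement of Conjecture \ref{c:MS} via a covering argument, using Bonnet's blow-up analysis as the input that verifies the hypothesis of Theorem \ref{t:main} at every point of $K$.

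First, I would invoke Bonnet's theorem \cite{B96}, as recalled in the paragraph preceding the corollary: the assumption that $K$ has finitely many connected components guarantees that at every $x\in K$ one can find a scale $r=r(x)\leq 1$ such that the Hausdorff distance between $K\cap B_{2r}(x)$ and one of the three model sets $K_0$ of Conjecture \ref{c:MS} is smaller than $\varepsilon r$, with $\varepsilon$ the threshold of Theorem \ref{t:main}. The same analysis shows that when the model is a radius (case (a)), the portion $K\cap B_{2r}(x)$ is actually connected, because at small scales it is forced to lie in a single connected component of $K$ and to trace out a single loose-end arc.

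Second, for each such $x$ I would apply Theorem \ref{t:main} in its restricted-minimizer form: case (a) is covered by item (v) thanks to the connectedness property just recorded, while cases (b) and (c) are covered by item (iv). The conclusion (i) provides, in each $B_r(x)$, a $C^{1,\alpha}$ diffeomorphism $\Phi_x$ of $B_r(x)$ carrying $K\cap B_r(x)$ onto the corresponding model $K_0$; item (iii) then upgrades each maximal $C^1$ arc to $C^2$ up to its extrema, and item (vi) yields the $120°$ meeting condition at each triple junction. Hence the local description of $K$ in each such ball is exactly the one prescribed by Conjecture \ref{c:MS}.

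Third, I would globalize. The balls $\{B_{r(x)/2}(x)\}_{x\in K}$ cover $K$, and the local models show that triple junctions and endpoints of maximal arcs are isolated in $\Omega$. Local finiteness of $\mathcal{H}^1\restr K$, which is immediate from the energy bound, implies that on every compact subset of $\Omega$ only finitely many balls of the cover are needed; combined with the standing hypothesis that $K$ has finitely many connected components, a direct combinatorial argument (each component is a finite graph whose vertices are triple junctions and whose edges are closed $C^1$ arcs) yields that $K$ is globally a finite union of closed $C^1$ arcs meeting only at their endpoints, and only at $120°$ triple junctions. This is the content of Conjecture \ref{c:MS}.

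The main obstacle is the first step: Bonnet's classification of blow-ups of restricted minimizers with connected singular set is a nontrivial input, requiring a monotonicity formula for $E_\lambda$ and a separate analysis of planar global minimizers. Once that classification is available, the present corollary is essentially a formal consequence of Theorem \ref{t:main} together with a routine covering-and-finiteness argument.
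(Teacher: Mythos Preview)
Your proposal is correct and follows essentially the same approach as the paper. The paper's treatment of this corollary is extremely brief (one sentence invoking Bonnet's work \cite{B96} to verify the hypotheses of Theorem \ref{t:main} at every point of $K$), and you have simply filled in the natural details: the blow-up classification as input, the application of Theorem \ref{t:main}(iv)--(vi) for restricted minimizers, and the routine covering/finiteness argument to pass from local to global structure.
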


A proof of the latter corollary will be given in Section \ref{s:Bonnet-plus}.
If $g$ is more regular, the regularity of $K$ can be bootstrapped to higher regularity at regular jump points: loosely speaking the regularity of $K$ at jump points is $2$ derivatives more than the regularity of $g$, see \cite{AFP99}. In fact, even the real analiticity of $K$ holds if $g$ is real analytic, cf. \cite{KLM}. However, we will not pursue this issue in these notes.

We finally observe that the second part of conclusion (ii) in Theorem~\ref{t:main} has a curious outcome: for restricted minimizers of $E_0$ with finitely many connected components or anyway for any minimizer for which the Mumford-Shah conjecture holds, the points of maximal curvature of any maximal arc forming $K$ must be pure jump points, unless the arc is a straight segment. 

\section{Global generalized minimizers and classification}
Absolute minimizers enjoy a natural elementary energy upper bound\index{energy upper bound@energy upper bound} on any disk $B_r (x)\subset \Omega$, more precisely 
\begin{equation}\label{e:upper bound}
E_\lambda (u, K, B_r (x), g) \leq 2\pi r  + \lambda \pi \|g\|_\infty^2 r^2\, ,
\end{equation}
cf. Lemma~\ref{l:upper-bound} below. Restricted minimizers satisfy the same bound under the additional assumption that $\overline{B}_r (x) \cap K\neq \emptyset$. If we therefore consider the following rescalings\index{rescaling@rescaling}\index[simb]{aalK_r@$K_{x,r}$}\index[simb]{aalu_u_{x,r}@$u_{x,r}$}
\begin{align*}
u_{x,r} (y) &:= r^{-\sfrac{1}{2}} u (x+ry)\\
K_{x,r} &:= \{{\textstyle{\frac{y-x}{r}}}: y\in K\}\, ,
\end{align*}
then $E_0 (u_{x,r}, K_{x,r}, B_1)$ enjoy a uniform bound. Since under the scaling detailed above the ``fidelity term'' becomes negligible as $r\downarrow 0$, the latter bound allows for a suitable compactness statement for $r\downarrow 0$, provided we introduce an appropriate concept of ``generalized minimizer of $E_0$''\index{generalized minimizer@generalized minimizer}, cf. Definition~\ref{d:generalized} (this is done elegantly in \cite{B96} for the case of restricted minimizers). 
Note that in general, we do not have uniform bounds on the size of $u$, but only on the size of $\nabla u$, hence the space of generalized minimizers of $E_0$ must allow for a suitable normalization of infinities. An important point is that the regularity theory detailed so far remains valid for them.

\begin{theorem}\label{t:main-generalized}
The conclusions of Theorem~\ref{t:main} remain valid for generalized and generalized restricted minimizers of $E$.
\end{theorem}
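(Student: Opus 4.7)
The plan is to deduce Theorem \ref{t:main-generalized} from Theorem \ref{t:main} by a direct approximation argument that exploits the defining property of generalized minimizers. As recalled in Section \ref{ss:eps reg}, generalized (restricted) minimizers are built precisely so that they arise as limits of rescalings $(K^{(n)}_{x_n,r_n}, u^{(n)}_{x_n,r_n})$ of honest (restricted) minimizers of $E_{\lambda_n}$; after rescaling the effective fidelity parameter is $\lambda_n r_n$, which can be made arbitrarily small, so the limiting object is effectively a minimizer of $E_0$, modulo a normalization at infinity. The convergence in this compactness package is Hausdorff on the singular sets and, up to affine adjustments, $W^{1,2}_{loc}$ on the Sobolev components.

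Let $(K,u)$ be a generalized minimizer in $B_r(x)$ for which $K\cap B_{2r}(x)$ lies within Hausdorff distance $\varepsilon r$ of a model $K_0$ as in Conjecture \ref{c:MS}, and choose a sequence of rescaled genuine minimizers $(K_n, u_n)$ converging to $(K,u)$. For $n$ large, the Hausdorff convergence $K_n\to K$ yields $d_{\mathcal H}(K_n\cap B_{2r}(x), K_0) < 2\varepsilon r$, so that, up to shrinking $\varepsilon$ by a factor two once and for all, Theorem \ref{t:main} applies to each $(K_n, u_n)$. This produces $C^{1,\alpha}$ diffeomorphisms $\Phi_n$ of $B_r(x)$ with $\Phi_n(K_n\cap B_r(x))=K_0$ whose $C^{1,\alpha}$ norms are bounded independently of $n$, since the constants in Theorem \ref{t:main} depend only on the model and the universal parameters. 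Arzelà--Ascoli extracts a subsequential limit $\Phi\in C^{1,\alpha}$; combining the uniform convergence of $\Phi_n$ with the Hausdorff convergence $K_n\to K$ gives $\Phi(K\cap B_r(x))=K_0$, which is conclusion (i). The restricted variants (iv)--(v) follow verbatim once one observes that both the connected-component bound and the connectedness in case (a) are preserved under Hausdorff convergence of compact sets in the plane; conclusion (vi) is an immediate consequence of the $C^1$-convergence of $\Phi_n$ at the triple junction.

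For conclusions (ii) and (iii) the argument is intrinsic and local. Once (i) is known, near a pure jump or an extremum $K$ is a $C^{1,\alpha}$ graph and $u$ satisfies, on either side of $K$, the Euler--Lagrange system $\Delta u=\param(u-g)$ with vanishing Neumann condition on $K$ and no Dirichlet constraint at the extremum. These first variation identities pass to the limit under the compactness of generalized minimizers (they involve only local integration by parts against compactly supported test vector fields and functions, which are stable under the convergence of $u_n$ and $K_n$). The same elliptic bootstrap and free-boundary argument employed to prove Theorem \ref{t:main} (ii)--(iii) for classical minimizers thus applies word-for-word here, yielding $C^2$ regularity at pure jumps when $\param=0$ or $g\in C^0$, and $C^2$ regularity with vanishing curvature at extrema when $\param=0$.

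The main obstacle is the uniformity of the $C^{1,\alpha}$ bound on $\Phi_n$ and the stability of the first-variation identities under the generalized-minimizer compactness. Both hinge on the proof of Theorem \ref{t:main} being genuinely quantitative: the smallness thresholds $\varepsilon$ and the exponent $\alpha$ must depend only on the model $K_0$ and not on the individual minimizing pair, and the constants controlling the bootstrap in (ii)--(iii) must be similarly universal. Provided one organizes the proof of Theorem \ref{t:main} with these quantitative dependencies explicit (as is standard in $\varepsilon$-regularity arguments), the transfer to generalized minimizers is essentially automatic; otherwise one must revisit the proof and re-derive the key monotonicity, decay, and tilt-excess estimates directly for generalized minimizers, which is feasible because all of them ultimately reduce to the first variation identities that, as noted, are preserved in the limit.
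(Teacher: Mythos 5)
Your approximation argument is a genuinely different route from the paper's, and it has a gap that is not easily patched without essentially falling back on the paper's strategy.

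The gap concerns the fidelity data along the approximating sequence. A generalized minimizer arises as the limit of rescalings $(K^{(n)}_{x_n,r_n},u^{(n)}_{x_n,r_n})$ of genuine minimizers of $E_{\lambda_n}(\cdot,\cdot,g_n)$. Under the rescaling $u_{x,r}(y)=r^{-1/2}u(x+ry)$, the associated fidelity function rescales as $g_{x,r}(y)=r^{-1/2}g(x+ry)$, so the approximating pairs minimize $E_{\lambda_n r_n^2}(\cdot,\cdot,g_{n,x_n,r_n})$ with $\|g_{n,x_n,r_n}\|_\infty = r_n^{-1/2}\|g_n\|_\infty \to \infty$. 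In particular the standing hypothesis \eqref{e:g-and-lambda} on which Theorem \ref{t:main} is stated (and on which its constants are allowed to depend, as the paper notes explicitly) \emph{fails} for the approximating sequence. What is preserved under the rescaling is only the combined quantity $\lambda\|g\|_\infty^2 r^{1/2}$, which indeed tends to zero. To make your approximation argument rigorous you would first have to restate Theorem \ref{t:main} with hypotheses depending only on $\lambda\|g\|_\infty^2 r^{1/2}$ and constants uniform in this weaker regime; this is essentially the content of Theorems \ref{t:eps_salto_puro} and \ref{t:eps_tripunto}, where exactly this combination appears. But once you have done that, you no longer need the approximation: those $\varepsilon$-regularity theorems are already stated and proved for generalized and generalized restricted minimizers directly, by the paper's explicit convention that ``minimizer'' encompasses all four types.

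This is in fact what the paper does. No separate proof of Theorem \ref{t:main-generalized} is given because none is needed: the Lipschitz approximation (Proposition \ref{p:lip-approx}), the tilt lemma (Lemma \ref{l:tilt}), the vertical-separation lemma (Lemma \ref{l:vertical-sep}), the decay lemmas (Lemmas \ref{l:decay1} and \ref{l:decay2}), the triple-junction analysis of Section \ref{s:tripunto-inizio}, the cracktip analysis of Chapter \ref{ch:crack}, and the variational identities (Proposition \ref{p:variational identities}) are all established for generalized (restricted) minimizers from the outset, with the compactness package of Theorems \ref{t:minimizers compactness} and \ref{t:compactness-2} supplying whatever blow-up limits are required inside those proofs. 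Consequently, the proof of Theorem \ref{t:main} (in Section \ref{ss:proof of (b)-(c) Theorem main} for cases (b) and (c), and Chapter \ref{ch:crack} for case (a)) goes through verbatim for generalized minimizers. Your approach buys you an economical derivation \emph{if} Theorem \ref{t:main} were known with constants uniform in the right combined quantity; the paper's approach buys you a statement that is already universal in the type of minimizer at the level of the key lemmas, which is also what several of those lemmas require internally since they argue by contradiction and blow-up, producing generalized minimizers as intermediate objects.

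Your argument for passing the diffeomorphism to the limit, and for conclusions (ii)--(iii) via stability of the first-variation identities, is otherwise sound. But as written it conditionally assumes a version of Theorem \ref{t:main} stronger than the one actually stated, and that is where the work lies.
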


Coming back to the scaling procedure outlined above, as $r\downarrow 0$ the domain of definition of $(u_{x,r}, K_{x,r})$ becomes larger and in the limit we attain ``global generalized minimizers'', namely generalized minimizers on the entire $\mathbb R^2$\index{global generalized minimizer@global generalized minimizer}\index{generalized minimizer, global@generalized minimizer, global}\index{global minimizer, generalized@global minimizer, generalized}. We warn the reader that in the literature these are typically called just ``global minimizers''. Our rationale for deviating slightly from the usual terminology and adding the adjective ``generalized'' is explained in the next chapter, see in particular Remark~\ref{r:generalized-vs-topological}.

{Looking at limits of these rescalings for $r\downarrow 0$, which will often refer to as ``blow-ups'' \index{blow-up@blow-up}, is reminiscent of the concept of tangent cones in the theory of minimal surfaces, see e.g. \cite{Simon83}. There is however a notable ingredient missing in our context, namely a powerful monotonicity formula as the one on the mass ratio for minimal surfaces. We will come back to this later in this introduction and at several point in the rest of the notes.}

If for any such global minimizer $(u,K)$ of $E_0$ the set $K$ is empty, then $u$ must be an harmonic function and the upper bound \eqref{e:upper bound} implies, by Liouville's theorem, that $u$ is constant. This is the easiest among the ``elementary'' global minimizer\index{elementary global minimizer@elementary global minimizer}\index{global minimizer, elementary@global minimizer, elementary}, which we define as generalized global minimizers whose Dirichlet energy vanishes identically on all compact sets, cf. Theorem~\ref{t:class-global}. Note that, since for these global minimizers the Dirichlet energy is zero, not surprisingly they end up matching ``global minimal $1$-dimensional sets'' in $\mathbb R^2$, which is a more common terminology for them. We however prefer the term elementary global minimizer because the values of the functions still play an important role (for instance we will see that there must be an ``infinite jump'' across the interfaces subdividing different connected components).

Two elementary types of global generalized minimizers correspond to the cases (b) and (c) explained above: in one case the set $K$ consists of an infinite line and in the other case it consists of three halflines meeting at a common origin at equal ($120$ degrees) angles. In both cases the function is piecewise constant (in fact, in an appropriate sense, the difference between all constant values must be $\infty$). This list exhausts the case of elementary global minimizers.

If the Mumford-Shah Conjecture~\ref{c:MS} holds, there is then only another possible type of generalized global minimizer, namely $K$ can only be a half line.
It can be proved that the corresponding $u$ must then take a rather special form, cf. Chapter 4.\index{cracktip@cracktip}

\begin{definition}\label{d:crack-tip}
We will call {\em cracktip} \index{cracktip@cracktip} any pair $(u,K)$ in $\mathbb R^2$ which, up to translations, rotations, subtraction of a constant and change of sign can be described as follows:
\begin{itemize}
    \item $K$ is the set $\mathbb R^+ := \{(t,0): t\geq 0\}$;
    \item $u (x_1, x_2) = \sqrt{\frac{2}{\pi}} {\rm Re}\, \sqrt{x_1+ix_2}$ , where we use the convention that $\sqrt{z}$ is the branch of the square root yielding $i$ on $-1$, with branch cut on $\mathbb R^+$. In polar coordinates the function is given by 
    \[
    u (\theta, \rho) = \sqrt{\frac{2\rho}{\pi}} \cos \frac{\theta}{2}\, ,
    \]
    while in cartesian coordinates we have
    \[
    u(x_1,x_2)=\frac1{\sqrt{\pi}}\frac{x_2}{|x_2|}\sqrt{{x_1+\sqrt{x_1^2+x_2^2}}}
    \qquad\textrm{ if $x_2\neq 0$,}
    \]
    and $u(x_1,0)=0$ if $x_1\leq0$.
\end{itemize}
\end{definition}

With a slight abuse of notation, sometimes we will also use cracktip for points $p\in K$ (in an admissible pair $(u, K)$ which is not necessarily globally defined) where $K$ is locally diffeomorphic to a global cracktip as in the above definition.

\begin{remark}\label{r:convention}
In the literature it is more common to put the halfline of discontinuity at the negative real line and hence to use the formula 
\[
u (\theta, \rho) = \sqrt{\frac{2\rho}{\pi}} \sin \frac{\theta}{2}\, .
\]
The two conventions are obviously equivalent up to symmetries along the imaginary axis.
Since a lot of our analysis in Chapter~\ref{ch:crack} will be based on the paper \cite{DFG}, which fixes the discontinuity set on the positive real axis, we have opted for the less common convention of Definition~\ref{d:crack-tip}. 
\end{remark}

\begin{remark}\label{r:elementary-exotic}
Given that the Mumford-Shah conjecture is still open, currently the existence of other global minimizers is not excluded. These are termed {\em exotic} by Bonnet and David (see e.g. \cite{bonnetdavid2001} and \cite{DavidBook}). We will use the term ``not elementary'' for global minimizers such that $\nabla u$ does not vanish, hence for both the cracktips and the (conjecturally nonexisting) exotic minimizers. 
\end{remark}

Theorem~\ref{t:main} implies that if the list of generalized global minimizers is exhausted by the elementary ones and by cracktips, then the Mumford-Shah conjecture holds. Except for the determination of what happens at loose ends, the latter conclusion was in fact proved as a combination of the two remarkable works \cite{david1996} and \cite{bonnetdavid2001} (see also \cite{DavidBook}). The only point left by those references was the possibility that the set $K$ forms a rather slow spiral at loose ends: Theorem~\ref{t:main} excludes the latter possibility. This was first proved by Andersson and Mikayelyan for minimizers of $E_0$ under the assumption of connectedness of $K$ in \cite{AndMyk}. A different argument for the same theorem was then given by the two authors and Ghinassi in \cite{DFG}, while the present reference is the first to handle the case of $E_\lambda$ for $\lambda>0$. 

The Mumford-Shah conjecture is indeed fully equivalent to the classification of generalized global minimizers, a fact which we record in the following theorem.

\begin{theorem}\label{t:equivalence}
Conjecture~\ref{c:MS} holds for absolute minimizers if and only if any global generalized minimizer is either one of the elementary minimizers listed in Theorem~\ref{t:class-global} or a cracktip.
\end{theorem}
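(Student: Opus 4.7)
The plan is to prove the two implications separately, both relying on the blow-up procedure described in the excerpt.

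\textbf{Easier direction (classification $\Rightarrow$ Conjecture \ref{c:MS}).} Let $(K,u)$ be an absolute minimizer of $E_\param$ and fix $x \in K$. I would consider the blow-up pairs $(K_{x,r}, u_{x,r})$ as $r \downarrow 0$. The upper bound \eqref{e:upper bound} together with the compactness theory of generalized minimizers underlying Theorem \ref{t:main-generalized} yields subsequential convergence to a global generalized minimizer $(K_\infty, u_\infty)$. Since $0 \in K_{x,r}$ for every $r > 0$, Hausdorff closeness on $\overline{B}_1(0)$ forces $0 \in K_\infty$, so the trivial case $K_\infty = \emptyset$ is ruled out. By hypothesis $K_\infty$ must then be an entire line through the origin, three half-lines at $120^\circ$, or a single half-line (the cracktip). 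In each of these three cases, for $r$ small enough the set $K \cap \overline{B}_{2r}(x)$ is within $\varepsilon r$ in Hausdorff distance of the corresponding model set of Conjecture \ref{c:MS}, so hypothesis (3) of Theorem \ref{t:main} is met, and its conclusion identifies $x$ as a pure jump, a triple junction, or a regular loose end. A covering argument over compact subsets of $K$, using that the three point types are locally isolated, then yields the finite $C^1$-arc decomposition asserted by the conjecture.

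\textbf{Harder direction (Conjecture \ref{c:MS} $\Rightarrow$ classification).} Let $(K, u)$ be a global generalized minimizer of $E_0$ on $\mathbb{R}^2$. By Theorem \ref{t:main-generalized} the same blow-up analysis applies to $(K, u)$ itself, so at every $x \in K$ the local structure is a pure jump, a triple junction, or a regular loose end: the blow-up limit at $x$ is again a global generalized minimizer, and the hypothesis identifies it with one of the allowed models. I would then analyze the tangent cone at infinity by rescaling $(K_{0, R}, u_{0, R})$ as $R \uparrow \infty$ and extracting, by compactness, a subsequential scale-invariant limit $(K_\infty, u_\infty)$. Scale invariance combined with the local regular-point structure forces $K_\infty$ to be empty, a line, a triple junction of half-lines, or a single half-line --- matching the classification in Theorem \ref{t:class-global}. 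A global rigidity step, via monotonicity of the normalized Bonnet-type energy $r \mapsto r^{-1} E_0(K, u, B_r(0))$ together with uniqueness of tangent cones at infinity, then promotes the asymptotic identification to a global one: $(K, u)$ coincides with the trivial, line, or triple-junction elementary minimizer when $K_\infty$ is empty, a line, or a triple junction, respectively, and is forced to coincide with the cracktip when $K_\infty$ is a half-line.

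\textbf{Main obstacle.} The most delicate step is the cracktip case of the harder direction. One must rule out a priori configurations whose singular set is an unbounded $C^1$ arc only \emph{asymptotic} to a half-line but wiggling at finite distance, or containing additional isolated triple junctions, and then identify $u$ with the explicit square-root profile of Definition \ref{d:crack-tip}. This typically requires combining the Bonnet monotonicity with unique continuation for harmonic functions on slit disks and a careful treatment of the normalization-at-infinity convention built into the definition of the generalized-minimizer class, which is precisely where $u$ may fail to lie in $L^2$ and the usual harmonic-extension arguments must be adapted.
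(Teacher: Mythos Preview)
Your easier direction is correct and matches the strategy the paper indicates: blow up at a point of $K$, invoke compactness to land on a global generalized minimizer, apply the assumed classification, and feed the resulting closeness into Theorem~\ref{t:main}.

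The harder direction, however, contains a genuine circularity. You write that ``the blow-up limit at $x$ is again a global generalized minimizer, and the hypothesis identifies it with one of the allowed models.'' But the hypothesis is that Conjecture~\ref{c:MS} holds for \emph{absolute minimizers}, not for global generalized minimizers; the blow-up of a generalized minimizer is another generalized minimizer, so you cannot invoke the hypothesis on it. The route the paper's machinery actually provides is different and avoids blow-ups entirely at this step: if $(K,v)$ is a nonelementary global generalized minimizer, then Theorem~\ref{t:David-Leger} forces $\mathbb{R}^2\setminus K$ to be connected, and then Theorem~\ref{t:minimizers compactness}(iii) (with $\mathscr{A}$ the single index) shows that $(K,v)$ is itself an \emph{absolute} minimizer of $E_0$ on $\mathbb{R}^2$. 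Now the Mumford--Shah conjecture applies directly to $(K,v)$, so $K$ consists of finitely many $C^1$ arcs, hence finitely many connected components. Corollary~\ref{c:global-two-connected-components} says at most one is unbounded, so all but one lie in a compact set, and Theorem~\ref{t:uniqueness} (Bonnet--David) finishes the classification.

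Your blow-down sketch is a different and more fragile route. The monotonicity you invoke (Proposition~\ref{p:monotonia-0}) requires that $K\cap\partial B_r$ lie in a single connected component of $K$, which you have not established; without it there is no monotone quantity, no uniqueness of the tangent cone at infinity, and no rigidity. The ``main obstacle'' you identify (ruling out wiggling arcs asymptotic to a half-line) is real, but it is precisely what the Bonnet--David theorem handles once its connectedness hypothesis is in place---and securing that hypothesis is what your argument skips.
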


Note at this point that we would not need a classification of all global generalized minimizers, but only of those which are arising as blow-ups. Moreover the elementary minimizers and cracktips are all ``homogeneous'', more precisely they are invariant under the rescalings $(K_{0,r}, u_{0,r})$, if we assume that the origin is appropriately chosen (in the case of pure jumps it must be contained in $K$, while in the case of triple junctions and cracktips it must be, respectively, the common endpoint of the three halflines forming $K$ and the loose end of the halfline forming $K$). On the other hand if we had a suitable monotonicity formula guaranteeing such homogeneity for the blow-ups, then it is not difficult to see that the classification would be rather simple. There are interesting monotonicity formulae available under certain assumptions and in this book we will highlight their usefulness, on the other hand none of them is powerful enough to guarantee the homogeneity of blow-ups in full generality.

A notable feature of the monotonicity formula for minimal surfaces is the fact that the total volume of a minimal surface of dimension $m$ in a ball $B_r (p)$ centered at some point $p$ on the surface is always bounded below by the volume of the corresponding $m$-dimensional disk of radius $r$ (as long as $B_r (p)$ does not intersect the boundary of the surface). This ``density lower bound'' \index{density lower bound@density lower bound} is in fact valid for the set $K$ as well, as it was proved in the important work \cite{DGCL89}, and it plays a fundamental role in the regularity theory.

Observe that Theorem~\ref{t:main} does not guarantee that cracktips are indeed global minimizers: Theorem~\ref{t:main} would imply the latter conclusion only if one could find an absolute minimum $(u,K)$ such that $K$ is close, in some disk $B_r (x)$, to a single straight segment which terminates at $x$. 

The global minimality of cracktip was however proved in the book \cite{bonnetdavid2001}. While these notes will not cover the proof of the latter fact, we will however cover the proof of one fundamental conclusion of \cite{bonnetdavid2001}, which will be instrumental in showing case (c) of Theorem~\ref{t:main}, and which is interesting in its own right. The relevant statement is recorded in the first half of the following theorem, while the second half is another remarkable result for global generalized minimizers proved in \cite{DL02}, of which we will report as well the proof.

\begin{theorem}\label{t:uniqueness}
Let $(u,K)$ be a generalized global minimizer in the sense of Definition~\ref{d:generalized}. If all but at most one connected component of $K$ are contained in a compact set of $\mathbb R^2$, then $(u,K)$ is either one of the elementary global minimizers described in Theorem~\ref{t:class-global} or it is a cracktip. 

Let $(u,K)$ be a generalized global minimizer for which $K$ disconnects $\mathbb R^2$. Then $(u,K)$ is an elementary global minimizer.
\end{theorem}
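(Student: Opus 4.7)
My plan is a Bonnet-style compactness-plus-classification analysis, treating the two statements separately. For Part 1, I would exploit Bonnet's monotonicity formula for the normalized Dirichlet energy $r\mapsto r^{-1}\int_{B_r\setminus K}|\nabla u|^2$, together with the universal upper bound \eqref{e:upper bound} extended to generalized global minimizers. Passing to a blow-down along a sequence $r_n\to \infty$ and invoking compactness of generalized minimizers, I extract a limit $(K_\infty,u_\infty)$; constancy of the monotone quantity in the limit forces $u_\infty$ to be $1/2$-homogeneous, so $(K_\infty,u_\infty)$ is a minimizing cone. The hypothesis that all but one connected component of $K$ lies in a fixed compact set ensures that only the unique unbounded component survives the rescaling, so $K_\infty$ has at most one connected component.

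The next step is to classify minimizing cones in the plane whose singular set has at most one connected component. On each angular sector of $\mathbb R^2\setminus K_\infty$, a $1/2$-homogeneous harmonic function satisfying Neumann boundary conditions on the two bounding halflines must be of the form $\alpha\, r^{1/2}\cos(\theta/2+\beta)$ or constant, and the matching conditions together with energy minimality pin down the admissible configurations to exactly four: $K_\infty=\emptyset$ with $u_\infty$ constant, $K_\infty$ a full line, $K_\infty$ a tripod of three halflines at $120^\circ$, or $K_\infty$ a halfline (the cracktip). Rigidity—promoting the asymptotic cone to $(K,u)$ itself—follows from stationarity of the monotone quantity at every finite scale, which forces $(K,u)$ to coincide with $(K_\infty,u_\infty)$ up to rigid motion; this simultaneously rules out additional relatively compact components of $K$, since any such component would admit a harmonic-replacement competitor that strictly decreases the energy.

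For Part 2, the key topological observation is that among the candidates produced by Part 1 only the full line and the tripod actually disconnect $\mathbb R^2$: neither the halfline nor the empty set does. I would therefore reduce Part 2 to the hypothesis of Part 1 by showing that a generalized global minimizer whose singular set disconnects the plane cannot have two distinct unbounded connected components. A second unbounded component would, upon blow-down, survive in the limit and produce a minimizing cone with two disjoint connected components both reaching infinity, contradicting the classification above (any non-trivial minimizing cone in the plane must have all its branches meet at the origin). Once Part 1 applies, the cracktip alternative is excluded by the disconnection assumption, leaving only the line or tripod.

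The main obstacle will be the cracktip rigidity step in Part 1: even after identifying the blow-down as the cracktip cone, one must exclude slow (e.g.\ logarithmic) spiralling of $K$ near infinity. I expect this to require a dyadic iteration controlling the Dirichlet excess at one scale by its value at the next, yielding algebraic convergence of $(K,u)$ to the cracktip profile—this is the delicate analytic content that Part 2 can avoid entirely.
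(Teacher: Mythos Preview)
Your blow-down identification in Part 1 is fine: under the hypothesis, Bonnet's monotonicity (Proposition \ref{p:monotonia-0}) applies for large radii, and its rigidity clause forces any blow-down to be either elementary or a cracktip. The fatal gap is the rigidity step. You write that ``stationarity of the monotone quantity at every finite scale'' promotes the blow-down to $(K,u)$ itself, but $r^{-1}D(r)$ is only monotone with a limit at infinity---it is \emph{not} constant at finite scales, and nothing in the monotonicity formula makes it so. Your fallback, a dyadic iteration on the Dirichlet excess, is the mechanism behind the \emph{local} $\varepsilon$-regularity at a tip (Theorem \ref{t:final-cracktip}), but that argument needs an initial smallness hypothesis which is unavailable here: knowing only that $r^{-1}D(r)\to 1$ gives no rate, so no decay lemma iterates. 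The paper's proof of Part 1 (Theorem \ref{t:Bonnet-David}, Sections \ref{s:coniugata}--\ref{s:gira}) is completely different and substantially deeper: it introduces the harmonic conjugate $v$ of $u$, analyses the topology of its level sets (no loops, no terminal points for a.e.\ level, exactly two ends at infinity), shows that the unbounded component of $K$ is $\{v=\min v\}$, and if $\max_K v>\min_K v$ isolates a bounded component $G\subset K$ on which $v$ is maximal and derives a contradiction by tracking the trace of $u$ around $G$. None of the monotonicity formulae suffices on its own, and your proposal contains no analogue of this machinery.

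Your Part 2 strategy also has structural problems. First, ``$K$ disconnects $\mathbb R^2$'' does not reduce to the hypothesis of Part 1: even with a single unbounded component, infinitely many bounded components need not lie in any fixed compact set. Second, the paper reverses your order: it proves Part 2 \emph{first} and independently (Theorem \ref{t:David-Leger}) via the David--L\'eger monotonicity for $F(x,r)=2d(x,r)+\ell(x,r)/r$ (Proposition \ref{p:monotonia-2}), which---unlike Bonnet's formula---does not require the points of $K\cap\partial B_r$ to lie in the same connected component. That result and its corollaries (in particular Corollary \ref{c:non-terminal=regular}: every nonterminal point of $K$ is a pure jump or triple junction, and Corollary \ref{c:global-two-connected-components}: $K$ cannot have two unbounded components) are then used throughout the proof of Part 1. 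Running the argument in your order would be circular.
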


\section{Structural results} 

Even though the Mumford-Shah conjecture is widely open for absolute minimizers with an infinite number of connected components, Theorem~\ref{t:main} and Theorem~\ref{t:uniqueness} tell us a lot about the structure of the set $K$ for minimizers $(u,K)$. We first introduce the following terminology.

\begin{definition}\label{d:internal points}
Consider a closed set $K\subset \mathbb R^2$. A point $p\in K$ is {\em nonterminal} \index{nonterminal point@nonterminal point} if there is a continuous injective map $\gamma: (-1,1) \to K$ such that $\gamma (0) = p$. Otherwise $p\in K$ is called {\em terminal}\index{terminal point@terminal point}.

Given a pair $(u,K)$ which is an absolute, restricted, generalized, or generalized restricted minimizer, we define:
\begin{itemize}
    \item the regular part $K^{(r)}$ of $K$\index{regular part of $K$@regular part of $K$}\index[simb]{aalK^r@$K^{(r)}$}, which consists of those points which are either regular loose ends, or pure jump points, or triple junctions;
    \item the irregular part $K^{(i)}$ of $K$\index{irregular part of $K$@irregular part of $K$}\index[simb]{aalK^i@$K^{(i)}$}, which is the complement of $K^{(r)}$ in $K$;
    \item the points $K^\sharp$ of high energy\index{points of high energy@points of high energy}\index{high energy, points of@high energy, points of}\index[simb]{aalK^sharp@$K^\sharp$}, which is the union of $K^{(i)}$ and regular loose ends. 
\end{itemize}
\end{definition}

The following theorem summarizes all the structural results available in the literature (at least to our knowledge).

\begin{theorem}\label{t:structure}
Assume $(u,K)$ is an absolute or a generalized minimizer of $E_\lambda$. Then
\begin{itemize}
    \item[(i)] $K^{(i)}$ is a relatively closed subset, and its Hausdorff dimension\index{Hausdorff dimension@Hausdorff dimension} is at most $1-\varepsilon$, where $\varepsilon$ is a positive geometric constant;
    \item[(ii)] Every nonterminal point of $K$ is either a triple junction or a pure jump point;
    \item[(iii)] If $U$ is an open set such that $U\cap K$ consists of finitely many connected components, then $K^{(i)}\cap U =\emptyset$;
    \item[(iv)] A point $p\in K$ belongs to $K^\sharp$ if and only if 
    \begin{equation}\label{e:high-energy}
    \limsup_{r\downarrow 0} \frac{1}{r} \int_{B_r (x)} |\nabla u|^2 >0\, ,
    \end{equation}
    which occurs if and only if
    \begin{equation}\label{e:high-energy-2}
    \liminf_{r\downarrow 0} \frac{1}{r} \int_{B_r (x)} |\nabla u|^2 >0\, ;
    \end{equation}
    \item[(v)] Triple junctions and regular loose ends form a discrete set;
    \item[(vi)] A terminal point which is not the accumulation point of infinitely many connected components of $K$ is necessarily a regular loose end.
\end{itemize}
\end{theorem}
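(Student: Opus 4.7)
The plan is to derive (i)--(vi) from the $\varepsilon$-regularity Theorem~\ref{t:main}, the classification Theorem~\ref{t:uniqueness}, and the blow-up compactness behind Theorem~\ref{t:main-generalized}, using different combinations for each item. The openness of $K^r$ in $K$ is immediate from Definition~\ref{d:regular-points}: every regular point sits in a disk on which $K$ is $C^1$-diffeomorphic to a model, a property that is stable on nearby disks, so $K^i$ is relatively closed. For the dimension bound in (i) I would proceed by a porosity/iteration argument: combining the uniform energy upper bound \eqref{e:upper bound} with the $\varepsilon$-closeness criterion of Theorem~\ref{t:main} shows that $K^i$ avoids a definite portion of each disk centered on $K^i$ at every scale, in a quantified sense; iterating this decay gives $\dim_{\HH} K^i \leq 1-\varepsilon$ for a universal $\varepsilon>0$. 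I expect this dimension bound to be the main obstacle, since it is the only item requiring a genuinely quantitative covering argument beyond the qualitative local reasoning used for (ii)--(vi).

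For (iii), if $U\cap K$ has only finitely many connected components, Corollary~\ref{c:Bonnet-plus} applies in $U$ and yields $K\cap U\subset K^r$, hence $K^i\cap U=\emptyset$. For (ii), take a nonterminal $p$ with witnessing continuous injective arc $\gamma:(-1,1)\to K$, $\gamma(0)=p$, and blow up at $p$ along a subsequence $r_j\downarrow 0$; by the compactness underlying Theorem~\ref{t:main-generalized} we may extract a generalized global minimizer $(K_\infty,u_\infty)$. The rescaled images $r_j^{-1}(\gamma((-1,1))-p)$ produce, in the Hausdorff limit, a properly embedded topological line through the origin inside $K_\infty$ (each half of $\gamma$ has positive distance to $p$ outside every small neighborhood, so each half escapes any fixed ball $B_R$ for $j$ large), whence $K_\infty$ disconnects $\R^2$. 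Theorem~\ref{t:uniqueness} then forces $(K_\infty,u_\infty)$ to be elementary, and since $0\in K_\infty\neq\emptyset$ it must be a line or a triple junction; the Hausdorff-closeness hypothesis of Theorem~\ref{t:main} is therefore satisfied at scale $r_j$ for $j$ large, so $p$ is a pure jump or a triple junction.

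For (iv), at a pure jump or triple junction the $C^{1,\alpha}$-diffeomorphism supplied by Theorem~\ref{t:main} reduces $u$ to a smooth elliptic problem with Neumann conditions on a smooth arc or a $120^\circ$ Y, giving $|\nabla u|^2\in L^\infty_{\mathrm{loc}}$ near $p$ and hence $\frac{1}{r}\int_{B_r(p)}|\nabla u|^2\to 0$; at a regular loose end the blow-up analysis of Theorem~\ref{t:uniqueness} identifies $(K_\infty,u_\infty)$ with a cracktip, for which a direct computation gives $\int_{B_1}|\nabla u_\infty|^2 = 1$, yielding a positive density. Conversely, if the liminf in \eqref{e:high-energy-2} is positive, no blow-up can be elementary (elementary minimizers have identically vanishing Dirichlet energy on compacts), so combining (ii) and (iii) rules out triple junctions and pure jumps and leaves only $K^i$ or regular loose ends. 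The equivalence of \eqref{e:high-energy} and \eqref{e:high-energy-2} relies on the (almost) monotonicity of the rescaled Dirichlet integral encoded in the $\Rad$/$\rad$-type quantities introduced later in the notes: every blow-up has the same Dirichlet density at $p$, and that density takes only the values $0$ (elementary cases) or $1$ (cracktip), so the limsup and the liminf necessarily coincide.

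Finally, (v) follows from the local $C^1$ models in Definition~\ref{d:regular-points}: a triple junction $p$ admits a disk $B_r(p)$ on which $K$ is $C^1$-diffeomorphic to a Y, so no second triple junction and no regular loose end can fit inside; a regular loose end admits a disk on which $K$ is a single radius, which likewise excludes both, and discreteness follows. For (vi), if $p$ is terminal and some neighborhood $U$ intersects $K$ in finitely many connected components, then by (iii) $p\in K^r$; terminality excludes both the pure jump and triple junction models, since any witnessing diffeomorphism would produce a continuous injective arc through $p$ inside $K$, so $p$ is forced to be a regular loose end.
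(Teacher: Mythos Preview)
Your overall strategy matches the paper's: item (ii) is the content of Corollary~\ref{c:non-terminal=regular}, items (iii)--(vi) are extracted from Theorem~\ref{t:structure-2}, and item (i) goes through either porosity or higher integrability of $\nabla u$ combined with Corollary~\ref{c:semplice-2}. However, two steps in your sketch do not go through as written.

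\textbf{Item (ii).} The assertion that the rescaled arcs $r_j^{-1}(\gamma((-1,1))-p)$ produce ``in the Hausdorff limit a properly embedded topological line through the origin inside $K_\infty$'' is unjustified and in general false: Hausdorff limits of injective arcs need not be arcs, and even if the limit is a connected unbounded set through $0$, that alone does not force $K_\infty$ to disconnect $\mathbb R^2$ (a single half-line does not). The paper's proof of Corollary~\ref{c:non-terminal=regular} avoids this by arguing at the level of \emph{regions} rather than the curve itself: the arc $\gamma$ splits a small disk $B_r(p)$ into two Jordan components $\Omega^\pm$ (Jordan curve theorem), one rescales these components and passes to limits $\Omega^\pm_\infty$, and then uses the $\varepsilon$-regularity at a pure jump point of the limit set $\Gamma$ to see that for large $j$ the rescaled $K$ coincides locally with the rescaled arc and genuinely separates the two regions. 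This last step is what guarantees both $\Omega^\pm_\infty$ are nonempty and lie in distinct components of $\mathbb R^2\setminus K_\infty$. Your sketch is missing this mechanism.

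\textbf{Item (iv).} The appeal to ``(almost) monotonicity encoded in the $\Rad$/$\rad$-type quantities'' is misplaced: $\Rad$ and $\isq$ are the explicit model profiles for the cracktip in Chapter~\ref{ch:crack}, not a monotonicity formula for $d(p,r)$ at a general point (Bonnet's monotonicity in Proposition~\ref{p:monotonia-0} requires $K\cap\partial B_r$ to lie in one connected component of $K$, which fails precisely at the points of $K^i$ you need to handle). Moreover the claim that ``the density takes only the values $0$ or $1$'' presupposes the Mumford--Shah conjecture. The correct argument, which the paper carries out in Proposition~\ref{p:structure-3}, is a compactness threshold: if $\liminf_{r\downarrow 0} d(p,r)=0$ then along some sequence the blow-up has vanishing Dirichlet energy, hence is elementary with $0\in K_\infty$ (Theorem~\ref{t:class-global}), and $\varepsilon$-regularity then forces $p$ to be a pure jump or triple junction, whence $\limsup d(p,r)=0$ as well. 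This yields the dichotomy $\liminf d=0$ or $\liminf d\geq\varepsilon$ for a universal $\varepsilon>0$, and the equivalence of \eqref{e:high-energy} and \eqref{e:high-energy-2} follows without any monotonicity. You essentially have this argument in the ``Conversely'' sentence; the monotonicity remark should simply be deleted.
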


Note that $K^{(i)}$ could be further subdivided in the union of those points $\{p\}$ of $K$ which are connected components of $K$ (``singletons'') and of the irregular terminal points of the connected components of $K$ with positive length. The current state of the art in the regularity theory does not allow to conclude that singletons are absent, or that terminal points of connected components of $K$ with positive length are {\em always} regular loose ends. In both cases the scenario that cannot be excluded is that of points $p$ which are the accumulation of an infinite sequence of connected components of $K$ with positive length. It seems simpler to exclude that this might happen for terminal points of connected components with positive length: the latter would be implied by the following strengthening of the first part of Theorem~\ref{t:uniqueness}, which is interesting in its own right. 

\begin{conjecture}\label{c:BD-forte}
Let $(u,K)$ be a global generalized minimizer with the property that $K$ has one unbounded connected component. Then $(u,K)$ is either an elementary global minimizer or a cracktip.
\end{conjecture}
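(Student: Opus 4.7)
The strategy is to combine a blow-down analysis at infinity with the first half of Theorem \ref{t:uniqueness} and the generalized $\varepsilon$-regularity of Theorem \ref{t:main-generalized}.

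\emph{Step 1 (tangent at infinity).} Let $K^\infty$ be the unique unbounded component of $K$. Consider the rescalings $(K_{0,R}, u_{0,R})$ as $R\uparrow \infty$, after subtracting the constants needed to normalize $u$. The energy upper bound \eqref{e:upper bound}, combined with the compactness built into Definition \ref{d:generalized}, produces a subsequential limit $(K_\star, u_\star)$ that is itself a generalized global minimizer. The critical point is to show that this tangent at infinity is \emph{self-similar}, i.e.\ $(K_\star, u_\star)$ is invariant (up to the natural scaling) under all rescalings at the origin. This should follow from a Bonnet-type monotonicity formula for the normalized Dirichlet integral $r \mapsto r^{-1}\int_{B_r(0)}|\nabla u|^2$ evaluated along the scales $R$.

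\emph{Step 2 (classification of the tangent).} A closed self-similar subset of $\R^2$ satisfying the length bound $\HH^1(K_\star \cap B_r)\leq 2\pi r$ is a union of half-lines emanating from the origin: indeed, a non-singleton bounded component $C$ would generate disjoint scaled copies $\lambda C$ of arbitrarily large total length in $B_R$, violating the bound. Hence every connected component of $K_\star$ either passes through the origin (and is unbounded) or is a singleton; the latter disappear under normalization (Definition \ref{d:normalized}). Since by hypothesis only one unbounded component is allowed, we are in the situation of the first half of Theorem \ref{t:uniqueness}, which identifies $(K_\star, u_\star)$ as either an elementary global minimizer or a cracktip.

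\emph{Step 3 (propagation back to $(K,u)$).} Once the tangent is identified, for every $\delta>0$ the Hausdorff distance between $K^\infty\cap B_{2R}$ and the corresponding truncation of $K_\star$ is smaller than $\delta R$ for all $R\geq R_0(\delta)$. Applying Theorem \ref{t:main-generalized} in the annuli $B_{2R}(0)\setminus B_{R/2}(0)$ for every sufficiently large $R$, $K^\infty$ becomes a $C^{1,\alpha}$ graph over $K_\star$ outside a large disk $B_{R_0}$. Rigidity of the models (using in particular the global minimality of cracktips established in \cite{bonnetdavid2001}) together with unique continuation for the harmonic function $u$ on $\R^2\setminus K$ then upgrades this asymptotic agreement to the identity $(K,u)\equiv (K_\star, u_\star)$, forbidding the presence of any bounded companion component.

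\emph{Main obstacle.} The whole argument hinges on the self-similarity of the tangent produced in Step 1. Bonnet-type monotonicity formulas are available under topological restrictions on $K$, as in Theorem \ref{t:uniqueness}, where all but one component lie in a fixed compact set. In the present setting, however, the infinitely many bounded components of $K$ could a priori have diameters comparable to $R$ at every large scale, disrupting the variational construction underlying the monotonicity formula. Ruling out such a scale-invariant accumulation of bounded components around $K^\infty$ — equivalently, showing that the blow-down at infinity cannot host infinitely many bounded components of positive diameter — is the critical missing ingredient and is the reason why Conjecture \ref{c:BD-forte} remains open.
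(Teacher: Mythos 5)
The statement is Conjecture \ref{c:BD-forte}: the paper presents it as open, gives no proof, and explicitly motivates it as a strengthening of the first part of Theorem \ref{t:uniqueness}. There is therefore no proof in the paper against which to compare; the right question is whether your sketch correctly locates the obstruction, and it does.

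Where the argument genuinely breaks --- and where you are right to stop --- is this. The first part of Theorem \ref{t:uniqueness} assumes all but at most one component of $K$ are contained in a fixed compact set; Conjecture \ref{c:BD-forte} only constrains the \emph{number} of unbounded components and drops the uniform compact confinement of the bounded ones. Bonnet monotonicity (Proposition \ref{p:monotonia-0}) requires $K\cap\partial B_r$ to lie in a single connected component of $K$: under the hypotheses of Theorem \ref{t:uniqueness} this holds for all $r$ larger than the diameter of the confining compact set, but under the conjecture's hypothesis nothing rules out a sequence of bounded components whose diameters are comparable to $R$ at every large scale $R$, which disrupts the monotonicity formula and hence the self-similarity claim in your Step 1. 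Moreover, even granting a classification of the blow-down as a cracktip, the rigidity step in your Step 3 would run through Theorem \ref{t:Bonnet-David}, whose hypothesis (b) requires $K\cap\partial B_R$ to consist of a single point for some finite $R$ --- once again a compactness statement about the bounded components that the conjecture's hypothesis does not supply. Your ``main obstacle'' paragraph captures precisely what the paper itself emphasizes in the discussion preceding the conjecture: ruling out points that are accumulation points of infinitely many connected components of $K$ with positive length is, at present, out of reach, and this is why Conjecture \ref{c:BD-forte} remains open.
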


We finally record here that the structure theorem yields another equivalent formulation of the Mumford-Shah conjecture in terms of the optimal summability of $\nabla u$ (cf. \cite{DLF13}).

\begin{theorem}\label{t:summability}
Let $(u,K)$ be an absolute or generalized minimizer of $E_\param$ in $\Omega$. The set $K^{(i)}$ is empty if and only if $\nabla u \in L^{4,\infty}_{{\rm loc}}$,
\index[simb]{aalL^4,inftyloc@$L^{4,\infty}_{{\rm loc}}$} namely if and only if for every compact set $U\subset \Omega$ there is a constant $C = C(U)$ such that
\begin{equation}\label{e:L quattro debole}
|\{x\in U : |\nabla u (x)|^4\leq M\}|\leq \frac{C}{M}
\qquad \forall M \geq 1\, .
\end{equation}
\end{theorem}

\section{Critical points}\label{ss:critical-points}
Restricted, absolute, generalized, and generalized restricted minimizers are naturally critical points in the following sense (cf. Proposition~\ref{p:variational identities}). We start by defining two classes of appropriate variations of the relevant functionals.

\begin{definition}\label{d:variations}
Let $(u,K)$ be an admissible pair in an open set $U$.
\begin{itemize}
\item[(Out)] If $\varphi\in W^{1,2} (U\setminus K)$ with $\supp (\varphi) \Subset U$, then the $1$-parameter family of competitors $(u_t, K_t)=(u+t\varphi, K)$ will be called an {\em outer variation} \index{outer variation@outer variation}in $U$.
\item[(In)] Consider any one-parameter family of diffeomorphisms $(-\varepsilon, \varepsilon) \times \mathbb{R}^2 \ni (t, x)\mapsto \Phi_t (x)= \Phi (t,x)$ of $U$, which is $C^1$ in both variables and such that $\Phi_t (x)=x$ for all $x$ outside a compact subset of $U$. Then $(u_t, K_t)=(u\circ \Phi_t^{-1},\Phi_t (K))$ will be called an {\em inner variation}\index{inner variation@inner variation}. 
\end{itemize}
\end{definition}

As it can be naturally expected, for minimizers we will show that the condition
\begin{equation}\label{e:critical}
\left.\frac{d}{dt}\right|_{t=0} E_\lambda (u_t,K_t,U,g) = 0 \, 
\end{equation}
holds for every inner and outer variation in $U$.
Equation \eqref{e:critical} gives corresponding Euler-Lagrange conditions, which in turn play a crucial role in the regularity theory. In fact several conclusions can be inferred from such conditions alone and in these notes we will make an effort to keep track of them. 

The first Euler-Lagrange condition, derived from outer variations is given by\index{Euler-Lagrange conditions@Euler-Lagrange conditions}
\begin{align}
&- \int_{\Omega\setminus K} \nabla u \cdot \nabla \varphi = \lambda \int_\Omega \varphi (u-g)\qquad\mbox{$\forall \varphi\in W^{1,2} (\Omega\setminus K)$ with $\supp (\varphi) \subset\subset \Omega$.} \label{e:outer}
\end{align}
The second, derived from inner variations, takes a particularly simple form when $g\in C^1$:
\begin{align}
&\int_{\Omega\setminus K} \big(|\nabla u|^2 {\rm div}\, \psi -2 \nabla u^T
\cdot D \psi\, \nabla u\big)+
\int_K e^T \cdot D\psi\, e \, d\mathcal{H}^1\nonumber\\
&= \lambda\int_{\Omega\setminus K} (2 (u-g) \nabla g \cdot \psi - |u-g|^2 {\rm div}\, \psi)\qquad\qquad\qquad\qquad \forall \psi \in C^1_c (\Omega, \mathbb R^2)\, ,\label{e:inner-C1}
\end{align}
where $e(x)$\index[simb]{aale@$e$} is a unit tangent vector field to $K$ and the notations $a\cdot b$ and $A b$ refer to, respectively, the scalar product\index{scalar product@scalar product} between two vectors $a$ and $b$ and the usual product of a matrix $A$ and a (column) vector $b$, $\nabla u$ denotes the gradient of the function $u$, and $D\psi$ the Jacobian matrix \index{Jacobian matrix@Jacobian matrix} of the vector field $\psi$ (this convention will be followed through the rest of these notes). The test $\varphi$ in the outer variation relates to $u_t$ through the relation $u_t = u + t\varphi$ when it is $C^1$, while the test $\psi$ relates to the one parameter family of diffeomorphisms in Definition~\ref{d:variations}(In) via $\psi (x) = \partial_t \Phi (0,x)$.

Note that \eqref{e:inner-C1} does not make sense when $g\in L^\infty$. In fact it is not at all obvious that $t\mapsto E_\lambda (u_t,K_t,\Phi_t(U),g\circ\Phi_t^{-1})$ is differentiable at $t=0$ in that case. This can however be shown for minimizers and corresponding Euler-Lagrange conditions can be derived. In order to state it, we first introduce the normal $\nu$\index[simb]{aagn@$\nu$}, which is the counterclockwise rotation of $e$ by an angle of $90$ degrees. For minimizers we can then show the existence of suitable traces $u^+$, $u^-$ and $g_K$ on $K$\index[simb]{aalu^+@$u^+$}\index[simb]{aalu^-@$u^-$}\index[simb]{aalg_K@$g_K$}, which are locally bounded Borel functions and such that
\begin{align}\label{e:inner}
& \int_{\Omega\setminus K} \big(|\nabla u|^2 {\rm div}\, \psi 
-2 \nabla u^T \cdot D \psi\, \nabla u\big)
+ \int_K e^T \cdot D\psi\,  e \, d\mathcal{H}^1 \nonumber\\ 
& =2 \lambda \int_{\Omega\setminus K}(u-g)\nabla u\cdot\psi
+\lambda \int_{K}(|u^+-g_K|^2-|u^--g_K|^2) \psi\cdot\nu d\mathcal{H}^1
\quad \forall \psi \in C^1_c (\Omega, \mathbb R^2)\, .
\end{align}
The latter identity is in fact equivalent to \eqref{e:inner-C1} when $g\in C^1$.

Motivated by the above discussion we introduce the following notions.

\begin{definition}\label{d:critical-points}
Consider $g\in C^1$. A pair $(u,K)$ in $\Omega$  is a critical point\index{critical point@critical point} of $E_\lambda$ if \eqref{e:outer}-\eqref{e:inner-C1} hold for every $\varphi\in W^{1,2} (\Omega\setminus K)$ with $\supp (\varphi) \subset\subset \Omega$ and every $\psi\in C^1_c (\Omega, \mathbb R^2)$. 

Consider $g\in L^\infty$. A pair $(u,K)$ in $\Omega$ is a critical point of $E_\lambda$ if \eqref{e:outer} holds for every $\varphi\in W^{1,2} (\Omega\setminus K)$ with $\supp (\varphi) \subset\subset \Omega$
and if there are bounded Borel functions $u^+, u^-$, and $g_K$ such that \eqref{e:inner} holds for every $\psi\in C^1_c (\Omega, \mathbb R^2)$.
\end{definition}

The variational identities \eqref{e:outer}-\eqref{e:inner} are proved for minimizers in Proposition~\ref{p:variational identities}. Both \eqref{e:outer} and \eqref{e:inner-C1} are well-known and their proof are reported, for instance, in \cite{AFP00}. However we have not been able to find a reference in the literature for \eqref{e:inner} and we thus give its derivation in the appendix.

\begin{remark}
In these notes we will take advantage of the identity \eqref{e:inner} to prove several monotonicity formulae which we will freely use in the proofs of the regularity results of Theorem~\ref{t:main} in cases (b) and (c). On the other hand, those $\varepsilon$-regularity statements hold more generally for what in the literature are sometimes called quasi-minimizers (see \cite{AFP00}) or almost-minimizers\footnote{We warn the reader that \cite{DavidBook} uses both the terms almost-minimizers and quasi-minimizers, but the quasi-minimziers of \cite{DavidBook} are not the ones of \cite{AFP00}.} (see \cite{DavidBook}) \index{almost-minimizer@almost-minimizer} \index{quasi-minimizer@quasi-minimizer} of the functional $E_0$, 
of which minimizers of $E_\lambda$ are a distinguished example. In particular our use of the monotonicity formulae can be avoided. 

More precisely, the conclusions of Theorem~\ref{t:main} in cases (b) and (c) of Conjecture~\ref{c:MS} 
are proved in \cite{david1996,AP97,AFP97} (cf. also \cite{AFP00,DavidBook}) for 
pairs $(u,K)$ satisfying the following weakened version of \eqref{e:minimizer}: 
there are $\omega,\delta>0$ such that
\[
E_0 (u,K,B_r(y))\leq E_0 (v,J,B_r(y))+\omega r^{1+\delta}
\]
(where $x,r$ and $(v,J)$ are arbitrary and satisfy the conditions for competitors given in Definition~\ref{d:definitions}, with $U=B_r (x)$).
Note that minimizers of $E_\lambda$ are indeed almost-minimizers of $E$ with $\delta=1$ and 
$\omega=2\pi$ according to a simple comparison estimate. 

However, the conclusion of Theorem~\ref{t:main} for Conjecture~\ref{c:MS}(a)  
seems to require crucially \eqref{e:inner} or anyway a suitable approximate version. At present we do not know how to prove the same conclusion for almost-minimizers or even whether to expect it to be true for them.

We also believe that the use of the variational identities in the proofs of the cases (b) and (c) has its own interest: besides providing a different point of view which covers the most important examples of almost-minimizers of $E_0$ (i.e. minimizers of $E_\lambda$), it simplifies considerably some of the arguments. 
\end{remark}

\section{Plan of the notes}\label{s:plan of the notes}

We start the notes with Chapter~\ref{ch:preliminaries}, which will collect a number of preliminary results which have their own independent interest but will also be instrumental in the proofs of the main theorems. We will in particular:
\begin{itemize}
    \item state and prove some elementary bounds and maximum principles;
    \item state a fundamental density lower bound on the discontinuity set $K$, due to De~Giorgi, Carriero, and Leaci, whose proof is deferred to the appendix;
    \item introduce the ``blow-up procedure'' and state some relevant compactness properties of minimizers, with most of the proofs deferred to the appendix;
    \item detail the variational identities stated in Section~\ref{ss:critical-points} (whose proofs are given in the appendix) and derive a few important corollaries, in particular most of the monotonicity formulae.
\end{itemize}
Chapter~\ref{ch:salti_e_tripunti} will prove the cases (b) and (c) of Theorem~\ref{t:main} 
(cf. Subsection~\ref{ss:proof of (b)-(c) Theorem main}, see also Theorems~\ref{t:eps_salto_puro} and \ref{t:eps_tripunto} for slightly different statements). Case (b) (e.g. the regularity at ``pure jumps'') was proved independently by David in \cite{david1996} and Ambrosio, Fusco, and Pallara in \cite{AFP97}.
We essentially follow the approach of Ambrosio, Fusco, and Pallara, cf. the book \cite{AFP00}, but it must be noted that, while their arguments are written to cover the case of all dimensions, in our setting we exploit the $1$-dimensionality of the set $K$ to take several shortcuts. The case (c) of triple junctions was proved in \cite{david1996}. Here we propose an approach which is different and new, in particular we take advantage of the monotonicity formulae of the first chapter (due to David and L\'eger in \cite{DL02}) and of a ``blow-in'' argument which considerably simplifies the overall proof. 

Chapter~\ref{c:Bonnet David} is a self-contained proof of Theorem~\ref{t:uniqueness}. The arguments for the first part of the theorem are borrowed from the book \cite{bonnetdavid2001} while the arguments for the second part are due to \cite{DL02}. Here and there we propose alternative derivations and we streamline and simplify a few steps of the original arguments. Moreover, we prove first the second part of the theorem and hence take advantage of it whenever possible to prove the first part (a possibility which was not at disposal of the authors in \cite{bonnetdavid2001}, given that the paper \cite{DL02} appeared afterwards). 

The first part of Theorem~\ref{t:uniqueness} is in fact a stepping stone for case (c) of Theorem~\ref{t:main}, more precisely it implies the intermediate result of Corollary~\ref{c:connectedness}, namely that when for an absolute minimizer $(u,K)$ the set $K$ is sufficiently close to a radius in the disk $B_r (x)$, then it consists of a single connected component in $B_{r/2} (x)$. Chapter~\ref{ch:crack} starts from this conclusion  and leads to a complete proof of case (c) of Theorem~\ref{t:main} (cf. Theorem~\ref{t:final-cracktip}). We follow the arguments of our previous work \cite{DFG} with some appropriate modifications to account for the fidelity term in the energy $E_\lambda$ (the work \cite{DFG} addressed the case of $E_0$). 

Chapter~\ref{ch:finale} examines a few important consequences of the $\varepsilon$-regularity theory. It first studies which structural conclusions can be derived from it. More precisely it proves all the structural results of Theorem~\ref{t:structure}, apart from 
item (ii), which is exactly the content of Corollary~\ref{c:non-terminal=regular}: a quantitative version of item (i) is given in Corollary~\ref{c:semplice-2}, and
items (iii) to (vi) are implied by  
Theorem~\ref{t:structure-2}. 
Theorem~\ref{t:summability} is the content of 
Theorem~\ref{t:summability-2}. Moreover the chapter addresses some other consequences of the $\varepsilon$-regularity theory, like the porosity of the singular portion of $K$, and the higher integrability of $\nabla u$, conjectured by De~Giorgi and first proved by the authors in \cite{DLF13} in the setting of this book (i.e. in $2$-dimensions) and hence settled in all dimensions by De~Philippis and Figalli in \cite{DePFig14}. In fact we present both arguments. 

\subsection{Acknowledgments} Both authors are very thankful to Silvia Ghinassi, for helping with a preliminary version of Chapter~\ref{ch:salti_e_tripunti}, and to Francesco Deangelis, for reading carefully the first draft of the book. The first author acknowledges the support of the National Science Foundation through the grant FRG-1854147. The second author has been supported by the European Union - Next Generation EU, Mission 4 Component 1 CUP 2022J4FYNJ, PRIN2022 project "Variational methods for stationary and evolution problems with singularities and interfaces".

\chapter{Density bounds, compactness, variations, and monotonicity}\label{ch:preliminaries}

In this chapter, we will collect some preliminary important considerations, before delving into the $\varepsilon$-regularity theory in the next four chapters. For some aspects which are not our main focus, but which are needed to understand the rest of the notes, we will nonetheless include the arguments for the reader's convenience (and because in some cases it has been difficult to track in the literature appropriate arguments that apply to our precise statements): however, when the proofs are long and technical we will move them to the appendix. 

After collecting some preliminary lower and upper bounds on the energy of minimizers and on the length of $K$, we will use them to introduce a pivotal procedure in the regularity theory, that of ``blow-up''. This procedure will allow us to zoom around a point $x\in K$ and extract meaningful limits (global minimizers) which in turn will give us a first rough picture of the local behavior around the point $x$. The second part of the chapter is devoted to important variational identities that can be derived by plugging suitable tests in \eqref{e:inner}. We will review several consequences of these identities (the determination of the cracktip factor, L\'eger's magic formula, the David-L\'eger-Maddalena-Solimini identity, and the truncated test identities). Finally, in the last section, we will introduce two important monotonicity formulae due, respectively, to Bonnet, and to David and L\'eger. While the first has a straightforward proof, which will be given in this chapter, the latter requires a more involved argument, which will be detailed later in Chapter 4.

Although the concept of ``generalized minimizer'' for $E_0$ will be given in Definition~\ref{d:generalized}, some preliminary statements (which will be formulated earlier than Definition~\ref{d:generalized}) hold for generalized minimizers as well and we point this out when needed: the proof will either become obvious once the concept is defined, or it is postponed to the appendix, where we will treat all cases.

\section{Preliminaries} 

In this section, we collect some preliminary considerations. 

\subsection{A maximum principle} We first give the following elementary maximum principle (see Figure~\ref{f:max-principle}).\index{maximum principle for minimizers@maximum principle for minimizers}

\begin{lemma}\label{l:maximum}
Let $(u,K)$ be an absolute or restricted minimizer of $E_\lambda$ in $\Omega$, or a generalized (resp. generalized restricted) minimizer of $E_0$ (cf. Definition~\ref{d:generalized}) and let $V$ be a connected component of $U\setminus K$ for some open $U\subset\subset \Omega$.
\begin{itemize}
\item[(a)] If $V\cap \partial U =\emptyset$, then $\|u\|_{L^\infty (V)}\leq \|g\|_{\infty}$ when $\lambda >0$ and $\nabla u|_V =0$ when $\lambda=0$;
\item[(b)] If $V$ intersects $\partial U$, then if $\lambda>0$ 
\[
\min \left\{\inf_V g, \inf_{V\cap \partial U} u\right\} \leq \inf_V u \leq \sup_V u \leq \max \left\{\sup_V g, \sup_{V\cap \partial U} u\right\}\,, 
\]
while if $\lambda=0$
\[
\inf_{V\cap \partial U} u \leq \inf_V u \leq \sup_V u \leq \sup_{V\cap \partial U} u\,.
\]
\end{itemize}
\end{lemma}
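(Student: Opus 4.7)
The proof of both cases is by truncation: the competitors modify $u$ only on $V$ and leave $K$ unchanged, so that admissibility is automatic for all four notions of minimizer mentioned in the statement (absolute, restricted, and their generalized counterparts for $\lambda=0$). By applying the argument to $-u$ (and $-g$ when $\lambda>0$) it suffices to establish the upper bounds; the lower bounds are symmetric. Throughout I read the statement's ``$V\cap\partial U$'' as $\bar V\cap\partial U$ (the literal $V\cap\partial U$ is empty since $V$ is an open subset of the open set $U$), and I interpret suprema on $\partial V\cap\partial U$ via the $W^{1,2}$-trace of $u|_V$.

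\emph{Case (a).} The hypothesis gives $\bar V\subset U$ and $\partial V\subset K$, so any modification of $u$ on $V$ that lies in $W^{1,2}_{loc}(\Omega\setminus K)$ is an admissible competitor for the minimality of $(K,u)$ on $U$. When $\lambda>0$, set $M:=\|g\|_\infty$ and let $\tilde u:=u\wedge M$ on $V$ and $\tilde u:=u$ elsewhere. Then the Dirichlet energy on $V$ weakly decreases ($\nabla\tilde u=\chi_{\{u\leq M\}}\nabla u$), the length term is unchanged, and on $\{u>M\}\cap V$ one has $|\tilde u-g|=M-g\leq u-g=|u-g|$ because $g\leq M$. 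Minimality forces $|\{u>M\}\cap V|=0$; the symmetric truncation from below by $-M$ closes the bound. When $\lambda=0$, the competitor $\tilde u\equiv c$ on $V$ (any constant) drives the Dirichlet energy on $V$ to zero, so minimality forces $\nabla u\equiv 0$ on $V$.

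\emph{Case (b).} Set
\[
M:=\max\!\left\{\,\sup_V g,\;\sup_{\partial V\cap\partial U} u\,\right\}\qquad(\lambda>0),
\]
and $M:=\sup_{\partial V\cap\partial U}u$ when $\lambda=0$. The function $\tilde u:=u\wedge M$ on $V$, extended by $u$ elsewhere, belongs to $W^{1,2}_{loc}(\Omega\setminus K)$: jumps across $\partial V\cap K$ are permitted, and on $\partial V\cap\partial U$ the truncation leaves the trace of $u$ unchanged by the choice of $M$. However $\supp(u-\tilde u)$ may reach $\partial U$, violating the support condition in the definition of minimizer. I remedy this with a smooth cutoff $\eta_\delta\in C^\infty_c(U)$, $0\leq\eta_\delta\leq 1$, $\eta_\delta\equiv 1$ on $\{\dist(\cdot,\partial U)>\delta\}$, and the modified competitor
\[
\tilde u_\delta:=u-\eta_\delta(u-M)^+ \text{ on }V,\qquad \tilde u_\delta:=u\text{ elsewhere},
\]
whose difference with $u$ is compactly supported in $U$. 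Expanding $E_\lambda(K,\tilde u_\delta)-E_\lambda(K,u)$ one isolates a non-positive principal term (the Dirichlet gain $-(2\eta_\delta-\eta_\delta^2)\chi_{\{u>M\}}|\nabla u|^2$ plus the analogous fidelity gain, handled exactly as in Case~(a)) from an error term involving $(u-M)^+\nabla\eta_\delta$. Since $(u-M)^+$ has vanishing trace on $\partial V\cap\partial U$ by the choice of $M$, a Hardy/Poincar\'e-type estimate allows $\eta_\delta$ to be chosen so that this error vanishes as $\delta\to 0$. Passing to the limit, minimality forces $|\{u>M\}\cap V|=0$; the symmetric argument from below yields the full conclusion, and the $\lambda=0$ case is identical in spirit.

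\emph{Main obstacle.} The genuine technical point is Case~(b): the naive truncation competitor $u\wedge M$ need not have support compactly contained in $U$, so the admissibility condition in Definition~\ref{d:definitions} fails. The cutoff-and-limit procedure circumvents this, but requires controlling the boundary-layer error $(u-M)^+\nabla\eta_\delta$, which is precisely where the choice $M=\sup_{\partial V\cap\partial U}u$ (trace sense) pays off by ensuring that $(u-M)^+$ vanishes at the boundary. For generalized and generalized restricted minimizers of $E_0$, only the $\lambda=0$ instances apply and the same truncation/constant-replacement competitors remain admissible within those notions.
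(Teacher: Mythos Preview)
Your treatment of Case~(a) is correct and essentially identical to the paper's: constant replacement for $\lambda=0$, truncation at $\pm\|g\|_\infty$ for $\lambda>0$.

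The divergence is in Case~(b). You correctly spot a technicality the paper glosses over: the naive truncation $\tilde u=u\wedge M$ (extended by $u$ off $V$) need not satisfy $\supp(u-\tilde u)\subset\subset U$. But your cutoff-and-limit remedy is heavier than necessary, and the step ``a Hardy/Poincar\'e-type estimate allows $\eta_\delta$ to be chosen so that this error vanishes'' is not cleanly justified: the term $\int_V((u-M)^+)^2|\nabla\eta_\delta|^2$ is controlled by Hardy's inequality only up to a \emph{bounded} constant, and squeezing it to zero requires in addition dominated convergence of $\int_{\{\dist<\delta\}}((u-M)^+)^2/\dist^2$, which in turn needs enough boundary regularity of $\partial U$---not granted for an arbitrary open $U\subset\subset\Omega$.

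There is a one-line fix that avoids all of this, and it is what makes the paper's terse argument work. You have already observed that the trace of $u\wedge M$ on $\partial V\cap\partial U$ coincides with that of $u$ (by the very choice of $M$), so $\tilde u\in W^{1,2}_{loc}(\Omega\setminus K)$ globally. Hence $\supp(u-\tilde u)\subset\bar V\subset\bar U$, and one simply tests minimality not on $U$ but on any open $U'$ with $\bar U\subset U'\subset\subset\Omega$; such $U'$ exists because $U\subset\subset\Omega$. For this $U'$ the competitor is admissible in the sense of Definition~\ref{d:definitions} (and, since $J=K$, trivially admissible for generalized minimizers as well), and the energy comparison on $U'$ reduces to one on $V$, yielding the contradiction directly. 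No cutoff, no limit, no Hardy inequality.
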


\begin{figure}
\begin{tikzpicture}
\draw (-6,0) circle [radius =2];
\draw[very thick] (-6, 2) to [out = 240, in = 120] (-6, 1.5) to [out = 300, in = 0] (-6, -1.5) to [out = 180, in = 210] (-6, 1.5);
\node at (-6,0) {$V$};
\draw (0,0) circle [radius = 2];
\draw[very thick] (-0.5,2.4) to [out=290, in=180] (0.2,-1) to [out=0, in=180] (0.7,-0.5) to [out=0, in=240] (2.2,0.5);
\node at (0.3,0.3) {$V$};
\node[above right] at ({sqrt(2)},{sqrt(2)}) {$V\cap \partial U$};
\end{tikzpicture}
\caption{On the left case (a) and on the right case (b) of Lemma~\ref{l:maximum}. In these examples $U$ is a disk.\label{f:max-principle}}
\end{figure}
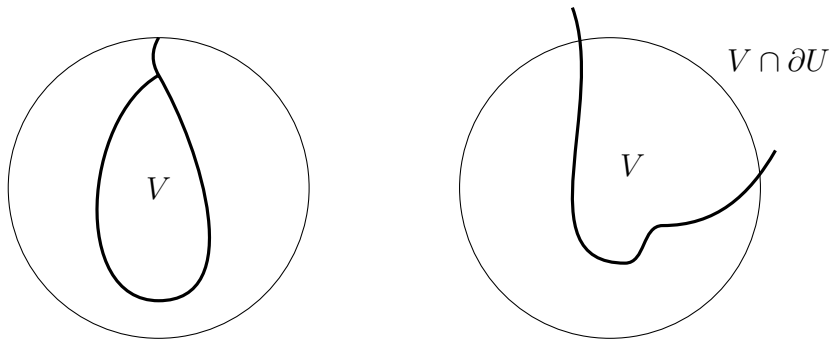

\begin{proof} Assume (a) is false. For $\lambda=0$ we could define $\bar K =K$, $\bar u = u$ on $\Omega\setminus V$ and $\bar u=0$ on $V$ when $(u,K)$ minimizes $E_0$: the pair $(\bar u, \bar K)$ would then have a lower energy. Similarly, when the pair minimizes $E_\lambda$ with $\lambda$ positive, $u$ is the absolute minimizer of $\int_V (|\nabla u|^2+\lambda (u-g)^2)$ in $V$ and hence its $L^\infty$ norm is bounded by $\|g\|_\infty$ by an obvious truncation argument.
As for (b), if $\lambda>0$ set
\begin{align*}
m:= \min \left\{\inf_V g, \inf_{V\cap \partial U} u\right\}\, ,\qquad
M:= \max \left\{\sup_V g, \sup_{V\cap \partial U} u\right\}\, .
\end{align*}
If (b) were false, we could just define $\bar K = K$, $\bar u = u$ on $\Omega\setminus V$ and $\bar u = \min \{M, \max \{m, u\}\}$ on $V$: the pair $(\bar u , \bar K)$ would contradict the minimality. Similarly, if $\lambda=0$ we conclude by comparison with
$\bar u = \min\{\sup_{V\cap \partial U} u , \max\{\inf_{V\cap \partial U} u, u\}\}$.
\end{proof}

\subsection{Lower and upper bounds} Next, we point out that it is rather trivial to come up with upper bounds for the full energy in a disk $B_r (x)$ using a second simple comparison argument\index{energy upper bound@energy upper bound}.

\begin{lemma}\label{l:upper-bound}
Assume $(u,K)$ is an absolute or restricted minimizer of $E_\lambda$ in $\Omega\subset \mathbb R^2$ or a generalized (resp generalized restricted) minimizer of $E_0$ (cf. Definition~\ref{d:generalized}). Let $B_r (x)\subset\subset \Omega$ and in case $(u,K)$ is a restricted minimizer assume in addition $\overline{B}_r (x)\cap K\neq \emptyset$. Then the estimate \eqref{e:upper bound} holds. 
\end{lemma}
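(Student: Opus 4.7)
The approach is to compare with an explicit competitor $(J,v)$ obtained by \emph{sealing off} the disk: I replace the part of $K$ inside $B_r(x)$ by the whole circle $\partial B_r(x)$ and zero out $u$ on $B_r(x)$, leaving everything unchanged outside. Concretely, after choosing $r'>r$ with $\overline{B}_{r'}(x)\subset \Omega$ and setting $U:=B_{r'}(x)$, I would take
\[
J := (K\setminus B_r(x)) \cup \partial B_r(x), \qquad v := u \ \text{on}\ \Omega\setminus \overline{B}_r(x), \qquad v := 0 \ \text{on}\ B_r(x).
\]
The set $J$ is closed and $1$-rectifiable, $v$ lies in $W^{1,2}_{loc}(\Omega\setminus J)$ because on each connected piece of its domain it equals either $u$ or the constant $0$, and both $\overline{J\Delta K}$ and $\supp(u-v)$ lie in $\overline{B}_r(x)\subset\subset U$. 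Hence $(J,v)$ is admissible for the absolute notion of minimality.

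For the restricted case I then need to verify that the number of connected components of $J$ does not exceed that of $K$. A component $C$ of $K$ disjoint from $\overline{B}_r(x)$ is preserved verbatim. Any other component either lies entirely in $B_r(x)$ and disappears in $J$, or meets $\partial B_r(x)$; in the latter situation every fragment of $C\setminus B_r(x)$ retains a point of $\partial B_r(x)$ --- here it is essential that I removed the \emph{open} ball, so those boundary points are not deleted --- and is therefore absorbed into the single connected component of $J$ containing the full circle. Thus at most one "new" component, namely the one built around $\partial B_r(x)$, is created; the hypothesis $\overline{B}_r(x)\cap K\neq\emptyset$ guarantees that at least one original component of $K$ is absorbed to pay for it, so the count does not rise.

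The lemma itself now follows from the minimality inequality on $U$. The two pairs agree on $U\setminus \overline{B}_r(x)$; cancelling those contributions, using that $\nabla v\equiv 0$ and $|v-g|\leq \|g\|_\infty$ on $B_r(x)$, that $\mathcal{H}^1(\partial B_r(x))=2\pi r$, and inclusion--exclusion to compute $\mathcal{H}^1(J\cap U)$, I obtain
\[
\int_{B_r(x)\setminus K}|\nabla u|^2 + \mathcal{H}^1(K\cap B_r(x)) + \mathcal{H}^1(K\cap \partial B_r(x)) + \lambda\int_{B_r(x)}|u-g|^2 \leq 2\pi r + \lambda\pi r^2 \|g\|_\infty^2,
\]
and dropping the nonnegative term $\mathcal{H}^1(K\cap \partial B_r(x))$ delivers \eqref{e:upper bound}. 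For generalized (resp.\ generalized restricted) minimizers of $E_0$ the same competitor works once the definition is in place; I would defer the admissibility verification in that setting to the appendix, in line with the authors' general policy. The only genuinely delicate step is the component count in the restricted case, so that is where I would concentrate care; everything else is direct comparison.
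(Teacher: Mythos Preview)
Your proposal is correct and is essentially the same argument as the paper's: both use the identical competitor $J=(K\setminus B_r(x))\cup\partial B_r(x)$ with $v\equiv 0$ on the disk and $v=u$ outside. You supply considerably more detail than the paper does---in particular the careful connected-component count in the restricted case (where the hypothesis $\overline{B}_r(x)\cap K\neq\emptyset$ is used) and the explicit inclusion--exclusion bookkeeping---but the strategy is identical.
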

\begin{proof}
Compare the energy of $(u,K)$ to the competitor $(v,J)$ defined by setting
$J= \partial B_r (x) \cup (K\setminus B_r (x))$, $v=0$ on $B_r (x)$ and $v= u$ on $\Omega\setminus (\overline{B_r (x)}\cup K)$. See Figure~\ref{f:upper-bound}.
\end{proof}

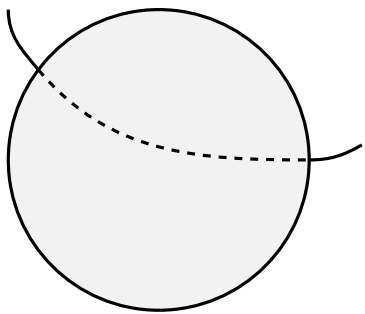
\begin{figure}
\begin{tikzpicture}
\fill[gray!10] (0,0) circle [radius =2];
\draw[very thick] (0,0) circle [radius = 2];
\draw[very thick] (-2,2) to [out = 270, in=130] (-8/5, 6/5);
\draw[very thick, dashed] (-8/5, 6/5) to [out = 310, in = 180] (2, 0);
\draw[very thick] (2,0) to [out = 0, in = 210] (2.7, 0.2);
\end{tikzpicture}
\caption{The competitor $(v,J)$ in Lemma~\ref{l:upper-bound}: we remove the dashed part of the set $K$, we add the circle $\partial B_r (x)$, we set $v=0$ in the shaded disk and we keep $v=u$ outside of it.\label{f:upper-bound}}
\end{figure}

A much more interesting and nontrivial fact is that it is possible to bound  uniformly from below on any disk which is centered at a point of $K$ the ratio of the length of $K$ and the radius of the disk itself. This fundamental discovery, which is at the foundation of all the regularity theory, is due to De~Giorgi, Carriero and Leaci \cite{DGCL89} (and an appropriate generalization appropriately holds in any dimension). Since then different proofs have appeared in the literature (see \cite{CL90,DME92,david1996,Siaudeau,DF,BL}). 
However, the statement below, which is valid across different formulations and for restricted minimizers independently of the number of the connected components of $K$, seems to be new and we include its proof in the appendix.\index{density lower bound@density lower bound} \index{lower bound, density@lower bound, density}

\begin{theorem}\label{t:dlb}
There exists a geometric constant $\epsilon>0$ with the following property.
\begin{itemize}
 \item[(a)] If $(u,K)$ is a (absolute, restricted, generalized, or generalized restricted) minimizer of $E_0$ on $\Omega$, 
 then
\begin{equation}\label{e:dlbE}
\mathcal{H}^1(K\cap B_\rho(x))\geq\epsilon\rho\qquad
\forall x \in K,\, \forall \rho\in(0,\mathrm{dist}(x,\partial\Omega))\, .
\end{equation}
\item[(b)] If $(u,K)$ is a (absolute or restricted) minimizer of $E_\lambda$ on $\Omega$, then 
\begin{equation}\label{e:dlbEg}
\mathcal{H}^1(K\cap B_\rho(x))\geq\epsilon\rho\qquad
\forall x\in K,\, \forall \rho\in(0,\min\{1,\mathrm{dist}(x,\partial\Omega)\})\, .
\end{equation}
\end{itemize}
\end{theorem}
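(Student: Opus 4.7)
I would prove the theorem by contradiction via blow-up and compactness, along the lines of De Giorgi--Carriero--Leaci. The high-level idea: if the density lower bound fails, a suitable rescaling produces normalized minimizers on $B_1$ whose jump sets, in the limit, would be forced to simultaneously have zero $\mathcal{H}^1$-measure (by lower semicontinuity combined with normalization) and contain the origin (by Hausdorff convergence of closed sets all containing $0$). This contradiction is leveraged purely through the compactness theorem for minimizers, with no harmonic-replacement computation needed at this stage.

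\textbf{Setup.} Suppose the conclusion fails. Then I may find a sequence of normalized minimizers $(K_n,u_n)$ in the relevant class on domains $\Omega_n$, points $x_n\in K_n$, and radii $\rho_n\in(0,\min\{1,\dist(x_n,\partial\Omega_n)\})$ with $\mathcal{H}^1(K_n\cap B_{\rho_n}(x_n))/\rho_n\to 0$. I rescale via
\[
\tilde K_n:=\tfrac{K_n-x_n}{\rho_n},\qquad \tilde u_n(y):=\rho_n^{-1/2}u_n(x_n+\rho_n y),\qquad \tilde g_n(y):=\rho_n^{-1/2}g(x_n+\rho_n y),
\]
so that a direct scaling computation shows $(\tilde K_n,\tilde u_n)$ is a normalized minimizer in the same class of $E_{\lambda\rho_n^2}$ on $B_1$ with datum $\tilde g_n$. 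Although the $L^\infty$ bound on $\tilde g_n$ may deteriorate, the \emph{combined} quantity $\lambda\rho_n^2\|\tilde g_n\|_\infty^2=\lambda\rho_n\|g\|_\infty^2\leq M_0^2$ stays uniformly bounded and $\lambda\rho_n^2\leq 1$ (using $\rho_n\leq 1$ and $\lambda\leq 1$), so the analogue of (\ref{e:g-and-lambda}) is effectively preserved. By Lemma \ref{l:upper-bound}, $\int_{B_1}|\nabla \tilde u_n|^2+\mathcal{H}^1(\tilde K_n\cap B_1)\leq 2\pi+\pi M_0^2$ uniformly. Crucially, $0\in\tilde K_n$ for every $n$ and $\mathcal{H}^1(\tilde K_n\cap B_1)\to 0$ by construction.

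\textbf{Compactness and contradiction.} I would invoke the SBV-type compactness theorem for minimizers, deferred to the appendix, which provides, after subtracting locally constant functions from each $\tilde u_n$ on the connected components of $B_1\setminus\tilde K_n$ and passing to a subsequence: (a) Hausdorff convergence on compact subsets of $B_1$ of the jump sets $\tilde K_n\to K_\infty$; (b) $L^1_{loc}$ convergence of the normalized $\tilde u_n$'s to some $u_\infty$; and (c) that the limit pair $(K_\infty,u_\infty)$ is a normalized (generalized) minimizer on $B_1$ of a limiting functional $E_{\lambda_\infty}$ with $\lambda_\infty\in[0,1]$ a subsequential limit of $\lambda\rho_n^2$, and some limiting datum $g_\infty$. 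Lower semicontinuity of $\mathcal{H}^1$ along this convergence yields
\[
\mathcal{H}^1(K_\infty\cap B_1)\leq \liminf_n \mathcal{H}^1(\tilde K_n\cap B_1)=0,
\]
and Definition \ref{d:normalized} then forces $K_\infty\cap B_1=\emptyset$. On the other hand, each $\tilde K_n$ contains $0$, so Hausdorff convergence of these closed sets guarantees $0\in K_\infty$, giving the contradiction $\{0\}\subseteq K_\infty\cap B_1=\emptyset$.

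\textbf{Main obstacle.} The argument rests entirely on the compactness theorem for minimizers, which is the technical heart and will be delicate to set up correctly. One must prove: (i) Hausdorff convergence on compact subsets of $B_1$ of the jump sets up to subsequences; (ii) passage of the minimality inequality to the limit, yielding a \emph{normalized} minimizer in the corresponding generalized class, and in particular, for the restricted classes, an appropriate control on the number of connected components of $\tilde K_n$ in the limit (to keep the competitor class meaningful); and, critically, (iii) that none of the compactness arguments implicitly relies on the density lower bound one is currently proving, thereby avoiding circularity. These points are exactly what the monograph's appendix has to establish, and the whole force of Theorem \ref{t:dlb} emerges from them.
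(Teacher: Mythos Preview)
Your approach has a genuine circularity that you correctly flag in point (iii) but do not resolve. The compactness theorem for minimizers (Theorem~\ref{t:minimizers compactness}) that you invoke uses the density lower bound in its proof: Step~1 of that proof relies on Theorem~\ref{t:dlb} to show that the Hausdorff limit $K$ satisfies $\mathcal{H}^1(K\cap\overline{B}_r(x))\leq C\liminf_j\mathcal{H}^1(K_j\cap B_{2r}(x))$, and Steps~3--4 use it again to upgrade this to the sharp equality in item~(i). Without Theorem~\ref{t:dlb} already in hand, you cannot conclude that the Hausdorff limit $K_\infty$ of the $\tilde K_n$ is the discontinuity set of a \emph{normalized} minimizer.

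More concretely, your argument conflates two limiting objects. The Hausdorff limit of $\tilde K_n$ does contain $0$, but lower semicontinuity of $\mathcal{H}^1$ fails under Hausdorff convergence of general closed sets (e.g.\ finite point sets can converge to a full circle). What \emph{is} lower semicontinuous is $\mathcal{H}^1(S_{u_\infty})$ under SBV convergence, via Ambrosio's theorem; but $S_{u_\infty}$ need not coincide with the Hausdorff limit of $\tilde K_n$. Bridging these two sets---showing that every Hausdorff-limit point of $\tilde K_n$ lies in $\overline{S_{u_\infty}}$---is exactly what the density lower bound for the approximating sequence provides. So the contradiction $0\in K_\infty$ versus $\mathcal{H}^1(K_\infty)=0$ does not close without assuming what you want to prove.

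The paper avoids this by a different mechanism. It first proves a decay lemma (Lemma~\ref{l:decay}): if $\mathcal{H}^1(K\cap B_\rho(x))\leq\varepsilon\rho$ then $E_0(K,u,B_{\tau\rho}(x))\leq\tau^{3/2}E_0(K,u,B_\rho(x))$ (with a fidelity correction when $\lambda>0$). This lemma is proved by a contradiction/compactness argument that uses only Ambrosio's general SBV compactness theorem, comparing the rescaled minimizers against the harmonic extension of their boundary values---no density lower bound is assumed. Then Theorem~\ref{t:dlb} follows by iterating the decay: if the bound fails at $x$, the energy at all smaller scales decays geometrically, forcing $\lim_{r\to0}(2r)^{-1}\mathcal{H}^1(K\cap B_r(x))=0$, which contradicts the fact that the rectifiable set $K$ has density $1$ at $\mathcal{H}^1$-a.e.\ point (and such points are dense in $K$). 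The logical order is thus: SBV compactness $\Rightarrow$ decay lemma $\Rightarrow$ density lower bound $\Rightarrow$ compactness for minimizers.
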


We remark that the constant $\epsilon$ in \eqref{e:dlbEg} depends in fact on $\lambda$ and $\|g\|_\infty$, a dependence that we can ignore thanks to \eqref{e:g-and-lambda}.
Note that the two bounds \eqref{e:upper bound}-\eqref{e:dlbE} (resp. \eqref{e:dlbEg}) 
taken together describe a property of $K$ which is often termed {\em Ahlfors-David regularity} in the literature\index{Ahlfors-David regular@Ahlfors-David regular}, namely the property that for some exponent $\alpha$ (in our case $\alpha=1$) and some constant $C>0$, the closed set $K$ satisfies
\begin{equation}\label{e:Ahlfors-David regularity}
C^{-1} r^\alpha \leq \mathcal{H}^\alpha (K\cap B_r (x)) \leq C r^\alpha
\end{equation}
for all $x\in K$ and for all positive radii $r< \min \{1, \dist (x, \partial \Omega)\}$. 

\subsection{Normalization and equivalence between classical and SBV formulations}
The density lower bound in Theorem~\ref{t:dlb} and Lemma~\ref{l:maximum}, together with an elementary fact about SBV functions observed by De~Giorgi, Carriero and Leaci, gives the following corollary. 
We do not dwell here on the definition of BV and SBV functions and on the definition of the set $S_u$, but we refer the reader to the textbook \cite{AFP00}.

\begin{corollary}\label{c:normalized}
Let $(u,K)$ be a (absolute, restricted, generalized, or generalized restricted) minimizer of $E_\lambda$. 
Then $u\in SBV(\Omega)$ with  $\overline{S}_u\subseteq K$. If $u$ is an absolute minimizer, then we have in addition $K\triangle\overline{S}_u=\emptyset$.

Moreover, if $\mathcal{H}^1 (K\cap U) = 0$ for some open domain $U\subset\subset\Omega$,
then $u$ extends to a function $u\in C^{1,\alpha}_{{\rm loc}} \cap W^{2,p}_{{\rm loc}}(U)$ for every $p<\infty$ and every $\alpha\in (0,1)$ which solves 
$\Delta u = \lambda (u-g)$.
\end{corollary}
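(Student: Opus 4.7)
\smallskip

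\noindent\textbf{Plan.} The strategy combines the quantitative bounds already available (Lemmas~\ref{l:maximum}--\ref{l:upper-bound} and Theorem~\ref{t:dlb}) with the standard machinery of $SBV$, a De Giorgi--Carriero--Leaci style elimination argument, and classical elliptic regularity. I would first collect on any bounded open $U\subset\subset\Omega$: the $L^\infty$ bound on $u$ coming from the maximum principle Lemma~\ref{l:maximum} (yielding $\|u\|_{L^\infty(V)}\leq\|g\|_\infty$ on components of $U\setminus K$ not touching $\partial U$, and control by the boundary trace on the remaining ones), and the simultaneous bound $\int_{U\setminus K}|\nabla u|^2+\mathcal{H}^1(K\cap U)\le C(U)$ obtained from the upper bound \eqref{e:upper bound} applied to a finite cover of $\overline U$ by small disks. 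Since $u$ is locally bounded, $\nabla u$ locally in $L^2$, and $K$ closed with locally finite $\mathcal{H}^1$-measure, the standard $SBV$ criterion (cf.\ \cite{AFP00}) delivers $u\in SBV_{\text{loc}}(\Omega)$ with approximate jump set $S_u\subseteq K$ up to an $\mathcal{H}^1$-null set, and the closedness of $K$ gives $\overline{S}_u\subseteq K$.

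\smallskip

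For the absolute-minimizer claim $\mathcal{H}^1(K\setminus\overline{S}_u)=0$, I would use that $K\setminus\overline{S}_u$ is relatively open in $K$, so it suffices to show $\mathcal{H}^1(K\cap B)=0$ for every disk $B\subset\subset\Omega$ with $\overline B\cap\overline{S}_u=\emptyset$. On such a $B$ the function $u$ is $SBV$ with $\mathcal{H}^1(S_u\cap B)=0$, hence its jump part vanishes and the distributional gradient coincides with the approximate gradient, giving $u\in W^{1,2}(B)$. The pair $(J,v)=(K\setminus B,u)$ is then admissible on a slightly larger disk $B'\subset\subset\Omega$ containing $\overline B$ and still disjoint from $\overline{S}_u$; inserted in \eqref{e:minimizer}, the Dirichlet and fidelity contributions cancel and the inequality collapses to $\mathcal{H}^1(K\cap B)\le 0$.

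\smallskip

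For the extension statement, assume $\mathcal{H}^1(K\cap U)=0$ with $U\subset\subset\Omega$. If some $x\in K\cap U$ existed, the density lower bound of Theorem~\ref{t:dlb} applied to a small $B_\rho(x)\subset U$ would give $\mathcal{H}^1(K\cap U)\geq \epsilon\rho>0$, a contradiction; therefore $K\cap U=\emptyset$ and $u$ is already defined on all of $U$. The outer variation identity \eqref{e:outer}, tested with $\varphi\in W^{1,2}_0(U)$, is precisely the weak form of
\[
-\Delta u=\lambda(g-u)\qquad\text{in }U,
\]
whose right-hand side is in $L^\infty(U)$. Calder\'on--Zygmund estimates then yield $u\in W^{2,p}_{\text{loc}}(U)$ for every finite $p$, and the Sobolev embedding $W^{2,p}\hookrightarrow C^{1,\alpha}$ with $\alpha=1-2/p$ gives the claimed $C^{1,\alpha}_{\text{loc}}$ regularity for every $\alpha\in(0,1)$.

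\smallskip

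\noindent\textbf{Main obstacle.} The conceptual heart is the $SBV$ input used in the elimination step: that an $SBV$ function on a disk with $\mathcal{H}^1$-negligible jump set automatically lies in $W^{1,2}$. This is the elementary observation attributed in the statement to De Giorgi, Carriero, and Leaci, and it is what legitimizes the competitor $(K\setminus B,u)$ in the restricted/generalized regimes as well, since removing a piece of $K$ can only decrease the number of connected components. The rest (basic estimates, density bound, elliptic regularity) is then a straightforward chain of references, which is why the authors defer the full proof to the appendix.
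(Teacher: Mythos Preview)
Your argument is correct and in places cleaner than the paper's. For the extension statement you invoke the density lower bound (Theorem~\ref{t:dlb}) to force $K\cap U=\emptyset$ outright, which is more direct than the paper's route via \cite[Proposition~4.4]{DGCL89}. For the absolute-minimizer claim $\mathcal{H}^1(K\setminus\overline{S}_u)=0$ you use the elementary competitor $(K\setminus B,u)$ on a disk $B$ disjoint from $\overline{S}_u$, whereas the paper goes through a longer chain: it fixes a ball, picks an $SBV$ minimizer $\tilde{u}$ of the relaxed functional $\tilde{E}_\lambda$ with the same boundary data, and sandwiches the energies using Corollary~\ref{c:essential closure} for $\tilde{u}$. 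Your approach avoids this auxiliary minimization entirely; the paper's gains nothing extra here, so your version is simply shorter.

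One small correction: your parenthetical that removing $K\cap B$ ``can only decrease the number of connected components'' is false in general (cutting an arc through the middle of $B$ increases the count), so the competitor argument does \emph{not} extend to restricted minimizers. This is harmless for the proof since the statement only asserts $\mathcal{H}^1(K\setminus\overline{S}_u)=0$ for absolute minimizers, but the remark should be dropped.
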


{A proof of the corollary is given in Section \ref{s:SBV-equivalence} of the appendix, where the first part is used to conclude easily the second. However, it is possible to give the following elementary argument for the second part, without resorting to SBV functions. First of all, once we show that $u\in W^{1,2}_{loc} (U)$, $u$ is a local minimizer of $\int (|\nabla u|^2 +\lambda (u-g)^2)$ in $U$: $u$ will then be a weak solution of $\Delta u = \lambda (u-g)$ and recalling that $u$ is locally bounded, while $g$ is bounded, the regularity of $u$ is a consequence of the Schauder and Calderon-Zygmund estimates for the Laplace equation. Our goal is thus to prove that $\nabla u$ is the weak derivative of $u$ in $U$, namely that 
\[
\int u\, {\rm div}\, X = -\int X\cdot \nabla u \qquad \mbox{for every $X\in C^\infty_c (U, \mathbb R^2)$.}
\] 
Since $K\cap {\rm spt}\, (X)$ is compact, set $\tau:=\min \{\dist (y, \partial U) : y\in K\cap {\rm spt}\, (X)\} >0$. Fix now any $\varepsilon >0$ and choose a covering $B_{r_i} (x_i)$ of $K\cap {\rm spt}\, (X)$ with $x_i\in K$, $2r_i < \tau$ and $\sum_i r_i < \varepsilon$. By compactness we can assume that the cover is finite. For each $i$ we let $\varphi_i$ be a smooth cut-off function with $0\leq \varphi_i\leq 1$, $\varphi_i \equiv 1$ on $\mathbb R^2\setminus B_{2r_i} (x_i)$ and $\varphi_i\equiv 0$ on $B_{r_i} (x_i)$. We also require that $|\nabla \varphi_i|\leq C r_i^{-1}$, where the constant $C$ is just dimensional. We consider then 
$\psi_\varepsilon := \Pi_i \varphi_i$. Given that ${\rm spt}\,  (X\psi_\varepsilon) \subset \subset U\setminus K$ and that $u\in W^{1,2}_{loc} (U\setminus K)$, we have 
\[
\int \psi_\varepsilon X \cdot \nabla u = - \int u \psi_\varepsilon {\rm div}\, X - \int u\, \nabla \psi_\varepsilon \cdot X\, . 
\]
Observe that $\psi_\varepsilon$ converges monotonically to $1$ almost everywhere in $\mathbb R^2$ and thus we conclude 
\[
\lim_{\varepsilon\downarrow 0} \int \psi_\varepsilon (X \cdot \nabla u + u\, {\rm div}\, X) = \int (X \cdot \nabla u+ u\, {\rm div}\, X) \, .
\]
To prove our claim we therefore just need to show that $\int u \nabla \psi_\varepsilon \cdot X$ converges to $0$.
Recalling, however, that $u$ and $X$ are both bounded on compact subsets of $U$ and using the properties of $\psi_\varepsilon$ we conclude
\[
\left| \int u \nabla \psi_\varepsilon \cdot X \right| \leq C \sum_i \int_{B_{2r_i} (x_i)} |\nabla \varphi_i|
\leq C \sum_i r_i \leq C \varepsilon\, ,
\]
for a constant $C$ which is independent of $\varepsilon$.
}

\begin{remark}\label{r:pointwise}
Corollary~\ref{c:normalized} implies that the function $u$ and its gradient $\nabla u$ are pointwise defined on any open subset $U$ in which $\mathcal{H}^1 (K\cap U) =0$, since $u$ is continuous and continuously differentiable. This easily implies that any minimizer can be ``reduced'' in the sense of Definition~\ref{d:normalized}, as explained in the introduction. In the rest of these notes we will make a similar assumption whenever dealing with general critical points, i.e. we will assume that they are reduced and that they belong to $C^{1,\alpha}_{{\rm loc}} \cap W^{2,p}_{{\rm loc}} (\Omega\setminus K)$.
\end{remark}

\section{Blow-up of minimizers}

In this section we assume that $(u,K)$ is an {\em absolute} or a {\em restricted} minimizer of the energy functional $E_\param$ (and we recall that $g$ is bounded).
Observe that \eqref{e:upper bound} gives locally an apriori estimate on the energy of $u$. Fix a point $x\in K$ and a sequence of radii $r_j\downarrow 0$. The bound suggests to consider the rescaled functions\index{rescaling@rescaling}\index[simb]{aalK_r@$K_{x,r}$}\index[simb]{aalu_u_{x,r}@$u_{x,r}$}
\begin{align}
K_j &:= \left\{\frac{y-x}{r_j} : y\in K \right\}\label{e:riscalamenti K}\\
u_j (y) &:= r_j^{-\frac{1}{2}} u (x + r_j y)\,.
\label{e:riscalamenti u}
\end{align}
When $\param =0$ the pair $(u_j, K_j)$ is then still a minimizer of the functional $E_0$ if $(u,K)$ is. In case of $\param >0$, the density lower bound (cf. Theorem~\ref{t:dlb}) ensures that 
\[
\epsilon\, r\leq\int_{B_r (x)} |\nabla u|^2 + \mathcal{H}^1 (K\cap B_r (x))
\]
for every sufficiently small radius $r$, while the maximum principle gives
\[
\mathscr{F} (u, B_r (x), g) = \int_{B_r (x)} |u-g|^2 \leq 4 \pi\|g\|^2_{L^\infty(B_r)} r^2\, ,
\]
in particular the fidelity term $\mathscr{F}$ becomes negligible compared to $E_0$ at small scales.

It would be desirable to use now the upper bound on $E_\lambda$ in order to provide a suitable compactness result. Note however that $E_0$ controls the values of the function only up to an additive constant in each connected component of $\Omega\setminus K$, while when $\param>0$ the fidelity term does not help because it is a lower order perturbation. Thus the space of rescalings of minimizers is in general not (pre)compact in a classical sense: roughly speaking to find a meaningful compactification we must allow the limits of the rescalings to take infinite values, and in fact we need to distinguish between infinities of ``different size''. 

We next deal with a slightly more general situation in which we make the following assumptions.

\begin{ipotesi}\label{a:blow-up}
We assume that
\begin{itemize}
    \item[(i)] $\lambda_j$ is a sequence of numbers in $[0,1]$;
    \item[(ii)] $g_j$ is a sequence of bounded functions with $\|g_j\|_\infty \leq M_0$;
    \item[(iii)] $(u^j, K^j)$ is a sequence of absolute or restricted minimizers of 
    \[
    E_{\lambda_j} (\cdot, \cdot, U_j, g_j) = E_0 (\cdot,\cdot, U_j) + \lambda_j \mathscr{F} (\cdot, U_j, g_j)
    \]
    on a sequence of domains $U_j$;
    \item[(iv)] $\lim_{j\uparrow\infty} \lambda_j r_j = 0$ and $\limsup_j \|u^j\|_\infty < \infty $;
    \item[(v)] a certain open domain $U$ satisfies $U'\subset \frac{U_j-x_j}{r_j}$ for every $U'\subset U$ compact and for $j$ large enough (depending on $U'$).
\end{itemize}
We define the rescaled sets and functions
\begin{align*}
 K_j = K^j_{x_j, r_j} &:= \frac{K^j-x_j}{r_j}\\
 u_j(y) = u^j_{x_j, r_j} (y) &:= 
 \frac{u^j (r_j y + x_j)}{r_j^{\sfrac{1}{2}}}\, .
\end{align*}
Up to subsequences (and using standard arguments) we can further assume the following.
\begin{itemize}
    \item[(a)] $K_j\cap U$ converges locally in the Hausdorff distance\index{Hausdorff distance@Hausdorff distance} to a closed set $K\cap U$, namely, for every open set $W\subset \subset U$ and every $\varepsilon >0$, the following holds for every sufficiently large $j$:
    \begin{itemize}
    \item $K_j \cap W \subset \{x : {\rm dist}\, (x, K) < \varepsilon\}$,
    \item $K\cap W \subset \{x: {\rm dist}\, (x, K_j)<\varepsilon\}$.
    \end{itemize}
    \item[(b)] We enumerate the connected components $\{\Omega_k\}_{k\in \mathscr{I}}$ of $U\setminus K$ and for each we select a point $z_k\in \Omega_k$. The real numbers 
\begin{equation}\label{e:rinormalizzazione-1}
\{u_j (z_k) - u_j (z_l)\}_{j\in \mathbb N} 
\end{equation}
converge to some elements $p_{kl}$\index[simb]{aalpkl@$p_{kl}$} of the extended real line $[-\infty, \infty]$.
\item[(c)] The functions
\begin{equation}\label{e:rinormalizzazione-2}
v^k_j := u_j - u_j (z_k)
\end{equation}
converge locally in $W^{1,2}_{{\rm loc}} (\Omega_k)$ to an harmonic function $v^k$ (this requires classical estimates for solution of the Laplace equation, for the details we refer the reader to the appendix).
\end{itemize}
We introduce further the function $v$ on $U\setminus K$ by setting $v= v^k$ on each $\Omega_k$.
\end{ipotesi}

{When $x_j=x$, $r_j\downarrow 0$, $\lambda_j=\lambda$}, $u^j\equiv u^0$ and
$K^j \equiv K^0$, the above sequence will be called a {\em blow-up sequence}\index{blow-up sequence@blow-up sequence}. Observe in addition that, because of (i) and (ii), (iv) is always satisfied for a blow-up sequence.
It is elementary to see that Theorem~\ref{t:dlb} and Lemma~\ref{l:upper-bound} imply that $K$ has locally finite $\mathcal{H}^1$ measure (cf. the proof in the appendix). The rectifiability of $K$ is instead more delicate. Leaving that aside for the moment, we introduce the following terminology.
\index[simb]{aaluKpk@$(v,K,\{p_{kl}\})$}

\begin{definition}\label{d:competitors}
The triple $(v,K,\{p_{kl}\})$ as above will be called a {\em limit point} of (an appropriate subsequence of) $(u_j, K_j)$ in the set $U$. In the case of a blow-up sequence, such triple will be called a {\em blow-up of $(u^0, K^0)$ at $x$}. \index{blow-up@blow-up} A pair $(w,J)$ will be called a ``topological competitor'' \index{topological comptetitor@topological competitor}\index{competitor, topological@competitor, topological} for the pair $(v,K)$ (cf. \cite{DavidBook}) if it satisfies the following assumptions:
\begin{itemize}
    \item[(i)] $(w,J)$ coincides with $(v,K)$ outside of an open set $O\subset\subset U$;
    \item[(ii)] if $x,y\in U \setminus (O\cup K)$ belong to distinct connected components of $U\setminus K$, then they belong to distinct connected components of $U\setminus J$ (see Figure~\ref{figura-1} for a simple illustration of the latter condition). 
\end{itemize}
\end{definition}

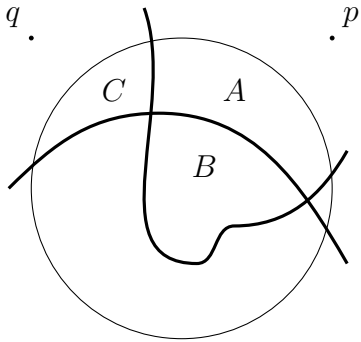
\begin{figure}
\begin{tikzpicture}
\draw (0,0) circle [radius = 2];
\draw[very thick] (-2.3,0) to [out=45, in=180] (-0.3,1) to [out=0, in=120] (2.2, -1);
\draw[very thick] (-0.5,2.4) to [out=290, in=180] (0.2,-1) to [out=0, in=180] (0.7,-0.5) to [out=0, in=240] (2.2,0.5);
\node at (0.7,1.3) {$A$};
\node at (0.3,0.3) {$B$};
\node at (-0.9,1.3) {$C$};
\draw[fill] (2,2) circle [radius=0.025];
\node[above right] at (2,2) {$p$}; 
\node[above left] at (-2,2) {$q$};
\draw[fill] (-2,2) circle [radius=0.025];
\end{tikzpicture}
\caption{A visual explanation of the admissibility in point (ii) of Definition~\ref{d:competitors}. The set $K$ is given by the thick lines, while the open set $O$ is the interior of the circle. In $O$ we are allowed to change $K$ to a new set $J$ under the condition that any two points outside $O$ which belong to distinct components of $U\setminus K$ will still belong to distinct connected components of $U\setminus J$. For instance, we cannot remove from $K$ any arc which lies between the regions $A$ and $C$, since such operation would ``connect'' the points $p$ and $q$. However we are allowed to remove from $K$ an arc which lies between $A$ and $B$.
The picture is also a good illustration of Lemma~\ref{l:ciao_componenti}. As long as $J\cap O$ ``separates'' the four arcs which are the connected components of $\partial O\setminus K$ in $O$, the pair $(w, J)$ is certainly a topological competitor for $(v,K)$.}\label{figura-1}
\end{figure}

It turns out that this is indeed a good candidate for a suitable variational problem in the limit.
The proof of the following theorem will be given in the appendix for the reader's convenience.

\begin{theorem}\label{t:minimizers compactness}\label{T:MINIMIZERS COMPACTNESS}
Let $(u_j, K_j)$ be a sequence as in Assumption~\ref{a:blow-up} and let $(v, K, \{p_{kl}\})$ be as in Definition~\ref{d:competitors}. Then $K$ is rectifiable and has locally finite $\mathcal{H}^1$ measure, while $v\in W^{1,2} (U'\setminus K)$ for every $U'\subset\subset U$. Moreover the following holds.
\begin{itemize}
\item[(i)] For every $O\subset\subset U$ such that $\mathcal{H}^1 (\partial O \cap K) = 0$, we have
\begin{align*}
&\lim_{j\to\infty} \mathcal{H}^1 (K_j\cap \overline{O}) = \mathcal{H}^1 (K\cap \overline{O})\\
&\lim_{j\to\infty} \int_{O\setminus K_j} |\nabla u_j|^2 = \int_{O\setminus K} |\nabla v|^2\, .
\end{align*}
For every continuous compactly supported function $\varphi:\mathbb P^1 \mathbb R\times U \to \mathbb R$ and for every $O$ bounded open set with $\mathcal{H}^1 (O\cap K)>0$ we have
\begin{equation}\label{e:varifold-convergence}
\lim_{j\to\infty} \int_{K_j\cap O} \varphi (T_x K_j, x) d\mathcal{H}^1 (x) = \int_{K\cap O} \varphi (T_x K, x)\, d\mathcal{H}^1 (x)\, .
\end{equation} 
\item[(ii)] In the case of absolute minimizers, if $(w, J)$ is a topological competitor as in Definition~\ref{d:competitors}, then
$E_0 (w, J) \geq E_0 (v, K)$. In the case of restricted minimizers the same holds if $J$ does not increase the number of connected components of $K$.
\item[(iii)] Let $\mathscr{A}\subseteq\mathscr{I}$\index[simb]{aalAscr@$\mathscr{A}$} be a set of indices with the property that $-\infty < p_{kl} < \infty$ for every $k,l\in \mathscr{A}$. Define
\begin{itemize}
\item $k_0:= \min \mathscr{A}$;
\item $U_\mathscr{A}:= \cup_{k\in \mathscr{A}} \Omega_k$ and $\Omega_\mathscr{A}:=\mathrm{int}\big(\overline{U_\mathscr{A}}\big)$;
\item $u_\mathscr{A}:= v^{k_0}$ on $\Omega_{k_0}$ and $u_\mathscr{A}:=v^k + p_{kk_0}$ on $\Omega_k$ for any $k\in \mathscr{A}$.
\end{itemize}
Then $(u_{\mathscr{A}}, K)$ is an absolute (resp. restricted) minimizer of $E_0$ on the set $\Omega_\mathscr{A}$.
\end{itemize}
\end{theorem}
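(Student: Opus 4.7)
The plan is to exploit the absolute (or restricted) minimality of each $(K_j, u_j)$ together with the uniform Ahlfors regularity coming from Theorem \ref{t:dlb} and Lemma \ref{l:upper-bound}, and to build, from any admissible competitor for the limit, a ``recovery'' sequence of competitors for $(K_j, u_j)$.

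First I would establish the a priori control on $K$. The density lower bound \eqref{e:dlbE} (or the rescaled version of \eqref{e:dlbEg}, whose effective parameter $\lambda_j r_j$ tends to $0$ by Assumption \ref{a:blow-up}(iv)) together with the upper bound \eqref{e:upper bound} gives uniform Ahlfors regularity of $K_j$ at every scale $\rho \leq 1$. Upper semicontinuity of $\mathcal{H}^1$ under Hausdorff convergence within this uniform Ahlfors class then yields the local finiteness of $\mathcal{H}^1(K)$. For rectifiability of $K$ I would invoke $SBV$ compactness: after renormalizing on each connected component (as in Assumption \ref{a:blow-up}(b)), the functions $u_j$ belong to $SBV_{loc}(U)$ with uniformly bounded Dirichlet energy and jump length, so a subsequence converges to some $v \in SBV_{loc}$ whose jump set is rectifiable; combining Corollary \ref{c:normalized} with the density lower bound identifies this jump set $\mathcal{H}^1$-a.e.\ with $K$.

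For claim (i), the convergence $\int_O |\nabla u_j|^2 \to \int_O |\nabla v|^2$ on open sets $O$ with $\mathcal{H}^1(\partial O \cap K) = 0$ follows from strong $W^{1,2}_{loc}$-convergence of $v_j^k$ on each component $\Omega_k$, which is a consequence of standard elliptic estimates applied to $\Delta v_j^k = \lambda_j r_j(\text{bounded})$ whose right-hand side tends to zero by (iv). The convergence of $\mathcal{H}^1$-length is more subtle: Golab's semicontinuity theorem provides the $\liminf$ direction, while the reverse inequality follows by comparing each $(K_j, u_j)$ to a ``transported limit'' competitor that agrees with $(K, v)$ inside $O$ and interpolates in a thin buffer. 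The varifold convergence \eqref{e:varifold-convergence} is then a standard consequence of rectifiability plus the convergence of length measures, using blow-ups at $\mathcal{H}^1$-almost every tangent point.

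The heart of the argument, and the step I expect to be the main obstacle, is (ii). Given an admissible competitor $(J, w)$ differing from $(K, v)$ only in $O \subset\subset U$, I would choose $O' \supset\supset O$ with $\mathcal{H}^1(\partial O' \cap K) = 0$ and construct $(J_j, w_j)$ by setting $(J_j, w_j) = (K_j, u_j)$ outside $O'$, letting $J_j \cap O = J \cap O$ and $w_j = w$ (shifted by the additive constants $u_j(z_k) - u_j(z_{k_0})$ on each component) inside $O$, and interpolating $w_j$ between $u_j$ and the shifted $w$ in the buffer annulus $O' \setminus \overline{O}$ component by component, adding $\partial O'$ to $J_j$ if necessary to avoid merging components. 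The admissibility condition Definition \ref{d:competitors}(ii) is exactly what guarantees that the additive shifts extend unambiguously from the components of $U \setminus K$ to those of $U \setminus J$. Minimality of $(K_j, u_j)$ together with (iv) yields
\[
E_0(K_j, u_j, O') \leq E_0(J_j, w_j, O') + o(1),
\]
where the $o(1)$ absorbs both the fidelity correction and the buffer cost (controlled because $u_j \to v$ strongly in $W^{1,2}$ on the buffer and because $\mathcal{H}^1(\partial O' \cap K) = 0$). Passing to the limit by means of (i) and then letting $O' \downarrow O$ produces $E_0(K, v, O) \leq E_0(J, w, O)$. In the restricted case, the construction does not increase the number of connected components as long as $J$ does not. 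Finally, (iii) is a direct consequence of (ii): since all $p_{kl}$ with $k, l \in \mathscr{A}$ are finite, $u_\mathscr{A}$ is well defined on $\Omega_\mathscr{A} \setminus K$, and any admissible competitor for $(K, u_\mathscr{A})$ on $\Omega_\mathscr{A}$ extends trivially to an admissible competitor for $(K, v)$ on $U$ by leaving the remaining components untouched, whence (ii) yields the desired minimality.
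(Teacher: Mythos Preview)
Your overall architecture is sound and matches the paper's, but there is a genuine gap in your treatment of the length lower semicontinuity. You write that ``Gol\k{a}b's semicontinuity theorem provides the $\liminf$ direction'' for $\mathcal{H}^1(K_j\cap O)$. Gol\k{a}b's theorem, however, requires the approximating sets to have a uniformly bounded number of connected components; for \emph{absolute} minimizers there is no such bound on $K_j$, and uniform Ahlfors regularity alone does not yield lower semicontinuity of $\mathcal{H}^1$ under Hausdorff convergence with the sharp constant $1$ (only up to a multiplicative constant, as in the paper's Step~1). The paper therefore argues differently: assuming the density of the limit measure $\mu=\lim_j \mathcal{H}^1\res K_j$ were $<1$ at some rectifiable point of $K$, it blows up there by a diagonal sequence so that $K$ becomes a straight segment, then uses a slicing argument to find a vertical line on which the cardinality of $K_j$ is at most two and the trace of $u_j$ has bounded oscillation; this produces a uniform $L^\infty$ bound permitting $SBV$ compactness, and Ambrosio's lower semicontinuity then forces the length density to be at least $1$. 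For restricted minimizers the paper does fall back on Gol\k{a}b when the number of components stays bounded, but otherwise observes that the limit is an absolute minimizer and runs the same argument.

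A related issue is your claim that $\int_O |\nabla u_j|^2 \to \int_O |\nabla v|^2$ follows from strong $W^{1,2}_{loc}$ convergence on each component $\Omega_k$. This only gives convergence on sets compactly contained in $U\setminus K$; energy could a priori concentrate on $K$. The paper resolves this not in isolation but by proving a $\limsup$ bound for the \emph{total} energy via an explicit competitor (a Lipschitz squeezing map that collapses a thin strip around the limiting line onto a segment), which combined with the separate $\liminf$ for length and for Dirichlet energy forces both to converge. Your ``transported limit competitor'' idea points in the right direction, but you should be aware that it must control the sum, and that the length $\liminf$ has to be established independently by the argument above before you can split the two contributions.
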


\index{generalized minimizer@generalized minimizer}\index{global generalized minimizer@global generalized minimizer}\index{generalized restricted minimizer@generalized restricted minimizer}\index{global generalized restricted minimizer@global generalized restricted minimizer}

\begin{definition}\label{d:generalized}
A triple $(v, K, \{p_{kl}\})$ on $U$ as in the theorem above will be called a ``generalized minimizer'' resp. ``generalized restricted minimizer'' if it is the limit of absolute resp. restricted minimizers \index{generalized minimizer@generalized minimizer}\index{generalized minimizer, global@generalized minimizer, global}\index{global minimizer, generalized@global minimizer, generalized}. If $U=\mathbb R^2$, then the triple will be called ``global generalized minimizer'', resp. ``global generalized restricted minimizer''. Finally, an ``elementary global generalized minimizer'' is a global generalized minimizer whose Dirichlet energy vanishes identically on compact sets. \index{global minimizer, generalized@global minimizer, generalized}
\end{definition}

\begin{remark}\label{r:generalized-vs-topological}
It is worth emphasizing that the generalized minimizers as defined above are not the same as the ``topological minimizers''\index{topological minimizer@topological minimizer} defined by David in \cite{DavidBook}. The energy of the latter can only be compared to that of topological competitors, while the sense of point (iii) in Theorem~\ref{t:minimizers compactness} is that we have some more competitors which are not topological: in particular if $\Omega_k$ and $\Omega_l$ are two connected components of $\mathbb R^2\setminus K$ with the property that $p_{kl}\in \mathbb R$ sharing a common arc $\gamma\subset K$, we are allowed to use a competitor which removes a portion of $\gamma$, thus connecting the two regions. We would not be allowed to do this if instead $p_{kl}\in \{-\infty, \infty\}$. In particular, the role of the constants $p_{kl}$ is ultimately to tell us whether across a particular interface dividing two connected components we have an ``infinite jump''.
\end{remark}

In a variety of situations, it will be useful to have a quick way to identify whether a competitor $(w,J)$ satisfies the requirement (ii) of Definition~\ref{d:competitors}. The following simple remark will be widely used in that sense.

\begin{lemma}\label{l:ciao_componenti}
Let $(v,K)$ and $(w,J)$ be pairs in $\Omega$ which coincide outside of an open disk $B_r (x)\subset\subset \Omega$. Let $\{\gamma_i\}_i$ be the connected components of $\partial B_r (x)\cap K = \partial B_r (x) \cap J$. If no distinct pairs $\gamma_i$ and $\gamma_j$ are contained in the closure of the same connected component of $B_r (x)\setminus J$, then the pair $(w,J)$ is a topological competitor in the sense of Definition~\ref{d:competitors}(ii).
\end{lemma}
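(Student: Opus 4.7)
The plan is to verify condition (ii) of Definition \ref{d:competitors} with $O = B_r(x)$ by contradiction: if $p,q \in U \setminus (O \cup K)$ lie in the same component of $U \setminus J$, I want to exhibit a path in $U \setminus K$ from $p$ to $q$. I would start from a path $\sigma : [0,1] \to U \setminus J$ joining them and modify it by replacing each ``excursion'' into $O$ by an arc on $\partial O \setminus K$; since $J$ and $K$ coincide outside $O$, the rest of $\sigma$ already lies in $U \setminus K$.

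The key structural step is to show that the two endpoints of every excursion belong to the same connected component of $\partial O \setminus K$. Write $\partial O \setminus K = \bigsqcup_\ell \alpha_\ell$ for the open arcs separated by the $\gamma_i$'s. First I would observe that, since $J$ is closed, for any connected component $C$ of $O \setminus J$ and any arc $\alpha_\ell$ the intersection $\alpha_\ell \cap \overline{C}$ is either empty or all of $\alpha_\ell$: a point $y \in \alpha_\ell \cap \overline{C}$ admits a small open disk disjoint from $J$ whose inner half-disk is a connected open neighborhood in $O \setminus J$ containing points of $C$ and therefore lying entirely in $C$, showing that $\alpha_\ell \cap \overline{C}$ is relatively open (and obviously closed) in $\alpha_\ell$.

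Taking closures, if $\alpha_\ell \subset \overline{C}$ then the two endpoints of $\alpha_\ell$, which belong to $K \cap \partial O = \bigsqcup_i \gamma_i$, also lie in $\overline{C}$. Under the hypothesis that $\overline{C}$ meets at most one $\gamma_i$, either $K \cap \partial O$ has only one component (in which case there is a single outside region and the conclusion is automatic), or the endpoints of any $\alpha_\ell \subset \overline{C}$ would have to coincide with a single $\gamma_{i_0}$, which is impossible because consecutive arcs $\alpha_\ell$ have endpoints in two \emph{distinct} $\gamma_i$'s. Combined with the dichotomy above, the latter case forces $\overline{C} \cap (\partial O \setminus K) = \emptyset$. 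In either case, each component $C$ of $O \setminus J$ meets $\partial O \setminus K$ in at most a single arc $\alpha_\ell$.

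Since each excursion of $\sigma$ into $O$ lies in a single component $C$ of $O \setminus J$, with its endpoints belonging to $\overline{C} \cap (\partial O \setminus K)$, the previous step forces both endpoints of every excursion to lie on the same arc $\alpha_{\ell_k}$. I would then replace each excursion $\sigma|_{[s_k,t_k]}$ by the shorter of the two arcs on $\alpha_{\ell_k}$ between $\sigma(s_k)$ and $\sigma(t_k)$, reparametrised linearly on $[s_k,t_k]$. The main subtlety, and the only real technical point, is continuity of the modified path at accumulation points of the (possibly countably many) excursions: at such a $t$ one must check that, since the chords $|\sigma(s_k) - \sigma(t_k)|$ tend to $0$, so do the lengths of the corresponding arc replacements, so that uniform continuity of $\sigma$ yields continuity of the modified path at $t$. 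The result is a continuous path from $p$ to $q$ in $U \setminus K$, contradicting the assumption that $p$ and $q$ lie in distinct components of $U \setminus K$.
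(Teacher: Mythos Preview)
The paper leaves this as an exercise, so there is no proof to compare against. Your excursion-replacement strategy is the right one. There is, however, a genuine gap in the step where you invoke ``the hypothesis that $\overline{C}$ meets at most one $\gamma_i$'': the lemma as written only says that at most one $\gamma_i$ is \emph{contained} in $\overline{C}$, which is strictly weaker when the $\gamma_i$ have positive length. In that regime your implication fails, and in fact the lemma as literally stated is false: take $\gamma_1,\gamma_2$ to be two disjoint closed arcs on $\partial B_r(x)$, and let $J\cap B_r(x)$ ``shield'' each $\gamma_i$ by an inner concentric arc joined to the two endpoints of $\gamma_i$ by short radial segments; the main component $C$ of $B_r(x)\setminus J$ then has both complementary arcs $\alpha_1,\alpha_2$ of $\partial B_r(x)\setminus K$ in its closure while containing only the endpoints (not the interiors) of $\gamma_1$ and $\gamma_2$, so the stated hypothesis holds, yet points entering through $\alpha_1$ and $\alpha_2$ are joined inside $C$ and admissibility can fail.

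The resolution is that the lemma almost certainly contains a typo: the $\gamma_i$ should be the connected components of $\partial B_r(x)\setminus K$, not of $\partial B_r(x)\cap K$. This is exactly what the caption of Figure~\ref{figura-1} says, and it is what is actually verified in every application (in the proof of Proposition~\ref{p:monotonia-1}, for instance, $J\cap B_r$ is a union of radii and each sector closure contains \emph{two} points of $K\cap\partial B_r$, so the literal hypothesis is violated there). With the corrected hypothesis your central claim ``each component $C$ of $O\setminus J$ meets $\partial O\setminus K$ in at most one arc $\alpha_\ell$'' becomes precisely the assumption, the endpoint argument is no longer needed, and the rest of your proof (excursion replacement together with the continuity check at accumulation points) goes through cleanly.
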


The proof is left as a simple exercise to the reader, see again Figure~\ref{figura-1}.

\section{Compactness for generalized minimizers}

The theorem above can be extended to generalized minimizers of $E_0$ (absolute or restricted). This remark will be especially used in ``blow-down'' \index{blow-down@blow-down} procedures for global generalized minimizers $(u, K, \{p_{ij}\})$, i.e. any limit of a subsequence of the rescalings $(u_{0,R}, K_{0,R}, \{p_{ij}\})$ as the radius diverges, where recall that $K_{0,R}=\frac KR$ and $u_{0,R}(y)=R^{-\frac12}u(Ry)$.

\begin{ipotesi}\label{a:compattezza}
We assume that
\begin{itemize}
    \item[(i)] $U_j$ is a sequence of monotonically increasing domains which converge to some domain $\Omega$;
    \item[(ii)] $(u_j, K_j)$ is part of a triple $(u_j, K_j, \{p_{kl,j}\}_j)$ of generalized (restricted) minimizers.
\end{itemize}
Up to subsequences we further assume the following.
\begin{itemize}
    \item[(a)] $K_j$ converges locally in the Hausdorff distance to a closed set $K$;
    \item[(b)] We enumerate the connected components $\{\Omega_k\}_{k\in \mathscr{I}}$ of $\Omega\setminus K$, for each we select a point $x_k\in \Omega_k$. For each fixed $x_k, x_l$ will belong to appropriate connected components $\Omega_{k,j}$ of $U_j \setminus K_j$ for $j$ large enough and we will assume that 
\begin{equation}\label{e:rinormalizzazione-4}
\{u_j (x_k) - u_j (x_l) - p_{kl,j}\}_{j\in \mathbb N}\,  
\end{equation}
converge to some elements $p_{kl}$ of the extended real line;
\item[(c)] The functions
\begin{equation}\label{e:rinormalizzazione-5}
v^k_j := u_j - u_j (x_k)
\end{equation}
converge locally in $W^{1,2}_{{\rm loc}} (\Omega_k)$ to an harmonic function $v^k$.
\end{itemize}
We introduce further the function $v$ on $\Omega\setminus K$ by setting $v= v^k$ on each $\Omega_k$.
\end{ipotesi}

\begin{theorem}\label{t:compactness-2}\label{T:COMPACTNESS-2}
Under the Assumption~\ref{a:compattezza} all the conclusions of Theorem~\ref{t:minimizers compactness} apply to the triple $(v, K, \{p_{kl}\})$ and the corresponding sequence $\{(u_j, K_j)\}$. 
\end{theorem}

\section{Elementary global generalized minimizers} We next turn to the simplest type of global generalized minimizers, i.e. those for which the Dirichlet energy vanishes identically. \index{elementary global minimizer@elementary global minimizer}\index{global minimizer, elementary@global minimizer, elementary}\index{pure jump, global@pure jump, global}\index{global pure jump@ global pure jump}\index{triple junction, global@triple junction, global}\index{global triple junction@global triple junction}

\begin{theorem}[Classification of elementary global generalized minimizers]\label{t:class-global}
Let $(v, K, \{p_{kl}\})$ be a global generalized minimizer of $E_0$ and assume that $\int |\nabla v|^2 =0$. Then $(v, K, \{p_{kl}
\})$ is either
\begin{itemize}
\item[(a)] A {\em constant}, namely $K=\emptyset$ and $v$ is a constant.
\item[(b)] A {\em global pure jump}, namely 
\begin{itemize}
\item[(b1)] $K$ is a straight line,
\item[(b2)] $v$ is constant on each connected component $\Omega_1$ and $\Omega_2$ of $\mathbb R^2\setminus K$, 
\item[(b3)] and $|p_{12}|=\infty$.
\end{itemize}
\item[(c)] A {\em global triple junction}, namely: 
\begin{itemize}
\item[(c1)] $K$ is the union of three half lines originating at a common point where they form equal angles,
\item[(c2)] $v$ is constant on each of the three connected components $\Omega_1, \Omega_2$ and $\Omega_3$ of $\mathbb R^2\setminus K$,
\item[(c3)] and $|p_{12}|=|p_{13}|=|p_{23}|= \infty$.
\end{itemize}
\end{itemize}
For global generalized restricted minimizers such that $K\neq \emptyset$, then $K$ is as in (b1) or as in (c1). The conclusions (b2) and (c2) hold as well. The conclusions (b3) and (c3) do not hold for all generalized restricted minimizers, but do hold for those satisfying the following stronger variational property:
\begin{itemize}
    \item $E_0 (v, K, U) \leq E_0 (w, J, U)$ for any bounded open set $U$ and any pair $(w, J)$ such that $\{v\neq w\} \subset \subset U$ and for which $J$ consists of at most two connected components.
\end{itemize}
Finally, if $(v, K, \{p_{kl}\})$ is a global generalized minimizer, then:
\begin{itemize}
\item[(i)] If $K$ is empty and $(v, K, \{p_{kl}\})$ is an {\em absolute} minimizer, then it is necessarily a constant;
\item[(ii)] If $K$ is a straight line, then it is necessarily a pure jump;
\item[(iii)] If $K$ is the union of three half lines originating at a common point, then it is necessarily a triple junction.
\end{itemize}
\end{theorem}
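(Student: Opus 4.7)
}

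The proof naturally splits into three tasks: (A) identifying $v$, (B) classifying the geometric shape of $K$, and (C) upgrading bounded differences $p_{kl}$ to $\pm\infty$ in the absolute setting. For (A), the hypothesis $\int|\nabla v|^2=0$ forces $v$ to be locally constant on $\mathbb{R}^2\setminus K$, hence constant on each connected component $\Omega_k$; this immediately delivers conclusions (b2) and (c2) and, together with connectedness of $\mathbb{R}^2$, handles the empty-$K$ case needed for (a) and (i).

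For (B), the key input is Theorem \ref{t:minimizers compactness}(ii) (in its generalized version, Theorem \ref{t:compactness-2}): for every admissible competitor $(J,w)$ coinciding with $(K,v)$ off a bounded open $O\Subset\mathbb{R}^2$ and preserving the topological separation of the components of $\mathbb{R}^2\setminus K$, one has $E_0(K,v,O)\leq E_0(J,w,O)$. Since $v$ is piecewise constant, the separation condition allows us to choose $w$ constant on each component of $O\setminus J$, matching $v$ across $\partial O$, so that $\int_O|\nabla w|^2=0$; the inequality then collapses to the purely metric statement
\[
\mathcal{H}^1(K\cap O)\leq \mathcal{H}^1(J\cap O)
\]
for every admissible $J$. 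Thus $K$ is a length-minimal $1$-dimensional set subject to the topological-separation constraint. At this point I would invoke the classification of such planar minimal sets, arguing that (i) every blow-up and blow-down of $K$ is itself such a minimal set (by Theorem \ref{t:compactness-2}), (ii) Bonnet's monotonicity from the final section of Chapter \ref{ch:preliminaries} forces these tangent objects to be $1$-dimensional cones with integer density, and (iii) the first-variation identity (Lemma applied via \eqref{e:inner}) rules out any configuration other than the empty set, a straight line, and three half-lines meeting at $120^\circ$. In the restricted case the same length-minimality holds with the strengthened ``$J$ has at most two components'' variational property, which still accommodates a line or a $Y$-set when $K\neq\emptyset$ but excludes disconnected or more complicated networks.

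For (C), I would argue by contradiction using the approximating sequence $(K_j,u_j)$ of absolute minimizers whose limit is $(K,v,\{p_{kl}\})$. If some $|p_{kl}|\leq M<\infty$ between adjacent components, then for $j$ large the function $u_j$ has jumps across the arc of $K_j$ separating the corresponding regions that are uniformly bounded by $M+1$. Inside a sufficiently large sub-disk $B_R\Subset U_j$ I would build a smoothing competitor $(J_j,w_j)$: remove a long portion of that arc of $K_j$ and interpolate $u_j$ across it by a harmonic-type transition whose boundary trace on $\partial B_R$ matches $u_j$ (the remaining stubs of $K_j$ reaching $\partial B_R$ absorb the trace jumps, so $w_j\in W^{1,2}$). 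A Dirichlet-energy estimate gives a cost bounded by $C(M+1)^2$ times a geometric factor independent of the removed length, while the length gain is linear in the removed arclength; for $R$ large enough this contradicts the absolute minimality of $(K_j,u_j)$, and hence $|p_{kl}|=\infty$. The assertion (b3) follows for $K$ a line, (c3) follows by applying this argument to each of the three adjacencies at the triple junction, and statements (i)--(iii) then follow by combining (A)--(C) with the specific shape of $K$.

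The main obstacle is step (B): the classification of $K$. The identification of the three admissible cones as the unique $1$-dimensional minimal sets in $\mathbb{R}^2$ is classical but delicate—it combines the monotonicity formulae of Chapter \ref{ch:preliminaries} with a rigidity argument on blow-ups, and crucially uses the ``absoluteness'' of the competitors to exclude, e.g., two parallel lines or other multi-component configurations which appear metrically minimal against topologically-constrained competitors but fail to be absolute limits. A clean way to package this obstacle is to defer it to Theorem \ref{t:uniqueness}, whose second part (on generalized global minimizers with $\mathbb{R}^2$ disconnected by $K$) provides exactly the rigidity needed to force $K$ into one of the three listed shapes.
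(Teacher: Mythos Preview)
There are two genuine gaps.

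First, you have conflated the two parts of the theorem. Items (i)--(iii) do \emph{not} assume $\int|\nabla v|^2=0$: they assert that if $K$ has the prescribed shape then the generalized minimizer is elementary, and in particular that $\nabla v$ vanishes. Your step (A) cannot be invoked here. The paper handles this by Liouville-type arguments tailored to each geometry: for $K=\emptyset$ the mean-value property and the energy upper bound \eqref{e:upper bound} force $\nabla v\equiv0$; for $K$ a line, the Neumann condition allows a Schwarz reflection to a global harmonic function, and one concludes as before; for $K$ a propeller, one expands $v$ in each sector as $\sum_k a_k r^{k\pi/\alpha_i}\cos(k\pi\vartheta/\alpha_i)$ and the bound $r^{-1}\int_{B_r}|\nabla v|^2\leq 2\pi$ kills every mode $k\geq1$ since $2k\pi/\alpha_i>1$. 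None of this is in your plan.

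Second, your step (B) is both circular and mis-tooled. Deferring the classification of $K$ to Theorem~\ref{t:uniqueness} is not available: that theorem is proved in Chapter~\ref{c:Bonnet David} and repeatedly uses the present classification (e.g.\ via Corollary~\ref{c:unicita-blow-down}). Moreover, Bonnet's monotonicity (Proposition~\ref{p:monotonia-0}) concerns $r^{-1}\int_{B_r}|\nabla u|^2$, which is identically zero here and carries no information. The paper's argument is different and more elementary: since $\nabla v=0$, the variational inequality reduces to $K\cap B_R$ being a \emph{minimal connection} of $K\cap\partial B_R$ for every $R$; this gives monotonicity of $R\mapsto R^{-1}\mathcal{H}^1(K\cap B_R)$ directly, whence the blow-down $K/R$ converges to a minimal cone (line or propeller). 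Coarea then yields $\mathcal{H}^0(K\cap\partial B_\rho)\leq 3$ along a sequence, and since minimal connections of two or three boundary points are a segment or a propeller, $K$ itself has the stated shape. Your competitor idea in (C) is workable but heavier than the paper's: once $|p_{kl}|<\infty$, Theorem~\ref{t:minimizers compactness}(iii) already gives an actual absolute minimizer $(K,u_{\mathscr{A}})$ on the relevant region, and a single rescaling (blow-down along the line, or translation-plus-rescaling along an edge of the propeller) shows $(K,0)$ would be a minimizer, which is absurd by excising a segment.
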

\begin{proof} We focus on the case of generalized minimizers, leaving the analogous one of generalized restricted minimizers to the reader.

We start off proving items (i)-(iii) in the second part of the statement. 

Assume first $K=\emptyset$, then $v$ is harmonic on $\mathbb{R}^2$. 
In view of the density upper bound (cf. \eqref{e:upper bound}) 
and the mean value property for harmonic functions we conclude 
that $|\nabla v|=0$ on $\mathbb{R}^2$. 

If $K$ is a line, without loss of generality, we may assume $K=\{x\in \mathbb{R}^2:\,x_2=0\}$
and set $H^\pm:=\{x\in \mathbb{R}^2:\,\pm x_2>0\}$.
Then $v$ is harmonic on $H^+\cup H^-$ and $\frac{\partial v}{\partial\nu}=0$ on $K$. By the Schwartz reflection principle, 
the even extensions $v_\pm$ of $v|_{H^\pm}$ across $K$ are harmonic on the whole of $\mathbb{R}^2$ with \[
\int_{B_\rho}|\nabla v_\pm|^2dx=2\int_{H^\pm\cap B_\rho}|\nabla v|^2dx\leq 4\pi\rho.
\]
Arguing as above, we infer that $|\nabla v|=0$ on $H^+\cup H^-$, therefore $v$ is locally constant on 
$\mathbb{R}^2\setminus K$. To conclude the proof of item (ii) we have to show that $|p_{12}|=\infty$.
Otherwise setting $\mathscr{A} = \{1,2\}$ and $w= u_\mathscr{A}$, we can apply Theorem~\ref{t:compactness-2} to the rescalings
$u_{0,R}(x):=R^{-\frac12}v(Rx)$ and $K_{0,R}$. Obviously they converge to $(0,K, \{0\})$ as $R\uparrow \infty$, and so $(\tilde{u}, \tilde{K}) = (0, K)$ would have to be an absolute minimizer on any bounded open subset of $\mathbb R^2$. The latter assertion is false, as we can remove any compact subset of $K$ and extend the function $\tilde{u}$ to $0$ on it. 

In case (iii), $\mathbb{R}^2\setminus K=\Omega_1\cup\Omega_2\cup\Omega_3$, each $\Omega_i$ being a convex cone with 
vertex $p$ and opening $\alpha_i\in(0,2\pi)$. 
Recalling that $\triangle v=0$ on $\Omega_i$ and $\frac{\partial v}{\partial\nu}=0$ on $\partial\Omega_i\cap K$, 
we may expand $w_i:=v|_{\Omega_i}$ in Fourier series.
In particular, given a point $x\in \Omega_i$, we set $r=\mathrm{dist}(x,p)$ and we let $\vartheta\in[0,\alpha_i]$ be the angle formed be the segment $[p,x]$ and one of the two half-lines delimiting $\Omega_i$ (the choice is not important). 
Hence, if we denote by  $a_{i,k}$ the Fourier coefficients of $v|_{\partial B_1\cap\Omega_i}$ in the angle $\vartheta$, we can write
\[
w_i(r,\vartheta)=\sum_{k=0}^\infty a_{i,k}r^{\frac{k\pi}{\alpha_i}}\cos\Big(\frac{k\pi}{\alpha_i}\vartheta\Big)\,,
\] 
on $\Omega_i$, while we can compute
\begin{equation}\label{e:enrg harmonic}
\int_{\Omega_i\cap B_r (p)}|\nabla w_i|^2=\sum_{k=1}^\infty\frac{k\pi}{2}a_{i,k}^2r^{2\frac{k\pi}{\alpha_i}}\,.
\end{equation}
Since $2k\pi\geq2\pi>\alpha_i$, $k\geq1$, we conclude that $a_{i,k}=0$ for all $k\geq 1$ and $i\in\{1,2,3\}$, thanks again to inequality
\eqref{e:upper bound}. 
Thus, $v$ is locally constant on $\mathbb{R}^2\setminus K$. 
In turn, being $v$ a generalized global minimizer, $K\cap B_1 (p)$ is a set with minimal length 
connecting three points on $\partial B_1 (p)$, in particular the angles in $p$ must be all equal to $\frac23\pi$.

Assume now $p_{ij}$ is finite for some $i\neq j$. Let $\mathscr{A} = \{i,j\}$ and consider $w= u_\mathscr{A}$ on $\Omega_{\mathscr{A}} = {\rm int} (\overline{\Omega_i \cup \Omega_j})$. Without loss of generality we can assume that $\partial \Omega_i \cap \partial \Omega_j = \{x_2=0, x_1\geq 0\}$. We can then choose the points $y_k := (k^2,0)$ and the radii $R_k := k$ and consider the pairs $(w_{y_k,R_k},K_{y_k, R_k})$ on the domains $B_{\sqrt{k}} (0)$ obtained by rescaling and translating $\Omega_{\mathscr{A}}$. We can again apply Theorem~\ref{t:compactness-2} and we would conclude as above that $(0,K)$ is a generalized minimizer, which is a contradiction. 

Finally, we prove the classification of generalized global minimizers with null gradient energy as stated in (a)-(c). In this case $K$ is a minimal Caccioppoli partition and then a minimal connection of 
$\partial B_R\cap K$ for all $R$ (cf. \cite[Lemma 12]{DLF13}). 
In particular, $R\mapsto\frac{\mathcal{H}^1(\partial B_R\cap K)}{R}$ is nondecreasing and $\frac KR$ is converging to a minimal cone $K_\infty$ as $R\uparrow \infty$. Therefore, $K_\infty$ is either a line or a propeller\index{propeller@propeller}, the union of three half-lines meeting at a common point with equal angles. In turn, this implies that $\mathcal{H}^0(\partial B_R\cap K)\leq 3$ at least for some sequence of $R$'s converging to infinity by the coarea formula \cite[Theorem 2.93]{AFP00} (because $R^{-1} \mathcal{H}^1 (B_R\cap K) \leq 3$ for every $R>0$). In particular, either 
$\mathcal{H}^0(\partial B_\rho\cap K)=3$ for some $\rho$, and in this case $B_\rho\cap K$ is a propeller, or $\mathcal{H}^0(\partial B_\rho\cap K)=2$ for some $\rho$ and in this case $B_\rho\cap K$ is a segment.
Moreover, in the first case $\mathcal{H}^0(\partial B_R\cap K)=3$ for all $R\geq\rho$, and thus $K$ is a propeller, while in the second case 
$\mathcal{H}^0(\partial B_R\cap K)=2$ for all $R\geq\rho$, and $K$ is a line (cf. \cite[Sections 2.3 and 3]{DLF13}).

We conclude the proof of (a)-(c) by appealing to the classification result contained in (i)-(iii).
\end{proof} 

A related classification result, expressed in terms of one-dimensional minimal sets in 
$\mathbb R^n$, is contained in \cite[Theorem 10.1]{David2009}.

\section{Variational identities}

As we have already remarked, restricted and absolute minimizers $(u,K)$ of $E_\param$
are critical points, namely they satisfy the identities  \eqref{e:outer} and \eqref{e:inner}. The same applies to generalized minimizers of $E_0$. We can thus summarize our conclusions in the following statement (for the proof see Appendix~\ref{a:variational identities}).

\begin{proposition}\label{p:variational identities}
Assume $(u,K)$ is a (restricted or absolute) minimizer of $E_\lambda$ or a generalized (restriced or absolute) minimizer of $E_0$. Then
$(u,K)$ is a critical point, namely there exists suitable traces $u^+$, $u^-$, and $g_K\in L^\infty(\Omega,\mathcal H^1\restr K)$ such that the identities \eqref{e:outer} and \eqref{e:inner} hold, with the properties that $\|g_K\|_\infty \leq \|g\|_\infty$ and that $u^\pm$ are the classical one-sided traces of $K$ in the sense of Sobolev-space theory. If $g$ is, in addition, $C^1$, \eqref{e:inner} is indeed equivalent to \eqref{e:inner-C1}. 
\end{proposition}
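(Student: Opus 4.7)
The outer variation identity \eqref{e:outer} follows from a standard one-line calculation: since $(K, u + t\varphi)$ shares the set $K$ with $(K,u)$ and differs only on $\supp \varphi \subset\subset \Omega$, it is an admissible competitor for every notion of minimizer considered here (including the restricted and generalized versions, as $J = K$). Differentiating $t \mapsto E_\lambda(K, u + t\varphi, U, g)$ at $t = 0$ and using the minimality yields $2\int_{\Omega \setminus K} \nabla u \cdot \nabla \varphi + 2\lambda \int_\Omega (u-g)\varphi = 0$, which is \eqref{e:outer}.

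For the inner variation I would take the competitors $(K_t, u_t) = (\Phi_t(K), u \circ \Phi_t^{-1})$; these are admissible even in the restricted case because $\Phi_t$ is a diffeomorphism and so preserves the number of connected components of $K$. Changing variables $y = \Phi_t(x)$ in the Dirichlet term gives $\int_{U \setminus K} |D\Phi_t^{-T}\nabla u|^2 |\det D\Phi_t|\, dx$, whose derivative at $t=0$, using $\partial_t|_{t=0} D\Phi_t = D\psi$ and $\partial_t|_{t=0} \det D\Phi_t = \mathrm{div}\, \psi$, reproduces the bulk term $\int (|\nabla u|^2 \mathrm{div}\, \psi - 2\nabla u^T D\psi\, \nabla u)$ of \eqref{e:inner}. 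The area formula for the Lipschitz image $\Phi_t(K)\cap U$ gives $\int_K |D\Phi_t\cdot e|\, d\mathcal{H}^1$, whose derivative at $t=0$ is $\int_K e^T D\psi\, e\, d\mathcal{H}^1$. When $g \in C^1$ a further change of variables turns the fidelity term into $\lambda\int_U |u(x) - g(\Phi_t(x))|^2 |\det D\Phi_t|\, dx$; differentiating reproduces the right side of \eqref{e:inner-C1}. The equivalence between \eqref{e:inner} and \eqref{e:inner-C1} for $C^1$ data is then obtained by applying the divergence theorem to the vector field $(u-g)^2\psi$ on each connected component of $\Omega\setminus K$: the boundary terms on $K$ assemble exactly the surface integral in \eqref{e:inner} and the bulk terms reconstitute the rest of \eqref{e:inner-C1}, with $g_K = g|_K$ and $u^\pm$ the classical one-sided Sobolev traces.

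The delicate case is $g \in L^\infty$, where the change of variables inside $g$ is no longer available. Here I would differentiate $t \mapsto \lambda\int_U |u \circ \Phi_t^{-1} - g|^2$ directly, splitting $U$ into a tubular strip $N_t$ of width $O(t)$ about $K$ and its complement. On $U \setminus N_t$, the map $\Phi_t^{-1}$ stays on the same side of $K$ as the identity, and a Taylor expansion gives $u \circ \Phi_t^{-1}(y) = u(y) - t\,\nabla u(y)\cdot\psi(y) + o(t)$ with uniform error, contributing $-2\lambda \int_{\Omega \setminus K}(u-g)\nabla u \cdot \psi$ to the derivative. On $N_t$, instead, $\Phi_t^{-1}(y)$ crosses to the opposite side of $K$, so $u \circ \Phi_t^{-1}$ takes the value of the trace from the other side; locally straightening the rectifiable set $K$ at $\mathcal{H}^1$-a.e.\ point and parametrizing $N_t$ via $y = \xi + s\nu(\xi)$, the slab has signed width $\approx t\,\psi(\xi)\cdot\nu(\xi)$, and a coarea-style computation shows that the strip contribution to $\frac{d}{dt}\big|_{t=0}\lambda\int|u_t - g|^2$ is
\[
-\lambda \int_K \big(|u^+ - g_K|^2 - |u^- - g_K|^2\big)\, \psi\cdot\nu\, d\mathcal{H}^1,
\]
where $g_K$ is a suitable trace of $g$ on $K$, defined $\mathcal{H}^1$-a.e.\ as a limit of averages of $g$ over small tangent-aligned rectangles around $\xi$, and automatically satisfying $\|g_K\|_\infty \leq \|g\|_\infty$. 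Summing the Dirichlet, length and fidelity contributions and setting the total derivative to zero yields \eqref{e:inner}.

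The principal obstacles are technical rather than conceptual. The hardest point is justifying the slab computation rigorously when $K$ is merely rectifiable: one needs uniform control of the Taylor remainder near the $\mathcal{H}^1$-null set of non-$C^1$ points of $K$, existence of the trace $g_K$ as an $\mathcal{H}^1$-measurable function on $K$, and convergence of the slab integral (rescaled by $1/t$) to the asserted surface integral as $t \to 0$; these points are what make the argument long enough to be relegated to the appendix. For generalized (restricted) minimizers of $E_0$, the outer and inner variation identities are then obtained by passing \eqref{e:outer} and \eqref{e:inner} to the limit through the $W^{1,2}_{loc}$ convergence of the functions and the varifold-type convergence \eqref{e:varifold-convergence} provided by Theorem \ref{t:minimizers compactness}; the absence of a fidelity term in the limit makes this passage straightforward.
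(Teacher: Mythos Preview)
Your treatment of the outer variation and of the inner variation when $g\in C^1$ matches the paper's argument essentially line for line: flow $\Phi_t$, change of variables in the Dirichlet term and in the fidelity term, first variation of the length, and an integration by parts of $|u-g|^2\psi$ to pass between \eqref{e:inner-C1} and \eqref{e:inner}.

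The divergence is in the case $g\in L^\infty$, and here your slab argument has a real gap. You propose to define $g_K(\xi)$ as a limit of averages of $g$ over shrinking tangent-aligned rectangles at $\xi$. For a merely bounded measurable $g$ there is no reason such limits exist at $\mathcal H^1$-a.e.\ point of the one-dimensional set $K$: Lebesgue's differentiation theorem gives convergence of averages $\mathcal L^2$-a.e., but $K$ is $\mathcal L^2$-null, so this says nothing. Without the existence of $g_K$ as a genuine trace, your slab computation for the strip contribution cannot be closed, and the acknowledgment that this is ``delicate'' does not by itself supply the missing mechanism.

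The paper sidesteps this entirely. It approximates $g$ by $g_\varepsilon\in C^1\cap L^\infty$ in $L^2$, so that for each $\varepsilon$ the derivative $h'_{g_\varepsilon}(t)$ of the fidelity term is given by the $C^1$ formula you already derived, with the surface contribution $2\int_{\Phi_t(K)}(u_t^+-u_t^-)\,g_\varepsilon\,\psi\cdot\nu_{\Phi_t(K)}\,d\mathcal H^1$. Rather than trying to pass $g_\varepsilon\to g$ pointwise on $K$, the paper views this surface term as the pairing of $g_\varepsilon$ against a uniformly bounded family of measures, extracts a weak-$*$ limit $\bar g$ in $L^\infty$ of the corresponding space--time measure, and then observes that the resulting functional $\psi\mapsto f'(0)+\lambda(\Lambda(0)+\Gamma(0))$ is linear and bounded by $2\lambda\|g\|_\infty\int_K|u^+-u^-|\,|\psi\cdot\nu|\,d\mathcal H^1$. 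The Riesz representation theorem then \emph{produces} $g_K\in L^\infty(\mathcal H^1\res K)$ with $\|g_K\|_\infty\le\|g\|_\infty$; it is not constructed as any kind of pointwise trace of $g$. This is the idea your proposal is missing: $g_K$ is defined functionally, not geometrically.
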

A more explicit form of the Euler-Lagrange conditions can be devised in case $K$ is a smooth graph. To this aim we denote by
$\nu$ the counterclockwise rotation by $90$ degrees of a $C^0$ unit tangent vector $e$ locally orienting $K$, 
while we denote by $\kappa$\index[simb]{aagk@$\kappa$} the curvature of a local compatible parametrization, namely 
\begin{equation}\label{e:rigorous-def-curvature}
\kappa = \ddot\gamma \cdot \nu\, ,
\end{equation}
for an arclength parametrization $\gamma$ such that $\dot\gamma = e$. Such classical definition of the curvature $\kappa$ assumes in general $C^2$ regularity. We will use it under the assumption that $K$ is $C^{1,1}$: the reader can check that local arc-length parametrizations belong to $W^{2,\infty}$ and thus $\ddot\gamma$ is interpreted as an $L^\infty$ function of $t$. Under such assumptions the curvature $\kappa$ is then a bounded Borel function defined $\mathcal{H}^1$-a.e. on $K$.
Finally, $w^+$ and $w^-$ are the one-sided traces of the relevant function $w$ on $K$ (following the obvious convention that $w^+$ is the trace on the side which $\nu$ is pointing to, cf. Figure~\ref{figura-3}). 

\begin{figure}
\begin{tikzpicture}
\draw[>=stealth,->] (-0.5,0) -- (3.5,0);
\node[below] at (3.5,0) {$x_1$};
\draw[>=stealth,->] (0,-0.5) -- (0,2.9);
\node[left] at (0,2.9) {$x_2$};
\draw[very thick] (0,0) to [out=0, in=200] (1.5,0.7) to [out=20, in=180] (2.7, 0.3) to [out = 0, in=225] (3.3,0.6);
\draw[>=stealth,->] (1.5,0.7) -- ({1.5-1.2*cos(20)},{0.7-1.2*sin(20)});
\draw[fill] (1.5,0.7) circle [radius=0.05];
\draw[>=stealth,->] (1.5,0.7) -- ({1.5+1.2*sin(20)},{0.7-1.2*cos(20)});
\node[right] at (1.5,1.1) {$p = \gamma (t)$};
\node[above] at ({1.5-1.2*cos(20)},{0.7-1.2*sin(20)}) {$e(p)$};
\node[below] at  ({1.5+1.2*sin(20)},{0.7-1.2*cos(20)}) {$\nu (p)$};
\node[below left] at (1.5,0.5) {$+$};
\node[above left] at (1.5,0.7) {$-$};
\end{tikzpicture}
\caption{The tangent vector $e(p) = \dot\gamma (t)$ (for an-arc length parametrization) and the normal vector $\nu (p)$. The picture illustrates the convention for the symbols $\pm$ on traces of functions over $\gamma$.\label{figura-3}}
\end{figure}
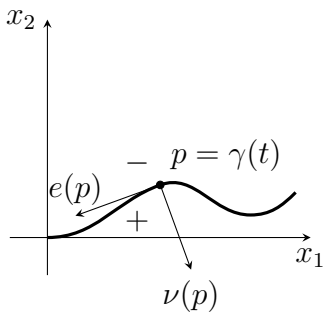

\begin{proposition}\label{p:variational identities 2}
Let $(u, K)$ be a critical point of $E_\lambda$ in $U$ and assume that $K\cap U$ consists only of regular jump points (i.e. $K\cap U$ is a $C^1$ submanifold\footnote{In particular we are assuming that $U\cap K$ does not contain any loose end of $K$.}). Then
\begin{itemize}
\item[(a)] $u$ has $C^{1,\alpha}$ extensions on each side of $K\cap A$, for every $\alpha<1$, 
\item[(b)]$K\cap A$ is locally $C^{1,1}$,
\item[(c)] the variational identities \eqref{e:outer}-\eqref{e:inner} are equivalent to the following three conditions
\begin{align}
&\Delta u = \lambda (u-g) \qquad \mbox{on $\Omega\setminus K$}\label{e:Euler harmonic g}\\
&\frac{\partial u}{\partial \nu} = 0 \qquad \mbox{on $K$}\label{e:Euler Neumann g}\\
& \kappa = - (|\nabla u^+|^2- |\nabla u^-|^2) - \lambda (|u^+-g_K|^2-|u^--g_K|^2)\qquad 
\mbox{$\mathcal{H}^1$ a.e. on $K$\,,} \label{e:Euler curvature g}
\end{align}
\end{itemize}
where $\kappa$ is the curvature of $K$ and $g_K$ is the function in Proposition~\ref{p:variational identities}. 
\end{proposition}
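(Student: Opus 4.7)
The plan is to derive (a), (b), and (c) in sequence, each one bootstrapping regularity we will need for the next. For \textbf{part (a)}, I first test \eqref{e:outer} against $\varphi\in C^\infty_c(\Omega\setminus K)$ to obtain $\Delta u=\lambda(u-g)$ distributionally, hence classically away from $K$. Next I allow $\varphi\in W^{1,2}(\Omega\setminus K)$ with compact support crossing $K$: integrating by parts on each side of $K$ (which is $C^1$, so admits one-sided traces and a well-defined normal $\nu$), the bulk term cancels the right-hand side and what remains is $\int_K([\partial u/\partial\nu]^+ - [\partial u/\partial\nu]^-)\varphi\, d\mathcal H^1=0$ for sufficiently rich tests on each side, whence $\partial u/\partial\nu=0$ on $K$ from each side. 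With $\lambda(u-g)\in L^\infty$ and homogeneous Neumann data on a $C^1$ interface, local $W^{2,p}$ estimates for the Neumann problem (Calderón--Zygmund, flattening $K$ via a $C^1$ diffeomorphism) give $u\in W^{2,p}_{\mathrm{loc}}$ on each side for every $p<\infty$, and Morrey embedding yields the $C^{1,\alpha}$ statement in (a) for every $\alpha<1$.

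For \textbf{part (b)} I use the inner variation identity \eqref{e:inner}. The bulk term is integrated by parts on each component $V$ of $\Omega\setminus K$ using the $C^{1,\alpha}$ regularity from (a): writing the Noether tensor $T_{ij}=|\nabla u|^2\delta_{ij}-2u_iu_j$, one computes $(\mathrm{div}\, T)_i=-2\Delta u\,u_i$, so that
\begin{equation*}
\int_{V}(|\nabla u|^2\mathrm{div}\,\psi-2\nabla u^T D\psi\nabla u)=\int_{\partial V}(T\nu_{\partial V})\cdot\psi\,d\mathcal H^1+2\int_V\Delta u\,(\nabla u\cdot\psi).
\end{equation*}
Using $\partial u/\partial\nu=0$ and the PDE, $(T\nu)\cdot\psi$ collapses to $-|\nabla u^\pm|^2\,\psi\cdot\nu_{\partial V}$ on the $\pm$ sides of $K$, and the volume leftover becomes exactly the first term on the RHS of \eqref{e:inner}. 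For the length term I use the pointwise identity $e^T D\psi\,e=\frac{d}{d\tau}(\psi\cdot e)-\psi\cdot\partial_\tau e$ along an arc-length parametrization, so that for $\psi\in C^1_c$,
\begin{equation*}
\int_K e^T D\psi\,e\,d\mathcal H^1=-\int_K\psi\cdot\partial_\tau e\,d\mathcal H^1,
\end{equation*}
interpreted distributionally since $K$ is only $C^1$. Substituting all this into \eqref{e:inner} gives
\begin{equation*}
\int_K\psi\cdot\partial_\tau e\,d\mathcal H^1=\int_K\bigl[(|\nabla u^+|^2-|\nabla u^-|^2)+\lambda(|u^+-g_K|^2-|u^--g_K|^2)\bigr]\psi\cdot\nu\,d\mathcal H^1
\end{equation*}
for every $\psi\in C^1_c(\Omega,\mathbb R^2)$. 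The right-hand side is an $L^\infty$ measure on $K$ (the traces $u^\pm$, $\nabla u^\pm$ exist in $C^{0,\alpha}$ by (a) and $g_K\in L^\infty$ by Proposition~\ref{p:variational identities}), while the left-hand side is the distributional curvature vector of $K$ tested against $\psi$. Writing $K$ locally as a graph $x_2=f(x_1)$ with $f\in C^1$, the above reads $f''/(1+f'^2)^{3/2}\in L^\infty$ in the distributional sense, hence $f\in W^{2,\infty}_{\mathrm{loc}}$, which gives (b). Moreover $\partial_\tau e$ is then the classical curvature vector $\kappa\nu$ $\mathcal H^1$-a.e., so the displayed identity yields \eqref{e:Euler curvature g}.

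The preceding computation shows that a critical point satisfies \eqref{e:Euler harmonic g}--\eqref{e:Euler curvature g}. The converse direction of \textbf{(c)} is a straightforward reversal: given the PDE and Neumann condition, multiply by $\varphi$ and integrate by parts on each side to recover \eqref{e:outer}; given in addition the curvature identity, run the inner-variation computation backwards, that is, substitute \eqref{e:Euler curvature g} into the tangential integration-by-parts formula above and combine with the bulk identity using \eqref{e:Euler harmonic g} and \eqref{e:Euler Neumann g} to reproduce \eqref{e:inner}.

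The main obstacle is the justification of tangential integration by parts on a merely $C^1$ curve against a nonsmooth $\partial_\tau e$: one cannot simply write $\kappa=\ddot\gamma\cdot\nu$ pointwise, so the curvature equation \eqref{e:Euler curvature g} must first be obtained as a distributional identity on $K$ and only \emph{then} promoted to a pointwise statement once $K\in C^{1,1}$ is established. A secondary technicality is boundary $W^{2,p}$ regularity for the Neumann problem when the interface is only $C^1$ and the datum is only $L^\infty$; this is handled by a $C^1$ flattening argument combined with standard Calderón--Zygmund estimates, bounds which are insensitive to the modulus of continuity of $\nabla\Phi$ beyond continuity.
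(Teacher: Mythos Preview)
Your argument follows essentially the same route as the paper's proof: derive the weak Neumann problem from the outer variation and invoke elliptic boundary regularity for (a), then integrate by parts in the inner variation (your Noether-tensor computation is the paper's ``integrate by parts twice'' step) to identify the distributional curvature of $K$ with an $L^\infty$ density, which gives (b) and the pointwise condition \eqref{e:Euler curvature g}. One caution: your claim that boundary $W^{2,p}$ estimates are ``insensitive to the modulus of continuity of $\nabla\Phi$ beyond continuity'' is not correct as stated---Calder\'on--Zygmund theory up to the boundary generally needs at least a Dini or H\"older modulus on the coefficients after flattening; the paper's own proof sidesteps this by invoking \cite[Theorem~7.49]{AFP00}, which assumes $\phi\in C^{1,\alpha}$, an assumption that is always satisfied in the applications via the $\varepsilon$-regularity Theorem~\ref{t:eps_salto_puro}.
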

The proof is given in the appendix for the reader's convenience.
Therefore, in view of Remark~\ref{r:pointwise} and item (a) above, in the rest of the notes we can simply assume that $u$ is continuously differentiable in $\Omega\setminus K$. 

We record here an important elementary consequence of \eqref{e:outer} which will be used throughout the notes 
\begin{corollary}\label{c:Bonnet}
Assume $(u,K)$ is a critical point of $E_\lambda$ in some domain $\Omega$ and fix $x\in \Omega$. For a.e. $r\in (0, \dist (x, \partial \Omega))$ the following holds. First of all we have
\begin{equation}\label{e:int by parts}
\int_U |\nabla u|^2 + \lambda \int_U (u-g) u= \int_{\partial B_r (x)\cap \overline{U}} u \frac{\partial u}{\partial n}    
\end{equation}
for every connected component $U$ of $B_r (x)\setminus K$ (where $n$ is the unit normal to $\partial B_r (x)$). 

Moreover if $\lambda=0$ and $\gamma$ is a connected component of $\partial B_r (x)\setminus K$ such that the endpoints of $\gamma$ belong to the same connected component of $K$, then
\begin{equation}\label{e:Bonnet}
\int_\gamma \frac{\partial u}{\partial n} = 0\, 
\end{equation}
(see Figure~\ref{figura-4} for an illustration of the two conclusions).
\end{corollary}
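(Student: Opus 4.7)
The plan for both identities is to substitute carefully chosen compactly supported test functions into the outer variation identity \eqref{e:outer} and pass to the limit as a radial cut-off parameter $\varepsilon\downarrow 0$. The a.e.\ $r$ requirement will come from coarea: for a.e.\ $r$ the restrictions of $u,\nabla u,g$ to $\partial B_r(x)\setminus K$ are $L^2$, the set $K\cap\partial B_r(x)$ is finite, and Lebesgue differentiation turns the concentrated radial derivative of a cut-off into an honest line integral on $\partial B_r(x)$.

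For \eqref{e:int by parts}, I would fix a connected component $U$ of $B_r(x)\setminus K$ and test \eqref{e:outer} against $\varphi_\varepsilon := u\,\chi_\varepsilon\,\mathbf{1}_U$, where $\chi_\varepsilon$ is a Lipschitz radial cut-off equal to $1$ on $B_{r-\varepsilon}(x)$ and vanishing outside $B_r(x)$. This is admissible in $W^{1,2}(\Omega\setminus K)$ with compact support, since the would-be jump of $\mathbf{1}_U$ across $\overline{U}\cap\partial B_r(x)$ is killed by $\chi_\varepsilon=0$ there, while the rest of $\partial U$ lies in $K$, outside the domain. Expanding $\nabla(u\chi_\varepsilon)=\chi_\varepsilon\nabla u+u\nabla\chi_\varepsilon$ on $U$, the bulk terms will converge to $-\int_U|\nabla u|^2$ on the left and to $\lambda\int_U u(u-g)$ on the right, while $\int_U u\,\nabla u\cdot\nabla\chi_\varepsilon$ will concentrate, by coarea, on $\partial B_r(x)\cap\overline{U}$ giving $-\int_{\partial B_r(x)\cap\overline{U}} u\,\tfrac{\partial u}{\partial n}\,d\mathcal{H}^1$ for a.e.\ $r$. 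Rearrangement produces \eqref{e:int by parts}.

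For \eqref{e:Bonnet} (with $\lambda=0$) I would take instead $\varphi_\varepsilon := \chi_\varepsilon\,\mathbf{1}_V$, where $V$ is a union of connected components of $B_r(x)\setminus K$ to be chosen so that $\overline{V}\cap\partial B_r(x)$ coincides with $\overline{\gamma}$ modulo the finite null set $K\cap\partial B_r(x)$. The same coarea limit will then give $\int_\gamma\tfrac{\partial u}{\partial n}\,d\mathcal{H}^1=0$. To build $V$, I would let $p,q$ be the endpoints of $\gamma$ and $C$ their common connected component of $K$, pick a simple arc $\sigma\subset C$ joining $p$ to $q$ (which exists thanks to the rectifiability and the local graph-like structure of $K$ for normalized critical points), and observe that $\Gamma:=\gamma\cup\sigma$ is a Jordan curve, since $\overline{\gamma}\cap K=\{p,q\}$. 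Letting $W$ be the bounded complementary region of $\Gamma$, the set $V:=W\cap B_r(x)\setminus K$ satisfies $\partial W\cap B_r(x)\subseteq\sigma\subset K$, and hence is a union of connected components of $B_r(x)\setminus K$ with the desired boundary trace.

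The main obstacle will be the case where $\sigma$ leaves $\overline{B_r(x)}$: then $W$ may include extra arcs $\gamma_1',\ldots,\gamma_\ell'$ of $\partial B_r(x)\setminus K$ in its boundary, producing the weaker identity $\int_\gamma\tfrac{\partial u}{\partial n}+\sum_j\int_{\gamma_j'}\tfrac{\partial u}{\partial n}=0$. Since the endpoints of each $\gamma_j'$ lie on $\sigma\cap\partial B_r(x)\subset C$, they too satisfy the hypothesis of \eqref{e:Bonnet}, and I would close the argument by induction on $\ell$, the base case being $\sigma\subset\overline{B_r(x)}$ where $V\cap\partial B_r(x)=\gamma$ directly.
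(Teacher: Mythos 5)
Your treatment of \eqref{e:int by parts} is essentially the same as the paper's: both test \eqref{e:outer} against $u\cdot\mathbf{1}_U\cdot\psi_\delta$ for a radial cut-off $\psi_\delta$ supported in $B_r(x)$, identify the bulk terms, and extract the flux on $\partial B_r(x)$ via a Lebesgue-point/coarea argument in the parameter $r$. No issue there.

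For \eqref{e:Bonnet} your method is also the correct one in spirit --- test $\eqref{e:outer}$ against a locally constant $\chi=\mathbf{1}_V$ multiplied by a radial cut-off --- but the choice of $V$ and the way you close the argument diverge from the paper in a way that leaves a real gap. The paper does not restrict to regions lying in $B_r(x)$: it picks a bounded connected component $U$ of $\Omega\setminus(\gamma\cup K)$ whose boundary contains $\gamma$, observes that $\partial U\subset\gamma\cup K$ forces $\overline U\cap\partial B_r(x)$ to be $\overline\gamma$ together with the finite set $K\cap\partial B_r(x)$, and then distinguishes whether $U$ lies in $B_r(x)$ or in $\Omega\setminus\overline{B}_r(x)$, using in the second case the complementary cut-off $\psi_\delta$ (supported \emph{outside} $B_r(x)$ rather than inside; cf.\ \eqref{e:psi_delta} and the display just after it). That two-sided flexibility is precisely what lets the paper avoid contaminating the flux limit with additional arcs of $\partial B_r(x)\setminus K$.

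Your construction $V=W\cap B_r(x)\setminus K$, by contrast, is forced to live inside $B_r(x)$, and you correctly notice that when $\sigma$ leaves $\overline{B}_r(x)$ the limit produces the relation $\int_\gamma \frac{\partial u}{\partial n}+\sum_{j}\int_{\gamma_j'}\frac{\partial u}{\partial n}=0$ with extraneous arcs $\gamma_j'$. The proposed ``induction on $\ell$'' to eliminate these does not close: each $\gamma_j'$ is a new instance of the problem, and nothing in your construction guarantees that the number of stray arcs produced when you repeat the argument for $\gamma_j'$ is strictly smaller --- the new Jordan arc $\sigma_j$ in $C$ joining the endpoints of $\gamma_j'$ may itself exit $\overline{B}_r(x)$, and there is no monotone quantity underlying the recursion. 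As written, this is circular. The fix (and what the paper does) is to work with connected components of $\Omega\setminus(\gamma\cup K)$ rather than of $B_r(x)\setminus K$, use the containment $\partial U\subset\gamma\cup K$ to rule out any extraneous trace on $\partial B_r(x)$, and when the relevant component sits outside the disk simply replace the inner cut-off with the outer one --- the resulting flux integral over $\gamma$ then vanishes regardless of the side. A secondary point: the sign of $\partial u/\partial n$ flips in the exterior case, but since the conclusion is that the integral is zero this is harmless, which is why both cases yield \eqref{e:Bonnet}.
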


\begin{figure}
\begin{tikzpicture}
\draw[>=stealth,->] (0,2) -- (0,2.8);
\node[right] at (0,2.4) {$n$};
\draw (0,0) circle [radius = 2];
\draw[very thick] (-0.5,2.4) to [out=290, in=180] (0.2,-1) to [out=0, in=180] (0.7,-0.5) to [out=0, in=240] (2.2,0.5);
\node at (0.3,0.3) {$U$};
\node[above right] at ({sqrt(2)},{sqrt(2)}) {$\gamma$};
\end{tikzpicture}
\begin{tikzpicture}
\draw[>=stealth,->] (0,2) -- (0,2.8);
\node[right] at (0,2.4) {$n$};
\draw (0,0) circle [radius = 2];
\draw[very thick] (-0.5,1.5) to [out=135, in=180] (0.2,4) to [out=0, in=180] (0.7,3.5) to [out=45, in=0] (1.5,0.5);
\node at (1,2.3) {$U$};
\node[above right] at ({sqrt(2)},{sqrt(2)}) {$\gamma$};
\end{tikzpicture}

\caption{The arc $\gamma$ is the set $\overline{U}\cap \partial B_r (x)$ and since its two endpoints belong to the same connected component of $K$, both conclusions of Corollary~\ref{c:Bonnet} apply.
On the left the case $U\subset B_r(x)$, on the right the case 
$U\subset\Omega\setminus \overline{B}_r(x)$.}
\label{figura-4}
\end{figure}
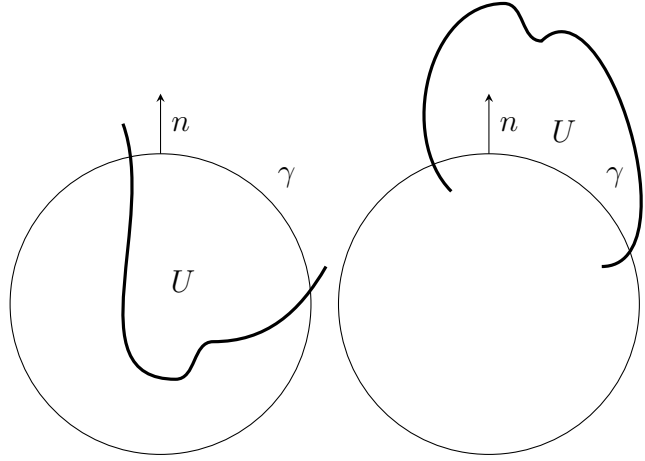

\begin{proof} Without loss of generality we assume $x=0$. We will prove the claims for any radius $r$ with the following property. First of all define the domain
\[
\hat{K}:= K \cup \{y\in \Omega: {\textstyle{\frac{ry}{|y|}}}\in K\}
\]
and we require that it is a Lebesgue null set. This is certainly the case for all $r$ such that $\partial B_r\cap K$ is finite. Additionally we require that $u|_{\partial B_r (x)\setminus K}\in W^{1,2} (\partial B_r (x)\setminus K)$ and that
\begin{equation}\label{e:coroncina}
\lim_{\delta\downarrow 0} \frac{1}{\delta} \int_{B_{r+\delta}\setminus (B_{r-\delta}\cup \hat{K})} \left(|\nabla u (x) - 
\nabla u ({\textstyle{\frac{rx}{|x|}}})|^2 + |u (x) - u ({\textstyle{\frac{rx}{|x|}}})|^2\right)\, dx =0\, .
\end{equation}
This is again a property which certainly holds for a.e. $r$.

Let us first deal with the second statement and fix a corresponding $\gamma$. Then there is one connected component $U$ of $\Omega\setminus (\gamma \cup K)$ with the property that $U\subset \subset \Omega$ and $\partial U$ contains $\gamma$. 

There are now two possibilities, either $U\subset B_r (x)$, or $U\subset \Omega\setminus \overline{B}_r (x)$. In the first we case we define 
\begin{equation}\label{e:psi_delta}
\psi_\delta (x)
:= \left\{\begin{array}{ll}
1 & \mbox{for $|x|\leq 1-\delta$}\\
1-\frac1\delta (|x|-1 +\delta) &\mbox{for $1-\delta \leq |x| \leq 1$}\\
0 &\mbox{otherwise,}
\end{array}
\right.
\end{equation}
while in the second we define
\[
\psi_\delta (x)
:= \left\{\begin{array}{ll}
0 & \mbox{for $|x|\leq 1$}\\
\frac1\delta (|x|-1) &\mbox{for $1 \leq |x| \leq 1 + \delta$}\\
1 &\mbox{otherwise.}
\end{array}
\right.
\]
We then define the function $\chi$ on $\Omega\setminus (K\cup \gamma)$ as constantly equal to $1$ on $U$ and constantly equal to $0$ otherwise, and we set $\varphi = \chi \psi_\delta(\frac{\cdot}{r})$. 
We then test \eqref{e:outer} with $\varphi$ and let $\delta\downarrow 0$. Using \eqref{e:coroncina} we
easily get \eqref{e:Bonnet} in the limit.

As for the first statement, we test \eqref{e:outer} with $\chi u \psi_\delta(\frac{\cdot}{r})$ and let $\delta\downarrow 0$, with $\psi_\delta$ as in \eqref{e:psi_delta}. The proof is entirely analogous and we leave the details to the reader.
\end{proof}

\subsection{Truncated tests}
While obviously we cannot plug in \eqref{e:inner} a test field which is not compactly supported, a standard approximation argument allows to derive an equivalent identity in that case too. This fact will play a pivotal role in the proof of several theorems described in these notes: specific choices of the test field will in fact deliver some remarkable identities. 
Analogous results can be inferred from \eqref{e:outer}, as done for instance in 
Corollary~\ref{c:Bonnet}. We do not work out the details since we do not need those 
results in these notes.   
\begin{proposition}\label{p:boundary-variations}
Let $(u,K)$ be a critical point of $E_\param$ in $\Omega$ and $y\in \Omega$. For a.e. $r\in (0, \dist (y, \partial \Omega))$ the following identity holds for every $\eta\in C^1 (\overline{B}_r(y), \mathbb R^2)$ 
\begin{align}\label{e:int-variation-bdry} 
&\int_{B_r (y) \setminus K} \big(|\nabla u|^2\operatorname{div} \eta 
-2 \nabla u^T \cdot D \eta\, \nabla u\big)
+ \int_{B_r (y)\cap K} e^T \cdot D \eta\, e   \, d\mathcal{H}^1 \, \nonumber\\
= &\int_{\partial B_r (y) \setminus K}\left(|\nabla u|^2 \eta \cdot n - 2\frac{\partial u}{\partial n}\nabla u\cdot\eta
\right) d\mathcal{H}^1 +
\sum_{p\in K \cap \partial B_r (y)} e(p) \cdot \eta (p)\nonumber\\
& +2\param \int_{B_r(y)\setminus K}(u-g)\nabla u\cdot\eta
+\param \int_{B_r(y)\cap K}\big(|u^+-g_K|^2-|u^--g_K|^2\big)
\eta\cdot \nu\, d\mathcal{H}^1 \, ,
\end{align}
where $n (x) = \frac{x-y}{|x-y|}$\index[simb]{aaln@$n$} is the exterior unit normal to the circle centered in $y$,
$e(p)$ is the tangent unit vector to $K$ at $p$ such that $e (p)\cdot n (p) >0$, and $\nu(p)=e(p)^\perp$.
\end{proposition}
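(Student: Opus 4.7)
The strategy is to plug into the inner-variation identity \eqref{e:inner} a sequence of compactly supported Lipschitz test fields obtained by multiplying a $C^1$ extension of $\eta$ by a radial cut-off localising in $B_r(y)$, and then to identify the boundary terms that survive in the limit. Throughout we assume $B_r(y)\subset\subset\Omega$, which is all we need since $r<\dist(y,\partial\Omega)$.

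\emph{Setup and decomposition.} Extend $\eta$ to a compactly supported $C^1$ field $\tilde\eta$ on $\Omega$. For small $\delta>0$ set
\[
\chi_\delta(x):=\begin{cases} 1, & |x-y|\leq r-\delta,\\ \delta^{-1}(r-|x-y|), & r-\delta\leq |x-y|\leq r,\\ 0, & |x-y|\geq r,\end{cases}
\]
so that $\nabla\chi_\delta(x)=-\delta^{-1}\mathbf{1}_{A_\delta}(x)\,n(x)$ on $A_\delta:=B_r(y)\setminus\overline B_{r-\delta}(y)$, and let $\psi_\delta:=\chi_\delta\,\tilde\eta$. Since both sides of \eqref{e:inner} are continuous in $\psi$ with respect to $W^{1,\infty}$ convergence with uniformly bounded support, a routine mollification of $\chi_\delta$ shows that \eqref{e:inner} is valid for such Lipschitz test fields. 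Writing $D\psi_\delta=\chi_\delta D\tilde\eta+\tilde\eta\otimes\nabla\chi_\delta$ and $\operatorname{div}\psi_\delta=\chi_\delta\operatorname{div}\tilde\eta+\tilde\eta\cdot\nabla\chi_\delta$, plugging $\psi_\delta$ into \eqref{e:inner} splits the identity into $\mathrm I_\delta+\mathrm{II}_\delta=\mathrm{III}_\delta$, where $\mathrm I_\delta$ contains the terms with the factor $\chi_\delta$, $\mathrm{II}_\delta$ contains the terms with the factor $\nabla\chi_\delta$, and $\mathrm{III}_\delta$ is the $\param$-piece. By dominated convergence $\mathrm I_\delta$ converges to the left-hand side of \eqref{e:int-variation-bdry}, and $\mathrm{III}_\delta$ converges to the last two terms on the right-hand side of \eqref{e:int-variation-bdry}.

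\emph{Volume part of $\mathrm{II}_\delta$.} This contribution equals
\[
-\delta^{-1}\!\int_{A_\delta\setminus K}\!\bigl(|\nabla u|^2\,\tilde\eta\cdot n-2(\nabla u\cdot\tilde\eta)(\nabla u\cdot n)\bigr)dx.
\]
For a.e. $r\in(0,\dist(y,\partial\Omega))$ the one-sided traces of $u$ on $\partial B_r(y)\setminus K$ lie in $W^{1,2}$ (by slicing), and $r$ is a Lebesgue point of the radial-slice map $t\mapsto\int_{\partial B_t(y)\setminus K}(\cdots)\,d\HH^1$. Lebesgue differentiation therefore yields in the limit the boundary integral on $\partial B_r(y)\setminus K$ in \eqref{e:int-variation-bdry}, with the correct minus sign.

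\emph{$K$-part of $\mathrm{II}_\delta$ via coarea.} The remaining piece of $\mathrm{II}_\delta$ is $-\delta^{-1}\!\int_{K\cap A_\delta}(e\cdot\tilde\eta)(e\cdot n)\,d\HH^1$. Since $K$ is $1$-rectifiable with $\HH^1(K\cap B_R(y))<\infty$ for each $R<\dist(y,\partial\Omega)$, the tangential Jacobian on $K$ of $h(x):=|x-y|$ is $|e\cdot n|$. The coarea formula for rectifiable sets gives
\[
\int_{K\cap A_\delta}(e\cdot\tilde\eta)(e\cdot n)\,d\HH^1=\int_{r-\delta}^{r}\sum_{p\in K\cap\partial B_t(y)}(e\cdot\tilde\eta)(p)\,dt,
\]
where at each $p$ we orient $e$ so that $e\cdot n>0$ (the product $(e\cdot\tilde\eta)(e\cdot n)$ is orientation-independent, hence this rewriting is consistent). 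Since $\int_0^{R}\#(K\cap\partial B_t(y))\,dt\leq\HH^1(K\cap B_R(y))<\infty$, the slice $K\cap\partial B_t(y)$ is finite and transverse for a.e. $t$, and the slice sum is an $L^1_{loc}$ function of $t$. Hence for a.e. $r$ the above ratio converges to $\sum_{p\in K\cap\partial B_r(y)}(e\cdot\eta)(p)$.

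\emph{Conclusion and main obstacle.} Combining the three limits, the equality $\mathrm I_\delta+\mathrm{II}_\delta=\mathrm{III}_\delta$ rearranges in the limit to \eqref{e:int-variation-bdry} (note that $\tilde\eta$ can be replaced by $\eta$ since all terms are evaluated on $\overline B_r(y)$). The delicate point of this plan is the coarea identification in Step 4: one has to compute the tangential Jacobian of $|\cdot-y|$ on the rectifiable set $K$, reorient $e$ consistently so that the orientation-dependent signs on discrete slice sums match those in the statement, and verify that the generic radius is a Lebesgue point of the slice sum. The volume layer in Step 3 is the standard slicing argument for $W^{1,2}$ functions across concentric circles, which is routine.
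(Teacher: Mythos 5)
Your plan follows essentially the same route as the paper's own proof: take the radial cut-off $\chi_\delta$, multiply by a compactly supported extension $\tilde\eta$ of $\eta$, justify the use of this Lipschitz field in \eqref{e:inner} by mollification, split into the bulk term, the $\nabla\chi_\delta$-layer, and the fidelity term, and identify the layer contribution via a Lebesgue-point argument for the volume part and the coarea formula for the $K$-part. The coarea computation, including the orientation bookkeeping that makes $(e\cdot\tilde\eta)(e\cdot n)$ orientation-independent so that $e$ can be normalized by $e\cdot n>0$, is correct and matches the paper.

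There is, however, a genuine gap. The statement asserts that there is a single full-measure set of radii $r$ for which \eqref{e:int-variation-bdry} holds \emph{for every} $\eta\in C^1(\overline{B}_r(y),\mathbb R^2)$, whereas your argument, as written, fixes $\eta$ first and then discards an $\eta$-dependent null set of radii: both the Lebesgue-point requirement for $t\mapsto\int_{\partial B_t\setminus K}(|\nabla u|^2\tilde\eta\cdot n-2(\nabla u\cdot\tilde\eta)(\nabla u\cdot n))\,d\mathcal H^1$ and the Lebesgue-point requirement for the coarea slice sum $t\mapsto\sum_{p\in K\cap\partial B_t}(e\cdot\tilde\eta)(p)$ depend on $\tilde\eta$. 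To close the quantifier, the paper selects a countable family $\{\eta_j\}$ dense in $C^1_c(\Omega;\mathbb R^2)$, intersects the corresponding full-measure sets of good radii, and then extends to an arbitrary $\eta$ by $C^1$-approximation, using that, for a good radius $r$, both sides of \eqref{e:int-variation-bdry} are continuous in $\eta$ for the $C^1$ topology (this is where finiteness of $K\cap\partial B_r$ and of the boundary energy on $\partial B_r$ enter). You need to add this step.

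A smaller imprecision: the claim that both sides of \eqref{e:inner} are continuous in $\psi$ with respect to $W^{1,\infty}$ convergence is not accurate as stated, because the mollified fields $\psi_{\delta,\varepsilon}$ do not converge to $\psi_\delta$ in $W^{1,\infty}$ (the derivative $D\psi_\delta$ has jumps along $\partial B_{r-\delta}$ and $\partial B_r$). What is needed, and suffices, is a uniform $W^{1,\infty}$ bound plus pointwise a.e. convergence of $D\psi_{\delta,\varepsilon}$ to $D\psi_\delta$ on $K$; for this one must first ensure $\mathcal H^1(K\cap(\partial B_{r-\delta}\cup\partial B_r))=0$, which by coarea holds for a.e. $\delta$ and $r$. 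This is exactly the measure-theoretic preliminary the paper invokes before passing to the sharp cut-off.
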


\begin{proof}
We follow essentially the proof of \cite{DFG}. In that reference we take advantage of the regularity theory to avoid technicalities. In principle we could still consistently follow the same approach: the regularity needed to carry over the proof of \cite{DFG} relies only on case (b) of Theorem~\ref{t:main}, while Proposition~\ref{p:boundary-variations} will be used only in the proof of the cases (a) and (c). However, in order to keep our notes streamlined, we give here a proof which does not rely on any regularity. Moreover, since the proof in the case of $\param >0$ is just a minor adjustment, we will focus on the case of $E_0$. 

Fix $r>0$ and $\eta\in C^\infty_c (\Omega, \mathbb R^2)$ and without loss of generality assume $y=0$. Consider $\psi_\delta$ as in \eqref{e:psi_delta}. We would like to plug $\psi_{\delta}(\frac{\cdot}{r}) \eta$ in \eqref{e:inner}, however the latter is only Lipschitz continuous. We therefore first plug in \eqref{e:inner} a suitable smoothed version $\psi_{\delta, \varepsilon}(\frac{\cdot}{r}) \eta$, where $\psi_{\delta,\varepsilon}$ could for instance be the convolution of $\psi_{\delta}$ with a standard smooth kernel $\varphi_\epsilon$. We then wish to pass to the limit as $\varepsilon\downarrow 0$. Observe first of all that the first summand in both the left and the right hand sides of \eqref{e:inner} would carry to the obvious limits, respectively.
Next recall that, by the coarea formula \cite[Theorem 2.93]{AFP00}, for $\mathcal{H}^1$-a.e. $r$ the intersection of $K$ with $\partial B_r$ is finite. In particular, for $\mathcal{H}^1$-a.e. $\delta$ the function $\psi_{\delta}(\frac{\cdot}{r}) \eta$ is differentiable $\mathcal{H}^1$-a.e. on $K$ and the
second term on the left hand side of \eqref{e:inner} would then make sense. It is also a simple measure theoretic exercise to see that the identity \eqref{e:inner} in fact holds under that assumption. We then can explicitly compute
\begin{align*} 
&\int_{B_r\setminus K} \psi_{\delta}({\textstyle{\frac{x}{r}}}) (|\nabla u|^2 \operatorname{div} \eta - 2 \nabla u^T \cdot D \eta\, \nabla u) + 
\int_{B_r \cap K} \psi_{\delta}({\textstyle{\frac{x}{r}}})
e^T \cdot D \eta\, e 
\, d\mathcal{H}^1 \,\nonumber\\
=&\frac{1}{r\delta} \int_{B_{r}\setminus B_{r(1-\delta)}} \left(|\nabla u|^2 \eta\cdot {\textstyle{\frac{x}{|x|}}}
- 2 (\eta \cdot\nabla u)(\nabla u\cdot {\textstyle{\frac{x}{|x|}}})\right)
+ \frac{1}{r\delta} \int_{K\cap B_{r}\setminus B_{r (1-\delta)}} (e \cdot \eta) (e\cdot {\textstyle{\frac{x}{|x|}}})\, d\mathcal{H}^1 \nonumber\\
& +2\param \int_{B_r\setminus K}\psi_{\delta}({\textstyle{\frac{x}{r}}})(u-g)\nabla u\cdot\eta
+\param\int_{B_r\cap K}\psi_{\delta}({\textstyle{\frac{x}{r}}})
\big(|u^+-g_K|^2-|u^--g_K|^2\big)\eta\cdot
\nu\,d\mathcal{H}^1 \, .
\end{align*}
The terms on the left hand side in the equation above and the third and fourth integrals on the right hand side converge as $\delta\downarrow 0$ to the obvious limits by dominated convergence.
The first term on the right hand side converge to the obvious limit for every $r$ which satisfies \eqref{e:coroncina} as in the proof of Corollary~\ref{c:Bonnet}. 
The remaining term on the right hand side can be written in the following way using the coarea formula \cite[Theorem 2.93]{AFP00}:
\[
\frac{1}{r\delta}\int_{r(1-\delta)}^r \Big(\sum_{p\in K \cap \partial B_s} e(p) \cdot \eta (p)\Big)
\, ds\, .
\]
We then observe that, by standard measure theory, for a.e. $r$ the integral converges, as $\delta\downarrow 0$, to the corresponding one in \eqref{e:int-variation-bdry}. We thus conclude that \eqref{e:int-variation-bdry} holds for a.e. $r$. 

The argument above makes, however, the choice of the radii dependent on the fixed smooth vector field $\eta$. In order to gain a set of full measure for which \eqref{e:int-variation-bdry} is valid for every test $\eta$ we use the following standard argument. We first select a countable family $\{\eta_j\}\subset C^\infty (\Omega, \mathbb R^2)$ which is dense in the $C^1$ topology in the space $C^1_c (\Omega, \mathbb R^2)$. Hence we observe that the above argument gives a set of full measure of radii $r$ in $(0, \dist (y, \partial \Omega))$ for which \eqref{e:int-variation-bdry} is valid for every $\eta_j$. Next we fix an $r$ in this set and an $\eta\in C^1 (\overline{B}_r (y), \mathbb R^2)$. We extend the latter to a vector field in $C^1_c (\Omega,\mathbb R^2)$, hence select a sequence $\eta_{j_n}$ such that $\|\eta_{j_n}-\eta\|_{C^1}\to 0$ and derive \eqref{e:int-variation-bdry} as limit of the corresponding identities for $\eta_{j_n}$.
\end{proof}
We collect next specific choices of the test field which will be crucial in several instances (cf. Lemma~\ref{l:DLMS}, Proposition~\ref{p:monotonia-1}, Proposition~\ref{p:linearizzazione}, Proposition~\ref{p:tre-anelli_odd}).
\begin{corollary}\label{c:boundary-variations}
Let $(u,K)$, $y$, and $r$ be a as in Proposition~\ref{p:boundary-variations}.
If $\eta\in C^1 (\overline{B}_r, \mathbb R^2)$ is conformal, then
\begin{align} 
&\int_{B_r(y)\cap K} e^T \cdot D\eta \,  e \, d\mathcal{H}^1
= \int_{\partial B_r(y) \setminus K} \left(|\nabla u|^2 \eta \cdot n - 2\frac{\partial u}{\partial n}
\nabla u\cdot\eta\right)d\mathcal{H}^1 +
\sum_{p\in K \cap \partial B_r(y)} e(p) \cdot \eta (p)\notag\\
&+2\param \int_{B_r(y)\setminus K}(u-g)\nabla u\cdot\eta
+\param \int_{B_r(y)\cap K} \big(|u^+-g_K|^2-|u^--g_K|^2\big)\eta\cdot \nu\, d\mathcal{H}^1 \, .\label{e:conformal} 
\end{align}
In particular, for every constant vector $v\in\mathbb{R}^2$ we have 
\begin{align}\label{e:translations}
0 &= \int_{\partial B_r(y) \setminus K}  \left(|\nabla u|^2 v \cdot n - 2\frac{\partial u}{\partial n}\frac{\partial u}{\partial v}\right)d\mathcal{H}^1 +\sum_{p\in K \cap \partial B_r(y)} e(p) \cdot v\notag\\
& +2\param \int_{B_r(y)\setminus K}(u-g)\nabla u\cdot v + \param \int_{\partial B_r(y)\cap K}\big(|u^+-g_K|^2-|u^--g_K|^2\big)u\cdot \nu\, d\mathcal{H}^1\, ,
\end{align}
and if $\tau(x)=\frac{(x-y)^\perp}{|x-y|}$, then
\begin{align}\label{e:rotations}
&0= - 2 \int_{\partial B_r(y)\setminus K}\frac{\partial u}{\partial n}\frac{\partial u}{\partial \tau}d\mathcal{H}^1
+\sum_{p \in K \cap \partial B_r(y)} e(p) \cdot \tau(p)
+ 2 \param \int_{B_r(y)\setminus K}(u-g)\nabla u\cdot \tau
\,.
\end{align}
\end{corollary}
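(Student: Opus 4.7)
All three identities follow from Proposition~\ref{p:boundary-variations} by inserting specific test vector fields into \eqref{e:int-variation-bdry}, the unifying idea being that conformality of $\eta$ kills the interior bulk term of \eqref{e:int-variation-bdry}.

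For \eqref{e:conformal}, recall that in two dimensions a $C^1$ vector field $\eta$ on $\overline{B}_r(y)$ is conformal if and only if
\[
D\eta + D\eta^T \;=\; (\operatorname{div} \eta)\,\mathrm{Id}\, .
\]
Contracting twice with $\nabla u$ on $B_r(y)\setminus K$ gives the pointwise identity
\[
2\,\nabla u^T \cdot D\eta\,\nabla u \;=\; \nabla u^T\,(D\eta + D\eta^T)\,\nabla u \;=\; (\operatorname{div} \eta)\,|\nabla u|^2\, ,
\]
so the first volume integrand on the left-hand side of \eqref{e:int-variation-bdry} vanishes identically on $B_r(y)\setminus K$. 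Substituting back into \eqref{e:int-variation-bdry} leaves exactly \eqref{e:conformal}.

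For \eqref{e:translations}, I would apply \eqref{e:conformal} with $\eta \equiv v$ constant; since $D\eta \equiv 0$, this field is trivially conformal and moreover makes the $\mathcal{H}^1$-integrand $e^T\cdot D\eta\,e$ vanish identically on $K$. The remaining substitutions $\eta\cdot n = v\cdot n$, $\nabla u\cdot \eta = \partial u/\partial v$ and $\eta\cdot\nu = v\cdot \nu$ are immediate and yield \eqref{e:translations}. For \eqref{e:rotations}, I would apply \eqref{e:conformal} with $\eta(x) = (x-y)^\perp$. Then $D\eta$ is the constant antisymmetric matrix $\bigl(\begin{smallmatrix}0 & -1\\ 1 & 0\end{smallmatrix}\bigr)$, hence $\eta$ is conformal with $\operatorname{div} \eta = 0$ and $e^T \cdot D\eta\, e = 0$ on $K$ by antisymmetry. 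The geometric identities that then carry the computation are $\eta\cdot n = (x-y)^\perp\cdot \tfrac{x-y}{r} = 0$ on $\partial B_r(y)$, and $\eta = r\,\tau$ on $\partial B_r(y)$, which convert $\nabla u \cdot \eta = r\,\partial u/\partial \tau$ and $e(p)\cdot \eta(p) = r\,e(p)\cdot \tau(p)$ at points of the boundary circle. Dividing the resulting identity through by $r$ reproduces \eqref{e:rotations}.

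I do not expect any serious obstacle here: the argument is purely algebraic, the only real insight being that constant vector fields and the rotation generator $x\mapsto (x-y)^\perp$ are conformal Killing fields of the Euclidean plane, so that \eqref{e:conformal} specializes to them particularly cleanly. The one bookkeeping point to keep in mind is to correctly rescale by $r$ on $\partial B_r(y)$ when passing from $\eta = (x-y)^\perp$ to $\tau$ in the rotation case.
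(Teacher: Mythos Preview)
Your proposal is correct and takes essentially the same approach as the paper. The paper's proof phrases the key cancellation via $D\eta(x)=\vartheta(x)\,\mathbb{O}(x)$ with $\mathbb{O}(x)\in O(2)$, while you use the equivalent conformal-Killing identity $D\eta+D\eta^T=(\operatorname{div}\eta)\,\mathrm{Id}$; the specializations to $\eta\equiv v$ and $\eta(x)=(x-y)^\perp$ for \eqref{e:translations} and \eqref{e:rotations} are exactly what the paper does.
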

\begin{proof} Recall that, if 
 $\eta\in C^1 (\overline{B}_r, \mathbb R^2)$ is conformal, then $D\eta(x)=\vartheta(x)\mathbb{O}(x)$,
 with $\vartheta(x)>0 $ and
 $\mathbb{O}(x)\in O(2)$ for all $x\in \overline{B}_r$: in particular $|\xi|^2 {\rm div}\, \eta - 2 \xi^T \, D\eta \, \xi$ vanishes identically for every vector $\xi$, leading immediately to formula \eqref{e:conformal}. The identities in
 \eqref{e:translations} and \eqref{e:rotations} are then simple consequences of \eqref{e:conformal} after we choose $\eta(x)\equiv v$ and
 $\eta(x)=(x-y)^\perp$, respectively.
\end{proof}

\subsection{The factor of the cracktip}
Consider the pair $(u,K)$ given by $K= \mathbb R^+$ and, in polar coordinates, the function on
$\mathbb R^2\setminus K$ given by
\begin{equation}\label{e:radice}
u (\theta, r) = br^{\frac{1}{2}} \cos \frac{\theta}{2}\, .
\end{equation}
It is straightforward to see that $(u,K)$ is a critical point of $E_0$ on any set $\Omega = B_r\setminus B_\delta$ for positive $r$ and $\delta$ by taking advantage of Proposition~\ref{p:variational identities 2}
Note however that $K$ is not any more smooth at the tip: in a suitable variational sense the curvature of $K$ is singular at the origin. This singularity must be somewhat balanced by the variation of the Dirichlet energy and, remarkably, one outcome is that the constant $b$ is then determined up to sign. Proposition~\ref{p:boundary-variations} gives indeed a very short proof.

\begin{proposition}\label{p:constant-crack-tip}
Assume $K = \mathbb R^+$ and $u$ is given by \eqref{e:radice}. If $(u,K)$ is a critical point of $E_0$, 
then $b^2=\frac{2}{\pi}$.
\end{proposition}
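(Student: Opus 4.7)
The plan is to apply the variational identity \eqref{e:translations} of Corollary \ref{c:boundary-variations} with $\lambda = 0$, on a ball $B_r$ centered at the origin, with the constant vector $v = (1,0)$ pointing along $K$. Since $K\cap \partial B_r = \{(r,0)\}$ is a single point and the tangent $e(p)$ there, oriented so that $e(p)\cdot n(p)>0$, is exactly $(1,0)=v$, the sum contribution on the right hand side reduces to $e(p)\cdot v = 1$.

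The second step is to compute the boundary integrand for the explicit $u$. In polar coordinates
\[
\partial_r u = \tfrac{b}{2}r^{-1/2}\cos(\theta/2),\qquad \tfrac{1}{r}\partial_\theta u = -\tfrac{b}{2}r^{-1/2}\sin(\theta/2),
\]
so $|\nabla u|^2 = \frac{b^2}{4r}$. On $\partial B_r$ one has $n=(\cos\theta,\sin\theta)$, hence $v\cdot n = \cos\theta$ and $\tfrac{\partial u}{\partial n} = \partial_r u$. Using $\partial_{x_1} = \cos\theta\,\partial_r - \tfrac{\sin\theta}{r}\partial_\theta$ I get
\[
\tfrac{\partial u}{\partial v} = \partial_{x_1} u = \tfrac{b}{2}r^{-1/2}\bigl(\cos\theta\cos(\theta/2)+\sin\theta\sin(\theta/2)\bigr)
= \tfrac{b}{2}r^{-1/2}\cos(\theta/2).
\]
Combining and using the identity $2\cos^2(\theta/2) = 1+\cos\theta$, the integrand collapses to
\[
|\nabla u|^2 v\cdot n - 2\tfrac{\partial u}{\partial n}\tfrac{\partial u}{\partial v}
= \tfrac{b^2}{4r}\bigl(\cos\theta - (1+\cos\theta)\bigr) = -\tfrac{b^2}{4r}.
\]

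Finally, integrating $-\tfrac{b^2}{4r}$ against $r\,d\theta$ over $\theta\in(0,2\pi)$ yields $-\pi b^2/2$, so \eqref{e:translations} becomes
\[
0 = -\tfrac{\pi b^2}{2} + 1,
\]
which gives $b^2 = 2/\pi$. (By Corollary \ref{c:boundary-variations} the identity holds for a.e.\ $r$; since both sides are independent of $r$ in this computation, any admissible $r$ suffices.)

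There is essentially no obstacle in this argument; the only subtlety is the choice of test field. Picking $v$ \emph{parallel} to the crack is what simultaneously makes the crack contribution a clean unit ($e\cdot v = 1$) and produces the trigonometric cancellation that leaves a constant integrand. Any other direction for $v$ (or the rotational identity \eqref{e:rotations}) would either produce a vanishing tangential contribution or generate a nonzero but less transparent angular integral; the translation along $K$ is precisely the variation that balances the "singular curvature" at the tip against the Dirichlet energy and thus fixes the constant.
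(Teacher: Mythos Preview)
Your proof is correct and follows the same approach as the paper: both apply the translation identity \eqref{e:translations} with $\lambda=0$, $y=0$, and $v=(1,0)$, exploit that $K\cap\partial B_r$ is a single point with $e(p)\cdot v=1$, and compute the boundary integral explicitly. Your organization is in fact slightly cleaner: rather than first discarding the $|\nabla u|^2\,v\cdot n$ term (whose angular integral vanishes) and then computing $\int \tfrac{\partial u}{\partial n}\tfrac{\partial u}{\partial v}$ separately as the paper does, you combine both pieces and use $\cos(\theta-\theta/2)=\cos(\theta/2)$ and $2\cos^2(\theta/2)=1+\cos\theta$ to collapse the full integrand to the constant $-\tfrac{b^2}{4r}$, making the final integration trivial.
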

\begin{proof}
We use identity \eqref{e:translations} in Corollary~\ref{c:boundary-variations} 
on $B_r (x)= B_1 (0)$ with vector $v=(1,0)$.
On the other hand, using polar coordinates, $|\nabla u|^2 (\theta, 1)= \frac{b^2}{4}$, while $\eta\cdot n = \cos \theta$ and hence \eqref{e:int-variation-bdry} becomes
\begin{equation}\label{e:semplice-1}
1 = 2 \int_{\partial B_1\setminus \{(1,0)\}} \frac{\partial u}{\partial n} \frac{\partial u}{\partial \eta}\, .
\end{equation}
We then compute
\begin{align}
\frac{\partial u}{\partial n} (1, \theta) & = \frac{b}{2} \cos \frac{\theta}{2}\\
\frac{\partial u}{\partial \theta} (1, \theta) &=-\frac{b}{2} \sin \frac{\theta}{2}\\
\frac{\partial u}{\partial \eta} (1, \theta) &= \cos \theta \frac{\partial u}{\partial n} (1, \theta) - \sin \theta \frac{\partial u}{\partial \theta} (1, \theta)=
\frac{b}{2} \left(\cos \theta \cos \frac{\theta}{2} + \sin \theta \sin \frac{\theta}{2}\right)\, .
\end{align}
So the right hand side of \eqref{e:semplice-1} equals
\begin{align*}
& 2 \int_0^{2\pi} \frac{b^2}{4} \left(\cos^2 \frac{\theta}{2} \cos \theta + \cos \frac{\theta}{2} \sin \frac{\theta}{2} \sin \theta\right)\,  d\theta
= \frac{b^2}{2} \int_0^{2\pi} \left(\cos \theta \frac{\cos \theta+1}{2} + \frac{\sin^2 \theta}{2}\right)\, d\theta\\
= & \frac{b^2}{4} \int_0^{2\pi} (1+\cos \theta)\, d\theta = \frac{\pi b^2}{2}\, ,
\end{align*}
and inserting it in \eqref{e:semplice-1} we achieve $b^2 = \frac{2}{\pi}$.
\end{proof}
We remark that \cite[Proposition~2.1]{LemMik18} provides a proof that
$(u,K)$, with $u$ defined in \eqref{e:radice} and $K=\mathbb R^+$, is a critical point of $E_0$ 
on $\mathbb{R}^2$.

\subsection{L\'eger's ``magic formula''}

A remarkable discovery of L\'eger in \cite{Leger} is a closed singular integral formula for 
\[
\left(\frac{\partial u}{\partial x}-i\frac{\partial u}{\partial y}\right)^2
\]
when $(u,K)$ is a global critical point of the Mumford-Shah functional. This formula will not be used in our notes, but since it can be seen as a simple and direct consequence of the inner variation identity, we include a very short proof in this section.

\begin{proposition}\label{p:Leger-magic}
Assume $(u,K)$ is a global generalized restricted minimizer. Then the following formula holds for every $(x_0,y_0)\not \in K$:
\begin{equation}\label{e:Leger-magic}
\left(\frac{\partial u}{\partial x} - i \frac{\partial u}{\partial y}\right)^2 (x_0,y_0) = - \frac{1}{2\pi} \int_K \frac{d\mathcal{H}^1 (x,y)}{((x-x_0) + i (y - y_0))^2} \, .
\end{equation}
\end{proposition}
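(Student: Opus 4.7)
The plan is to apply the inner-variation identity \eqref{e:int-variation-bdry} (valid for generalized minimizers of $E_0$ by Propositions \ref{p:variational identities} and \ref{p:boundary-variations}, with $\lambda=0$) on a large ball $B_R(z_0)$, $z_0=x_0+iy_0$, using the Cauchy kernel $\psi(z)=1/(z-z_0)$ as test vector field. Because $\psi$ is holomorphic, $D\psi(z)=\bigl(\begin{smallmatrix}a&-b\\ b&a\end{smallmatrix}\bigr)$ with $a+ib=\psi'(z)=-(z-z_0)^{-2}$; hence $\psi$ is conformal in the sense of Corollary \ref{c:boundary-variations} (so that the bulk Dirichlet integrand $|\nabla u|^2\operatorname{div}\psi - 2\nabla u^T D\psi\,\nabla u$ vanishes pointwise), and for every unit tangent $e$ one has $e^T D\psi\,e = a = -\mathrm{Re}\bigl((z-z_0)^{-2}\bigr)$, independently of $e$. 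The two difficulties are the pole at $z_0$ and the lack of compact support; I handle them by cutting off at both ends.

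Fix $\epsilon>0$ small enough that $B_{2\epsilon}(z_0)\cap K=\emptyset$ (possible since $z_0\notin K$ is closed), and let $\chi_\epsilon$ be a smooth radial cutoff with $\chi_\epsilon\equiv 0$ on $B_\epsilon(z_0)$, $\chi_\epsilon\equiv 1$ outside $B_{2\epsilon}(z_0)$. Apply \eqref{e:int-variation-bdry} on $B_R(z_0)$ with $\eta_\epsilon:=\chi_\epsilon\psi\in C^1(\overline B_R(z_0))$. The $K$-integral collapses to $-\int_{K\cap(B_R(z_0)\setminus B_{2\epsilon}(z_0))}\mathrm{Re}((z-z_0)^{-2})\,d\mathcal{H}^1$, while the Dirichlet integrand is supported in the annulus $A_\epsilon=B_{2\epsilon}(z_0)\setminus B_\epsilon(z_0)$, where $u$ is harmonic and $K$-free. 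On $A_\epsilon$, writing the Dirichlet integrand as $\partial_j(T_{ij}\eta_\epsilon^i)$ with $T_{ij}=|\nabla u|^2\delta_{ij}-2\partial_i u\partial_j u$ (divergence-free for harmonic $u$) and applying Stokes, only the outer boundary $\partial B_{2\epsilon}(z_0)$ contributes since $\eta_\epsilon\equiv 0$ on $\partial B_\epsilon(z_0)$. Parametrizing by $\theta$ I compute $\psi\cdot n=\cos(2\theta)/(2\epsilon)$, $\nabla u\cdot\psi=(u_x\cos\theta-u_y\sin\theta)/(2\epsilon)$, $\partial_n u=u_x\cos\theta+u_y\sin\theta$, which after trigonometric simplification makes the integrand equal to $(u_y^2-u_x^2)/(2\epsilon)$. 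Combining with $d\mathcal{H}^1=2\epsilon\, d\theta$ and Taylor expanding $u$ at $z_0$ (noting that the $O(\epsilon)$ corrections vanish upon angular integration), I obtain
\[
\lim_{\epsilon\downarrow 0}\int_{A_\epsilon}\bigl(|\nabla u|^2\operatorname{div}\eta_\epsilon - 2\nabla u^T D\eta_\epsilon\,\nabla u\bigr)\,dx \;=\; 2\pi(u_y^2-u_x^2)(z_0) \;=\; -2\pi\,\mathrm{Re}\bigl((\partial_x u-i\partial_y u)^2\bigr)(z_0).
\]

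Next I drive $R\uparrow\infty$ in the boundary terms on $\partial B_R(z_0)$. Since $|\psi|=1/R$ there, the boundary integral is bounded by $(3/R)\int_{\partial B_R(z_0)}|\nabla u|^2\,d\mathcal{H}^1$ and the sum by $|K\cap\partial B_R(z_0)|/R$. The upper bound \eqref{e:upper bound} and Theorem \ref{t:dlb} give $\int_{B_R(z_0)}|\nabla u|^2\leq CR$ and $\mathcal{H}^1(K\cap B_R(z_0))\leq CR$; by the coarea formula, both $r\mapsto\int_{\partial B_r}|\nabla u|^2$ and $r\mapsto |K\cap\partial B_r|$ have averages bounded by $C$ on $(0,R)$, so there exists a sequence $R_k\uparrow\infty$ along which both boundary contributions tend to $0$. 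Passing first to the limit $\epsilon\downarrow 0$ (with $R=R_k$ fixed) and then along $R_k$ yields
\[
\mathrm{Re}\bigl((\partial_x u-i\partial_y u)^2\bigr)(z_0) \;=\; -\frac{1}{2\pi}\int_K \mathrm{Re}\bigl((z-z_0)^{-2}\bigr)\,d\mathcal{H}^1(z),
\]
the absolute convergence of the singular integral being guaranteed by a dyadic decomposition using $\mathcal{H}^1(K\cap B_{2^{k+1}}(z_0))\leq C\cdot 2^{k+1}$. Repeating the whole argument with the conformal test $\widetilde\psi(z)=i/(z-z_0)$ (so $\widetilde\psi'(z)=-i/(z-z_0)^2$ and $e^T D\widetilde\psi\,e = \mathrm{Im}((z-z_0)^{-2})$), an entirely analogous calculation produces an annular contribution converging to $-4\pi u_x u_y(z_0)=2\pi\,\mathrm{Im}((\partial_x u-i\partial_y u)^2)(z_0)$ and gives the imaginary-part identity. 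Adding real and imaginary parts delivers \eqref{e:Leger-magic}.

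The main obstacle is the $\epsilon\downarrow 0$ analysis on $\partial B_{2\epsilon}(z_0)$: one must keep track that the $\cos\theta$, $\sin\theta$ corrections from the Taylor expansion of $u$ integrate to zero, so that the coefficients $-2\pi$ (for the real part) and $-4\pi$ (for the imaginary part) emerge cleanly. The $R_k\uparrow\infty$ step, by contrast, is essentially forced by the energy upper bound combined with a standard Fubini/coarea selection.
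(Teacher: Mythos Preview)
Your proof is correct and follows essentially the same strategy as the paper: both test the inner variation identity against (a truncation of) the Cauchy kernel $1/(z-z_0)$, identified in the paper as the Lipschitz field $\varphi(|z|)(x,-y)$ with $\varphi(t)=t^{-2}-R^{-2}$. The differences are purely in packaging: the paper builds both the $\rho$- and $R$-cutoffs directly into a single compactly supported test in \eqref{e:inner-C1} (avoiding the need to select good radii $R_k$), and observes that the imaginary part follows from the real part by a $45^\circ$ rotation of coordinates rather than by repeating the computation with $i/(z-z_0)$.
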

\begin{remark}
Introducing the complex coordinate $z= x+iy$ the formula \eqref{e:Leger-magic} can be elegantly rewritten as:
\begin{equation}\label{e:Leger-magic-2}
\left(\frac{\partial u}{\partial z}\right)^2 (z_0) 
= - \frac{1}{8\pi} \int_K \frac{d\mathcal{H}^1 (w)}{(w-z_0)^2}\, .
\end{equation}
\end{remark}
\begin{remark}
The assumptions on the global minimality of $(u,K)$ can be considerably relaxed. The proof only uses the facts that the pair $(u,K)$ is a critical point and that the growth estimate
\begin{equation}\label{e:growth-Leger}
\mathcal{H}^1 (K\cap B_r) + \int_{B_r\setminus K} |\nabla u|^2 \leq C r
\end{equation}
holds for all sufficiently large disks.
\end{remark}

\begin{proof}
First of all, by translation invariance it suffices to prove the formula when $(x_0, y_0) = 0$. Next observe that the real part of \eqref{e:Leger-magic} reads
\begin{equation}\label{e:Leger-magic-real}
\left[\left(\frac{\partial u}{\partial x}\right)^2 - 
\left(\frac{\partial u}{\partial y}\right)^2\right] (0) = \frac{1}{2\pi} \int_K \frac{y^2 -x^2}{(x^2 + y^2)^2}d \mathcal{H}^1\, ,
\end{equation}
while the imaginary part is in fact equivalent to \eqref{e:Leger-magic-real} in the system of coordinates which results from a $45$ degrees counterclockwise rotation of the standard one. We focus therefore on the proof of \eqref{e:Leger-magic-real}. Since $(u,K)$ is a critical point of the Mumford-Shah functional without fidelity term, the inner variations \eqref{e:inner-C1} reads
\begin{equation}\label{e:internal-2.5.3}
\int_{\mathbb R^2\setminus K} (2 \nabla u^T\cdot D\psi \nabla u - |\nabla u|^2 {\rm div}\, \psi) = \int_K e^T \cdot D\psi\, e\, d\mathcal{H}^1\, .
\end{equation}
We fix positive radii $\rho<R$ and consider the vector field $\psi (x,y) = \varphi (|(x,y)|) (x, -y)$ where 
\[
\varphi (t) = \left\{
\begin{array}{ll}
\rho^{-2} - R^{-2} \qquad &\mbox{if $t\leq \rho$}\\
t^{-2} - R^{-2} \qquad &\mbox{if $\rho \leq t \leq R$}\\
0 &\mbox{otherwise}\, .
\end{array}\right.
\]
Strictly speaking the latter is not a valid test in \eqref{e:internal-2.5.3} because it is not continuously differentiable. However, if we assume that $\mathcal{H}^1 (K\cap (\partial B_\rho \cup \partial B_R))=0$, it is easily seen that the right hand side \eqref{e:internal-2.5.3} makes sense because $\psi$ is $\mathcal{H}^1$-a.e. differentiable on $K$, while a standard regularization argument, analogous to the ones already used in the previous sections, shows the validity of the formula. Next we compute $D\psi$ in the two relevant domains where it does not vanish:
\begin{align*}
D\psi & = \left(\frac{1}{\rho^2}- \frac{1}{R^2}\right)
\left(\begin{array}{ll}
1 & 0\\
0 & -1
\end{array}\right)
&\mbox{on $B_\rho$}\\
D\psi &=  -\frac{1}{R^2} \left(\begin{array}{ll}
1 & 0\\
0 & -1
\end{array}\right) + \frac{1}{(x^2+y^2)^2} 
\left(\begin{array}{ll}
y^2-x^2 & 2 xy\\
- 2 xy & y^2-x^2
\end{array}
\right) \qquad &\mbox{on $B_R\setminus \bar B_\rho$}\, .
\end{align*}
Choose then $\rho$ sufficiently small to have $B_\rho\cap K = \emptyset$ and obtain
\begin{align*}
&\frac{2}{\rho^2}\int_{B_\rho} \left(\left(\frac{\partial u}{\partial x}\right)^2 - \left(\frac{\partial u}{\partial y}\right)^2\right) -\frac{2}{R^2} \int_{B_R} \left(\left(\frac{\partial u}{\partial x}\right)^2 - \left(\frac{\partial u}{\partial y}\right)^2\right)\\
= & \frac{1}{R^2} \int_{K\cap B_R} (e_2^2-e_1^2)d\mathcal{H}^1 
+ \int_{K\cap B_R} \frac{y^2-x^2}{(x^2+y^2)^2}\, d\mathcal{H}^1\, .
\end{align*}
Then, we first let $R\uparrow \infty$ and use \eqref{e:growth-Leger} to obtain
\[
\frac{2}{\rho^2} \int_{B_\rho} \left(\left(\frac{\partial u}{\partial x}\right)^2 - \left(\frac{\partial u}{\partial y}\right)^2\right)
= \int_K \frac{y^2-x^2}{(x^2+y^2)^2}\, d\mathcal{H}^1\, ,
\]
and hence we let $\rho\downarrow 0$ and use the regularity of $u$ at $0$ to infer \eqref{e:Leger-magic-real}.
\end{proof}

\section{Monotonicity formulae}

An important role in our arguments will be played by three monotonicity statements, valid for minimizers of $E_\lambda$ (irrespectively whether they are absolute, restricetd, generalized, or generalized restricted). The first statement was discovered by Bonnet in \cite{B96}, while the other two were discovered and proved by David and L\'eger in \cite{DL02}. The first and the second are considerably easier to prove and we will show them in this section. 

\begin{proposition}\label{p:monotonia-0}
Assume $(u,K)$ is a critical point of $E_0$. Fix $x\in \Omega$ and consider
\begin{equation}
d(x,r):=\frac{D(x,r)}{r} := \frac{1}{r} \int_{B_r (x)\setminus K} |\nabla u|^2\, .
\end{equation}
Then $r\mapsto D(x,r)$ is an absolutely continuous function and $\frac{d}{dr} 
d(x,r)\geq 0$ for a.e. $r\in (0, \dist (x, \partial \Omega))$ which satisfies the following property:
\begin{itemize}
    \item[(i)] The set $K\cap \partial B_r (x)$ belongs to the same connected component of $K$.
\end{itemize}
Moreover, if (i) holds, $d(x,r)$ 
is constant on the interval $(0, r_0)$ and in addition $(u,K)$ is a restricted, an absolute, or a generalized minimizer, then either $\nabla u=0$ $\mathcal{L}^2$-a.e. on $B_{r_0}(x)$ or $(u,K)$ is a cracktip with loose end located at $x$.
\end{proposition}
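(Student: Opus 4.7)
Without loss of generality take $x = 0$ and write $D(r)$ for $D(0,r)$. Absolute continuity of $r \mapsto D(r)$ is immediate from $|\nabla u|^2 \in L^1_{\mathrm{loc}}(\Omega)$, and the coarea formula gives
\[
D'(r) = \int_{\partial B_r \setminus K} |\nabla u|^2\, d\mathcal{H}^1
\]
for a.e. $r \in (0, \dist(0, \partial \Omega))$. The inequality $\frac{d}{dr} d(0,r) \geq 0$ is thus equivalent to the pointwise bound $D(r) \leq r D'(r)$, which I plan to establish at a.e. $r$ satisfying (i).

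The idea is to combine integration by parts with a sharp Poincar\'e--Wirtinger inequality on each arc of $\partial B_r \setminus K$. Summing \eqref{e:int by parts} (with $\lambda = 0$) over the connected components of $B_r \setminus K$ yields
\[
D(r) = \int_{\partial B_r \setminus K} u\, \frac{\partial u}{\partial n}\, d\mathcal{H}^1 = \sum_\gamma \int_\gamma u\, \frac{\partial u}{\partial n}\, d\mathcal{H}^1,
\]
where the sum runs over the connected components $\gamma$ of $\partial B_r \setminus K$. Under hypothesis (i) every such arc has both endpoints in the same connected component of $K$, so \eqref{e:Bonnet} gives $\int_\gamma \partial_n u\, d\mathcal{H}^1 = 0$ and we may subtract the mean $\bar u_\gamma := |\gamma|^{-1} \int_\gamma u\, d\mathcal{H}^1$ without altering the integral. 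The sharp Poincar\'e--Wirtinger inequality on an arc of length $L_\gamma$ reads
\[
\int_\gamma (u - \bar u_\gamma)^2\, d\mathcal{H}^1 \leq \frac{L_\gamma^2}{\pi^2} \int_\gamma |\partial_\tau u|^2\, d\mathcal{H}^1,
\]
and combining it with Cauchy--Schwarz on each arc and the elementary inequality $2ab \leq a^2 + b^2$ gives
\[
D(r) \leq \sum_\gamma \frac{L_\gamma}{\pi}\, \|\partial_\tau u\|_{L^2(\gamma)} \|\partial_n u\|_{L^2(\gamma)} \leq \sum_\gamma \frac{L_\gamma}{2\pi} \int_\gamma |\nabla u|^2\, d\mathcal{H}^1 \leq r\, D'(r),
\]
where the last inequality uses $L_\gamma \leq 2\pi r$ for every arc of $\partial B_r$. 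This establishes the monotonicity.

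For the rigidity, assume $d(0,r)$ is constant on $(0, r_0)$, so that $D(r) = r D'(r)$ for a.e. $r \in (0, r_0)$ and thus equality holds throughout the chain above at a.e. such $r$. Equality in the final step requires $L_\gamma = 2\pi r$ on every arc carrying positive Dirichlet energy. This dichotomizes as follows: either $D'(r) = 0$ for a.e. $r \in (0, r_0)$ (in which case $D \equiv 0$ on $(0, r_0)$, giving $\nabla u = 0$ $\mathcal{L}^2$-a.e. on $B_{r_0}$, the first alternative of the statement), or $K \cap \partial B_r$ is a single point for a.e. $r$. In the latter regime the Poincar\'e equality forces $u|_{\partial B_r}$ to coincide with the principal Neumann eigenfunction of $-\partial_\tau^2$ on the unique arc,
\[
u(r, \theta) = c(r) + A(r)\, \cos\big((\theta - \theta_0(r))/2\big),
\]
while the Cauchy--Schwarz and AM--GM equalities force $\partial_n u$ on $\partial B_r$ to have the same angular profile. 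Harmonicity of $u$ on $B_{r_0}\setminus K$ together with separation of variables then forces $c(r) \equiv c$ constant, $\theta_0(r)$ independent of $r$, and $A(r) = A_0 r^{1/2}$, so that $K \cap B_{r_0}$ is a radius with loose end at $x$ and $u$ has exactly the cracktip profile. Finally, Proposition \ref{p:constant-crack-tip} pins $A_0^2 = 2/\pi$, identifying $(K,u)$ with a cracktip at $x$.

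The main obstacle is the rigidity step: one must extract simultaneous information from the equality cases in three different inequalities (Poincar\'e, Cauchy--Schwarz, AM--GM) valid only for a.e. radius, and then assemble the arcwise angular data into a rigid global structure via the harmonicity of $u$ off $K$. A careful appeal to the density lower bound of Theorem \ref{t:dlb} is also needed to ensure that the single points $K \cap \partial B_r$, defined only for a.e. $r$, trace out a genuine half-line rather than a thin, irregular subset of $B_{r_0}$.
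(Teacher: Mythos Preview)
Your monotonicity argument is correct and essentially identical to the paper's: integration by parts via \eqref{e:int by parts}, subtraction of arc-means using \eqref{e:Bonnet}, sharp Poincar\'e--Wirtinger on each arc, and Cauchy--Schwarz/AM--GM to close.

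The rigidity step, however, has a genuine gap. Your claim that ``harmonicity of $u$ on $B_{r_0}\setminus K$ together with separation of variables'' forces $\theta_0(r)$ to be constant and $A(r)=A_0 r^{1/2}$ is not justified, and in fact the standard separation-of-variables argument presupposes that $K$ is already a radius --- precisely what you are trying to prove. Knowing that $u|_{\partial B_r}$ and $\partial_n u|_{\partial B_r}$ have cosine profiles for a.e.\ $r$ does not by itself control $K$: a priori $K$ could be a curve winding around the origin with $\theta_0(r)$ varying, and the cosine profile would simply rotate with it. The density lower bound you invoke only tells you that $K\cap\partial B_r$ is nonempty for all $r$, not that the points line up radially.

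The paper closes this gap by using the minimality hypothesis (which you never invoke in the rigidity half, despite it appearing in the statement). Specifically, once the trace of $u$ on $\partial B_r$ is known to be $a(r)+b(r)\cos((\theta-c(r))/2)$, the paper builds an explicit competitor $(\bar K,\bar u)$ with $\bar K$ the segment $[0,p(r)]$ and $\bar u$ the $\tfrac12$-homogeneous extension of that trace; a direct computation shows $\int_{B_r}|\nabla\bar u|^2=\int_{B_r}|\nabla u|^2$, so minimality forces $\mathcal{H}^1(K\cap B_r)\le r$, hence $\mathcal{H}^1(K\cap B_r)=r$ (the lower bound coming from coarea). This length identity is what forces the approximate tangent of $K$ to be radial $\mathcal{H}^1$-a.e. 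From there the paper uses local smoothness of $u$ away from $K$ to upgrade the a.e.\ data $a,b,c$ to smooth functions on intervals, shows $c$ is locally constant (hence $K$ is locally a straight segment), and then pins down $a'\equiv0$ and $b(r)=\beta r^{1/2}$ by direct computation. Your sketch is missing this competitor step entirely.
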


\begin{proposition}\label{p:monotonia-1}
Let $(u,K)$ be an absolute, restricted, generalized, or generalized restricted minimizer of $E_\lambda$ in $\Omega$. Fix $x\in \Omega$ and consider
\begin{equation}\label{e:funzionale-F}
F (x,r) := \frac{2}{r} \int_{B_r (x)\setminus K} |\nabla u|^2 + \frac{1}{r} \mathcal{H}^1 (B_r (x)\cap K) =: 2d(x,r)
+ \frac{\ell (x,r)}{r}\, .
\end{equation}
Then $r\mapsto F (x,r)$ is a function of bounded variation. The singular part of its derivative is a nonnegative measure, while there is a constant $C$ such that the absolutely continuous part $F'(r)$ 
satisfies
\begin{equation}\label{e:DLMS monotonia}
F'(x,r)\geq\min\Big(1,3-\frac{2\alpha}\pi\Big)
\frac{D'(x,r)}{r} - C \param
\end{equation}
at a.e. radius $r\in(0,\dist(x,\partial\Omega))$ such that
\begin{itemize}
\item[(i)] each connected component of $\partial B_r (x)\setminus K$ has length less or equal to $\alpha r$, with $\alpha\leq\frac{3}{2}\pi$.
\end{itemize}
Moreover, if $\param =0$, $r\mapsto F (x,r)$ is constant on $(0, r_0)$ and (i) holds for a.e. $r\in (0,r_0)$, then $(u,K)$ coincides in $B_{r_0} (x)$ with an elementary global minimizer as in Theorem~\ref{t:class-global}(b) and (c). In both cases necessarily $x\in K$ and in case (c) it must in fact be a triple junction.  
\end{proposition}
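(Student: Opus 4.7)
The plan is to combine three ingredients: the inner variation identity of Corollary~\ref{c:boundary-variations} applied with the conformal field $\eta(y)=y-x$ (for which $D\eta=\mathrm{Id}$); a Bonnet-type comparison whose competitor in $B_r(x)$ replaces $K$ by the radial segments joining $x$ to each point of $K\cap\partial B_r(x)$ and replaces $u$ by its harmonic extension in each sector with Neumann data on the radial sides; and the coarea formula for $\ell'(x,r)$. The BV character of $r\mapsto F(x,r)$ is immediate: $D(x,r)$ is absolutely continuous (being an integral), while $\ell(x,r)$ is monotone nondecreasing; in particular the singular part of $F'$ coincides with that of $(\ell/r)'$, which is nonnegative because the singular part of the measure $d\ell$ is.

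For the three ingredients, \eqref{e:conformal} with $\eta(y)=y-x$ gives
\begin{equation*}
\ell(x,r)= r\!\!\int_{\partial B_r(x)\setminus K}\!\!\bigl((\partial_\tau u)^2-(\partial_n u)^2\bigr)\,d\mathcal H^1 + r\!\!\sum_{p\in K\cap\partial B_r(x)}\!\!e(p)\cdot n(p) + O(\lambda r^2),
\end{equation*}
where the fidelity remainders are bounded by $C\lambda r^2$ using $\|g\|_\infty\le M_0$, the maximum principle, and the basic upper bound \eqref{e:upper bound}. A Fourier expansion in each sector of opening $\beta_i\le\alpha$ shows that the harmonic extension with Neumann radial data satisfies $\int_{\Sigma_i}|\nabla u'|^2\le(\beta_i/\pi)\,r\int_{\gamma_i}(\partial_\tau u)^2\,d\mathcal H^1$ (because $k\le k^2$ for $k\ge 1$); summing, and using that $u'$ can be truncated at level $M_0$ without increasing energy, minimality yields
\begin{equation*}
D(x,r)+\ell(x,r)\le \frac{\alpha}{\pi}\,r\!\!\int_{\partial B_r(x)\setminus K}\!\!(\partial_\tau u)^2\,d\mathcal H^1 + r\,\mathcal H^0(K\cap\partial B_r(x)) + C\lambda r^2.
\end{equation*}
The admissibility for restricted minimizers follows from Lemma~\ref{l:ciao_componenti}, since the added radial segments never increase the number of connected components of $K$. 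Finally, the coarea formula gives $\ell'(x,r)=\sum_p|e(p)\cdot n(p)|^{-1}$, and the elementary identity $t^{-1}\ge 2-t$ for $t\in(0,1]$ yields $\ell'(x,r)\ge 2\,\mathcal H^0(K\cap\partial B_r(x))-\sum_p e(p)\cdot n(p)$. Plugging these three bounds into $r^2F'=2rD'+r\ell'-2D-\ell$ (substituting first the coarea bound, then the comparison into $-2D$, and finally the inner variation identity into $\ell-r\sum e\cdot n$), after cancellations one arrives at
\begin{equation*}
r^2F'(x,r)\ge r\|\partial_n u\|_{L^2(\partial B_r\setminus K)}^2 + \Bigl(3-\tfrac{2\alpha}{\pi}\Bigr)\,r\|\partial_\tau u\|_{L^2(\partial B_r\setminus K)}^2 - C\lambda r^2,
\end{equation*}
from which \eqref{e:DLMS monotonia} follows since $rD'=r\|\partial_n u\|^2+r\|\partial_\tau u\|^2$ and both coefficients are at least $\min(1,3-2\alpha/\pi)$.

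For the rigidity statement, I assume $\lambda=0$ and $F(x,\cdot)$ constant on $(0,r_0)$, so that equality holds a.e. in each of the three estimates above. Equality in the coarea bound, coming from $(t-1)^2=0$, forces $|e(p)\cdot n(p)|=1$ at every $p\in K\cap\partial B_r$ for a.e. $r\in(0,r_0)$: hence $K\cap B_{r_0}(x)$ meets every circle radially, so it is a finite union of radial segments emanating from $x$. Equality in the Bonnet comparison forces every sector to have opening exactly $\alpha$ and each $u|_{\gamma_i}$ to be constant (the sharp Fourier estimate is strict beyond the zero mode); combined with the vanishing of $\|\partial_n u\|^2$ forced by the residual inequality, $u$ is locally constant on each sector. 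In particular the blow-up $(K,u)|_{B_{r_0}(x)}$ has zero Dirichlet energy on every compact set and is a conical generalized minimizer with piecewise constant $u$ and radial $K$; by the classification Theorem~\ref{t:class-global}, it must be an elementary pure jump as in (b) or a triple junction as in (c), and in the latter case the vertex is necessarily $x$. The main obstacle is the careful bookkeeping in combining the three estimates without sign losses (the cancellation between $\sum e\cdot n$ and $\mathcal H^0$ terms is delicate), together with verifying admissibility of the Bonnet competitor in the restricted setting; the rigidity then follows cleanly by tracing the saturation of each inequality.
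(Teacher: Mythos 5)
Your proof of the inequality \eqref{e:DLMS monotonia} is correct and follows the same route as the paper: the DLMS identity is exactly what you get from \eqref{e:conformal} with $\eta(y)=y-x$, the $t^{-1}\ge 2-t$ rearrangement is the same AM--GM step used in \eqref{e:3-tau-bound}, and the Bonnet-type competitor with harmonic extensions via Lemma~\ref{l:extension} is identical. The bookkeeping you worry about does work out exactly as claimed.

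The rigidity argument, however, has a genuine gap. You claim that equality in the Bonnet comparison forces each $u|_{\gamma_i}$ to be constant ``because the sharp Fourier estimate is strict beyond the zero mode.'' This is false: in the proof of Lemma~\ref{l:extension} the inequality $\sum_{k\ge 1}\frac{k\pi}{2}a_k^2\le\sum_{k\ge 1}\frac{k^2\pi}{2}a_k^2$ is tight for $k\le 1$, not only $k=0$, so equality permits $u|_{\gamma_i}=a_0+a_1\cos(\pi\theta/\beta_i)$, a genuinely non-constant boundary datum. Relatedly, ``every sector has opening exactly $\alpha$'' is not forced; what equality gives is the weaker alternative that each sector has opening $\alpha$ \emph{or} $\int_{\gamma_i}(\partial_\tau u)^2=0$. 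So your route to $u$ locally constant does not close. The correct (and cleaner) step, as in the paper's proof, is to use $\int_{\partial B_r\setminus K}(\partial_n u)^2=0$ for a.e.\ $r$ together with the integration-by-parts identity \eqref{e:int by parts}, which for $\lambda=0$ reads $\int_U|\nabla u|^2=\int_{\partial B_r\cap\overline U}u\,\partial_n u$ and hence gives $\nabla u=0$ on $B_{r_0}\setminus K$ immediately, with no appeal to Fourier sharpness. Once $u$ is locally constant, $K$ is a minimizing network in each $B_r$; combined with $e(p)=n(p)$ from the saturated AM--GM inequality, $K\cap B_{r_0}(x)$ must be a cone with vertex $x$, and the classification follows. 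There is also a subtler circularity to avoid in your order of deductions: to infer that $K$ is a union of radial segments from $e(p)\cdot n(p)=1$ at a.e.\ circle, one needs the structure of $K$ as a minimizing network, which in turn requires $u$ already constant; the \eqref{e:int by parts} route breaks this chicken-and-egg dependence.
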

\index[simb]{aalD(x,r)@$D(x,r)$}
\index[simb]{aald(x,r)@$d(x,r)$}
\index[simb]{aall(x,r)@$\ell(x,r)$}
\index[simb]{aalF(x,r)@$F(x,r)$}
The third is more laborious and will only be used in the second part of these notes. We therefore postpone its proof to Section~\ref{s:monotonia-2}.

\begin{proposition}\label{p:monotonia-2} Let $(u,K)$, $\Omega$, $x$, and $F$ be as in Proposition~\ref{p:monotonia-1} with $\param =0$.
 Then $F'(x,r)\geq 0$ at a.e. point $r$ such that
\begin{itemize}
\item[(i)] $N (r) := \sharp (K\cap \partial B_r (x)) \in \{0\} \cup [3, \infty)$
\item[(ii)] or $N(r)=2$ and the two points of $K\cap \partial B_r (x)$ belong to the same connected component of $K$.
\end{itemize}
Moreover, if $r\mapsto F (x,r)$ is constant on $(0, r_0)$ and (i) or (ii) hold for a.e. $r\in (0,r_0)$, then $(u,K)$ coincides in $B_{r_0} (x)$ with an elementary global minimizer as in Theorem~\ref{t:class-global}(a), (b), or (c). In both cases (b) and (c) necessarily $x\in K$ and in case (c) it must in fact be a triple junction.  
\end{proposition}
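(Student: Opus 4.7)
The plan is to combine an energy comparison against a radial competitor in $B_r(x)$ with the inner variation identity \eqref{e:int-variation-bdry} tested against the vector field $\eta(y)=y-x$. As a preliminary observation, since $\ell(x,r)=\mathcal H^1(K\cap B_r(x))$ is monotone non-decreasing, it is BV with nonnegative singular part; combined with the absolute continuity of $2D/r$ this already yields the BV and singular-part assertion for $F$. For a.e.\ $r\in(0,\dist(x,\partial\Omega))$ satisfying (i), coarea and trace arguments guarantee that $K\cap\partial B_r(x)$ is a finite set $\{p_1,\ldots,p_N\}$, that $u|_{\partial B_r(x)\setminus K}\in W^{1,2}$, that $D'(x,r)=A(r)+B(r)$ with $A:=\int_{\partial B_r(x)\setminus K}(\partial_\tau u)^2\,d\mathcal H^1$ and $B:=\int_{\partial B_r(x)\setminus K}(\partial_n u)^2\,d\mathcal H^1$, and that $\ell'(x,r)=\sum_p|e(p)\cdot n(p)|^{-1}\geq N$.

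For the comparison, let $\theta_i\leq\alpha$ be the opening of the arc $\gamma_i$ of $\partial B_r(x)\setminus K$. I build the competitor $(J,v)$ by replacing $K\cap B_r(x)$ with the $N$ radial segments joining $x$ to the $p_i$'s, and $u|_{B_r(x)}$ sector by sector with the harmonic extension of $u|_{\gamma_i}$ satisfying vanishing Neumann data on the two bounding radii. Since the new $J\cap B_r(x)$ is a star through $x$, the competitor can only decrease the number of connected components of $K$, so it is admissible for all four classes of minimizers. A Fourier expansion on each sector along the Neumann eigenbasis $\cos(k\pi\phi/\theta_i)$, $k\geq 0$, gives the sharp bound $\int_{S_i}|\nabla v|^2\leq(\theta_i/\pi)\,r\int_{\gamma_i}(\partial_\tau u)^2\,d\mathcal H^1$, saturated at the first mode. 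Summing, using $\theta_i\leq\alpha$, and absorbing the fidelity difference via Lemma \ref{l:maximum} and Lemma \ref{l:upper-bound}, quasi-minimality yields the comparison inequality
\[
D(x,r)+\ell(x,r)\;\leq\;\tfrac{\alpha}{\pi}\,r\,A(r)+N(r)\,r+C\lambda r^2.
\]
Independently, testing \eqref{e:int-variation-bdry} with the conformal $\eta(y)=y-x$ (so $D\eta=\Id$ and $\eta\cdot n=r$ on $\partial B_r(x)$) makes the bulk $u$-term vanish, collapses the $K$-bulk term to $\ell(x,r)$, produces the boundary contribution $r(A-B)$ and the sum $r\sum_p e(p)\cdot n(p)$ with $e\cdot n\in[0,1]$ by the orientation convention; the two fidelity integrals are $O(\lambda r^2)$ by the same two lemmas. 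The resulting identity reads
\[
\ell(x,r)/r\;=\;A(r)-B(r)+\sum_p e(p)\cdot n(p)+O(\lambda r).
\]

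Using the two displays to estimate $F=(2D+\ell)/r$ from above, substituting in $rF'(x,r)=2D'(x,r)+\ell'(x,r)-F(x,r)$, and invoking the elementary inequality $t+t^{-1}\geq 2$ for each $t=e(p)\cdot n(p)\in(0,1]$, a direct algebraic manipulation produces
\[
rF'(x,r)\;\geq\;\Big(3-\tfrac{2\alpha}{\pi}\Big)A(r)+B(r)+O(\lambda r)\;\geq\;\min\!\Big(1,3-\tfrac{2\alpha}{\pi}\Big)D'(x,r)+O(\lambda r),
\]
which is exactly \eqref{e:DLMS monotonia}. For the rigidity assertion with $\lambda=0$ and $F$ constant on $(0,r_0)$, every nonnegative contribution in the above chain must vanish for a.e.\ $r$: the coefficient bound forces $A\equiv B\equiv 0$, hence $\nabla u\equiv 0$ on $B_{r_0}(x)\setminus K$, while equality in $t+t^{-1}\geq 2$ imposes $e(p)=n(p)$ at every $p\in K\cap\partial B_r(x)$. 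Consequently $K\cap B_{r_0}(x)$ is a finite union of radial segments through $x$ and $u$ is piecewise constant on the complementary sectors; since (i) with $\alpha\leq 3\pi/2<2\pi$ rules out $N\leq 1$, and minimality against the obvious diameter/Steiner-tree competitors forces $N\in\{2,3\}$ with collinear radii in the first case and $120^\circ$ angles in the second, we recover exactly configurations (b) or (c) of Theorem \ref{t:class-global}, with $x\in K$ in both cases.

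The main obstacle I anticipate is the careful tracking of the two $O(\lambda r^2)$ fidelity errors from the comparison and the inner-variation identity: both depend on the $L^\infty$ bound of Lemma \ref{l:maximum} for $u$ and for the harmonic competitor on each connected component of $B_r(x)\setminus K$, together with the Ahlfors upper bound of Lemma \ref{l:upper-bound}, and in the generalized-minimizer setting they require passing these estimates through the defining approximating sequence. A secondary technical point is the admissibility of the star competitor for restricted minimizers: one must verify that replacing $K\cap B_r(x)$ by a star centered at $x$ can only merge connected components of $K$, never split them, so that the component count does not increase.
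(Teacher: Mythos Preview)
You have essentially reproduced the proof of Proposition~\ref{p:monotonia-1}, not of Proposition~\ref{p:monotonia-2}. Your final inequality is literally \eqref{e:DLMS monotonia}, and your rigidity paragraph invokes ``(i) with $\alpha\le 3\pi/2$'', which is the hypothesis of Proposition~\ref{p:monotonia-1}; the hypotheses of Proposition~\ref{p:monotonia-2} are $N(r)\in\{0\}\cup[3,\infty)$ or $N(r)=2$ with the two points in the same connected component of $K$, and neither implies any bound on the largest arc length $\alpha$. The entire point of Proposition~\ref{p:monotonia-2} is to dispense with the arc-length restriction, and this is where your argument breaks: your star competitor yields $D+\ell\le(\alpha/\pi)rA+Nr$, which after the DLMS identity gives the coefficient $(3-2\alpha/\pi)$ in front of $A$. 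When, say, $N=3$ and the three points cluster in a short subarc of $\partial B_r$, the longest complementary arc has $\alpha$ close to $2\pi$, the coefficient becomes close to $-1$, and your lower bound on $rF'$ is useless. You also do not treat $N=0$, where there is no $K\cap\partial B_r$ and no radial star to build.

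The paper's proof first reduces, via Proposition~\ref{p:monotonia-1}, to the regime where the longest arc exceeds $\tfrac{3}{2}\pi r$ (equivalently $K\cap\partial B_r$ is confined to a subarc of length $\le\pi r/2$), and then treats each value of $N$ with a bespoke competitor: for $N\ge 4$ a polygonal path of length $<4r$ plus two radii; for $N=3$ two different competitors whose averaged estimates close; for $N=0$ the harmonic extension of the full trace. The delicate case is $N=2$: there the star competitor is not enough, and the proof combines \emph{two} comparison estimates (a Steiner tree $J_1$ and $J_2=J_1\cup[\bar c,0]$) with a third inequality coming from Corollary~\ref{c:Bonnet}, namely $D(r)\le \omega r\,ab$ with $a^2=A$, $b^2=B$ and $\omega=\alpha/\pi$; this last bound is exactly where the same-connected-component hypothesis (ii) enters, via \eqref{e:Bonnet}. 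One then shows $\max\{P_1,P_2,P_3\}(\omega,a,b)\ge 0$ by an explicit elementary analysis on the range $\omega\in(\sqrt3,\omega_0)$. None of this machinery is present in your proposal.
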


\begin{remark}\label{r:possible-monotonicity}
While it is plausible that (ii) in Proposition~\ref{p:monotonia-2} could be weakened to the single assumption $N(r)=2$, the monotonicity of $F$ is certainly false when $N(r)=1$. We indeed show here that it fails for any functional of type $F_c (x,r) := c\, d(x,r) + \frac{\ell (x,r)}{r}$.
Consider the cracktip pair $(u,K)$ of Definition~\ref{d:crack-tip} and change it to the pair $(u_a,K_a)$ where $K_a = K + (a,0)$ and $u_a (x_1, x_2) = u (x_1-a, x_2)$. We then introduce the function
\[
f (a,r) := F_c((-a,0),r)=
\frac{c}{r} \int_{B_r} |\nabla u_a|^2 + \frac{\mathcal{H}^1 (K_a\cap B_r)}{r}=:
D (a,r) + L (a,r)
\]
and we study it on the domain $\Lambda := \{|a|<\frac{1}{2}, |r-1|< \frac{1}{2}\}$. First of all observe that $L(a,r) = \frac{r-a}{r}$
and it is thus a smooth function on $\Lambda$. Next we integrate by parts and write 
\[
D(a,r) = \frac{c}{r} \int_{\partial B_r\setminus (1,0)}
u_a \frac{\partial u_a}{\partial n}\, .
\]
The latter formula shows immediately that $D$ is smooth as well on $\Lambda$. Next observe that
$|\nabla u_a|^2 (x) = \frac{1}{2\pi|x-(a,0)|}$
and in particular by symmetry we conclude
$D (a,r) = D (-a,r)$. We thus infer 
$\frac{\partial^2 D}{\partial a \partial r} (0,r)= 0$.
On the other hand we can explicitly compute
$\frac{\partial^2 L}{\partial a \partial r} = \frac{1}{r^2}$ and thus we get
$\frac{\partial^2 f}{\partial a \partial r} (0,r) = \frac{1}{r^2}$.
Since it is obvious that $\frac{\partial f}{\partial r} (0,r)=0$, 
there is $\delta >0$ such that
\[
\frac{\partial f}{\partial r} (a,r) <0
\qquad \forall r\in [{\textstyle{\frac{3}{4}}},
{\textstyle{\frac{5}{4}}}], \forall a\in [-\delta, 0)\, .
\]
This shows that the function $r\to F_c((-a,0),r)$
is certainly not monotone 
for $a<0$ sufficiently small. In fact a simple scaling argument shows that the monotonicity fails on some interval for every $a$ negative. 
\end{remark}

\subsection{The David-L\'eger-Maddalena-Solimini identity} A first important ingredient in the proofs of both Proposition~\ref{p:monotonia-1} and \ref{p:monotonia-2} is given by an interesting identity discovered independently by David and L\'eger in \cite{DL02} and Maddalena and Solimini in \cite{MadSol01} for critical points of $E_0$. In the following we state its version for critical points of $E_\lambda$

\begin{lemma}\label{l:DLMS}
Let $F$ be as in Proposition~\ref{p:monotonia-1}. If $(u,K)$ is an absolute, restricted, generalized, or a generalized restricted minimizer of $E_\param$ in $\Omega$, then for every $y\in\Omega$ and for a.e. $r\in(0,\dist(y,\partial\Omega))$ we have
\begin{align}\label{e:DLMS}
&\int_{\partial B_r (y)\setminus K} \left(\frac{\partial u}{\partial n}\right)^2
= \int_{\partial B_r (y)\setminus K} \left(\frac{\partial u}{\partial \tau}\right)^2 -
\frac{\mathcal{H}^1 (K\cap B_r (y))}{r} + \sum_{p\in \partial B_r (y)\cap K} e(p) \cdot n(p)\notag\\
& +\frac{2\param}{r} \int_{B_r(y)\setminus K}(u-g) \nabla u \cdot (x-y)
+\frac{\param}{r} \int_{B_r(y)\cap K}\big(|u^+-g_K|^2-|u^--g_K|^2\big)
(x-y)\cdot\nu\, d\mathcal{H}^1
\end{align}
where the vectors $n$ and $e$ are as in Proposition~\ref{p:boundary-variations}, in particular $n (p) \cdot e (p)>0$ for all $p\in \partial B_r (y)\cap K$,
and $\tau(p)=n^\perp(p)$\index[simb]{aagtau@$\tau$}.
\end{lemma}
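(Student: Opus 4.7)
The plan is to apply the boundary inner‐variation identity \eqref{e:int-variation-bdry} from Proposition~\ref{p:boundary-variations} to the explicit radial test field
\[
\eta(x) := x - y.
\]
This is an admissible test since $\eta \in C^1(\overline{B}_r(y), \mathbb{R}^2)$, and the radial choice is natural because it makes the bulk term on the left hand side of \eqref{e:int-variation-bdry} collapse: since $\operatorname{div}\eta = 2$ and $D\eta = \mathrm{Id}$, one has $|\nabla u|^2 \operatorname{div}\eta - 2\nabla u^T \cdot D\eta\,\nabla u \equiv 0$ on $B_r(y)\setminus K$, while $e^T\cdot D\eta\, e = |e|^2 = 1$ gives exactly $\mathcal{H}^1(K\cap B_r(y))$ as the bulk $K$-contribution.

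Next I would compute the boundary terms on $\partial B_r(y)$. There, $\eta(x) = x-y = r\,n(x)$, hence $\eta\cdot n = r$ and $\nabla u \cdot \eta = r\,\partial u/\partial n$, so the integrand on $\partial B_r(y)\setminus K$ becomes
\[
|\nabla u|^2\,\eta\cdot n - 2\frac{\partial u}{\partial n}\,\nabla u\cdot\eta
= r\left(|\nabla u|^2 - 2\Big(\tfrac{\partial u}{\partial n}\Big)^2\right)
= r\left(\Big(\tfrac{\partial u}{\partial \tau}\Big)^2 - \Big(\tfrac{\partial u}{\partial n}\Big)^2\right),
\]
using the orthogonal decomposition $|\nabla u|^2 = (\partial_n u)^2 + (\partial_\tau u)^2$ valid $\mathcal{H}^1$-a.e.\ on $\partial B_r(y)\setminus K$. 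Similarly, for each $p\in K\cap \partial B_r(y)$, $\eta(p)\cdot e(p) = r\,n(p)\cdot e(p)$.

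Assembling these identifications, \eqref{e:int-variation-bdry} reads, after dividing through by $r$,
\begin{align*}
\frac{\mathcal{H}^1(K\cap B_r(y))}{r}
&= \int_{\partial B_r(y)\setminus K}\Big(\tfrac{\partial u}{\partial \tau}\Big)^2
- \int_{\partial B_r(y)\setminus K}\Big(\tfrac{\partial u}{\partial n}\Big)^2
+ \sum_{p\in K\cap\partial B_r(y)} e(p)\cdot n(p) \\
&\quad + \frac{2\lambda}{r}\int_{B_r(y)\setminus K}(u-g)\nabla u \cdot (x-y)
+ \frac{\lambda}{r}\int_{K\cap B_r(y)}\bigl(|u^+-g_K|^2-|u^--g_K|^2\bigr)(x-y)\cdot\nu\, d\mathcal{H}^1,
\end{align*}
which is precisely \eqref{e:DLMS} after rearranging the $(\partial u/\partial n)^2$ term to the left.

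There is essentially no serious obstacle: the calculation is a direct substitution, and the restriction to a.e.\ $r$ comes entirely from the a.e.\ validity of \eqref{e:int-variation-bdry} already established in Proposition~\ref{p:boundary-variations} (in particular, from the coarea‐type selection of radii for which $K\cap\partial B_r(y)$ is finite and traces behave well). The only point worth care is checking that the decomposition $|\nabla u|^2 = (\partial_n u)^2 + (\partial_\tau u)^2$ is legitimate on the tangential Sobolev trace of $u$ on the circle, which is precisely the regularity encoded in the a.e.\ choice of $r$ used in Proposition~\ref{p:boundary-variations}.
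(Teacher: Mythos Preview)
Your proof is correct and is essentially identical to the paper's: the paper tests \eqref{e:int-variation-bdry} with $\eta(x) = \frac{x-y}{r}$, while you test with $\eta(x) = x-y$ and then divide by $r$, which is the same computation. The algebraic verifications you carry out (vanishing of the bulk Dirichlet term, the $K$-term yielding $\mathcal{H}^1(K\cap B_r(y))$, and the boundary decomposition via $|\nabla u|^2 = (\partial_n u)^2 + (\partial_\tau u)^2$) are all accurate.
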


\begin{proof} Test with $\eta (x) = \frac{x-y}{r}$ the equation \eqref{e:int-variation-bdry}.
\end{proof}

\subsection{Elementary estimate on harmonic extensions} A second important ingredient is an elementary estimate on harmonic extensions.

\begin{lemma}\label{l:extension}
Let $U\subset B_r (x)$ be an open set such that
\begin{itemize}
    \item $\partial B_r (x)\cap \partial U$ is a connected arc $\gamma$;
    \item $U$ is contained in a circular sector of angle $\alpha < 2\pi$.
\end{itemize}
Then for every $g\in W^{1,2} (\gamma)$ there is an harmonic extension $v\in W^{1,2} (U)$ such that
\begin{equation}\label{e:extension-estimate}
\int_U |\nabla v|^2 \leq \frac{\alpha r}{\pi} \int_\gamma \left(\frac{\partial g}{\partial \tau}\right)^2\, .
\end{equation}
\end{lemma}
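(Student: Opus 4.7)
My plan is to explicitly construct $v$ on an auxiliary sector $S\supset U$ via separation of variables, and then restrict to $U$.

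After translating to $x=0$ and rotating coordinates, I assume that the sector of opening $\alpha$ containing $U$ has the form
\[
S=\{(\rho\cos\theta,\rho\sin\theta):0<\rho<r,\ 0<\theta<\alpha\}\,,
\]
and I denote its outer arc by $A=\partial B_r\cap\overline{S}$. Since $U\subset S$, the assumption $\gamma=\partial B_r\cap\partial U$ forces $\gamma\subset A$. First I extend $g$ to $\tilde g\in W^{1,2}(A)$ by setting $\tilde g$ equal to the corresponding one-sided limit of $g$ on each of the (at most two) components of $A\setminus\gamma$; $\gamma$ being a connected arc, this yields
\[
\int_A\Big(\frac{\partial\tilde g}{\partial\tau}\Big)^2\,d\mathcal{H}^1=\int_\gamma\Big(\frac{\partial g}{\partial\tau}\Big)^2\,d\mathcal{H}^1\,.
\]

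Next I would define $v$ on $S$ as the solution of the mixed boundary value problem with Dirichlet data $\tilde g$ on $A$ and homogeneous Neumann data on the two radial sides $\theta=0,\alpha$. Expanding
\[
\tilde g(re^{i\theta})=\sum_{k\geq 0} a_k\cos(k\pi\theta/\alpha)\,,
\]
separation of variables in polar coordinates gives the explicit formula
\[
v(\rho e^{i\theta})=\sum_{k\geq 0}a_k\,\Big(\frac{\rho}{r}\Big)^{k\pi/\alpha}\cos(k\pi\theta/\alpha)\,,
\]
since each summand is harmonic on $S$ and satisfies $\partial v/\partial\theta=0$ at $\theta=0,\alpha$ term by term; the condition $\sum k^2 a_k^2<\infty$ coming from $g\in W^{1,2}(\gamma)$ ensures that $v\in W^{1,2}(S)$ and that its trace on $A$ is $\tilde g$.

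The final step is a direct Fourier computation. Using the orthogonality $\int_0^\alpha\cos^2(k\pi\theta/\alpha)\,d\theta=\alpha/2$ for $k\geq 1$, one obtains
\[
\int_S|\nabla v|^2=\frac{\pi}{2}\sum_{k\geq 1}k\,a_k^2,\qquad
\int_A\Big(\frac{\partial\tilde g}{\partial\tau}\Big)^2\,d\mathcal{H}^1=\frac{\pi^2}{2\alpha r}\sum_{k\geq 1}k^2\,a_k^2\,.
\]
Since $k\leq k^2$ for every $k\geq 1$, the first quantity is at most $(\alpha r/\pi)$ times the second, and combining this with $\int_U|\nabla v|^2\leq\int_S|\nabla v|^2$ and the identity relating $\tilde g$ and $g$ yields \eqref{e:extension-estimate}. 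The only point deserving genuine care is that the sharp constant $\alpha/\pi$ originates from the eigenvalues $(k\pi/\alpha)^2$ of the Neumann Laplacian on $[0,\alpha]$ matched against their square roots $k\pi/\alpha$ coming from the radial growth exponents; the rest is bookkeeping and standard $W^{1,2}$ trace theory.
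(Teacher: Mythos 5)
Your proof is correct and follows essentially the same route as the paper: constant extension of $g$ to the full outer arc, cosine Fourier expansion on $[0,\alpha]$ (equivalently even reflection), explicit separated-variables harmonic extension to the sector, and comparison $\sum k a_k^2\le\sum k^2a_k^2$. The only cosmetic difference is that you carry the general radius $r$ through the computation while the paper normalizes $r=1$; the substance is identical.
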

\begin{proof} Since the statement is invariant under rotations, translations, and dilations, without loss of generality we assume $x=0$, $r=1$, and, using polar coordinates $(\theta, \rho)$, $U\subset V := \{\rho<1, 0<\theta < \alpha\}$. Observe that $\gamma = \{\rho =1, a\leq \theta\leq b\}$ with $0\leq a < b\leq \alpha$. Extend now $g$ to $\{\rho =1, 0\leq \theta\leq \alpha\}$ by setting it to be constant on the arcs $\{\rho =1, 0\leq \theta\leq a\}$ and
$\{\rho =1, b\leq \theta\leq \alpha\}$: recalling that $g$ is continuous by Morrey's embedding, it is obvious that the latter extension can be achieved in $W^{1,2}$. It then suffices to find a $W^{1,2}$ extension to the whole sector $V$ which enjoys the desired bound. In other words, we can assume without loss of generality that $U$ is itself the sector $V := \{\rho<1, 0<\theta < \alpha\}$. Consider now $g$ as a function on the interval $[0,\alpha]$ and extend it to an even $W^{1,2}$ function on $[-\alpha, \alpha]$, which can be thought as a periodic function on $\mathbb R$ with period $2\alpha$. In particular we can write its Fourier series as
\[
g (\theta) = a_0 + \sum_{k\geq 1} a_k \cos k \frac{\pi}{\alpha} \theta\, .
\]
We then consider the harmonic extension
\[
v (\theta, \rho) = a_0 + \sum_{k\geq 1} a_k \rho^{k\pi/\alpha} \cos k \frac{\pi}{\alpha} \theta
\]
and standard computations yield
\[
\int_V |\nabla v|^2 = \sum_{k\geq 1} \frac{k\pi}{2} a_k^2
\leq \sum_{k\geq 1} \frac{k^2\pi}{2} a_k^2 = \frac{\alpha}{\pi} \int_\gamma \left(\frac{\partial g}{\partial \tau}\right)^2\, .\qedhere
\]
\end{proof}

\subsection{Proof of Proposition~\ref{p:monotonia-1}} We assume without loss of generality that $x=0$, thus we drop the dependence on the base point in all the relevant quantities, i.e. for instance 
$D(r) = \int_{B_r\setminus K} |\nabla u|^2$ and $\ell (r) = \mathcal{H}^1 (B_r\cap K)$. 
Observe that $r\mapsto D (r)$ is an absolutely continuous function with
\begin{equation}\label{e:E'}
D'(r) = \int_{\partial B_r\setminus K} |\nabla u|^2 = \int_{\partial B_r \setminus K} \left(\frac{\partial u}{\partial n}\right)^2 + \int_{\partial B_r \setminus K} \left(\frac{\partial u}{\partial \tau}\right)^2\, .
\end{equation}
$r\mapsto \ell (r)$ is a monotone nondecreasing function, and hence a function of bounded variation. Moreover, the absolutely continuous part of is derivative equals, by the coarea formula \cite[Theorem 2.93]{AFP00},
\begin{equation}\label{e:ell'}
\ell' (r) = \sum_{p\in \partial B_r \cap K} \frac{1}{e(p)\cdot n (p)}\, 
\end{equation}
(where we follow the notation of Proposition~\ref{p:boundary-variations}). 
The first claim of the proposition is thus obvious, while for a.e. $r$ we have
\begin{equation}\label{e:F'}
r^2 F' (r) = 2 r \int_{\partial B_r\setminus K} |\nabla u|^2 + r \sum_{p\in \partial B_r \cap K} \frac{1}{e(p)\cdot n (p)} - 2 D(r) - \ell (r)\, .
\end{equation}
We first prove the conclusion for $\param =0$.
Using \eqref{e:DLMS} we get
\begin{align}
r^2 F' (r) =& 3 r \int_{\partial B_r \setminus K} \left(\frac{\partial u}{\partial \tau}\right)^2 + r \int_{\partial B_r \setminus K} \left(\frac{\partial u}{\partial n}\right)^2\nonumber\\
& + r \sum_{p\in \partial B_r \cap K} \left(\frac{1}{e(p)\cdot n (p)} + e (p)\cdot n (p)\right)
-2 (D(r)+\ell (r))\nonumber\\
\geq & 3 r \int_{\partial B_r \setminus K} \left(\frac{\partial u}{\partial \tau}\right)^2
+ r \int_{\partial B_r \setminus K} \left(\frac{\partial u}{\partial n}\right)^2
+ 2 r N(r) -2 E_0 (u,K,B_r)\, ,\label{e:3-tau-bound}
\end{align}
where $N(r) := \sharp (K\cap \partial B_r)$\index[simb]{aalNr@$N(r)$}. Consider next a competitor $(w,J)$ for $(u,K)$ in the following fashion: 
\begin{itemize}
    \item $(w,J) = (u,K)$ outside $B_r$;
    \item $J\cap B_r$ consists of $N(r)$ straight segments joining each point of $\partial B_r\cap K$ with the origin; 
    \item on each connected component of $B_r\setminus J$, which according to assumption (i) is a circular sector with angle $\alpha\leq\frac{3\pi}{2}$, we let $w$ be the extension of the trace of $u$ on the corresponding circular arc given by Lemma~\ref{l:extension}.
\end{itemize}    
Observe that $J$ does not increase the number of connected components and that we can apply Lemma~\ref{l:ciao_componenti}. In particular $(w,J)$ is a topological competitor for restricted and generalized restricted minimizers as well. We can use estimate 
\eqref{e:extension-estimate} in Lemma~\ref{l:extension} to infer \begin{align*}
    E_0 (u,K,B_r) \leq & E_0 (w,J,B_r) = \int_{B_r\setminus J} |\nabla w|^2 + r N(r)
    \leq  \frac{\alpha}{\pi}r \int_{\partial B_r\setminus K} \left(\frac{\partial u}{\partial \tau}\right)^2 + r N(r)\, .
\end{align*}
Combined with \eqref{e:3-tau-bound} we then conclude 
\[
r^2F'(r)\geq 
\Big(3-\frac{2\alpha}{\pi}\Big) r \int_{\partial B_r \setminus K} \left(\frac{\partial u}{\partial \tau}\right)^2+
r \int_{\partial B_r \setminus K} \left(\frac{\partial u}{\partial n}\right)^2.
\] 
In particular, we deduce from \eqref{e:E'} that
\[
F'(r)\geq
\min\Big(1,3-\frac{2\alpha}{\pi}\Big) \frac{D'(r)}r.
\]
Finally, if $F$ is constant on $(0, r_0)$ and (i) holds for a.e. $r\in (0, r_0)$, we would necessarily conclude from the last but one inequality that, for a.e. $r\in (0, r_0)$,
\[
\int_{\partial B_r \setminus K} \left(\frac{\partial u}{\partial n}\right)^2 = 0\, .
\]
Equality \eqref{e:int by parts} easily implies that $u$ must be locally constant on $B_{r_0}\setminus K$, and thus $K$ in $\partial B_r$ is a minimizing network. More precisely, if $r$ is a radius such that $N(r) <\infty$, $K\cap \overline{B}_r$ consists of finitely many connected components $K_1, \ldots, K_j$ and each $K_j$ is a connected set which minimizes the length among all closed connected sets $\hat{K}\subset \overline{B}_r$ with $\hat{K}\cap \partial B_r = K_j \cap B_r$. In particular we conclude that $K_j\cap B_r$ is either a single segment or it consists of a network of a finite number of segments joining at triple junction.

Observe next that from \eqref{e:3-tau-bound}, being $u$ locally constant on 
$B_{r_0}\setminus K$, the constancy of $F$ implies 
\[
2 N(r) = \sum_{p\in \partial B_r \cap K} \left(\frac{1}{e (p) \cdot n (p)} + e(p)\cdot n (p)\right)\, 
\]
for a.e. $r\in(0,r_0)$. For every such $r$ we must then have 
$e (p) = n (p)$ for every $p\in K \cap \partial B_r$. 
In particular we conclude that $K\cap B_{r_0}$ is a cone centered at the origin. But then it is either a straight segment or it is a collection of three radii meeting at the origin, or it is the empty set. The latter is excluded by assumption (i).

We now come to the monotonicity statement for $\param >0$. We plug \eqref{e:DLMS} in \eqref{e:F'} to infer the following inequality from Cauchy-Schwartz and the energy 
upper bound in \eqref{e:upper bound}:
\begin{align}
r^2 F' (r) =& 3 r \int_{\partial B_r \setminus K} \left(\frac{\partial u}{\partial \tau}\right)^2 + r \int_{\partial B_r \setminus K} \left(\frac{\partial u}{\partial n}\right)^2\nonumber\\
& + r \sum_{p\in \partial B_r \cap K} \left(\frac{1}{e(p)\cdot n (p)} + e (p)\cdot n (p)\right)
-2 (D(r)+\ell (r))\nonumber\\
    & +2\param \int_{B_r\setminus K}(u-g)\nabla u\cdot y
+\param \int_{B_r\cap K}\big(|u^+-g_K|^2-|u^--g_K|^2\big) y\cdot \nu\, d\mathcal{H}^1
    \nonumber\\
\geq & 3 r \int_{\partial B_r \setminus K} \left(\frac{\partial u}{\partial \tau}\right)^2
+ r \int_{\partial B_r \setminus K} \left(\frac{\partial u}{\partial n}\right)^2
+ 2 r N(r)
-2 E_\param (u,K,B_r)-C\param r^2\,,
    \label{e:3-tau-bound-g}
\end{align}
where $C$ depends on $\|g\|_{\infty}$ thanks to Lemma~\ref{l:maximum}.
We use the competitor $(w,J)$ defined above to get
\begin{align*}
    E_\param (u,K,B_r) \leq & E_\param (w,J,B_r) =   \int_{B_r\setminus J} |\nabla w|^2 
    + r N(r) +\param \int_{B_r}(w-g)^2\\
    &\leq  \frac{\alpha}{\pi}r \int_{\partial B_r\setminus K} \left(\frac{\partial u}{\partial \tau}\right)^2 + r N(r) + 4\pi\param \|g\|^2_{\infty}r^2\,
\end{align*}
(note that $w$ consists of harmonic extensions in $B_r\setminus J$ and since the trace of $w$ on $\partial B_r$ is bounded by $\|g\|_{\infty}$, we see immediately that $\|w\|_{\infty}\leq \|g\|_{\infty}$).
The latter estimate combined with \eqref{e:3-tau-bound-g} gives
\[
r^2F'(r)\geq 
\Big(3-\frac{2\alpha}{\pi}\Big) r \int_{\partial B_r \setminus K} \left(\frac{\partial u}{\partial \tau}\right)^2+
r \int_{\partial B_r \setminus K} \left(\frac{\partial u}{\partial n}\right)^2-C\param r^2\,,
\] 
and \eqref{e:DLMS monotonia} follows at once.

\subsection{Proof of Proposition~\ref{p:monotonia-0}} First of all we start by assuming, without loss of generality, that $\nabla u$ does not vanish identically. 

We follow the same approach of the previous section and will just carry on our computations assuming that we have selected a good radius. We thus have 
\begin{equation}\label{e:D'}
\frac{d}{dr}d(r)=
\frac{d}{dr} \frac{D(r)}{r} = \frac{1}{r} \int_{\partial B_r (x)\setminus K} |\nabla u|^2 - \frac{D(r)}{r^2}\,. 
\end{equation}
Observe that we are in a position to apply both statements in Corollary~\ref{c:Bonnet}. In particular, if we let $c_i$ be the average of $u$ on any connected component $\gamma_i$ of $\partial B_r (x)\setminus K$ we can write
\begin{align}\label{e:caso costante}
D(r) \stackrel{\eqref{e:int by parts}}{=} 
& \int_{\partial B_r (x)\setminus K} u \frac{\partial u}{\partial n} \stackrel{\eqref{e:Bonnet}}{=}
\sum_i \int_{\gamma_i} (u-c_i) \frac{\partial u}{\partial n}
\leq  \sum_i \left(\int_{\gamma_i} (u-c_i)^2\right)^{\frac{1}{2}}
\left(\int_{\gamma_i} \left(\frac{\partial u}{\partial n}\right)^2 \right)^{\frac{1}{2}} \nonumber\\
\leq & \sum_i \frac{\mathcal{H}^1 (\gamma_i)}{\pi} 
\left(\int_{\gamma_i} \left(\frac{\partial u}{\partial \tau}\right)^2\right)^{\frac{1}{2}}
\left(\int_{\gamma_i} \left(\frac{\partial u}{\partial n}\right)^2 \right)^{\frac{1}{2}}\nonumber\\
\leq & 2 r \left(\int_{\partial B_r\setminus K} \left(\frac{\partial u}{\partial \tau}\right)^2\right)^{\frac{1}{2}}
\left(\int_{\partial B_r\setminus K} \left(\frac{\partial u}{\partial n}\right)^2 \right)^{\frac{1}{2}}
\leq r \int_{\partial B_r\setminus K} \left(\left(\frac{\partial u}{\partial \tau}\right)^2+\left(\frac{\partial u}{\partial n}\right)^2\right)\nonumber\\
= & r\int_{\partial B_r\setminus K} |\nabla u|^2\, ,
\end{align}
where he have used that the sharp constant in the $1$-dimensional Poincar\'e-Wirtinger inequality on an interval of length $L$ is $\frac{L^2}{\pi^2}$.

Next notice that, if $\frac{D(r)}{r}$ is constant on $(0,r_0)$ then the equality holds for a.e. $r\in(0,r_0)$ in all the inequalities above. First of all observe that, if $\partial B_r\cap K$ consists of more than one point, then all the connected components of $\partial B_r\setminus K$ have length strictly less than $2\pi$. This would give a strict inequality sign at the beginning of the third line, unless one of the two functions $\frac{\partial u}{\partial n}$ and $\frac{\partial u}{\partial \tau}$ vanish identically on $\partial B_r\setminus K$. If however only one of them vanishes identically on $\partial B_r\setminus K$, then the second inequality in the third line is strict. So they would have to vanish both identically on $\partial B_r\setminus K$. 
Then necessarily from \eqref{e:D'} we would get
\[
D(r)=r\int_{\partial B_r\setminus K} |\nabla u|^2=0.
\]
So, if there is a set of radii  of positive measure such that $\partial B_r\cap K$ consists of more than one point, then $\nabla u$ vanishes identically on $B_r$ for any such radius $r$. 

Observe that the same argument implies that there cannot be a set of radii of positive measure for which $\partial B_r \cap K$ is empty, because the constant in the sharp Poincar\'e-Wirtinger inequality on the unit circle equals the constant of the sharp Poincar\'e-Wirtinger inequality on an interval of length $2\pi$, and we know that on the interval $[0, 2\pi[$ the latter is achieved by the functions of type
\begin{equation}\label{e:optimal function PW}
a + b \cos \frac{\theta}{2}
\end{equation}
where $a$ and $b$ are constants.

In particular, we conclude that there exists a subset $\mathcal{R}$ of $(0, r_0)$ of full measure, functions $a,b: \mathcal{R} \to \mathbb R$, and a function $c: \mathcal{R}\to \mathbb S^1$ with the following properties 
\begin{itemize}
\item[(a)] $K\cap \partial B_r = \{(r\cos c (r), r \sin c (r))\} =: \{p (r)\}$ for all $r\in \mathcal{R}$;
\item[(b)] $u (\theta, r) = a (r) + b (r) \cos \left(\frac{\theta - c (r)}{2}\right)$ in polar coordinates\footnote{In the latter formula we understand that the angle of the polar coordinates is taken to vary in $[c(r), c(r)+2\pi[$, so that the $\theta \mapsto u (\theta, r)$ has at most a jump discontinuity at $\theta = c(r)$ and is smooth everywhere else.};
\item[(c)] $b$ never vanishes on $\mathcal{R}$.
\end{itemize}
Note in particular that $K\cap \partial B_r$ consists of exactly one point for a.e. $r$. 
Fix next $r\in \mathcal{R}$ and
next define the function
\[
\bar{u} (\theta, \rho) = a(r) + b(r) (r^{-1}\rho) ^{\frac{1}{2}} \cos \frac{\theta -c(r)}{2}
\]
and let $\bar{K}$ 
be the segment with endpoints the origin and $p(r)$.

This is obviously a competitor for $(u,K)$. Moreover, a direct computation gives immediately 
\[
\int_{B_r\setminus\bar K} |\nabla \bar u|^2 = 2r \int_{\partial B_r\setminus \bar K} \left(\frac{\partial \bar{u}}{\partial \tau}\right)^2 = 2r \int_{\partial B_r\setminus K} \left(\frac{\partial u}{\partial \tau}\right)^2 =
\int_{B_r\setminus K} |\nabla u|^2\,, 
\]
where in the last equality we use the optimality conditions which can be derived from the constancy of $\frac{D(r)}{r}$ (cf. \eqref{e:caso costante}).
So, by minimality of $(u,K)$, $\mathcal{H}^1 (K) \leq \mathcal{H}^1 (\bar K)$. In particular, since we already know that $\mathcal{H}^1 (K\cap B_r)\geq r$ we actually conclude
\begin{equation}
\mathcal{H}^1 (K\cap B_r) = r\, .    
\end{equation}
This holds for a.e. $r$ and thus implies that the approximate tangent to the rectifiable set $K$ must in fact be orthogonal to the circle $\partial B_r$ at the point $(r\cos c(r), r \sin c(r))$ for a.e. $r\in (0, r_0)$. It also implies that, if we define
\[
R:= \{(r\cos c(r), r\sin c(r)): r\in \mathcal{R}\}\, ,
\]
then $\mathcal{H}^1 (K\setminus R)=0$. On the other hand, by the density lower bound, this also means that $R$ is dense in $K$.

Consider again a radius $r\in \mathcal{R}$ and, after applying a rotation, assume without loss of generality that $c(r) = \frac{\pi}{2}$. Notice that, since $K$ is closed, there is a positive $\delta$ with the property that, if $\rho \in (r-\delta, r+\delta)$, the open set $U := \{(\rho \cos \phi, \rho \sin \phi) : r-\delta < \rho < r+\delta, 0 < \phi < \frac{\pi}{4}\}$ does not intersect $K$. We use the addition formula for the cosine to write 
\[
u (\theta, r) = a(r) + b (r) \cos \frac{c(r)}{2} \cos \frac{\theta}{2} + b(r) \sin \frac{c(r)}{2} \sin \frac{\theta}{2} 
\]
on the set $U$. Note that $u$ is smooth over $U$ and in particular the map
\[
r\mapsto u (\theta, r)
\]
must be smooth on the interval $I = (r-\delta, r+\delta)$ for every fixed $\theta \in (\frac{2\pi}{4}, \pi)$. This immediately implies that the three functions $a$, $b \cos \frac{c}{2}$ and $b \sin \frac{c}{2}$ have smooth extensions from $\mathcal{R}\cap I$ to $I$, because varying $\theta$ it is easy to find three linearly independent linear combinations of these functions which are smooth. Using that $\cos^2 + \sin^2=1$, we then conclude that also $b^2$ has a smooth extension to $I$. Now, if such smooth extension were to vanish at some point $\rho$, we then would have that the trace of $u$ is constant on $\partial B_\rho$. But this would immediately imply the constancy of the function $u$ in $B_\rho$ and then, as already argued, that $\nabla u \equiv 0$ on $B_{r_0}$. 

Being that $b^2$ has a smooth extension which is bounded away from zero, $b$ itself has a smooth extension if it does not change sign over $I$.
Now, again because $K$ is closed, for any fixed $\delta$ we can assume that $c(\rho)\in (\frac{\pi}{2}-\delta, \frac{\pi}{2}+\delta)$ for all $\rho\in I$, so that in particular $\cos \frac{c}{2}$ is positive and bounded away from zero on $I$. But then the existence of a smooth extension of $b\cos \frac{c}{2}$ over $I$ would preclude $b$ from changing sign. We thus conclude that $b$ has a smooth extension. Moreover, over the interval $(\frac{\pi}{2}-\delta, \frac{\pi}{2} + \delta)$ the function $\cos \frac{\cdot}{2}$ has a smooth inverse, which allows us to conclude that $c$ has a smooth extension to $I$ as well. 

Therefore, we conclude that $K$ is a smooth curve in $B_{r+\delta}\setminus B_{r-\delta}$. But from the discussion above we also know that $K$ intersects $\partial B_\rho$ transversally for a.e. $\rho \in (r-\delta, r+\delta)$. This means that $K$ is a straight segment, or in other words $c$ is constant. We can now integrate by parts to conclude that 
\[
a' (r) = \frac{d}{dr} \frac{1}{2\pi r} \int_{\partial B_r} u = \frac{1}{2 \pi r} \int_{\partial B_r} \frac{\partial u}{\partial n} = 0 \, ,
\]
so that $a$ is constant over $I$.
Moreover we can use the fact that 
\[
\frac{1}{r} \int_{B_r} |\nabla u|^2 = \int_{\partial B_r} \left(\frac{\partial u}{\partial \tau}\right)^2
\]
is constant in $r$ to conclude that $b(r) = \beta r^{1/2}$ for some nonzero $\beta$. 

Summarizing, for every $r\in \mathcal{R}$ we have concluded that there is an interval $I$ containing it over which $a$ and $c$ are constant and $b$ takes the form $\beta r^{1/2}$ for some constant $\beta$. Without loss of generality assume $a=c=0$. Let $I$ be a maximal such interval, which we denote by $(s,t)$, and assume that $s >0$. Then the trace of $u$ on $\partial B_s$ on the exterior of the disk $B_s$ is of the form $ \beta s^{1/2} \cos \frac{\theta}{2}$. We now claim that $K\cap \partial B_s$ must consist of the single point $(s, 0)$ (in cartesian coordinates). Indeed, if $K\cap \partial B_s$ contains some other point, then there is a sequence of radii $\{r_k\} \subset \mathcal{R}$ with $r_k \to s$ and $c(r_k) \to \phi\neq 0$, because we know that $R$ is dense in $K$. But then it would follow from our formulas that $u|_{\partial B_{r_k}}$ converges to a function of the form $a' + b' \cos \frac{\theta-\phi}{2}$, which disagrees with $\beta s^{1/2} \cos \frac{\theta}{2}$ on the whole circle $\partial B_s$ minus a discrete set of points. This would only be possible if $\partial B_s \subset K$, which however is excluded from the fact that $\mathcal{H}^1 (K\setminus R)=0$.

Now, if $K\cap \partial B_s$ consists only of the point $(s, 0)$, then it turns out that $u$ is smooth in a neighborhood of $\partial B_s\setminus K$. We already know that the trace from the exterior of the disk $B_s$ must be $\beta s^{1/2} \cos \frac{\theta}{2}$, in particular we can conclude that $s\in \mathcal{R}$. But then we can iterate the argument above and show that in fact there is an interval $(s-\delta, s+\delta)$ over which $a$ and $c$ are constant and $b$ takes the form $\beta s^{1/2}$. This then shows that the same is true on the interval $(s-\delta, t)$, thereby contradicting the maximality of $(s, t)$.

The conclusion is that in fact $s$ must be $0$. Likewise, an entirely analogous argument shows $t= r_0$. We thus have conclude that over $B_{r_0}$ the discontinuity set $K$ is a radius and $u$ is given by the formula of the statement of the proposition.

\chapter{Pure jumps and triple junctions}\label{ch:salti_e_tripunti}

This chapter is devoted to proving the cases (b) and (c) of Theorem~\ref{t:main}.

{We first state, in Section \ref{s:epsilon reg} the two key $\varepsilon$-regularity theorems proved by David in his pioneering work \cite{david1996} (see also \cite{DavidBook}). The only difference with the statements in Theorem~\ref{t:main} is that we will make the additional assumption that the Dirichlet energy is also small, while the statements in Theorem~\ref{t:main} assume only the smallness of the Hausdorff distance to the model cases. We will show that it is rather straightforward to remove the smallness of the Dirichlet energy with a ``blow-up'' argument, thanks to Theorem~\ref{t:class-global}.

The sections~\ref{s:salto_preliminari}, \ref{s:approssimazione}, \ref{s:lemmi-decadimento}, and \ref{s:salto-conclusione} are then devoted to prove the $\varepsilon$-regularity at pure jumps, essentially following the approach by \cite{AFP97} (see also \cite[Chapter~8]{AFP00}). More precisely, the argument is based on a suitable decay lemma which is very close to the pioneering lemma of De~Giorgi in the regularity theory of area-minimizing hypersurfaces, but it is conceptually more complicated because we will need to juggle two quantities. One, cf. \eqref{e:mean flatness}, measures, in a scaling-invariant fashion, the $L^2$-closeness of the jump set $K$ to a flat line in a disk of radius $r$. The other, cf. \eqref{e:Dirichlet-scaled}, is the natural scaling-invariant Dirichlet energy in a disk of radius $r$. The decaying quantity is the maximum of the two, cf. Proposition \ref{p:salto_decay} (in reality, the actual decaying quantity, defined in \eqref{e:quantita-m}, takes also into account the fidelity term when $\lambda >0$). The key strategy is to then split the decay in two cases: the flatness of $K$ decays if it is at least comparable to the Dirichlet energy and likewise the Dirichlet energy decays if it is at least comparable to the flatness of $K$, cf. Lemmas \ref{l:decay1} and \ref{l:decay2}.\index{flatness@flatness}

A pivotal technical tool to prove the decay Lemmas is Proposition \ref{p:lip-approx} which shows how, when the Dirichlet energy and the flatness are both small, the set $K$ coincides, up to a small error, with the graph of a Lipschitz function. Section \ref{s:lemmi-decadimento} will exploit the Lipschitz approximation to prove the decay lemmas. When the flatness is comparable to the Dirichlet energy, the Lipschitz approximation will be shown to be very close to an affine map. This is a manifestation that $K$ is close to minimize the length: it corresponds to the one-dimensional case of De Giorgi's harmonic approximation in the case of area-minimizing hypersurfaces. When the Dirichlet energy is comparable to the flatness, we will instead show that $u$ is close to an harmonic function on a half disk, satisfying a Neumann boundary condition on the flat part of boundary. This is a manifestation of the fact that $u$ minimizes the Dirichlet energy and satisfies a Neumann boundary condition at $K$. An important part of the proof is to estimates the size of the ``possible holes of $K$.'' 

The final sections~\ref{s:tripunto-inizio} and \ref{s:triple-junction-final} are devoted to prove $\varepsilon$-regularity at triple junctions by taking advantage of the case of jump points and of the monotonicity formula contained in Proposition~\ref{p:monotonia-1}. This argument is new and quite different to David's original one. Section~\ref{s:tripunto-inizio} uses a ``blow-down'' argument and leverages to $\varepsilon$-regularity theorem for pure jumps to show that, once the set $K$ is sufficiently close to a triple junction in some disk $B_{2r} (x_0)$, then it stays close to a triple junction in every smaller disk centered at some point $y$ not too far from $x$. In particular the argument implies that in $B_r (y)$ the set $K$ is the union of three arcs joining at $y$ (but it does not imply that each arc is regular {\em up to and including} the extremum $y$). In Section~\ref{s:triple-junction-final} the regularity up to $y$ we will then be concluding thanks to the monotonicity formula of 
Proposition~\ref{p:monotonia-1}}

\section{Epsilon-regularity statements}\label{s:epsilon reg}

We start off by stating the case in which $K$ is close in Hausdorff distance to a line. First, we introduce some useful notation. We let:
\begin{itemize}
    \item $\mathcal{R}_\theta$  be the counterclockwise rotation of angle $\theta\in[0,2\pi]$ in $\mathbb R^2$. \item $\ello$ be the infinite line $\{(t, 0) : t\in \mathbb R\}$.
\end{itemize}
\index[simb]{aalV_0@$\ello$}\index[simb]{aalR_t@$\mathcal{R}_\theta$}\index[simb]{aagZ^j(t,x,r)@$\Omega^j (\theta,x,r)$}
Hence we combine in a single quantity the measure of the Hausdorff distance, $\dist_H$,\index[simb]{aalHdist@$\dist_H$}\index{Hausdorff distance@Hausdorff distance} of $K$ from a diameter 
and of the smallness of the Dirichlet energy
\begin{align}
\Omega^j (\theta,x,r) &:= r^{-1}\dist_H (K \cap \overline{B}_{2r} (x), (x+\mathcal{R}_\theta (\ello))\cap \overline{B}_{2r} (x)) + r^{-1} \int_{B_{2r} (x)\setminus K} |\nabla u |^2 \, .
\end{align}
Moreover, we will use the notation ${\rm gr}\, (f)$ for the graph of a given function $f$. \index[simb]{aalgr(f)@${\rm gr}\, (f)$@}
Finally, to make our statements less cumbersome, we agree to use ``minimizer'' whenever the statement holds for absolute, restricted, generalized, and generalized restricted minimizers (i.e. for all type of minimizers considered in these notes).

\begin{theorem}\label{t:eps_salto_puro}\label{T:EPS_SALTO_PURO}
There are constants $\varepsilon, \alpha, C > 0$ with the following property. Assume
\begin{itemize}
\item[(i)] $(u,K)$ is a minimizer of $E_\lambda$ on $B_{2r} (x)$ with $r\leq 1$;
\item[(ii)] There is $\theta \in [0, 2\pi]$ such that 
\[
\Omega^j (\theta,x,r) 
+\lambda \|g\|_\infty^2 r^{\frac12} < \varepsilon\, .
\]
\end{itemize}
Then $K\cap B_r (x)$ is the graph of a $C^{1,\alpha}$ function $f$. More precisely there is $f:[-r,r] \to \mathbb R$ such that $K\cap B_r (x) =
(x+\mathcal{R}_\theta ({\rm gr}\, (f)))\cap B_r (x)$ and 
\begin{equation}\label{e:estimate_jump Eg}
\|f\|_{0} + r \|f'\|_0 + r^{1+\alpha} [f']_\alpha \leq 
C r \big(\Omega^j (\theta,x,r)+\lambda \|g\|_\infty^2 r^{\frac12}\big)^{\frac{1}{2}}\, .
\end{equation}
\end{theorem}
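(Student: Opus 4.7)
The plan is to establish a one-step flatness decay lemma at each point of $K$ and iterate it to get Hölder regularity of the tangent direction, from which the $C^{1,\alpha}$ graph parametrization and the estimate \eqref{e:estimate_jump Eg} will follow via a Campanato-type argument. After a translation, rotation and rescaling I reduce to $x=0$, $r=1$, $\theta=0$, with reference line $\ello=\{x_2=0\}$. The hypothesis forces $K\cap \overline B_2$ into the strip $|x_2|<\varepsilon$ and every point of $\ello\cap\overline B_{2-\varepsilon}$ to lie within $\varepsilon$ of $K$; combined with the Ahlfors regularity from Theorem \ref{t:dlb} and Lemma \ref{l:upper-bound}, this implies that $K\cap B_{7/4}$ is essentially a single quasi-horizontal curve, any small spurious component being forbidden by the density bound and an elementary competitor argument.

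The heart of the argument is the decay lemma: for some $\tau\in(0,1/2)$ and $\sigma\in(0,1)$, setting
\[
\eta(y,\rho):=\inf_{\theta}\Omega^j(\theta,y,\rho)+\lambda\|g\|_\infty^2\rho^{1/2}\, ,
\]
if $\eta(0,1)$ is below a threshold then $\eta(0,\tau)\leq \tau^{2\sigma}\eta(0,1)$. I would prove this by contradiction and blow-up. Take a sequence $(K_n,u_n)$ of minimizers of $E_{\lambda_n}$ with $\eta_n(0,1)=\varepsilon_n\downarrow 0$ violating the decay at scale $\tau$. Renormalize by vertically rescaling $K_n$ by $\varepsilon_n^{-1/2}$, and on each side of the jump subtract the one-sided mean of $u_n$ and divide by $\varepsilon_n^{1/2}$; the hypothesis $\lambda_n\|g_n\|_\infty^2\leq\varepsilon_n$ forces the fidelity contribution below the relevant scaling order. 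Applying Theorem \ref{t:minimizers compactness} (or Theorem \ref{t:compactness-2} for generalized minimizers), I would extract two complementary limits: the unrescaled limit $(K_\infty,u_\infty)$ is a global generalized minimizer with flat jump set, hence by Theorem \ref{t:class-global} a pure jump on $\ello$; while the renormalized vertical deviation $h_n$ of $K_n$ from $\ello$ converges to a function $h$, and the rescaled side traces of $u_n$ converge to harmonic functions $v^\pm$ on the two half-planes satisfying the Neumann condition \eqref{e:Euler Neumann g}. Linearizing the curvature identity \eqref{e:Euler curvature g} around $\ello$ yields $h''=|\partial_1 v^+|^2-|\partial_1 v^-|^2$; elementary power decay estimates for harmonic functions on the half-plane and for this ODE then give $\eta_\infty(0,\tau)\leq C\tau^{2}\eta_\infty(0,1)$, which for any $\sigma\in(0,1)$ and $\tau$ small enough contradicts the failure of the decay along the sequence.

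Given the decay lemma, an iteration yields $\eta(y,\tau^k)\leq\tau^{2\sigma k}\eta(0,1)$ uniformly for $y\in K\cap B_1$ and $k\in\N$. A standard Campanato-type characterization (compare \cite{AFP00}) then produces the $C^{1,\alpha}$ graph parametrization of $K\cap B_1$ and the quantitative bound \eqref{e:estimate_jump Eg} with $\alpha=\sigma$. The main obstacle is the blow-up step: one must simultaneously control the renormalization of $u$ on both sides of $K$, identify the correct linearized limit system coupling the vertical deviation with the two harmonic traces, and ensure that the unrescaled limit is captured by the classification in Theorem \ref{t:class-global}; these points are where the precise smallness $\lambda\|g\|_\infty^2 r^{1/2}<\varepsilon$ and the flexibility of the compactness theorems across the different notions of minimality enter decisively.
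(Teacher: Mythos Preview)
Your overall architecture (one-step decay by blow-up, then iteration and Campanato) matches the paper's, but the execution has a genuine gap and a structural mismatch with how the paper actually argues.

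\textbf{Circular use of the curvature equation.} Your blow-up step relies on the pointwise identity $\kappa=-(|\nabla u^+|^2-|\nabla u^-|^2)$ from Proposition~\ref{p:variational identities 2} to derive the limit system $h''=|\partial_1 v^+|^2-|\partial_1 v^-|^2$. But that proposition \emph{assumes} $K$ is already a $C^1$ submanifold, which is precisely the conclusion of Theorem~\ref{t:eps_salto_puro}. The paper never uses the pointwise curvature equation here; instead it works with the distributional inner variation \eqref{e:inner}. Concretely, the Tilt Lemma~\ref{l:tilt} tests \eqref{e:inner} with $\eta(x)=\varphi^2(x)(0,x_2)$ to control the excess by $d+\beta+\lambda\|g\|_\infty^2 r$, and Proposition~\ref{p:lip-approx} uses this together with a vertical-separation blow-up (Lemma~\ref{l:vertical-sep}) to build a $1$-Lipschitz graph approximating $K$, all without any a priori regularity of $K$. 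Your proposal is missing this machinery entirely.

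\textbf{Scaling mismatch in the single blow-up.} The two pieces of $\Omega^j$ scale differently: the Hausdorff distance is linear in the vertical deviation of $K$, while the Dirichlet energy is quadratic in $\nabla u$. A single normalization by $\varepsilon_n^{-1/2}$ cannot bring both to a nondegenerate limit; generically one of $h$ or $v^\pm$ will vanish. This is exactly why the paper splits the decay into two separate lemmas: Lemma~\ref{l:decay1} (the regime $\bar\delta\max\{d,\lambda\|g\|_\infty^2 r^{1/2}\}\le\beta$) rescales the Lipschitz graph by $\beta_j^{-1/2}$ and obtains an affine limit via the first-variation formula for length; Lemma~\ref{l:decay2} (the regime $\bar\delta\max\{\beta,\lambda\|g\|_\infty^2 r^{1/2}\}\le d$) rescales $u$ by $d_j^{-1/2}$ and obtains two harmonic limits $v^\pm$, with the key step being to rule out energy concentration on the jump via a competitor construction. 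Proposition~\ref{p:salto_decay} then combines the two regimes into the single statement $m(\tau r)\le\tau^{1/2}m(r)$ for $m=\max\{d,\beta,\lambda\|g\|_\infty^2 r^{1/2}\}$. Incidentally, your limit equation is not a linearization at all: it is quadratic in $v^\pm$, and the linearization of \eqref{e:Euler curvature g} around a pure jump (where $\nabla u^\pm=0$) is simply $h''=0$.
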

The proof we will give shows that a weaker assumption suffices 
for Theorem~\ref{t:eps_salto_puro}. Indeed, the Hausdorff distance in the definition of $\Omega^j$ can be substituted by an $L^2$, one-sided analogue (cf. the definition of the mean flatness $\beta$ in \eqref{e:mean flatness} afterwards).\index{flatness@flatness}

A higher dimensional version of the previous result has been established contemporarily and independently 
by Ambrosio, Fusco and Pallara \cite{AP97,AFP97} (see also \cite[Chapter 8]{AFP00}). The proof 
we provide is inspired by that in \cite[Chapter 8]{AFP00}, despite the several shortcuts we can take due 
to the $2$d setting.

Next, we state the case in which $K$ is close to a propeller. To that end we introduce further the notation\index[simb]{aalT_0@$\To$}\index[simb]{aagZ^t(t,x,r)@$\Omega^t(\theta,x,r)$}\index[simb]{aalV_0^+@$\ello^+$}
\begin{itemize}
    \item $\ello^+$ for the halfline $\{(t,0): t\geq 0\}$;
    \item $\To$ for the global triple junction 
    \begin{equation}
        \To:= \ello^+ \cup \mathcal{R}_{\frac{2\pi}3} (\ello^+) \cup \mathcal{R}_{\frac{4\pi}3} (\ello^+)\, ;\label{e:To}
\end{equation}
\item $\Omega^t (\theta, x,r)$ for the analog of $\Omega^j (\theta, x,r)$:
\begin{equation}
\Omega^t (\theta,x,r) := r^{-1}\dist_H (K \cap \overline{B}_{2r} (x), (x+\mathcal{R}_\theta (\To))\cap \overline{B}_{2r} (x)) + r^{-1}\int_{B_{2r} (x)\setminus K} |\nabla u |^2\, .
\end{equation}
\end{itemize}

\begin{theorem}\label{t:eps_tripunto}\label{T:EPS_TRIPUNTO}
There are constants $\varepsilon, \alpha, C > 0$ with the following property. Assume:
\begin{itemize}
\item[(i)] $(u,K)$ is a minimizer  of $E_\lambda$ in $B_{2r} (x)$ with $r\leq 1$;
\item[(ii)] There is $\theta\in [0, 2\pi]$ such that 
\[
\Omega^t (\theta,x,r) 
+\lambda \|g\|_\infty^2 r^{\frac12}< \varepsilon\, .
\]
\end{itemize}
Then there is a $C^{1,\alpha}$ diffeomorphism $\Phi: B_r\to B_r (x)$ such that $K\cap B_r (x) = \Phi (\To \cap B_r)$ and
\begin{align}
|\Phi (0)-x| & + r\left(\|D\Phi - \mathcal{R}_\theta\|_0+ \|D\Phi^{-1} - \mathcal{R}_{-\theta}\|_0\right)\nonumber\\
&+ r^{1+\alpha} \left([D\Phi]_\alpha + [D\Phi^{-1}]_\alpha\right) \leq C r \big(\Omega^t (\theta,x,r)+ \lambda \|g\|_\infty^2 r^{\frac{1}{2}}\big)^{\frac12}\, .\label{e:estimate_triple}
\end{align}
\end{theorem}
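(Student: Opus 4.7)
The plan is to reduce by translation, rotation, and rescaling to the case $x=0$, $r=1$, $\theta=0$. Under the rescaling $(K,u)\mapsto(K_{x,r},u_{x,r})$ the fidelity parameter becomes $\lambda r$, the bound on $g$ becomes $r^{-1/2}M_0$, and the combined quantity $\lambda \|g\|_\infty^2 r^{1/2}$ is invariant, so the hypothesis is scale-invariant and the whole argument can be carried out after normalization. I will denote the smallness parameter by $\e_0:=\Omega^t(0,0,1)+\lambda\|g\|_\infty^2$.

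The first step is to identify the point $y\in B_1$ that will play the role of the triple-junction vertex. Using the density lower bound (Theorem \ref{t:dlb}), the Hausdorff closeness of $K$ to $\To$, and the monotonicity formula (Proposition \ref{p:monotonia-1})---which applies because under the smallness hypothesis every connected component of $\partial B_s(y)\setminus K$ for appropriate $s$ has length close to $\tfrac{2\pi}{3}s$, hence $\alpha<\pi$---one selects $y$ via a ``blow-in'' procedure, as indicated in the introduction: take a sequence of blow-ups at candidate vertices and pass to the limit. Combining the monotonicity bound on $F(y,s)$ with the compactness statement (Theorem \ref{t:minimizers compactness}) and the classification (Theorem \ref{t:class-global}), any blow-up at such a $y$ must be an elementary global minimizer, and the Hausdorff constraint excludes the constant and pure-jump cases, forcing a global triple junction.

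The core of the proof is a decay estimate for the excess
\[
\Theta(y,s):=\inf_{\theta'\in[0,2\pi]}\Omega^t(\theta',y,s)+\lambda\|g\|_\infty^2 s^{1/2}.
\]
I would prove, by a contradiction/compactness argument, that there exist $\tau\in(0,1/2)$ and $\beta\in(0,1)$ such that $\Theta(y,\tau s)\leq\beta\,\Theta(y,s)+C\lambda s^{1/2}$ whenever $\Theta(y,s)<\e$. Indeed, if this failed along a sequence $(K_j,u_j)$ with $\Theta_j(y_j,s_j)\to0$, rescaling by $s_j$ produces a sequence whose Hausdorff and Dirichlet-energy excess relative to some $\mathcal{R}_{\theta_j}(\To)$ tends to zero; by Theorems \ref{t:minimizers compactness} and \ref{t:class-global}(c) the limit is exactly a global triple junction with a well-defined vertex, and the quantitative rate at scale $\tau$ for that limit---which can be read off by linearizing the triple-junction structure away from the vertex (where Theorem \ref{t:eps_salto_puro} applies on each of the three branches) and by explicit Fourier expansion in each of the three sectors as in the proof of Theorem \ref{t:class-global}(iii)---provides the contradiction. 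Iterating the decay gives $\Theta(y,2^{-k})\leq C 2^{-\alpha k}\sqrt{\e_0}$ and a Cauchy sequence of optimal angles $\theta_k\to\theta_\infty$ with $|\theta_\infty-0|\leq C\sqrt{\e_0}$, yielding a Campanato-type estimate. The three branches of $K$ emanating from $y$ are then $C^{1,\alpha}$ arcs by Theorem \ref{t:eps_salto_puro} applied at each interior point, and the assembly of a diffeomorphism $\Phi$ sending $\To\cap B_1$ onto $K\cap B_1(x)$ with estimate \eqref{e:estimate_triple} is standard: parametrize each branch as a $C^{1,\alpha}$ graph over the corresponding ray of $\To$, and extend $\Phi$ sector-by-sector by a $C^{1,\alpha}$ interpolation that matches the boundary data on the branches and on $\partial B_1$.

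The main obstacle is Step 3, the decay of $\Theta$. Unlike the pure-jump case, where a standard linearization by harmonic competitors and tilt-excess estimates is available, near a triple junction the argument must simultaneously control the position of the vertex, the three angles, and the Dirichlet energy in each of the three sectors. The key technical device is the monotonicity formula of Proposition \ref{p:monotonia-1}, which provides the almost-constancy of $F(y,\cdot)$ (modulo a $C\lambda$ error from the fidelity) and forces any limit to be exactly a propeller with $120^\circ$ angles; the ``blow-in'' construction lets one localize the vertex with quantitative precision, replacing the arguments of \cite{david1996}. A secondary technical point is tracking the fidelity contribution $\lambda\|g\|_\infty^2 s^{1/2}$ through the iteration: the exponent $\tfrac12$ is dimensionally the smallest one compatible with both the scaling of $E_\param$ and geometric summability of the Campanato iteration, so the estimate closes.
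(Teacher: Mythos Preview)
Your overall architecture is reasonable, but the heart of your argument---the decay $\Theta(y,\tau s)\leq\beta\,\Theta(y,s)+C\lambda s^{1/2}$ via contradiction/compactness---has a genuine gap, and this is precisely where the paper's route differs from yours.

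In your contradiction argument you rescale a failing sequence and pass to a limit which is \emph{exactly} a global triple junction. For that limit, $\Theta\equiv 0$ at every scale; there is no ``quantitative rate at scale $\tau$'' to read off, because the limit object has zero excess. The Fourier expansion from the proof of Theorem~\ref{t:class-global}(iii) only tells you that the Dirichlet energy of a harmonic function in a $\tfrac{2\pi}{3}$-sector decays like $\rho^3$, which is information about the limit, not about the rate at which the sequence approaches it. To extract a genuine decay rate by compactness you would have to \emph{linearize}: divide the graph functions and $u_j-u_\infty$ by $\sqrt{\Theta_j}$, pass to a limit, and analyze the resulting linear system in the three sectors with transmission conditions at the junction. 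This is doable---it is exactly the machinery the paper deploys for the cracktip in Chapter~\ref{ch:crack}---but it is a substantial technical undertaking that you have not carried out, and it requires identifying the correct spectral gap for the linearized operator at a triple junction.

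The paper sidesteps this entirely. It does \emph{not} prove decay of $\Omega^t$ by compactness. Instead it proceeds in two disjoint steps. First, Lemma~\ref{l:decay iteration} (the ``blow-in'') shows only that if smallness holds at $N+1$ consecutive dyadic scales around a drifting sequence of centers, it persists at the next scale; the contradiction limit is again an exact triple junction, but now the failing inequality is $\Theta>\varepsilon$ for a \emph{fixed} threshold, so $\Theta_\infty=0$ already gives the contradiction---no rate is needed. Iterating produces the vertex $y$ with \emph{uniform} smallness at all scales (Lemma~\ref{l:C1smooth}). Second, and this is the key idea you are missing, the actual power decay comes from the David--L\'eger monotonicity formula (Proposition~\ref{p:monotonia-1}): since each arc of $\partial B_\rho(y)\setminus K$ has length close to $\tfrac{2\pi}{3}\rho$, the inequality $F'(\rho)\gtrsim D'(\rho)/\rho - C\lambda$ integrates to $\int_0^r D'(\rho)/\rho\,d\rho<\infty$, and a direct comparison with a radial competitor (three segments plus harmonic extensions in the sectors) yields $D(\rho)\lesssim\rho^{\pi/\alpha_0}$ with $\alpha_0$ slightly above $\tfrac{2\pi}{3}$. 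This decay then feeds into the curvature equation~\eqref{e:Euler curvature g} to give $\int_{\Gamma_i\cap B_\rho}|\kappa_i|\lesssim\rho^{\gamma}$ and hence $C^{1,\gamma}$ regularity up to the vertex. No linearization is involved.
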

In $3$d Lemenant \cite{lemenant} has proven an analogous statement provided that $K$ in $B_{2r}(x)$ is close either to the union of three half-planes meeting along their edges by $120$ degree angles 
(a $\mathbb{Y}$-cone), or to a cone over the union of the edges of a regular tetrahedron (a $\mathbb{T}$-cone).

We finally record an interesting corollary, which implies that, under the assumptions (i)-(ii) above, if $\lambda=0$, then the three arcs forming $K$ are in fact $C^2$ up to the triple junction and their respective curvatures vanish there.

\begin{corollary}\label{c:curvatura=0 nel tripunto}
Let $(u,K)$ be a minimizer of $E_0$ satisfying the assumptions of Theorem~\ref{t:eps_tripunto}. Then the three arcs forming $K\cap B_r(x)$ are $C^2$ up to the junction point $\bar x = \Phi (0)$ included and moreover $\kappa_i(\bar x)=0$ for 
every $i\in\{1,2,3\}$.
\end{corollary}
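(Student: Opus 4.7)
The plan is to establish $C^2$ regularity and vanishing curvature by controlling the behavior of $|\nabla u|^2$ up to the junction $\bar x = \Phi(0)$ and then invoking the Euler–Lagrange equation for the curvature. Away from $\bar x$, Theorem~\ref{t:eps_tripunto} already provides $C^{1,\alpha}$ regularity of each arc, and on both sides of each arc $u$ is harmonic with vanishing Neumann datum. By standard Schauder bootstrap (boundary Schauder up to a $C^{k,\alpha}$ arc gives $u\in C^{k,\alpha}$ up to the arc, hence $\kappa\in C^{k-1,\alpha}$ via \eqref{e:Euler curvature g}, which lifts the arc to $C^{k+1,\alpha}$, and so on) each arc is actually $C^\infty$ on any compact subset that avoids $\bar x$. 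In particular, the curvature formula
\[
\kappa_i = -\big(|\nabla u^+|^2 - |\nabla u^-|^2\big)
\]
from Proposition~\ref{p:variational identities 2}(c) with $\param=0$ extends each $\kappa_i$ to a continuous function on the interior of the arc.

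The core step is to show that $|\nabla u(y)|\to 0$ as $y\to\bar x$. I would proceed by flattening $K$ through the diffeomorphism $\Phi$ of Theorem~\ref{t:eps_tripunto}: the function $\tilde u := u\circ\Phi$ is defined on $B_\rho\setminus \To$ for some small $\rho>0$, solves a divergence-form uniformly elliptic equation with $C^{0,\alpha}$ coefficients $A(y)$ (coming from $D\Phi$, with $A(0)=\mathrm{Id}$ since $D\Phi(0)=\mathcal{R}_\theta$), and satisfies a homogeneous conormal boundary condition on the three straight edges of $\To$. On each of the three open $\tfrac{2\pi}{3}$–sectors bounded by $\To$, the model (constant–coefficient) Neumann eigenfunctions are $r^{3k/2}\cos(3k\theta/2)$, $k\geq 0$, so the smallest positive exponent is $3/2$. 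A Campanato-type harmonic approximation scheme (compare the argument around \eqref{e:enrg harmonic}: freeze coefficients at $0$, compare with the exact Neumann solution in the sector, and iterate over dyadic scales) gives the decay
\begin{equation}\label{e:plan-decay}
\frac{1}{\rho}\int_{B_\rho\cap S}|\nabla \tilde u|^2 \leq C\rho^{2-\e}\qquad\forall \e>0,
\end{equation}
for every open sector $S$. An input of \eqref{e:plan-decay} at scale zero is provided by a blow-up argument: using Theorem~\ref{t:minimizers compactness} and the convergence of Dirichlet energies in part~(i), any blow-up of $(K,u)$ at $\bar x$ is, by Theorem~\ref{t:eps_tripunto} and Theorem~\ref{t:class-global}(iii), a global triple junction, and so
\[
\lim_{\rho\downarrow 0}\frac{1}{\rho}\int_{B_\rho(\bar x)\setminus K}|\nabla u|^2=0.
\]
(Alternatively one can invoke Bonnet's monotonicity, Proposition~\ref{p:monotonia-0}, whose hypothesis is met since $K\cap\partial B_r(\bar x)$ lies in a single connected component of $K$ for small $r$.) Combining \eqref{e:plan-decay} with interior and boundary Caccioppoli estimates on each sector yields the pointwise bound $|\nabla \tilde u(y)|\leq C_\e |y|^{\frac12-\e}$, and pulling back through $\Phi$ gives $|\nabla u(y)|\to 0$ as $y\to \bar x$.

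With this in hand, \eqref{e:Euler curvature g} shows that each $\kappa_i$ extends continuously to the arc up to $\bar x$ with $\kappa_i(\bar x)=0$. To convert this into $C^2$ regularity up to $\bar x$, parametrize each arc by arclength as $\gamma_i$; since $\gamma_i$ is $C^{1,\alpha}$, $\dot\gamma_i$ is continuous, and $\ddot\gamma_i=\kappa_i\,\nu_i$ (with $\nu_i$ the unit normal, continuous up to $\bar x$); continuity of $\kappa_i$ up to $\bar x$ then gives continuity of $\ddot\gamma_i$ up to $\bar x$, hence $\gamma_i\in C^2$ up to the endpoint, with $\ddot\gamma_i(\bar x)=\kappa_i(\bar x)\nu_i(\bar x)=0$.

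The main obstacle is upgrading the integral decay produced by the blow-up/monotonicity input to the pointwise decay \eqref{e:plan-decay} at the corner $\bar x$. The difficulty is genuinely that of elliptic boundary value problems at a non-smooth (corner) boundary point with merely $C^{0,\alpha}$ coefficients; the clean constant-coefficient Fourier expansion is available only on the exact $120^\circ$ sector, so one must run a Kondratiev/Campanato iteration that at each dyadic scale compares $\tilde u$ with the explicit sector eigenfunctions and uses the gain $3/2>1$ in the leading exponent to close the iteration. Everything else in the proof is essentially a routine consequence of Theorem~\ref{t:eps_tripunto} and Propositions~\ref{p:variational identities 2} and~\ref{p:monotonia-0}.
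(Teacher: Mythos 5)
Your proposal is correct in its global scheme, and it matches the paper's scheme at the level of the final steps (decay of the local Dirichlet energy near the junction, pointwise vanishing of $\nabla u$ at $\bar x$, then \eqref{e:Euler curvature g} to show $\kappa_i\to 0$ along each arc and conclude $C^2$ with vanishing curvature at $\bar x$). Where you differ genuinely from the paper is in how the Dirichlet-energy decay is produced. The paper stays in the physical variables: it observes that $\partial B_\rho\setminus K$ splits into three arcs $\gamma_i$ with $|\mathcal{H}^1(\gamma_i)-\tfrac{2\pi}{3}\rho|$ small, and applies the sharp one-dimensional Poincar\'e--Wirtinger inequality on each (after subtracting the mean, legitimate by \eqref{e:Bonnet}) to get a differential inequality $D(\rho)\leq \tfrac{5}{12}\,\rho\, D'(\rho)$, which integrates directly to $D(\rho)\lesssim\rho^{12/5}$. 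No iteration is required, and the input at scale $\rho_0$ is just the trivial energy upper bound \eqref{e:upper bound}. You instead propose to flatten $K$ to $\mathscr{T}_0$ through the diffeomorphism $\Phi$ and run a Campanato/Kondratiev harmonic-approximation scheme at the corner, comparing $\tilde u=u\circ\Phi$ with the Neumann eigenfunctions $r^{3k/2}\cos(3k\theta/2)$ of the $120^\circ$ sector; this is a valid alternative and gives the near-optimal exponent $\rho^{2-\varepsilon}$ rather than $\rho^{7/5}$, at the cost of having to run a freezing/iteration argument on a corner domain with $C^{0,\alpha}$ coefficients --- a step you correctly flag as the main obstacle but leave as a sketch. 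Both routes are sound and suffice for $\kappa_i(\bar x)=0$ since any exponent strictly larger than $1$ works; the paper's is shorter and self-contained, yours is sharper but more machinery-heavy. Your supplementary reliance on a blow-up (or Proposition~\ref{p:monotonia-0}) to get $D(\rho)/\rho\to 0$ as an "input at scale zero" is correct but unnecessary for the paper's version, which uses only the a priori bound $D(\rho_0)\leq 2\pi\rho_0$.
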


\subsection{Useful corollaries of the epsilon-regularity at pure jumps and triple junctions} We observe here that, as it is standard for $\varepsilon$-regularity statements as Theorem~\ref{t:eps_salto_puro} in geometric analysis, we can infer from these theorems a number of consequences which will be useful in the sequel, namely:
\begin{itemize}
    \item an improved convergence result of the jump sets when the limit is itself smooth; 
    \item a partial regularity result, which will
be extensively used in the rest of the notes;
\item a rigidity at ``infinity'' of certain global minimizers.
\end{itemize}

We start with the improved convergence.

\begin{corollary}\label{c:salto-puro-curvo}
Let $V\subset\subset U$ be two open planar domains and let:
\begin{itemize}
    \item[(a)] $K\subset U$ be a set which is the union of finitely many nonintersecting $C^1$ arcs (with endpoints in $\partial U$) and finitely many $C^1$ simple closed curves, all pairwise disjoint;
    \item[(b)] $u: U\setminus K \to \mathbb R$ be a $C^1$ function with the property that, for every connected component $A$ of $U\setminus K$, $u|_{A}$ has a $C^1$ extension to $\overline{A}$.
\end{itemize}
Then, for every $\delta>0$ there is a $\varepsilon (u, K, V, U, \delta)>0$ with the following property. If $(v,J)$ is a minimizer of $E_\lambda$ and
\begin{equation}
\dist_H (J\cap U, K\cap U) + \int_{U\setminus (J\cup K)} |\nabla v - \nabla u|^2 + \lambda {\rm diam}\, (U) \|g\|_\infty^2 < \varepsilon\, ,    \end{equation}
then $J\cap V$ is $C^{1,\alpha}$ close to $K\cap V$, where $\alpha$ is the constant of Theorem~\ref{t:eps_salto_puro}.
\end{corollary}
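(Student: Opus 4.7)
The approach is a finite covering argument based on Theorem~\ref{t:eps_salto_puro}. First I fix $V\subset\subset V'\subset\subset U$, so that $\nabla u$ is bounded by some $M<\infty$ on $V'\setminus K$; this uses that $\nabla u$ has $C^0$ one-sided extensions on the components of $U\setminus K$ and that $\overline{V'}$ is compact in $U$. Since $K$ is a disjoint union of finitely many $C^1$ arcs (whose endpoints lie in $\partial U$, hence not in $\overline V$) and simple closed $C^1$ curves, compactness of $K\cap\overline V$ allows me to choose $r_0\in(0,1]$ so small that for every $x\in K\cap\overline V$: (i) $\overline B_{2r_0}(x)\subset V'$; (ii) $K\cap \overline B_{2r_0}(x)$ is a single $C^1$ arc through $x$; (iii) if $\theta(x)$ is the angle of the unit tangent to $K$ at $x$, then
$$r_0^{-1}\dist_H\bigl(K\cap\overline B_{2r_0}(x),\ (x+\mathcal{R}_{\theta(x)}(\ello))\cap\overline B_{2r_0}(x)\bigr)<\tfrac{\varepsilon_1}{8},$$
with $\varepsilon_1$ the constant of Theorem~\ref{t:eps_salto_puro}; (iv) $4M^2r_0<\varepsilon_1/8$, so that $r_0^{-1}\int_{B_{2r_0}(x)\setminus K}|\nabla u|^2<\varepsilon_1/8$.

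Next I extract a finite subcover $\{B_{r_0/2}(x_i)\}_{i=1}^N$ of $K\cap\overline V$ with $x_i\in K\cap\overline V$ and set $\theta_i:=\theta(x_i)$. Given a minimizer $(J,v)$ of $E_\lambda$ satisfying the hypothesis with parameter $\varepsilon<r_0/4$, the triangle inequality together with $|\nabla v|^2\le 2|\nabla u|^2+2|\nabla v-\nabla u|^2$ yield
$$\Omega^j(\theta_i,x_i,r_0)+\lambda\|g\|_\infty^2 r_0^{1/2}\le \tfrac{\varepsilon_1}{8}+\tfrac{\varepsilon}{r_0}+\tfrac{\varepsilon_1}{4}+\tfrac{2\varepsilon}{r_0}+\tfrac{\varepsilon}{\operatorname{diam}(U)}.$$
Taking $\varepsilon$ sufficiently small (depending on $r_0$, hence on $K,u,V,U$), this is less than $\varepsilon_1$. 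Theorem~\ref{t:eps_salto_puro} then produces $f_i\in C^{1,\alpha}([-r_0,r_0])$ such that $J\cap B_{r_0}(x_i)=(x_i+\mathcal{R}_{\theta_i}({\rm gr}\, f_i))\cap B_{r_0}(x_i)$, and \eqref{e:estimate_jump Eg} makes the norm $\|f_i\|_{C^0}+r_0\|f_i'\|_{C^0}+r_0^{1+\alpha}[f_i']_\alpha$ as small as we wish by further shrinking $\varepsilon$.

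Since $K$ is $C^1$, $K\cap B_{r_0}(x_i)$ is also the graph of a $C^1$ function $h_i$ over the same line $x_i+\mathcal{R}_{\theta_i}(\ello)$ with $h_i(0)=h_i'(0)=0$, and by choosing $r_0$ smaller at the outset I may assume $\|h_i\|_{C^1}$ is arbitrarily small. Hausdorff closeness of $J$ and $K$ forces $\|f_i-h_i\|_{C^0}\le C\varepsilon$, and interpolating this $C^0$ bound against the uniform $C^{1,\alpha}$ bound on $f_i$ and the small $C^1$ bound on $h_i$ yields $\|f_i-h_i\|_{C^{1,\alpha'}}<\delta$ for any $\alpha'<\alpha$; replacing $\alpha$ with a slightly smaller exponent at the outset absorbs this loss. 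Finally, since $\varepsilon<r_0/4$, every point of $J\cap V$ lies within $r_0/4$ of $K\cap\overline V$, hence inside some $B_{r_0/2}(x_i)$, so no stray piece of $J$ is left uncovered.

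The main obstacle is the passage from the $C^{1,\alpha}$ \emph{bounds} on $f_i$ furnished by Theorem~\ref{t:eps_salto_puro} to $C^{1,\alpha}$ \emph{closeness} to the nonzero $C^1$ graph $h_i$ of $K$: one cannot directly apply the $\varepsilon$-regularity theorem to $f_i-h_i$, and the natural remedy is the interpolation between $C^0$ closeness (from Hausdorff distance) and uniform $C^{1,\alpha}$ bounds described above, at the unavoidable cost of an arbitrarily small loss in the Hölder exponent.
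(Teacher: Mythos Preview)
Your covering argument is correct and is exactly the approach the paper intends: its own proof is the single sentence ``follows from a simple application of Theorem~\ref{t:main} \ldots\ for the case of a pure jump and a standard covering argument.'' Your care about the passage from $C^{1,\alpha}$ bounds on $f_i$ to closeness to the merely $C^1$ graph $h_i$ is well placed, but note that the paper's phrase ``$C^{1,\alpha}$ close'' is informal (since $K$ is only $C^1$): what is actually needed in the applications, and what \eqref{e:estimate_jump Eg} gives directly without interpolation, is that $J$ is locally a $C^{1,\alpha}$ graph of small $C^{1,\alpha}$ norm over the tangent lines to $K$, hence in particular $C^1$-close to $K$.
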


The proof follows from a simple application of Theorem~\ref{t:main} (and Theorem~\ref{t:main-generalized}) for the case of a pure jump and a standard covering argument. An analogous statement, which we leave to the reader, holds if $K$ is allowed to have a finite number of triple junctions.

\medskip

We next state the partial regularity result.

\begin{corollary}\label{c:a.e.regularity}
Assume $(u,K)$ is a minimizer of $E_\lambda$ in some open domain $U$. Then the subset of $K$ of pure jump points is relatively open and has full $\mathcal{H}^1$ measure.
\end{corollary}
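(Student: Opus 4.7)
The openness follows directly from the definition. If $x$ is a pure jump point, then $K$ is a $C^1$ submanifold in a neighborhood of $x$; consequently every nearby $y\in K$ inherits the same structure, and a straightforward normalization of the local straightening diffeomorphism (post-composing with an affine map so that $y\mapsto y$ with identity differential) exhibits $y$ as a pure jump point as well.

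For the full-measure assertion, the plan is to show that at $\mathcal{H}^1$-a.e.\ $x\in K$ the hypotheses of Theorem~\ref{t:eps_salto_puro} are satisfied at every sufficiently small radius $r$. I select good points $x\in K$ via the two conditions:
\begin{itemize}
\item[(a)] the approximate tangent line $T_xK$ to $K$ exists at $x$;
\item[(b)] $\displaystyle\lim_{r\downarrow 0}\frac{1}{r}\int_{B_r(x)\setminus K}|\nabla u|^2 = 0$.
\end{itemize}
Property (a) holds at $\mathcal{H}^1$-a.e.\ point of $K$ by the rectifiability of $K$ (Theorem~\ref{t:minimizers compactness}) and standard properties of rectifiable sets. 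For (b), note that $\mu:=|\nabla u|^2\mathcal{L}^2$ is a locally finite Radon measure which is mutually singular with $\nu:=\mathcal{H}^1\restr K$ (since $\mathcal{L}^2(K)=0$); hence, by the Besicovitch differentiation theorem, $\mu(B_r(x))/\nu(B_r(x))\to 0$ for $\nu$-a.e.\ $x$. Combined with the upper Ahlfors bound $\nu(B_r(x))\leq Cr$ from Lemma~\ref{l:upper-bound}, this yields (b).

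Fix such a good point $x$. For any sequence $r_j\downarrow 0$, condition (iv) of Assumption~\ref{a:blow-up} is automatic for a blow-up sequence, so by Theorem~\ref{t:minimizers compactness} a subsequence of the rescalings $(K_{x,r_j},u_{x,r_j})$ converges to a generalized global minimizer $(K_\infty,v,\{p_{kl}\})$, with $K_{x,r_j}\to K_\infty$ in Hausdorff distance. Property (a) together with the convergence of the length measures in Theorem~\ref{t:minimizers compactness}(i) gives $\mathcal{H}^1\restr K_{x,r_j}\rightharpoonup\mathcal{H}^1\restr T_xK$, whence $\mathcal{H}^1\restr K_\infty = \mathcal{H}^1\restr T_xK$; invoking the density lower bound of Theorem~\ref{t:dlb} (valid for $K_\infty$) to rule out extraneous $\mathcal{H}^1$-null pieces of the Hausdorff limit, one concludes $K_\infty=T_xK$. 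Property (b) together with the Dirichlet-energy convergence in Theorem~\ref{t:minimizers compactness}(i), applied after the standard scaling calculation $\int_{B_R\setminus K_{x,r}}|\nabla u_{x,r}|^2 = r^{-1}\int_{B_{rR}(x)\setminus K}|\nabla u|^2$, yields $\int_O|\nabla v|^2=0$ on every bounded open $O$. Hence $(K_\infty,v,\{p_{kl}\})$ is a global generalized minimizer whose $K$-part is a line and whose Dirichlet energy vanishes, and by Theorem~\ref{t:class-global}(ii) it is a global pure jump.

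Since the limit is uniquely determined (the tangent line $T_xK$, with zero Dirichlet energy), a standard ``every subsequence has a subsequence'' argument upgrades the convergence to the full family $r\downarrow 0$, so $\Omega^j(\theta_x,x,r)\to 0$ where $\theta_x$ is the angle of $T_xK$. Since additionally $\lambda\|g\|_\infty^2 r^{1/2}\to 0$, for $r$ sufficiently small the hypothesis of Theorem~\ref{t:eps_salto_puro} is satisfied, and its conclusion exhibits $K\cap B_{r/2}(x)$ as a $C^{1,\alpha}$ graph, so $x$ is a pure jump point. The most delicate step in this plan is the identification $K_\infty=T_xK$: the varifold-type convergence from (a) only controls the mass of $K_\infty$ along the tangent line, so the lower density bound is essential to exclude that the Hausdorff limit contains additional pieces of zero $\mathcal{H}^1$-measure.
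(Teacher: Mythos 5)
Your proof is correct, and it differs from the paper's in two ways worth noting. For openness, you argue directly from the definition (a pure jump point gives a $C^1$ arc locally, and the normalization $\Phi(y)=y$, $D\Phi(y)=\mathrm{Id}$ is a matter of composing the straightening map with an appropriate affine and radial adjustment); the paper instead invokes Corollary~\ref{c:salto-puro-curvo}. Your route is more elementary and arguably more transparent. For the full-measure assertion, the paper stops after upgrading the measure-theoretic blow-up convergence to Hausdorff convergence and then cites Theorem~\ref{t:main}(b), which requires only Hausdorff closeness; the blow-up contradiction argument that converts Hausdorff closeness into smallness of the scaled Dirichlet energy is buried in the proof of Theorem~\ref{t:main}(i) (claim (Cl) in Section~\ref{ss:proof of (b)-(c) Theorem main}). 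You instead establish the vanishing of the Dirichlet energy density directly, at $\mathcal{H}^1$-a.e.\ point, via the Besicovitch differentiation theorem and mutual singularity of $|\nabla u|^2\mathcal{L}^2$ with $\mathcal{H}^1\restr K$, and then invoke the more primitive Theorem~\ref{t:eps_salto_puro}. What this buys you is self-containedness: you bypass the compactness-by-contradiction step in Section~\ref{ss:proof of (b)-(c) Theorem main} and replace it with a clean measure-theoretic density argument. What the paper's route buys is brevity, at the price of leaning on the full strength of Theorem~\ref{t:main}(b). Your identification $K_\infty=T_xK$ via weak-$*$ convergence of $\mathcal{H}^1\restr K_{x,r_j}$ plus the density lower bound is also sound and correctly flagged as the step requiring care: the Hausdorff limit could a priori contain $\mathcal{H}^1$-null accretions, and the lower Ahlfors bound is exactly what excludes them.
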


\begin{proof}
The openness follows immediately from Corollary~\ref{c:salto-puro-curvo}. Observe moreover that, since $K$ is rectifiable, at $\mathcal{H}^1$-a.e. point $p\in K$ the sets 
\[
K_{p,r} := \frac{K-p}{r}
\]
converge to the approximate tangent line $\ell_p$ to $K$ at $p$. Although such convergence is just in a measure theoretic sense, the compactness Theorem~\ref{t:minimizers compactness} upgrades it to local Hausdorff convergence. By the case (b) of Theorem~\ref{t:main} every such $p$ is a pure jump point and a point of regularity for $K$.
\end{proof}
Actually, it was established by David \cite{david1996} (see also \cite[Theorem 51.20]{DavidBook}) that the Hausdorff dimension of the complement in $K$ of pure jump points is strictly less than $1$. We will give a different proof of this fact in Corollary~\ref{c:semplice-2} as a consequence of a higher integrability result for the gradient.

We close this section with the following rigidity theorem.

\begin{corollary}\label{c:unicita-blow-down}
Assume that $(u, K, \{p_{kl}\})$ is a global generalized (or generalized restricted) minimizer and that, for some sequence of radii $r_j\uparrow \infty$, a subsequence of rescalings $K_{0,r_j}$ converge to a pure jump or to a triple junction. Then $(u,K)$ itself is, respectively, a pure jump or a triple junction.
\end{corollary}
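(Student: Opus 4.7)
The plan is to combine the $\varepsilon$-regularity Theorems~\ref{t:eps_salto_puro} and \ref{t:eps_tripunto} with the rigidity items (ii) and (iii) of Theorem~\ref{t:class-global}: it will suffice to show that, as a set, $K$ is a straight line (resp.\ a propeller). Since $(K,u,\{p_{kl}\})$ is a global generalized minimizer of $E_0$, the rescalings $(K_{0,r_j},u_{0,r_j})$ are again global generalized minimizers, and Theorem~\ref{t:compactness-2} together with the blow-down hypothesis gives that, on every fixed ball $B_R$, both $\dist_H(K_{0,r_j}\cap\overline B_R,K_\infty\cap\overline B_R)$ and $\int_{B_R\setminus K_{0,r_j}}|\nabla u_{0,r_j}|^2$ tend to zero, where $K_\infty=\mathcal{R}_\theta(\ello)$ (resp.\ $\mathcal{R}_\theta(\To)$). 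Consequently the scale-invariant quantity $\Omega^j(\theta,0,1)$ (resp.\ $\Omega^t(\theta,0,1)$) computed on the rescaled minimizer tends to zero, and since $\lambda=0$ the constraint $r\leq 1$ in the $\varepsilon$-regularity statements is inessential.

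In the pure-jump case, applying Theorem~\ref{t:eps_salto_puro} to $(K_{0,r_j},u_{0,r_j})$ at unit scale and rescaling back yields, for $j$ large, that $K\cap B_{r_j}$ is the graph of a $C^{1,\alpha}$ function $g_j$ over $\mathcal{R}_\theta(\ello)$ with Lipschitz constant $\|g_j'\|_0\to 0$. Because these representations parametrize the same set $K$, they must agree on overlaps and patch into a single function $g\colon\mathbb R\to\mathbb R$ with $K=\mathrm{gr}(g)$; the uniform bound $\|g'\|_{L^\infty([-r_j,r_j])}\leq\|g_j'\|_0\to 0$ then forces $g$ to be constant, so $K$ is a straight line and Theorem~\ref{t:class-global}(ii) delivers the conclusion.

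In the triple-junction case, Theorem~\ref{t:eps_tripunto} yields, for $j$ large, a $C^{1,\alpha}$ diffeomorphism $\Phi_j\colon B_{r_j}\to B_{r_j}$ with $K\cap B_{r_j}=\Phi_j(\To\cap B_{r_j})$ and $\|D\Phi_j-\mathcal{R}_\theta\|_0\to 0$. The \emph{main technical point} is to establish the stabilization of the triple-junction points $\Phi_j(0)$: since each $K\cap B_{r_j}$ contains a unique triple junction of $K$ (at $\Phi_j(0)$), the inclusion $K\cap B_{r_j}\subset K\cap B_{r_k}$ for $r_j<r_k$ forces $\Phi_k(0)\in B_{r_j}$ and hence $\Phi_k(0)=\Phi_j(0)=:y_\infty$ for all sufficiently large $j,k$. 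Combined with the uniform $C^1$ estimate, on every fixed $B_R$ the $\Phi_j$ converge uniformly to the rigid motion $z\mapsto y_\infty+\mathcal{R}_\theta z$; since $K\cap B_R$ does not depend on $j$, this yields $K\cap B_R=(y_\infty+\mathcal{R}_\theta\To)\cap B_R$ for every $R$, so $K$ is a propeller and Theorem~\ref{t:class-global}(iii) concludes.
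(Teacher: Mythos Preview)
Your proof is correct in the pure-jump case and essentially correct in the triple-junction case, but it takes a genuinely different route from the paper. The paper's argument uses the $\varepsilon$-regularity only qualitatively: once $K\cap B_{r_j}$ is seen to be diffeomorphic to a diameter or a $Y$-shape, $K$ is connected; then Bonnet's monotonicity formula (Proposition~\ref{p:monotonia-0}) forces $r^{-1}\int_{B_r}|\nabla u|^2$ to be nondecreasing with limit $0$ along the sequence $r_j$, hence identically zero, and the classification of elementary minimizers in Theorem~\ref{t:class-global} finishes. Your approach instead extracts the full quantitative content of the estimates \eqref{e:estimate_jump Eg} and \eqref{e:estimate_triple} to pin down $K$ geometrically as a line or a propeller, and then invokes the rigidity items (ii)--(iii) of Theorem~\ref{t:class-global}. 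The paper's route is shorter and uses monotonicity as a black box; yours is more hands-on and avoids the monotonicity formula entirely.

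There is one small gap in your triple-junction case. You write $K_\infty=\mathcal{R}_\theta(\To)$, tacitly placing the junction of the blow-down at the origin. While $0\in K_\infty$ is automatic (any $p\in K$ gives $p/r_j\to 0$), this only puts $0$ somewhere on the propeller, not necessarily at the meeting point $p_0$; if $p_0\neq 0$ then $\Omega^t(\theta,0,1)$ does not tend to zero and your application of Theorem~\ref{t:eps_tripunto} at the origin fails. The fix is immediate: apply the theorem at $x=p_0$ with a fixed radius $R>|p_0|$; the resulting balls $B_{Rr_j}(r_jp_0)$ are still nested (since $R>|p_0|$), and your junction-stabilization and uniform-convergence arguments go through verbatim to yield $K=y_\infty+\mathcal{R}_\theta(\To)$. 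No such issue arises in the pure-jump case, where $0\in K_\infty$ already forces the limit line through the origin. The paper's monotonicity argument sidesteps this centering issue altogether, since connectedness of $K\cap B_{r_j}$ follows from either $\varepsilon$-regularity theorem regardless of where $p_0$ sits.
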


\begin{proof}
By the $\varepsilon$-regularity theory, we conclude that in each disk $B_{r_j}$ with $r_j$ sufficiently large, $K\cap B_{r_j}$ is diffeomorphic either to a straight line or to a triple junction, and in the latter case the point of junction must be contained in $B_{r_j/2}$. It is then simple to see that $K$ is connected.
 In particular, by the Bonnet's monotonicity formula Proposition~\ref{p:monotonia-0}, $\frac{1}{r} \int_{B_r} |\nabla u|^2$ is monotone nondecreasing in $r$. Since the limit of $\frac{1}{r_j} \int_{B_{r_j}} |\nabla u|^2$ is $0$, we conclude that $\nabla u$ vanishes identically. But then by Theorem~\ref{t:class-global}, $(u,K)$ itself is either a pure jump or a triple junction. 
\end{proof}

\section{Regularity at pure jumps: preliminaries}\label{s:salto_preliminari}

The proof of Theorem~\ref{t:eps_salto_puro} is based on a suitable decay proposition 
for which we need some notation and terminology. Let\index[simb]{aalL@$\mathcal{L}$}\index[simb]{aalA@$\mathcal{A}$}
\begin{align*}
\mathcal{L}&:= \{\VV \mid \VV \subset \R^2 \text{ is a linear $1$-dimensional subspace}\}; \\
\mathcal{A} &:=\left\{ z + \VV \mid z \in \R^2 \text{ and } \VV \in\mathcal{L}\right\}.
\end{align*}
Let $(u,K)$ be an admissible pair
in $B_1$. 
For all $x\in B_1$ and $0<r<1-|x|$ recall that
\begin{align}
d(x,r) & = \frac{D(x,r)}{r} =\frac{1}{r} \int_{B_r(x) \setminus K} |\nabla u|^2, \label{e:Dirichlet-scaled}
\end{align}
and define \index[simb]{aagb(x,r)@$\beta (x,r)$}\index[simb]{aalm(x,r)@$m(x,r)$}
\begin{align}
\beta(x,r) & := \min_{\VV \in \mathcal{A}}\int_{B_r(x) \cap K} \frac{\dist^2(y,\VV)}{r^2} \, \frac{d \mathcal{H}^1(y)}{r}\label{e:mean flatness}\\
m(x,r) & := \max \{d(x,r), \beta(x,r), \lambda \|g\|_\infty^2 r^{\frac{1}{2}}\}.\label{e:quantita-m}
\end{align}\index{flatness@flatness}
In what follows in case $x=0$ we shall drop the dependence on the base point from the notation introduced above. 
Observe that the following crude estimates are a simple consequence of our definitions:
\begin{align}
\beta(x,\tau r) & \leq \tau^{-3}\beta(x,r) \label{beta_scaling} \\ 
d(x,\tau r) & \leq \tau^{-1} d(x,r). \label{d_scaling}
\end{align}
The starting point to prove $\varepsilon$-regularity for $K$ is the decay of the energy under a smallness condition at a certain radius.
\begin{proposition}\label{p:salto_decay}
There are geometric constants $\varepsilon, \tau >0$ such that, if $(u,K)$ is a minimizer of $E_\param $ on $B_1$ and $0\in K$, 
then
\begin{equation}\label{e:salto_decay}
m (r) \leq \varepsilon\Rightarrow
m (\tau r) \leq \tau^{\frac{1}{2}} m (r)\, .
\end{equation}
\end{proposition}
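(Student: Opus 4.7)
The plan is to prove Proposition~\ref{p:salto_decay} by a contradiction and blow--up argument, converting the decay of $m$ into a linearization problem whose limit is regulated by harmonic function theory. I would fix $\tau\in(0,1/4)$ (to be chosen universal and sufficiently small at the very end) and suppose, for contradiction, that no $\varepsilon>0$ works for this $\tau$. Rescaling so that $r=1$, this produces a sequence of minimizers $(K_j,u_j)$ of $E_{\lambda_j}$ with data $g_j$ satisfying \eqref{e:g-and-lambda}, $0\in K_j$, $M_j^2:=m_j(1)\to 0$, yet $m_j(\tau)>\tau^{1/2}M_j^2$.

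By the compactness Theorem~\ref{t:minimizers compactness}, and after the usual normalization of additive constants as in Assumption~\ref{a:blow-up}, a subsequence converges to a generalized minimizer $(K_\infty,u_\infty)$ on $B_1$. Since $d_j(1)\to 0$, the limit satisfies $|\nabla u_\infty|\equiv 0$; since $\beta_j(1)\to 0$, $K_\infty$ is contained in some affine line $\ell_\infty$. The density lower bound Theorem~\ref{t:dlb}, which passes to Hausdorff limits, forces $0\in K_\infty$ and $K_\infty\cap B_{3/4}=\ell_\infty\cap B_{3/4}$, and Theorem~\ref{t:class-global}(b) identifies $(K_\infty,u_\infty)$ as (the restriction of) a global pure jump. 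After a rotation I may take $\ell_\infty=\ello$.

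The contradiction assumption only yields $m_j(\tau)\to 0$, not the required rate, so the core of the argument is a linearization. I would set $v_j:=(u_j-c_j^\pm)/M_j$ on the two components of $B_{3/4}\setminus K_j$, choosing $c_j^\pm$ so that $v_j$ has vanishing average on a fixed reference half-disk on each side. Using the outer variation \eqref{e:outer}, the Neumann condition in Proposition~\ref{p:variational identities 2}, and interior elliptic estimates on half-disks that remain uniformly separated from $K_j$ (which holds for large $j$ by Hausdorff convergence), I would show that $v_j$ is bounded in $W^{1,2}_{\mathrm{loc}}$ on each half-disk $B^\pm_{1/2}:=B_{1/2}\cap\{\pm x_2>0\}$, and extract a weak limit $w$ harmonic on $B^\pm_{1/2}$ with vanishing normal trace on $\ello\cap B_{1/2}$. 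By Schwarz reflection $w$ extends harmonically across $\ello$, so $|\nabla w|^2$ is subharmonic on $B_{1/2}$, yielding $\tau^{-1}\int_{B_\tau}|\nabla w|^2\leq C\tau\int_{B_{1/2}}|\nabla w|^2\leq C\tau$, that is, $d_j(\tau)/M_j^2\leq C\tau+o(1)$. The fidelity part is automatic: $\lambda_j\|g_j\|_\infty^2\tau^{1/2}\leq \tau^{1/2}M_j^2$ by the very definition of $M_j$.

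For the flatness I would combine the Dirichlet decay with a Lipschitz graph approximation of $K_j$. The uniform smallness of $d_j$ and $\beta_j$, together with the density upper and lower bounds and the varifold-type convergence \eqref{e:varifold-convergence}, allows a coarea/good-slice argument producing a height function $h_j$ on $\ello\cap B_{2/3}$ with $\|h_j\|_\infty+\|h_j'\|_\infty=o(1)$ such that $K_j\cap B_{2/3}$ coincides with ${\rm gr}\,(h_j)$ outside a set of negligibly small $\mathcal{H}^1$ measure. Comparing the minimality of $(K_j,u_j)$ against competitors obtained by perturbing $h_j$ (and using the linearized Dirichlet control to estimate the ensuing change in Dirichlet energy), the normalized sequence $h_j/M_j$ is bounded and converges to a limit $\phi$ which satisfies a linear elliptic equation on $\ello\cap B_{1/2}$ (the Euler--Lagrange linearization of length-plus-Dirichlet). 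Its regularity yields $\beta_j(\tau)/M_j^2\leq C\tau+o(1)$. Summing the three contributions, $m_j(\tau)/M_j^2\leq C\tau+o(1)$, and choosing $\tau$ so that $C\tau<\tau^{1/2}/2$, i.e.\ $\tau\leq(2C)^{-2}$, contradicts $m_j(\tau)>\tau^{1/2}M_j^2$ for $j$ large. The main obstacle is precisely this flatness step: producing the Lipschitz graph parameterization of $K_j$ and identifying a sensible limit equation for $h_j/M_j$ has to be done without invoking Theorem~\ref{t:eps_salto_puro}, which is what Proposition~\ref{p:salto_decay} is aimed at proving; this is where the bulk of the technical work lies.
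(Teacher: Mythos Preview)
Your plan differs from the paper's in a structural way. Instead of a single joint blow--up normalized by $M_j^2=m_j(1)$, the paper decouples the problem into two separate decay lemmas: Lemma~\ref{l:decay1} gives $\beta(\tau r)\le\tau\,\beta(r)$ under the hypothesis $\bar\delta\max\{d(r),\lambda\|g\|_\infty^2 r^{1/2}\}\le\beta(r)$, and Lemma~\ref{l:decay2} gives $d(\tau r)\le\tau^{1/2}d(r)$ under $\bar\delta\max\{\beta(r),\lambda\|g\|_\infty^2 r^{1/2}\}\le d(r)$. Proposition~\ref{p:salto_decay} is then a four--case analysis: whenever one of $d,\beta$ is much smaller than the other (or than the fidelity scale) the crude bounds \eqref{beta_scaling}--\eqref{d_scaling} absorb the loss, and otherwise both lemmas apply. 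Each lemma is proved by contradiction and blow--up, but each normalizes by a \emph{single} quantity ($\beta_j$ or $d_j$), which is what makes the compactness and linearization clean.

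There is a real gap in your Dirichlet step, in addition to the flatness obstacle you already flag. Weak $W^{1,2}$ convergence of $v_j$ to a harmonic $w$ only gives $\int_{B_\tau}|\nabla w|^2\le\liminf_j\int_{B_\tau\setminus K_j}|\nabla v_j|^2$, whereas you need the opposite inequality, i.e.\ strong convergence of $\nabla v_j$ up to the moving set $K_j\to\ello$. This is precisely the ``no concentration'' statement \eqref{e:heart} in the paper's proof of Lemma~\ref{l:decay2}, and it is not a consequence of compactness or harmonic estimates: one must construct an explicit competitor (the squeezing map $\Phi_j$ of Figure~\ref{figura-2}) that collapses a thin strip around $K_j$ onto a segment and then invoke minimality to show the energy in that strip is $o(1)$. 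Without this argument the conclusion $d_j(\tau)/M_j^2\le C\tau+o(1)$ does not follow. The paper's case split is designed so that this competitor has to be built only when $d$ dominates $\beta$ (so the Lipschitz approximation of $K_j$, with errors measured in $d_j$, is already at hand), rather than in the coupled regime you work in; your route would have to reproduce both the non--concentration argument and the Lipschitz approximation simultaneously.
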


The proof is based on two lemmas.

\begin{lemma}\label{l:decay1}
There exists $\tau_1\in (0,1)$ such that for any $\tau\leq \tau_1$ and for any $\overline{\delta}>0$ we can choose $\eta_1 =\eta_1 (\overline{\delta}, \tau)>0$ with the following property. If $(u,K)$ is a minimizer of $E_\param$ on $B_1$ and $0\in K$, then for all $r\in(0,1)$ such that
\begin{equation*} \label{decay1_eq1-g}
\overline{\delta} \max\{d(r),\param \|g\|_\infty^2 r^{\frac12}\} \leq \beta(r) \leq \eta_1 ,
\end{equation*}
we have
\begin{equation} \label{decay1_eq2-g}
\beta(\tau r) \leq \tau \beta(r).
\end{equation}
\end{lemma}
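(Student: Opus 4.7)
I would argue by contradiction and compactness. The idea is to blow up around the ``bad'' scale, identify the limit as a flat pure jump, and then linearize to reach a contradiction via the ``harmonicity'' (here: affineness, since the height variable is $1$-dimensional) of the limit height function.

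\textbf{Setup.} First I would reduce to $r=1$ by the rescaling $(\tilde K,\tilde u)=(K/r, r^{-1/2}u(r\cdot))$, which is a minimizer of $E_{\tilde\param}$ on $B_1$ with $\tilde\param=\param r^2$ and $\tilde g(y)=r^{-1/2}g(ry)$; one checks that $\tilde d(1)=d(r)$, $\tilde\beta(1)=\beta(r)$, and $\tilde\param\|\tilde g\|_\infty^2=\param r\|g\|_\infty^2\leq \param\|g\|_\infty^2 r^{1/2}$, so the hypotheses are preserved. Supposing no $\eta_1(\bar\delta,\tau)$ works, I obtain minimizers $(K_n,u_n)$ on $B_1$ of $E_{\param_n}$ with $0\in K_n$ and bounded data $g_n$ such that $A_n:=\beta_n(1)\to 0$, $\max\{d_n(1),\param_n\|g_n\|_\infty^2\}\leq\bar\delta^{-1}A_n$, but $\beta_n(\tau)>\tau A_n$.

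\textbf{Compactness.} Since $d_n(1)\to 0$ and $\param_n\|g_n\|_\infty^2\to 0$, Assumption~\ref{a:blow-up} applies and Theorem~\ref{t:minimizers compactness} gives, up to subsequences, convergence of $(K_n,u_n)$ to a generalized minimizer $(K_\infty,v_\infty,\{p_{kl}\})$ of $E_0$ on $B_1$, with weak convergence of $\mathcal H^1\restr K_n$ to $\mathcal H^1\restr K_\infty$ and $\int_{B_1\setminus K_\infty}|\nabla v_\infty|^2=0$. For every affine line $V$, the weak convergence gives $\int_{K_\infty\cap B_1}\dist^2(y,V)\,d\mathcal H^1\leq \liminf_n \int_{K_n\cap B_1}\dist^2(y,V)\,d\mathcal H^1$; minimizing over $V$ yields $\beta_\infty(1)=0$, so $K_\infty\cap B_1$ lies in a line $L$. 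Since $0\in K_\infty$, $L$ passes through the origin, and the density lower bound Theorem~\ref{t:dlb} forces $K_\infty\cap B_1=L\cap B_1$, a full diameter.

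\textbf{Linearization and conclusion.} After a rotation take $L=\ello$. The smallness of $\beta_n(1)$ together with Ahlfors regularity (standard $\beta$-to-Hausdorff estimate, using Theorem~\ref{t:dlb} and Lemma~\ref{l:upper-bound}) implies that, for $n$ large, $K_n\cap B_{3/4}$ is the graph of a continuous function $h_n:(-3/4,3/4)\to\R$ with $\|h_n\|_\infty\to 0$ and $\int h_n^2\lesssim A_n$. Setting $\tilde h_n:=h_n/\sqrt{A_n}$, I would test the inner-variation identity \eqref{e:inner} with $\psi=\varphi(x_1)\,e_2$ for $\varphi\in C^1_c(-3/4,3/4)$. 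The Dirichlet contribution is $O(d_n(1))=O(A_n)$, the fidelity terms are $O(\param_n\|g_n\|_\infty^2)=O(A_n)$, and the geometric term $\int_{K_n}e^T D\psi\,e\,d\mathcal H^1$ expands as $\int \varphi'\,h_n'/\sqrt{1+(h_n')^2}\,dx_1$, which to leading order is $\int \varphi'h_n'\,dx_1$. Dividing by $\sqrt{A_n}$ and passing to the weak $L^2$ limit $\tilde h$ of $\tilde h_n$, I obtain $\int \varphi'\tilde h'\,dx_1=0$ for all test $\varphi$, i.e. $\tilde h''=0$ weakly on $(-3/4,3/4)$: $\tilde h$ is affine. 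Consequently, for every fixed $\tau\leq 1/2$,
\[
\frac{\beta_n(\tau)}{A_n}\;\leq\;\frac{1}{\tau^3}\int_{-\tau}^{\tau}\bigl(\tilde h_n-\ell_\tau^n\bigr)^2\,dx_1+o(1)\;\longrightarrow\; 0,
\]
where $\ell_\tau^n$ is the best affine approximation of $\tilde h_n$ on $[-\tau,\tau]$ (the limit vanishes because $\tilde h$ itself is affine). This contradicts $\beta_n(\tau)>\tau A_n$ for $n$ large, and choosing $\tau_1=1/2$ completes the proof.

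\textbf{Main obstacle.} The delicate part is the linearization step: justifying rigorously the graph representation of $K_n$ on $B_{3/4}$ at scale $\sqrt{A_n}$ (which needs a quantitative $\beta$-to-Hausdorff estimate plus a lower bound on the slope of $K_n$ transversal to $L$ via Ahlfors regularity), and controlling the nonlinear remainder $\int\varphi'(h_n'/\sqrt{1+(h_n')^2}-h_n')$ when dividing by $\sqrt{A_n}$. Once this is done, the hypothesis $\bar\delta\max\{d,\param\|g\|_\infty^2 r^{1/2}\}\leq\beta$ is precisely what forces the Dirichlet and fidelity pieces of the inner variation to be of lower order than the geometric one, yielding the harmonicity $\tilde h''=0$ of the limit.
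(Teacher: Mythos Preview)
Your overall strategy is the right one and matches the paper's: argue by contradiction, normalize, and show via the inner variation that the rescaled height function converges to an affine map, contradicting $\beta_n(\tau)>\tau\beta_n(1)$. The gap is precisely the step you flag as the ``main obstacle'', and it is not a technicality: smallness of $\beta_n(1)$ together with Ahlfors regularity does \emph{not} imply that $K_n\cap B_{3/4}$ is a graph. At this stage we have no regularity for $K_n$ (indeed this lemma is a building block for it), so $K_n$ could a priori have several sheets, holes, or pieces with vertical tangent over a set of positive length. Your identity $\int_{K_n} e^T D\psi\, e\, d\mathcal H^1=\int \varphi' h_n'/\sqrt{1+(h_n')^2}$ and your passage to $\tilde h_n=h_n/\sqrt{A_n}$ both presuppose this graphicality, and you also need an a priori bound $\int (h_n')^2\lesssim A_n$ (not just $\int h_n^2\lesssim A_n$) to get $W^{1,2}$ compactness of $\tilde h_n$; neither is available from $\beta$ and Ahlfors regularity alone.

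The paper handles this via the Lipschitz approximation Proposition~\ref{p:lip-approx}: one produces a $1$-Lipschitz function $f_j$ whose graph $\Gamma_j$ satisfies $\mathcal H^1((\Gamma_j\triangle K_j)\cap[-\sigma r_j,\sigma r_j]^2)\leq C r_j(d_j+\beta_j+\lambda_j r_j\|g_j\|_\infty^2)$ and $\int|f_j'|^2\leq C r_j(d_j+\beta_j+\lambda_j r_j\|g_j\|_\infty^2)$, together with the height bound $\|f_j\|_{C^0}\lesssim r_j\beta_j^{\alpha}$. These come from the Tilt Lemma~\ref{l:tilt} (which bounds the excess, hence $\int|f_j'|^2$, by $d+\beta$) and the Vertical Separation Lemma~\ref{l:vertical-sep} (which rules out multiple sheets over the good set). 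With $\Gamma_j$ in hand one applies the inner variation \eqref{e:inner} to $(K_j,u_j)$ and transfers the length term from $K_j$ to $\Gamma_j$ at cost $O(\|\eta\|_{C^1}\mathcal H^1(K_j\triangle\Gamma_j))=O(\beta_j r_j)$; this yields $\bigl|\int f_j'\varphi'/\sqrt{1+|f_j'|^2}\bigr|\lesssim \beta_j r_j$, and together with the $\int|f_j'|^2$ bound the rescalings $h_j(x_1)=(f_j(r_jx_1)-f_j(0))/(r_j\beta_j^{1/2})$ converge weakly in $W^{1,2}$ to an affine function. Finally one splits $\beta_n(\tau)$ into the contribution from $\Gamma_j$ (which is $o(\beta_j)$ by affineness of the limit) and from $K_j\setminus\Gamma_j$ (which is $O(\beta_j^{2\alpha+1})$ since that set has $\mathcal H^1$-measure $\lesssim \beta_j$ and lies in the strip $\{|x_2|\lesssim r_j\beta_j^{\alpha}\}$). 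Once you insert this Lipschitz approximation machinery, your sketch becomes the paper's proof.
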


\begin{lemma}\label{l:decay2}
There exists $\tau_2\in (0,1)$  such that for any $\tau\leq \tau_2$ and for any $\overline{\delta}>0$ we can choose $\eta_2 = \eta_2 (\overline{\delta}, \tau)>0$ with the following property.
 If $(u,K)$ is a minimizer  of $E_\param$ on $B_1$ and $0\in K$, then for all $r\in(0,1)$ such that
\begin{equation*} \label{decay2_eq1-g}
\overline{\delta}\max\{\beta (r),\param \|g\|_\infty^2 r^{\frac12}\} \leq d (r) \leq \eta_2 ,
\end{equation*}
we have
\begin{equation} \label{decay2_eq2-g}
d (\tau r) \leq \tau^{\frac{1}{2}} d (r).
\end{equation}
\end{lemma}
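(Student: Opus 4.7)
My plan is a harmonic (or near-harmonic) approximation argument in the regime where the Dirichlet excess dominates the flatness. I argue by contradiction: for $\tau\in(0,1/2)$ to be chosen later, suppose the failure of the lemma yields minimizers $(K_k,u_k)$ of $E_{\lambda_k}$ on $B_1$ with $0\in K_k$, $\lambda_k\in[0,1]$, $\|g_k\|_\infty\leq M_0$, and $r_k\in(0,1)$ such that, setting $\varepsilon_k^2:=d_k(r_k)$,
\begin{equation*}
\varepsilon_k^2\to 0,\qquad \bar\delta\max\bigl\{\beta_k(r_k),\,\lambda_k\|g_k\|_\infty^2 r_k^{1/2}\bigr\}\leq \varepsilon_k^2,\qquad d_k(\tau r_k)>\tau^{1/2}\varepsilon_k^2.
\end{equation*}
Rescale via $\tilde K_k:=K_k/r_k$, $\tilde u_k(y):=r_k^{-1/2}u_k(r_k y)$, $\tilde g_k(y):=r_k^{-1/2}g_k(r_k y)$, $\tilde\lambda_k:=\lambda_k r_k^2\leq 1$: then $(\tilde K_k,\tilde u_k)$ is a minimizer of $E_{\tilde\lambda_k}$ on $B_{1/r_k}$, scale invariance yields $\tilde d_k(1)=\varepsilon_k^2\to 0$, $\tilde\beta_k(1)\leq \bar\delta^{-1}\varepsilon_k^2$, $\tilde\lambda_k\|\tilde g_k\|_\infty^2 = \lambda_k r_k\|g_k\|_\infty^2\leq \bar\delta^{-1}\varepsilon_k^2$, and the contradiction reads $\tilde d_k(\tau)>\tau^{1/2}\varepsilon_k^2$.

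Since $\tilde\beta_k(1)\to 0$ and $0\in\tilde K_k$, the density lower bound (Theorem~\ref{t:dlb}) combined with the compactness Theorem~\ref{t:minimizers compactness} produces, up to subsequences and a rotation, a diameter $L:=\{y_2=0\}$ with $\tilde K_k\cap\overline B_1\to L\cap\overline B_1$ in Hausdorff distance and $\mathcal H^1(\tilde K_k\cap B_\rho)\to 2\rho$ for a.e.~$\rho\in(0,1)$, the limit triple being $(L,\mathrm{const},\{p_{12}\})$ with $|p_{12}|=\infty$ since $\int|\nabla\tilde u_k|^2\to 0$. Normalize by picking $a_k^\pm$ equal to the mean of $\tilde u_k$ on $B_{3/4}\cap\{\pm y_2>1/8\}$ and setting $v_k:=\varepsilon_k^{-1}(\tilde u_k-a_k^\pm)$; then $\int_{B_1\setminus\tilde K_k}|\nabla v_k|^2=1$, so a Poincar\'e argument extracts a weak $W^{1,2}_{\rm loc}(B_{3/4}\setminus L)$ limit $v$. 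Dividing the outer variation \eqref{e:outer} by $\varepsilon_k$ gives
\begin{equation*}
-\int_{B_1\setminus\tilde K_k}\nabla v_k\cdot\nabla\varphi=\varepsilon_k^{-1}\tilde\lambda_k\int_{B_1}(\tilde u_k-\tilde g_k)\varphi,\qquad \varphi\in C^1_c(B_{3/4}),
\end{equation*}
whose right-hand side is uniformly bounded by $C(\bar\delta)\|\varphi\|_{L^2}$ (via the maximum principle Lemma~\ref{l:maximum}, $\tilde\lambda_k\leq 1$, and Cauchy--Schwarz on $\tilde\lambda_k\|\tilde g_k\|_\infty^2\leq\bar\delta^{-1}\varepsilon_k^2$). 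Allowing $\varphi$ to cross $L$, which is legitimate for $k$ large since $\tilde K_k\to L$ in Hausdorff distance, and passing to the limit, one obtains that $v$ solves $-\Delta v = h$ on $B_{3/4}\setminus L$ with zero Neumann trace on $L$ for some $h\in L^\infty$ with $\|h\|_\infty\leq C(\bar\delta)$; its even reflection $\bar v$ across $L$ then satisfies an analogous bounded-source Poisson problem on $B_{3/4}$ with $\int_{B_{3/4}}|\nabla\bar v|^2\leq 2$.

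The main obstacle is upgrading the weak convergence of $\nabla v_k$ to strong $L^2$ convergence on balls $B_\rho\subset B_{3/4}$, which is what transfers the linear-elliptic decay $\int_{B_\tau}|\nabla\bar v|^2\leq C(\bar\delta)\tau^2$ (a standard $W^{2,p}$ plus Morrey estimate for $\bar v$) to $v_k$. I do this by a harmonic-competitor comparison: for a good radius $\rho\in(1/2,3/4)$ with $\mathcal H^1(\tilde K_k\cap\partial B_\rho)$ finite and $\mathcal H^1(\tilde K_k\cap B_\rho)\to 2\rho$, let $J_k\cap B_\rho$ consist of $L\cap B_\rho$ augmented by short vertical segments joining $L\cap\partial B_\rho$ to each point of $\tilde K_k\cap\partial B_\rho$, and define $w_k$ on each component of $B_\rho\setminus J_k$ as the solution of the mixed problem with Dirichlet data $\tilde u_k$ on $\partial B_\rho$ and Neumann zero on $J_k$. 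Lemma~\ref{l:ciao_componenti} makes $(J_k,w_k)$ (extended by $(\tilde K_k,\tilde u_k)$ outside $B_\rho$) an admissible competitor, so minimality yields
\begin{equation*}
\int_{B_\rho\setminus\tilde K_k}|\nabla\tilde u_k|^2\leq \int_{B_\rho\setminus J_k}|\nabla w_k|^2+\bigl[\mathcal H^1(J_k\cap B_\rho)-\mathcal H^1(\tilde K_k\cap B_\rho)\bigr]+O(\tilde\lambda_k).
\end{equation*}
The length discrepancy is $o(1)$ (both lengths tend to $2\rho$), the fidelity discrepancy is $O(\varepsilon_k^2)$, and by the Dirichlet principle $\int|\nabla w_k|^2$ is bounded above by the energy of any cutoff interpolation of $a_k^\pm+\varepsilon_k\bar v$ matching the trace of $\tilde u_k$ on $\partial B_\rho$, whose energy is $\varepsilon_k^2\int_{B_\rho}|\nabla\bar v|^2+o(\varepsilon_k^2)$ (for a.e.~$\rho$ the trace of $v_k-\bar v$ on $\partial B_\rho$ tends strongly to zero in $L^2$). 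Dividing by $\varepsilon_k^2$, taking $\limsup$, and combining with weak lower semicontinuity gives the strong convergence $\int_{B_\rho\setminus\tilde K_k}|\nabla v_k|^2\to\int_{B_\rho}|\nabla\bar v|^2$. Consequently
\begin{equation*}
\frac{\tilde d_k(\tau)}{\varepsilon_k^2}=\tau^{-1}\int_{B_\tau\setminus\tilde K_k}|\nabla v_k|^2\xrightarrow[k\to\infty]{}\tau^{-1}\int_{B_\tau}|\nabla\bar v|^2\leq C(\bar\delta)\tau,
\end{equation*}
and picking $\tau_2=\tau_2(\bar\delta)$ with $C(\bar\delta)\tau_2^{1/2}<1/2$ contradicts $\tilde d_k(\tau)/\varepsilon_k^2>\tau^{1/2}$ once $\eta_2=\eta_2(\bar\delta,\tau)$ is chosen so small that the blow-up extraction is forced.
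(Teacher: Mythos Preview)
The competitor comparison that is supposed to upgrade weak to strong $L^2$ convergence of $\nabla v_k$ has a fatal gap in the length term. After using minimality you write
\[
\int_{B_\rho\setminus\tilde K_k}|\nabla\tilde u_k|^2\leq \int_{B_\rho\setminus J_k}|\nabla w_k|^2+\bigl[\mathcal H^1(J_k\cap B_\rho)-\mathcal H^1(\tilde K_k\cap B_\rho)\bigr]+O(\varepsilon_k^2),
\]
bound the bracket by $o(1)$ (``both lengths tend to $2\rho$''), and then divide by $\varepsilon_k^2$. After division the bracket becomes $o(1)/\varepsilon_k^2$, which is completely uncontrolled and may diverge; the resulting inequality says nothing about $\limsup_k\int_{B_\rho}|\nabla v_k|^2$. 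What is needed is a length discrepancy of order $O(\varepsilon_k^2)$, and your competitor cannot deliver it: Theorem~\ref{t:minimizers compactness} gives $\tilde K_k\to L$ in Hausdorff distance and $\mathcal H^1(\tilde K_k\cap B_\rho)\to 2\rho$ with \emph{no rate}, while the height of $\tilde K_k$ over $L$ is only $O(\tilde\beta_k^{1/3})=O(\varepsilon_k^{2/3})$, so even a bounded number of connecting segments at $\partial B_\rho$ already contribute $\gg\varepsilon_k^2$ to $\mathcal H^1(J_k)$. This is precisely why the paper invokes the Lipschitz approximation Proposition~\ref{p:lip-approx}: it yields a $1$-Lipschitz graph $\Gamma_k$ with $\mathcal H^1((\Gamma_k\triangle\tilde K_k)\cap Q)\leq C\,d_k=C\varepsilon_k^2$ and the refined projection estimate (iv), and then builds a squashing competitor (via the explicit map $\Phi_k$) whose length exceeds $\mathcal H^1(\tilde K_k)$ by at most $\Lambda\,d_k+o(d_k)$ with $\Lambda$ arbitrarily small. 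The paper's strong-convergence step is by contradiction on energy concentration near $L$: if a fixed amount $\theta>0$ of Dirichlet energy sits in a thin strip, the squashing removes it while paying only $\Lambda\,d_k$ in length, contradicting minimality once $\Lambda<\theta$.

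There is a second, independent defect. You allow the limit to satisfy a Poisson equation with source bounded by $C(\bar\delta)$ and then pick $\tau_2=\tau_2(\bar\delta)$ so that $C(\bar\delta)\tau_2^{1/2}<\tfrac12$. The lemma, however, requires a \emph{universal} $\tau_2$ chosen before $\bar\delta$: its quantifier structure reads ``there exists $\tau_2$ such that for any $\tau\leq\tau_2$ and any $\bar\delta>0$ we can choose $\eta_2(\bar\delta,\tau)$''. The paper obtains the universal $\tau_2=\sigma^4/32$ (with $\sigma$ the geometric constant of Proposition~\ref{p:lip-approx}) from the harmonicity of the limits $v^\pm$, which makes the decay constant absolute; with your $\bar\delta$-dependent source this is not available.
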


Before coming to the proofs of the two Lemmas we show how the decay proposition can be easily concluded from them.

\begin{proof}[Proof of Proposition~\ref{p:salto_decay}] First of all fix $\tau = \min\{\tau_1, \tau_2\}$, denote by $\bar\delta$ a positive parameter smaller than $1$ to be chosen later in terms of $\tau$ and let $\varepsilon = \min\{\eta_1 (\tau, \bar\delta), \eta_2 (\tau, \bar\delta)\}$. We next claim that, for an appropriate choice of $\bar\delta\in(0,1)$, the conclusion of the proposition holds (in fact we will see below that $\bar \delta \leq \tau^{\frac{7}{2}}$ suffices). 

We distinguish four cases (actually, Case 4 below is not possible if $\lambda=0$). 
\begin{itemize}
\item [Case 1:] $\bar{\delta}\max\{d(r),\param \|g\|_\infty^2 r^{\frac12}\} \leq \beta (r)$ and 
$\bar\delta\max\{\beta(r),\param \|g\|_\infty^2 r^{\frac12}\}\leq d (r)$. 
In this case $\beta (\tau r) \leq \tau \beta (r)$ by \eqref{decay1_eq2-g} and 
$d (\tau r) \leq \tau^{\frac{1}{2}} d (r)$, by \eqref{decay2_eq2-g}, and thus 
$m (\tau r) \leq \tau^{\frac{1}{2}} m (r)$.
\item [Case 2:]  
$\bar\delta\max\{d(r),\param \|g\|_\infty^2 r^{\frac12}\}\leq \beta (r)$ and $d (r)<\bar{\delta}\max\{\beta(r),\param \|g\|_\infty^2 r^{\frac12}\}$.
Observe that we have
\begin{align*}
d(\tau r) &\stackrel{\eqref{d_scaling}}{\leq} \tau^{-1} d (r) <
\tau^{-1}\bar\delta\max\{\beta (r),\param \|g\|_\infty^2 r^{\frac12}\}\\
\beta (\tau r) &\stackrel{\eqref{decay1_eq2-g}}{\leq} \tau \beta (r)\, .
\end{align*}
Hence, provided $\bar\delta \leq \tau^{\frac{3}{2}}$, we have 
$m (\tau r) \leq \tau^{\frac{1}{2}} m (r)$.
\item [Case 3:] $\beta (r)<\bar{\delta}\max\{d(r),\param \|g\|_\infty^2 r^{\frac12}\}$ and 
$\bar\delta\max\{\beta(r),\param \|g\|_\infty^2 r^{\frac12}\}\leq d (r)$.
In this case
\begin{align*}
\beta (\tau r) &\stackrel{\eqref{beta_scaling}}{\leq} \tau^{-3} \beta (r) < \tau^{-3}\bar{\delta}\max\{d(r),r^{\frac12}\}\\
d (\tau r) &\stackrel{\eqref{decay2_eq2-g}}{\leq} \tau^{\frac{1}{2}} d (r)\, ,
\end{align*}
Hence, by choosing $\bar\delta \leq \tau^{\frac{7}{2}}$ we conclude.

\item [Case 4:] $\beta (r)<\bar{\delta}\max\{d(r),\param \|g\|_\infty^2 r^{\frac12}\}$ and 
$d (r)<\bar\delta\max\{\beta(r),\param \|g\|_\infty^2 r^{\frac12}\}$. In this last case $m(r)= \param \|g\|_\infty^2 r^{\frac12}$, being $\bar\delta<1$, and 
$\max\{\beta(r),d(r)\}\leq\bar\delta \param \|g\|_\infty^2r^{\frac12}$. Hence, we conclude that 
\begin{align*}
\beta (\tau r) &\stackrel{\eqref{beta_scaling}}{\leq} \tau^{-3} \beta (r) \leq \tau^{-3}\bar{\delta} \param \|g\|_\infty^2 r^{\frac12}\\
d(\tau r) &\stackrel{\eqref{d_scaling}}{\leq} \tau^{-1} d (r) \leq 
\tau^{-1}\bar\delta \param \|g\|_\infty^2 r^{\frac12}
\end{align*}
It suffices to choose $\bar\delta \leq \tau^{\frac{7}{2}}$ to infer the desired decay.\qedhere
\end{itemize}
\end{proof}

\section{Lipschitz approximation}\label{s:approssimazione}


A key ingredient in the proof of both decay lemmas is the Lipschitz approximation for the set $K$. Before stating it we introduce a suitable notion of  ``excess'' \index{excess@excess} which, like the quantity $\beta$, measures the flatness of the set $K$. Given $\VV\in \mathcal{L}$, we denote by $\pi_{\VV}:\mathbb{R}^2\to\VV$ and $\pi_{\VV}^{\perp}:\mathbb{R}^2\to \VV^{\perp}$ \index[simb]{aagp_V@$\pi_{\VV}$}\index[simb]{aagp_Vperp@$\pi_{\VV}^\perp$} the orthogonal projections onto $\VV$ and onto its orthogonal complement $\VV^{\perp}$, respectively. 
Given $\VV, \VV' \in \mathcal{L}$, consider the linear map $L: \mathbb R^2 \to \mathbb R^2$, $L:= \pi_{\VV} - \pi_{\VV'}$, 
and denote by $|L|$ its Hilbert-Schmidt norm.
In particular, 
\[
|L|^2=2-2 \pi_{\VV}:\pi_{\VV'}\, , 
\]
where $:$ denotes the scalar product between $2\times2$ matrices.\index[simb]{aalA:B@$A:B$}\index[simb]{aalexc(x,r)@$\exc(x,r)$}\index[simb]{aalexc(x,r)@$\exc_\VV (x,r)$}
The excess is then given for $x\in B_1$ and $0<r<1-|x|$ by
\begin{equation}\label{e:excess}
\exc(x,r):= 
\min_{\VV\in \mathcal{L}}\exc_\VV(x,r) 
\end{equation}
where for every $\VV\in \mathcal{L}$
\begin{equation}\label{e:excess V}
\exc_{\VV}(x,r):=\frac{1}{r}\int_{B_r(x)\cap K}|\VV-T_y K|^2 \, d\mathcal{H}^1(y),
\end{equation}
here $T_y K$\index[simb]{aalTK@$T_y K$} denotes the approximate tangent plane to $K$ at $y$ which exists for $\mathcal H^1$-a.e. $y\in K$ (cf. \cite[Theorem 2.83]{AFP00}).
It is also convenient to introduce a variant of the $\beta$-number: for all $\mathscr{A} \in \mathcal{A}$, \index[simb]{aagb_A(x,r)@$\beta_{\mathscr{A}}(x,r)$}
we set
\[
\beta_{\mathscr{A}}(x,r):= \int_{B_r(x) \cap K} \frac{\dist^2(y,\mathscr{A})}{r^2} \, \frac{d \mathcal{H}^1(y)}{r}\, .
\]
The basic idea behind the Lipschitz approximation is that the subset of $K$ consisting of those points $y$ where $\exc_{\VV} (y,r)$ is controlled by a (sufficiently small geometric) constant $c_0$ {\em at all scales} is contained in the graph of a Lipschitz function. This is analogous to the ``Lipschitz truncation'' of Sobolev functions $\varphi\in W^{1,1}$: the restriction of $\varphi$ to the lower level set $\{M |D\varphi| \leq \lambda\}$ of the Hardy-Littlewood maximal function of $|D\varphi|$ is Lipschitz with a constant comparable to $\lambda$, cf. \cite[Section 6.6]{EG}. At the technical level, in order to implement the latter idea we will need a suitable ``vertical separation'' lemma, which will be proved in Section \ref{s:vertical}.

However, in order to be useful in our context, we also need to show that the excess can be controlled by the flatness $\beta$. This will be accomplised in Section \ref{ss:tilt}.

\begin{proposition} \label{p:lip-approx}
There exist $C,\delta, \sigma, \alpha >0$ geometric constants with the following properties.
Assume that
\begin{itemize}
\item[(a)] $(u,K)$ is a minimizer of $E_\param$ in $B_r$;
\item[(b)] $0 \in K$ and $\ello$ is the horizontal axis;
\item[(c)] there exists $c\in\mathbb{R}$ such that setting $\bar\beta(r):=\beta_{(0,c)+\ello}(r)$, 

\[
d(r) + \bar\beta(r)+ \lambda \|g\|^2_{L^{\infty}(B_r)} r < \delta\,.
\]
\end{itemize}
Then, 
\begin{itemize}
\item[(i)] $K \cap B_{\frac r2} \subset \{|x_2| \leq Cr(\bar\beta(r))^{\alpha}\}$;
\end{itemize}
and there exists $f:[-\sigma r,\sigma r]\to\mathbb{R}$ 
Lipschitz such that 
\begin{itemize}
\item[(ii)] $\|f\|_{C^0}\leq Cr\bar\beta(r)^{\alpha}$ and $\operatorname{Lip}(f) \leq 1$; 
\item[(iii)] the following estimates hold
\[
\mathcal{H}^1\left (({\rm gr}\, (f) \triangle K)\cap 
[-\sigma r,\sigma r]^2\right) \leq Cr(d(r)+\bar\beta(r)+\param \|g\|^2_{L^\infty(B_r)} r
)
\] 
and
\begin{align*}
\int |f'|^2 &\leq \mathcal{H}^1({\rm gr}\, (f) \setminus K) 
+ C \int_{{\rm gr}\, (f) \cap K} |\ello-T_xK|^2 \, d\mathcal{H}^1(x) \\
&\leq Cr(d(r)+\bar\beta(r)+ \param\|g\|^2_{L^\infty(B_r)} r
);
\end{align*}
\item [(iv)] for any $\Lambda>0$, $\delta$ can be chosen so that for any $\varepsilon >Cr(\bar\beta(r))^{\alpha}$ (cf. item (i) above),
\[
\mathcal{H}^1([-\sigma r,\sigma r] \setminus \pi_{\ello}(K \cap \{|x_2| <\varepsilon r\}))\leq \Lambda rd(r)\, .
\]
\end{itemize}
Finally, there are constants $\bar \tau>0 , \bar\varepsilon > 0$ with the property that 
\begin{equation}\label{e: tilde K}
\tilde K := \left\{x \in K \cap B_{\bar \tau r} \mid \sup_{0 < \rho < \frac r4} \left( d(x,\rho)
+ \exc_{\ello}(x,\rho)\right)<\bar \varepsilon\right\}\subseteq {\rm gr}\,(f).
\end{equation}
\end{proposition}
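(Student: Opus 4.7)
The plan is to prove this Lipschitz approximation following the classical De Giorgi / Ambrosio--Fusco--Pallara strategy, as adapted to the Mumford--Shah setting by David. After rescaling, assume $r=1$. The core idea is to decompose $K\cap B_{\bar\tau}$ into a "good part" $\tilde K$, on which $K$ already sits inside a $1$-Lipschitz graph over $\ello$, and a "bad part" $K\setminus\tilde K$ of small $\mathcal{H}^1$-measure controlled by the smallness of $d(1)$ and $\bar\beta(1)$.

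First I would establish the height bound (i) by a Reifenberg-type iteration. The density lower bound Theorem~\ref{t:dlb} converts the $L^2$-flatness $\bar\beta$ into a pointwise height control: if some $x\in K\cap B_{1/2}$ had $|x_2|=h$, then $\mathcal{H}^1(K\cap B_{h/2}(x))\geq\epsilon h/2$ would force a contribution of order $h^3$ to $\bar\beta(1)$, yielding the crude bound $h\leq C\bar\beta(1)^{1/3}$. To upgrade the exponent to a universal $\alpha$, one iterates at dyadic scales, using the variational minimality of $(K,u)$: at each scale a competitor replacing $K$ by two horizontal segments capping the excursion saves length, which forces a geometric decay $\bar\beta(2^{-k})\leq\theta^k(\bar\beta(1)+d(1)+\lambda\|g\|_\infty^2)$ for some $\theta<1$. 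Summing yields $h\leq C\bar\beta(1)^{\alpha}$.

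Next I would construct the graph. For $x\in\tilde K$ the uniform smallness of $\exc_\ello(x,\rho)$ at all scales $\rho<1/4$, together with rectifiability of $K$, forces the approximate tangent $T_x K$ to be close to $\ello$, and a telescoping argument over dyadic scales gives that any pair $x,y\in\tilde K$ satisfies $|\pi_\ello^\perp(x-y)|\leq |\pi_\ello(x-y)|$, so $\pi_\ello:\tilde K\to\ello$ is injective with $1$-Lipschitz inverse. Define $f_0$ on $\pi_\ello(\tilde K)$ by $f_0(\pi_\ello(x))=\pi_\ello^\perp(x)$ and extend to $f:[-\sigma,\sigma]\to\mathbb{R}$ via the McShane formula $f(t):=\inf_{s\in\pi_\ello(\tilde K)}\bigl(f_0(s)+|t-s|\bigr)$, which is $1$-Lipschitz. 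By construction $\tilde K\subseteq\mathrm{gr}(f)$, giving \eqref{e: tilde K}; the $C^0$ bound on $f$ follows from (i) applied at points of $\tilde K$, since the McShane formula cannot push the supremum beyond the boundary values plus $\sigma$, and $0\in\tilde K$ anchors the baseline.

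For the measure estimates in (iii), I would apply a Vitali $5$-covering to $K\setminus\tilde K$: each bad point $x$ admits some radius $\rho(x)\leq 1/4$ at which either $d(x,\rho(x))\geq\bar\varepsilon/2$ or $\exc_\ello(x,\rho(x))\geq\bar\varepsilon/2$. Summing, the total $\mathcal{H}^1$-measure of $K\setminus\tilde K$ is bounded by $C\bar\varepsilon^{-1}\bigl(D(1)+\mathrm{Exc}_\ello(1)\bigr)$, where the global excess $\mathrm{Exc}_\ello(1)=\int_{K\cap B_1}|\ello-T_y K|^2\, d\mathcal{H}^1$ must in turn be dominated by $C(d(1)+\bar\beta(1)+\lambda\|g\|_\infty^2)$ via a comparison with a straight-segment competitor, using the Euler--Lagrange identities of Proposition~\ref{p:variational identities 2}. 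The bound on $\int|f'|^2$ then splits naturally: on portions of $\mathrm{gr}(f)\cap K$ one has $|f'|^2\leq C|\ello-T_xK|^2$ by definition of the graph, and on $\mathrm{gr}(f)\setminus K$ the $1$-Lipschitz bound gives $|f'|^2\leq|f'|$, whose integral is bounded by $\mathcal{H}^1(\mathrm{gr}(f)\setminus K)$. Finally (iv) follows by observing that $[-\sigma,\sigma]\setminus\pi_\ello(K\cap\{|x_2|<\varepsilon\})\subseteq\pi_\ello(K\setminus\tilde K)$ once $\varepsilon>C\bar\beta(1)^\alpha$, so its length is at most $\mathcal{H}^1(K\setminus\tilde K)\leq C\bar\varepsilon^{-1}d(1)$, and $\bar\varepsilon^{-1}$ is absorbed into $\Lambda$ by choosing $\delta$ small.

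The main obstacle I expect is the tilt control step, namely upgrading $\bar\beta$ to a bound on $\mathrm{Exc}_\ello$. This is where minimality is used essentially: one must construct a competitor that straightens $K$ along $\ello$ and estimates simultaneously the loss of length (controlled by $\bar\beta$ via the elementary $\sqrt{1+t^2}\leq 1+t^2/2$ convexity of arclength), the gain in Dirichlet energy (handled by solving an auxiliary Neumann extension problem on each side, as in Lemma~\ref{l:extension}), and the fidelity error (absorbed into $\lambda\|g\|_\infty^2$ via Lemma~\ref{l:maximum}). Getting the optimal exponent $\alpha$ in (i) from the iteration also requires some care but is by now classical once the comparison estimate is in place.
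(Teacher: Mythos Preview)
Your argument for (iv) has a genuine gap. You claim that $[-\sigma,\sigma]\setminus\pi_{\ello}(K\cap\{|x_2|<\varepsilon\})\subseteq\pi_{\ello}(K\setminus\tilde K)$, but this inclusion is false: a point $t$ in the left-hand side has \emph{no} point of $K$ above it in the strip (and hence, by the height bound (i) and the choice $\varepsilon>C\bar\beta^\alpha$, no point of $K\cap B_{1/2}$ above it at all), so $t$ is not in $\pi_{\ello}(K)$, let alone in $\pi_{\ello}(K\setminus\tilde K)$. The set you need to control consists precisely of ``holes'' in the projection of $K$, and these are invisible to the bad-set decomposition.

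The paper's mechanism for (iv) is completely different and uses the Dirichlet energy rather than the bad set. By a compactness argument (Theorem~\ref{t:class-global}), for $\delta$ small the pair $(K,u)$ is close to a pure jump, so the traces of $u$ on the two sides of the strip differ by a definite amount $C_0>0$. If the vertical segment $\mathscr{W}_t=\{t\}\times[-\gamma,\gamma]$ over some $t\in A$ misses $K$, then $u$ is $W^{1,2}$ along $\mathscr{W}_t$ and Jensen gives $\int_{\mathscr{W}_t}|\nabla u|^2\geq C_0^2/(2\gamma)$. Integrating in $t$ over $A$ yields $\mathcal{H}^1(A)\leq C_0^{-2}(2\gamma)\int_{B_r}|\nabla u|^2$, and choosing $\gamma$ small absorbs the constant into $\Lambda$. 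This is the step where the closeness to a \emph{jump} (as opposed to mere flatness of $K$) enters.

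Two smaller remarks. First, your height-bound iteration is unnecessary: the paper simply takes $\alpha=1/3$ from the one-scale density argument you already sketched, and this suffices for everything downstream. Second, your plan for the tilt estimate $\exc_{\ello}\lesssim d+\bar\beta+\lambda\|g\|_\infty^2 r$ via a straight-segment competitor and Proposition~\ref{p:variational identities 2} is not quite right, since that proposition requires $K$ to already be a $C^1$ submanifold. The paper instead tests the weak inner variation identity~\eqref{e:inner} with the field $\eta(x)=\varphi^2(x)(0,x_2)$, which directly produces $\int\varphi^2 e_2^2\,d\mathcal{H}^1$ on the left and $\int|\nabla\varphi|^2|x_2|^2\,d\mathcal{H}^1$ plus lower-order terms on the right; this is Allard's tilt-excess trick and needs no regularity of $K$.
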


\subsection{Tilt Lemma}\label{ss:tilt} The first basic tool to prove Proposition~\ref{p:lip-approx} is the following ``tilt lemma'', heavily inspired by a similar result in the theory of minimal surfaces due to Allard, cf. the seminal paper \cite{Allard}. In turn the lemma can be thought as a ``geometric counterpart'' of the classical Caccioppoli inequality for solutions of elliptic partial differential equations, in which the $L^2$ norm of the derivative of the solution in a given ball is controlled with the $L^2$ norm of the solution in a slightly larger ball. As in Allard's tilt lemma, our proof is based in plugging in the ``internal'' first variation a suitable test vector field.

\begin{lemma} \label{l:tilt}
There exists a universal constant $C>0$ such that, if $(u,K)$ is a minimizer of $E_\param$ in $B_1$ and $0\in K$, 
\begin{equation}\label{e:exc-nel-tilt-lemma}
\textstyle{\exc\left(\frac{r}{4}\right) \leq C\left(d(r) + \beta(r)
+\param \|g\|^2_{L^\infty(B_r)} r
\right)} \qquad \mbox{for all $r\leq 1$}.
\end{equation}
More precisely, if $\mathscr{A}=(0,c)+\VV_0$ for some $c\in\mathbb{R}$, we have the more accurate estimate
\begin{equation}\label{e:tilt-g}
{\textstyle{\exc_{\VV_0} \left(\frac{r}{4}\right)}}
\leq C \left(d(r) + \beta_{\mathscr{A}} (r)+\param \|g\|^2_{L^\infty(B_r)} r
\right).
\end{equation}
\end{lemma}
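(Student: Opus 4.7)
The plan is to derive both inequalities from a single application of the inner-variation identity \eqref{e:inner} with a carefully chosen test vector field, in the spirit of Allard's classical tilt lemma for stationary integral varifolds. It is enough to prove the more precise estimate \eqref{e:tilt-g}: once that is established, \eqref{e:exc-nel-tilt-lemma} follows at once by choosing $\mathscr{A}\in \mathcal{A}$ realizing the infimum defining $\beta(r)$ and rotating coordinates so that the direction of $\mathscr{A}$ becomes $\VV_0 = \ello$, because $\exc(r/4) = \min_{\VV\in\mathcal{L}} \exc_\VV(r/4) \leq \exc_{\ello}(r/4)$. For \eqref{e:tilt-g} I take coordinates so that $\mathscr{A} = \{y_2 = c\}$, write the unit tangent to $K$ as $e = (\tau_1, \tau_2)$, and record that $|\VV_0 - T_y K|^2 = 2\tau_2^2$ for $\mathcal{H}^1$-a.e.\ $y\in K$.

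Next I fix a cutoff $\chi\in C^1_c(B_{r/2})$ with $\chi\equiv 1$ on $B_{r/4}$ and $|\nabla\chi|\leq C/r$, and plug into \eqref{e:inner} the test field
\[
\psi(x) := \chi(x)^2\,(x_2-c)\,e_2.
\]
A direct computation gives
\[
e^T\cdot D\psi\, e = \chi^2\tau_2^2 + 2\chi(x_2-c)\tau_2\,(e\cdot\nabla\chi),
\]
\[
|\nabla u|^2\operatorname{div}\psi - 2\nabla u^T\cdot D\psi\,\nabla u = |\nabla u|^2\bigl(\chi^2 + 2\chi(x_2-c)\partial_2\chi\bigr) - 2\chi^2(\partial_2 u)^2 - 4\chi(x_2-c)\partial_2 u\,(\nabla u\cdot\nabla\chi).
\]
The crucial point of the quadratic cutoff is that the cross term on $K$ can be bounded by Cauchy--Schwarz as
\[
\bigl|2\chi(x_2-c)\tau_2(e\cdot\nabla\chi)\bigr| \leq \tfrac{1}{2}\chi^2\tau_2^2 + 2(x_2-c)^2|\nabla\chi|^2,
\]
so that half of the principal term $\int_K \chi^2\tau_2^2\,d\mathcal{H}^1$ can be absorbed into the left-hand side, while the remaining error is controlled by $\tfrac{C}{r^2}\int_{K\cap B_{r/2}}(x_2-c)^2\,d\mathcal{H}^1\leq C r\,\beta_{\mathscr{A}}(r)$ directly from the definition of $\beta_{\mathscr{A}}$.

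The bulk contribution on $B_r\setminus K$ is estimated pointwise by $C(\chi^2 + |x_2-c||\nabla\chi|)|\nabla u|^2$, which is bounded by $C r\, d(r)$ provided $|x_2-c|\leq Cr$ on $B_{r/2}$. The fidelity contribution on the right-hand side of \eqref{e:inner} is handled by Young's inequality together with $\|u\|_\infty \leq \|g\|_\infty$ (Lemma \ref{l:maximum}), $|\psi|\leq Cr$, and the upper bound $\mathcal{H}^1(K\cap B_{r/2})\leq Cr$ of Lemma \ref{l:upper-bound}, yielding a contribution bounded by $C\param\|g\|_\infty^2\, r^2 + Cr\,d(r)$. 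Combining, dividing through by $r/8$, and using $\int_{K\cap B_{r/4}}\tau_2^2\,d\mathcal{H}^1 \leq \int_K \chi^2\tau_2^2\,d\mathcal{H}^1$, gives exactly \eqref{e:tilt-g}. The main technical subtlety, and the step that requires some care, is the control $|c|\leq C r$ needed in Step 2: it follows from the density lower bound Theorem \ref{t:dlb} applied at the origin, since $0\in K$ implies $\dist(y,\mathscr{A})\geq |c|-|y|$, so that $\mathcal{H}^1(K\cap B_{\min(r,|c|)/2}) \cdot (|c|/2)^2 \leq r^3\beta_\mathscr{A}(r)$; this forces $|c|\leq Cr$ in the only non-trivial regime (namely $\beta_\mathscr{A}(r)$ bounded by a geometric constant), since outside that regime the target inequality is implied by the universal bound $\exc_{\VV_0}(r/4)\leq C$ coming again from Lemma \ref{l:upper-bound}.
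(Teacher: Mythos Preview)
Your proof is correct and follows essentially the same route as the paper's: both arguments plug the test field $\chi^2(x_2-c)\,e_2$ (the paper first translates by $(0,c)$ and then uses $\varphi^2 x_2\,e_2$, which is the same thing) into the inner-variation identity \eqref{e:inner}, absorb the cross term on $K$ by Cauchy--Schwarz, bound the Dirichlet and fidelity contributions exactly as you do, and control $|c|$ via the density lower bound combined with a threshold in $\beta_{\mathscr{A}}(r)$. The only cosmetic differences are the choice of radii ($B_{r/4}\subset B_{r/2}$ versus the paper's $B_{r/3}\subset B_{2r/3}$ after translation) and that the paper records the sharper bound $|c|\le 2(\beta_{\mathscr{A}}(r)/\epsilon)^{1/3}r$, which you implicitly derive as well.
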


\begin{proof}
We start off noting that \eqref{e:tilt-g} implies \eqref{e:exc-nel-tilt-lemma}. 
By rotating we can assume that $\mathscr{A}=(0,c)+\ello$, for some constant 
$c\in \R$. 
By the density upper bound \eqref{e:upper bound}  we have $\exc_{\ello} (\frac r4)\leq \frac{16}r\mathcal{H}^1 (K \cap B_{\frac r4}) \leq 8\pi + \lambda \pi \|g\|_\infty^2 r$. 
Let $\gamma>0$ be a fixed a parameter which will be chosen appropriately later. If $\beta_{\mathscr{A}} (r) \geq \gamma$, then $\exc_{\ello} (\frac{r}{4}) \leq \frac{8\pi}{\gamma} \beta_{\mathscr{A}} (r) + \lambda \pi \|g\|_\infty^2 r$, and thus \eqref{e:tilt-g}
follows in this case. Therefore, we may additionally suppose that 
\begin{equation}\label{e:no-restriction}
\beta_{\mathscr{A}}(r) \leq \gamma.
\end{equation}
Next, we claim that
\begin{equation}\label{e:c estimate}
|c| \leq 2 \left(\frac{\beta_{\mathscr{A}}(r)}{\epsilon}\right)^{\frac 13}r. 
\end{equation}
Indeed, first observe that $|c|<\frac r2$ for $\gamma$ small enough. 
Otherwise, being $0 \in K$, the density lower bound inequality in \eqref{e:dlbEg} would imply
\[
\frac\epsilon{64}\leq\frac\epsilon4\left(\frac{|c|}{2r}\right)^2\leq
\frac1{r^3} \int_{B_{\frac{r}{4}}\cap K} 
|x_2-c|^2 \, d\mathcal{H}^1(x)\leq \beta_{\mathscr{A}}(r),
\] 
where in the second inequality we have used that $|x_2-c|\geq\frac{|c|}{2}$ for all $x \in B_{\frac{r}{4}}$.
By \eqref{e:no-restriction} this would give a contradiction for 
$\gamma$ sufficiently small.
Then, as $|c|<\frac r2$, arguing similarly we deduce that
\[
\epsilon\left(\frac{|c|}{2r}\right)^3 \leq 
\frac1{r^3} \int_{B_{\frac{|c|}{2}}\cap K} 
|x_2-c|^2 \, d\mathcal{H}^1(x)\leq \beta_{\mathscr{A}}(r),
\] 
as $|x_2-c|\geq\frac{|c|}{2}$ for all $x \in B_{\frac{|c|}{2}}$.

We next use \eqref{e:no-restriction} and \eqref{e:c estimate} 
to choose $\gamma>0$ small enough to have 
$B_{\frac r4} \subset B_{\frac r3}((0,c)) \subset B_{\frac{2r}3}((0,c)) \subset B_r$. Hence, we conclude that it is enough to prove
\begin{align*}
& \frac 1{r}\int_{B_{\frac r3}((0,c))\cap K} |\ello - T_xK|^2 \, d\mathcal{H}^1 \\
\leq & C \Big(d\left(\textstyle{\frac{2r}3}\right)+ 
\frac 1{r^3}\int_{ B_{\frac{2r}3}((0,c))\cap K}|x_2-c|^2 d\mathcal{H}^1
+ \lambda \|g\|^2_{L^\infty(B_{\frac{2r}3}((0,c)))}r\Big),
\end{align*}
which by translating is implied by
\begin{equation} \label{e:excess-bound}
\frac 1{r}\int_{B_{\frac r3}\cap K} |\ello - T_xK|^2 \, d\mathcal{H}^1 
\leq C \Big( d(r) + \frac 1{r^3}\int_{B_{\frac{2r}{3}}\cap K} |x_2|^2 \, d\mathcal{H}^1 + \lambda \|g\|^2_{L^\infty(B_{\frac{2r}{3}})}r \Big).
\end{equation}
Let $e \colon K \to \mathbb{S}^1$ be a tangent vector field to $K$ with $e(x)=(e_1(x),e_2(x))$. Then
\begin{align*}
|\ello - T_xK|^2& = 
\left|
\begin{pmatrix}
1 & 0\\
0&0
\end{pmatrix}
-
\begin{pmatrix}
e_1^2 & e_1e_2\\
e_1e_2& e_2^2
\end{pmatrix}
\right|^2 
= 2-2e_1^2=2e_2^2\,.
\end{align*}
In particular, \eqref{e:excess-bound} is equivalent to
\begin{equation} \label{e:excess-bound2}
\frac 1{r}\int_{K \cap B_{\frac r3}}e_2^2(x) \, d\mathcal{H}^1 \leq 
C \left( d(r) + \frac 1{r^3}\int_{K \cap B_{\frac{2r}{3}}} |x_2|^2 \, d\mathcal{H}^1
+ \lambda r \|g\|^2_{L^\infty(B_{\frac{2r}{3}})}
\right).
\end{equation}
Let $\eta(x)=\varphi^2(x)(0,x_2)$, $\varphi \in C_c^{\infty}(B_{\frac{2r}{3}},[0,1])$ 
with $\varphi \equiv 1$ on $B_{\frac r 3}$ and 
$\|\nabla\varphi\|_{L^\infty(B_{\frac{2r}{3}})}\leq \frac6r$. We have 
\[
D\eta(x)=2\varphi(x)(0,x_2) \otimes \nabla \varphi(x) +\varphi^2(x) \begin{pmatrix}
0 & 0\\
0 & 1
\end{pmatrix},
\]
and thus
\begin{align*}
\varphi^2 e_2^2 &\leq e^T \, D\eta \, e + 2 \varphi |\nabla \varphi| |x_2||e_2|. 
\end{align*}
The internal variation formula \eqref{e:inner} for critical points of $E_\lambda$, namely
\begin{align*} 
\int_{B_r\cap K} &e^T \cdot D\eta\,  e \, d\mathcal{H}^1=
-\int_{B_r \setminus K} \big(|\nabla u|^2 \operatorname{div} \eta 
+ 2\nabla u^T\cdot D\eta\, \nabla u\big)\\
&+2\param \int_{B_r \setminus K} (u-g)\eta\cdot\nabla u
+\param\int_{B_r\cap K} \big(|u^+-g_K|^2-|u^--g_K|^2\big)\eta\cdot\nu\, d\mathcal{H}^1
\end{align*}
yields 
\begin{equation*}
\int_{K\cap B_r} \varphi^2 e_2^2 \, d\mathcal{H}^1\leq 
2\int_{K\cap B_r} \varphi |\nabla \varphi| |x_2| |e_2|\,d\mathcal{H}^1 
+ C \int_{B_{\frac{2r}{3}} \setminus K} |\nabla u|^2 +C \lambda \|g\|^2_{L^\infty (B_{\frac{2r}{3}})} r^2\, ,
\end{equation*}
where we have used that $|D\eta|\leq C$ and that $\|u\|_{L^\infty(B_{\frac{2}{3}r})}\leq\|g\|_{L^\infty(B_{\frac{2}{3}r})}$. So we conclude 
\[
\int_{K\cap B_r} \varphi^2 e_2^2 \,d\mathcal{H}^1 \leq 
C\int_{K\cap B_r} |\nabla \varphi|^2 |x_2|^2\,d\mathcal{H}^1 
+ C\left(rd\left(\frac{2r}{3}\right )+\lambda r^2 \|g\|^2_{L^\infty(B_{\frac{2r}{3}})}
\right).
\]
Since $\varphi \in C_c^{\infty}(B_{\frac{2r}{3}},[0,1])$ with $\varphi \equiv 1$ 
on $B_{\frac r 3}$ and $\|\nabla\varphi\|_{L^\infty(B_{\frac{2r}{3}})}\leq \frac6r$, 
\eqref{e:excess-bound2} follows at once.
\end{proof}

\subsection{Vertical separation}\label{s:vertical} The second main ingredient to prove Proposition~\ref{p:lip-approx} is the following ``vertical separation lemma''.

\begin{lemma} \label{l:vertical-sep}
There exist $\varepsilon, \tau >0$ such that:
\begin{itemize}
    \item[(a)] if $(u,K)$ is a minimizer of $E_\param$ in $B_1$;
    \item[(b)] $z^1, z^2 \in K$ and $z^1\in B_{\frac12}$;
    \item[(c)] $|z^1-z^2|< \frac{\tau}{2}$ and
\begin{equation}
\sup_{\frac{|z^1-z^2|}2 < \rho < \frac{|z^1-z^2|}{\tau}} 
\left(d(z^1,\rho)+\exc_{\ello}(z^1,\rho)\right)<\varepsilon\, .
\end{equation}
\end{itemize}
Then $|z_2^1-z_2^2| \leq| z_1^1-z_1^2|$.
\end{lemma}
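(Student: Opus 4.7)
My plan is to argue by contradiction via a rescaling (blow-up) argument, combined with the compactness of minimizers and the classification of elementary global generalized minimizers in Theorem \ref{t:class-global}. Suppose the lemma fails: then there exist sequences $\varepsilon_n, \tau_n \downarrow 0$, minimizers $(K^n, u^n)$ of $E_{\lambda_n}$ on $B_1$ with $\lambda_n \in [0,1]$ and $\|g_n\|_\infty \leq M_0$, and points $z^{1,n}, z^{2,n} \in K^n$ with $z^{1,n} \in B_{1/2}$ satisfying $r_n := |z^{1,n} - z^{2,n}| < \tau_n/2$ and
\[
\sup_{r_n/2 < \rho < r_n/\tau_n} \big(d(z^{1,n}, \rho) + \exc_{\ello}(z^{1,n}, \rho)\big) < \varepsilon_n,
\]
yet $|z^{1,n}_2 - z^{2,n}_2| > |z^{1,n}_1 - z^{2,n}_1|$.

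First I would rescale around $z^{1,n}$ at scale $r_n$: set $\tilde K^n := (K^n - z^{1,n})/r_n$, $\tilde u^n(y) := r_n^{-1/2} u^n(z^{1,n} + r_n y)$, and $w_n := (z^{2,n} - z^{1,n})/r_n$. Then $(\tilde K^n, \tilde u^n)$ is a minimizer of the suitably rescaled functional on the growing domains $B_{1/(2 r_n)}(0)$, we have $0, w_n \in \tilde K^n$, $|w_n| = 1$, $|w_{n,2}| > |w_{n,1}|$, and by the scaling invariance of both $d$ and $\exc_{\ello}$ (a direct change of variables) one gets
\[
\sup_{1/2 < \rho < 1/\tau_n} \big(d(0, \rho) + \exc_{\ello}(0, \rho)\big) < \varepsilon_n.
\]
Since $\lambda_n r_n \|g_n\|_\infty^2 \leq M_0^2 \tau_n/2 \to 0$, condition (iv) of Assumption \ref{a:blow-up} holds, so Theorems \ref{t:minimizers compactness} and \ref{t:compactness-2} apply: up to a subsequence, $(\tilde K^n, \tilde u^n)$ admits a limit $(K_\infty, v_\infty, \{p_{kl}\})$ that is a global generalized (or generalized restricted) minimizer of $E_0$.

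Next I would pass the smallness condition to the limit using Theorem \ref{t:minimizers compactness}(i) and the varifold convergence \eqref{e:varifold-convergence}: for every $\rho > 1/2$ with $\mathcal{H}^1(\partial B_\rho \cap K_\infty) = 0$,
\[
\int_{B_\rho \setminus K_\infty} |\nabla v_\infty|^2 = 0 \qquad \textrm{and} \qquad \int_{B_\rho \cap K_\infty} |\ello - T_y K_\infty|^2 \, d\mathcal{H}^1(y) = 0.
\]
Since such $\rho$ can be taken arbitrarily large (the admissible $\rho$ form a set of full measure), $\nabla v_\infty \equiv 0$ on $\mathbb R^2 \setminus K_\infty$ and the approximate tangent of $K_\infty$ equals $\ello$ at $\mathcal H^1$-a.e. point. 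Thus $(K_\infty, v_\infty, \{p_{kl}\})$ is an elementary global generalized minimizer with a distinguished tangent direction, and local Hausdorff convergence of $\tilde K^n$ to $K_\infty$ gives $0 \in K_\infty$, so $K_\infty$ is nonempty. By Theorem \ref{t:class-global} it must be either a pure jump or a triple junction; the triple-junction option is incompatible with every tangent being horizontal (at least two of the three half-lines would have a non-horizontal tangent carrying positive $\mathcal H^1$-measure), so $K_\infty$ is a straight line with horizontal tangent through the origin, i.e. $K_\infty = \ello$.

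To conclude, extract a further subsequence so that $w_n \to w_\infty$ with $|w_\infty| = 1$. Local Hausdorff convergence forces $w_\infty \in K_\infty = \ello$, whence $w_{\infty,2} = 0$ and $|w_{\infty,1}| = 1$. On the other hand, the inequality $|w_{n,2}| > |w_{n,1}|$ passes to the limit to yield $|w_{\infty,2}| \geq |w_{\infty,1}|$, a contradiction. The only mild obstacle in this scheme is the bookkeeping: verifying the rescaled data satisfy Assumption \ref{a:blow-up} and checking the scale invariance of $d$ and $\exc_{\ello}$; the conceptual core is the classification Theorem \ref{t:class-global}, which is exactly what forces the blow-up limit to be the horizontal axis and delivers the contradiction.
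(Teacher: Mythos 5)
Your proposal is correct and follows essentially the same blow-up/compactness route as the paper: rescale so that $|z^1-z^2|$ becomes unit, pass to a global generalized minimizer with vanishing Dirichlet energy and $\mathcal{H}^1$-a.e.\ horizontal tangent via Theorem \ref{t:minimizers compactness}, invoke Theorem \ref{t:class-global} and the zero-excess condition to force the limit to be the horizontal line, and then derive the contradiction from the Hausdorff convergence of the normalized points. The only (minor) divergence is that the paper devotes a separate paragraph to the restricted and generalized restricted cases, arguing that there the limit set $\bar K$ is an unbounded connected set and concluding more directly that it is a half-line or line contained in $\ello$, whereas you fold these cases into the classification theorem in a single stroke; both routes close the argument.
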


\begin{proof}
We first consider the case of absolute and generalized absolute minimizers.
For the sake of contradiction, assume the conclusion does not hold: in particular set $\varepsilon = \frac{1}{j}$ and $\tau = \frac{1}{j}$ and let $\{(u_j,K_j)\}$ and $z^1_j, z^2_j\in K_j$ be a sequence of minimizers and pairs of points which contradict the statement. Rescale the minimizers and translate so that $|z^1_j-z^2_j|=1$ and $z_j^1=0$. 
Thanks to Theorem~\ref{t:minimizers compactness}, up to subsequences assume that $(u_j, K_j)$ converges to a generalized global minimizer $(\bar u, \bar K)$ and that $z^i_j$ converges to some $z^i$, with $z^1=0$ and $|z^2|=1$. From the hypothesis we know that $\int_{B_r} |\nabla \bar u|^2=0$ for every $r$. By Theorem~\ref{t:class-global}, the pair $(\bar u, \bar K)$ can only be a constant, a pure jump, or a triple junction. We know $z^1, z^2 \in K$, so $K$ is non-empty and $u$ cannot be a constant. Consider now that, by \eqref{e:varifold-convergence} in Theorem~\ref{t:minimizers compactness}, the set $\bar K$ has tangent $\ello$ at $\mathcal{H}^1$-a.e. point. In particular, as $\exc_{\ello}(0,\rho)=0$
for all $\rho\in(\frac12,+\infty)$ (and then for all $\rho>0$), $\bar K$ cannot be the triple junction. So $(u,K)$ must be a pure jump with a horizontal discontinuity. But from the contradiction assumption we also deduce that $|z_2^2| \geq |z_1^2|$. Since $z^1=0\neq z^2$ and $|z^2|=1$, the two points cannot belong to the same horizontal line. This gives a contradiction.

In the case of restricted and generalized restricted minimizers, note that we can again make a blow-up argument where we know that $K_j$ is converging to an unbounded closed connected set $\bar K$ of locally finite length in $\mathbb R^2$. As above we conclude that the tangent to $\bar K$ is $\ello$ $\mathcal{H}^1$-a.e., in particular we conclude that $\bar K$ is either a halfline or a line, both contained in $\ello$. At any rate this implies that $z^1$ and $z^2$ both belong to the horizontal line $\ello$, which is the same contradiction reached above. 
\end{proof}

\subsection{Proof of Proposition~\ref{p:lip-approx}}\label{s:proof-Lipschitz}
We may argue as in Lemma~\ref{l:tilt} (cf. \eqref{e:c estimate}) to get $|c| \leq 2 \left(\frac{\bar\beta(r)}{\epsilon}\right)^{\frac 13}r$. In fact the same argument implies that
\begin{equation*}
K\cap B_{\frac r2} \subset \{x: |x_2-c|\leq 2 \left(\frac{\bar\beta(r)}{\epsilon}\right)^{\frac 13}r\}\,.
\end{equation*}
To this aim, 
assume that on the contrary there is $z\in K\cap B_{\frac r2}$ such that 
$|z_2-c|>2 \left(\frac{\bar\beta(r)}{\epsilon}\right)^{\frac 13}r$.
Set $\rho := \left(\frac{\bar\beta(r)}{\epsilon}\right)^{\frac 13}\frac r2$, if $\delta<\epsilon$ then $B_\rho(z)\subset B_r$ and we reach a contradiction as:
\begin{align*}
\frac98\bar{\beta}(r)=\frac9{r^3}\epsilon\rho^3\leq\frac1{r^3} \int_{B_\rho(z)\cap K} 
|x_2-c|^2 \, d\mathcal{H}^1(x)\leq\bar{\beta}(r),    
\end{align*}
since $|x_2-c|\geq 3\rho$ for every $x\in B_\rho(z)$.

In particular, we deduce that
\begin{equation}\label{e:reduction bis}
K \cap B_{\frac r2} \subset \{|x_2| \leq C r(\bar\beta(r))^{\frac 13}\}.
\end{equation}

Fix $\varepsilon$ and $\tau$ as in Lemma~\ref{l:vertical-sep} and let
\[
\tilde K := \left\{x \in K \cap B_{\frac{\tau}{4}r} \mid \sup_{0 < \rho < \frac r2}\left(d(x,\rho)+\exc_{\ello}(x,\rho)\right) 
<\varepsilon\right\}.
\]
On setting $\sigma:=\frac\tau4$, using Lemma~\ref{l:vertical-sep} we can define a $1$-Lipschitz function $f:[-\sigma r,\sigma r]\to\mathbb{R}$ such that
\[
\tilde K \subset \left\{(t,f(t)) \mid |t| \leq \sigma r \right\}
={\rm gr}\,(f).
\]
In particular, by \eqref{e:reduction bis} we get conclusions 
(i) and (ii) with $\alpha=\frac13$, as well as \eqref{e: tilde K}. 

In addition, for what conclusion (iii) is concerned, being $\operatorname{Lip}(f)\leq 1$, the second estimate there follows immediately from Lemma~\ref{l:tilt} provided that the first estimate in conclusion (iii) itself is established.
To this aim, by using Besicovitch covering theorem (see for instance \cite[Theorem 2.18]{AFP00}), one can cover $(K\setminus\tilde K)\cap B_{\sigma r}$ with a countable family of balls with controlled overlapping such that in each ball the defining condition of 
$\tilde K$ does not hold, so that the ensuing estimate follows easily
\begin{equation}\label{e:Besicovitch}
\mathcal{H}^1((K \setminus \tilde K)\cap B_{\sigma r}) 
\leq \frac{C}{\varepsilon} r\left({\textstyle{d \left(\frac34r\right)+
\exc_{\ello}\left(\frac34r\right)}}\right)
\leq \frac{C}{\varepsilon} r\big(d(r)+\bar\beta (r)+ \lambda r \|g\|^2_{L^\infty(B_r)}
\big)\,,
\end{equation}
where in the last inequality we have used \eqref{e:tilt-g}
in the Tilt Lemma.
Therefore, to conclude item (iii) we need to estimate $\mathcal{H}^1\left(({\rm gr}\,(f) \setminus K ) \cap \left[-\sigma r,\sigma r\right]^2\right)$. To this aim, note that 
\[
{\textstyle{\pi_{\ello}({\rm gr}\,(f) \setminus K ) \cap \left[-\sigma r,\sigma r\right] \subset \left[-\sigma r,\sigma r\right] \setminus \pi_{\ello}(\tilde K)}},
\]
so that (recalling $\operatorname{Lip}(f) \leq 1$) we get
\begin{align*}
\mathcal{H}^1&\left(({\rm gr}\,(f) \setminus K ) \cap {\textstyle{\left[-\sigma r,\sigma r\right]^2}}\right)\\
\leq & \sqrt{2}
\mathcal{H}^1\left(\pi_{\ello}({\rm gr}\,(f) \setminus K ) \cap 
{\textstyle{\left[-\sigma r,\sigma r\right]}}\right)\leq \sqrt{2}
\mathcal{H}^1\left( \textstyle{{\left[-\sigma r,\sigma r\right] \setminus \pi_{\ello}(\tilde K)}}\right)\\ 
\leq & \sqrt{2}
\mathcal{H}^1\left( \textstyle{{\left[-\sigma r,\sigma r\right] \setminus \pi_{\ello}(K)}}\right)+ \sqrt{2}
\mathcal{H}^1({\textstyle{\left[-\sigma r,\sigma r\right]}}\cap\pi_{\ello}(K \setminus \tilde K))\\ \leq
& \sqrt{2}
\mathcal{H}^1\left( \textstyle{{\left[-\sigma r,\sigma r\right] \setminus \pi_{\ello}(K)}}\right)+ \sqrt{2}
\mathcal{H}^1((K \setminus \tilde K)\cap B_{\sigma r}),
\end{align*}
In view of \eqref{e:Besicovitch} we are left with estimating the measure of the set $A:=\left[-\sigma r,\sigma r\right] \setminus \pi_{\ello}(K)$. 
In this respect, consider the vertical segments 
$\mathscr{W}_t:= \{t\} \times \left[-\frac12, \frac12\right]$, then
\begin{equation}
\mathscr{W}_t \cap K 
= \emptyset \qquad \forall t \in A.
\end{equation}
Theorem~\ref{t:class-global} and a compactness argument show that, choosing $\delta$ small enough 
in the assumption (c) of Proposition~\ref{p:lip-approx}, $(u',K')$ must be close to a pure jump $(v, J)$, where $J=\{x_2=0\}$ and $v=v^+\chi_{\{x_2>0\}} + v^- \chi_{\{x_2<0\}}$, with $|v^+ - v^-|\geq 2C_0>0$, for some universal constant $C_0$.
In particular, $|u'(t,\frac12)-u'(t,-\frac12)|\geq C_0$ and then from Jensen inequality we get 
\[
\int_{\mathscr{W}_t} |\nabla u'|^2dx_2 \geq \left(\int_{\mathscr{W}_t} |\nabla u'|dx_2\right)^2 \geq C_0^2 \qquad \forall t \in A' = \frac{A}{r}.
\]
From the latter we infer 
\[
\frac{1}{r} \mathcal{H}^1(A) =\mathcal{H}^1 \left(A'\right)\leq \frac1{C_0^2}\int_{B_1}|\nabla u'|^2 = \frac{1}{C_0^2 r} \int_{B_r} |\nabla u|^2,
\]
that concludes the proof of (iii).

To prove (iv), we argue as above with
\[
\tilde{A}:=[-\sigma r,\sigma r] \setminus 
\pi_{\ello}(K \cap \{|x_2| <\varepsilon\})
=[-\sigma r,\sigma r] \setminus \pi_{\ello}(K).
\]
Set
\[
\mathscr{W}_t = \{t\} \times \left[-\gamma, \gamma \right], \qquad  
\forall t \in \tilde{A}\,,
\]
for a fixed $\gamma = O(\lambda^{\frac12})$. 
As before, by Jensen inequality we get
\[
\frac1{2\gamma} \int_{\mathscr{W}_t} |\nabla u|^2dx_2 \geq \left(\frac1{2\gamma}\int_{\mathscr{W}_t} |\nabla u|dx_2\right)^2 \geq {\left(\frac{C_0}{2\gamma}\right)^2},
\]
and hence
\[
\mathcal{H}^1(\tilde A) \leq \frac{2\gamma}{C_0^2} \int_{B_r} 
|\nabla u|^2,
\] 
which implies (iv).
\endproof

\section{Regularity at pure jumps: decay lemmas}\label{s:lemmi-decadimento}

We are now ready to prove the two main decay lemmas.

\subsection{Proof of Lemma~\ref{l:decay1}} 
For the sake of contradiction, let $\tau_1>0$ to be chosen appropriately in what follows: then there  
are $\tau \in (0, \tau_1)$ and $\overline{\delta}>0$ such that there exist sequences $(u_j, K_j)$ of 
minimizers of $E_\lambda$, radii $r_j\in(0,1)$, and real numbers $c_j$ such that
\begin{align}
\overline{\delta}\max\{d_j, \lambda_j \|g_j\|^2_{\infty} r_j^{\frac12}\} &:={\overline{\delta}}\max\left\{\frac 1{r_j}\int_{B_{r_j} \setminus K_j}|\nabla u_j|^2, \lambda_j \|g_j\|^2_{\infty} r_j^{\frac12}\right\}\nonumber\\
&\leq\frac1{r_j^3}\int_{B_{r_j}\cap K_j} |x_2-c_j|^2\,d\mathcal{H}^1 =: \beta_j \to 0,\label{e:d-beta}
\end{align}
\begin{equation}\label{e:up-to-rotations}
\min_{\mathcal{A}}\int_{B_{r_j}\cap K_j}\dist^2(x,\mathscr{A})d\mathcal{H}^1
=\int_{B_{r_j}\cap K_j} |x_2-c_j|^2 \, d\mathcal{H}^1,
\end{equation}
and that
\begin{equation} \label{e:contradiction}
\int_{B_{\tau r_j} \cap K_j} \dist^2(x,\mathscr{A}) \, d\mathcal{H}^1 \geq \tau^4 r_j^3\beta_j 
\qquad \forall j, \ \forall \mathscr{A} \in \mathcal{A}\, 
\end{equation}
 (we can assume \eqref{e:up-to-rotations} thanks the the fact that our statement is invariant under rotations).
To apply Proposition~\ref{p:lip-approx} let $\tau_1 \leq \frac{\sigma}{2}$, $\sigma$ being defined there. 
For $j$ sufficiently large, let $f_j:[-\sigma r_j,\sigma r_j]\to\mathbb{R}$ be the $1$-Lipschitz function provided by Proposition~\ref{p:lip-approx}, and denote by $\Gamma_j$ its graph.  
In light of conclusion (ii) in Proposition~\ref{p:lip-approx}, we can assume $\Gamma_j$ to be contained inside 
the rectangle $[-\sigma r_j, \sigma r_j] \times [-\frac{\sigma}{2}r_j,\frac{\sigma}{2}r_j]$. 
For any $\eta \in C^{\infty}_c((-\sigma r_j, \sigma r_j)^2;\R^2)$ consider the corresponding inner variation \eqref{e:inner} to infer from the density upper bound in \eqref{e:upper bound} and \eqref{e:d-beta}
\begin{align*}
|\delta \Gamma_j(\eta)| &:= \left|\int_{\Gamma_j} e_j(x)^T \cdot D \eta \, e_j(x) \, d\mathcal{H}^1\right|\\ 
&\leq C\|\eta\|_{C^1} \int_{[-\sigma r_j,\sigma r_j]^2\setminus K_j} |\nabla u_j|^2+C \|\eta\|_{C^0} \lambda_j \|g_j\|_{\infty }^2 r_j\\
&\leq C\bar\delta^{-1}\|\eta\|_{C^1}\beta_jr_j
+C\bar\delta^{-1}\|\eta\|_{C^0}\beta_jr_j^{\frac12}\, ,
\end{align*}
where  $e_j$ is the unitary tangent vector field to $\Gamma_j$, and $C>0$ is a constant. 

Note that by choosing $\eta(x_1,x_2)=(0, \varphi(x_1)\psi(x_2))$, where 
$\varphi, \psi \in C^{\infty}_c((-\sigma r_j,\sigma r_j))$ 
and $\psi \equiv 1$ on $\left(-\frac{\sigma}{2}r_j,\frac{\sigma}{2}r_j\right)$, 
we can use the classical first variation formula for the length of the graph of a function to compute
\[
\delta \Gamma_j(\eta)=\int\frac{f'_j \varphi'}{\sqrt{1 + |f'_j|^2}}.
\]
In addition, by Proposition~\ref{p:lip-approx} (iii) and \eqref{e:d-beta}, we have that 
\[
\int |f_j'|^2 \leq Cr_j( d_j + \beta_j + \lambda_j \|g_j\|_{\infty }^2 r_j) 
\leq C r_j( \beta_j + \beta_j r_j^{\frac12}) \leq C\bar\delta^{-1} r_j\beta_j\,,
\] 
where $C>0$ is a constant. Thus, we set 
\[
h_j (x_1) := \frac{f_j(r_jx_1)-f_j(0)}{r_j\beta_j^{\frac12}},
\]
and conclude that, passing possibly to a subsequence, the $h_j$'s converge weakly in the Sobolev space $W^{1,2}((-\sigma,\sigma))$ to a function $h$. Note that, because of the embedding $W^{1,2} ((-\sigma, \sigma))\subset C^{1/2} ((-\sigma, \sigma))$ the convergence is uniform, and in particular we conclude that $h(0)=0$. Moreover
\[
\int h'\zeta'=0 \qquad \forall \zeta\in C^{\infty}_c ((-\sigma,\sigma)).
\]
Hence, $h(x)=ax_1$ for some constant $a \in \R$. Consider $\mathscr{V}_j:= \{(x_1,f_j(0) + \beta_j^{\frac12} a x_1) \mid x_1 \in \R \}$, then from  Proposition~\ref{p:lip-approx} (ii), recalling that $\tau<\tau_1\leq\frac\sigma2$, we conclude  
\begin{equation} \label{e:1}
\int_{B_{\tau  r_j} \cap \Gamma_j} \dist^2(x,\mathscr{V}_j) \, d\mathcal{H}^1 = o(r_j^3\beta_j).
\end{equation}
On the other hand, from Proposition~\ref{p:lip-approx} (i), (iii) and \eqref{e:d-beta} we have (assuming $\alpha$ there to be such that $2\alpha\leq 1$)
\begin{equation} \label{e:2}
\int_{B_{\tau r_j} \cap (K_j \setminus \Gamma_j)} \dist^2(x,\mathscr{V}_j)\,d\mathcal{H}^1
\leq Cr_j^2\beta_j^{2\alpha} \mathcal{H}^1(K_j\setminus \Gamma_j) \leq C\bar\delta^{-1} r_j^3\beta_j^{2\alpha+1}\,,
\end{equation}
where $C>0$ is a constant.
Hence, putting together \eqref{e:1} and \eqref{e:2}, 
\[
\int_{B_{\tau r_j} \cap K_j} \dist^2(x,\mathscr{V}_j) \, d\mathcal{H}^1 = o(r_j^3\beta_j)
\]
contradicting \eqref{e:contradiction}, and so we are done
with the proof of \eqref{decay1_eq2-g}.

\subsection{Proof of Lemma~\ref{l:decay2}}
We argue by contradiction.
Let $\tau_2>0$, to be suitably chosen afterwards: then there are $\tau \in(0,\tau_2)$, $\bar\delta>0$, 
and sequences $(u_j, K_j)$ of minimizers of $E_{\lambda_j}$, real numbers $c_j$ and radii $r_j$ such that
\begin{equation*}
d_j:= \frac1{r_j}\int_{B_{r_j} \setminus K_j} |\nabla u_j|^2 \to 0\, , 
\end{equation*}
\begin{equation} \label{e:beta-limit}
\bar\delta\max\{\beta_j, \lambda_j \|g_j\|^2_{\infty } r_j^{\frac12}\}:=\bar\delta\max
\left\{ \frac1{r_j^3}\int_{B_{r_j}\cap K_j}|x_2-c_j|^2, \lambda_j \|g_j\|^2_{\infty} r_j^{\frac12}\right\}
\leq d_j\, ,
\end{equation}
\[
\min_{\mathcal{A}}\int_{B_{r_j}\cap K_j}\dist^2(x,\mathscr{A})d\mathcal{H}^1
=\int_{B_{r_j}\cap K_j} |x_2-c_j|^2 \, d\mathcal{H}^1\, ,
\]
and
\begin{equation}\label{e:contra dj}
\int_{B_{\tau r_j} \setminus K_j} |\nabla u_j|^2 \geq \tau^{\frac{3}{2}} r_jd_j.
\end{equation}
Let $(v_j, \bar K_j)$ be defined by $\bar K_j:=r_j^{-1}K_j$, and $v_j(y):= (d_jr_j)^{-\frac12}u_j(r_jy)$. 
In view of Proposition~\ref{p:lip-approx}, 
the assumptions above and the density lower bound yield that 
$\bar K_j \cap \left[-\frac{1}{2},\frac{1}{2}\right]^2 \to \{x_2=0, |x_1| \leq \frac{1}{2}\}$ 
in Hausdorff convergence.
In addition,
\begin{equation}\label{e:Dir normalization}
\int_{B_1 \setminus \bar K_j} |\nabla v_j|^2=1,
\end{equation}
thus by compactness for harmonic functions there exist harmonic functions $v^{\pm}$ and constants $\kappa_j$ such that, 
up to subsequences,
\begin{equation} \label{e:vj convergence to v}
v_j-\kappa_j \to v^{\pm} \qquad \text{in $W^{1,2}_{{\rm loc}}(B^{\pm}_{1})$,}
\end{equation}
where $B^{\pm}_{\rho}:= B_{\rho} \cap \{\pm x_2 >0\}$, $\rho>0$. The heart of the proof is to show that
\begin{equation} \label{e:heart}
\lim_{j \to \infty} \int_{B_{\frac\sigma2} \setminus \bar K_j} |\nabla v_j|^2 =
\int_{B_{\frac\sigma2}^{+}} |\nabla v^{+}|^2 + \int_{B_{\frac\sigma2}^{-}} |\nabla v^{-}|^2
\end{equation}
($\sigma$ the constant introduced in Proposition~\ref{p:lip-approx}).
In fact, once we have \eqref{e:heart} we can easily conclude 
from \eqref{e:contra dj} and \eqref{e:Dir normalization} that
\[
\tau^{\frac32} \Big(\int_{B_{\frac\sigma2}^{+}} |\nabla v^{+}|^2 + \int_{B_{\frac\sigma2}^{-}} 
|\nabla v^{-}|^2\Big)\leq\tau^{\frac32}\leq\int_{B_{\tau}^{+}} 
|\nabla v^{+}|^2 + \int_{B_{\tau}^{-}} |\nabla v^{-}|^2.
\]
On the other hand, from the harmonicity of $v^{\pm}$ we have for $\tau<\frac\sigma2$
\[
\int_{B_{\tau}^{\pm}} |\nabla v^{\pm}|^2 \leq \frac{4\tau^2}{\sigma^2}\int_{B_{\frac\sigma2}^{\pm}}|\nabla v^{\pm}|^2,
\]
and we get a contradiction as long as $ \frac{4\tau^2}{\sigma^2} < \tau^{\frac{3}{2}}$, for instance by choosing $\tau_2=\frac{\sigma^4}{32}$.

We are now left to establish \eqref{e:heart}. By \eqref{e:vj convergence to v}, 
it is enough to prove that the Dirichlet energy of $v_j$ on $B_{\frac\sigma2}^\pm$ does not concentrate on 
$\left[-\frac{\sigma}{2},\frac{\sigma}{2}\right]\times\{0\}$. 
For the sake of contradiction, if the energy concentrates, there would exist a constant $\theta >0$ 
and a sequence $\varepsilon_j \to 0$ such that
\begin{equation} \label{e:concentration}
\int_{ \left(-\frac{\sigma}{2},\frac{\sigma}{2}\right)\times
\left(-\frac{\varepsilon_j}{2},\frac{\varepsilon_j}{2}\right)\setminus\bar K_j} |\nabla v_j|^2 
\geq \theta. 
\end{equation}
Up to replacing $\varepsilon_j$ with $\max\{\varepsilon_j, C (\bar\delta^{-1} d_j)^{\alpha}\}$, 
where $C$ is the constant in (iv) of Proposition~\ref{p:lip-approx},
we may assume $\varepsilon_j\geq C (\bar\delta^{-1} d_j)^{\alpha}$.
Thus, using \eqref{e:beta-limit}, we infer that $\varepsilon_j \geq C \beta_j^{\alpha}$.

To get a contradiction, we first note that being $(u_j, K_j)$ a minimizer of $E_{\lambda_j}$ on $B_{r_j}$, $(v_j, \bar K_j)$ minimizes on $B_1$ the functional
\begin{equation}\label{e:Fj ch3}
F_j(v,\bar K):=\int_{B_1}|\nabla v|^2+\frac1{d_j}\mathcal{H}^1(\bar K\cap B_1)+\lambda_j r_j^2\int_{B_1}|v-\bar g_j|^2
\end{equation}
where $\bar g_j(x):=(d_jr_j)^{-\frac12} g_j (r_j x)$. We next use \eqref{e:concentration} to build a competitor 
$(w_j,K_j')$ for $(v_j,\bar K_j)$ with $F_j(w_j, K_j')<F_j(v_j,\bar K_j)$ for $j$ large, which contradicts the minimality of $(v_j,\bar K_j)$. For the sake of simplicity we assume that the minimizers are absolute minimizers, the reader can anyway easily check that the competitor exhibited below is allowed also in the case of restricted and generalized minimizers.

To this aim consider the $1$-Lipschitz function $f_j:[-\sigma r_j,\sigma r_j]\to\mathbb{R}$ given by Proposition~\ref{p:lip-approx} applied to $(u_j,K_j)$. Note that by item (ii) there $\|f_j\|_{C^0}\leq Cr_j(\bar\delta^{-1}d_j)^\alpha$, and 
by item (iii) there and by \eqref{e:beta-limit} we have
\[
\int_{-\sigma r_j}^{\sigma r_j}|f_j'|^2 \leq C\bar\delta^{-1} r_jd_j,\qquad
\mathcal{H}^1(({\rm gr}\,(f_j)\triangle K_j)\triangle[-\sigma r_j,\sigma r_j]^2)\leq C\bar\delta^{-1}r_j d_j,
\]
where $C>0$ is a constant, and by item (iv) for any $\Lambda,\varepsilon>0$ for $j$ sufficiently large
\[
\mathcal{H}^1([-\sigma r_j,\sigma r_j]\setminus
\pi_{\ello}(K_j\cap\{|x_2|<\varepsilon r_j\}))\leq \Lambda r_jd_j.
\]
In particular, the function $\bar{f}_j(t):=r_j^{-1}f_j(r_jt)$, $t\in[-\sigma,\sigma]$, 
is $1$-Lipschitz with $\|\bar{f}_j\|_{C^0}\leq C(\bar\delta^{-1} d_j)^\alpha<\varepsilon_j$, and
\begin{equation}\label{e:barfj 1}
\int_{-\sigma}^{\sigma}|\bar{f}_j'|^2 \leq C\bar\delta^{-1}d_j,\qquad
\mathcal{H}^1(({\rm gr}\,(\bar f_j) \triangle \bar K_j) \triangle[-\sigma,\sigma]^2)
\leq C\bar\delta^{-1}d_j,
\end{equation}
and
\begin{equation}\label{e:barfj 2}
\mathcal{H}^1([-\sigma,\sigma]\setminus
\pi_{\ello}(\bar K_j\cap\{|x_2|<\varepsilon \}))\leq \Lambda d_j.
\end{equation}
In view of \eqref{e:Dir normalization} and \eqref{e:barfj 2}, an elementary averaging argument 
implies that we can find two points, $a_j \in [-\sigma, -\frac34 \sigma]$ 
and $b_j \in [\frac34 \sigma, \sigma]$, with the property that on the region
\[
R_j:= ([a_j-\varepsilon_j, a_j + 2\varepsilon_j]\cup [b_j-\varepsilon_j, b_j + 2\varepsilon_j])\times \left[-\frac34\sigma, \frac34\sigma\right]
\]
we have
\begin{equation}\label{e:little-energy}
\int_{R_j\setminus\bar K_j} |\nabla v_j|^2\leq C\varepsilon_j,\qquad 
\mathcal{H}^1( (\bar K_j \triangle 
{\rm gr}(\bar f_j))\cap R_j)\leq C \varepsilon_j d_j,
\end{equation}
for some positive constant $C$ depending on $\sigma$ and $\bar\delta$.
Let $h_j$ be the affine function such that 
$h_j(a_j)  = \bar f_j(a_j)$ and
$h_j(b_j)  = \bar f_j(b_j)$.
Note that $\|h_j\|_{C^0}\leq C\bar\delta_j^{-\alpha}d_j^\alpha$, and $|h_j'| \leq C d_j^{\frac12}$ by \eqref{e:barfj 1}. 
Next, for simplicity of exposition we assume that the 
graph of $h_j$ is horizontal (this is clearly always true up to a rotation). 
Let $h_j \equiv \bar h_j\in\R$ under the new choice of coordinates. In particular, $\bar h_j\to 0$ as $j\uparrow\infty$.

Set $Q_j:=[\baraj,\barbj]\times \left[-\frac\sigma2, \frac\sigma2 \right]$,
assuming $\sigma$ sufficiently small we construct a map $\Phi_j \colon B_1 \to B_1$ as follows (see Figure~\ref{figura-2} for a visual description of $\Phi_j$):
\begin{itemize}
\item $\Phi_j(x_1,x_2) = (x_1,x_2)$, if $(x_1,x_2) \in 
B_1\setminus Q_j$;
\item $\Phi_j(x_1,x_2) = (x_1, \varphi_j(x_1,x_2))$ if $(x_1,x_2) \in 
Q_j$; 
\end{itemize}
where $\varphi_j:Q_j\to Q_j$ is defined by
\begin{itemize}
\item If $\baraj + \varepsilon_j \leq x_1 \leq \barbj -\varepsilon_j$, the map $x_2 \mapsto \varphi_j(x_1,x_2)$
\begin{itemize}
\item[i)] is identically $\bar h_j$ for $|x_2-\bar h_j| \leq 2\varepsilon_j$;
\item[ii)] maps linearly the segments $\{x_1\}\times[\bar h_j + 2\varepsilon_j, \frac\sigma2]$ 
and $\{x_1\}\times[-\frac\sigma2,\bar h_j - 2\varepsilon_j]$ onto 
$\{x_1\}\times[\bar h_j,\frac\sigma2]$ and $\{x_1\}\times[-\frac\sigma2,\bar h_j]$, 
respectively.
\end{itemize}

\item If $\baraj \leq x_1 \leq \baraj + \varepsilon_j$, the map  $x_2 \mapsto \varphi_j(x_1,x_2)$
\begin{itemize}
\item[i)] is identically $\bar h_j$ if $|x_2-\bar h_j| \leq 2 (x_1 - \baraj) $;
\item[ii)] maps linearly the segments $\{x_1\}\times[\bar h_j + 2 (x_1-\baraj), \frac\sigma2]$ and 
$\{x_1\}\times[-\frac\sigma2, \bar h_j - 2 (x_1-\baraj)]$ onto $\{x_1\}\times[\bar h_j, \frac\sigma2]$ 
and $\{x_1\}\times[-\frac\sigma2,\bar h_j]$, respectively.
\end{itemize}
\item If $\barbj-\varepsilon_j  \leq x_1 \leq \barbj$, the map  $x_2 \mapsto \varphi_j(x_1,x_2)$
\begin{itemize}
\item[i)] is identically $\bar h_j$ if $|x_2-\bar h_j| \leq  2 (\barbj-x_1)$;
\item[ii)] maps linearly the segments $\{x_1\}\times[\bar h_j + 2 (\barbj-x_1), \frac\sigma2]$ and 
$\{x_1\}\times[-\frac\sigma2,\bar h_j - 2 ( \barbj-x_1)]$ onto $\{x_1\}\times[\bar h_j, \frac\sigma2]$ 
and $\{x_1\}\times[-\frac\sigma2,\bar h_j]$, respectively.
\end{itemize}
\end{itemize}
Let $S_j:=[\baraj,\barbj ] \times \{\bar h_j\}$, $\Sigma_j:=\Phi_j^{-1}(S_j)$,
$T_j:=\{(x_1,x_2)\in Q_j:\,\baraj+\varepsilon_j\leq x_1\leq\barbj-\varepsilon_j,\,2\varepsilon_j\leq|x_2-\bar{h}_j|\leq\frac\sigma2\}$,
and $U_j:=[\baraj+\varepsilon_j,\barbj-\varepsilon_j]\times[-\frac\sigma2,\frac\sigma2]$. 
Then
\begin{itemize}
\item[(a)] $\Phi_j|_{\Sigma_j}$ is the projection onto $\mathscr{V}_j:=\{(x_1,\bar h_j):x_1\in\mathbb{R}\}$
with $\Phi_j(\Sigma_j)=S_j$, 
$\Phi_j \colon B_1 \to B_1$ is Lipschitz, 
and $\Phi_j \colon B_1 \setminus \Sigma_j \to B_1 \setminus S_j$ is bi-Lipschitz; 
\item[(b)] 
$\|D\Phi_j\|_{L^\infty((B_1\setminus Q_j)\cup\Sigma_j,B_1)}=1$, 
$\|D\Phi_j\|_{L^\infty(Q_j\setminus \Sigma_j,B_1)} \leq 1+C \varepsilon_j$, and\\
$\|D(\Phi_j^{-1})\|_{L^\infty(B_1\setminus S_j, B_1\setminus \Sigma_j)}\leq 1+C\varepsilon_j$,  for some universal $C>0$, and where we are using the operator norm on $D\Phi_j$;
\item[(c)] ${\rm gr}\,(\bar f_j) \cap Q_j
\subset \Sigma_j$.
\end{itemize}

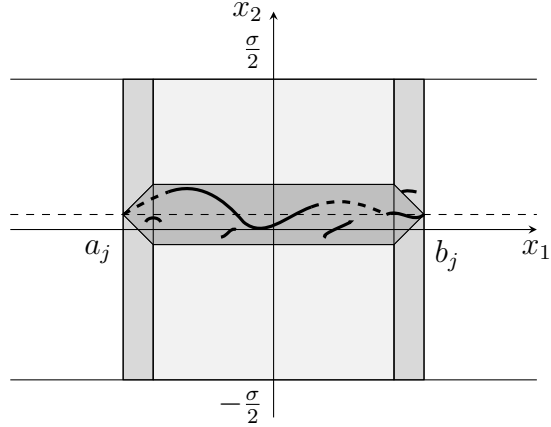
\begin{figure}
\begin{tikzpicture}
\draw[fill=gray!30] (-2,0.2) -- (-1.6,0.6) -- (-1.6,2) -- (-2,2) -- (-2,0.2);
\draw[fill=gray!30] (2,0.2) -- (2,2) -- (1.6,2) -- (1.6,0.6) -- (2,0.2);
\draw[fill=gray!10] (-1.6,0.6) -- (1.6, 0.6) -- (1.6,2) -- (-1.6,2) -- (-1.6,0.6);
\draw[fill=gray!30] (-2,0.2) -- (-1.6,-0.2) -- (-1.6,-2) -- (-2,-2) -- (-2,0.2);
\draw[fill=gray!30] (2,0.2) -- (2,-2) -- (1.6,-2) -- (1.6,-0.2) -- (2,0.2);
\draw[fill=gray!10] (-1.6,-0.2) -- (1.6, -0.2) -- (1.6,-2) -- (-1.6,-2) -- (-1.6,-0.2);
\draw[fill=gray!50] (-2,0.2) -- (-1.6,0.6)-- (1.6,0.6) -- (2,0.2) -- (1.6,-0.2) -- (-1.6,-0.2) -- (-2,0.2);
\draw[dashed, very thick] (-2,0.2) to [out=30,in=200] (-1.4,0.5);
\draw[very thick] (-1.4,0.5) to [out=20, in =130] (-0.4,0.1) to [out=320,in=200] (0.5,0.3);
\draw[dashed, very thick] (0.5,0.3) to [out=20,in=155] (1.5,0.2);
\draw[very thick] (1.5,0.2) to [out=25, in=220] (2,0.2);
\draw[very thick] (1,0.1) to [out=10, in=170] (0.7,-0.1);
\draw[very thick] (-0.7,-0.1) to [out=5,in=160] (-0.5,0);
\draw[very thick] (-1.7,0.1) to [out=70,in=110] (-1.5,0.1);
\draw[dashed] (-3.5,0.2) -- (3.5,0.2);
\draw[>=stealth,->] (-3.5,0) -- (3.5,0);
\node[below] at (3.5,0) {$x_1$};
\draw[>=stealth,->] (0,-2.5) -- (0,2.9);
\node[left] at (0,2.9) {$x_2$};
\draw (-2,-2) -- (-2,2);
\draw (2,2)-- (2,-2);
\draw (-3.5,2) -- (3.5,2);
\draw (-3.5,-2) -- (3.5,-2);
\draw (-1.6,-2) -- (-1.6,2);
\draw (1.6,-2) -- (1.6,2);
\node[above left] at (0,2) {${\textstyle{\frac{\sigma}{2}}}$};
\node[below left] at (0,-2) {${\textstyle{-\frac{\sigma}{2}}}$};
\node[below left] at (-2,0) {$\baraj$};
\node[below right] at (2,0) {$\barbj$};
\draw[very thick] (1.7,0.5) to [out=20,in=170] (1.9,0.5);
\end{tikzpicture}
\caption{A visual description of the map $\Phi_j$ in the rotated coordinates $(x_1, x_2)$. The map is the identity outside the gray zones. The diamond shaped dark gray zone is the region $\Sigma_j$: $\Phi_j$ ``squeezes'' it on the horizontal central segment $S_j$ (which lies on the dashed horizontal line). The $6$ lighter gray zones are consequently stretched by $\Phi_j$: in $T_j$, the two central lightest ones, and in the four remaining lateral zones the Lipschitz constant is controlled by $1+C\varepsilon_j$. The set $\bar K_j$ is depicted by the thick arcs, while the graph of $\bar f_j$ is thick and dashed (and has a large overlap with $\bar K_j$). While the graph of 
$\bar f_j$ must lie in the region $\Sigma_j$, there might well be portions of $\bar K_j$ which lie outside.\label{figura-2}}
\end{figure}

The competitor $(w_j, K'_j)$ to test the minimality of $(v_j, \bar K_j)$ for $F_j$ is then given by
\begin{gather*}
K_j':= (\bar K_j \setminus Q_j) \cup S_j\cup \Phi_j(\bar K_j \setminus \Sigma_j),\qquad
w_j := v_j \circ \Phi_j^{-1} \quad \text{on $B_1 \setminus K'_j$}
\end{gather*}
In particular, note that the number of connected components of $K'_j$ is less than or equal to that of $\bar K_j$.

We begin by estimating the length of $K'_j$. 
First note that $K'_j\setminus Q_j
= \bar K_j \setminus Q_j$. 
Furthermore, we have
\begin{align*}
\mathcal{H}^1(S_j)&=\mathcal{H}^1(\Phi_j(\bar K_j\cap\Sigma_j))+
\mathcal{H}^1(S_j\setminus\Phi_j(\bar K_j\cap\Sigma_j))\\ &
\stackrel{(a),(b)}{\leq} \mathcal{H}^1(\bar K_j \cap \Sigma_j) +
\mathcal{H}^1([\baraj, \barbj]\times\{\bar h_j\} \setminus \pi_{\mathscr{V}_j}(\bar K_j))
\stackrel{\eqref{e:barfj 2}}{\leq}\mathcal{H}^1(\bar K_j \cap \Sigma_j) + \Lambda d_j
\end{align*}
where we recall that $\mathscr{V}_j=\{(x_1,\bar h_j):x_1\in\mathbb{R}\}$. 
On the other hand, the very definition of $K'_j$ gives
\begin{align*}
\mathcal{H}^1&((K'_j\cap Q_j)\setminus S_j)
=\mathcal{H}^1(\Phi_j((\bar K_j\cap Q_j)\setminus \Sigma_j))
\stackrel{(b)}{\leq}(1+C\varepsilon_j)
\mathcal{H}^1((\bar K_j\cap Q_j)\setminus \Sigma_j)
\\
&\stackrel{(c)}{\leq}\mathcal{H}^1((\bar K_j\cap Q_j)\setminus \Sigma_j)+C\varepsilon_j\mathcal{H}^1((\bar K_j\cap Q_j
) \setminus {\rm gr}\,(\bar f_j))\\&
\stackrel{\eqref{e:barfj 1}}{\leq}
\mathcal{H}^1((\bar K_j\cap Q_j)\setminus \Sigma_j)+C\varepsilon_jd_j\,.
\end{align*}
Hence, we conclude that
\begin{equation} \label{e:comp-length}
\mathcal{H}^1(K'_j\cap B_1) \leq \mathcal{H}^1(\bar K_j\cap
B_1) + \Lambda d_j+o(d_j).
\end{equation}

Now, we estimate the Dirichlet energy. 
Note that $w_j = v_j$ on $B_1 \setminus 
Q_j$, and in addition that
\begin{align*}
\int_{U_j\setminus K'_j} |\nabla w_j|^2 & \stackrel{(a),(b)}{\leq} 
(1 + C \varepsilon_j)^2 \int_{
T_j\setminus \bar K_j} |\nabla v_j|^2
\stackrel{\eqref{e:concentration}}{\leq} 
(1 + C \varepsilon_j)^2
\left(\int_{Q_j\setminus \bar K_j
} |\nabla v_j|^2-\theta\right).
\end{align*}
and that 
\begin{align*}
\int_{Q_j\setminus(U_j\cup K'_j)}|\nabla w_j|^2\leq &
\|D(\Phi_j^{-1})\|^2_{L^\infty(B_1\setminus S_j,B_1\setminus \Sigma_j)}\int_{Q_j\setminus(U_j\cup \bar K_j)} |\nabla v_j|^2
\stackrel{\eqref{e:little-energy},\,(b)}{\leq} 
C\varepsilon_j\,.
\end{align*}
Therefore, we conclude by \eqref{e:Dir normalization} that 
\begin{equation} \label{e:comp-energy}
\int_{B_1 \setminus K'_j} |\nabla w_j|^2 \leq \int_{B_1 \setminus \bar K_j}
|\nabla v_j|^2 - \theta+ o(1) .
\end{equation}
By taking into account that $\|w_j\|_{L^\infty(B_1)}=\|v_j\|_{L^\infty(B_1)}
\leq\|\bar g_j\|_{L^\infty (B_1)}=
(d_jr_j)^{-\frac12}\|g_j\|_{L^\infty(B_{r_j})}$,
and that $\lambda_j \|g_j\|_{L^\infty (B_{1})}^2 r_j\leq\bar\delta^{-1}r_j^{\sfrac12}d_j$ in view of \eqref{e:beta-limit},
adding up \eqref{e:comp-length} and \eqref{e:comp-energy} we get from the very definition
of $F_j$ in \eqref{e:Fj ch3}
\[
F_j(w_j, K_j)\leq F_j(v_j,\bar K_j)+\Lambda - \theta + o (1),
\]
which, for $j$ large enough and $\Lambda<\theta$, contradicts the fact that $(\bar K_j,v_j)$ minimizes $F_j$.

\section{Regularity at pure jumps: conclusion}\label{s:salto-conclusione}

We are now ready to show how the $\varepsilon$-regularity Theorem~\ref{t:eps_salto_puro} follows from the decay Proposition~\ref{p:salto_decay}.

\begin{proof}[Proof of Theorem~\ref{t:eps_salto_puro}] 
Without loss of generality assume that $x=0$ and that $\theta=0$. First note that the very definition of $\beta$ in \eqref{e:mean flatness} gives
\begin{align*}
\beta(0,2r)&\leq (2r)^{-3}\dist_H^2 (K \cap \bar B_{2r}, \ello\cap \bar B_{2r})
\mathcal{H}^1(K\cap \bar B_{2r})\\&
\leq C(\Omega^j(0,0,r))^2(1+\lambda\|g\|^2_\infty r^2)\leq C\Omega^j(0,0,r)
\end{align*}
where $C>0$ is a universal constant, assuming $\varepsilon\in(0,1)$, and using assumption (ii) and the energy upper bound in \eqref{e:upper bound} for the last inequality. In turn, from this we deduce that  
\[
\beta(0,2r) + d(0,2r)+\lambda \|g\|_\infty^2 (2r)^{\frac12} 
\leq C(\Omega^j(0,0,r)+\lambda \|g\|_\infty^2 r^{\frac12})=:\varepsilon(r)<C\varepsilon\,.  
\]
Then from assumption (ii), \eqref{beta_scaling} and \eqref{d_scaling} we get
\[
\beta(z,r) + d(z,r)+\lambda \|g\|_\infty^2 r^{\frac12} 
< C\varepsilon(r)
\qquad \forall z\in B_{r}\, .
\]
Using Proposition~\ref{p:salto_decay}, if $\varepsilon$ is chosen sufficiently small, we get for all $k\in\mathbb{N}
$
\[
\beta(z,\tau^k r) + d(z,\tau^k r) + \lambda \|g\|_\infty^2 (\tau^k r)^{\frac12}
< 
C\varepsilon(r)\tau^{\frac k2}
\qquad \forall z\in K\cap B_{r}\, .
\]
In particular, we can easily conclude
\begin{equation}\label{e: beta d r decay}
\beta(z,\rho) +  d(z,\rho) + \lambda \|g\|_\infty^2 \rho^{\frac12}
\leq 
C\varepsilon(r)\rho^{\frac{1}{2}} 
\qquad \forall \rho < r,\, 
\forall z\in K\cap B_r 
\end{equation}
(where from now on we stop keeping track of geometric constants).
From the Tilt Lemma~\ref{l:tilt} we infer that 
\begin{equation}\label{e: exc decay}
\exc(z,\rho) \leq 
C\varepsilon(r)\rho^{\frac{1}{2}}
\qquad \forall \rho < \frac{r}{4}\, , \forall z\in K\cap B_r.
\end{equation}
For each $z$ and $\rho$ as above, let now $\VV (z, \rho)\in\mathcal{L}$ be such that
\[
\exc(z,\rho) = \exc_{\VV (z, \rho)}(z,\rho),
\]
(cf. \eqref{e:excess} and \eqref{e:excess V} for the definition of excess), 
then observe that for all $\rho<t<\frac r8$ and $t\le2\rho$, using the density lower bound in Theorem~\ref{t:dlb}, we have
\begin{align}
|\VV (z, \rho) - \VV (z, t)|^2 &\leq \frac{C}{\rho} \int_{K\cap B_{\rho} (z)} |\VV (z, \rho) - \VV (z, t)|^2 d\mathcal{H}^1 (y)\nonumber\\
&\leq C (\exc (z,\rho) + \exc (z,t)) 
\stackrel{\eqref{e: exc decay}}{\leq}
C\varepsilon(r)\rho^{\frac{1}{2}}\, . \label{e:scales}
\end{align}
Additionally, with a similar argument we infer that
\begin{align*}
|\VV (z, \rho) - \VV (y, \rho)|^2 &\leq 
C\varepsilon(r)
\rho^{\frac{1}{2}} \qquad \mbox{$\forall z,y\in K\cap B_{r}$ with $|z-y|\leq \frac{\rho}{4}$, $\rho< \frac r8$}\, .
\end{align*}
Combining both estimates with \eqref{e: exc decay}, an elementary summation argument on dyadic scales yields that
\[
|\VV (z, \rho) - \VV (0, \textstyle{\frac{r}{16}})|^2 \leq 
C\varepsilon(r)
r^{\frac12} \qquad \forall z\in K \cap B_{\frac{r}{16}}\, , \forall \rho < \textstyle{\frac{r}{8}}\, .
\]
If we rotate the coordinates so that $\VV (0, \frac{r}{16})$ is the horizontal line
$\VV_0$, the density lower bound in Theorem~\ref{t:dlb},
\eqref{e: exc decay} and the latter inequality imply that
\[
\exc_{\ello}(z,\rho)=
\frac{1}{\rho} \int_{K\cap B_\rho(z)} |T_y K - \VV_0|^2\, d\mathcal{H}^1 (y) \leq 
C\varepsilon(r)r^{\frac12}\qquad 
\forall z\in K\cap B_{\frac r{16}}\, , \forall \rho <\textstyle{\frac{r}{8}}\,.
\]
Therefore, thanks to \eqref{e: beta d r decay} and \eqref{e: tilde K} in Proposition~\ref{p:lip-approx}, we know that for a choice of $\varepsilon$ sufficiently small there is a $1$-Lipschitz function 
$f:[-\sigma r,\sigma r]\to\mathbb{R}$ such that 
$K\cap B_{\sigma r}\subseteq {\rm gr}\,(f)$, where $\sigma$ is the geometric constant in Proposition
\ref{p:lip-approx}. 
This shows the graphicality of $K$ over $\mathscr{V} (0, \frac{r}{16})$, but since the angle between $\mathscr{V} (0, \frac{r}{16})$ and $\mathscr{V}_0$ is small, we have in fact shown also graphicality over $\mathscr{V}_0$.

In addition, estimate
\eqref{e: beta d r decay} yields that 
\[
\lim_{\rho\downarrow 0} \frac{1}{\rho} \int_{B_\rho(z)} |\nabla u|^2 = 0
\]
for every $z\in K\cap B_{\frac r{16}}$. Hence, by Theorem~\ref{t:class-global} any blow-up at every such $z$ is either a pure jump or a triple junction. On the other hand, since $\beta(z,\rho)\downarrow 0$ as $\rho\downarrow 0$ (cf. \eqref{e: beta d r decay}), we infer that any blow-up is in fact a pure jump. In particular, 
we conclude that
\begin{equation}\label{e: full H1 density}
\lim_{\rho\downarrow 0} \frac{\mathcal{H}^1 (K\cap B_\rho (z))}{2\rho} = 1\, .
\end{equation}
Note that, choosing $\varepsilon$ sufficiently small, we can assume that $K\cap B_{\frac{\sigma r}4}$ is not empty (cf. item (i) in Proposition~\ref{p:lip-approx}). In particular, $\|f\|_{C^0([-\frac{\sigma r}4, \frac{\sigma r}4])}\leq \frac34 \sigma r$. Denote by $R$ the open rectangle 
\[
R = \big(-\frac{\sigma r}4,\frac{\sigma r}4\big)\times \big(-\frac34 \sigma r, \frac34 \sigma r\big)\, .
\]
We show next that $K\cap R={\rm gr}\,(f)\cap R$. 
 We know that $\pi_{\ello}(K \cap R)$ is a relatively closed set inside $I=(-\frac{\sigma r}4, \frac{\sigma r}4)$ and it is not empty. If $I \setminus \pi_{\ello} (K)$ is not empty, then it contains at least an open interval $(a,b)$ with one extremum, say $b$, which belongs to $\pi_{\ello} (K)$. Namely $y=(b,f(b)) \in K\cap R$, but the density of the set $K$ at $y$ would be strictly less than $1$, a contradiction to 
 \eqref{e: full H1 density}. 
Hence $K \cap R= {\rm gr}\,(f) \cap R$, in particular $K\cap B_{\frac{\sigma r}4} = {\rm gr}\, (f)\cap B_{\frac{\sigma r}4}$.

Finally, observe that the limit of $\VV (z, \rho)$ for $\rho\downarrow 0$ exists at every $z= (s, f(s)) \in  K\cap {\rm gr}\, (f)$ by \eqref{e:scales}, that together with \eqref{e: exc decay} yield that any $s\in I$
is a Lebesgue point of $f'$. In turn, this fact implies 
the differentiability of $f$ at every $s\in I$, with the limit being the graph of the linear map $t\mapsto f' (s) t$
(cf., for instance, the proof of Rademacher's theorem in \cite[Theorem 2.14]{AFP00}).
In particular, 
the decay of $\exc(z,\cdot)$ in \eqref{e: exc decay} can be translated into
\[
\fint_{s-\delta}^{s + \delta} |f'(t)-f'(s)|^2\,dt \leq 
C\varepsilon(r)
\delta^{\frac{1}{2}} \qquad \forall \delta \leq \sigma\rho_0-|s|,
\]
and so the classical Campanato's theorem implies that $ f \in C^{1,\frac14}(I)$ with $[f']_{\frac14} \leq 
(C\varepsilon(r))^{\frac{1}{2}}$
(cf. \cite[Theorem 7.51]{AFP00}). Estimate \eqref{e:estimate_jump Eg} follows from this and Proposition~\ref{p:lip-approx}.
This proves the claim of the theorem except that the graphicality of $K$ has been shown in $B_{\frac{\sigma r}4}$ rather than in $B_r$. Note however that we can proceed with a simple covering argument to achieve graphicality in $B_r$ (provided we choose $\varepsilon$ even smaller). 
\end{proof}


\section{Triple junctions, closeness at all scales}\label{s:tripunto-inizio}

We turn to the proof of Theorem~\ref{t:eps_tripunto}.
We start off proving the following fact: if around a point $x$ a minimizer is close to a triple junction at some scale $r$, then there is a nearby point $y$ such that the minimizer is close to a triple junction at {\em every} scale $\rho<r$ around $y$. In order to formulate our conclusion more precisely, we recall the notation introduced in Theorem~\ref{t:eps_tripunto}
\[
\Omega^t (\theta,x,r) = r^{-1}\dist_H \big(K \cap \overline{B}_{2r} (x), (x+\mathcal{R}_\theta (\mathscr{T}_0))\cap \overline{B}_{2r} (x)\big) +r^{-1} \int_{B_{2r} (x)\setminus K} |\nabla u |^2\,,
\]
where $\theta \in [0, 2\pi]$, $\mathcal{R}_\theta$ is the corresponding rotation, and $\mathscr{T}_0$ is defined in \eqref{e:To}. Furthermore we define
\begin{equation}\label{e:Omega_t}
\Omega^t (x,r) := \inf_\theta \Omega^t (\theta, x,r)\,.
\end{equation}
\index[simb]{aagZ^t(x,r)@$\Omega^t(x,r)$}

\begin{lemma}\label{l:C1smooth}
For every $\delta>0$ there is $\eta>0$ such that the following holds. Assume that 
$(u,K)$ is a minimizer of $E_\lambda$ in $B_{2r} (x)$ such that
\[
\Omega^t (x,r)+ \lambda \|g\|_\infty^2 r^{\frac 12} < \eta \, .
\]
Then there is a $y\in B_{\delta r} (x)$ such that
\begin{align*}
\Omega^t (y, \rho) + \lambda \|g\|_\infty^2 \rho^{\frac 12} \leq \delta \qquad \forall \rho < r\, .
\end{align*}
\end{lemma}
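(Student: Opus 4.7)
The plan is a compactness/contradiction argument, followed by a second blow-up in the bad scale regime. Suppose the lemma fails for some $\delta_0 > 0$: then there exist $\eta_j \downarrow 0$ and minimizers $(K_j, u_j)$ of $E_{\lambda_j}$ on $B_{2r_j}(x_j)$ with $\Omega^t(x_j, r_j) + \lambda_j \|g_j\|_\infty^2 r_j^{1/2} < \eta_j$, yet such that every $y \in B_{\delta_0 r_j}(x_j)$ admits a scale $\rho_y < r_j$ with $\Omega^t(y, \rho_y) + \lambda_j \|g_j\|_\infty^2 \rho_y^{1/2} > \delta_0$. After translating, rotating, and rescaling I would assume $x_j = 0$, $r_j = 1$, and that the optimal rotation in the definition of $\Omega^t(x_j,r_j)$ is $\theta = 0$, so that $K_j$ is at Hausdorff distance at most $\eta_j$ from $\mathscr{T}_0\cap \bar B_2$. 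Theorem~\ref{t:minimizers compactness} then yields, up to subsequences, $(K_j, u_j)\to (\mathscr{T}_0, u_\infty,\{p_{kl}\})$ on $B_2$, with $\int|\nabla u_\infty|^2 = 0$, so $u_\infty$ is locally constant on each of the three sectors.

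Next I would pick $y_j \in K_j$ to be the point closest to the origin, so $y_j \to 0$ by Hausdorff convergence and $y_j \in B_{\delta_0/2}$ for $j$ large. Applying the contradiction hypothesis to $y_j$ produces scales $\rho_j \in (0, 1)$ with $\Omega^t(y_j, \rho_j) + \lambda_j\|g_j\|_\infty^2 \rho_j^{1/2} > \delta_0$, and a further subsequence may be extracted so $\rho_j \to \rho_\infty \in [0, 1]$. In the easy case $\rho_\infty > 0$, choosing the rotation $\theta = 0$ in the definition of $\Omega^t(y_j,\rho_j)$ and using the pointwise convergence of the Hausdorff and Dirichlet terms give $\Omega^t(y_j, \rho_j) \to \rho_\infty^{-1}\mathrm{dist}_H(\mathscr{T}_0\cap \bar B_{2\rho_\infty}(0), \mathscr{T}_0 \cap \bar B_{2\rho_\infty}(0)) = 0$, an immediate contradiction.

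In the hard case $\rho_\infty = 0$ I would perform a second blow-up: set $\tilde K_j := (K_j - y_j)/\rho_j$ and $\tilde u_j(z) := \rho_j^{-1/2} u_j(\rho_j z + y_j)$, which is a minimizer of a rescaled functional on $B_{(2-|y_j|)/\rho_j}$ with rescaled fidelity factor $\tilde \lambda_j\|\tilde g_j\|_\infty^2 = \rho_j \lambda_j \|g_j\|_\infty^2 \to 0$. Theorem~\ref{t:compactness-2} then produces, up to a subsequence, a global generalized minimizer $(\tilde K_\infty, \tilde u_\infty, \{\tilde p_{kl}\})$ on $\mathbb R^2$. The Dirichlet energy of the limit must vanish: reworking the proof of Proposition~\ref{p:monotonia-0} while carrying an explicit $C\lambda\|g\|_\infty^2$ error from the fidelity yields an almost-monotonicity of the form $d(y_j, r) \leq d(y_j, R) + C\lambda_j\|g_j\|_\infty^2 (R - r)$ for $r \leq R$, under the connectedness condition (i) of Proposition~\ref{p:monotonia-0}, which holds since $K_j$ approximates the connected set $\mathscr{T}_0$. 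Taking $R = 1 - \delta_0$ and noting $d(y_j, 1-\delta_0) \leq (1-\delta_0)^{-1}\eta_j \to 0$, one gets $d(y_j, r) \to 0$ uniformly in $r \in (0, 1-\delta_0]$, and the scaling identity $\int_{B_R\setminus \tilde K_j}|\nabla \tilde u_j|^2 = d(y_j, R\rho_j)$ yields $\int|\nabla \tilde u_\infty|^2 = 0$ on $\mathbb R^2$.

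By the classification in Theorem~\ref{t:class-global}, $\tilde K_\infty$ is therefore empty, a straight line, or a triple junction. The first two are excluded by the density at infinity: using the Hausdorff convergence of $K_j$ to $\mathscr{T}_0$ together with $y_j \in K_j$ and $y_j\to 0$, one verifies that $\mathcal{H}^1(\tilde K_j \cap B_R)/R = \mathcal{H}^1(K_j\cap B_{R\rho_j}(y_j))/(R\rho_j) \to 3$ once $R\rho_j$ is sufficiently large compared to $\eta_j$; the varifold convergence in Theorem~\ref{t:minimizers compactness}(i) then forces $\tilde K_\infty$ to have density $3$ on sufficiently large balls, so it must be a triple junction. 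Since the rescaled origin lies in $\tilde K_j$, the density lower bound of Theorem~\ref{t:dlb} passes to the limit and pins the junction point of $\tilde K_\infty$ at the origin, giving $\Omega^t_{\tilde K_\infty}(0, 1) = 0$ and contradicting $\Omega^t(y_j, \rho_j) > \delta_0/2$. The main obstacle is precisely this last step: a priori the limit could be a triple junction shifted by $O(\eta_j/\rho_j)$, and the careful choice of $y_j$ in $K_j$ (rather than $y_j = 0$) together with the interplay between the density lower bound and the almost-monotonicity is what forces the shift to vanish.
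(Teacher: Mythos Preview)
Your direct contradiction approach has a genuine gap in the case $\rho_\infty = 0$. The almost-monotonicity you invoke from Proposition~\ref{p:monotonia-0} requires $K_j \cap \partial B_r(y_j)$ to lie in a single connected component of $K_j$; Hausdorff closeness of $K_j$ to the connected set $\mathscr{T}_0$ does not give this (an absolute minimizer may have many small components), and at scales $r$ below $\eta_j$ you have no control on $K_j \cap \partial B_r$ whatsoever. Switching to Proposition~\ref{p:monotonia-1} does not help, since its arc condition can only be verified at scales comparable to~$1$. More seriously, the second blow-up $(\tilde K_j, \tilde u_j)$ examines $K_j$ in windows $B_{R\rho_j}(y_j)$ with $R\rho_j \to 0$, where no information from the first limit survives: your density claim $\mathcal{H}^1(\tilde K_j \cap B_R)/R \to 3$ is unsupported, and nothing rules out $\tilde K_\infty$ being a pure jump, a cracktip, or a triple junction centred away from the origin. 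You correctly flag this as the main obstacle, but choosing $y_j \in K_j$ only pins $0 \in \tilde K_\infty$, not the junction point.

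The paper supplies the missing multi-scale control by first proving an iterative decay, Lemma~\ref{l:decay iteration}: if $\Omega^t$ is small at $N+1$ consecutive dyadic scales around drifting centres $x_0,\dots,x_N$, then it is small at the next scale around some $x_{N+1}$. When \emph{that} lemma is contradicted and one blows up at the smallest scale $2^{-N}r$ around $x_N$, the data from all $N$ larger scales persist in the limit $(J,v)$ as $\Omega^t(y_k, 2^k) \le \varepsilon$ for every $k \ge 0$. This controls $(J,v)$ at \emph{all large} radii: one applies $\varepsilon$-regularity at pure jumps on large annuli to see that $J\setminus B_{1/2}$ consists of three $C^1$ curves, then invokes Proposition~\ref{p:monotonia-1} on those annuli (where the arc condition is now automatic), and finally uses the blow-down together with Bonnet's monotonicity to force $(J,v)$ itself to be a triple junction. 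Lemma~\ref{l:C1smooth} then follows by iterating Lemma~\ref{l:decay iteration} to build a Cauchy sequence of centres whose limit is the desired $y$. The iterative structure with moving centres is not a technicality one can bypass; it is what transports information across scales into the second blow-up.
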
 

Lemma~\ref{l:C1smooth} follows in fact easily from the following more technical statement.

\begin{lemma}\label{l:decay iteration}
There exists $\gamma_0>0$ such that for every $\gamma\in(0,\gamma_0)$ there is $\varepsilon_0 (\gamma)>0$ with the following property.
For every $\varepsilon\in(0,\varepsilon_0)$ there is $N= N (\varepsilon) \in\mathbb{N}$ such that for all $r\in (0,1]$, for all $(u,K)$ minimizer of $E_\lambda$ in $B_{2r}(x)$, and for all $(N+1)$-ple of points 
$x_0 =x, x_1, \ldots, x_N$ in $B_{2r}(x)$ such that
\begin{align*}
&\Omega^t (x_k, 2^{-k} r) + \lambda \|g\|_\infty^2 (2^{-k} r)^{\frac{1}{2}} \leq \varepsilon 
\qquad \forall k\in \{0, \ldots, N\}\\
& |x_{k+1} - x_k| \leq \gamma 2^{-k} r \qquad \forall k\in \{0, \ldots , N-1\}\, ,
\end{align*}
then there is a point $x_{N+1}\in B_{2r}(x)$ such that 
\begin{align*}
&\Omega^t (x_{N+1}, 2^{-N-1} r) + \lambda \|g\|_\infty^2 (2^{-N-1} r)^{\frac{1}{2}} \leq \varepsilon\\
&|x_{N+1}-x_N|\leq \gamma 2^{-N} r\, .
\end{align*}
\end{lemma}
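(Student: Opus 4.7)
The proof proceeds by contradiction via a blow-up argument. Fix $\gamma\in(0,\gamma_0)$ and suppose the conclusion fails for some $\varepsilon\in(0,\varepsilon_0)$: there exist sequences $N_j\to\infty$, minimizers $(K^j,u^j)$ of $E_{\lambda_j}$ on $B_{2r_j}(x^j)$, and chains $x_0^j,\ldots,x_{N_j}^j$ satisfying the hypotheses but admitting no valid extension point. Rescaling about $x_{N_j}^j$ at scale $s_j:=2^{-N_j}r_j$ produces $(\tilde K^j,\tilde u^j)$ with $\tilde\lambda_j\|\tilde g_j\|_\infty^2\to 0$; setting $y_m^j:=(x^j_{N_j-m}-x^j_{N_j})/s_j$ we have $|y_m^j|\leq 2\gamma\cdot 2^m$ and $\Omega^t(y_m^j,2^m)\leq\varepsilon-o(1)$ for $0\leq m\leq N_j$. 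A diagonal extraction combined with Theorem~\ref{t:compactness-2} produces a subsequential limit $(\hat K,\hat u)$, a global generalized minimizer of $E_0$ on $\R^2$, with $y_m^j\to y_m\in\bar B_{2\gamma\cdot 2^m}(0)$ and $\Omega^t(y_m,2^m)\leq\varepsilon$ for every $m\geq 0$; the negated conclusion descends as $\Omega^t(\hat z,1/2)\geq\varepsilon$ for every $\hat z\in\bar B_\gamma(0)$.

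The key reduction is to prove that $\hat K$ is a global triple junction: granted this, choosing $\hat z=0$ and using $\hat K$ itself as the comparison triple junction in the definition of $\Omega^t$ yields $\Omega^t(0,1/2)=0$ on $\hat K$, and the Hausdorff-plus-Dirichlet convergence of Theorem~\ref{t:minimizers compactness}(i) forces $\Omega^t(0,1/2)<\varepsilon$ on $\tilde K^j$ for $j$ large, contradicting the negation.

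To identify $\hat K$ we combine the two monotonicity formulae of Chapter~1 with the classification Theorem~\ref{t:class-global}. The density lower bound (Theorem~\ref{t:dlb}) and Hausdorff closeness to a connected triple junction on each ball $B_{2\cdot 2^m}(y_m)$ make $\hat K$ connected, so Bonnet's formula (Proposition~\ref{p:monotonia-0}) applies: $r\mapsto d(0,r)$ is non-decreasing, and the inclusion $B_r(0)\subset B_{2\cdot 2^m}(y_m)$ valid for $r\leq(1-2\gamma)2^m$ gives the uniform bound $d(0,r)\leq C\varepsilon$. A subsequential blow-down $\hat K/R\to\hat K_\infty$ has $d_{\hat K_\infty}(0,\cdot)$ independent of $r$, so Proposition~\ref{p:monotonia-0} forces either $\nabla\hat u_\infty\equiv 0$ or $\hat K_\infty$ to be a cracktip; the cracktip is excluded by the scale-by-scale triple-junction closeness inherited by $\hat K_\infty$, and Theorem~\ref{t:class-global} identifies $\hat K_\infty$ as a global triple junction with vertex at the origin. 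Consequently $F(0,r):=2d(0,r)+r^{-1}\HH^1(\hat K\cap B_r(0))\to 3$ as $r\uparrow\infty$, and David-L\'eger's monotonicity (Proposition~\ref{p:monotonia-1}) gives $F(0,\cdot)$ non-decreasing with $F(0,\cdot)\leq 3$.

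The main obstacle is the upgrade from blow-down to $\hat K$ itself, since the natural tool would be Corollary~\ref{c:unicita-blow-down} applied to triple junctions, but invoking it would close a logical loop with Theorem~\ref{t:eps_tripunto}. My plan is to distinguish cases according to the value $F(0,0^+)\in\{0,2,3\}$. If $F(0,0^+)=3$, monotonicity forces $F(0,\cdot)\equiv 3$ on $(0,\infty)$ and Proposition~\ref{p:monotonia-1} directly makes $\hat K$ a global triple junction with vertex at $0$. If $F(0,0^+)\in\{0,2\}$, the blow-up of $\hat K$ at $0$ is either trivial or a pure jump, so the already proved Theorem~\ref{t:eps_salto_puro} makes $\hat K$ a $C^{1,\alpha}$ graph (or empty) on some disk $B_{r_0}(0)$. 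An elementary geometric estimate shows that a $C^{1,\alpha}$ arc cannot stay within Hausdorff distance $\varepsilon$ of three half-lines meeting at $120^\circ$ over a disk of diameter substantially larger than $\varepsilon$, forcing $r_0=O(\varepsilon)$, so that the vertex $q_0\in\bar B_\varepsilon(0)$ of the scale-$1$ comparison triple junction sits outside the smooth disk. Recentering the analysis at $q_0$, computing $F(q_0,1)\geq 3-O(\varepsilon)$ via the $\HH^1$-convergence of Theorem~\ref{t:minimizers compactness}(i), and applying Proposition~\ref{p:monotonia-1} a second time force $F(q_0,\cdot)\equiv 3$ on $(0,\infty)$, identifying $\hat K$ as a triple junction with vertex $q_0\in B_\gamma(0)$ and closing the contradiction.
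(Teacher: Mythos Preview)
Your contradiction setup, rescaling, and reduction to showing the limit $(\hat K,\hat u)$ is a global triple junction are correct and match the paper, and you correctly flag that Corollary~\ref{c:unicita-blow-down} cannot be invoked without circularity. However, your workaround via a case split on $F(0,0^+)$ has real gaps. First, the connectedness of $\hat K$ is asserted but not proved: Hausdorff closeness to a triple junction together with the density lower bound does not exclude that $\hat K$ consists of three disjoint arcs, and without connectedness condition~(i) of Proposition~\ref{p:monotonia-0} is unavailable, so $d(0,\cdot)$ need not be monotone. Second, even granting connectedness, the arc-length hypothesis of Proposition~\ref{p:monotonia-1} is verified only for radii bounded away from $0$ (triple-junction closeness is inherited at scales $2^m$ with $m\geq 0$), so $F(0,\cdot)$ is monotone only on some $[c,\infty)$; neither $F(0,0^+)$ nor the trichotomy $\{0,2,3\}$ is under control. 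Third, the recentering step is internally inconsistent: if $\hat K$ is a single $C^{1,\alpha}$ graph near $0$ then any nearby $q_0$ has $F(q_0,0^+)\leq 2$, so $F(q_0,\cdot)\equiv 3$ is impossible; knowing $F(q_0,1)\geq 3-O(\varepsilon)$ and $F(q_0,r)\to 3$ only traps $F$ near $3$ on $[1,\infty)$ and cannot trigger the rigidity clause of Proposition~\ref{p:monotonia-1}, which requires constancy on an interval starting at $0$.

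The paper's route past this obstacle is different and is the ingredient you are missing. After establishing via pure-jump regularity on annuli that $J\setminus B_{1/2}$ consists of three $C^1$ curves $\Gamma_i$ with integrable curvature, and that every blow-down is a triple junction, it exploits the conclusion $|p_{kl}|=\infty$ of Theorem~\ref{t:class-global}(c3): if two of the three sectors of $\R^2\setminus(B_{1/2}\cup J)$ lay in the same connected component of $\R^2\setminus J$, the gradient decay along radial rays (coming from the curvature bound) would give $|v(2^k\hat q_1)-v(2^k\hat q_2)|\lesssim 2^{k/2}$, contradicting the divergence forced by $|p_{kl}|=\infty$ in the blow-down. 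This \emph{separation of the complement} is exactly what allows one to apply (the proof of) Corollary~\ref{c:Bonnet} on each arc of $\partial B_R\setminus J$ for $R\geq 1$, hence Bonnet's monotonicity for $d(0,\cdot)$ on $[1,\infty)$; since $d(0,R)\to 0$ along the blow-down, monotonicity gives $\nabla v\equiv 0$, and Theorem~\ref{t:class-global} then identifies $(J,v)$ as a triple junction.
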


\subsection{Proof of Lemma~\ref{l:C1smooth}} Without loss of generality $x=0$. Fix $\delta>0$, and additionally choose $\varepsilon$ and $\gamma$ sufficiently small, whose choice will be specified later, so that Lemma~\ref{l:decay iteration} is applicable. Let $N$ be given by Lemma~\ref{l:decay iteration} and notice that, if $\eta$ is chosen sufficiently small, the assumption of that Lemma holds with $x=x_0=x_1= \ldots = x_N =0$. We thus find $x_{N+1}$ as in the conclusion there. Observe therefore that we can apply the lemma again in $B_{\frac r2} (x_1)$, but this time the points $x_0, \ldots, x_N$ substituted by $x_1, \ldots, x_{N+1}$. Proceeding inductively we find a sequence of points $\{x_k\}$ with:
\begin{itemize}
    \item $x_0=0$;
    \item $|x_{k+1} - x_k|\leq \gamma 2^{-k} r$;
    \item $\Omega^t (x_k, 2^{-k} r) + \lambda\|g\|^2_{\infty}(2^{-k}r)^{\frac12}\leq \varepsilon$.
\end{itemize}
Since $\{x_k\}$ is a Cauchy sequence, it has a limit $y$. Observe that $|y- x_k|\leq \gamma 2^{-k+1} r$. Fix $\rho\leq r$ and choose $k$ such that $2^{-k-1} r < \rho \leq 2^{-k} r$. In particular $B_\rho (y) \subset B_{2^{-k+1} r} (x_k)$. Let $\theta_k$ be such that
\[
\Omega^t (\theta_k, x_k, 2^{-k} r) + \lambda \|g\|_\infty^2 (2^{-k}r)^{\frac12} \leq \varepsilon\, .
\]
Observe that
\begin{align*}
\rho^{-1}&\dist_H \big(K \cap \overline{B}_{2\rho} (y), (y+  \mathcal{R}_{\theta_k} (\mathscr{T}_0))\cap \overline{B}_{2\rho} (y)\big)
+ \rho^{-1}\int_{B_{2\rho} (y)} |\nabla u|^2 + \lambda \|g\|_\infty^2 \rho^{\frac12}\\
\leq & C \gamma+ C(2^{-k} r)^{-1}\dist_H \big(K\cap \overline{B}_{2^{-k+1} r}, (x_k + \mathcal{R}_{\theta_k} (\mathscr{T}_0))\cap 
\overline{B}_{2^{-k+1} r} (x_k)\big)\\
& + C(2^{-k} r)^{-1}
\int_{B_{2^{-k+1} r} (x_k)} |\nabla u|^2 +C \lambda \|g\|_\infty^2 (2^{-k}r)^{\frac12}
\leq C( \gamma + \varepsilon)\, ,
\end{align*}
where $C\geq 1$ is a geometric constant.
We choose first $C\gamma \leq \frac{\delta}{4}$. Having fixed $\gamma$, we can take $\varepsilon_0 (\gamma)$ as in Lemma~\ref{l:decay iteration} and hence impose that $C\varepsilon < \min \{\varepsilon_0 (\gamma), \frac{\delta}{4}\}$. This ensures the applicability of Lemma~\ref{l:decay iteration} in the argument above, and the inequality $C (\gamma + \varepsilon) < \delta$. 
We thus conclude
\[
\Omega^t (y, \rho) + \lambda \|g\|_\infty^2 \rho^{\frac12} \leq \Omega^t (\theta, y, \rho) +  \lambda \|g\|_\infty^2 \rho^{\frac12} < \delta \, .
\]

\subsection{Proof of Lemma~\ref{l:decay iteration} for absolute and generalized minimizers} The argument for the two cases is entirely analogous and for simplicity we focus on absolute minimizers.

Without loss of generality $x=0$.
We argue by contradiction. We assume that, for some $\gamma >0$ and $\varepsilon >0$ sufficiently small (the smallness will be specified later) and for every 
$N\in\N$ there are 
\begin{itemize}
\item[(a)] A family of numbers $\lambda_{N}\in [0,1]$;
\item[(b)] A family of fidelity functions $g_{N}$ with $\|g_{N}\|_{\infty}\leq M_0$;
\item[(c)] A family of radii $r_{N}\in (0,1]$;
\item[(d)] A family of points $x_{k,N}$, with 
$k\in\{0,\ldots,N\}$, and
\begin{align}
&x_{0,N}=0\\    
&|x_{k+1,N}-x_{k,N}|\leq \gamma 2^{-k} r_{N}\qquad \forall k\in\{0,\ldots,N-1\}\, ;
\label{e:contra2}
\end{align}
\item[(e)] An absolute minimizing pair $(u_{N},K_{N})$ of $E_{\lambda_{N}} (\cdot, \cdot, B_{2r_{N}}, g_{N})$ for which
\begin{align}\label{e:contra1}
\Omega^t (x_{k,N}, 2^{-k} r_{N}) + \lambda_{N} \|g_{N}\|_\infty^2 (2^{-k} r_{N})^{\frac12}
\leq\varepsilon\,,
\end{align}
for all $k\in\{0,\ldots,N\}$;
\item[(f)] For every $y\in B_{\gamma 2^{-N} r_{N}} (x_{N,N})$
\begin{align}\label{e:contra3}
\Omega^t (y, 2^{-N-1} r_{N}) + \lambda_{N} \|g_{N}\|_\infty^2 (2^{-N-1} r_{N})^{\frac12} 
>\varepsilon\,.
\end{align}
\end{itemize}
For each $N$ we consider the rescaled pairs
\begin{align}
v_{N} (x) &:= (2^{-N} r_{N})^{-\frac{1}{2}} u_{N} \big(x_{N,N} + 2^{-N} r_{N} x\big)\, ,\\
J_{N} &:= (2^{-N}r_{N})^{-1}(K_{N}-x_{N,N})\, .
\end{align}
Next, observe that from items (a)-(c) above
we achieve
\begin{equation}\label{e:gN infinitesimal}
\lim_{N\to\infty} \lambda_N\|g_{N}\|_\infty^2
(2^{-N} r_{N})^{\frac12}= 0\, .
\end{equation}
We can therefore apply Theorem~\ref{t:minimizers compactness} to conclude the convergence, up to subsequences, of $(v_{N}, J_{N})$ to a generalized minimizer $(v, J, \{p_{kl})\})$ of $E_0$. 

Note that, the points $x_{k,N}$ are mapped to points
$y_{k,N} := (2^{-N} r_{N})^{-1}(x_{k,N} - x_{N,N})$.
From \eqref{e:contra2} we immediately get
\[
|y_{N-k,N}| \leq \gamma \sum_{j=1}^{k} 2^{j} \leq \gamma 2^{k+1}\, . 
\]
In particular, for every fixed $k\geq 1$, up to extraction of an appropriate subsequence, we can assume that $y_{N-k,N}$ converges to a point $y_{k}$ with $|y_{k}|\leq \gamma 2^{k+1}$. 
In particular for $(v, J)$, from \eqref{e:contra1} we immediately conclude
\begin{equation}\label{e:contra10}
\Omega^t (y_{k}, 2^k) \leq \varepsilon \qquad \forall k\in\N\, . 
\end{equation}
But from the convergence in Theorem~\ref{t:minimizers compactness},
from \eqref{e:contra3} and \eqref{e:gN infinitesimal}, 
we also conclude that 
\begin{equation}\label{e:contra11}
\Omega^t (z, \sfrac 12) \geq \varepsilon\, \qquad \forall z \in B_\gamma\, .
\end{equation}

For each $k$ choose $\theta_k$ such that
\[
\dist_H (J\cap B_{2^{k+1}} (y_k), y_k+ \mathcal{R}_{\theta_k} (\mathscr{T}_0))
= \min_\theta \dist_H (J\cap B_{{k+1}} (y_k), 
y_k+ \mathcal{R}_{\theta} (\mathscr{T}_0))\,.
\]
We fix $\delta>0$ and we wish now apply Corollary~\ref{c:salto-puro-curvo} to the pair
$(2^{-k}(J-y_k), 2^{-\frac k2} v (y_k + 2^k \cdot))$ choosing $(u,K)$ equal to $(\mathcal{R}_{\theta_k} (\mathscr{T}_0)), 0)$, while $U= B_1\setminus \bar B_\gamma$ and $V= B_{1-\gamma}\setminus B_{2\gamma}$. This just requires $\varepsilon$ to be chosen sufficiently small depending on $\gamma$ and $\delta$. In particular we conclude that $2^{-k}(J-y_k)\cap (B_{1-\gamma}\setminus B_{2\gamma})$ is the union of three $C^1$ arcs, $\delta$-close to the three segments $\mathcal{R}_{\theta_k} (\mathscr{T}_0))\cap (B_{1-\gamma}\setminus B_{2\gamma})$. Scaling back, we conclude that in $A_k := J\cap (B_{2^k (1-\gamma)}\setminus B_{2^{k+1} \gamma})$ the set $J$ consists of three $C^1$ arcs with Hausdorff distance less than $\delta 2^k$ from the three segments $y_k+\mathcal{R}_{\theta_k} (\mathscr{T}_0)$. Choosing $\gamma$ smaller than a geometric constant, it is easy to see that each of the three $C^1$ curves $A_k \cap J$ coincide with one of the three $C^1$ curves $A_{k+1}\cap J$ in $A_k\cap A_{k+1}$, and viceversa, see 
Figure~\ref{f:figura-tripunto}.

\begin{figure}
\input{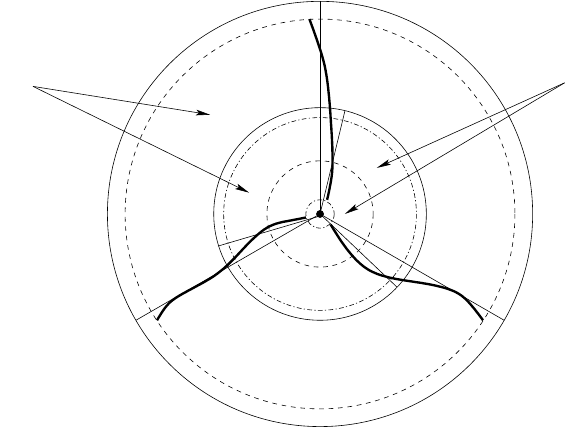_t}
\caption{ The region $A_1$ is delimited by the two dashed circles, while the region $A_0$ is delimited by the two dashed-and-dotted circles. In each of this regions the set $J$ consists of three $C^1$ arcs which are close to three straight lines meeting at $120$ degrees. The arcs must coincide where the regions $A_0$ and $A_1$ overlap, hence $J\cap (A_0 \cup A_1)$ consists of three $C^1$ arcs.}\label{f:figura-tripunto}
\end{figure}

Thus $J \cap (\mathbb R^2 \setminus B_{\frac12})$ consists of three distinct nonselfintersecting $C^1$ infinite curves $\Gamma_i$, which subdivide $\mathbb R^2\setminus B_{\frac12}$ in three infinite connected components. Note that, moreover, for each $r\geq \frac{1}{2}$, each curve $\Gamma_i$ intersects each $\partial B_r$ in exactly one point $p_i (r)$ and that, the three points $p_1 (r), p_2 (r), p_3 (r)$ subdivide $\partial B_r$ in three arcs of length $\frac{2\pi}{3} r (1-\delta) \leq l_i (r) \leq \frac{2\pi}{3} r (1+\delta)$. 

Therefore, we are in a position to apply Proposition~\ref{p:monotonia-1} and deduce from \eqref{e:DLMS monotonia} that
\begin{align*}
\int_{2^{k}}^{2^{k+1}}&
\frac1\rho\int_{\partial B_\rho}|\nabla v|^2d\rho\leq
C \left(F (2^{k+1}) - F (2^{k})\right) \qquad \forall k \geq -1\,,
\end{align*}
where we recall that $F(r):=2d(r)+ \frac{\ell (r)}{r}$.
Summing for $k=-1$ to $k_0-1$, using that $F(r)$ is positive and thanks to the density upper bound \eqref{e:upper bound}, we conclude
\begin{equation}
\int_{\frac12}^{2^{k_0}} \frac{1}{\rho} \int_{\partial B_\rho}|\nabla v|^2d\rho \leq C F (2^{k_0}) \leq  \frac{C}{2^{k_0}} E_0 (v, J, B_{2^{k_0}}) \leq C\,.
\end{equation}
Letting $k_0$ to infinity we achieve
\[
\int_{\frac12}^\infty\frac1\rho\int_{\partial B_r}|\nabla v|^2d\rho\leq C\,,
\]
or equivalently by integration on dyadic intervals, 
\begin{equation}\label{e:dyadic}
\sum_{k\geq0}2^{-k}\int_{B_{2^k}\setminus B_{2^{k-1}}}|\nabla v|^2\leq C\,.
\end{equation}
We claim that $(v, J,\{p_{ij}\})$ is indeed a triple junction, with $z_0$ being the point where the three half-lines meet together. This obviously would give a contradiction to \eqref{e:contra11}, because $\Omega^t (0, 2) \leq \varepsilon$, with $\varepsilon \leq \varepsilon_0 (\gamma)$, would imply, for an appropriately chosen $\varepsilon_0 (\gamma)$, that $z_0$ belongs to $B_\gamma$.

To this aim we start off showing that for all $R\geq 1$ the connected components of $\partial B_R \setminus J$ belong to three distinct connected components of $\mathbb R^2 \setminus J$. As a first step we show that each ``blow-down'' of $(v,J,\{p_{ij}\})$, i.e. any limit as $R\to \infty$ of a subsequence of the rescalings $(v_{0,R}, J_{0,R}, \{p_{ij}\})$, is a triple junction. For the sake of notational simplicity we drop $0$ in the previous subscripts.

First of all, since $v$ is harmonic in 
$\mathbb{R}^2\setminus J$ and being $J\setminus B_{\frac12}$ smooth, standard estimates for harmonic functions give for every $k\geq-1$
\begin{equation}\label{e:LinftyL2}
\|\nabla v\|^2_{L^\infty(B_{2^{k+1}}\setminus (B_{2^k}\cup J))}\leq
C\Big(2^{-2k}\int_{B_{2^{k+2}}\setminus (B_{2^{k-1}}\cup J)}|\nabla v|\Big)^2
\leq C2^{-2k}\int_{B_{2^{k+2}}\setminus (B_{2^{k-1}}\cup J)}|\nabla v|^2
\end{equation}
$C>0$ independent of $k$.
Recall that $\Gamma_i$ is locally $C^{1,1}$ by item (b) in Proposition~\ref{p:variational identities 2}, and that by \eqref{e:Euler curvature g}
\[
\kappa_i=-(|\nabla v^+|^2-|\nabla v^-|^2)\qquad\mathcal{H}^1\textrm{-a.e. on }\Gamma_i
\]
where $\kappa_i$ denotes the curvature of $\Gamma_i$. Thus
we conclude from the energy upper bound \eqref{e:upper bound}, and estimates \eqref{e:dyadic} and \eqref{e:LinftyL2}
\begin{align*}
\int_{J\setminus  B_{\frac12}}&|\kappa_i|d\mathcal{H}^1\leq 2
\sum_{k\geq 0}\|\nabla v\|^2_{L^\infty(B_{2^{k}}\setminus (B_{2^{k-1}}\cup J))} \mathcal{H}^1\big(\Gamma_i\cap(B_{2^{k}}\setminus (B_{2^{k-1}}\cup J))\big)\\
&\leq C \sum_{k\geq 0}2^{-k}\int_{B_{2^{k+2}}\setminus (B_{2^{k-1}}\cup J)}|\nabla v|^2\leq C\,.
\end{align*}
In particular, each $\Gamma_i$ is asymptotic at $\infty$ to some straight line $\ell_i$, and in particular as $R\uparrow \infty$, $J_R$ converges, locally in the sense of Hausdorff, to a set $J_\infty$ which is the union of three half-lines meeting at the origin. Apply Theorem~\ref{t:compactness-2} and let $(v_\infty, J_\infty, \{q_{kl}\})$ be any limit of a subsequence $R_j\uparrow \infty$ of $(v_{R_j}, J_{R_j}, \{p_{kl}\})$. It follows that $v_\infty$ is harmonic in each of the three sectors which form the connected components of $\mathbb R\setminus J_\infty$ and that $\frac{\partial v_\infty}{\partial \nu} =0$ on $J_\infty$. We also know that the angles formed by the three half-lines $\ell_i$ are all close to $\frac{2\pi}{3}$. Using Theorem~\ref{t:class-global}(iii) we conclude that $(v_\infty, J_\infty, \{q_{kl}\})$ is a triple junction.

We next argue that the three connected components of $\mathbb R^2 \setminus (B_{\frac12}\cup J)$ belong to different connected components of $\mathbb R^2\setminus J$. In turn, this implies the claim that $\partial B_R\setminus J$ belongs to three different connected components of $\mathbb R^2 \setminus J$ for all $R\geq1$. Indeed, by contradiction fix two connected components $\Omega_1$ and $\Omega_2$ of $\mathbb R^2 \setminus (B_{\frac12}\cup J)$ and two arbitrary points $q_1\in\Omega_1$ and $q_2\in\Omega_2$ that are connected by a $C^1$ arc in $\mathbb R^2 \setminus J$. 
The boundaries of $\Omega_1$ and $\Omega_2$ have one of the $\Gamma_i$ in common and to fix ideas let us assume it is $\Gamma_1$. The latter is asymptotic to the half $\ell_1$, which is common to the boundaries of two of the sectors of $\mathbb R^2\setminus J_\infty$. We denote $\Lambda_1$ and $\Lambda_2$ these two sectors and we fix two points $\hat{q}_1\in\Lambda_1$ and $\hat{q}_2\in\Lambda_2$ with $|\hat{q}_1|=|\hat{q}_2|=1$. It is then easy to see that there are two $C^1$ curves $\sigma_1, \sigma_2$
contained in $\mathbb R^2\setminus (B_{\frac12}\cup J)$ and containing, respectively, $q_1$ and $q_2$, and
such that for $R$ sufficiently large, $\sigma_i\setminus B_R = 
\{\rho \hat{q}_i : \rho\geq R\}$. 
Thus by \eqref{e:dyadic} and \eqref{e:LinftyL2} we can write, for $2^{k-1}\geq R$,
\begin{align*}
|v (2^k \hat{q}_i)-v (2^{k-1} \hat{q}_i)|&\leq
\int_{\sigma_i\cap(B_{2^k}\setminus B_{2^{k-1}})}|\nabla v|\,d\mathcal{H}^1\\
&\leq \mathcal{H}^1\big(\sigma_i\cap(B_{2^{k}}\setminus B_{2^{k-1}})\big)\|\nabla v\|_{L^\infty(B_{2^{k}}\setminus B_{2^{k-1}})}\leq C\,2^{\frac k2}
\end{align*}
Hence we can estimate 
\[
|v (2^k \hat{q}_1) - v (2^k \hat{q}_2)| \leq C 2^{\frac k2}
\, ,
\]
given that there is a curve connecting $R\hat{q}_1$ and $R\hat{q}_2$ which does not intersect $J$.

On the other hand, since all blow-downs of $(v, J, \{p_{kl}\})$ are triple junctions, from Theorem~\ref{t:compactness-2} we necessarily conclude that 
\[
\lim_{k\to\infty} \frac{|v (2^k \hat{q}_1) - v (2^k \hat{q}_2)|}{2^{\frac k2}} =\infty\, ,
\]
which would be a contradiction.

In particular, having shown that each connected component of $\partial B_R\setminus J$ belongs to distinct connected components of $\mathbb R^2\setminus J$, we can apply Proposition~\ref{p:monotonia-0}, and thus we conclude that \[
[1,\infty)\ni R\mapsto
\frac{1}{R} \int_{B_R\setminus J} |\nabla v|^2 
\] 
is nondecreasing in $R$. Since however we know that the blow-downs of $(v,J,\{p_{kl}\})$ are triple junctions, 
we have
\[
\lim_{R\uparrow \infty} \frac{1}{R} \int_{B_R\setminus J} |\nabla v|^2 = 0\, .
\]
We thus conclude that $\int_{B_R\setminus J} |\nabla v|^2 =0$ for every $R\geq 1$. In particular by Theorem~\ref{t:class-global} 
$(v,J,\{p_{kl}\})$ is itself a triple junction. 

\subsection{Proof of Lemma~\ref{l:decay iteration} for restricted minimizers} In the case of restricted (and generalized restricted) minimizers, we apply the same procedure above. Note, however, that we cannot immediately conclude that the blow-downs are triple junctions, because the proof that $(v_\infty, J_\infty, \{q_{kl}\})$ are triple junctions, i.e. $|q_{kl}| = \infty$, relies on a comparison with a competitor which increases the number of connected components. Now, assume that for every fixed $R>1$ the number of connected components of the sets $J_{R_j}$ in the blow-down sequence is larger than $1$, namely at least two, for an infinite number of $j$. Then, for the limiting blow-down we can use a competitor which has two connected components, because the cut-and-paste argument which from this competitor yields the competitors for the approximating sequence keeps the same number of connected components. On the other hand the argument $|q_{kl}|=\infty$ relies indeed in using a better competitor which has two connected components. Hence in this case the conclusion is valid and in particular the set $J$ is connected. If instead there is an $R$ for which the number of connected components of the $J_{R_j}$ is $1$ for an infinite set of $j$'s, then we conclude directly that $J$ is connected. At any rate, in both cases $J$ must be connected. Hence we can apply the monotonicity formula in Proposition~\ref{p:monotonia-0} and conclude that $J$ is the union of three half lines, while $v$ is locally constant. Note that the proof that the three half lines must meet at 120 degrees holds for restricted minimizers as well, since it is based on exhibiting competitors that consist of a single connected component.

\section{Proof of Theorem~\ref{t:eps_tripunto}}\label{s:triple-junction-final}

Denote by $\varepsilon_0$ the constant in assumption (ii) of Theorem~\ref{t:eps_salto_puro} and fix a $\delta>0$, whose choice will be specified later. Let then the constant $\varepsilon$ in the statement of Theorem~\ref{t:eps_tripunto} be smaller than the constant $\eta$ provided by the conclusion of Lemma~\ref{l:C1smooth}. Let $y$ be the point in $B_{\delta r} (x)$ provided by Lemma~\ref{l:C1smooth} itself. Arguing as in Lemma~\ref{l:decay iteration}, if $\delta$ is chosen sufficiently small with respect to $\varepsilon_0$, then $K \cap B_r (y)\setminus \{y\}$ consists of three arcs $\Gamma_1, \Gamma_2, \Gamma_3$ with the following properties:
\begin{itemize}
    \item[(i)] Each $\Gamma_i$ is a $C^{1,\frac12}$ arc in $B_{r(1-s)} (y) \setminus B_{rs} (y)$ for every $s\in(0,\frac12)$.
    \item[(ii)] For every $k\in\N$, $k\geq 1$, there is an angle $\theta_k$ such that 
    $K\cap (B_{2^{-k} r} (y)\setminus B_{2^{-k-1} r} (y))$ is $\delta 2^{-k} r$ close in the Hausdorff distance to $(y + \mathcal{R}_{\theta_k} (\mathscr{T}_0)))\cap (B_{2^{-k} r}(y)\setminus B_{2^{-k-1} r} (y))$.
    \item[(iii)] Each $\Gamma_i$ is, in $B_{2^{-k} r} (y)\setminus B_{2^{-k-1} r} (y)$, a $C^{1,\frac12}$ graph over the corresponding straight line $\ell_{k,i}$ of $y + \mathcal{R}_{\theta_k} (\mathscr{T}_0)$ of a function $\psi_{k,i}$ with $\|\psi_{k,i}'\|_\infty \leq C \delta$.
\end{itemize}
From now on, in order to simplify our notation, we assume without loss of generality that $y=0$.

We are now in the position to apply Proposition~\ref{p:monotonia-1} for all radii $\rho\in(0,r)$. Thus, from \eqref{e:DLMS monotonia} we infer that 
\[
\frac{D'(\rho)}{\rho}\leq C\,F'(\rho)+C
\]
for $\mathcal{L}^1$ a.e. $\rho\in(0,r)$. 
Therefore, by direct integration and the density upper bound 
\eqref{e:upper bound} we deduce that 
\begin{equation*}
\int_0^{r}\frac{D'(\rho)}{\rho}d\rho\leq 
CF(r)+Cr\leq \frac{C}{r}E_\lambda(u,K,B_r,g)+Cr\leq C\,,
\end{equation*}
for a dimensional constant $C>0$, or equivalently by passing to dyadic intervals, 
\begin{equation}\label{e:dyadic bis}
\sum_{k\geq 2}\frac1{2^{-k} r}\int_{B_{2^{-k+2} r}\setminus 
B_{2^{-k+1}r}}|\nabla u|^2\leq C\,.
\end{equation}
Since $\Delta u = \lambda(u- g)$ on $B_{2r}\setminus K$, $K\cap (B_{2^{-k+2} r} \setminus B_{2^{-k+1} r})$ is $C^{1,\frac12}$ and $u$ satisfies the Neumann boundary conditions on $K$, by elliptic regularity we have
\begin{align}
\|\nabla u\|^2_{L^\infty(B_{2^{-k+1} r} \setminus (B_{2^{-k} r}\cup K))}& \leq \frac{C}{2^{-2k} r^2}  
\int_{B_{2^{-k+2}}\setminus (B_{2^{-k-1}r}\cup K)}|\nabla u|^2
 + C  \lambda 2^{-2k}r^2(\|u\|_\infty^2 + \|g\|_\infty^2)
\label{e:LinftyL2 bis}
\end{align}
for some constant $C>0$. 

Denote by $\kappa_i$ the curvature of $\Gamma_i$ and recall that
\[
\kappa_i=-(|\nabla u^+|^2-|\nabla u^-|^2)
-\lambda(|u^+-g_K|^2-|u^--g_K|^2)
\] 
$\mathcal{H}^1$-a.e. on 
$\Gamma_i$ by \eqref{e:Euler curvature g} for an appropriate trace $g_K$ of $g$ (which enjoys the same upper bound on the $L^\infty$ norm). From \eqref{e:LinftyL2 bis} we thus conclude
\begin{align}\label{e:stima kik}
&\int_{\Gamma_i\cap(B_{2^{-k+1} r} \setminus B_{2^{-k-1} r})}|\kappa_i|d\mathcal{H}^1
\leq \frac{C}{2^{-k} r} \int_{B_{2^{-k+2} r}\setminus B_{2^{-k-1} r}} |\nabla u|^2 + C 2^{-k} r 
\,
\end{align}
for some constant $C>0$.
We therefore conclude from \eqref{e:dyadic bis}
\begin{equation*}
\int_{\Gamma_i \cap B_r \setminus \{0\}}|\kappa_i|d\mathcal{H}^1\leq C \,.
\end{equation*}
In turn, from this we deduce that $\overline{\Gamma_i}$ is a $C^1$ graph up to the origin. Moreover, a blow-up argument shows that the tangents to $\Gamma_i$ in the origin form equal angles and thus, up to rotations, we can assume that they are given by $\{\theta=0\}$, $\{\theta=\frac{2\pi}{3}\}$
and $\{\theta=\frac{4\pi}{3}\}$.

We shall prove next that $\overline{\Gamma_i}$ is a $C^{1,\gamma}$ graph up to the origin, $\gamma\in(0,1)$, by means of a suitable monotonicity formula. First notice that for $\rho\in(0,r)$ we have $\mathcal{H}^0(\partial B_\rho\cap K)=3$. Then, if we consider the set $K'$ which is the union of the three segments obtained by joining each point in $\partial B_\rho\cap K$ with the origin we have
\[
\mathcal{H}^1(B_\rho\cap K)\leq 3\rho=\mathcal{H}^1(B_\rho\cap K')\,.
\]
In addition, $B_\rho\setminus K' = \Omega_1\cup\Omega_2\cup\Omega_3$, where each $\Omega_i$ is a convex cone with vertex the origin and opening $\alpha_i$, with $|\alpha_i-\frac23\pi|\leq C\delta(\rho)$, with 
$\delta(\rho)\to 0$ if $\rho\to0$. 
Therefore if $C\delta(\rho)<\delta_0$ for $\rho\leq\rho_0$ ($\delta_0$ can be chosen as small as we want up to reducing $\rho_0$) and if 
$w_i$ is the (harmonic) function provided by Lemma~\ref{l:extension}, 
we have
\begin{equation}\label{e:enrg harmonic 2}
\int_{\Omega_i}|\nabla w_i|^2
\leq\frac{\alpha_i}\pi\rho\int_{\partial\Omega_i\cap \partial B_\rho}
\left(\frac{\partial u}{\partial\tau}\right)^2.
\end{equation}
If $w$ is defined as $w|_{\Omega_i}=w_i$, $w=u$ on $B_{2r}\setminus B_\rho$, then testing the minimality of $(u,K)$ with the competitor $(w,K')$ we get for $\alpha_0:=\frac{2}{3}\pi+\delta_0$ (note that 
$\max\{\alpha_1,\alpha_2,\alpha_3\}\leq\alpha_0$)
\[
\int_{B_\rho}|\nabla u|^2+\mathcal{H}^1(B_\rho\cap K)+\lambda\int_{B_\rho}|u-g|^2\leq
\frac{\alpha_0}\pi\rho\int_{\partial B_\rho}
\left(\frac{\partial u}{\partial\tau}\right)^2+3\rho
+\lambda\int_{B_\rho}|w-g|^2\,,
\]
from which we deduce straightforwardly that for $\mathcal{L}^1$-a.e. $\rho\in(0,\rho_0)$
\[
D(\rho)\leq \frac{\alpha_0}\pi\rho D'(\rho)
+ 4\pi\lambda\|g\|^2_\infty\rho^2.
\]
In turn, the latter inequality and the energy upper bound in 
\eqref{e:upper bound} imply that for all $\rho\in(0,\rho_0)$
\begin{equation}\label{e:decay ki}
D(\rho)\leq C
\rho_0^{-\frac\pi{\alpha_0}}(D(\rho_0)+\rho_0^2)\rho^{\frac\pi{\alpha_0}}
\leq C \rho_0^{1-\frac\pi{\alpha_0}}\rho^{\frac\pi{\alpha_0}}\,,
\end{equation} 
for some constant $C>0$ depending on $\lambda$, $\|g\|_\infty$
and $\alpha_0$.
Finally, estimates \eqref{e:stima kik} and \eqref{e:decay ki} yield
\[
\int_{\Gamma_i\cap B_\rho}|\kappa_i|d\mathcal{H}^1\leq C\rho^{\frac\pi{\alpha_0}-1}\,,
\]
and the claimed $C^{1,\gamma}$ regularity of $\overline{\Gamma_i}$ up to the origin then follows at once
choosing $\delta_0$ sufficiently sufficiently small so that $\alpha_0<\pi$.

We now come to the construction of the diffeomorphism $\Phi$ which is given as the composition $\Phi_0 \circ \Phi_1$ of two other diffeomorphisms. We note that due to our assumption
we have actually set $\theta=0$. Without loss of generality we assume $r=1$. $\Phi_0$ maps $y$ (which without loss of generality has been assumed to be $0$) into the origin. If we let $\varphi\in C^\infty_c (B_1)$ be a function which is identically equal to $1$ on $B_{\frac12}$, $\Phi_0$ is then given by the formula
\[
\Phi_0 (z) := z + (1- \varphi (z)) x \, .  
\]
(observe that $B_{\delta}(x)$ contains the origin, in particular $|x|\leq \delta$: thus if $\delta$ is sufficiently small, the latter map can be seen to be $C^1$ close to the identity, and hence a diffeomorphism).
$\Phi_1$ is then the inverse of a map
$\Psi$ which maps $K' :=\Phi_0^{-1} (K)$ onto three straight half-lines emanating from $0$ and forming equal angles. We know indeed that $K'$ consists of three $C^{1,\alpha}$ arcs $\gamma_1$, $\gamma_2$, and $\gamma_3$ meeting at $0$, where they form equal angles. We use polar coordinates $(\theta, \rho)$ to define the map $\Phi_1$. Upon a suitable rotation we can also assume the tangents to $\gamma_1$, $\gamma_2$, and $\gamma_3$ at the origin are given by $\{\theta =0\}$, $\{\theta = \frac{2\pi}{3}\}$, and $\{\theta = \frac{4\pi}{3}\}$. In particular each $\gamma_i$ is given in polar coordinates by $\{(\rho, \theta_i (\rho))\}$, where $\theta_i: (0, 2r) \to \mathbb S^1$ is a $C^{1,\alpha}$ function and satisfies 
$\|\theta_i - (i-1) \frac{2\pi}{3}\| \leq \delta$ and $|\theta_i' (r)|\leq \delta r^{-1}$. In particular we can assume that $\delta<\frac{1}{2}.$ Let now $\psi\in C^\infty_c (-1,1)$ be a function which is identically equal to $1$ on $(-\frac{1}{2}, \frac{1}{2})$. In polar coordinates the map $\Phi_1$ is then given by
\[
(\theta, \rho) \mapsto \Big(\theta + \sum_i \psi\left(\theta - (i-1) {\textstyle{\frac{2\pi}{3}}}\right) \left( \theta_i (\rho) - (i-1) {\textstyle{\frac{2\pi}{3}}}\right), \rho\Big)\,.
\]
The estimate \eqref{e:estimate_triple} follows from the corresponding estimates for the functions $\theta_i$ and for $|x|$ and is left to the reader.

\subsection{Proof of Corollary~\ref{c:curvatura=0 nel tripunto}}
\begin{proof}
We first note that $K\setminus\{\bar{x}\}$ is the union of three $C^{1,1}$ arcs in view of Proposition~\ref{p:variational identities 2}.
We use the notation and arguments in the proof of Theorem~\ref{t:eps_tripunto}, in particular $\bar{x}=0$ by translation. 
We build upon the conclusions of Theorem~\ref{t:eps_tripunto} and substitute the monotonicity formula employed there by a sharper one inspired by Proposition~\ref{p:monotonia-0}.

Let $\rho\in(0,2r)$, then $\mathcal{H}^0(\partial B_\rho\cap K)=3$.
Let $B_\rho\setminus K=S_1\cup S_2\cup S_3$ with 
$\partial B_\rho\setminus K=\gamma_1\cup\gamma_2\cup\gamma_3$,
$\gamma_i\subset\partial S_i$, and  
$|\mathcal{H}^1(\gamma_i)-\frac23\pi\rho|\leq C\varepsilon\rho^{1+\gamma}$.
Denote by $u_i$ the mean value of $u$ on $\gamma_i$ and recall that
$\int_{\gamma_i}\frac{\partial u}{\partial\nu}=0$ by \eqref{e:Bonnet}
(here we use $\lambda=0$).
For every $i\in\{1,2,3\}$, by the sharp Poincar\'e-Wirtinger inequality 
we then get that
\begin{align*}
\int_{S_i}|\nabla u|^2&\stackrel{\eqref{e:int by parts}}{=}
\int_{\gamma_i}u\frac{\partial u}{\partial\nu}\stackrel{\eqref{e:Bonnet}}{=}
\int_{\gamma_i}(u-u_i)\frac{\partial u}{\partial\nu}
\leq\int_{\gamma_i}\left(\frac{\delta}{2}(u-u_i)^2
+\frac1{2\delta}\left(\frac{\partial u}{\partial\nu}\right)^2\right)\\
&\leq\int_{\gamma_i}
\left(\frac{\delta}{2}\left(\frac{\mathcal{H}^1(\gamma_i)}{\pi}\right)^2
\left(\frac{\partial u}{\partial\tau}\right)^2
+\frac1{2\delta}\left(\frac{\partial u}{\partial\nu}\right)^2\right)
= \frac{\mathcal{H}^1(\gamma_i)}{2\pi}\int_{\gamma_i}|\nabla u|^2
\end{align*}
having chosen $\delta=\frac{\pi}{\mathcal{H}^1(\gamma_i)}$ in the last inequality. In turn, from this we conclude that for $\mathcal L^1$-a.e. $\rho\in(0,\rho_0)$, $\rho_0<2r$ sufficiently small, we have
\begin{align*}
D(\rho)&=\int_{B_\rho\setminus K}|\nabla u|^2=\sum_{i=1}^3\int_{S_i}|\nabla u|^2\leq
\max_{i\in\{1,2,3\}}\frac{\mathcal{H}^1(\gamma_i)}{2\pi}
\int_{\partial B_\rho\setminus K}|\nabla u|^2\\
&=\max_{i\in\{1,2,3\}}\frac{\mathcal{H}^1(\gamma_i)}{2\pi} D'(\rho)
\leq \frac5{12}\rho D'(\rho).
\end{align*}
By direct integration and the energy upper bound in \eqref{e:upper bound} we deduce that for all $\rho\in(0,\rho_0)$
\[
D(\rho)\leq D(\rho_0)\Big(\frac{\rho}{\rho_0}\Big)^{\frac5{12}}
\leq 2\pi \rho_0^{-\frac75}\rho^{\frac{12}5},
\]
and thus estimate \eqref{e:stima kik} yields
\[
\int_{\Gamma_i\cap B_\rho}|\kappa_i|d\mathcal{H}^1
\leq C\rho^{\frac75}.
\]
In particular, since $\Gamma_i$ is $C^{1,\gamma}$ up to the origin, 
we deduce that it is actually $C^2$ in the origin itself with $\kappa_i(0)=0$. 

Since $u$ has $C^{1,\alpha}$ extensions on each side of $\Gamma_i\cap A$ (cf. item (a) in 
Proposition~\ref{p:variational identities 2}) and $\Gamma_i$ is $C^{1,\gamma}$, the conclusion follows at once using
\eqref{e:Euler curvature g} for $\lambda=0$.
\end{proof}
\begin{remark}
Actually, the arcs composing $K\setminus\{\bar{x}\}$ are $C^\infty$ (resp. analytic) if $g$ is $C^\infty$ (resp. analytic) in view of the higher regularity theory contained in \cite[Theorem 7.42]{AFP00} (resp. \cite{KLM}).
\end{remark}

\section{Proof of Theorem~\ref{t:main} for pure jumps and triple junctions}\label{ss:proof of (b)-(c) Theorem main}

We start by proving conclusion (i). First of all we observe that, by a standard covering argument, it is enough to prove the conclusion in a ball $B_{\delta r}(x)$ rather than in $B_r (x)$, where $\delta$ is a fixed geometric constant. We focus on the case of a pure jump and of absolute minimizers. The various other possibilities are all treated with the same idea, with minor changes. Without loss of generality assume $x=0$.

We wish to apply Theorem~\ref{t:eps_salto_puro} and in order to do it we claim that
\begin{itemize}
    \item[(Cl)] For any $\varepsilon >0$ there are constants $\delta>0$ and $\eta>0$ such that, if $(u,K)$ is an absolute minimizer and, for some $r>0$, ${\rm dist}_H (\mathscr{V}_0\cap \overline{B}_{2r} , K\cap \overline{B}_{2r} )\leq \eta r$, then
    \[
    \int_{B_{2\delta r}\setminus K} |\nabla u|^2 + \lambda \|g\|_\infty^2 (2\delta r)^{\frac{3}{2}} < 2 \varepsilon \delta r\, .
    \]
\end{itemize}
First of all, since $\|g\|_\infty \leq M_0$, $\lambda \leq 1$, and $r\leq 1$, it is easy to see that, for $\delta$ sufficiently small,
\[
\lambda \|g\|_\infty^2 (2\delta r)^{\frac{3}{2}} < \varepsilon \delta r\, .
\]
We therefore focus on the Dirichlet energy and argue by contradiction. In particular, if our claim is false, we find a sequence of radii $r_j\leq 1$ and of absolute minimizers $(u_j, K_j)$ in $B_{2r_j}$ of $E_{\lambda_j}$ with fidelity terms $g_j$, satisfying the following properties:
\begin{itemize}
    \item[(1)] ${\rm dist}_H (\mathscr{V}_0\cap \overline{B}_{2r_j} , K_j\cap \overline{B}_{2r_j})
    \leq 2^{-j} r_j$;
    \item[(2)] $\int_{B_{2 j^{-1} r_j}\setminus K_j} 
    |\nabla u_j|^2 \geq \varepsilon j^{-1} r_j$ for some positive $\varepsilon$. 
\end{itemize}
We now consider the rescalings $(v_j, J_j)$ of $(u_j, K_j)$ given by
\begin{align*}
J_j &:= \frac{K_j}{j^{-1} r_j}\\
v_j (x) &:= (j^{-1} r_j)^{-\frac{1}{2}} u_j (j^{-1} r_j x)\, .
\end{align*}
Note that $\lambda_j \|g_j\|_\infty^2j^{-1} r_j \to 0$ as $j\uparrow \infty$. In particular we can apply Theorem~\ref{t:minimizers compactness} and assume that, up to a subsequence not relabeled, the pairs $(v_j, J_j)$ converge to a generalized global minimizer $(v,J)$. Observe that on any disk $B_R$ we have 
\[
{\rm dist}\, (\mathscr{V}_0 \cap \overline{B}_R ,
J_j \cap \overline{B}_R) \leq j 2^{-j}\, .
\]
Hence $J$ must coincide with $\mathscr{V}_0$. But then from Theorem~\ref{t:class-global} it follows that $\nabla v\equiv 0$ on $\mathbb R^2\setminus \mathscr{V}_0$. In particular, by the convergence proved in Theorem~\ref{t:minimizers compactness}, 
\[
\lim_{j\to \infty} \int_{B_1\setminus J_j} |\nabla v_j|^2 = 0\, .
\]
On the other hand
\[
\int_{B_1\setminus J_j} |\nabla v_j|^2 = \frac{1}{j^{-1} r_j} \int_{B_{2 j^{-1} r_j\setminus K_j}} |\nabla u_j|^2\, ,
\]
and we thus reach a contradiction with (2). 

\medskip

We next come to point (ii). In the case of pure jumps observe that if $\lambda=0$ or $g\in C^0$, $\Delta u$ is continuous and from classical estimates for the Neumann problem we infer that $\nabla u$ has a $C^{0,\alpha}$ extensions up to $K$ on each side of it. In particular, from the Euler-Lagrange conditions of Proposition~\ref{p:variational identities 2} we conclude that the distributional curvature of $K$ is continuous, which in turn implies its $C^2$ regularity. 

As for point (iii), it follows from Corollary~\ref{c:curvatura=0 nel tripunto} if the extremum is a triple junction.

\chapter{The Bonnet-David rigidity theorem for cracktip}\label{c:Bonnet David}

\section{Main statement and consequences}

This part of our notes is devoted to prove the following rigidity theorem of Bonnet and David, which in turn is the first step towards the proof of case (c) in Theorem~\ref{t:main}, namely Corollary~\ref{c:connectedness}. We start by stating both these facts.

\begin{theorem}\label{t:Bonnet-David}\label{T:BONNET-DAVID}
Let $(u,K,\{p_{kl}\})$ be a global generalized minimizer and assume that, for a sufficiently large radius $R$:
\begin{itemize}
    \item[(a)] $K\setminus B_R$ consists of a single unbounded connected component;
    \item[(b)] $K\cap \partial B_R$ consists of a single point.
\end{itemize}
Then $(u,K,\{p_{kl}\})$ is a cracktip. 
\end{theorem}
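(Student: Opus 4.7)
My plan is to follow the monotonicity strategy of Bonnet. First I would observe that, by hypotheses (a) and (b), for every $r>R$ the (finite) set $K\cap \partial B_r$ is contained in the unique unbounded connected component of $K\setminus \bar B_R$, and hence in a single connected component of $K$. Bonnet's monotonicity (Proposition \ref{p:monotonia-0}) therefore applies and
\[
d(r):=\frac{1}{r}\int_{B_r\setminus K}|\nabla u|^2
\]
is nondecreasing on $(R,+\infty)$. Combined with the a priori bound $d(r)\leq 2\pi$ coming from the energy upper bound of Lemma \ref{l:upper-bound}, the limit $d_\infty:=\lim_{r\uparrow\infty}d(r)$ exists and is finite.

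Next I would carry out a blow-down: for $R_j\uparrow\infty$, Theorem \ref{t:compactness-2} yields a subsequential limit $(K_\infty,u_\infty,\{q_{kl}\})$ of the rescalings $(K/R_j,u_{0,R_j},\{p_{kl}\})$, which is a global generalized minimizer. The scaling identity $d_{K_\infty,u_\infty}(0,r)=d(rR_j)$ together with the convergence statement in Theorem \ref{t:minimizers compactness}(i) gives that $r\mapsto d_{K_\infty,u_\infty}(0,r)\equiv d_\infty$ on $(0,\infty)$. Since the bounded portion $K\cap \bar B_R$ collapses to the origin under rescaling while $K\setminus B_R$ is connected and unbounded, $K_\infty$ is itself connected, so the hypothesis of Proposition \ref{p:monotonia-0} holds for $(K_\infty,u_\infty)$ at every radius. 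The rigidity part of that proposition then forces either $(K_\infty,u_\infty)$ to be a cracktip with loose end at $0$, or $\nabla u_\infty\equiv 0$. In the second alternative, Theorem \ref{t:class-global} classifies $(K_\infty,u_\infty,\{q_{kl}\})$ as a constant, a pure jump (a line), or a triple junction (a propeller). The constant case is incompatible with connectedness of $K_\infty$; a line or a propeller for $K_\infty$ would force the rescalings $K/R_j$ to be Hausdorff-close to configurations with two or three unbounded components, contradicting hypothesis (a). Hence the blow-down must be a cracktip, and by Proposition \ref{p:constant-crack-tip} one reads off $d_\infty=1$.

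The remaining (and main) step is to upgrade ``the blow-down is a cracktip'' to ``$(K,u,\{p_{kl}\})$ itself is a cracktip''. For this I would adopt the approach of \cite{bonnetdavid2001}: since $d(\cdot)$ is monotone on $(R,\infty)$ with supremum $1$, the deficit $1-d(r)$ controls, through the various chains of inequalities entering the proof of Proposition \ref{p:monotonia-0} (sharp Poincar\'e--Wirtinger, Cauchy--Schwarz, and the minimality comparison with straight segments together with the harmonic extension of Lemma \ref{l:extension}), both how close the trace of $u$ on each component of $\partial B_r\setminus K$ is to the optimizer $a(r)+b(r)\cos\frac{\theta-c(r)}{2}$, and how close $K\cap (B_r\setminus \bar B_R)$ is to a single radial segment. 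Using the integration-by-parts identities of Corollary \ref{c:Bonnet} and the conformal variational identities of Corollary \ref{c:boundary-variations}, one then argues, in the same spirit as the rigidity discussion in the proof of Proposition \ref{p:monotonia-0}, that the functions $a(\cdot),b(\cdot),c(\cdot)$ extend smoothly, that $c$ is in fact constant, that $a$ is constant, and that $b(r)=\sqrt{2/\pi}\,r^{1/2}$ on the exterior $\{r>R\}$. Having identified $K\setminus\bar B_R$ as a radial half-line and $u$ as the cracktip ansatz on that region, a continuation argument inside $B_R$, based on the density lower bound (Theorem \ref{t:dlb}) and on the fact that every blow-up of $(K,u)$ at a point of $K\cap \bar B_R$ must be a global generalized minimizer compatible with the exterior half-line structure, shows that $K\cap \bar B_R$ is an initial segment of the same half-line and $u$ agrees with the cracktip function throughout $\mathbb{R}^2$.

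The hardest step will be the last one: translating the asymptotic cracktip behaviour into rigidity of $K$ in each exterior annulus and then propagating this rigidity across the transition region $B_R$ where Bonnet's monotonicity is no longer directly available. This requires the harmonic-conjugate/complex-analytic machinery on $\mathbb{R}^2\setminus K$ which is the technical core of \cite{bonnetdavid2001}, and is considerably less elementary than the monotonicity and compactness arguments that precede it.
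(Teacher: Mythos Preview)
Your first two steps are correct and appear in the paper (cf.\ Proposition~\ref{p:struttura-3}(i)): Bonnet's monotonicity holds on $(R,\infty)$ since $K\cap\partial B_r$ lies in a single connected component there, the blow-down is a cracktip, and $d_\infty=1$. (A minor slip: to exclude the line and the propeller as blow-downs you should invoke hypothesis (b), not (a), since a single connected set can well be Hausdorff-close to a line; but by the $\varepsilon$-regularity at jumps, such closeness would force $\sharp(K\cap\partial B_r)\geq 2$ for large $r$.)

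The real gap is in your ``upgrade'' step. The rigidity case of Proposition~\ref{p:monotonia-0} requires $d(\cdot)$ to be \emph{exactly} constant, and from $d(r)\uparrow 1$ you only get that the various inequalities in its proof are \emph{asymptotically} sharp, not sharp at any fixed $r$. There is no mechanism in your outline that turns this asymptotic information into the exact identification of $K$ on any fixed annulus, let alone inside $B_R$. Your proposed ``continuation argument inside $B_R$ via blow-ups'' cannot work as stated: blow-ups are purely local and carry no information about the exterior half-line; nothing in the hypotheses prevents $K\cap B_R$ from containing many bounded connected components, and it is precisely the exclusion of such components that is the heart of the theorem.

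The paper's route is entirely different and does not attempt a direct rigidity argument from near-constancy of $d$. Instead it introduces the harmonic conjugate $v$ (Proposition~\ref{p:armonica-coniugata}), which is globally $C^{1/2}$ and constant on each connected component of $K$, and carries out a topological analysis of its level sets (Propositions~\ref{p:struttura-1}--\ref{p:struttura-3}): no level set contains loops, almost every level set is a single properly embedded line, and $v$ achieves its global minimum exactly on the unbounded component $L=\{v=\min v\}$. Setting $m_0:=\max_K v$, the case $m_0=0$ forces $K$ to be connected and then your monotonicity argument (applied now on \emph{all} of $(0,\infty)$, plus a blow-up at a terminal point) finishes the proof (Corollary~\ref{c:m_0=0}). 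If $m_0>0$, one picks a bounded component $G\subset\{v=m_0\}$ with $\mathcal H^1(G)>0$, shows via a variant of Bonnet's monotonicity that terminal points of $G$ cannot lie in $\overline{\{v>m_0\}}$ (Proposition~\ref{p:componente-speciale}), and then ``tours around $G$'' to locate a regular jump point where the two traces $u^\pm$ coincide---a contradiction for an absolute minimizer (Section~\ref{s:gira}). This argument is what replaces your missing step; it is genuinely global and topological, and is not an almost-rigidity argument for the monotonicity formula.
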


\begin{corollary}\label{c:connectedness}
There is a $\delta >0$ with the following property. 
Assume that
\begin{itemize}
    \item \eqref{e:g-and-lambda} holds;
    \item $(u,K)$ is an absolute minimizer of $E_\lambda$ in $B_{4r} (x)$ for some $4r \leq 1$;
    \item ${\rm dist}_H\, (K\cap \overline{B}_{4r} (x), (x+\mathcal{R_\theta}) (\mathscr{V}_0^+)\cap \overline{B}_{4r}(x)) < \delta r$. 
\end{itemize}    
Then $B_{2r} (x)\cap K$ consists of a single continuous nonselfintersecting arc with an endpoint $y\in B_r (x)$ (which according to our terminology is a terminal point of $K$) and an endpoint in $\partial B_{2r} (x)$. Moreover the arc is $C^{1,1}$ in $B_{2r} (x)\setminus \{y\}$.
\end{corollary}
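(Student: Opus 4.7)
The plan is to argue by contradiction using compactness, force an associated global generalized minimizer to be a cracktip via Theorem \ref{t:Bonnet-David}, and then pull the cracktip structure back to the approximating sequence.

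Suppose no such $\delta$ exists. Then I can find a sequence of absolute minimizers $(K_j, u_j)$ of $E_{\lambda_j}$ with fidelities $g_j$ on $B_{4 r_j}(x_j)$, with $4 r_j \leq 1$, and angles $\theta_j$ satisfying
\[
\dist_H\bigl(K_j \cap \overline{B}_{4r_j}(x_j),\, (x_j + \mathcal{R}_{\theta_j}(\mathscr{V}_0^+)) \cap \overline{B}_{4r_j}(x_j)\bigr) \leq \delta_j r_j,\qquad \delta_j \downarrow 0,
\]
for which the conclusion fails. Up to rotations assume $\theta_j = 0$. Pick any sequence $\rho_j \downarrow 0$ with $\delta_j/\rho_j \downarrow 0$ (e.g.\ $\rho_j = \sqrt{\delta_j}$) and rescale $(K_j, u_j)$ by the factor $s_j := \rho_j r_j$ at $x_j$. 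Since $\lambda_j s_j \|g_j\|_\infty^2 \leq M_0^2 s_j \to 0$, Assumption \ref{a:blow-up} is in force, and along a subsequence Theorem \ref{t:minimizers compactness} together with Theorem \ref{t:compactness-2} produces a generalized global minimizer $(K^*, u^*, \{p_{kl}\})$ of $E_0$. The rescaled Hausdorff bound becomes $\delta_j/\rho_j \to 0$, so on every fixed $\overline{B}_R$ the rescaled sets converge in Hausdorff distance to $[0, R] \times \{0\}$, forcing
\[
K^* \cap \overline{B}_R \,=\, [0,R] \times \{0\} \qquad \forall\, R > 0, \qquad\text{i.e.}\qquad K^* = \mathscr{V}_0^+.
\]

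Theorem \ref{t:Bonnet-David} now applies to $(K^*, u^*, \{p_{kl}\})$ with any $R > 0$: $K^* \setminus B_R = (R, \infty) \times \{0\}$ is a single unbounded connected component and $K^* \cap \partial B_R = \{(R,0)\}$ is a single point. It follows that $(K^*, u^*, \{p_{kl}\})$ is a cracktip.

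To pull back, denote by $\hat K_j$ the rescaled sets. On the annulus $B_2 \setminus \overline{B}_{1/4}$, where $K^*$ is a smooth pure jump, the $C^{1,\alpha}$-stability Corollary \ref{c:salto-puro-curvo} (i.e.\ case (b) of Theorem \ref{t:main}) forces $\hat K_j \cap (B_2 \setminus \overline{B}_{1/4})$ to be, for $j$ large, a single $C^{1,\alpha}$ arc close to $[1/4, 2] \times \{0\}$. Inside $\overline{B}_{1/4}$ one argues analogously by a further blow-up centered at a suitably chosen point $\hat y_j \in \hat K_j \cap \overline{B}_{1/4}$ (whose existence is guaranteed by the density lower bound Theorem \ref{t:dlb}); the Hausdorff closeness to $\mathscr{V}_0^+$ and the same argument as above identify any subsequential limit again as a cracktip, which rules out branching or additional connected components inside $\overline{B}_{1/4}$. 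Gluing the two pieces, $\hat K_j \cap B_2$ is a single continuous nonselfintersecting arc with one endpoint on $\partial B_2$ and a terminal endpoint in $B_1$; rescaling back to $(K_j, u_j)$ and invoking Proposition \ref{p:variational identities 2}(b) for the $C^{1,1}$ regularity off the terminal point contradicts the assumed failure of the conclusion.

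The main obstacle is this final pull-back step. Hausdorff closeness of $\hat K_j$ to the half-line $K^* = \mathscr{V}_0^+$ does not a priori rule out microscopic spurious connected components, bifurcations, or multiple arcs accumulating at the terminal point. Excluding these pathologies requires combining the $\varepsilon$-regularity of Chapter \ref{ch:salti_e_tripunti}, the density lower bound Theorem \ref{t:dlb}, and an iterative use of the cracktip rigidity Theorem \ref{t:Bonnet-David} at arbitrarily small scales near the terminal point.
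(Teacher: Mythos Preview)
Your high-level strategy --- blow up, invoke Theorem \ref{t:Bonnet-David} on the limit, then pull back via the pure-jump $\varepsilon$-regularity --- matches the paper's. However, your argument has a genuine gap, which you yourself flag: a single blow-up (your steps 1--5) only tells you that the rescaled sets $\hat K_j$ are close to a cracktip at \emph{one} scale. The pure-jump regularity then controls $\hat K_j$ on an annulus like $B_2\setminus B_{1/4}$, but says nothing about what happens inside $B_{1/4}$. Your step 6, ``a further blow-up centered at a suitably chosen point $\hat y_j$'', does not fix this: you do not know where to center the blow-up (there is no terminal point yet), and even if a further limit is a cracktip, that again only controls one more scale. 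You would need to iterate indefinitely, but you have not set up an inductive statement that allows this.

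The paper closes this gap with Lemma \ref{l:decay iteration 2}, a quantitative decay iteration: if $\Omega^c(x_k,2^{-k}r)\le\varepsilon$ along a chain of nearby points $x_0,\dots,x_N$, then one can find $x_{N+1}$ with $|x_{N+1}-x_N|\le\gamma 2^{-N}r$ and $\Omega^c(x_{N+1},2^{-N-1}r)\le\varepsilon$. This lemma is itself proved by the blow-up and rigidity argument you sketch (that is where Theorem \ref{t:Bonnet-David} enters), but the point is that its conclusion is \emph{self-improving}: iterating it produces a Cauchy sequence $\{x_k\}$ converging to a point $y$, and then $\Omega^c(y,\rho)$ is small for \emph{every} $\rho\le r$. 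With this all-scales control at $y$, the pure-jump $\varepsilon$-regularity applies on every annulus $B_{2\rho/3}(y)\setminus B_{\rho/3}(y)$, which is exactly what rules out the spurious components and bifurcations you worry about and shows $K\cap B_{2r}(x)$ is a single arc terminating at $y$. Your final paragraph gestures at ``iterative use of the cracktip rigidity at arbitrarily small scales'', which is the right intuition, but without the precise inductive mechanism of Lemma \ref{l:decay iteration 2} the argument is incomplete.
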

Note that the conclusion of Corollary~\ref{c:connectedness} does not prevent the possibility that $K$ spirals around $y$. This fact needs further analysis and will be actually ruled out in Chapter~\ref{ch:crack}.

In this chapter we will not only prove Theorem~\ref{t:Bonnet-David} but also give some general properties of global generalized minimizers $(u, K, \{p_{kl}\})$ which are not elementary. In order to simplify our terminology, we will call them {\em nonelementary} global minimizers.\index{nonelementary global minimizer@nonelementary global minimizer}\index{global minimizer, nonelementary@global minimizer, nonelementary} An important result of David and L\'eger, presented below, shows that for a {\em nonelementary} global minimizer the set $\mathbb R^2\setminus K$ is in fact connected. Due to this we can (and will) omit to mention the ``normalizations'' $\{p_{kl}\}$ for nonelementary global minimizers.

We conclude this section by showing how Corollary~\ref{c:connectedness} follows from Theorem~\ref{t:Bonnet-David}.

\subsection{Proof of Corollary~\ref{c:connectedness}}
The argument is similar to that used in Section~\ref{s:tripunto-inizio}. Following a similar path we introduce the quantities 
\[
\Omega^c (\theta, x, r) := r^{-1}{\rm dist}_H (K\cap \overline{B}_{2r} (x), (x+\mathcal{R}_\theta (\mathscr{V}_0^+))\cap \overline{B}_{2r}(x))
\]
and\index[simb]{aagZ^c(t,x,r)@$\Omega^c(\theta,x,r)$}\index[simb]{aagZ^c(x,r)@$\Omega^c(x,r)$}
\[
\Omega^c (x,r) := \min_\theta \Omega^c (\theta, x, r)\, .
\]
The following lemma is then the analog of Lemma~\ref{l:decay iteration}.

\begin{lemma}\label{l:decay iteration 2}
For every $\gamma>0$ sufficiently small there exists $\varepsilon_0 (\gamma)$ with the following property. Assume $\varepsilon \in (0, \varepsilon_0)$ and let $N = N (\varepsilon)\in \mathbb N$ be sufficiently large. Let $r\in (0,1]$ and assume that $(u,K)$ is an absolute minimizer of $E_\lambda$ in $B_{2r} (x)$, while $x=x_0, x_1, \ldots, x_N$ are points such that
\begin{align}
&\Omega^c (x_k, 2^{-k} r) + \lambda \|g\|_\infty^2 (2^{-k} r)^{\frac{1}{2}} \leq \varepsilon 
\qquad \forall k\in \{0, 1, \ldots, N\}\\
&|x_{k+1} -x_k|\leq \gamma 2^{-k} r \qquad \qquad \forall k\in \{0, 1, \ldots , N-1\}\, .
\end{align}
Then there is a point $x_{N+1}\in B_{2r}(x)$ such that $|x_{N+1} -x_N| \leq \gamma 2^{-N} r$ and 
\[
\Omega^c (x_{N+1}, 2^{-N-1} r) + \lambda \|g\|_\infty^2 (2^{-N-1}r)^{\frac{1}{2}} \leq \varepsilon\, . 
\]
\end{lemma}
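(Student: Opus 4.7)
The plan is to follow the contradiction-compactness scheme of the proof of Lemma \ref{l:decay iteration}, replacing the triple-junction analysis at the end by an appeal to the Bonnet-David rigidity theorem (Theorem \ref{t:Bonnet-David}). First I would fix $\gamma>0$ small and $\varepsilon \in (0,\varepsilon_0(\gamma))$, and assume by contradiction the existence, for every $N \in \mathbb N$, of a minimizer $(K_N, u_N)$ of $E_{\lambda_N}$ on $B_{2r_N}(x_N)$ with fidelity $g_N$, together with points $x_{0,N}=x_N, \ldots, x_{N,N}$ satisfying the two hypotheses of the lemma, but such that no admissible $x_{N+1}$ exists, i.e.\ $\Omega^c(y, 2^{-N-1}r_N) + \lambda_N \|g_N\|_\infty^2 (2^{-N-1}r_N)^{1/2} > \varepsilon$ for every $y\in B_{\gamma 2^{-N}r_N}(x_{N,N})$.

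Next I would rescale at the finest scale, setting $J_N := (2^{-N}r_N)^{-1}(K_N - x_{N,N})$, $v_N(x) := (2^{-N}r_N)^{-1/2} u_N(x_{N,N} + 2^{-N}r_N x)$ and $y_{k,N} := (2^{-N}r_N)^{-1}(x_{k,N}-x_{N,N})$. Since the assumption $\lambda_N \|g_N\|_\infty^2 r_N^{1/2}\leq\varepsilon$ forces $\lambda_N \|g_N\|_\infty^2 (2^{-N}r_N)^{1/2} \to 0$, Theorem \ref{t:minimizers compactness} applies and, up to a diagonal subsequence, $(J_N, v_N)$ converges to a generalized global minimizer $(J, v, \{p_{kl}\})$ of $E_0$, while $y_{N-k, N} \to y_k$ with $|y_k| \leq \gamma 2^{k+1}$. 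The hypotheses then pass to the limit as $\Omega^c(y_k, 2^k) \leq \varepsilon$ for every $k \in \mathbb N$, and the negation gives $\Omega^c(z, \tfrac12) \geq \varepsilon$ for every $z \in B_\gamma$.

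The core step is to verify that $(J, v)$ meets the assumptions of Theorem \ref{t:Bonnet-David}. From $\Omega^c(y_k, 2^k) \leq \varepsilon$ we extract angles $\theta_k$ so that $J \cap \overline B_{2^{k+1}}(y_k)$ is $\varepsilon 2^k$-close in Hausdorff distance to $y_k + \mathcal{R}_{\theta_k}(\ello^+)$. A direct comparison of the closeness at two consecutive scales $2^k, 2^{k+1}$ around the nearby centers $y_k, y_{k+1}$ (both lying in $B_{\gamma 2^{k+2}}$) shows that $|\theta_{k+1} - \theta_k| \leq C\varepsilon$; after rotating coordinates we may assume $\theta_k \to 0$ uniformly. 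In particular every point of $J$ outside a suitably large ball $B_{R_0}$ lies within $C\varepsilon\,\dist(\cdot,0)$ of the positive real axis. Away from the tip of the approximating rays, the Dirichlet energy of $v$ on balls of radius comparable to the distance from $0$ can be bounded using the Bonnet and DLMS monotonicity formulae (Propositions \ref{p:monotonia-0}--\ref{p:monotonia-1}) together with the density upper bound \eqref{e:upper bound}, so that Theorem \ref{t:eps_salto_puro} becomes applicable on overlapping annuli. A standard covering argument then shows that $J\setminus B_{R_0}$ is a single embedded $C^{1,\alpha}$ arc extending to infinity and meeting $\partial B_R$ in exactly one point for every $R\geq R_0$. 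Theorem \ref{t:Bonnet-David} then forces $(J,v)$ to be a cracktip rooted at some endpoint $z_0$. The closeness $\Omega^c(y_0, 1) \leq \varepsilon$ with $|y_0| \leq 2\gamma$ pins $z_0$ to $B_\gamma$ (for $\varepsilon, \gamma$ small), which yields $\Omega^c(z_0, \tfrac12) = 0 < \varepsilon$, contradicting the lower bound $\Omega^c(z, \tfrac12) \geq \varepsilon$ on $B_\gamma$.

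The hard part will be the verification of Theorem \ref{t:Bonnet-David}(a), namely that $J\setminus B_R$ is a single unbounded connected component. The Hausdorff closeness to a half-line controls the location of $J$ but not its topology; ruling out small stray components of $J$ accumulating at infinity requires Theorem \ref{t:eps_salto_puro}, whose smallness hypothesis on the Dirichlet energy is not furnished by $\Omega^c$ itself and has to be extracted from the monotonicity machinery of Chapter \ref{ch:preliminaries} combined with the one-sided flatness provided by $\Omega^c$. Equally delicate is the passage from ``close to a different half-line at every scale $2^k$'' to ``close to a single half-line globally'', i.e.\ the Cauchy property of the angles $\theta_k$, which is what enables one to glue the local regularity pieces into one connected arc.
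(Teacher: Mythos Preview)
Your overall architecture is exactly that of the paper: argue by contradiction, rescale at the finest scale, pass to a generalized global minimizer $(J,v)$ of $E_0$, check the hypotheses of Theorem~\ref{t:Bonnet-David}, and derive the contradiction from the location of the tip. The limit identities $\Omega^c(y_k,2^k)\le\varepsilon$ and $\inf_{z\in B_\gamma}\Omega^c(z,\tfrac12)\ge\varepsilon$ are correct.

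Where you diverge from the paper is in the verification of hypothesis~(a) of Theorem~\ref{t:Bonnet-David}, which you flag as ``the hard part''. In fact it is not hard at all, and your proposed route through the monotonicity formulae is both unnecessary and problematic. The point you are missing is that by this stage of the paper the full case~(b) of Theorem~\ref{t:main} (equivalently Theorem~\ref{t:main-generalized}) has already been proved, and that statement requires \emph{only} Hausdorff closeness to a diameter --- the smallness of the Dirichlet energy has already been removed by the blow-up argument of Section~\ref{ss:proof of (b)-(c) Theorem main}. Since $(J,v)$ is a generalized minimizer of $E_0$, in each annulus $A_k=B_{(1-\gamma)2^{k+1}}(y_k)\setminus B_{\gamma2^{k+1}}(y_k)$ the condition $\Omega^c(y_k,2^k)\le\varepsilon$ makes $J$ Hausdorff-close to a straight segment on balls centered in $A_k$, so Theorem~\ref{t:main-generalized}(b) applies directly and yields that $J\cap A_k$ is a single $C^{1,\alpha}$ arc. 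For $\gamma$ small the annuli $A_k$ and $A_{k+1}$ overlap, and the arcs glue; hence $J\setminus B_{2\gamma}$ is a single unbounded curve. There is no need to invoke Theorem~\ref{t:eps_salto_puro} with its Dirichlet hypothesis, nor the monotonicity formulae: note in particular that Propositions~\ref{p:monotonia-0}--\ref{p:monotonia-1} give monotonicity, not smallness, of $d(x,r)$, and the connectivity assumption needed for Bonnet's formula is precisely what you are trying to establish.

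The Cauchy property of the angles $\theta_k$ is likewise superfluous: the paper never needs $J$ to be globally close to a fixed half-line, only that $J$ is a single arc in each annulus and that consecutive annuli overlap.
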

\begin{proof} We argue by contradiction and assume that, for some $\gamma>0$ and $\varepsilon >0$ sufficiently small (the smallness will be specified later) there are 
\begin{itemize}
    \item[(a)] A family of numbers $\lambda_N \in [0,1]$;
    \item[(b)] A family of fidelity functions $g_N$ with $\|g_N\|_\infty \leq M_0$;
    \item[(c)] A family of radii $r_N\in (0,1]$;
    \item[(d)] A family of points $x_{k,N}$, for $k\in \{0, \ldots , N\}$, with  
    \begin{align}
    &x_{0,N} =x
    &|x_{k+1,N}-x_{k,N}| \leq \gamma 2^{-k} r_N \qquad 
    \forall k\in \{0, \ldots, N-1\};
    \end{align}
    \item[(e)] An absolute minimizing pair $(u_N, K_N)$ of $E_{\lambda_N} (\cdot, \cdot, B_{2r_N}, g_N)$ for which
    \begin{align}\label{e:Omega c N}
    \Omega^c (x_{k,N}, 2^{-k} r_N) + \lambda_N \|g_N\|_\infty^2 (2^{-k} r_N)^{\frac{1}{2}} \leq \varepsilon\,, 
    \end{align}
    for all $k\in \{0, \ldots , N\}$;
    \item[(f)] For every $y\in B_{\gamma 2^{-N}r_N} (x_{N,N})$
    \[
    \Omega^c (y, 2^{-N-1} r_N) + \lambda_N \|g_N\|_\infty^2 (2^{-N-1} r_N)^{\frac{1}{2}} > \varepsilon\, . 
    \]
\end{itemize}
For each $N$ we consider the rescaled pairs
\begin{align*}
v_N (x) &:= (2^{-N} r_N)^{-\frac{1}{2}} u_N \left( x_{N,N} + 2^{-N} r_N x\right) \\
J_N &:= (2^{-N}r_N)^{-1} (K_N - x_{N,N})\, .
\end{align*}
Next observe that from \eqref{e:Omega c N} we get
\[
\lambda_N \|g_N\|_\infty^2 2^{-N} r_N  \leq 
\varepsilon (2^{-N} r_N)^{\sfrac12}\, .
\]
and thus in particular $\lambda_N \|g_N\|_\infty^2 2^{-N} r_N\to 0$. We can therefore apply Theorem~\ref{t:minimizers compactness} to conclude the convergence, up to subsequences, of $(v_N, J_N)$ to a generalized minimizer $(v, J, \{p_{kl}\})$. 
Note that the points $x_{k,N}$ are mapped to the points
\[
y_{k,N}:= (2^{-N} r_N)^{-1}(x_{k,N}-x_{N,N})
\]
We thus infer $y_{N,N}=0$ for all $N$, and for $k\in\{1,\ldots,N\}$
\[
|y_{N-k,N}| \leq \gamma \sum_{j=1}^k 2^{j} \leq \gamma 2^{k+1}\, .
\]
Up to extraction of a subsequence we can assume that, for each fixed $k\geq 1$, $y_{N-k,N}$ converges, as $N\uparrow \infty$, to some $y_k$ with $|y_k|\leq \gamma 2^{k+1}$, $k\geq 1$. Set moreover $y_0=0$.
In particular for $(v,J)$ we have
\[
\Omega^c (y_{k},2^k) \leq \varepsilon \qquad \forall k\in\N\, , 
\] 
On the other hand our contradiction assumption implies as well
\begin{equation}\label{e:contradiction-crack}
\inf_{z\in B_{\gamma}} \Omega^c (z, \sfrac12) \geq \varepsilon\, .
\end{equation}

Next observe that, by taking $\varepsilon$ smaller than a suitably chosen $\varepsilon_0 (\gamma)$, the $\varepsilon$-regularity theory at pure jumps would imply that, in the corona
$A_k= B_{(1-\gamma) 2^{k+1}} (y_k) \setminus B_{\gamma 2^{k+1}} (y_k)$ the set $K$ consists of a single arc with endpoints on the circles $\partial B_{(1-\gamma) 2^{k+1}}$ and $\partial B_{\gamma 2^{k+1}}$. If $\gamma$ is smaller than a geometric constant, the coronas $A_{k+1}$ and $A_k$ have a large overlap. It then follows that $K$ consists of a single unbounded curve in $\cup_{k\geq 1} A_k = \mathbb R^2 \setminus B_{2\gamma}$. 
We can thus apply Theorem~\ref{t:Bonnet-David} to conclude that $(v,J)$ is a cracktip. If we denote by $z_0$ the starting point of the half-line $J$, the condition $\Omega^c (0, 1) \leq \varepsilon$ will imply that $z_0\in B_\gamma$, provided $\varepsilon$ is smaller than a suitably chosen positive $\varepsilon_0 (\gamma)$. This however contradicts \eqref{e:contradiction-crack} and hence completes the proof.
\end{proof}

\begin{proof}[Proof of Corollary~\ref{c:connectedness}]
The proof is entirely similar to that of Lemma~\ref{l:C1smooth}. Like in there we assume without loss of generality that $x=0$. We fix then $\delta$, $\varepsilon$, and $\gamma$ whose choice will be specified later. First of all $\varepsilon$ is assumed to be smaller than the $\varepsilon_0 (\gamma)$ given by Lemma~\ref{l:decay iteration 2}, so that the latter is applicable. We then let $N$ be the natural number given by the conclusion of the latter lemma. If we let $r_0\leq 1$ be such that 
\[
\lambda M_0^2 r_0^{\sfrac12} = \frac{\varepsilon}{2}\, , 
\]
we observe that, by choosing $\varepsilon$ sufficiently small, we can at the same time ensure that
\begin{itemize}
    \item[(1)] Lemma~\ref{l:decay iteration 2} is applicable in $B_{2 \bar r}$ with $\bar r = \min \{r, r_0\}$;
    \item[(2)] $K\cap (B_{2r}\setminus B_{\bar r/2})$ is a single $C^1$ arc $\gamma$ with endpoints in $\partial B_{\bar r/2}$ and $\partial B_{2r}$. 
\end{itemize}
Indeed, the first point is simply because, by choosing $\delta$ sufficiently small, the condition of the Lemma is satisfied with $x_{k,N}$ all equal to $0$, $k\in\{0,\ldots,N\}$ (note that $N$ and $\varepsilon$ are fixed at this point). As for the second point, it is just a consequence of the $\varepsilon$-regularity theory at pure jumps. 

We now note that, after having applied Lemma~\ref{l:decay iteration 2} to find $x_{N+1}$, we can actually apply it again to $B_{\bar r/2} (x_1)$, but this time the points $x_0, \ldots, x_N$ would be substituted by $x_1, \ldots , x_{N+1}$. We then proceed inductively to produce a sequence of points $x_k$ with 
\begin{align*}
&x_0 =0\\
&|x_{k+1}-x_k|\leq \gamma 2^{-k} \bar r\\
&\Omega^c (x_k, 2^{-k} \bar r) + \lambda \|g\|_\infty^2 (2^{-k} \bar r)^{\frac{1}{2}} \leq \varepsilon \, .
\end{align*}
Since $\{x_k\}$ is a Cauchy sequence, it has a limit $y$. Observe that by choosing $\gamma$ smaller than a geometric constant we can then ensure $y\in B_{\bar r}$. Moreover, it is easy to see that 
\[
\Omega^c (y, \rho) \leq 4 (\varepsilon+\gamma)  
\]
for every $\rho \leq \bar r$ simply by choosing $k$ so that $2^{-k-2} \bar r \leq \rho \leq 2^{-k-1} \bar r$ and comparing $\Omega^c (y, \rho)$ with $\Omega^c (x_k, 2^{1-k} \bar r)$,
if $\gamma\leq 1$.

In particular, if $\varepsilon$ and $\gamma$ are chosen sufficiently small, $K\cap (B_{2\rho/3} \setminus B_{\rho/3})$ consists of a single $C^1$ arc with endpoints in the respective circles $\partial B_{2\rho/3}$ and $\partial B_{\rho/3}$. This shows that $B_{2r}\cap K$ consists of a single continuous arc joining $y$ with a point in $\partial B_{2r}$, which moreover is $C^{1,\alpha}$ in $B_{2r}\setminus \{y\}$. But then by the Euler-Lagrange conditions in Proposition~\ref{p:variational identities 2} we conclude that the arc is $C^{1,1}$ in $B_{2r}\setminus \{y\}$.
\end{proof}

\section{An overview of the ideas in the proof of Theorem~\ref{t:Bonnet-David}}

Even though for some steps we give independent proofs, the overall strategy and the main ideas for proving Theorem~\ref{t:Bonnet-David} are all taken from the book \cite{bonnetdavid2001} of Bonnet and David. In this section we give an overview of the whole argument, which is very ingenious. Before coming to it, we first notice that in Section~\ref{s:monotonia-2} we prove another Liouville-type Theorem, namely Theorem~\ref{t:David-Leger}, which is due to David and L\'eger in the work \cite{DL02}, posterior to \cite{bonnetdavid2001}. However, one big advantage of having Theorem~\ref{t:David-Leger} at disposal is that it implies a series of useful corollaries (above all Corollary~\ref{c:non-terminal=regular}) which cut a lot of technicalities of \cite{bonnetdavid2001}. The proof of Theorem~\ref{t:David-Leger} are based on the monotonicity formulas in Propositions~\ref{p:monotonia-1} and \ref{p:monotonia-2}. 

\medskip

One main player in the proof of Theorem~\ref{t:Bonnet-David} is the ``harmonic conjugate'' \index{harmonic conjugate@harmonic conjugate} $v$, a function with the property that $\nabla v = \nabla u^\perp$ on $\mathbb R^2\setminus K$, and which can be shown (for a nonelementary global minimizer) to have a unique continuous extension to $K$. $v$ is obviously harmonic on $\mathbb R^2\setminus K$. Its existence and some preliminary properties are given in Section~\ref{s:coniugata}: one pivotal property is that $v$ is constant on each connected component of $K$.

Much of the technical work for the proof of Theorem~\ref{t:Bonnet-David} goes into describing the structure of the level sets of $v$. The sections~\ref{s:livelli-coniugata} and \ref{s:livelli-coniugata-2} will prove facts which are valid for all nonelementary global minimizers. The most important are that:
\begin{itemize}
    \item no level set of $v$ contains a loop (a particular case of this statement will actually be shown in the next section: $K$ itself has no loops);
    \item most of them do not have ``terminal points''. 
\end{itemize}    
Section~\ref{s:livelli-coniugata-3} then uses the additional assumption of Theorem~\ref{t:Bonnet-David} (namely that outside a sufficiently large ball $K$ consists of a single connected component) together with Bonnet's monotonicity formula (cf. Proposition~\ref{p:monotonia-0}) to infer that asymptotically at infinity $(u,K)$ is a cracktip. In particular any level set of $v$ has precisely ``two infinite ends'' outside of a sufficiently large disk. This information, combined with the previous analysis allows to conclude that:
\begin{itemize}
    \item up to a change of sign in $u$ the function $v$ achieves its absolute minimum $m_0$ exactly on the unbounded connected component of $K$;
    \item Most of the level sets of $v$ are just single (i.e. nonintersecting) unbounded curves.
\end{itemize}
The first information allows us to define the maximum $\bar m$ of $v$ on $K$, while another simple argument shows that if $\bar m = m_0$ then $K$ is connected and hence it is the cracktip.

\medskip

Arguing by contradiction that $(u,K)$ is not a cracktip we can then introduce the pivotal object in the final argument for Theorem~\ref{t:Bonnet-David}: a connected component $G$ of $K$ where the value of $v$ equals $\bar m$. With a variant of the Bonnet's monotonicity formula we can show that $K$ must have positive length and that no terminal point of $G$ can actually be contained in the closure of $\{v> \bar m\}$, cf. Section~\ref{s:G}.

The final argument comes now from looking at the trace of $u$ as we go ``around $G$'' (which will be shown to be continuous) and is explained in Section~\ref{s:gira} and it is particularly simple to explain when $G$ does not have triple points and it is thus, topologically, a segment with two terminal points. 

\medskip

\begin{figure}
\begin{tikzpicture}
\draw[very thick] (-3, 0) to (3,0);
\draw (-1.5,0) to (-1.5, 3);
\draw (1.5, 0) to (1.5, 3);
\node at (0, 1.5) {$\{v>\bar m\}$};
\node[below] at (-1.5,0) {$p^-$};
\node[below] at (1.5,0) {$p^+$};
\node[below] at (0,0) {$\sigma$};
\end{tikzpicture}
\caption{\label{f:G-semplice} The picture is a visualization of the final argument for Theorem~\ref{t:Bonnet-David} under the simplifying assumption that $G$ does not have triple junctions. The analysis in the Sections~\ref{s:livelli-coniugata}-\ref{s:G} implies that the global minimum and global maximum of the trace of $u$ on $G$ are assumed at two point $p^-$ and $p^+$ ``on the same side of $G$'': the picture shows in particular the level set $\{v= \bar m\}$ departing at those points and delimiting the upper level set $\{v > \bar m\}$, whose closure cannot contain the terminal points of $G$.
The range of the lower trace of $u$ on the segment $\sigma = [p^+,p^-]\subset G$ is thus necessarily contained in the range of the upper trace over the same segment $\sigma$: the intermediate value theorem provides then a point $q\in \sigma$ where the two traces have the same value.}
\end{figure}
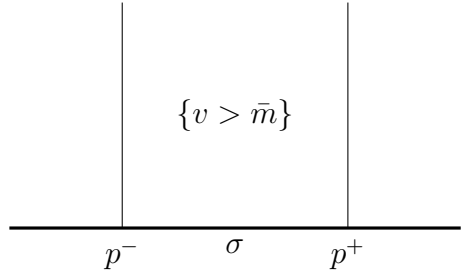

One outcome of the topological description of the level sets of $v$ is that while going around $G$ the trace of $u$ has exactly two local extrema, which are clearly the global minimum point and the global maximum point. This happens because to each local extremum of the trace of $u$ at $G$ corresponds to a distinct end of the level set $\{v=\bar m\}$ at infinity: having proved that such ends are  precisely 2, the trace must have precisely one local minimum and one local maximum. At the global maximum and the global minimum the level set $\{v=\bar m\}$ departs from $G$ as two infinite half-lines, which delimit one connected component of $\mathbb R^2$ where $v$ is above $\bar m$ and one connected component where $v$ is below $\bar m$. 

The other fundamental outcome of the previous analysis is that, since the terminal points of $G$ are not in the closure of $\{v>\bar m\}$, the global minimum and maximum points of the trace of $u$ on $G$ (which for simplicity we denote by $p^-$ and $p^+$) are on the ``same side'' of $G$: a schematic picture of what happens is given in Figure~\ref{f:G-semplice}.
$p^\pm$ delimit a segment $\sigma$ in $G$. Referring to the Figure~\ref{f:G-semplice}, the range of the trace of $u$ on the ``upper side'' of $\sigma$ is $[m,M]$, where $m$ denotes the global minimum and $M$ the global maximum. So the range of the trace in the lower side must be a segment $[m',M']$ strictly contained in $[m,M]$. By the intermediate value theorem there must then be a point $q\in \sigma$ where the upper and lower traces coincide. However this point must be a jump point (cf. Corollary~\ref{c:non-terminal=regular}) and at those points upper and lower traces must necessarily differ. This gives the desired contradiction.

\section{The absence of pockets}\label{s:pockets}

In this section we prove a simple fact valid for all concepts of minimizers of $E_0$, the absence of ``internal pockets''\index{pocket@pocket}, see Figure~\ref{f:pocket}.

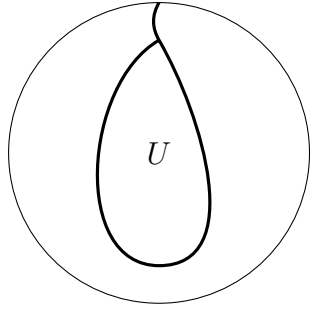
\begin{figure}
\begin{tikzpicture}
\draw (-6,0) circle [radius =2];
\draw[very thick] (-6, 2) to [out = 240, in = 120] (-6, 1.5) to [out = 300, in = 0] (-6, -1.5) to [out = 180, in = 210] (-6, 1.5);
\node at (-6,0) {$U$};
\end{tikzpicture}
\caption{\label{f:pocket}$U$ is a ``pocket'', namely a bounded connected component of $\Omega\setminus K$ which does not intersect the boundary $\partial \Omega$ of the domain of $(u,K)$.}
\end{figure}

\begin{lemma}\label{l:pockets}
Consider a minimizer $(u,K)$ of $E_0$ in $\Omega$. If $U$ is a connected component of $\Omega\setminus K$, then either it is unbounded, or its closure has to intersect $\partial \Omega$.
\end{lemma}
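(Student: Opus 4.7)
I argue by contradiction: assume $U$ is a bounded connected component of $\Omega\setminus K$ with $\overline U\subset\Omega$. A first, easy step is to apply Lemma \ref{l:maximum}(a) in its $\lambda=0$ version on any bounded open $W$ with $\overline U\subset W\subset\subset\Omega$: since $U$ is a connected component of $W\setminus K$ with $U\cap\partial W=\emptyset$, we obtain $\nabla u\equiv 0$ on $U$, hence $u\equiv c$ on $U$ for some constant $c\in\mathbb R$. Moreover $\partial U\subset K$ is nonempty and has positive $\mathcal H^1$-measure, by a soft topological argument (an $\mathcal H^1$-null planar set cannot separate two open sets, so otherwise $\Omega\setminus\partial U$ would be connected and $U$ could not be a component of $\Omega\setminus K\subset\Omega\setminus\partial U$) combined with the density lower bound in Theorem \ref{t:dlb}, which gives the local Ahlfors regularity of $\partial U$.

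The core of the argument is a local construction of a competitor $(K',u')$ with strictly smaller $E_0$-energy. By Corollaries \ref{c:normalized} and \ref{c:a.e.regularity}, $\mathcal H^1$-almost every point of $\partial U$ is a pure-jump point, so I can fix one such $p\in\partial U$. Theorem \ref{t:eps_salto_puro} and Proposition \ref{p:variational identities 2} then provide $\rho_0>0$ such that for $\rho\leq\rho_0$ the set $K\cap B_\rho(p)$ is a single $C^{1,\alpha}$ arc $\Gamma$, with $U$ on one side (where $u\equiv c$) and a single adjacent component $V$ on the other (where $u$ is $C^{1,\alpha}$ up to $\Gamma$, with trace $\phi$). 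The competitor I plan to use coincides with $(K,u)$ outside $B_{\rho/2}(p)$; inside $B_{\rho/2}(p)$ the arc $\Gamma\cap B_{\rho/2}(p)$ is removed from $K$ and the function $u'$ is redefined to be continuous across the removed arc by a smooth interpolation in a thin sleeve of width $\delta$, between the constant $c$ on most of the $U$-side and the $C^{1,\alpha}$ trace $\phi$ on most of the $V$-side. The constancy $u\equiv c$ on the $U$-side and the $C^{1,\alpha}$ control of $\phi$ on the $V$-side allow one to bound the extra Dirichlet energy by roughly $\|\phi-c\|_{L^\infty(\Gamma\cap B_{\rho/2}(p))}^2\,\rho/\delta$, while the length saved by removing the arc is of order $\rho$. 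Balancing $\delta$ appropriately (exploiting that $\phi$ is bounded in terms only of the local $L^\infty$ norm of $u$ on $V$, via the maximum principle for harmonic functions) yields a strict decrease of $E_0$. Since the modification only affects a smooth piece of $K$ and does not alter the number of connected components, the contradiction applies uniformly to absolute, restricted, generalized, and generalized restricted minimizers.

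The main obstacle is the sharp Dirichlet estimate for the interpolation: one must design the sleeve so that its width $\delta$ fits inside $B_{\rho/2}(p)$ while simultaneously dominating the square of the oscillation $\|\phi-c\|_\infty$ of the trace, and one must carefully match the boundary data on $\partial B_{\rho/2}(p)$ so that $(K',u')$ is an admissible pair. The constancy $u\equiv c$ on the $U$-side is the key feature that removes one of the two oscillation contributions and makes the argument go through; without it the Dirichlet cost would be of the same order as the length gain and the competitor would not produce a strict decrease.
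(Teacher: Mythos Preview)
Your overall strategy matches the paper's, but there is a genuine gap in the energy balance. Your Dirichlet estimate for the interpolation is essentially correct: the cost is of order $\|\phi-c\|_{L^\infty(\Gamma\cap B_{\rho/2}(p))}^{2}\,\rho/\delta$. The problem is that $\|\phi-c\|_\infty$ is \emph{not} small. At a pure jump point $p$ the two one-sided traces differ, so $|\phi(p)-c|$ is a fixed positive number $M>0$, and hence $\|\phi-c\|_{L^\infty(\Gamma\cap B_{\rho/2})}\geq M$ for every $\rho$. Your cost is then at least $M^{2}\rho/\delta$, and to beat the length gain $\sim\rho$ you would need $\delta\gtrsim M^{2}$, a fixed positive scale; but $\delta$ must also fit inside $B_{\rho/2}(p)$ with $\rho\leq\rho_0$ the regularity radius at $p$. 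There is no a priori relation between $M^{2}$ and $\rho_0$, so the balancing you describe cannot be carried out. The remark that ``$\phi$ is bounded'' via the maximum principle does not help: boundedness of $\phi-c$ is exactly what gives cost of the \emph{same} order as the gain, not smaller.

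The paper fixes this with a short but essential observation you omit: since $u\equiv c$ on the pocket $U$ and there is no fidelity term, replacing $c$ by any other constant leaves $(K,u_c)$ a minimizer with the same energy. One therefore first resets $c:=\phi(p)=u^{-}(p)$, the trace from the $V$-side at the chosen jump point. Now the jump at $p$ vanishes, and by the $C^{1}$ regularity of the trace (Proposition~\ref{p:variational identities 2}) one has $\|\phi-c\|_{L^\infty(\Gamma\cap B_{\delta}(p))}\leq C\delta$. The interpolation can then be done in a $\delta\times\delta$ box with $|\nabla w|\leq C$ independent of $\delta$, giving Dirichlet cost $O(\delta^{2})$ against a length gain of order $\delta$; for $\delta$ small this is a strict decrease. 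Your last paragraph actually diagnoses the difficulty correctly (``without it the Dirichlet cost would be of the same order as the length gain''), but the feature that removes the obstruction is not merely the constancy of $u$ on $U$ --- it is the freedom to \emph{choose} that constant to kill the jump at $p$.
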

\begin{proof} If the statement were false, then there would be a connected component $U$ of $\Omega\setminus K$ with the property that $U\subset\subset \Omega$. In particular it turns out that $u$ must necessarily be constant on $U$. Moreover, for any given constant $c$, if we define the function
\[
u_c := \left\{
\begin{array}{ll}
u &\mbox{on $\Omega\setminus \overline{U}$}\\
c &\mbox{on $U$,}
\end{array}\right.
\]
then $(u_c,K)$ has the same energy has $(u,K)$ and it is not difficult to check that it must have the same minimizing property of $(u,K)$. Now, observe that $\mathcal{H}^1 (\partial U)>0$ and thus, by Corollary~\ref{c:a.e.regularity}, there is at least one regular jump point $x\in K$, i.e. there is a neighborhood of $x$ where $K$ is a $C^1$ arc. Assume without loss of generality that $x=0$ and that the tangent to $K$ at $0$ is
$\{(x_1, x_2): x_2 =0\}$. In particular, for a sufficiently small $\delta>0$ we have that
$K\cap [-\delta,\delta]^2 = {\rm gr}\, (f)$ for some $C^{1,\alpha}$ function $f:[-\delta, \delta]\to [-\delta, \delta]$ with $f(0) = f'(0)=0$. We next assume, without loss of generality, that 
\[
U\cap [-\delta, \delta]^2
=\{(t,s): s>f(t), s\in [-\delta, \delta]\}\, .
\]
At the same time we know that the restriction of $u$ to $[-\delta, \delta]^2\setminus {\rm gr}\, (f)$ coincide with two $C^{1,\alpha}$ functions $u^\pm$ which can in fact be extended $C^{1,\alpha}$ up to the boundary $K\cap [-\delta, \delta]^2$. $u^+$ in fact is a constant while, by a classical extension theorem, we assume that $u^-$ is extended $C^1$ to the full square $[-\delta, \delta]^2$.

Let now $c= u^-(0)$ and consider $(u_c, K)$. 
We now consider a new
function defined in the following way. Let $\chi: \mathbb R\to [0,1]$ be a smooth function which is identically equal to $1$ on $[1,\infty)$ and identically equal to $0$ on $(-\infty, -1]$ and 
let $\varphi\in C^\infty_c ((-1,1)^2)$ be a function which is identically $1$ on $(-\sfrac12,\sfrac12)^2$. Then we first define
\[
v (x) = c \chi \left(\frac{x_2}{\delta}\right) + u^- (x) \left(1-\chi \left(\frac{x_2}{\delta}\right)\right)\, ,
\]
which is a $C^1$ function on $[-\delta, \delta]^2$. Then on $U$ we define 
\[
w (x) = c \left(1- \varphi \left(\frac{x}{\delta}\right)\right) +
v (x) \varphi \left(\frac{x}{\delta}\right)\, ,
\]
while on the complement of $U\cup K$ we define 
\[
w (x) = u^- (x) \left(1- \varphi \left(\frac{x}{\delta}\right)\right) + v (x) \varphi \left(\frac{x}{\delta}\right)\, .
\]
Observe that the function $w$ coincides with a $C^1$ function on $ Q:= [-\frac{\delta}{2}, \frac{\delta}{2}]\times [-\delta, \delta]$ and we can thus consider $J:= K \setminus Q$, so that $(w, J)$ is a pair in the domain of $E_0$. Note that the pair is obviously a competitor for absolute minimizers. For restricted minimizers observe that we are not creating an additional connected component, while for generalized and generalized restricted minimizers it is easy to see that, if $V$ is a bounded open set such that $V\supset\supset U$, there are no pair of points belonging to distinct connected components of $\Omega\setminus K$ and lying outside $V$ which would belong to the same connected component of $\Omega\setminus J$. 
Finally, observe that $|\nabla w|\leq C$ in $Q$, for some constant $C$ independent 
of $\delta$. So, while
\[
\int_V |\nabla w|^2 \leq \int_V |\nabla u|^2 + C \delta^2
\]
we have
\[
\mathcal{H}^1 (J\cap V) \leq \mathcal{H}^1 (K\cap V) - \delta\, .
\]
For a sufficiently small $\delta$ we then contradict the minimizing property of $(u_c,K)$.
\end{proof}

\section{The David-L\'eger rigidity theorem and consequences}\label{s:monotonia-2}

In this section we prove the second part of Theorem~\ref{t:uniqueness}, which we recall here for the reader's convenience. 

\begin{theorem}\label{t:David-Leger}
Assume $(u, K, \{p_{kl}\})$ is a global generalized minimizer of $E_0$ and $K$ disconnects the plane. Then $(u,K, \{p_{kl}\})$ is either a pure jump or a triple junction.
\end{theorem}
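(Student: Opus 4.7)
By Lemma \ref{l:pockets}, every connected component of $\mathbb{R}^2\setminus K$ is unbounded, so the disconnecting hypothesis produces at least two unbounded components and, in particular, $K$ itself is unbounded. The plan is to produce a blow-down of $(K,u)$ at infinity which is already a pure jump or a triple junction, and then to invoke Corollary \ref{c:unicita-blow-down} to propagate this rigidity back to $(K,u)$.

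Fix $x_0\in K$ and consider the rescalings $K_R:=R^{-1}(K-x_0)$, $u_R(y):=R^{-\sfrac{1}{2}}u(x_0+Ry)$; each is again a global generalized minimizer of $E_0$. By the upper bound of Lemma \ref{l:upper-bound} we have $F(x_0,R)\leq 4\pi$ uniformly in $R$, while the density lower bound of Theorem \ref{t:dlb} gives $\mathcal{H}^1(K_R\cap B_1)\geq\epsilon$. Theorem \ref{t:compactness-2} then delivers, along some subsequence $R_j\uparrow\infty$, a limit global generalized minimizer $(K_\infty,u_\infty,\{p^\infty_{kl}\})$ with $0\in K_\infty$. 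A standard topological argument, combining the Hausdorff convergence $K_{R_j}\to K_\infty$ with the unboundedness of each component of $\mathbb{R}^2\setminus K$, shows that $K_\infty$ itself disconnects $\mathbb{R}^2$: selecting points $y^j_\pm\in\Omega_\pm$ in two distinct unbounded components of $\mathbb{R}^2\setminus K$ with $|y^j_\pm-x_0|=R_j$, the rescaled points $(y^j_\pm-x_0)/R_j$ lie on the unit sphere and, after passing to a further subsequence, converge to limit points in distinct connected components of $\mathbb{R}^2\setminus K_\infty$. In particular $(K_\infty,u_\infty)$ is not a constant.

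The crucial step is to prove that $(K_\infty,u_\infty)$ is an elementary global minimizer in the sense of Theorem \ref{t:class-global}, for then the disconnecting property just established forces it to be a pure jump or a triple junction, and Corollary \ref{c:unicita-blow-down} applied along the sequence $R_j$ yields the conclusion for $(K,u,\{p_{kl}\})$. To this end I rely on Proposition \ref{p:monotonia-2}: the quantity $F(x_0,r)=2d(x_0,r)+\ell(x_0,r)/r$ is nondecreasing at a.e. radius at which the admissibility condition on $N(r)$ holds, and it is bounded above by $4\pi$ through \eqref{e:upper bound}, so $F_\infty:=\lim_{r\to\infty}F(x_0,r)$ exists. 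By the scale invariance of $F$, one has $F(0,r)\equiv F_\infty$ for every $r>0$ on the blow-down. The rigidity clause of Proposition \ref{p:monotonia-2} applied on every ball $B_{r_0}(0)$ then identifies $(K_\infty,u_\infty)$ with an elementary global minimizer on $B_{r_0}$; since the three elementary types are distinguished by the value of $F_\infty$ and $F_\infty$ does not depend on $r_0$, these local identifications are compatible and $(K_\infty,u_\infty)$ is globally elementary.

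The main obstacle is the admissibility condition on $N(r)$ required by Proposition \ref{p:monotonia-2}, which must be verified at a set of full measure both for $(K,u)$ (to extract the limit $F_\infty$ via monotonicity) and for the blow-down (to apply the rigidity clause). The worrisome scenario is that $N(r)=2$ on a set of positive measure with the two intersection points lying in distinct connected components of $K$. I expect this can be ruled out by combining the disconnecting hypothesis with the density lower bound and a competitor argument in the spirit of the proof of Proposition \ref{p:monotonia-1}: were such a configuration to persist on a nondegenerate range of $r$, replacing the two far-apart intersection arcs by radial segments plus harmonic extensions on the resulting sectors would strictly decrease the energy, contradicting minimality. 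The parallel analysis transferred to the blow-down, together with the fact that the scale-invariance of $K_\infty$ forces each of its connected components to be a union of rays from $0$ (so that $K_\infty$ is in fact connected), makes the admissibility condition hold at every $r>0$ for $(K_\infty,u_\infty)$ and closes the argument.
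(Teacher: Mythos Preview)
Your overall strategy matches the paper's: use Proposition~\ref{p:monotonia-2} to show $F(x_0,\cdot)$ is eventually monotone, pass to a blow-down where $F$ is constant, apply the rigidity clause of Proposition~\ref{p:monotonia-2} to identify the blow-down as elementary, then invoke Corollary~\ref{c:unicita-blow-down}. The gaps are in how you verify the admissibility hypotheses of Proposition~\ref{p:monotonia-2}.

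For $(K,u)$ itself, the $N(r)=2$ case is handled in the paper by a clean topological observation you are missing. If $\partial B_r\cap K=\{p_1,p_2\}$, the two arcs $\gamma_1,\gamma_2$ of $\partial B_r\setminus K$ must lie in distinct components $U_1,U_2$ of $\mathbb{R}^2\setminus K$ (both $U_i$ are unbounded by Lemma~\ref{l:pockets} and both meet $\partial B_r$). This forces $p_1,p_2$ to lie in the same connected component of $K\cap\overline{B}_r$: otherwise one could go around the component containing $p_1$ inside $B_r$ without touching $K$, connecting $\gamma_1$ to $\gamma_2$ and contradicting $U_1\neq U_2$. No competitor or energy argument is needed, and your proposed competitor construction would not rule out this configuration anyway, since Remark~\ref{r:possible-monotonicity} shows monotonicity can genuinely fail when the topological condition is absent.

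For the blow-down your argument is circular. You assert that ``the scale-invariance of $K_\infty$ forces each of its connected components to be a union of rays,'' but $K_\infty$ is not a priori a cone: constancy of $F$ does not by itself give conical structure, that is precisely what the rigidity clause of Proposition~\ref{p:monotonia-2} yields \emph{after} admissibility is verified. The paper instead proves a claim (Cl): $J=K_\infty$ still disconnects $\mathbb{R}^2$, and for no $r$ is $B_r\setminus J$ contained in a single component. This is shown by tracking the boundary $\partial V$ of (the interior of the closure of) one component $U_1$, passing its rescalings to a Hausdorff limit $\tilde{J}\subset J$, and using Corollary~\ref{c:a.e.regularity} at a pure jump point of $\tilde{J}$ to see that the two sides of $J$ there come from distinct components of $\mathbb{R}^2\setminus K_j$ and hence of $\mathbb{R}^2\setminus J$. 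Once (Cl) is established, the same topological observation as above verifies admissibility for the blow-down at a.e.\ radius, closing the loop. Your sketch that limits of points $y^j_\pm/R_j$ land in distinct components of $\mathbb{R}^2\setminus K_\infty$ is not sufficient: the limits might lie on $K_\infty$, and even if they do not, Hausdorff convergence alone does not prevent components from merging in the limit.
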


Before coming to the proof of Theorem~\ref{t:David-Leger}, we register some very important corollaries.

\begin{corollary}\label{c:non-terminal=regular}
Statement (ii) of Theorem~\ref{t:structure} holds. 
\end{corollary}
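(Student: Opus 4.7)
The plan is to combine a blow-up analysis at $p$ with Theorem \ref{t:David-Leger} (together with Theorem \ref{t:uniqueness}) and the $\varepsilon$-regularity theorems of Section \ref{s:epsilon reg} (including case (a) of Theorem \ref{t:main}, proved in Chapter \ref{ch:crack}). Choose any sequence $r_j\downarrow 0$; by Theorem \ref{t:minimizers compactness}, a subsequence of the rescalings $(K_{p,r_j},u_{p,r_j})$ converges to a global generalized (or generalized restricted) minimizer $(K_\infty,u_\infty,\{p_{kl}\})$, and $0\in K_\infty$ by the density lower bound in Theorem \ref{t:dlb}.

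The nonterminal hypothesis provides a continuous injective arc $\gamma\colon (-1,1)\to K$ with $\gamma(0)=p$. For every small $r>0$ the connected component of $\gamma^{-1}(\overline{B_r(p)})$ containing $0$ is a closed interval whose endpoints are sent by $\gamma$ to distinct points of $\partial B_r(p)$, so $K\cap\overline{B_r(p)}$ contains a Jordan arc joining two points of $\partial B_r(p)$ through $p$. Rescaling by $1/r_j$, for every fixed $R>0$ and $j$ large enough the set $K_{p,r_j}\cap\overline{B_R}$ contains a Jordan arc joining two distinct points of $\partial B_R$ and passing through the origin. A diagonal argument on local Hausdorff convergence then yields in $K_\infty$ an unbounded closed connected subset $C$ through the origin which, at every sufficiently large scale, cuts balls centred at $0$.

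The central step is to classify $(K_\infty,u_\infty)$. Using the separating behaviour inherited from $C$ I would aim to show that $K_\infty$ disconnects $\mathbb R^2$, and then invoke Theorem \ref{t:David-Leger} to conclude that $(K_\infty,u_\infty)$ is one of the elementary generalized minimizers of Theorem \ref{t:class-global}. A weaker fallback, sufficient for the final step, is to establish that $C$ is the only unbounded connected component of $K_\infty$, in which case the first part of Theorem \ref{t:uniqueness} yields that $(K_\infty,u_\infty)$ is either elementary or a cracktip. I expect this to be the main obstacle, since Hausdorff limits of Jordan arcs need not be Jordan arcs and therefore the topological separation property of $C$ must be extracted carefully; the natural route is to compare cross-sections $\partial B_R\cap K_{p,r_j}$ with those of $K_\infty$ and use the density lower bound Theorem \ref{t:dlb} together with Lemma \ref{l:pockets} to exclude pathologies.

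Once the classification is in hand, I would translate the information back to the original scale via the $\varepsilon$-regularity theorems. If $K_\infty$ is a line through the origin, Theorem \ref{t:eps_salto_puro} applied at small scales around $p$ makes $p$ a pure jump. If $K_\infty$ is a triple junction, then either its vertex is $0$ and Theorem \ref{t:eps_tripunto} makes $p$ a triple junction, or the vertex is off the origin, in which case near $0$ the set $K_\infty$ is a single line and $p$ is again a pure jump by Theorem \ref{t:eps_salto_puro}. Finally, if $K_\infty$ is a cracktip with tip $y_0$, then necessarily $y_0\neq 0$, because $y_0=0$ together with case (a) of Theorem \ref{t:main} would force $p$ to be a regular loose end, hence terminal, contradicting the standing hypothesis; thus near the origin $K_\infty$ reduces to a line and $p$ is a pure jump. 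In every admissible scenario $p$ is a pure jump or a triple junction, which is exactly statement (ii) of Theorem \ref{t:structure}.
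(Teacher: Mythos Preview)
Your primary route---blow up, show $K_\infty$ disconnects $\mathbb R^2$, then apply Theorem \ref{t:David-Leger}---is exactly the paper's strategy, and your final paragraph translating back via $\varepsilon$-regularity is correct (though the cracktip case never arises once disconnection is established). What you are missing is the concrete mechanism for the disconnection. The paper does \emph{not} try to show that the limiting arc $C$ separates the plane directly; as you note, Hausdorff limits of Jordan arcs need not separate. Instead it applies Jordan's theorem once, at the \emph{original} scale: the arc $\gamma((s_-,s_+))$ splits some fixed $B_r(p)$ into two open regions $\Omega^\pm$, and one then rescales the \emph{regions} (not just the arc) to obtain $\Omega^\pm_k\subset B_1$. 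Passing to a subsequence, these converge to open sets $\Omega^\pm_\infty$. The crucial step is to pick a pure-jump point $q$ of $K_\infty$ on the limiting arc $\Gamma$ (such points are dense by Corollary \ref{c:a.e.regularity}); the $\varepsilon$-regularity at pure jumps then forces $K_{p,r_k}\cap B_\rho(q)=\Gamma_k\cap B_\rho(q)$ to be a single smooth arc for large $k$, and since $\Gamma_k$ separates $\Omega^+_k$ from $\Omega^-_k$, the two halves of $B_\rho(q)$ lie in $\Omega^+_\infty$ and $\Omega^-_\infty$ respectively, so both are nonempty. A short path argument then shows that points of $\Omega^+_\infty\setminus K_\infty$ and $\Omega^-_\infty\setminus K_\infty$ lie in different components of $\mathbb R^2\setminus K_\infty$. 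Your idea of ``comparing cross-sections $\partial B_R\cap K_{p,r_j}$'' would not by itself yield this.

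Your fallback route is circular in the paper's logical order. The first part of Theorem \ref{t:uniqueness} is Theorem \ref{t:Bonnet-David}, whose proof in Section \ref{s:gira} (and the supporting Lemma \ref{l:smooth connections} and Proposition \ref{p:componente-speciale}) explicitly invokes Corollary \ref{c:non-terminal=regular} to classify points of $G$. Since Theorem \ref{t:Bonnet-David} is in turn needed for Corollary \ref{c:connectedness} and hence for case (a) of Theorem \ref{t:main} in Chapter \ref{ch:crack}, you cannot use either of these to prove the present corollary.
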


\begin{corollary}\label{c:global-two-connected-components}
If $(u,K, \{p_{kl}\})$ is a global generalized minimizer of $E_0$, then $K$ cannot have two distinct unbounded connected components.
\end{corollary}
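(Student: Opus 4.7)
The plan is to argue by contradiction, reducing to Theorem~\ref{t:David-Leger}. Assume $(K,u,\{p_{kl}\})$ is a global generalized minimizer of $E_0$ whose set $K$ has two distinct unbounded connected components $K_1$ and $K_2$. Since both a pure jump and a triple junction have $K$ consisting of a single connected set (a straight line or a propeller, cf.\ Theorem~\ref{t:class-global}), the goal is to show that $K$ disconnects $\mathbb{R}^2$: once this is established, Theorem~\ref{t:David-Leger} forces $(K,u,\{p_{kl}\})$ to be a pure jump or a triple junction, contradicting the two-component assumption.

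First I would record a purely topological consequence of Lemma~\ref{l:pockets} applied with $\Omega=\mathbb{R}^2$: every connected component of $\mathbb{R}^2\setminus K$ is unbounded, so $K$ cannot contain any Jordan loop, as the interior of such a loop would produce a bounded connected component of $\mathbb{R}^2\setminus K$. Consequently each $K_i$ is tree-like. Next, the heart of the argument is to pass to a blow-down at infinity: via Theorem~\ref{t:compactness-2}, along a subsequence $R_j\uparrow\infty$ the rescalings $K_{0,R_j}=R_j^{-1}K$ converge locally in Hausdorff distance to a closed set $K_\infty$ and $(K_\infty,u_\infty,\{q_{kl}\})$ is a global generalized minimizer. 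Since each $K_i$ is closed, connected, and unbounded, it meets every large sphere $\partial B_{R_j}$, and the standard fact that Hausdorff limits of closed connected subsets of a fixed compact region are connected produces two closed connected unbounded subsets $L_1,L_2\subseteq K_\infty$, each intersecting $\partial B_R$ for every $R\geq 1$.

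To conclude I would combine the density lower bound of Theorem~\ref{t:dlb} with the length upper bound $\mathcal{H}^1(K\cap B_R)\leq 2\pi R$: these Ahlfors-regularity estimates prevent $L_1$ and $L_2$ from collapsing onto a single subset in the limit, so that $K_\infty$ still has two distinct unbounded connected components, and the absence of loops (preserved in the Hausdorff limit by Lemma~\ref{l:pockets} applied to $K_\infty$) then shows that the union $L_1\cup L_2$ produces a disconnection of $\mathbb{R}^2\setminus K_\infty$. Transferring this disconnection back to $K$ (via the local Hausdorff convergence, which ensures that the separating behaviour at large scales is already present in $K$ itself at scales $R_j$) yields that $\mathbb{R}^2\setminus K$ is disconnected, at which point Theorem~\ref{t:David-Leger} closes the argument. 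The main obstacle is precisely this non-collapsing step: two disjoint unbounded closed connected sets in $\mathbb{R}^2$ may in principle come arbitrarily close to each other at infinity, and one must carefully exploit the quantitative density bounds and the tree-like structure of each $K_i$ to rule out the pathological configuration in which $K_1$ and $K_2$ share a common asymptotic end, in which case their complement would remain connected.
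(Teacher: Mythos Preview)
Your overall strategy---blow down and appeal to Theorem~\ref{t:David-Leger}---matches the paper's, but two of your key steps do not work as written, and you are missing the tool that the paper uses to finish.

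First, the step you flag as ``the main obstacle'' is a genuine gap and not one that Ahlfors regularity or the tree structure can close. Nothing prevents $K_1$ and $K_2$ from merging into a single connected subset of $K_\infty$ under the blow-down; the density bounds only control total length, not the separation between components. More seriously, even if you could keep $L_1$ and $L_2$ disjoint in the limit, the assertion that ``the union $L_1\cup L_2$ then produces a disconnection of $\mathbb{R}^2\setminus K_\infty$'' is false in general: two disjoint closed connected unbounded tree-like sets need not separate the plane (take two half-lines pointing in the same direction, or the two pieces of $\{x_2=0\}\setminus (-1,1)\times\{0\}$). So the disconnection of $\mathbb{R}^2\setminus K_\infty$ cannot be read off from the mere existence of two limit components.

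The paper bypasses both problems. Rather than tracking components, it fixes injective arcs $\gamma_k\subset K_1$ and $\eta_k\subset K_2$ joining $\partial B_1$ to $\partial B_{k^2}$; by the Jordan curve theorem these two arcs split the annulus $B_{k^2}\setminus\overline{B}_1$ into two pieces $U_k^\pm$. After rescaling by $k$ and passing to the limit one gets a connected set $\Gamma\subset K_\infty$ and limit open sets $\Omega^\pm_\infty$. The crucial point is that at a pure jump point $q\in\Gamma$ the $\varepsilon$-regularity theory forces the rescaled $K_{0,k}\cap B_\rho(q)$ to be a single smooth arc which must coincide with a piece of $\gamma_k$ or $\eta_k$; this guarantees both $\Omega^\pm_\infty$ are nonempty and that points on opposite sides lie in different components of $\mathbb{R}^2\setminus K_\infty$. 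This is the same mechanism used in the proof of Corollary~\ref{c:non-terminal=regular}. Finally, the paper applies Theorem~\ref{t:David-Leger} to $K_\infty$, not to $K$, and then invokes Corollary~\ref{c:unicita-blow-down} to conclude that $(K,u)$ itself is elementary---this replaces your unjustified ``transfer back'' step.
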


\begin{corollary}\label{c:no-vanishing}
If $(u, K, \{p_{kl}\})$ is a global generalized minimizer and $\nabla u$ vanishes on some open set, then $(u, K, \{p_{kl}\})$ is an elementary global minimizer.
\end{corollary}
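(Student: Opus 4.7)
The plan is to distinguish two cases according to whether $K$ disconnects $\mathbb R^2$ and, in both, to reduce the statement to results already established in the chapter. Let $\Omega$ be a nonempty open set on which $\nabla u\equiv 0$. Since $u$ is only defined (up to the normalizations) on $\mathbb R^2\setminus K$, we may assume $\Omega\subset\mathbb R^2\setminus K$, and consequently $\Omega$ lies inside a connected component $V$ of $\mathbb R^2\setminus K$.

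First I would argue that $\nabla u$ vanishes on the whole of $V$. By Corollary \ref{c:normalized} applied with $\lambda=0$, $u$ is harmonic on each connected component of $\mathbb R^2\setminus K$; in particular each partial derivative $\partial_{x_i} u$ is harmonic on $V$ and vanishes on the nonempty open subset $\Omega$. The classical identity principle for harmonic functions then forces $\nabla u\equiv 0$ throughout $V$.

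Next I would split into the two cases. If $\mathbb R^2\setminus K$ is connected, then $V=\mathbb R^2\setminus K$ and the previous step already gives $\nabla u\equiv 0$ everywhere on $\mathbb R^2\setminus K$; hence the Dirichlet energy vanishes identically on all compact sets, and $(K,u,\{p_{kl}\})$ is elementary by Definition \ref{d:generalized}. If instead $K$ disconnects $\mathbb R^2$, I would invoke Theorem \ref{t:David-Leger} directly: under that hypothesis any global generalized minimizer is either a pure jump or a triple junction, and both are elementary by Theorem \ref{t:class-global}.

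There is essentially no obstacle, since the two key inputs (harmonicity off $K$ from Corollary \ref{c:normalized}, and the disconnecting case from Theorem \ref{t:David-Leger}) are both already available in the excerpt; the only point requiring a line of justification is the identity-principle step, which is standard for harmonic functions. The only subtlety worth flagging is that the argument is stated for generalized minimizers of $E_0$ (absolute or restricted), which is precisely the framework in which the cited results apply.
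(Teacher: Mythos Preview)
Your proposal is correct and follows essentially the same approach as the paper: split according to whether $K$ disconnects $\mathbb R^2$, invoke Theorem~\ref{t:David-Leger} in the disconnecting case, and use unique continuation for harmonic functions in the connected case. The only cosmetic difference is that the paper, in the connected case, goes one step further to observe that $K$ must in fact be empty (since the classification in Theorem~\ref{t:class-global} forces a nonempty $K$ with vanishing Dirichlet energy to be a line or a propeller, both of which disconnect), whereas you stop---legitimately---at the observation that vanishing Dirichlet energy is exactly the definition of an elementary minimizer.
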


We start by giving a proof of the theorem, which relies on Proposition~\ref{p:monotonia-2}. Since the latter has not yet been proved, we will follow with its argument, and hence we will prove the three corollaries above.

\begin{proof}[Proof of Theorem~\ref{t:David-Leger}]
Let $U_1$ and $U_2$ be two connected components of $\mathbb R^2\setminus K$. First of all observe that, by Lemma~\ref{l:pockets}, both $U_i$ are unbounded. In particular, for every $r$ sufficiently large $B_r\cap U_i\neq \emptyset$ for both $i$'s. Again by Lemma~\ref{l:pockets}, $\partial B_r \cap U_i\neq \emptyset$ for both $i$'s as well. It thus turns out that $K\cap \partial B_r$ has cardinality at least $2$ for all $r$ sufficiently large. If the cardinality is precisely $2$, then $\partial B_r \setminus K$ consists of two arcs $\gamma_1$ and $\gamma_2$ and it turns out that one of them belongs to $U_1$ while the other to $U_2$. But then $K\cap \partial B_r$ must belong to the same connected component of $K\cap \overline{B}_r$.

Thus, in any case for all $r$ sufficiently large we fall under the assumption of Proposition~\ref{p:monotonia-2}, which implies that the map $r\mapsto F(r)$ is monotone for $r$ sufficiently large. Let $F_0$ be its limit as $r\uparrow \infty$, and consider the ``blow-downs'' of $(u,K)$, namely any generalized global minimizer $(v,J, \{q_{kl}\})$ which is the limit of a sequence of rescalings $(u_{0,r_j}, K_{0, r_j})$ for some $r_j\uparrow \infty$. We then conclude that the function
\[
r\mapsto \frac{2}{r} \int_{B_r\setminus J} |\nabla v|^2 + \frac{\mathcal{H}^1 (B_r \cap J)}{r}
\]
has the constant value $F_0$. We will show below that:
\begin{itemize}
\item[(Cl)] $J$ disconnects $\mathbb R^2$, and there is no $r>0$ for which $B_r \setminus J$ belongs to the same connected component of $\mathbb R^2\setminus J$.
\end{itemize}
Assuming the claim, we can argue as above to conclude that one of the two assumptions (i) and (ii) of Proposition~\ref{p:monotonia-2} holds for a.e. $r>0$. From the second part of the proposition we infer that $(v,J)$ is an elementary global minimizer, with $J$ that is either a straight line passing through the origin, or the union of three half lines meeting at the origin at equal angles. However, in one case $F_0 =2$ and in the other $F_0=3$. We could now apply Corollary~\ref{c:unicita-blow-down} to conclude that $(u,K)$ itself is an elementary global minimizer.

In order to show (Cl), consider the set $V := {\rm Int}\, \overline{U}_1$ and the (local) Hausdorff limit $\tilde{J}$ of $\partial V_{0,r_j}$ (which exists up to subsequences). Clearly $\tilde{J}\subset J$. Fix moreover $\rho>0$ and observe that for every sufficiently large $j$ the set $B_{\rho/2} \setminus V_{0, r_j}$ is nonempty because $B_{\rho r_j/2} \cap U_2$ is not empty.
So, for each $\rho >0$ there is at least one point $p=p(\rho)\in \tilde{J}\cap B_r$. By Corollary~\ref{c:a.e.regularity}, a.e. such $p$ is a pure jump point. So there is a $\delta>0$ such that $\tilde{J}\cap B_{2\delta} (p)$ divides $B_{2\delta} (p)$ in two regions
$W^+$ and $W^-$. Again by Corollary~\ref{c:a.e.regularity}, for $j$ sufficiently large the same statement is correct for 
$K_{0, r_j} \cap B_\delta (p)$ and moreover $K_{0, r_j} \cap B_\delta (p)$ converges smoothly to $J\cap B_r$. Note that $K_{0, r_j}\cap B_\delta (p)$ must contain a point of $\partial V_{0, r_j}$ for all $j$ large enough. But then the smoothness implies that $K_{0, r_j}\cap B_\delta (p)\subset \partial V_{0,r_j}$. One of the two connected components of $K_{0, r_j}$ belongs then to $V_{0, r_j}$, while the other is contained in a different connected component of $\mathbb R^2\setminus K_{0,j}$: this is a consequence of the definition of $V$ as formed by the interior points of the closure of $U_1$. We thus infer that $W^+$ and $W^-$ cannot belong to the same connected component of $\mathbb R^2\setminus J$. Otherwise there would be an arc $\gamma$ with endpoints $p^\pm \in W^{\pm}$, which is at positive distance from $J$. By the Hausdorff convergence, such path would be contained in $\mathbb R^2\setminus K_{0, r_j}$ for all $j$ large enough, implying then that $B_\delta (p)\setminus K_{0,r_j}$ is contained in the same connected component of $\mathbb R^2\setminus K_{0, r_j}$. This is a contradiction and hence completes the proof.
\end{proof}

\subsection{Proof of Proposition~\ref{p:monotonia-2}} 
Without loss of generality we assume $x=0$, and as usual we will drop the base point in the notation of all the relevant quantities.

First of all we argue as in the proof of Proposition~\ref{p:monotonia-1} to conclude
\begin{equation}\label{e:F'-2}
r^2 F' (r) = 2 r \int_{\partial B_r\setminus K} |\nabla u|^2 + r \sum_{p\in \partial B_r \cap K} \frac{1}{e(p)\cdot n (p)} - 2 D(r) - \ell (r)\, 
\end{equation}
and
\begin{align}
r^2 F' (r) 
\geq & 3 r \int_{\partial B_r \setminus K} \left(\frac{\partial u}{\partial \tau}\right)^2 + r \int_{\partial B_r\setminus K} \left(\frac{\partial u}{\partial n}\right)^2
+ 2 r N(r) - 2 E_0 (u,K,B_r)\, ,\label{e:3-tau-bound-2}
\end{align}
Note however that \eqref{e:DLMS} can be used to derive also
\begin{align}
r^2 F' (r) 
\geq & r \int_{\partial B_r \setminus K} \left(\frac{\partial u}{\partial \tau}\right)^2 + 3r \int_{\partial B_r\setminus K} \left(\frac{\partial u}{\partial n}\right)^2 - 2D(r)\, ,\label{e:3-nu-bound}
\end{align}
In the rest of the proof we consider several competitors for $(u,K)$ in $B_r$, all constructed in the following fashion:
\begin{itemize}
    \item $(w,J)$ coincides with $(u,K)$ on $\Omega\setminus \overline{B}_r$;
    \item $J\cap \overline{B}_r$ is connected and consists of finitely many segments;
    \item $J\cap \partial B_r\supset K\cap \partial B_r$;
    \item $J\cap B_r$ partitions $B_r$ in finitely many connected components $\Omega_i$; 
    \item either $\mathcal{H}^1 (\overline{\Omega_i}\cap \partial B_r)=0$, and $w$ is defined to be constant on $\Omega_i$, or $\overline{\Omega}_i\cap \partial B_r$ is a closed arc $\beta_i$ which intersects only one connected component of $\partial B_r\setminus K$, and in that case we use Lemma~\ref{l:extension} to extend $u|_{\beta_i}$ to $\Omega_i$.
\end{itemize}
Since $J$ does not increase the number of connected components of $K$ and we can apply Lemma~\ref{l:ciao_componenti}, the pair $(w,J)$ is a valid competitor under all our assumptions and thus we can infer
\begin{equation}\label{e:competitore}
E_0 (u,K,B_r) \leq E_0 (w,J, B_r) \leq \mathcal{H}^1 (J\cap \overline{B}_r) +
\frac{\alpha (J)}{\pi} r \int_{\partial B_r \setminus K} \left(\frac{\partial u}{\partial \tau}\right)^2\, ,
\end{equation}
where $\alpha (J)$ is the least real number such that each $\Omega_i$ is contained in an {\em open} circular sector of angle $\alpha (J)$ centred in the origin. Observe that in the extreme case $\alpha (J)=2\pi$, in order to apply Lemma~\ref{l:ciao_componenti}, we need each $\Omega_i$ to be contained in a ``slit domain''. i.e. in $B_r\setminus [0,q_i]$ for some $q_i\in \partial B_r$.

In what follows, when invoking \eqref{e:competitore} we will then just need to specify $J\cap\overline{B}_r$ and estimate $\alpha (J)$ and $\mathcal{H}^1 (J\cap \overline{B}_r)$. Moreover, since $J$ coincides with $K$ outside the closed disk $\overline{B}_r$, with a slight abuse of notation we will just refer to $J$ for the piece inside the disk itself. 

We next focus on proving $F'(r) \geq 0$, leaving the discussion of the implications of $F'\equiv 0$ to the very end. Observe that we can always resort to Proposition~\ref{p:monotonia-2} to prove $F' (r) \geq 0$ when the largest arc of $\partial B_r\setminus K$ has length at most $\frac{3}{2} \pi r$. We therefore assume from now on that:
\begin{itemize}
    \item[(A)] $\partial B_r \cap K$ is contained in a subarc of $\partial B_r$ which has length no larger than $\frac{\pi r}{2}$.
\end{itemize}

\medskip

{\bf Case $N (r)\geq 4$.} By applying a rotation (and because of (A)) we can assume that $K\cap \partial B_r=\{p_1, \ldots, p_N\}$ are contained in the quadrant $\{x_1\geq 0, x_2\geq 0\}$. Without loss of generality assume $p_1 = (r,0)$ and order $p_2, \ldots, p_N$ counterclockwise, moreover denote by $q$ the point $(0,r)$. We then use \eqref{e:competitore} with $J=[0,p_1]\cup [p_1,p_2]\cup \ldots \cup [p_{N-1}, p_N]\cup [p_N,q]\cup [q,0]$ (cf. Figure~\ref{f:N>=4}). Observe that $\mathcal{H}^1 (J) < 2r+\frac{\pi r}{2} < 4r \leq N(r)r$ and that $\alpha (J) = \frac{3\pi}{2}$, so that 
\[
E_0 (u, K, B_r) < N (r) r + \frac{3}{2} \int_{\partial B_r \setminus K} \left(\frac{\partial u}{\partial \tau}\right)^2
\]
We then conclude from \eqref{e:3-tau-bound-2} that $r^2 F'(r)>0$.

\begin{figure}
\begin{tikzpicture}
\draw (0,0) circle [radius = 3];
\draw[very thick] (0,0) -- (0,3) -- (1,{sqrt(8)}) -- (2,{sqrt(5)}) -- (2.7,{sqrt(9-2.7^2)})-- (3,0) -- (0,0);
\node[right] at (3,0) {$p_1$};
\node[above] at (0,3) {$q$};
\node[above right] at (1,{sqrt(8)}) {$p_4$};
\node[above] at (2,{sqrt(5)}) {$p_3$};
\node[above right] at (2.7,{sqrt(9-2.7^2)}) {$p_2$};
\end{tikzpicture}
\caption{The set $J$ in an example where $N(r)=4$ and $K\cap \partial B_r$ lies in the first quadrant.\label{f:N>=4}}
\end{figure}
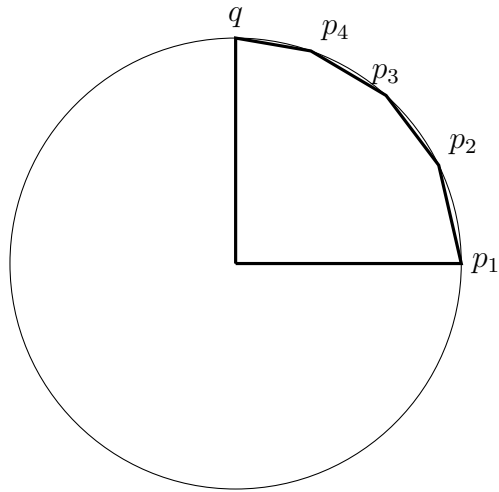

\medskip

{\bf Case $N(r)=3$.} Let $\{p_+, p_-, q\}= K \cap \partial B_r$ and once again we invoke (A) and the possibility of rotating the domain to assume that $p_\pm = ( c,\pm\sqrt{r^2-c^2})$ for some $r\frac{\sqrt{2}}{2}\leq c \leq r$ and that $q = (d,\sqrt{r^2-d^2})$ for some $0< c\leq d$, cf. Figure~\ref{f:N=3}. We now use \eqref{e:competitore} with two distinct $(w,J)$: the corresponding sets $J_1$ and $J_2$ are specified in the picture Figure~\ref{f:N=3} 

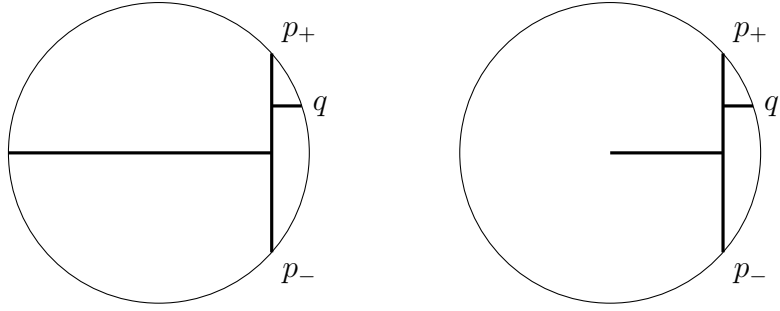
\begin{figure}
\begin{tikzpicture}
\draw (-3,0) circle [radius = 2];
\draw (3,0) circle [radius=2];
\draw[very thick] (-1.5,{sqrt(4-1.5^2)}) -- (-1.5,{-sqrt(4-1.5^2)});
\draw[very thick] (4.5,{sqrt(4-1.5^2)}) -- (4.5,{-sqrt(4-1.5^2)});
\draw[very thick] (-5,0) -- (-1.5,0);
\draw[very thick] (3,0) -- (4.5,0);
\draw[very thick] (-1.5, {sqrt(4-1.9^2)}) -- (-1.1,{sqrt(4-1.9^2)});
\draw[very thick] (4.5, {sqrt(4-1.9^2)}) -- (4.9,{sqrt(4-1.9^2)});
\node[above right] at ((-1.5,{sqrt(4-1.5^2)}) {$p_+$};
\node[right] at (-1.1,{sqrt(4-1.9^2)}) {$q$};
\node[below right] at (-1.5,{-sqrt(4-1.5^2)}) {$p_-$};
\node[above right] at ((4.5,{sqrt(4-1.5^2)}) {$p_+$};
\node[right] at (4.9,{sqrt(4-1.9^2)}) {$q$};
\node[below right] at (4.5,{-sqrt(4-1.5^2)}) {$p_-$};
\end{tikzpicture}
\caption{The sets $J_1$ and $J_2$ when $N(r)=3$.\label{f:N=3}}
\end{figure}

Observe that
\begin{align*}
\mathcal{H}^1 (J_1) & \leq 2r + \sqrt{2}r\\
\mathcal{H}^1 (J_2) &\leq r+\sqrt{2}r\\
\alpha (J_1) &=\pi\\
\alpha (J_2) &=2\pi\, .
\end{align*}
Apply now \eqref{e:competitore} with the two distinct competitors, average the inequalities and use \eqref{e:3-tau-bound-2} to conclude $F'(r)>0$.

\medskip

{\bf Case $N(r)=2$.} Let $\gamma_1$ and $\gamma_2$ be the two arcs delimited by $\partial B_r \cap K$ and again assume without loss of generality that $\omega := \frac{1}{\pi r}\max \{\ell (\gamma_i)\}\geq \frac{3}{2}$. Observe first that, by \eqref{e:outer},
\[
D (r) = \sum_i \int_{\gamma_i} u \frac{\partial u}{\partial n}\, .
\]
Next, let $c_i$ be the average of $u$ on $\gamma_i$ and recall Corollary~\ref{c:Bonnet} to estimate
\begin{align*}
D(r) = & \sum_i \int_{\gamma_i} (u-c_i) \frac{\partial u}{\partial n}
\leq \left(\sum_i \int_{\gamma_i} (u-c_i)^2\right)^{\frac{1}{2}} \left(\int_{\partial B_r\setminus K} \left(\frac{\partial u}{\partial n}\right)^2\right)^{\frac{1}{2}}\\
\leq & \omega r\underbrace{\left(\int_{\partial B_r\setminus K} \left(\frac{\partial u}{\partial \tau}\right)^2\right)^{\frac{1}{2}}}_{=: a}
\underbrace{\left(\int_{\partial B_r\setminus K} \left(\frac{\partial u}{\partial n} \right)^2\right)^{\frac{1}{2}}}_{=:b}\, .
\end{align*}
In particular, using \eqref{e:3-nu-bound} we conclude
\begin{equation}\label{e:poli-1}
r F' (r) \geq a^2 + 3b^2 - 2 \omega ab =: P_1 (\omega,a,b)\, .
\end{equation}
Next we use \eqref{e:3-tau-bound-2} with two competitors $J_1$ and $J_2$ constructed in the following fashion.
Without loss of generality we assume $\partial B_r = \{p_+, p_-\}$ where $p_\pm$ are, as in the previous step, symmetric with respect to the $x_1$-axis and in the quadrant $\{x_1>0, \pm x_2>0\}$, respectively. We then let $J_1$ be the connected set of minimal length joining the origin with the two points $p_\pm$ while, after setting $\bar c=(-r,0)$, we let $J_2 = J_1 \cup [\bar c,0]$, cf. Figure~\ref{f:N=2}. Observe that $J_1$ is easily determined as the union of the three segments joining $0, p_+$ and $p_-$ to the unique point $c$ where such segments form equal angles.

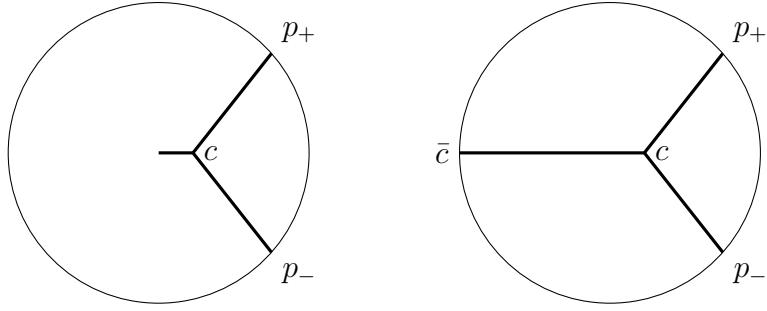
\begin{figure}
\begin{tikzpicture}
\draw (-3,0) circle [radius = 2];
\draw (3,0) circle [radius=2];
\node[above right] at ((-1.5,{sqrt(4-1.5^2)}) {$p_+$};
\node[below right] at (-1.5,{-sqrt(4-1.5^2)}) {$p_-$};
\draw[very thick] (-3,0) -- ({-3 + sqrt(4-1.5^2) -1.5/sqrt(3)},0);
\draw[very thick] (-1.5,{sqrt(4-1.5^2)}) -- ({-3 + sqrt(4-1.5^2) -1.5/sqrt(3)},0) -- (-1.5,{-sqrt(4-1.5^2)});
\node[right] at ({-3 + sqrt(4-1.5^2) -1.5/sqrt(3)},0) {$c$};
\draw[very thick] (1,0) -- ({3 + sqrt(4-1.5^2) -1.5/sqrt(3)},0);
\draw[very thick] (4.5,{sqrt(4-1.5^2)}) -- ({3 + sqrt(4-1.5^2) -1.5/sqrt(3)},0) -- (4.5,{-sqrt(4-1.5^2)});
\node[above right] at ((4.5,{sqrt(4-1.5^2)}) {$p_+$};
\node[below right] at (4.5,{-sqrt(4-1.5^2)}) {$p_-$};
\node[right] at ({3 + sqrt(4-1.5^2) -1.5/sqrt(3)},0) {$c$};
\node[left] at (1,0) {$\bar c$};
\end{tikzpicture}
\caption{The sets $J_1$ and $J_2$ when $N(r)=2$.\label{f:N=2}}
\end{figure}

Observe that $\alpha (J_1)=2\pi$ and $\alpha (J_2) =\pi$, while $\mathcal{H}^1 (J_2) = r +\mathcal{H}^1 (J_1)$. In order to apply \eqref{e:competitore}, we now wish to compute $\mathcal{H}^1 (J_1)$, relating its length to the angle $2\pi-\omega\pi$, which is twice the angle formed by $p_\pm$ with the $x_1$ axis. Thus we can explicitly calculate
\begin{align}
\mathcal{H}^1 (J_1) =& \mathcal{H}^1 ([0,c]) + 2\mathcal{H}^1 ([c,p_+])\nonumber\\ 
= & r\big(\cos (\pi (1-{\textstyle{\frac{\omega}{2}}}) - {\textstyle{\frac 1{\sqrt{3}}}}
\sin(\pi (1-{\textstyle{\frac{\omega}{2}}}))\big)
+ {\textstyle{\frac{4r}{\sqrt{3}}}}\sin(\pi (1-{\textstyle{\frac{\omega}{2}}}))\nonumber\\
= & 2 r \big({\textstyle{\frac{1}{2}}}\cos (\pi (1-{\textstyle{\frac{\omega}{2}}}))
+ {\textstyle{\frac{\sqrt{3}}{2}}}\sin (\pi (1-{\textstyle{\frac{\omega}{2}}}))\big)
= 2r \sin ({\textstyle{\frac{7\pi}{6}}-\frac{\omega \pi}{2}}) =: r f(\omega)\, .
\end{align}
Applying now \eqref{e:3-tau-bound-2} we thus get
\begin{align*}
r F' (r) &\geq -a^2 + b^2 + 4-2f(\omega)=: P_2 (\omega, a,b)\\
r F' (r) &\geq a^2 + b^2 + 2-2f(\omega)\, .
\end{align*}
Averaging between the two we then get
\begin{equation}\label{e:poli3}
r F'(r) \geq b^2 + 3-2f(\omega) =: P_3 (\omega, a, b)\, .
\end{equation}
Introduce now the function $h (\omega,a,b) =\max \{P_1, P_2, P_3\} (a,b, \omega)$ (recall that $P_1$ is defined in \eqref{e:poli-1}). We are thus reduced to show that $h\geq 0$ on the interval $[\frac{3}{2}, 2)$. First of all, $P_1 \geq 0$ if $\omega \leq \sqrt{3}$. Moreover, the function $f$ is clearly a decreasing function of $\omega$, which equals $\frac32$ at the point $\omega_0$ determined by
\[
\omega_0 := \frac{7}{3} - \frac{2}{\pi} \arcsin \frac{3}{4}\, . 
\]
Hence $P_3\geq 0$ for $\omega \geq \omega_0$. We thus have to show that $h$ is positive when $\omega\in I:=(\sqrt{3} , \omega_0)$. Observe that
\begin{itemize}
    \item $h\geq P_3 \geq 0$ unless
    \begin{equation}\label{e:condizione-1}
    b^2 < 2f(\omega) -3\, \, ;
    \end{equation}
    \item $h\geq P_2 \geq 0$ unless
    \begin{equation}\label{e:condizione-2}
    b^2 + 4-2 f(\omega) < a^2\, ;
    \end{equation}
    \item $h\geq P_1 \geq 0$ unless
    \begin{equation}\label{e:condizione-3}
    (\omega - \sqrt{\omega^2-3})^2 b^2< a^2 < (\omega + \sqrt{\omega^2-3})^2 b^2\, .
    \end{equation}
\end{itemize}
Now, \eqref{e:condizione-2} and \eqref{e:condizione-3} would imply
\[
4 - 2 f(\omega) < ((\omega + \sqrt{\omega^2-3})^2-1) b^2\, ,
\]
which combined with \eqref{e:condizione-1} would imply
\[
4 - 2 f(\omega) < ((\omega + \sqrt{\omega^2-3})^2-1) (2 f(\omega) -3))\, ,
\]
which in turn becomes
\begin{equation}\label{e:g1g2}
1 < (\omega + \sqrt{\omega^2-3})^2 (2 f(\omega) -3)\, .
\end{equation}
Recall that $I= (\sqrt{3}, \omega_0)$, and define 
\begin{align}
f_1 (\omega) &:=  (\omega + \sqrt{\omega^2-3})^2\, ,\\
f_2 (\omega) &:= 2 f(\omega) -3\, ,
\end{align}
we then just need to show that $f_1 f_2 \leq 1$ on $I$. Now, we already observed that $f_2$ is monotone decreasing, while it is easy to see that $f_1$ is monotone increasing. It can be explicitly computed that $f_2 (\sqrt{3}) < \frac{1}{4}$, which in turn implies $\sup_I f_2 < \frac{1}{4}$. Since it can be readily checked that $f_1 (\frac{7}{4}) = 4$, we conclude $\sup_{(\sqrt{3}, 7/4]} f_1 \leq 4$, in turn implying $\sup_{(\sqrt{3}, 7/4]} f_1 f_2 <1$. On the other hand it can be readily checked that $\omega_0 < 1.8$ and $f_1 (1.8) < 5.3$, which in turn implies $\sup_I f_1 < 5.3$. Since $f_2 (\frac{7}{4}) < 0.18$, we conclude $\max_{[7/4,\omega_0]} f_2 < 0.18$, from which we infer $\max_{[7/4, \omega_0]} f_1 f_2 < 0.18\cdot 5.3 < 1$.   

We also observe that in all these cases $F'(r)$ would actually result positive unless either $a$ and $b$ vanish.

\medskip

{\bf Case $N (r) =0$.} In this case we compare $(u,K)$ in $B_r$ with the harmonic extension $w$ of $u|_{\partial B_r}$. Recalling that the harmonic extension satisfies the estimate
\[
\int_{B_r} |\nabla w|^2 \leq r \int_{\partial B_r} \left(\frac{\partial u}{\partial \tau}\right)^2\, , 
\]
from \eqref{e:F'-2} we immediately conclude
\[
r^2 F' (r) \geq 2 \int_{\partial B_r} \left(\frac{\partial u}{\partial n}\right)^2\, .
\]

\medskip

{\bf $F$ constant.} Observe that if $F'(r)=0$ and we are in the assumptions of Proposition~\ref{p:monotonia-1} then 
\[
\int_{\partial B_r} \left(\frac{\partial u}{\partial n}\right)^2 = 0\, .
\]
However, the same conclusion can be drawn as well in all the cases examined above. But then the arguments of the final part of the proof of Proposition~\ref{p:monotonia-1} apply here as well and we reach the conclusion that, if $F$ is constant on $(0, r_0)$ and for a.a. $r\in (0,r_0)$ one of the assumptions (i) and (ii) holds, then $K\cap B_{r_0}$ coincides with one of three elementary global minimizers of Theorem~\ref{t:class-global}, with the additional information that, if $K$ is not empty, $0\in K$ and that, if $K$ is a triple junction, then $0$ is the point of junction. 

\subsection{Proof of Corollary~\ref{c:non-terminal=regular}} We assume without loss of generality that the nonterminal point \index{nonterminal point@nonterminal point} is the origin. By definition there is an injective continuous map $\gamma:[-1,1]\to K$ such that $\gamma(0) = 0$. Set 
\begin{equation}\label{e:def_raggio}
2r := \min \{|\gamma (-1)|, |\gamma (1)|\}
\end{equation}
and observe that there must be at least one negative $s_-$ and one positive $s_+$ such that $|\gamma (s_\pm)|= r$. We let $s_-$ be the largest negative number and $s_+$ the smallest positive number with the latter property. Using the Jordan curve theorem we then conclude that $B_r\setminus \gamma ((s_-, s_+))$ consists of two connected components $\Omega^\pm$. Consider now the limit $(u_\infty, K_\infty, \{p_{kl}\})$ of some sequence $\{(u_{0,r_k}, K_{0, r_k})\}$ with $r_k\downarrow 0$, as in Theorem~\ref{t:minimizers compactness}. If we can show that $K_\infty$ disconnects $\mathbb R^2$, then Theorem~\ref{t:David-Leger} would imply that $(u_\infty, K_\infty)$ is an elementary global minimizer and thus, by the cases (b) and (c) of Theorem~\ref{t:main} (resp Theorem~\ref{t:main-generalized}), $0$ would be a regular point, i.e. a point for which there is a disk $B_r$ in which $K$ is diffeomorphic either to a diameter, or to three radii joining at the origin.

In order to show that $K_\infty$ disconnects the plane, we consider the rescalings 
\begin{align*}
\Omega^\pm_k &:= \{x: r_k x \in \Omega^\pm\}\cap B_1\\
\Gamma_k &:= \{x: r_k x \in \gamma (s_-, s_+)\}\cap B_1\, .
\end{align*}
Up to extraction of a further subsequence, we can assume that $\Omega^\pm_k$ converge to some open sets $\Omega^\pm_\infty$ and $\Gamma_k$ converges (locally in the Hausdorff distance) to some $\Gamma\subset K_\infty$. Since $\partial \Omega^\pm_k \subset \Gamma_k\cup \partial B_1$, the convergence of the open sets means that 
\begin{itemize}
    \item[(i)] $|\Omega^+_\infty\Delta \Omega^+_k| + |\Omega^-_\infty \Delta \Omega^-_k| \to 0$;
    \item[(ii)] $x\in \Omega^\pm_\infty$ if and only if there is a $\rho>0$ and a $k_0\in \mathbb N$ such that $B_\rho (x)\subset \Omega^\pm_k$ for $k\geq k_0$.
\end{itemize}
Notice also that each $\Gamma_k$ contains at least one arc which connects the origin with $\partial B_1$. Hence $\Gamma$ as well contains at least one arc which connects the origin with $\partial B_1$ and in particular $\Gamma$ is not empty.

Pick now a point $q\in \Gamma$ which is a pure jump point (and whose existence is guaranteed by Corollary~\ref{c:a.e.regularity}). Then, for sufficiently small $\rho$, $B_\rho (q) \cap \Gamma$ is a $C^{1,\alpha}$ arc which divides $B_\rho (q)$ in two regions. On the other hand, by the regularity theory developed in the first part of the notes, for a sufficiently large $k$ we have that $K_{0,r_k}\cap B_\rho (q)$ consists also of a $C^{1,\alpha}$ arc which divides $B_\rho (q)$ into two regions. Since $\Gamma_k \to \Gamma$, for $k$ sufficiently large we must have that $K_{0,r_k}\cap B_\rho (q) = \Gamma_k \cap B_\rho (q)$. In particular $\Gamma_k$ divides $B_\rho (q)$ in two regions. By construction, one of the regions must be in $\Omega^+_k$ and the other must be in $\Omega^-_k$. It then turns out that $\liminf_k \min \{|\Omega^+_k|, |\Omega^-_k|\}>0$. Thus (i) above implies that both $\Omega^+_\infty$ and $\Omega^-_\infty$ are not empty. 

We now use the latter fact to show that $K_\infty$ disconnects $\mathbb R^2$. Consider indeed two points $q^\pm \in \Omega^\pm_\infty\setminus K_\infty$ whose existence is guaranteed by the fact that both $\Omega^\pm_\infty$ are not empty. If these points belonged to the same connected component of $\mathbb R^2\setminus K_\infty$, then there would be a continuous curve $\eta:[0,1] \to \mathbb R^2\setminus K_\infty$ with $\eta ([0,1])\cap K_\infty = \emptyset$, $\eta (0) = q^-$ and $\eta (1) = q^+$. For $k$ sufficiently large we have
\begin{itemize}
    \item $\eta ([0,1])$ does not intersect $K_{0,r_k}$ (by local Hausdorff convergence of $K_{0,r_k}$ to $K_\infty$);
    \item $q^\pm\in \Omega^\pm_k$ (by condition (ii) above);
    \item $\eta ([0,1])]\subset B_{r/r_k}$, where $r$ is the radius on \eqref{e:def_raggio}.
\end{itemize}    
Fix now a $k$ sufficiently large so that all the conclusions above apply. Then:
\begin{itemize}
    \item $p^-:= r_k q^- \in \Omega^+$, $p^+:= r_k q^+\in \Omega^-$;
    \item $t\mapsto r_k \eta (s)$ is a continuous curve contained in $B_r$ which does not intersect $K$ and hence does not intersect $\gamma ((s^-,s^+))$. 
\end{itemize}
However, since $\Omega^\pm$ are the two distinct connected components of $B_r\setminus \gamma ((s^-,s^+))$, we have reached a contradiction.

\subsection{Proof of Corollary~\ref{c:global-two-connected-components}} Assume by contradiction that $(u, K)$ is a global generalized minimizer such that $K$ has two unbounded connected components $K_1$ and $K_2$. Without loss of generality we can assume that both components intersect the unit disk $B_1$ and we fix two points $p_1\in K_1\cap B_1, p_2\in K_2 \cap B_1$.

Fix now $k\in \mathbb N\setminus \{1,2\}$ and consider the class of Lipschitz curves 
\[
\{\beta: [0, \ell (\beta)]\to K_1 :|\dot\beta|=1, \beta (0)=p_1, |\beta (\ell (\beta))|\geq k^2\}\, .
\]
By Lemma~\ref{l:connessi_per_archi} such class is nonempty.
It is easy to see that there is a curve $\beta$ in this class whose domain of definition has minimal length (the argument is the same as in Step 3 of the proof of Lemma~\ref{l:connessi_per_archi}). Such curve must then be injective and $|\beta (\ell (\beta))| = k^2$, otherwise it would not have the minimizing property just described. We then consider the largest $s$ such that $|\beta (s)|=1$. The arc $\gamma_k : [s, \ell (\beta)] \ni \sigma \mapsto \beta (\sigma)\in K_1$ is thus an injective arc connecting a point in $\partial B_1$ to a point in $\partial B_{k^2}$. We find likewise an injective arc $\eta_k$ in $K_2$ with the same properties. By the Jordan curve theorem the two paths $\gamma_k\cup \eta_k$ subdivide $B_{k^2}\setminus \overline{B}_1$ into two connected components $U^\pm_k$. 

We consider now the rescaled pairs $(u_{0, k}, K_{0,k})$ and the corresponding domains $\Omega_k^\pm \subset B_k\setminus \overline{B}_{1/k}$, resulting from appropriately scaling the domains $U^\pm_k$.  
As in the proof of Corollary~\ref{c:non-terminal=regular}, up to subsequences we can assume that $(u_{0, k}, K_{0,k})$ converge to a global generalized minimizer $(u_\infty,K_\infty)$, that the paths $\overline{\eta}_k\cup \overline{\gamma}_k$ converge locally in the Hausdorff topology to a connected closed set $\Gamma\subset K_\infty$ and that the sets $\Omega_k^\pm \cap B_1$ converge to open sets $\Omega^\pm_\infty$. As above $\Gamma\cap B_1$ is not empty and we can pick a point $q\in \Gamma\cap B_1$ which is a pure jump point for $K_\infty$. Again the regularity theory will imply that there is a $B_\rho (q)$ and a $k_0$ such that for all $k\geq k_0$ $K_{0,k}\cap B_\rho (q)$ is a smooth arc subdividing $B_\rho (q)$ into two open subsets which have roughly the same area. Such arc is then either a subset of the rescaled $\eta_k$ or a subset of the rescaled $\gamma_k$ and in both cases we conclude that one the two regions in which $B_\rho (q)$ is subdivided by $\Gamma$ is a subset of $\Omega^+_\infty$, while the other is a subset of $\Omega^-_\infty$. 

Similarly to the proof of Corollary~\ref{c:non-terminal=regular} we argue then that any pair of arbitry points $q^\pm \in \Omega^\pm_\infty\setminus K_\infty$ must belong to different connected components of $\mathbb R^2\setminus K_\infty$. This in turn allows us to apply Theorem~\ref{t:David-Leger} and conclude that $(u_\infty, K_\infty)$ is an elementary global generalized minimizer. But then Corollary~\ref{c:unicita-blow-down} would imply that $(u,K)$ was itself an elementary global generalized minimizer. In that case $K$ must be connected, contradicting the initial assumption that $K$ contains at least two distinct connected components.

\subsection{Proof of Corollary~\ref{c:no-vanishing}} Assume $(u, K, \{p_{kl}\})$ is a global generalized minimizer and that $\nabla u$ vanishes on some open set. If $K$ disconnects $\mathbb R^2$ then we know from Theorem~\ref{t:David-Leger} that the minimizer is elementary. We can thus assume that $\mathbb R^2\setminus K$ is connected. But then, by the unique continuation for harmonic functions we conclude that $\nabla u =0$ on the whole $\mathbb R^2\setminus K$ and in particular that $u$ is constant. This however is only possible if $K$ is empty. 

\section{The harmonic conjugate}\label{s:coniugata}

This and the next remaining four sections of the chapter will be dedicated to prove Theorem~\ref{t:Bonnet-David}. A fundamental role in the proof will be played by the harmonic conjugate \index{harmonic conjugate@harmonic conjugate} $v$ of the function $u$ in a minimizing pair $(u,K)$, which will be introduced in this section. $v$ is, locally in $\mathbb R^2\setminus K$, a classical harmonic conjugate of $u$, i.e. a function $v$ whose gradient $\nabla v$ is the counterclockwise rotation of $\nabla u$ by 
90 degrees (which we will denote by $\nabla u^\perp$), or alternatively such that the map $\zeta (x+iy) = u (x,y) + i v (x,y)$ is holomorphic. The main point is that $v$ exists globally on $\mathbb R^2\setminus K$, it is unique up to addition of a constant (if the minimizer is nonelementary), and it can be uniquely extended to a H\"older continuous function on the whole plane.\index{harmonic conjugate@harmonic conjugate}\index{conjugate, harmonic@conjugate, harmonic}\index[simb]{aalDu^p@$\nabla u^\perp$}

\begin{proposition}\label{p:armonica-coniugata}
Let $(u,K)$ be a nonelementary global minimizer of $E_0$. Then there is a function $v:\mathbb R^2\to \mathbb R$ with the following properties:
\begin{itemize}
    \item[(i)] $v$ is harmonic and smooth on $\mathbb R^2\setminus K$, where $\nabla v = \nabla u^\perp$;
    \item[(ii)] $v\in W^{1,2}_{{\rm loc}} (\mathbb R^2)$, it is H\"older continuous with exponent $\frac{1}{2}$ and the H\"older seminorm is globally bounded, i.e.
    \[
    \sup_{x\neq y} \frac{|v(x)-v(y)|}{|x-y|^{\frac{1}{2}}} < \infty\, ;
    \]
    \item[(iii)] $v$ is constant on each connected component of $K$;
    \item[(iv)] $v$ is unique up to addition of a constant.
\end{itemize}
\end{proposition}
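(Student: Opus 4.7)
The plan is to construct $v$ first on $\mathbb R^2\setminus K$ by integrating the closed $1$-form $\omega:=-\partial_{x_2} u\,dx_1+\partial_{x_1}u\,dx_2$, and then to extend it continuously to $K$. Local existence and local uniqueness up to a constant are immediate because $u$ is smooth and harmonic on $\mathbb R^2\setminus K$, so $\omega$ is smooth and closed there. To globalize, I would use that $(K,u)$ is nonelementary, so by Theorem~\ref{t:David-Leger} the set $\mathbb R^2\setminus K$ is connected, and by Corollary~\ref{c:global-two-connected-components} $K$ admits at most one unbounded connected component. Once the periods of $\omega$ along every closed loop in $\mathbb R^2\setminus K$ are shown to vanish, $v$ is globally defined on $\mathbb R^2\setminus K$, and conclusion~(iv) follows from the connectedness of $\mathbb R^2\setminus K$ together with the prescription $\nabla v=\nabla u^\perp$.

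To verify the vanishing of periods, I would observe that any simple closed loop $\gamma\subset\mathbb R^2\setminus K$ bounds a disk $D$ whose intersection with $K$ is a disjoint union of entire connected components of $K$ (a connected subset of $K$ cannot cross the $K$-free loop $\gamma$). Since the only possibly unbounded component cannot be contained in $D$, the period of $\omega$ along $\gamma$ reduces to a sum of fluxes $\int_{\partial V_i}\partial u/\partial n$ around bounded components $K_i\subset D$. For each such $K_i$ I would test the outer Euler--Lagrange identity \eqref{e:outer} with a smooth $\varphi\in C^\infty_c(\mathbb R^2)$ identically $1$ on a neighborhood $V_1$ of $K_i$ and vanishing outside a slightly larger neighborhood $V_2$ chosen so that $\overline{V_2\setminus V_1}\cap K=\emptyset$; since $u$ is smooth and harmonic on $V_2\setminus V_1$, integration by parts gives exactly $\int_{\partial V_1}\partial u/\partial n=0$. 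The possibility of infinitely many $K_i\subset D$ is handled by the density lower bound (Theorem~\ref{t:dlb}) together with \eqref{e:upper bound}, which imply $\sum_i\mathrm{diam}(K_i)<\infty$ and permit a straightforward summation/limiting argument. This yields~(i).

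For~(iii), I would work locally near points of $K$. By Corollary~\ref{c:non-terminal=regular} every nonterminal point is a pure jump or a triple junction, and the Neumann condition $\partial u/\partial\nu=0$ from Proposition~\ref{p:variational identities 2} rewrites, through $\nabla v=\nabla u^\perp$, as $\partial v/\partial\tau=0$ along smooth arcs of $K$. Hence the one-sided traces of $v$ on each such arc are constant, and a Schwarz-reflection argument at regular points shows that these two traces coincide, yielding a single continuous extension of $v$ across the regular part $K^r$. Since by Theorem~\ref{t:structure}(i) the irregular part $K^i$ has Hausdorff dimension strictly less than $1$, it cannot disconnect any connected component of $K$; propagating the local constancy across $K^i$ by continuity, I conclude $v$ is constant on every connected component of $K$ and continuous on $\mathbb R^2$.

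Finally, for~(ii), the pointwise identity $|\nabla v|=|\nabla u|$ and the energy upper bound \eqref{e:upper bound} give
\[
\int_{B_r(x)}|\nabla v|^2 \;=\; \int_{B_r(x)\setminus K}|\nabla u|^2 \;\leq\; 2\pi r
\qquad\forall\,x\in\mathbb R^2,\ r>0.
\]
Combined with the continuity and components-constancy of~(iii), this implies $v\in W^{1,2}_{loc}(\mathbb R^2)$. Applying Poincar\'e on each ball, I obtain the Campanato-type estimate
\[
\fint_{B_r(x)}\bigl|v-\bar v_{B_r(x)}\bigr|^2 \;\leq\; C r^2 \fint_{B_r(x)}|\nabla v|^2 \;\leq\; C r,
\]
uniformly in $x$ and $r$, which characterizes membership in $C^{0,1/2}(\mathbb R^2)$ with globally bounded H\"older seminorm. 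The main obstacle I anticipate is not the formal Morrey estimate but the continuous extension of $v$ in step (iii) across the irregular set $K^i$: one must ensure that the oscillation of $v$ on $\mathbb R^2\setminus K$ dies off as one approaches $K^i$, and while this follows formally from the Campanato control of~(ii) plus the regular-arc constancy, the simultaneous bootstrapping of (ii) and (iii) requires care.
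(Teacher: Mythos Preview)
Your overall strategy is workable but misses the key simplification that dissolves the circularity you worry about. The outer variation \eqref{e:outer} with $\lambda=0$ reads $\int_{\mathbb R^2}\nabla u\cdot\nabla\varphi=0$ for all $\varphi\in C^\infty_c(\mathbb R^2)$; this is precisely the statement that ${\rm div}(\nabla u)=0$, equivalently ${\rm curl}(\nabla u^\perp)=0$, holds \emph{distributionally on all of $\mathbb R^2$}, not merely on $\mathbb R^2\setminus K$. Since $\nabla u^\perp\in L^2_{loc}(\mathbb R^2)$, a mollification argument immediately produces a potential $v\in W^{1,2}_{loc}(\mathbb R^2)$ with $\nabla v=\nabla u^\perp$ a.e. The Morrey bound $\int_{B_r(x)}|\nabla v|^2\le 2\pi r$ then gives the global $C^{1/2}$ estimate at once, proving (ii) before and independently of (iii). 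Your period computation around bounded components is correct (modulo invoking Lemma~\ref{l:little-loop} to isolate each $K_i$), but it is an unnecessary detour: the distributional argument already encodes all of it.

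There are two genuine gaps in your step~(iii). First, the Schwarz-reflection claim does not show that the two one-sided traces $v^+$ and $v^-$ along a regular arc coincide: reflection extends each of $v^\pm-c^\pm$ harmonically across, but says nothing about $c^+=c^-$. What actually forces $v^+=v^-$ is again the distributional identity above (equivalently: $v\in W^{1,2}_{loc}(\mathbb R^2)$ has no jump part), so once you adopt the paper's construction this issue disappears. Second, the assertion that $K^i$ ``cannot disconnect any connected component of $K$'' because it has small Hausdorff dimension is false: a single point disconnects an arc. The paper instead uses Lemma~\ref{l:smooth connections}: any two pure jump points in the same component are joined by an injective Lipschitz path in $K$ that is piecewise $C^{1,\alpha}$ and passes only through pure jumps and finitely many triple junctions. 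Along each smooth piece $\partial u/\partial\nu=0$ gives $\partial v/\partial\tau=0$, so $v\circ\gamma$ is piecewise constant and hence (by the continuity already secured in (ii)) constant. Since pure jump points are dense in $K$ by Corollary~\ref{c:a.e.regularity}, this yields (iii). Note also that invoking Theorem~\ref{t:structure}(i) here would be a forward reference in the paper's logical order.
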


Before coming to the proof of the proposition we observe a simple consequence of Corollary~\ref{c:non-terminal=regular}, which will be useful in other occasions.

\begin{lemma}\label{l:smooth connections}
Let $(u,K)$ be a restricted, absolute, or generalized minimizer of $E_\lambda$. Let $x,y$ be two nonterminal points in the same connected component of $K$. There is then an injective Lipschitz curve 
$\gamma:[0,1]\to K$ with $\gamma (0) =x$ and $\gamma (1)=y$ and a partition $0= s_0 <s_1 < \ldots <s_{N+1}=1$ such that:
\begin{itemize}
    \item[(i)] $\gamma|_{[s_i, s_{i+1}]}\in C^{1,\alpha}$;
    \item[(ii)] Each $\gamma (s_i)$ with $i\in \{1, \ldots , N\}$ is a triple junction, while $\gamma (t)$ is a pure jump for every $t\neq s_i$.\footnote{Observe that $0$ and $1$ can be triple junctions or pure jumps.} 
\end{itemize}
In particular, if $K'\subset K$ is a connected component, the subset of nonterminal points of $K'$ is still arc connected.
\end{lemma}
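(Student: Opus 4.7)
The plan is to combine the arc-connectedness of $K$ at the level of nonterminal points (via Lemma~\ref{l:connessi_per_archi}, which the proof sketch of Corollary~\ref{c:global-two-connected-components} already invoked) with the partition into regular pieces provided by Corollary~\ref{c:non-terminal=regular} and Theorem~\ref{t:structure}(v). First, I would apply Lemma~\ref{l:connessi_per_archi} to obtain an injective Lipschitz curve $\gamma:[0,1]\to K'$ with $\gamma(0)=x$ and $\gamma(1)=y$, where $K'$ denotes the connected component of $K$ containing both points. The key initial observation is that every point of $\gamma([0,1])$ is automatically nonterminal: for $t\in(0,1)$ the restriction $\gamma|_{(t-\varepsilon,t+\varepsilon)}$, precomposed with a linear reparametrization, is exactly the continuous injective map required by Definition~\ref{d:internal points}, while the endpoints $\gamma(0)=x$ and $\gamma(1)=y$ are nonterminal by hypothesis. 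Corollary~\ref{c:non-terminal=regular} then forces each $\gamma(t)$ to be either a pure jump or a triple junction.

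Next, I would show that the set $T:=\{t\in[0,1]:\gamma(t)\text{ is a triple junction}\}$ is finite. By Theorem~\ref{t:structure}(v), triple junctions form a discrete subset of $K$, and since $\gamma$ is continuous and injective with compact image, the preimage of any discrete subset of $K$ is a discrete (hence finite) subset of $[0,1]$. Listing the interior triple junctions as $s_1<\dots<s_N$ and setting $s_0:=0$, $s_{N+1}:=1$ gives the desired partition; by construction every point $\gamma(t)$ with $t\notin\{s_1,\dots,s_N\}$ is a pure jump, which is exactly item~(ii).

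To establish item~(i), I would argue on each closed subinterval $[s_i,s_{i+1}]$. On the open part $(s_i,s_{i+1})$, every $\gamma(t)$ is a pure jump, so by Theorem~\ref{t:main}(i) in case (b) (together with Theorem~\ref{t:main-generalized}) the set $K$ is locally a $C^{1,\alpha}$ embedded arc near $\gamma(t)$; injectivity and connectedness of $\gamma((s_i,s_{i+1}))$ imply that the image is a $C^{1,\alpha}$ one-dimensional submanifold, on which $\gamma$, once reparametrized by arc length, is $C^{1,\alpha}$. At the endpoint $s_i$ (when $i\in\{1,\dots,N\}$), Theorem~\ref{t:eps_tripunto} and Theorem~\ref{t:main}(i) in case (c) provide a $C^{1,\alpha}$ diffeomorphism of a neighborhood of $\gamma(s_i)$ sending $K$ to the standard propeller $\mathscr T_0$; the arc $\gamma$ must enter and leave the junction along two of the three straight radii of $\mathscr T_0$, and hence its pull-back is $C^{1,\alpha}$ up to the endpoint. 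When the endpoint is a pure jump (possible only for $i=0$ or $i=N$), Theorem~\ref{t:eps_salto_puro} gives the same conclusion more directly.

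Finally, the ``in particular'' clause is immediate: since every point of $\gamma([0,1])$ was shown to be nonterminal, the arc $\gamma$ lies entirely in the nonterminal part of $K'$, proving that the nonterminal part of each connected component is arc-connected. The main issue to watch for is the regularity up to the triple junctions in item~(i); however, this is not really an obstacle, since Theorem~\ref{t:eps_tripunto} supplies precisely the $C^{1,\alpha}$-diffeomorphism model needed, and the rest of the proof is a bookkeeping exercise.
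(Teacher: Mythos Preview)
Your proposal is correct and takes essentially the same approach as the paper: obtain an injective Lipschitz curve via Lemma~\ref{l:connessi_per_archi}, observe that all its points are nonterminal and hence (by Corollary~\ref{c:non-terminal=regular}) pure jumps or triple junctions, deduce that the triple junctions along the curve are finitely many, and read off the $C^{1,\alpha}$ regularity on each subarc from the $\varepsilon$-regularity theorems. One small remark: you justify the finiteness of triple junctions by citing Theorem~\ref{t:structure}(v), whose proof is deferred to Chapter~\ref{ch:finale}, whereas the paper argues this locally and self-containedly from the $\varepsilon$-regularity already established in Chapter~\ref{ch:salti_e_tripunti} (at each $\gamma(s)$ there is a neighborhood in which $K$ is a $C^{1,\alpha}$ arc or propeller, hence $\gamma(s)$ is the only possible triple junction in that neighborhood, and compactness of $[0,1]$ finishes it). This is not a genuine circularity---the discreteness of triple junctions is an immediate consequence of Theorem~\ref{t:eps_tripunto}---but to keep the logical order clean you should phrase the argument in those terms rather than invoke Theorem~\ref{t:structure}.
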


\begin{proof}
Consider $\gamma$ as in Lemma~\ref{l:connessi_per_archi}. Since it is injective, $\gamma (s)$ is a nonterminal point for every $s\in (0,1)$. On the other hand $\gamma (0)$ and $\gamma (1)$ are nonterminal points by assumption. In particular, by the regularity theory developed so far, for each $s\in [0,1]$ there is a neighborhood $U$ of $s$ such that $\gamma (U\setminus \{s\})$ consist of pure-jump points. It thus turns out that there are at most finitely many $s$'s such that $\gamma (s)$ is a triple junction. If we then parametrize $\gamma$ with constant speed, assertion (i) follows from the regularity theory.
\end{proof}

\begin{proof}[Proof of Proposition~\ref{p:armonica-coniugata}]
Consider the $L^2$ vector field $\nabla u^\perp$. Observe that \eqref{e:outer} implies that the distributional curl of $\nabla u^\perp$ vanishes. In particular $\nabla u^\perp$ must have a potential $v\in W^{1,2}_{{\rm loc}} (\mathbb R^2)$. A simple direct proof of the latter claim can be found by convolving with a standard family of mollifiers $\varphi_\varepsilon$: clearly ${\rm curl} (\nabla u^\perp)*\varphi_\varepsilon$ is a smooth curl-free vector field by \eqref{e:outer} and as such it has a potential $v_\varepsilon$ which we can normalize to $v_\varepsilon (0) =0$. We can then use the compact embedding of $W^{1,2}$ to extract a sequence $\varepsilon_k\downarrow 0$ such that $v_{\varepsilon_k}$ has a local weak limit in $W^{1,2}$. Claim (i) is then an obvious consequence of the smoothness of $v$ in $\mathbb R^2\setminus K$, while the uniqueness up to constant is obvious because, by Theorem~\ref{t:David-Leger}, $\mathbb R^2\setminus K$ is connected. Observe next that
\[
\int_{B_r (x)} |\nabla v|^2 = \int_{B_r (x)\setminus K} |\nabla u|^2 \leq 2\pi r
\]
for every disk $B_r (x)$. Hence (ii) follows from the standard Morrey's estimate.

Finally, in order to prove (iii) fix a connected $K'\subset K$. Since by Corollary~\ref{c:a.e.regularity} pure jump points (which clearly are nonterminal) are dense, it suffices to prove that $v (x) = v(y)$ for every 
pair of pure jump points $x,y\in K'$, which from now on we assume to be fixed. Let $\gamma$ be a curve as in Lemma~\ref{l:smooth connections} and observe that the corresponding partition must satisfy $0<s_1< s_2< \ldots < s_N <1$. Observe moreover that $v|_K$ must be constant in the neighborhood of every pure jump point by the regularity theory developed thus far: indeed at each jump point we have that $u$ has $C^1$ extensions $u^+$ and $u^-$ on both sides of $K$, while the Euler-Lagrange conditions for $u$ imply $\frac{\partial u^\pm}{\partial \nu}=0$. This in turn implies that $v$ as well has $C^1$ extensions $v^+$ and $v^-$ on both sides of $K$, and since $\nabla v$ is a counterclockwise 90 degree rotation of $\nabla u$, we conclude that both $\nabla v^+$ and $\nabla v^-$ are orthogonal to $K$. Hence $s\mapsto v (\gamma (s))$ is constant on the arcs $[0, s_1), (s_1, s_2),\ldots , (s_{N-1}, s_N), (s_N, 1]$. By continuity we conclude that $v\circ \gamma$ is constant over the whole interval $[0,1]$, which in turn implies $v (x)= v (\gamma (0)) = v (\gamma (1))=y$ as desired.
\end{proof}

\section{The level sets of the harmonic conjugate: Part I}\label{s:livelli-coniugata}

In this section we study the level sets of the harmonic conjugate introduced in Proposition~\ref{p:armonica-coniugata}. Our main conclusion is Proposition~\ref{p:struttura-1} below. Observe that for both $v$ and $u$ we have a quite good description of their behavior at every point of $K$ which is either a pure jump point or a triple junction. We thus wish to first gain some more information on points which belong to the remaining set $K^\sharp$, i.e. points which either turn out to form a connected component of $K$ by themselves or which are terminal points of a connected component with positive length (cf. Definition~\ref{d:internal points}). At all such points we claim that $u$ can be extended continuously. 

\begin{proposition}\label{p:continuity}
There is a universal constant $C$ with the following property. Let $(u,K)$ be a nonelementary global minimizer and assume that $x\in K^\sharp$ (i.e. that $x\in K$ is neither a triple junction nor a pure jump point). Then there is a $\bar{u}\in \mathbb R$ such that
\begin{equation}\label{e:Hoelder-1/2}
|\bar u - u(y)|\leq C |x-y|^{\frac{1}{2}} \qquad \forall y\not \in K\, .
\end{equation}
\end{proposition}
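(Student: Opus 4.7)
My plan combines three ingredients established earlier: (i) the uniform scale-invariant energy bound $\int_{B_r(x)\setminus K}|\nabla u|^2 \leq 2\pi r$ for every $r>0$, which is immediate from Lemma \ref{l:upper-bound}; (ii) the connectedness of $\mathbb R^2 \setminus K$ for nonelementary global minimizers (Theorem \ref{t:David-Leger} together with Corollary \ref{c:global-two-connected-components}); and (iii) the global H\"older $1/2$ regularity of the harmonic conjugate $v$ from Proposition \ref{p:armonica-coniugata}. I would package these into the observation that $\zeta := u + i v$ is a single-valued holomorphic function on the connected set $\mathbb R^2 \setminus K$ satisfying
\[
\int_{B_r(x)}|\zeta'|^2 \;=\; 2\int_{B_r(x)\setminus K}|\nabla u|^2 \;\leq\; 4\pi r \qquad \forall r > 0.
\]
The task then reduces to showing that $\zeta$ extends to $x$ as a H\"older $1/2$ function with universal seminorm; the real part of this extension is the desired $\bar u$.

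I would then analyse the blow-ups at $x$ to rule out pathological behaviour. For any $r_j\downarrow 0$, Theorem \ref{t:compactness-2} and the density lower bound (Theorem \ref{t:dlb}) yield, up to subsequence, a nonempty blow-up limit $(K_\infty, u_\infty, \{p_{kl}\})$. Since $x \in K^\sharp$ is neither a regular pure jump nor a regular triple junction, the $\varepsilon$-regularity Theorem \ref{t:main-generalized}(i) forces this limit not to be an elementary pure jump or triple junction: were it so, the Hausdorff closeness of $K_{x,r_j}$ to $K_\infty$ on $B_2$ for $j$ large would make $x$ regular, a contradiction. Hence the blow-up is itself a nonelementary global generalized minimizer, so by Theorem \ref{t:David-Leger} its complement is connected, and the rescaled holomorphic functions $\zeta_{x,r_j}(z) := r_j^{-1/2}(\zeta(x+r_j z) - \zeta_j)$, with $\zeta_j$ an appropriate normalizing constant, converge to a single-valued holomorphic function $\zeta_\infty$ on a connected domain with $\int_{B_R}|\zeta_\infty'|^2 \leq 4\pi R$. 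Moreover, the bound $|v(y) - v(x)| \leq C|y-x|^{1/2}$ from Proposition \ref{p:armonica-coniugata} rescales to $|\operatorname{Im} \zeta_{x,r_j}(z)| \leq C|z|^{1/2}$ uniformly in $j$, yielding the same pointwise bound for $\operatorname{Im}\zeta_\infty$.

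The decisive step is to deduce the pointwise H\"older $1/2$ bound on $u$ at $x$ from these inputs. On a connected domain, a single-valued holomorphic function $\zeta$ whose imaginary part is globally controlled by $|\operatorname{Im}\zeta(z)| \leq C|z|^{1/2}$ at a boundary point, and whose Dirichlet energy scales as $\int_{B_r(x)}|\zeta'|^2 \leq C r$, must have its real part controlled by a bound of the same H\"older $1/2$ type, with a universal constant. Unrescaling this conclusion produces oscillation $\leq C\sqrt{r}$ for $u$ on $B_r(x)\setminus K$ after subtracting constants $c_r$, and the single-valuedness of $\zeta$ on the connected complement — anchored by the continuity of $v$ at $x$, which pins down the additive constant in $\zeta$ — forces these $c_r$ to converge to a common value $\bar u$ as $r \downarrow 0$. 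The global statement for all $y \notin K$ then follows from applying the same argument at every scale, exploiting the scale invariance of the energy estimate. I expect the main technical obstacle to lie in the final complex-analytic step, namely rigorously deducing H\"older $1/2$ control on $\operatorname{Re}\zeta$ from the bound on $\operatorname{Im}\zeta$ and the Dirichlet energy without additional hypotheses on the regularity of $\mathbb R^2 \setminus K$ near $x$.
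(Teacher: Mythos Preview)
Your approach has a genuine gap at precisely the point you flag as the main obstacle. The complex-analytic claim you need --- that on a connected planar domain, $|\operatorname{Im}\zeta(z)| \leq C|z|^{1/2}$ together with $\int_{B_r}|\zeta'|^2 \leq Cr$ forces $|\operatorname{Re}\zeta(z) - \bar u| \leq C'|z|^{1/2}$ --- is not a general fact, and you give no argument for it. Note that $|\zeta'|^2 = |\nabla u|^2 = |\nabla v|^2$ on $\mathbb R^2\setminus K$ (your factor of $2$ is spurious), so the Dirichlet bound on $\zeta'$ is exactly the Morrey bound on $\nabla v$ that already produced the $C^{1/2}$ estimate for $v$; it carries no additional information. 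The reason Morrey applies to $v$ but not to $u$ is that $v\in W^{1,2}_{loc}(\mathbb R^2)$ globally, whereas $u$ jumps across $K$. Connectedness of $\mathbb R^2\setminus K$ pins $u$ down only up to one additive constant; it does not by itself prevent large oscillation of $u$ near $x$, because points of $B_r(x)\setminus K$ separated by pieces of $K$ may only be joinable by paths in $\mathbb R^2\setminus K$ that wander far or pass through regions where $|\nabla u|$ is uncontrolled. Your blow-up paragraph does not break this circle: the limit $\zeta_\infty$ satisfies the same hypotheses, and you would need the same unproven lemma for it.

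The paper's proof supplies exactly the missing geometric input. After rescaling so that $x=0$ and the target disk is $B_1$, two compactness/contradiction arguments (both resting on Theorem~\ref{t:David-Leger} and the $\varepsilon$-regularity theorems) establish: (a) a good radius $r\in(1,2)$ with $K\cap\partial B_r$ finite, uniformly separated, and $\int_{\partial B_r}|\nabla u|^2$ bounded, so each arc of $\partial B_r\setminus K$ contains a point $x_i$ at uniform distance from $K$; and crucially (b) a universal $L$ such that $\{y\in B_2:\dist(y,K)>\delta\}$ lies in a single connected component of $\{y\in B_L:\dist(y,K)>1/L\}$. Step (b) --- quantitative connectivity of the complement at positive distance from $K$ --- is what replaces your complex-analytic step: were it to fail along a sequence, the limit would disconnect $\mathbb R^2$, hence be elementary, forcing $0$ to be regular and contradicting $0\in K^\sharp$. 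With (b) one connects the $x_i$ by chains of balls of radius $\sim 1/L$ at distance $\gtrsim 1/L$ from $K$, where harmonicity plus the energy upper bound give $|\nabla u|\leq C$ pointwise; summing along the chain bounds $|u(x_\ell)-u(x_{\ell+1})|$, Morrey on $\partial B_r$ bounds the oscillation on each arc, and Lemma~\ref{l:maximum} propagates the bound into $B_r$.
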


In particular there is a unique continuous extension of $u$ to $K^\sharp$ and from now on we will just use the same notation $u$ for such extension. We are now ready to state the following structural proposition.

\begin{proposition}\label{p:struttura-1}
Let $(u,K)$ be a nonelementary global minimizer and let $v$ be as in Proposition~\ref{p:armonica-coniugata}. Then for a.e. $m\in \mathbb R$ we have the following properties:
\begin{itemize}
    \item[(i)] $u$ is continuous at every point of $\{v=m\}$;
    \item[(ii)] $\mathcal{H}^{\frac12} (K\cap \{v=m\})=0$;
    \item[(iii)] $\{v=m\}\setminus K$ is the union of countably many real analytic arcs;
    \item[(iv)] $\{v=m\}$ has locally finite $\mathcal{H}^1$ measure;
    \item[(v)] for every injective Lipschitz curve $\gamma: [0,1]\to \{v=m\}$ the function $u\circ \gamma$ is absolutely continuous, $u$ is differentiable at $\gamma (s)$ for a.e. $s$ such that $\dot\gamma (s)\neq 0$ and
    \begin{equation}\label{e:derivata}
    \frac{d}{dt} (u\circ \gamma) = ((\nabla u) \circ \gamma) \cdot \dot\gamma
    \end{equation}
    where the right hand side is defined as $0$ at every point $s$ where $\dot \gamma (s) =0$, irrespectively of whether $u$ is differentiable or not at that point.
\end{itemize}
\end{proposition}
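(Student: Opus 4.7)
The plan is to build the argument around the holomorphic structure of $\zeta := u + iv$ on $\Omega := \mathbb{R}^2 \setminus K$, combined with coarea/Eilenberg-type bounds that trade the Hölder-$1/2$ regularity of $v$ for control on how $\{v=m\}$ meets $K$, and using the constancy of $v$ on connected components of $K$ (Proposition \ref{p:armonica-coniugata}(iii)) to rule out countably many bad levels $m$.

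Claim (iii) will follow directly from the holomorphy of $\zeta$: the critical set $Z := \{\zeta'=0\}$ is discrete, hence countable, in $\Omega$; away from $Z$, the implicit function theorem makes $\{v=m\}\setminus K$ a real-analytic graph, while near $z_0 \in Z$ the factorization $\zeta(z)-\zeta(z_0)=(z-z_0)^{k+1}h(z)$ with $h(z_0)\neq 0$ and $k\geq 1$ exhibits the level set as the union of $k+1$ analytic arcs meeting at equal angles at $z_0$. For (iv), I would apply the smooth coarea formula to $v|_\Omega$ to obtain
\begin{equation*}
\int_{\mathbb{R}} \mathcal{H}^1(\{v=m\}\cap B_R\setminus K)\, dm = \int_{B_R\setminus K}|\nabla v|\, dx = \int_{B_R\setminus K}|\nabla u|\, dx \leq |B_R|^{1/2}\|\nabla u\|_{L^2(B_R)} < \infty,
\end{equation*}
via Cauchy-Schwarz and the upper bound \eqref{e:upper bound}; combined with (ii), this gives the local finiteness in (iv) for a.e.\ $m$.

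For (i), Corollary \ref{c:non-terminal=regular} identifies every non-terminal point of $K$ with a pure jump or a triple junction, at which $u$ genuinely jumps. Any component of $K$ containing a non-terminal point has a full arc through it by Lemma \ref{l:smooth connections}, hence positive $\mathcal{H}^1$ measure; by $\sigma$-finiteness of $\mathcal{H}^1\restr K$ there are at most countably many such components, and Proposition \ref{p:armonica-coniugata}(iii) assigns each a single value of $v$. Thus the set of levels where $\{v=m\}$ meets a non-terminal point is countable, so for a.e.\ $m$ the intersection $\{v=m\}\cap K$ lies in $K^\sharp$, where $u$ is continuous by Proposition \ref{p:continuity}. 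For (ii), a standard covering argument that sends each ball of radius $r_i$ in $K$ to an interval of length $\lesssim r_i^{1/2}$ in the image gives the Eilenberg-type inequality
\begin{equation*}
\int_{\mathbb{R}} \mathcal{H}^{1/2}(K\cap B_R\cap\{v=m\})\, dm \leq C\, \mathcal{H}^1(K\cap B_R) < \infty,
\end{equation*}
hence a.e.\ finiteness. To upgrade this to a.e.\ vanishing I would exploit that, by the regularity theory of Theorem \ref{t:main}(b),(c), $v|_K$ is in fact locally Lipschitz near pure jumps and triple junctions, so after removing the countable exceptional set of Step~3 the Hölder-$1/2$ behavior is concentrated on the smaller set $K^\sharp$ of terminal points; combined with the density lower bound of Theorem \ref{t:dlb}, a dyadic iteration of the Eilenberg inequality across scales then yields a summable series and forces $\mathcal{H}^{1/2}(K\cap\{v=m\})=0$ for a.e.\ $m$.

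Finally, for (v), on the open set $\gamma^{-1}(\Omega)$ the composition $u\circ\gamma$ is $C^\infty$ and \eqref{e:derivata} holds pointwise by the chain rule. On the closed set $T:=\gamma^{-1}(K\cap\{v=m\})$, $u\circ\gamma$ is Hölder-$1/2$ (Proposition \ref{p:continuity} composed with Lipschitz $\gamma$); by (ii) and the Lipschitz bound on $\gamma$, $\mathcal{H}^{1/2}(T)=0$, so for any $\varepsilon>0$ one can cover $T$ by countably many intervals $I_j$ with $\sum |I_j|^{1/2}<\varepsilon$, making the total-variation contribution of $T$ to $u\circ\gamma$ at most $C\varepsilon$; together with absolute continuity on $\gamma^{-1}(\Omega)$ this gives (v) and the almost everywhere identity \eqref{e:derivata}. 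The hard step will be the upgrade from \emph{finite} to \emph{zero} in (ii): the Hölder-$1/2$ Eilenberg inequality is exactly at the critical exponent with no slack, and one must genuinely exploit both the improved (Lipschitz) behavior of $v|_K$ at regular points and the quantitative density lower bound on $K$ to create the room needed.
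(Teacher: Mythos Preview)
Your overall structure is correct and parts (i), (iii), (iv), and (v) match the paper's approach closely. The genuine gap is in (ii), where you have overcomplicated things and missed the decisive observation.

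You correctly isolate the set $K^\sharp$ (in the paper's notation, $K\setminus\tilde{K}$ where $\tilde{K}$ is the union of components of positive length) as the only part of $K$ that can meet $\{v=m\}$ once the countably many ``bad'' levels are discarded. But you then propose to upgrade the Eilenberg inequality from \emph{finite} to \emph{zero} by a dyadic iteration exploiting Lipschitz behavior at regular points and the density lower bound. This is both vague and unnecessary. The point you are missing is that $\mathcal{H}^1(K^\sharp)=0$: this follows directly from Corollary~\ref{c:a.e.regularity}, since pure jump points form a set of full $\mathcal{H}^1$ measure in $K$ and $K^\sharp$ contains none of them. Once you know this, apply the H\"older coarea inequality (Proposition~\ref{p:Jonas} in the paper, with $\alpha=\beta=\tfrac12$) to $A=K^\sharp$ rather than to all of $K$:
\[
\int_{\mathbb{R}} \mathcal{H}^{1/2}\bigl(K^\sharp\cap\{v=m\}\bigr)\,dm \;\leq\; C\,[v]_{1/2}\,\mathcal{H}^{1}(K^\sharp)\;=\;0,
\]
so the integrand vanishes for a.e.\ $m$ immediately. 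There is no ``critical exponent with no slack'' difficulty: the slack comes from applying the inequality to a null set, not from improving the exponent.

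A minor omission in your (v): smoothness of $u\circ\gamma$ on $\gamma^{-1}(\Omega)$ is not enough by itself for absolute continuity; you need $\int_{\gamma\cap\Omega}|\nabla u|\,d\mathcal{H}^1<\infty$. The paper secures this for a.e.\ $m$ by a second coarea computation, $\int\!\!\int_{\{v=m\}\cap B_N\setminus K}|\nabla u|\,d\mathcal{H}^1\,dm=\int_{B_N\setminus K}|\nabla u|^2<\infty$, which you should include.
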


\begin{remark} Note that, as a consequence of (v) we have the integral identity
\begin{equation}\label{e:integrata}
u (\gamma (1)) - u (\gamma (0)) = \int_0^1 \nabla u (\gamma (t))\cdot \dot{\gamma} (t)\, dt\, .
\end{equation}
\end{remark}

\subsection{Proof of Proposition~\ref{p:continuity}} Without loss of generality assume that $x=0$. Given an open set $U$ we define 
\[
{\rm osc}\, (u, U) := \sup \{|u(x)- u(y)|: x,y\in U\setminus K\}\, .
\]
We just need to show that there is a universal constant $C$ such that ${\rm osc}\, (u, B_r (0)) \leq C r^{\frac{1}{2}}$
and then we can simply define
\[
\bar{u} := \lim_{y\not\in K, y\to 0} u (x)\, 
\]
to conclude \eqref{e:Hoelder-1/2}.
Since we are claiming that the constant $C$ in our estimate is independent of $(u,K)$, we can assume by scaling that $r=1$ and we are thus reduced to show
\begin{equation}
\sup \{|u(x)-u(y)|: x,y\in B_1\setminus K\} \leq C\, .
\end{equation}
We will accomplish the latter estimate in three steps.

\medskip

{\bf Step 1} Firs of all consider $F:= \{r\in (1,2): \sharp (K\cap \partial B_r) <\infty\}$ and for each $r\in F$ we define
\[
d (K, r):= \min \{|x-y|: x\neq y\;\mbox{and}\; x,y\in K \cap \partial B_r\}\, . 
\]
We then claim that there is a $k\in \mathbb N$ such that 
\begin{equation}\label{e:spaziati}
|\{r\in F: d(K, r)> k^{-1}\}|\geq 2 k^{-1}\, .
\end{equation}
Assume indeed that the statement is false for every $k$ and let $(u_k,K_k)$ be a counterexample for $k$. Hence a subsequence $(u_{k_j}, K_{k_j})$ converges to a global generalized minimizer $(u_\infty, K_\infty)$. By the coarea formula \cite[Theorem 2.93]{AFP00} there must be at least one radius $r\in (5/4,7/4)$ such that $\partial B_r \cap K_\infty$ consists of finitely many pure jump points $x_1, \ldots, x_N$, intersecting $\partial B_r$ transversally. But then there is $\delta>0$ such that $(B_{r+2\delta}\setminus \overline{B}_{r-2\delta})\cap K_\infty$ consists of $N$ arcs, intersecting all circles at an angle no smaller than $2\delta$. In turn the regularity theory developed in the first part implies the existence of a positive $\eta$ such that, for sufficiently large $j$, $d(K_{k_j}, \rho)> \eta$ for all $\rho\in (r-\eta, r+\eta)$, which is a contradiction. 

Fix now a $k\in \mathbb N$ such that \eqref{e:spaziati} holds and set 
\[
\mathcal{R}:= \left\{r\in F: d(K, r)> k^{-1}\quad \mbox{and}\quad \int_{\partial B_r} |\nabla u|^2 \leq 8\pi k\right\}\, .
\]
Observe that $|\mathcal{R}|\geq k^{-1}$

For each $r\in \mathcal{R}$ we let $I_1 (r), \ldots , I_{N (r)} (r)$ be the connected components of $\partial B_r\setminus K$. Observe that the length of each $I_j (r)$ is at least $\frac{1}{k}$. For each $I_j (r)$ let
\[
\delta_j (r) := \max\{{\rm dist}\, (x,K): x\in I_j (r)\}
\]
and
\[
\delta (r):= \min_j \delta_j (r)\, .
\]
We claim that there is some $M\in \mathbb N$ universal constant such that $\delta (r)> M^{-1}$ for at least one $r\in \mathcal{R}$. Assume the contrary and for each $M>0$ let $(u_M, K_M)$ be such that $\delta (r) \leq \frac{1}{M}$ for every $r$ in the set
\[
\mathcal{R}_M :=\{r: d(K_M,r)> k^{-1}\}\, .
\]
Let then $(u_{M_j}, K_{M_j})$ be a sequence which converges to $(u_\infty, K_\infty)$. Let $\mathcal{R}_\infty$ be the set of all $\rho\in (1,2)$ which are cluster points of a sequence $\{r_j\}$ with $r_j\in \mathcal{R}_{M_j}$. Then $\mathcal{R}_\infty$ cannot be finite. Fix now $\rho\in \mathcal{R}_\infty$. We then easily conclude that $\partial B_\rho\cap K_\infty$ contains at least one interval of length larger than $\frac{1}{k}$. However this would imply that $\mathcal{H}^1 (K_\infty\cap B_2) = \infty$.

\medskip

{\bf Step 2} Let now $\Omega (K, \delta, R):= \{x\in B_R: {\rm dist}\, (x,K)>\delta\}$. We then claim that for every $\delta>0$ there is $L\in \mathbb N$ such that $\Omega (K, \delta, 2)$ is contained in the same connected component of $\Omega (K, \frac{1}{L}, L)$. Indeed, assume the claim is false and for every $L$ let $(u_L, K_L)$ be a counterexample. Extract a subsequence $(u_{L_j}, K_{L_j})$ which is converging to a global generalized minimizer $(u_\infty, K_\infty)$. Note that $K_\infty$ must then disconnect $\mathbb R^2$ and thus, by Theorem~\ref{t:David-Leger}, $(u_\infty, K_\infty)$ is a generalized global minimizer. Recall that by assumption $0\in K_L$ for every $L\in \mathbb N$ and hence $0\in K_\infty$. In turn, by the regularity theory $0\in K_{L_j}$ must be a triple junction or a pure jump point, while we are assuming that $0\in K^\sharp$.

\medskip

{\bf Step 3} Fix now $M$ and $k$ as in Step 1 and let $r\in \mathcal{R}$ be such that 
$\delta(r)>M^{-1}$.
Let $L$ be as in Step 2. Let 
$I_i$, $i\in\{1,\ldots, N\}$, be the connected components of $\partial B_r\setminus K$, and for each $i$ select $x_i\in I_i$ such that $B_{M^{-1}} (x_i)\subset \mathbb R^2\setminus K$. Fix now $\ell$ and consider that $x_\ell$ and $x_{\ell+1}$ are in the same connected component of $\Omega (K, \frac1L, L)$, which we denote by $\Omega'$. Consider a maximal subset of points $\mathscr{S} = \{y_j\}\subset \Omega'$ with the property that $|y_i - y_j|\geq \frac{1}{8L}$ for each distinct pair $y_i, y_j$. We then have that $\{B_{(4L)^{-1}} (y_j)\}$ covers $\Omega'$. At the same time the cardinality of $\mathscr{S}$ is bounded by a constant since $B_{(16L)^{-1}} (y_j)$ are pairwise disjoint. 

A chain of $\mathscr{S}$ is given by a choice of balls $\{B_{(4L)^{-1}} (y_{i(j)})\}_{j\in \{1, \ldots N\}}$ where the $i(j)$ are all distinct and $B_{(4L)^{-1}} (y_{i(j)})\cap B_{(4L)^{-1}} (y_{i(j+1)}) \neq \emptyset$. We say that $B_{(4L)^{-1}} (y_K)$ and $B_{(4L)^{-1}} (x_J)$ are chain-connected if there is a chain such that $I = i(1)$ 
and $J = i(N)$. Assume now, without loss of generality, that $B_{(4L)^{-1}} (y_1)$ contains $x_\ell$ and let $\mathscr{C}\subset \mathscr{S}$ be the subset of points such that $B_{(4L)^{-1}} (y_j)$ is chain-connected to $B_{(4L)^{-1}} (y_1)$. $\mathscr{C}$ must coincide with $\mathscr{S}$ otherwise the two open sets 
\begin{align}
U &:= \bigcup_{i\in \mathscr{C}} B_{(4L)^{-1}} (y_i)\\
V &:= \bigcup_{i\in \mathscr{S}\setminus \mathscr{C}} B_{(4L)^{-1}} (y_i)
\end{align}
would be disjoint and would disconnect 
$\Omega^\prime$. 
Upon reindexing our balls we can thus assume that $\{B_{(4L)^{-1}} (y_i)\}_{i\in \{1, \ldots , N\}}$ is a chain such that $x_\ell\in B_{(4L)^{-1}} (y_1)$ and $x_{\ell+1}\in B_{(4L)^{-1}} (y_N)$. Set $z_0=x_\ell$, $z_{N} = x_{\ell+1}$, and choose $z_i \in B_{(4L)^{-1}} (y_i)\cap B_{(4L)^{-1}} (y_{i+1})$ for every other $1\leq i\leq N-1$. Consider then the piecewise linear curve $\gamma$ consisting of joining the segments $[z_i, z_{i+1}]$. Since $|z_{i+1}-z_i|\leq 1/(2L)$, the length of the curve is bounded by a universal constant. Furthermore, each point $z$ on the curve is at distance at least $1/{2L}$ from $K$. In turn this 
and the energy upper bound in \eqref{e:upper bound} imply
\[
\int_{B_{1/(2L)} (z)} |\nabla u|^2 \leq \frac{\pi}{L}\, .  
\]
By the mean-value property of harmonic functions, we conclude that $|\nabla u|$ is bounded by a universal constant on the curve $\gamma$. This then implies that $|u (x_\ell) - u (x_{\ell+1})|$ is bounded by a universal constant too.

Next recall that 
\[
\int_{\partial B_r} |\nabla u|^2 \leq 8\pi k\, .
\]
Hence, by Morrey's embedding, we conclude that ${\rm osc}\, (u, \partial B_r\setminus K)$ is bounded by a universal constant. At this point the maximum principle of Lemma~\ref{l:maximum} implies that ${\rm osc}\, (u, B_r\setminus K)$ is bounded as well by the same constant, concluding the proof.

\subsection{Proof of Proposition~\ref{p:struttura-1}} Consider first the union $\tilde{K}$ of the connected components of $K$ with positive length. Since $K\setminus \tilde{K}$ consists of irregular points, $\mathcal{H}^1 (K\setminus \tilde{K})=0$ and thus Proposition~\ref{p:Jonas} implies that $\mathcal{H}^{\frac12} ((K\setminus \tilde{K})\cap \{v=t\}) =0$ for a.e. $t$
since $v\in C^{\frac12}$  by (ii) in Proposition~\ref{p:armonica-coniugata}. 
On the other hand, $v (\tilde{K})$ is a countable set, because the connected components of $K$ with positive length are countaly many, and on each such component the function $v$ is constant.
Thus, except for a countable values of $t$'s we have $\tilde{K}\cap \{v=t\}=\emptyset$. In particular, (ii) follows. Moreover, Proposition~\ref{p:continuity} implies that $u$ is continuous at every point $x\in \{v=t\}$ if $\{v=t\}$ does not intersect $\tilde{K}$, hence giving (i). 

The fact that $\{v=t\}\setminus K$ consists of a locally finite union of real analytic arcs is a direct consequence of the harmonicity of $u$ on $\mathbb R^2\setminus K$. Next, for every $N\in \mathbb N$ consider that by the coarea formula \cite[Theorem 2.93]{AFP00} it is true that
\[
\int \mathcal{H}^1 (\{v=t\}\cap (B_N\setminus K))\, dt =
\int_{B_N\setminus K} |\nabla v| = \int_{B_N\setminus K} |\nabla u|<\infty\, .
\]
Therefore for a.e. $t$ we have that $\mathcal{H}^1 (\{v=t\}\cap (B_N\setminus K)) < \infty$ for every $N\in \mathbb N$, thus implying (iv).

Again by the coarea formula \cite[Theorem 2.93]{AFP00} we know that
\[
\int \int_{B_N \cap \{v=t\}\setminus K} |\nabla u| d\mathcal{H}^1 \, dt
= \int_{B_N\setminus K} |\nabla v| |\nabla u| = \int_{B_N\setminus K} |\nabla u|^2<\infty\, .
\]
We thus infer that for a.e. $t$ 
\begin{equation}\label{e:nabla u in L^1}
\int_{B_R\cap\{v=t\}\setminus K} |\nabla u| d\mathcal{H}^1 < \infty \qquad \forall R>0\, .
\end{equation}
Fix now a $t$ which satisfies (i), (ii), and \eqref{e:nabla u in L^1} and let $\gamma: [0,1]\to \{v=t\}$ be an injective Lipschitz curve. We want to show that $u\circ \gamma \in W^{1,1}$. In order to do that we set
\[
h(s) := \left\{
\begin{array}{ll}
|\nabla u (\gamma (s))||\dot\gamma (s)| \quad &\mbox{if $\gamma (s)\not \in K$}\\
0 &\mbox{otherwise.}
\end{array}\right.
\]
Observe that, by injectivity of $\gamma$ and the $1$-dimensional area formula
\[
\int_0^1 h(s)\, ds = \int_{\gamma ([0,1])\setminus K} |\nabla u| < \infty\, .
\]
The absolute continuity of $u\circ \gamma$ will then follow once we prove that, for all
$\sigma, s\in[0,1]$,
\begin{equation}\label{e:abs continuity}
|u (\gamma (s)) - u (\gamma (\sigma))| \leq \int_\sigma^s h(\tau)\, d\tau\, .   
\end{equation}
First of all, \eqref{e:abs continuity} is obvious if $\gamma ([\sigma, s])\subset \mathbb R^2\setminus K$, because in that case $u$ is smooth on $[\sigma,s]$. On the other hand it is not possible that $\gamma ([\sigma, s])\subset K$ for $\sigma<s$ because 
$\mathcal{H}^{\frac{1}{2}} (K\cap \{v=t\})=0$, hence $K\cap \{v=t\}$ is totally disconnected and $\gamma ([\sigma,s])\subset K$ would imply that $\gamma$ is constant on $[\sigma,s]$, contradicting the injectivity of $\gamma$. 

Fix now an arbitrary $\sigma<s$. By the above argument we know that there is a sequence $\sigma_k\downarrow \sigma$ and a sequence $s_k\uparrow s$ such that $\gamma (\sigma_k), \gamma (s_k)\not \in K$, hence it suffices to prove \eqref{e:abs continuity} when $\gamma (s), \gamma (\sigma)\not \in K$. However, if $\gamma([\sigma , s])\subset \mathbb R^2\setminus K$, then we already observed that the inequality is correct.
Thus, we can assume the existence of at least one $\tau\in (\sigma, s)$ such that $\gamma (\tau)\in K$.

Fix now $\delta>0$ and using the fact that $\mathcal{H}^{\frac{1}{2}} (K\cap \{v=t\})=0$ cover $\gamma ([0,1])\cap K$ with a collection $\mathscr{C}$ of open disks $B_{r_i} (x_i)$ such that $x_i\in \gamma ([0,1])\cap K$ and 
\[
\sum_i r_i^{\frac{1}{2}} < \delta\, .
\]
Observe that $\gamma ([0,1])\cap K$ is compact and thus we can assume that the cover is finite. Moreover, by taking the disks suitably small, we can assume that $\gamma (\sigma)$ and $\gamma (s)$ do not belong to any of them. Finally, since $\mathcal{H}^{\frac{1}{2}} (\gamma ([0,1])\cap K)=0$ we can choose the disks so that $\partial B_{r_i} (x_i) \cap (\gamma ([0,1])\cap K) = \emptyset$, which in turn implies that 
\begin{equation}\label{e:sparso}
\gamma ([0,1])\cap K \cap B_{r_i} (x_i) \qquad \mbox{is compact for every $i$.}
\end{equation}
Select now the smallest $s_1\in (\sigma, s)$ such that $\gamma (s_1)\in K$. Then clearly $\gamma (s_1)$ belongs to some disk of the cover $\mathscr{C}$ and by reindexing it we can assume it is $B_{r_1} (x_1)$. We then let $\sigma_2$ be the largest number such that $\gamma (\sigma_2)\in K \cap B_{r_1} (x_1)$, which exists by \eqref{e:sparso} and is smaller than $s$ because $\gamma (s)\not\in K$. If $\gamma ((\sigma_2, s])\cap K =\emptyset$ we stop the procedure, otherwise we select the smallest $s_2>\sigma_2$ such that $\gamma (s_2)\in K$. Then $\gamma (s_2)$ must belong to a disk of $\mathscr{C}$ which however cannot be $B_{r_1} (x_1)$, and upon reindexing we can assume is $B_{r_2} (x_2)$. We then proceed as in the first step and since $\mathscr{C}$ is finite the procedure stops in finite time. We let $\sigma_N$ be the last chosen number and set $\sigma_1 = \sigma$ and $s_N = s$. We can then partition $[\sigma, s]$ as
\[
[\sigma_1, s_1)\cup [s_1, \sigma_2] \cup (\sigma_2, s_2) \cup \ldots \cup [s_{N-1}, \sigma_N] \cup (\sigma_N, s_N]\, .
\]
Each $(\sigma_i, s_i)$ is contained in $\mathbb R^2\setminus K$. Thus $u$ is smooth on the arc $\gamma ((\sigma_i, s_i))$ and for every sufficiently small $\varepsilon$ we can write
\[
u (\gamma(s_i - \varepsilon)) - u (\gamma(\sigma_i +\varepsilon)) = \int_{\sigma_i+\varepsilon}^{s_i-\varepsilon} \frac{d}{d\tau} (u\circ \gamma) (\tau)\, d\tau\, .
\]
In particular
\[
|u (\gamma(s_i)) - u (\gamma(\sigma_i))|\leq \int_{\sigma_i}^{s_i} h(\tau)\, d\tau\, .
\]
On the other hand $u (\gamma(s_i)), u (\gamma(\sigma_{i+1}))\in B_{r_i} (x_i)$. Recall that $x_i \in \gamma ([0,1])\cap K$ and since $\gamma ([0,1])$ does not intersect any connected component of $K$ with positive length, we necessarily have $x_i\in K^\sharp$. Hence by Proposition~\ref{p:armonica-coniugata} 
\[
|u (\gamma(\sigma_{i+1})) - u(\gamma(s_i))| \leq C r_i^{\frac{1}{2}}\, .
\]
We can thus estimate
\begin{align*}
| u (\gamma(s))-u (\gamma(\sigma))| &\leq \sum_{i=1}^N |u(\gamma(s_i))- u (\gamma(\sigma_i))| + \sum_{i=1}^{N-1} |u (\gamma(\sigma_{i+1}))- u (\gamma(s_i))|\\
&\leq \sum_{i=1}^N \int_{\sigma_i}^{s_i} h(\tau)\, d\tau + C \sum_{i=1}^{N-1} r_i^{\frac{1}{2}}
\leq \int_\sigma^s h(\tau)\, d\tau + C \delta\, .
\end{align*}
Since $\delta$ is arbitrary, we then conclude \eqref{e:abs continuity}. 

As for \eqref{e:derivata} recall first that, since $u\circ \gamma$ is in $W^{1,1}$, it is a.e. differentiable. The derivative at a.e. $\tau\in [0,1]\setminus \gamma^{-1} (K)$ can be computed using the chain rule because $u$ is smooth on $\mathbb R^2\setminus K$. Nex observe that, by \eqref{e:abs continuity},
\[
\left|\frac{d}{d\tau} (u\circ \gamma)\right|\leq h \qquad \mbox{a.e.}
\]
and since $g$ vanishes on $\gamma^{-1} (K)$, we conclude that 
\[
\frac{d}{d\tau} (u\circ \gamma) = 0\,  \qquad \mbox{a.e. on $\gamma^{-1} (K)$.}
\]
However, as already noticed we have that $\dot\gamma = 0$ a.e. on $\gamma^{-1} (K)$. Hence we can claim \eqref{e:derivata} a.e. on $[0,1]$ if we interpret the expression as $0$ at every point where $\dot\gamma$ exists and vanishes.

\section{The level sets of the harmonic conjugate: Part II}\label{s:livelli-coniugata-2}

In this section we deepen the analysis of the level sets of the harmonic conjugate. We will make use of the following terminology:

\begin{definition}\label{d:terminal points}
A set $J\subset \mathbb R^2$ contains no loops if any injective continuous map $\gamma: \mathbb S^1 \to J$ is necessarily constant. A point $p\in J$ is terminal if there is no injective continuous map: $\gamma:(-1,1) \to J$ such that $\gamma (0) =p$. Points $p\in J$ which are not terminal will be called nonterminal.
\end{definition}
Note that the terminology is consistent with the terms terminal and nonterminal points 
introduced in Definition~\ref{d:internal points} and used thus far for points of $K$.

\begin{proposition}\label{p:struttura-2}
Let $K,u$, and $v$ be as in Proposition~\ref{p:struttura-1}. Then:
\begin{itemize}
    \item[(i)] For every $m$ the level set $\{v=m\}$ does not disconnect $\mathbb R^2$ and in particular contains no loops.
    \item[(ii)] $v$ satisfies the maximum and minimum principle, i.e. 
    \begin{align}
    \max_{\overline U} v &= \max_{\partial U} v\, ,\\
    \min_{\overline U} v &= \min_{\partial U} v\, ,
    \end{align}
    for every bounded open set $U$. Moreover, if $x\in U$ is a local minimum (resp. maximum), then necessarily $x\in K$.
    \item[(iii)] For every $m$ to which all
    the conclusions of Proposition~\ref{p:struttura-1} apply, the level set $\{v=m\}$ contains no terminal points.
\end{itemize}
\end{proposition}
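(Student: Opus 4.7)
The plan is to establish (ii) first, deduce (i) immediately from it, and handle (iii) separately. I start with the \emph{moreover} clause of (ii). Suppose $p \in U \setminus K$ were a local maximum of $v$. Since non-elementarity combined with Theorem~\ref{t:David-Leger} forces $\mathbb{R}^2 \setminus K$ to be connected, and $v$ is harmonic on that connected open set, the strong maximum principle forces $v$ to be constant on $\mathbb{R}^2 \setminus K$; this gives $\nabla u \equiv 0$ on an open set, contradicting non-elementarity through Corollary~\ref{c:no-vanishing}. For the main equality $\max_{\overline U} v = \max_{\partial U} v$, I argue by contradiction: if $M := \max_{\overline U} v > \max_{\partial U} v =: M_\partial$, any point $p \in U$ with $v(p) = M$ must lie in $K$ by the \emph{moreover}, and since $v$ is constant on connected components of $K$, the component $K_0$ through $p$ satisfies $v|_{K_0} \equiv M$. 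As $v = M_\partial < M$ on $\partial U$, connectedness of $K_0$ forces $K_0 \subset U$, so $K_0$ is a bounded connected component of $K$ on which $v$ attains the supremum.

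The main obstacle is ruling out this last configuration via the minimality of $(K,u)$. My plan is to construct a competitor $(K \setminus K_0, u')$ with strictly smaller energy. I would choose a tubular neighborhood $N$ of $K_0$ with $\overline N \subset U$ and $\overline N \cap K = K_0$, possible since $K_0$ is compact and separated from $K \setminus K_0$. Because $v \equiv M$ on $K_0$, the conjugate $v$ lies in $W^{1,2}(N)$ with $\int_N |\nabla v|^2 = \int_{N\setminus K_0} |\nabla u|^2$, and the harmonic extension $\widetilde v$ of $v|_{\partial N}$ to $N$ satisfies $\int_N|\nabla \widetilde v|^2 \leq \int_N |\nabla v|^2$. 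Since $K_0$ contains no loops (any loop would create a bounded component of $\mathbb{R}^2 \setminus K$, excluded by Lemma~\ref{l:pockets}), $N$ can be chosen simply connected so that $\widetilde v$ admits a single-valued conjugate $\widetilde u$ in $N$ with $\int_N|\nabla \widetilde u|^2 = \int_N |\nabla \widetilde v|^2 \leq \int_{N\setminus K_0}|\nabla u|^2$. Defining $u' := u$ outside $N$ and $u' := \widetilde u$ inside, removing $K_0$ saves $\mathcal H^1(K_0) > 0$ in length while controlling Dirichlet energy. The technical subtleties I expect are: (a) matching the trace of $\widetilde u$ to that of $u$ on $\partial N$ so that $u' \in W^{1,2}_{\mathrm{loc}}(\mathbb{R}^2 \setminus (K \setminus K_0))$, which will require a boundary-layer interpolation between $u$ and $\widetilde u$ in a thin collar near $\partial N$ whose energy cost can be made negligible by shrinking $N$; (b) handling the degenerate case $\mathcal H^1(K_0) = 0$ via a removable-singularity argument for $v$ across a singleton component of $K$; and (c) checking admissibility across all four minimizer concepts, which is immediate for restricted and generalized-restricted minimizers since removing a component only decreases the component count of $K$.

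Once (ii) is established, (i) is immediate: a bounded connected component $U$ of $\mathbb{R}^2 \setminus \{v = m\}$ with $v > m$ on $U$ (without loss of generality) would have $\max_{\partial U} v = m$ but $\max_{\overline U} v > m$, contradicting (ii); the ``no loops'' statement is then automatic since a Jordan curve in $\{v = m\}$ would disconnect $\mathbb{R}^2$. For (iii), I would split terminal points of $\{v = m\}$ into those in $\mathbb{R}^2 \setminus K$ and those in $K$. A terminal point $p \notin K$ must be a critical point of $v$, otherwise the real-analyticity of $v$ on $\mathbb{R}^2 \setminus K$ makes $\{v = m\}$ a smooth arc through $p$; and the critical set of a non-constant harmonic function is discrete with countably many critical values, so a.e. $m$ avoids them. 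For a terminal point $p \in K$, I would combine Proposition~\ref{p:struttura-1}(ii), which gives $\mathcal H^{1/2}(K \cap \{v = m\}) = 0$ for a.e. $m$ and hence that $K \cap \{v = m\}$ is totally disconnected, with (i) and (ii) to argue that real-analytic arcs of $\{v=m\}\setminus K$ approaching $p$ from one side must be balanced by arcs from another side: a one-sided termination would yield a one-sided local extremum of $v$ at $p$, contradicting (ii) applied in a small ball around $p$. The main obstacle in (iii) will be handling terminal points at irregular points of $K$ (singletons and terminal points of components in $K^\sharp$), where the local structure of level sets of $v$ is not directly governed by the $\varepsilon$-regularity theory and one must combine covering/measure-theoretic arguments with the compactness of minimizers from Chapter~2.
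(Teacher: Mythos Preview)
Your competitor argument for (ii) has a genuine gap that is not a technicality. Notice that nothing in your construction of $(K\setminus K_0,\widetilde u)$ uses the hypothesis that $v|_{K_0}$ is the \emph{maximum} of $v$: you only use that $v$ is continuous across $K_0$ (always true) and that the harmonic extension $\widetilde v$ of $v|_{\partial N}$ has no larger Dirichlet energy than $v$ (always true). If the argument worked, it would show that \emph{every} bounded connected component of $K$ is removable --- but minimality does not exclude such components. The failure point is precisely your subtlety (a): on $\partial N$, the tangential derivative of $u$ equals minus the normal derivative of $v$, whereas the tangential derivative of $\widetilde u$ equals minus the normal derivative of $\widetilde v$; there is no reason these agree, and the resulting mismatch $u-\widetilde u$ on $\partial N$ is of order one (it encodes the jump of $u$ across $K_0$, which is nontrivial at regular points of $K_0$). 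Shrinking $N$ does not help, since $\mathcal H^1(\partial N)$ stays bounded below while the collar width must shrink, so the interpolation energy blows up.

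The paper proves (i) first, by a completely different mechanism that you miss: along any Lipschitz curve $\gamma$ contained in $\{v=m\}$, the identity $\nabla v=\nabla u^\perp$ forces $\nabla u\cdot\dot\gamma$ to have a sign (once $\gamma$ is oriented so that $v$ increases on a fixed side). Proposition~\ref{p:struttura-1}(v) then gives $u(\gamma(1))-u(\gamma(0))=\int\nabla u\cdot\dot\gamma>0$, so $u$ is strictly monotone along $\gamma$ and no loop can close. The maximum principle (ii) is then an immediate corollary of (i). This is why Proposition~\ref{p:struttura-1}(v) was established in the first place.

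For (iii), your argument at points $p\in K$ does not close: even if a single arc terminating at $p$ forced $p$ to be a local extremum of $v$, the ``moreover'' of (ii) explicitly \emph{permits} local extrema at points of $K$, so there is no contradiction. The paper's argument is global--topological rather than local: for $x_0\in\{v=m\}$ and a well-chosen radius $r$ (finite transversal intersection with $\partial B_r$), it shows that $x_0$ lies on $\partial A$ for some connected component $A$ of $B_r\setminus\{v=m\}$, that $A$ is simply connected, and --- by an explicit algorithm alternating boundary arcs $\gamma_i\subset\partial B_r$ with arcs $\eta_i\subset\{v=m\}\cap B_r$ --- that $\partial A$ is a Jordan curve. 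Since $x_0$ sits on this Jordan curve, it is nonterminal.
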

\begin{proof}{\bf Proof of (i)} If $\{v=m_0\}$ contains a loop, then it disconnects $\mathbb R^2$ and at least one connected component of $\{v \neq m_0\}$, which we denote by $U$, must be bounded. Clearly $v$ on $U$ must either be strictly larger than $m_0$ or strictly smaller than $m_0$. To fix ideas let us assume that it is strictly larger and let $\delta>0$ be such that $U_m:=\{v>m\}\cap U$ is not empty for every $m\in (m_0, m_0+\delta)$. Consider that $\partial U_m$ is rectifiable for a.e. $m$ by Proposition~\ref{p:struttura-1} and that 
\[
\int_{m_0}^{m_0+\delta} \mathcal{H}^1 (\partial U_m)\, dm < C\, .
\]
We fix next a suitable regularization $v_k$ of $v$, for instance by convolution, so that 
\[
\limsup_k \int_{m_0}^{m_0+\delta} \mathcal{H}^1 (\partial\{v_k>m\}\cap U)\, dm < \infty\, .
\]
By a standard diagonalization procedure we can find a sequence of values $m_k$ converging to some $m\in (m_0, m_0+\delta)$ with the properties that:
\begin{itemize}
    \item[(a)] All the conclusions of Proposition~\ref{p:struttura-1} apply.
    \item[(b)] $\{v_k = m_k\}\cap U$ consist of non degenerate points (i.e. $\nabla v_k\neq 0$ everywhere on $\{v_k= m_k\}\cap U$).
    \item[(c)] $\mathcal{H}^1 (\{v_k = m_k\}\cap U)$ converge to $\mathcal{H}^1 (\{v = m\}\cap U)$. 
\end{itemize}
Since $\{v_k=m_k\}\cap U$ is (for sufficiently large $k$) compactly contained in $U$, it consists of finitely many loops. Each such loop bounds a corresponding disk. Fix a point $p$ in $\{v>m\}\cap U$: for sufficiently large $k$'s $p$ must belong to one such disk. We denote it by $D_k$ and we let $\gamma_k$ be the corresponding loop. Note that the length of $\gamma_k$ is uniformly bounded in $k$. We can thus parametrize $\gamma_k$ by a constant multiple of the arc length over $\mathbb S^1$. Without loss of generality we keep denoting by $\gamma_k$ such parametrization. By Ascoli-Arzel\`a we can assume that $\gamma_k$ converges to some Lipschitz map $\gamma: \mathbb S^1 \to U$. Observe that $\gamma (\mathbb S^1)\subset \{v=m\}\cap U$. But observe also that the length of $\gamma (\mathbb S^1)$ must be equal to the limits of the lengths of $\gamma_k (\mathbb S^1)$ (which we might assume exist by extraction of a subsequence), otherwise 
item (c) above would be violated. In particular we obtain that 
\[
\int |\dot\gamma| = 
\mathcal H^1 (\gamma (\mathbb S^1))\, .
\]
Observe also that we can choose $\delta$ positive so that $\overline{B}_\delta (p)\subset \{v>m\}\cap U$. In particular, for all sufficiently large $k$ the disk $\overline{B}_\delta (p)$ must be contained inside the disk $D_k$. We thus conclude from the isoperimetric inequality that 
\[
\mathcal H^1(\gamma (\mathbb S^1)) > 0\, .
\]
Choose now the orientation of $\gamma_k$ so that 
\[
\frac{\dot{\gamma}_k^\perp}{|\dot \gamma_k|} = \lambda_k \nabla v_k\circ \gamma_k
\]
for some positive function $\lambda_k$. In particular 
\[
\frac{\dot{\gamma}_k^\perp}{|\dot \gamma_k|}
\]
is the inward unit normal to $D_k$ and $v_k$ increases in its direction. It is then easy to see that 
\[
\frac{\dot{\gamma}^\perp}{|\dot\gamma|}\cdot \nabla v \circ \gamma \geq 0
\]
holds a.e., since the convergence of $v_k$ to $v$ is in fact smooth on $\gamma (\mathbb S^1)\setminus K$, which has measure zero. 
But then we conclude that in fact 
\[
\frac{\dot{\gamma}^\perp}{|\dot\gamma|}\cdot \nabla v \circ \gamma > 0
\]
a.e., since both vectors are collinear and nonzero a.e. on $\gamma (\mathbb S^1)$ (recall that $v$ is harmonic outside $K$ and as such the set of points not contained in $K$ where its gradient vanishes must be countable). 

In particular, we conclude
\[
\dot{\gamma}\cdot \nabla u \circ \gamma > 0\, \qquad \mbox{a.e..}
\]
However, by Proposition~\ref{p:struttura-1} the latter would imply
\[
0 = u (\gamma (2\pi)) - u (\gamma (0)) = \int_0^{2\pi} \nabla u (\gamma (t)) \cdot \dot \gamma (t)\, dt > 0\, , 
\]
which is a contradiction.

\medskip

{\bf Proof of (ii)} The two cases are entirely analogous and we focus on the case of maxima for simplicity. Consider, by contradiction, a bounded open set $V$ for which $\max_{\overline V} v > \max_{\partial V} v$. Then there is an $m_0$ such that $\{v = m_0\}$ does not intersect $\partial V$ and $\{v>m_0\} \cap V$ is not empty. Setting $U:= \{v>m_0\}$ we can argue as in the previous step to obtain a contradiction. Next, if $x\in U$ is a local maximum and $x\not \in K$, by the harmonicity of $v$ we conclude that $\nabla v$ must vanish in a neighborhood of $x$. But this is not possible because Corollary~\ref{c:no-vanishing} would imply that $(u, K)$ is an elementary minimizer.

\medskip

{\bf Proof of (iii)} We will prove the claim for any $m$ such that all the conclusions of Proposition~\ref{p:struttura-1} apply. Fix then a point $x_0$ in $\{v=m\}$ and assume, without loss of generality, that $x_0 =0$. The proof will be split in several step

\medskip

{\bf Step 1. Choice of a good radius.}
Let $r>0$, and $G$ be any connected component of $\{v=m\}\cap \overline{B}_r$ and observe that it must necessarily intersect $\partial B_r$. Otherwise, using Lemma~\ref{l:little-loop} we find a Jordan curve $\gamma$ which does not intersect $\{v=m\}$ and bounds a disk $D$ which contains $G$. Clearly we must have either $v<m$ or $v>m$ on $\partial D$ and in particular we would violate (ii).

We thus infer that $\{v=m\}\cap \partial B_r \neq\emptyset$ for every $r$, because some connected component of $\{v=m\}\cap \overline{B}_r$ must contain the origin (recall that $x_0=0\in\{v=m\}$).

We next appeal to the coarea formula \cite[Theorem 2.93]{AFP00} to choose an $r\in (1,2)$ with the properties that
\begin{itemize}
    \item[(1)] $\{v=m\}\cap \partial B_r$ is finite;
    \item[(2)] $\{v=m\}\cap \partial B_r\cap K = \emptyset$;
    \item[(3)] $\nabla v (y) \neq 0$ for every $y\in \{v=m\}\cap \partial B_r$ (hence $\{v=m\}$ is a smooth arc in a sufficiently small neighborhood of any such $y$);
    \item[(4)] $\{v=m\}$ intersects $\partial B_r$ transversally at any $y\in \partial B_r\cap \{v=m\}$.
\end{itemize}
Fix now any point $y\in \partial B_r \cap \{v=m\}$.
By (4) the normal to $\{v=m\}$ at $y$ cannot be perpendicular to $\partial B_r$: recall that this normal is colinear with $\nabla v (y)$, which by (3) does not vanish. We thus conclude that $\nabla v (y)$ cannot be perpendicular to $\partial B_r$. Hence, if we consider the restriction of the function $v-m$ to $\partial B_r$, such function is changing sign at each point $y\in \partial B_r \cap \{v=m\}$. Therefore $\{v=m\}\cap \partial B_r$ consists of an even number of points which divide the cirle $\partial B_r$ into an even number of arcs $\gamma_i$: on half of them $v-m$ is positive, while on the other half it is negative, cf. Figure~\ref{f:finite-arcs}.

\begin{figure}
\begin{tikzpicture}
\draw (-6,0) circle [radius =2];
\draw[very thick] (-6, 2.5) to [out = 240, in = 120] (-6, 1.5);
\draw[very thick] (-3.5, 0) to [out = 240, in = 120] (-4.5, 0);
\draw[very thick] (-6, -2.5) to [out = 240, in = 120] (-6, -1.5);
\draw[very thick] (-7.5, 0) to [out = 240, in = 120] (-8.5, 0);
\node at (-6,0) {$B_r$};
\node at (-4.3,1.7) {$\gamma_1$};
\node at (-4.3,-1.7) {$\gamma_2$};
\node at (-7.7,-1.7) {$\gamma_3$};
\node at (-7.7,1.7) {$\gamma_4$};
\end{tikzpicture}
\caption{\label{f:finite-arcs} $\partial B_r\setminus \{v=m\}$ consists of an even number of arcs, on which the function $v-m$ takes alternating signs. The picture depicts $\{v=m\}$ only in a neighborhood of $\partial B_r$.}
\end{figure}
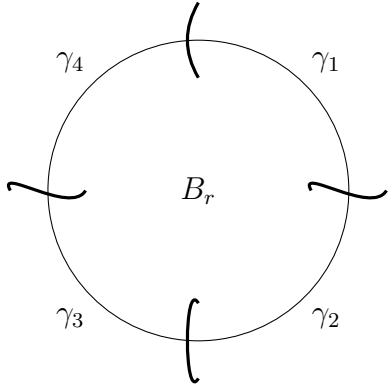

\medskip

{\bf Step 2. $0\in \partial A$ for some connected component $A$ of $B_r \setminus \{v=m\}$.} Consider now the connected components $\{A_i\}$ of $B_r \setminus \{v=m\}$. $v-m$ does not change sign nor vanishes on each $A_i$.
$\partial A_i$ must then intersect $\partial B_r$: otherwise we would have $v\equiv m$ on $\partial A_i$, violating (ii). Since each $A_i$ is connected, if $\partial A_i$ intersects a given $\gamma_j$, 
the latter being one of the arcs introduced in the previous step, then $\gamma_j \subset A_i$. In particular each $\gamma_j$ is contained in at most one $\partial A_i$, while each $\partial A_i$ contains at least one $\gamma_j$.
Hence the $A_i$'s are finitely many. 

Next, since $\{v=m\}$ contains no interior point, every $z\in \{v=m\}\cap B_r$ must be contained in $\partial A_i$ for some $i$. This holds for $0$ as well. We therefore assume that $0\in \partial A_1$ and for simplicity we set $A=A_1$.

\medskip

{\bf Step 3. $A$ is simply connected.} Furthermore,
to simplify our discussion we assume that $v>m$ on $A$. Note next that $A$ must be simply connected: if $\gamma$ is a smooth simple Jordan curve in $A$, then it bounds a disk $D$ and the latter must be contained in $A$ otherwise $D$ would contain a portion of $\partial A$ which does not intersect $\gamma$, and from the latter we would get a connected component of $\{v=m\}$ which does not intersect $\partial B_r$. Since $A$ contains $D$, $\gamma$ is contractible in $A$.

\medskip

{\bf Step 4. Finding a suitable Jordan curve in $\partial A$.} Having established that $A$ is simply connected, i.e. it is a topological disk, we infer that its boundary $\partial A$ cannot be disconnected by a point $p\in \partial A$. We next devise a suitable algorithm to generate a suitable Jordan curve contained in $\partial A$. 

Fix first a $\gamma_i$ contained in $\partial A$, relabel it so that $i=1$ and consider its two endpoints $a_1$ and $b_1$. If we remove the (open arc) $\gamma_1$ from $\partial A$, the remainder is a closed connected set with finite Hausdorff measure. We can thus apply Lemma~\ref{l:connessi_per_archi} to conclude that there is an injective curve $\eta$ in $\partial A\setminus \gamma_1$ which connects $b_1$ and $a_1$.  Consider it as a parametrized curve $\eta: [0,1]\to \partial A$ with $\eta (0) = b_1$ and $\eta (1) = a_1$. Since in a neighborhood $U$ of $b_1$ $\{v=m\} \cap U$ consists of a single smooth arc crossing $\{v=m\}$ precisely in $b_1$, for a sufficiently small $\delta>0$ $\eta ([0, \delta))$ must be contained in this arc. In particular $\eta ((0, \delta))$ does not intersect $\partial B_r$. On the other hand $\eta (1) = a_1\in \partial B_r$. Consider therefore the smallest positive $s$ such that $\eta (s) \in \partial B_r$: the arc $\eta_1 = \eta ([0, s])$ is then a simple arc joining $b_1$ with either $a_1$ or with one extremum of some other arc $\gamma_j$, and $\eta_1 ((0,s))$ is contained in $\partial A \setminus \partial B_r$. If $\eta_1 (s)= a_1$ we stop and we conclude that $\gamma_1\cup \eta_1$ is a Jordan curve. 

Otherwise $\eta_1 (s)$ must be an extremum of a second $\gamma_j$ contained in $\partial A$. We relabel so that $j=2$ and denote by $a_2$ and $b_2$ its extrema, with the convention that $a_2 = \eta_1 (s)$. We now remove $\gamma_1\cup \eta_1 \cap \gamma_2$ from $\partial A$ (where we follow the convention that the arcs $\gamma_i$ are open while the arcs $\eta_i$ are closed). The remaining set is still connected, has finite $\mathcal{H}^1$ measure, and contains $b_2$ and $a_1$. We can thus repeat the procedure above to produce a second simple arc $\eta_2\subset \partial A\setminus (\gamma_1\cup \eta_1\cup \gamma_2)$ which connects $b_2$ with the endpoint of some other arc $\gamma_k$ contained in $\partial B_r$, taking care that $\eta_2$ lies in $B_r$ except for its two extrema. Recall that one extremum of $\eta_2$ is $b_2$: if the second is $a_1$ we stop the procedure. Otherwise note that it cannot be an extremum of either $\gamma_1$ or $\gamma_2$: it must be the extremum of some $\gamma_k$ with $k\neq 1,2$, cf. Figure~\ref{f:find-Jordan}. We then let $k=3$ after relabeling and proceed as above.

Since the $\gamma_j$'s are finitely many, the procedure must end. Assume it ends after $N$ steps. If we string together $\gamma_1\cup \eta_1\cup \gamma_2 \cup \ldots \cup \eta_N$, we achieve a Jordan curve. 

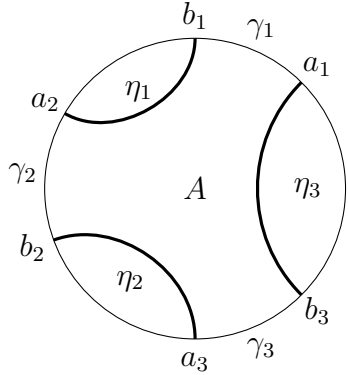
\begin{figure}
\begin{tikzpicture}
\draw (0,0) circle [radius =2];
\draw[very thick] (0,2) to [out = 270, in = 330] ({2*cos{150}},{2*sin{150}});
\draw[very thick] ({2*cos{200}},{2*sin(200)}) to [out = 20, in =90] (0,-2);
\draw[very thick] ({2*cos{-45}},{2*sin{-45}}) to [out =135, in=225] ({2*cos{45}},{2*sin{45}});
\node at ({2.3*cos{45}}, {2.3*sin{45}}) {$a_1$};
\node at (0,2.3) {$b_1$};
\node at ({2.3*cos{150}},{2.3*sin{150}}) {$a_2$};
\node at ({2.3*cos{200}},{2.3*sin{200}}) {$b_2$};
\node at (0,-2.3) {$a_3$};
\node at ({2.3*cos{-45}},{2.3*sin{-45}}) {$b_3$};
\node at ({2.3*cos{67.5}},{2.3*sin{67.5}}) {$\gamma_1$};
\node at ({2.3*cos{175}},{2.3*sin{175}}) {$\gamma_2$};
\node at ({2.3*cos{-67.5}},{2.3*sin{-67.5}}) {$\gamma_3$};
\node at ({1.5*cos{120}},{1.5*sin{120}}) {$\eta_1$};
\node at ({1.5*cos{235}},{1.5*sin{235}}) {$\eta_2$};
\node at (1.5,0) {$\eta_3$};
\node at (0,0) {$A$};
\end{tikzpicture}
\caption{\label{f:find-Jordan} The picture is an illustration of the algorithm to find a Jordan curve inside $\partial A$. The algorithm finds inductively the pairs of Jordan arcs $(\gamma_i, \eta_i)$ starting from $\gamma_1$.}
\end{figure}

\medskip

{\bf Step 5. Describing $\partial A$.} The Jordan curve $\beta:\mathbb S^1\to\partial A$ found by the previous algorithm bounds then a topological disk $D$ which must be contained in $B_r$ and which contains $0$. Observe that $A$ must be contained in $D$. Indeed $D$ is a connected component of $B_r\setminus \beta(\mathbb S^1)$ and, since $\beta(\mathbb S^1) \subset \partial A \subset \{v=m\}$, any connected component of $B_r\setminus \{v=m\}$ either does not intersect $D$, or it is contained in $D$. On the other hand both $A$ and $D$ contains $0$, so $A\cap D \neq \emptyset$. Next $v=m$ on all the $\eta_i$'s, while $v>m$ on all the $\gamma_i$'s ($v\geq m$ on each of them because they are in $\partial A$ and the strict inequality is because none of them intersects $\{v=m\}$). But then we conclude that $v\geq m$ on $D$, by (ii). We now want to exclude that 
$v=m$ somewhere in $D$. Indeed, if $z\in \{v=m\}\cap D$ notice that there is a connected component $G$ of $\{v=m\}$ which contains $z$ and intersects $\partial B_r$. But then $\mathcal{H}^1 (\{v=m\}\cap D) >0$. Since $\mathcal{H}^1 (\{v=m\} \cap K) = 0$, we must have some $z\in \{v=m\}\cap D \setminus K$. But then, by Proposition~\ref{p:struttura-1}(ii) $v-m$ takes both signs in any neighborhood of $z$, contradicting $v\geq m$ in $D$. Having concluded that $v>m$ in $D$, we infer that $D\cap \{v=m\}=\emptyset$. In particular $D$ is a connected component of $B_r\setminus \{v=m\}$. Since it contains $A$, it actually must be equal to $A$. 

We are now ready to conclude: recalling that $0\in \partial A$, it must be contained in some $\eta_j$. The latter is an injective curve, it is contained in $\{v=m\}$, and $0$ is not one of its two extrema: in particular $0$ is a nonterminal point of $\{v=m\}$. 
\end{proof}

\section{The level sets of the harmonic conjugate: Part III}\label{s:livelli-coniugata-3}

We are now ready to conclude our analysis of the level sets of $v$. While so far all the conclusions apply for a global generalized minimizer which is not elementary, in this section we use substantially the additional assumption that all but one connected component of $K$ are contained in a fixed disk $B_R$.
Before coming to the relevant statement, let us point out that, given any global generalized minimizer $(u,K, \{p_{kl}\})$ and any constant $c$, $(\pm (u-c), K, \{\pm p_{kl}\})$ is also a global generalized minimizer.

\begin{proposition}\label{p:struttura-3}
Let $(u,K)$ be a nonelementary global minimizer satisfying (a) and (b) in Theorem~\ref{t:Bonnet-David} and let $v$ be as in Proposition~\ref{p:armonica-coniugata}. Then:
\begin{itemize}
    \item[(i)] every blow-down of $(u,K)$ (i.e. any limit of $(u_{0,r_j}, K_{0, r_j})$ for $r_j\uparrow \infty$) is the cracktip;
    \item[(ii)] $|\nabla v (x)|\geq c_0 |x|^{-\frac12}$ for all $x\not \in K$ large enough and for a positive geometric $c_0$;
    \item[(iii)] up to changing the sign of $u$, $v$ achieves its global minimum $\min v$;
    \item[(iv)] for every $m> \min v$ there is a radius $R= R(m)$ such that $\{v=m\} \setminus B_r$ is smooth and $\{v=m\}$ intersects each $\partial B_r$ transversally in exactly two points for every $r\geq R(m)$;
    \item[(v)] for a.e. $m> \min v$ the level set $\{v=m\}$ is a properly embedded unbounded line;
    \item[(vi)] the unbounded connected component of $K$ is $\{v=\min v\}$.
\end{itemize}
\end{proposition}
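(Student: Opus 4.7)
My plan is to establish (i) first by combining Bonnet's monotonicity formula (Proposition \ref{p:monotonia-0}) with the blow-down compactness of Theorem \ref{t:compactness-2}. Hypothesis (a) together with Corollary \ref{c:global-two-connected-components} implies that $K\setminus B_R$ lies in the unique unbounded connected component of $K$, so for every $r>R$ the set $K\cap\partial B_r$ lies in a single connected component of $K$, and hypothesis (i) of Proposition \ref{p:monotonia-0} is satisfied for all such $r$. Consequently $r\mapsto d(r)=r^{-1}\int_{B_r\setminus K}|\nabla u|^2$ is monotone nondecreasing and bounded above by \eqref{e:upper bound}, so it converges to some $d_\infty$. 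Any blow-down $(K_\infty,u_\infty,\{p_{kl}\})$ then inherits a constant density $d(K_\infty,\rho)\equiv d_\infty$, forcing it, by the rigidity part of Proposition \ref{p:monotonia-0}, to be either a cracktip or to satisfy $\nabla u_\infty\equiv 0$. The latter combined with Theorem \ref{t:class-global} makes the blow-down elementary, but the density lower bound \eqref{e:dlbE} rules out $K_\infty=\emptyset$, and Corollary \ref{c:unicita-blow-down} together with the fact that hypotheses (a)--(b) preclude $(K,u)$ itself from being a pure jump or a triple junction rules out those two possibilities as well.

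For (ii), since the cracktip gradient satisfies $|\nabla u_{\mathrm{cr}}|^2=(2\pi|y|)^{-1}$ on its complement, I would upgrade the blow-down convergence to a pointwise gradient lower bound on the unit annulus $B_2\setminus B_{1/2}$. Away from the cracktip's half-line this is classical convergence for harmonic functions, while in a tubular neighborhood of the crack (off its tip) Theorem \ref{t:eps_salto_puro} applied to $(K_{0,R},u_{0,R})$ delivers $C^{1,\alpha}$-closeness of $u_{0,R}$ to the cracktip up to $K$. Using the scaling identity $|\nabla v(x)|=|\nabla u(x)|=|x|^{-1/2}|\nabla u_{0,|x|}(x/|x|)|$ and running a contradiction argument over subsequences yields $|\nabla v(x)|\geq c_0|x|^{-1/2}$ uniformly in direction for $|x|$ large.

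Statements (iii) and (vi) I would treat together. Fix the sign of $u$ so that some blow-down subsequence converges to the standard cracktip, whose conjugate $v_{\mathrm{cr}}\geq 0$ vanishes on its half-line; denoting by $m_\infty$ the constant value of $v$ on the unbounded component of $K$ (well defined by Proposition \ref{p:armonica-coniugata}(iii)), this choice gives $(v(x)-m_\infty)/|x|^{1/2}\to v_{\mathrm{cr}}(x/|x|)\geq 0$ along the chosen subsequence, and continuity in the rescaling parameter together with the discreteness of the sign choice extends this to every blow-down. Hence $\liminf_{|x|\to\infty}v(x)\geq m_\infty$, so $v$ is bounded below and by Proposition \ref{p:struttura-2}(ii) attains its minimum on $K$, proving (iii). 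To deduce (vi), suppose by contradiction that $\min v=m_0<m_\infty$: then $U:=\{v<m_\infty\}$ is a nonempty open set which is bounded because $v\geq m_\infty-\varepsilon$ outside a sufficiently large disk, and $\partial U\subset\{v=m_\infty\}$, so $\partial U$ contains a Jordan arc enclosing a bounded region inside $\{v=m_\infty\}$, contradicting the loop-free property in Proposition \ref{p:struttura-2}(i). The same loop-exclusion argument applied to any other connected component of $K$ on which $v$ takes the value $m_\infty$ identifies $\{v=\min v\}$ with the unbounded connected component of $K$.

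For (iv), expressing in polar coordinates $\tilde v_r(\theta):=r^{-1/2}(v(r\cos\theta,r\sin\theta)-m_\infty)$ and using blow-down convergence together with the pure-jump $\varepsilon$-regularity theorem, I would obtain $C^1$-convergence of $\tilde v_r$ to $\sqrt{2/\pi}\sin(\theta/2)$ on compact subsets of $\partial B_1$ away from the limit cracktip's $K$; since this function is strictly monotone on each half of $(0,2\pi)$, the equation $\tilde v_r(\theta)=(m-m_\infty)r^{-1/2}$ has exactly two transverse roots for $r\geq R(m)$, and smoothness of $\{v=m\}\setminus B_{R(m)}$ follows from the implicit function theorem since $\nabla v\neq 0$ there by (ii). For (v), I would fix $m>m_\infty$ for which all conclusions of Proposition \ref{p:struttura-1} apply and $\{v=m\}$ has no terminal points by Proposition \ref{p:struttura-2}(iii); then $\{v=m\}$ is a 1-rectifiable, terminal-free, loop-free closed set which outside $B_{R(m)}$ consists of exactly two disjoint unbounded smooth arcs, and a topological argument excluding both bounded connected components (which would carry terminal points) and two separate unbounded components (whose end counts would conflict with (iv)) forces it to be a single properly embedded unbounded line. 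The hardest technical step will be consistently normalizing the sign of the blow-down cracktip to justify the inequality $v\geq m_\infty$ at infinity and the loop-based enclosing argument used in (vi).
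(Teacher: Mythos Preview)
Your overall strategy matches the paper's closely: Bonnet's monotonicity and its rigidity clause for (i), $C^{1,\alpha}$ convergence from the $\varepsilon$-regularity at pure jumps for (ii) and (iv), and the no-loops/no-terminal-points structure of level sets for (v). However, there is a genuine gap in your treatment of (iii) and (vi).

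The inference ``$U=\{v<m_\infty\}$ is bounded because $v\geq m_\infty-\varepsilon$ outside a sufficiently large disk'' is incorrect: the statement $\liminf_{|x|\to\infty}v(x)\geq m_\infty$ only gives that $\{v<m_\infty-\varepsilon\}$ is bounded for each fixed $\varepsilon>0$, not that $\{v<m_\infty\}$ itself is bounded, so your loop argument cannot start. The paper avoids this by exploiting the $C^1$ convergence you already invoke in (ii) and (iv) more sharply: since $v_r\to v_\infty$ in $C^1$ up to the discontinuity set on the unit annulus, and $v_\infty(1,\theta)=\sqrt{2/\pi}\,\sin(\theta/2)$ is strictly positive on $(0,2\pi)$ with nonvanishing angular derivative at $\theta=0,2\pi$, one obtains the \emph{strict} inequality $v>m_\infty$ on $\partial B_r\setminus K$ for every sufficiently large $r$. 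As $K\cap\partial B_r$ is a single point of $L$ where $v=m_\infty$, the minimum principle of Proposition~\ref{p:struttura-2}(ii) on $B_r$ yields $v\geq m_\infty$ on $B_r$ for all large $r$, hence $\min v=m_\infty$ directly; the loop-exclusion detour is then unnecessary.

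Your identification $\{v=\min v\}=L$ in (vi) is also too sketchy. The paper's route is: every global minimum point of $v$ lies in $K$ by Proposition~\ref{p:struttura-2}(ii), so $\{v=m_\infty\}\subset K$; and $\{v=m_\infty\}=\bigcap_{m>m_\infty}\{v\leq m\}$ is connected because (v) makes $\{v=m\}$ a single embedded line for a.e.\ $m$, hence $\{v\leq m\}$ is connected. A connected subset of $K$ containing the connected component $L$ must equal $L$. Your loop-exclusion idea can be made to work (enclose a hypothetical bounded component $L'\subset K$ with $v|_{L'}=m_\infty$ by a Jordan curve $\gamma$ missing $K$, note $v>m_\infty$ on $\gamma$ since $\{v=m_\infty\}\subset K$, and contradict the minimum principle on the disk bounded by $\gamma$), but that presupposes $\{v=m_\infty\}\subset K$, which you never state.
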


\begin{proof}{\bf Proof of (i).} Since $K\setminus B_{R_0}$ is a single connected component for a sufficiently large $R_0$, by Proposition~\ref{p:monotonia-0} the quantity
\[
\frac{1}{r}\int_{B_r} |\nabla u|^2
\]
is monotone for $r> R_0$.
Let $(w, J)$ be any blow-down. It then turns out that $J$ consists of a single unbounded connected component and that 
\[
\frac{1}{r} \int_{B_r} |\nabla w|^2
\]
is a nonzero constant thanks to Corollary~\ref{c:no-vanishing}, being $(u, K)$ 
nonelementary. 
But then Proposition~\ref{p:monotonia-0} implies that it is a cracktip. Observe that, while we lack at this point any uniqueness statement and the half-line of the cracktip could change depending on the blow-down sequence, it is easy to verify that the sign of the constant $b$ in \eqref{e:radice} must always be the same, independently of the subsequence (cf. Proposition~\ref{p:constant-crack-tip}). 

\medskip

{\bf Proof of (ii), (iii), and (iv).} Fix $r>0$ sufficiently large and rotate the coordinates by an angle $\theta (r)$ so that $\mathcal{R}_{\theta (r)} (K)\cap \partial B_r = \{(r, 0)\}$. We consider the pair
\begin{align}
K_r &:= \frac{1}{r} \mathcal{R}_{\theta (r)} (K)\\
u_r (x) &:= r^{-\sfrac{1}{2}} u ( r \mathcal{R}_{-\theta (r)} (x))
\end{align}
and the function
\[
v_r (x) := r^{-\sfrac{1}{2}} v (r \mathcal{R}_{-\theta (r)} (x))\, .
\]
Up to a change of the sign of $u$ we can assume $(u_r, K_r)$ is converging to the following specific cracktip $(w_\infty,K_\infty)$:
\begin{align}
K_\infty &= \{(t,0): t\geq 0\}\\
w_\infty (x) &= \sqrt{\frac{2}{\pi}} r^{\sfrac{1}{2}} \cos \frac{\theta}{2}\, .
\end{align}
On the other hand $v_r$ is converging (locally uniformly, thanks to the H\"older estimate of Proposition~\ref{p:armonica-coniugata}) to a H\"older function $v_\infty$ which is harmonic on $\mathbb R^2\setminus K_\infty$ and satisfies $\nabla v_\infty = \nabla u_\infty^\perp$ and $v_\infty (0) =0$. But then it turns out that 
\[
v_\infty (x) = \sqrt{\frac{2}{\pi}} r^{\sfrac{1}{2}} \sin \frac{\theta}{2}\, .
\]
We next appeal to the $\varepsilon$-regularity theory at pure jumps to argue that the convergence of $K_r$ to $K_\infty$ is in $C^{1,\alpha}$ on $B_4\setminus B_{\frac12}$. Hence we can appeal to the regularity theory coming from the Euler-Lagrange conditions of Proposition~\ref{p:variational identities 2} to conclude that the convergence of $u_r$ and $v_r$ to $u_\infty$ and $v_\infty$ is also in $C^{1,\alpha}$ ``up to the discontinuity set $K_r$'': more precisely, there are $C^{1,\alpha}$ diffeomorphisms $\Phi_r$ of $B_4\setminus (B_{1/4} \cup K_r)$ onto $B_4\setminus (B_{1/4}\cup K_\infty)$ such that
\begin{itemize}
    \item $\Phi_r$ converges to the identity in $C^{1,\alpha}$ as $r\uparrow\infty$;
    \item $u_r\circ \Phi_r^{-1}$, $v_r\circ \Phi_r^{-1}$ converge in $C^{1,\alpha}$ to $u_\infty$, $v_\infty$ on $B_2\setminus (B_{\frac12}\cup K_\infty)$.
\end{itemize}
We can then easily draw the following conclusions:
\begin{itemize}
    \item[(1)] $|\nabla u_r|\geq c >0$ on $B_2\setminus (B_{\frac12}\cup K_r)$ for some suitable positive constant $c$ and every sufficiently large $r$. This clearly implies (ii).
    \item[(2)] Since $|\nabla v_r|= |\nabla u_r|$ we infer that the level sets of $v_r$ are smooth on $B_2\setminus (B_{\frac12} \cup K_r)$.
    \item[(3)] Since $\pm \frac{\partial v_\infty}{\partial \theta} (1,\pm \theta)\geq \frac{1}{2\sqrt{2\pi}}$ for $\theta \in (0, \frac{\pi}{2})$, we conclude $\pm \frac{\partial v_r}{\partial \theta} (1, \pm \theta) \geq \frac{1}{4\sqrt{2\pi}}$ for $\theta \in (0,\frac{\pi}{2})$ and $r$ large enough.
    \item[(4)] Since $v_\infty (1, \theta)\geq \pi^{-\sfrac{1}{2}}$ for $\theta \in (\frac{\pi}{2}, \frac{3\pi}{2})$, clearly $v_r (1, \theta) \geq \frac{1}{2}\pi^{-\sfrac{1}{2}}$ for $\theta \in (\frac{\pi}{2}, \frac{3\pi}{2})$ and $r$ large enough.
\end{itemize}
The last two facts imply that, if $r$ is large enough, then $v_r (1, \theta) > v_r (1,0)$ for all $\theta\in (0, 2\pi)$. Translating this information back to $v$, we conclude that, if we denote by $\bar m$ the constant value achieved by $v$ on the unbounded connected component of $K$, then $v> \bar m$ on $\partial B_r\setminus K$. By the maximum principle of Proposition~\ref{p:struttura-2} the latter implies that $\bar m$ is the absolute minimum of $v$.

Fix now $m> \bar m$, then 
\[
\{v=m\}\setminus B_{\frac r2}=
r (\{v_r = r^{-\sfrac{1}{2}} m
\}\setminus B_{\frac12})\, .
\]
If $r$ is sufficiently large,  the set $\{v_r = r^{-\sfrac{1}{2}} m 
\}$ intersects $\partial B_1$ transversally in exactly two points, by (3) and (4). This shows (iv)  

\medskip

{\bf Proof of (v).} Pick $m> \bar m$ to which all the conclusions of Proposition~\ref{p:struttura-2} apply and let $r$ be large enough so that $\partial B_r$ intersects $\{v=m\}$ transversally in exactly two points. Since $B_r \cap \{v=m\}$ has no terminal points and no loops, $\{v=m\}\cap B_r$ must be a Jordan arc. Indeed we could even notice that the very argument given in the previous section for Proposition~\ref{p:struttura-2}(iii) shows this fact directly.

\medskip

{\bf Proof of (vi).} Let $L$ be the unbounded connected component of $K$. Observe that we have already shown that $L\subset \{v=\bar m\}$, where $\bar m = \min v$. Next observe that, for a.e. $m> \bar m$, the level set $\{v\leq m\}$ is connected, because of (v). Since $\{v=\bar m\} = \bigcap_{m>\bar m} \{v\leq m\}$, we conclude that $\{v= \bar m\}$ is connected as well. Next notice that $\{v= \bar m\}\subset K$, because by Proposition~\ref{p:struttura-1}(ii) any point of $\{v=\bar m\}$ is a global minimum for $v$.  We have thus inferred that $L \subset \{v=\bar m\}\subset K$ and that $\{v=\bar m\}$ is connected. Recalling that $L$ is a connected component of $K$, we must have $L = \{v= \bar m\}$.
\end{proof}

\section{A special bounded connected component of \texorpdfstring{$K$}{K}}\label{s:G}

From now on we will establish a set of conditions which we can assume for a couple $(u,K)$ 
as in Theorem~\ref{t:Bonnet-David} without loss of generality, thanks to Proposition~\ref{p:struttura-3}.

\begin{ipotesi}\label{a:Bonnet-David}
$(u,K)$ is a nonelementary global generalized minimizer (in particular $\R^2\setminus K$ is connected) and $v$ is an harmonic conjugate of $u$ as in Proposition~\ref{p:armonica-coniugata}, which satisfy in addition the following properties.
\begin{itemize}
    \item[(i)] $v\geq 0$.
    \item[(ii)] $L:= \{v=0\}$ is the only unbounded connected component of $K$.
    \item[(iii)] There is $R>0$ s.t. $K\setminus B_R\subset L$ and $L\cap \partial B_R$ has cardinality $1$.
\end{itemize}
We then let $m_0$ be $\max_K v$.
\end{ipotesi}

Observe that $m_0$ is well defined because it is in fact $\max_{K\cap \overline{B}_R} v$ by (ii) and (iii) above, while $v$ is continuous and $K\cap \overline{B}_R$ compact. We next point out a pivotal (yet simple) outcome of the Bonnet monotonicity formula.

\begin{corollary}\label{c:m_0=0}
Let $(u,K)$ be as in Assumption~\ref{a:Bonnet-David}. If $m_0=0$, then $(u,K)$ is a cracktip.
\end{corollary}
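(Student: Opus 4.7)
First, I would observe that the hypothesis $m_0=0$ together with $v\geq 0$ forces $v\equiv 0$ on $K$. Since by Proposition~\ref{p:struttura-3}(vi) we have $L=\{v=0\}$, every connected component of $K$ must be contained in the single connected set $L$, and therefore coincide with it; thus $K=L$ is connected and unbounded, and $K\cap\partial B_R=\{p\}$ by Assumption~\ref{a:Bonnet-David}(iii). Because $K\subset\{v=0\}$ and no level set of $v$ contains a loop (Proposition~\ref{p:struttura-2}(i)), $K\cap\overline B_R$ is a compact connected rectifiable tree meeting $\partial B_R$ only at $p$, and so admits at least one terminal point of $K$, which I call $y$ (an interior leaf of $K\cap\overline B_R$, or $y=p$ in the degenerate case).

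The strategy is then to apply Bonnet's monotonicity formula (Proposition~\ref{p:monotonia-0}) at $y$. Since $K$ is connected, condition (i) of that proposition is automatic, so $r\mapsto d(y,r)$ is non-decreasing. By Proposition~\ref{p:struttura-3}(i) every blow-down of $(K,u)$ is a cracktip, whose Dirichlet density on a ball equals $1$ (a direct computation exploiting the normalization $b^2=2/\pi$ determined in Proposition~\ref{p:constant-crack-tip}); by Theorem~\ref{t:minimizers compactness} this gives $\lim_{r\to\infty}d(y,r)=1$ and hence $d(y,r)\leq 1$ for every $r>0$.

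The key step, and the main obstacle, is to prove $d(y,0^+)=1$. I would extract a sequence $r_j\downarrow 0$ for which $(K_{y,r_j},u_{y,r_j})$ converges to a generalized global minimizer $(K',u',\{p'_{k\ell}\})$. By the convergence of Dirichlet energies in Theorem~\ref{t:minimizers compactness} and the scaling, $d'(0,r)=\lim_j d(y,r r_j)=c$ is constant in $r$, where $c:=d(y,0^+)$. If $c=0$, then $\nabla u'\equiv 0$, so $(K',u')$ is elementary by Theorem~\ref{t:class-global}; but the $\varepsilon$-regularity results of Chapter~\ref{ch:salti_e_tripunti} would then force $y$ either to lie off $K$ or to be a pure jump or triple junction, contradicting that $y$ is terminal in $K$. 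Hence $c>0$ and $(K',u')$ is nonelementary. Applying the rigidity part of Proposition~\ref{p:monotonia-0} to $(K',u')$ at $0$, with the $\nabla u'\equiv 0$ alternative excluded by Corollary~\ref{c:no-vanishing}, would give that $(K',u')$ is a cracktip with tip at the origin, whence $c=1$. The delicate point is verifying hypothesis (i) of Proposition~\ref{p:monotonia-0} for $(K',u')$: this amounts to the connectedness of $K'$ (or at least of its component through $0$ at all small scales), which I would derive from the arcwise connectedness of $K$ (Lemma~\ref{l:connessi_per_archi}), the density lower bound (Theorem~\ref{t:dlb}), and a standard argument excluding splitting under Hausdorff convergence of connected sets of uniformly positive $1$-dimensional density.

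Combining $d(y,0^+)=d(y,\infty)=1$ with monotonicity yields $d(y,r)\equiv 1$ on $(0,\infty)$. For every $r_0>0$, the rigidity part of Proposition~\ref{p:monotonia-0} at $y$ on $(0,r_0)$ then gives either $\nabla u\equiv 0$ on $B_{r_0}(y)$ --- impossible since $(K,u)$ is nonelementary (Corollary~\ref{c:no-vanishing}) --- or $(K,u)$ coincides on $B_{r_0}(y)$ with the cracktip whose loose end is $y$. Sending $r_0\to\infty$ identifies $(K,u)$ with that cracktip globally.
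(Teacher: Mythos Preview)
Your proof is correct and follows essentially the same route as the paper's: establish $K$ is connected from $m_0=0$, locate a terminal point, apply Bonnet's monotonicity (Proposition~\ref{p:monotonia-0}), rule out $c=0$ via $\varepsilon$-regularity, identify the blow-up as a cracktip to get $c=1$, and conclude by rigidity. You are in fact more careful than the paper on one point: you flag that the rigidity part of Proposition~\ref{p:monotonia-0} for the blow-up $(K',u')$ requires $K'$ to be connected, whereas the paper simply asserts this. Your sketch for that step is in the right direction; a clean way to complete it is to note that the connected component of $K'$ through $0$ is unbounded (limits of the connected components of $K_{y,r_j}\cap\overline B_R$ through $0$), and then invoke Corollary~\ref{c:global-two-connected-components} together with Lemma~\ref{l:little-loop} to exclude any additional (necessarily bounded) component.
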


We postpone its simple proof, for the moment, and note that it gives us a route to the proof of Theorem~\ref{t:Bonnet-David}: we assume that $m_0>0$ and wish to derive a contradiction. To that end it will be useful to make the following preliminary analysis.

\begin{proposition}\label{p:componente-speciale}
Let $(u,K)$ be as in Assumption~\ref{a:Bonnet-David} and assume that $m_0 >0$. Then any $x\in K\cap \partial \{v> m_0\}$ is either a pure jump or a triple junction. In particular, if $G$ is any connected component of $K$ contained in $\{v=m_0\}$, then
\begin{itemize}
    \item[(i)] $G$ is bounded and $\mathcal{H}^1 (G)>0$; 
    \item[(ii)] if $x\in G$ is a terminal point, then there is a neighborhood of $x$ in which $v\leq m_0$.
\end{itemize}
\end{proposition}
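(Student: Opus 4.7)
The strategy is to first establish the main claim --- that $x \in K \cap \partial\{v>m_0\}$ is a pure jump or triple junction --- and then deduce (i) and (ii) as corollaries. Continuity of $v$ (Proposition~\ref{p:armonica-coniugata}) together with $v \leq m_0$ on $K$ forces $v(x) = m_0$. By Corollary~\ref{c:non-terminal=regular}, non-terminal points of $K$ are automatically pure jumps or triple junctions, so the plan reduces to showing that such an $x$ cannot be terminal in $K$.

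Granted the main claim, (ii) is immediate: if $x\in G$ is terminal in $G$, then $x$ is terminal in $K$, hence by the main claim $x\notin\partial\{v>m_0\}$. Combined with $v(x)=m_0$ this yields a neighborhood of $x$ on which $v\leq m_0$. For (i), boundedness of $G$ is automatic because $L=\{v=0\}$ is the unique unbounded connected component of $K$ while $v\equiv m_0>0$ on $G$. To prove $\mathcal H^1(G)>0$ I would argue by contradiction: a connected set of zero $\mathcal{H}^1$ measure is a singleton $\{p\}$, and by (ii) we then have $v\leq m_0$ near $p$. Normalization (Corollary~\ref{c:normalized}) forbids $p$ from being isolated in $K$; iterating the same argument at the components accumulating at $p$ and extracting a blow-up, one produces a limit in which $v_\infty \leq 0$ locally with $v_\infty(0)=0$. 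The strong maximum principle forces $v_\infty$ locally constant, hence $u_\infty$ globally constant by unique continuation, contradicting non-elementarity via Corollary~\ref{c:no-vanishing}.

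For the main claim itself, I would proceed by blow-up. Suppose by contradiction $x$ is terminal in $K$. Rescale via $K_j:=(K-x)/r_j$, $u_j(y):=r_j^{-1/2}(u(r_jy+x)-u(x))$ using the continuous extension from Proposition~\ref{p:continuity}, and $v_j(y):=r_j^{-1/2}(v(r_jy+x)-m_0)$. Theorem~\ref{t:minimizers compactness} yields in the limit a generalized global minimizer $(K_\infty,u_\infty)$ with harmonic conjugate $v_\infty$ satisfying $v_\infty(0)=0$. A key preliminary observation, which I would establish first, is that every connected component $U$ of $\{v>m_0\}$ is unbounded: $v$ is harmonic on $U$ with trace $m_0$ on $\partial U$, so boundedness together with the maximum principle would force $v\leq m_0$ on $U$, a contradiction. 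This property is inherited by the blow-up; combined with $x\in\partial\{v>m_0\}$ it forces $0\in\partial\{v_\infty>0\}$ with an unbounded component of $\{v_\infty>0\}$ adjacent to $0$.

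The main obstacle is then to classify the blow-up $(K_\infty,u_\infty)$. If it is elementary, the local structure at $0$ (pure jump, triple junction, or constant) is inconsistent with the terminal condition inherited from $x$. Otherwise, the hard part is to show that a blow-up at a terminal point of $K$ cannot exhibit two distinct unbounded connected components of $K_\infty$ --- a delicate geometric argument I expect to be the main technical hurdle, leveraging the density lower bound and the localization of $K$ near $x$. Granted this, Theorem~\ref{t:uniqueness} forces $(K_\infty,u_\infty)$ to be a cracktip, whose harmonic conjugate has the explicit form $\pm\sqrt{2/\pi}\sqrt r\sin(\theta/2)$ with sign pinned by $b^2=2/\pi$ from Proposition~\ref{p:constant-crack-tip}. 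The constraints $v_\infty(0)=0$ and the presence of $\{v_\infty>0\}$ adjacent to $0$ together with this rigidity will contradict the assumption $m_0>0$, since otherwise Corollary~\ref{c:m_0=0} would already yield that $(K,u)$ is itself a cracktip.
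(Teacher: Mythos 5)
Your reduction of claims (ii) and (i) to the main classification statement is sound in outline and roughly parallels the paper (for (i) the paper argues more directly: a singleton $\{x\}$ would force $x\in\partial\{v>m_0\}$ because harmonic nonconstant $v$ must exceed $m_0$ arbitrarily near $x$, and then the main claim gives connectedness of $K$ near $x$, a contradiction; your detour through an iterated blow-up and a ``strong maximum principle'' for $v_\infty$ doesn't close, because $v_\infty$ attains its maximum at $0\in K_\infty$, which lies outside the domain of harmonicity, so the maximum principle says nothing there).

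The real gap, however, is in the main claim. You blow up at a putative terminal $x\in\partial\{v>m_0\}$ and try to classify the limit $(K_\infty,u_\infty)$. Ruling out elementary limits via $\varepsilon$-regularity is fine. But your route to ruling out the nonelementary case --- showing a single unbounded component and then invoking Theorem~\ref{t:uniqueness} --- does not work as stated: that theorem requires \emph{all but one} connected component to lie in a \emph{compact set}, which is strictly stronger than having a single unbounded component, and you do not supply this (and indeed it can fail at a terminal point which is the accumulation point of infinitely many components). More importantly, even granting that some blow-up is a cracktip, you have no contradiction: the cracktip is exactly the blow-up at a regular loose end, which is a bona fide terminal point, so ``blow-up is a cracktip'' is perfectly compatible with $x$ being terminal. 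Your appeal to Corollary~\ref{c:m_0=0} runs in the wrong logical direction --- that corollary gives $m_0=0\Rightarrow$ cracktip for the \emph{global} pair, and a blow-up of $(K,u)$ being a cracktip does not yield $m_0=0$.

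What is missing is precisely the ingredient the paper builds in Steps 1--2: the quantity
\[
O(x,r):=\frac{1}{r}\int_{\{v>m_0\}\cap B_r(x)}|\nabla u|^2
\]
is shown (via the Neumann structure on $\{v=m_0\}$ and the arguments from Corollary~\ref{c:Bonnet}) to be monotone nondecreasing in $r$, with $\lim_{r\uparrow\infty}O(x,r)=1$ from Proposition~\ref{p:struttura-3}(i). If $\lim_{r\downarrow 0}O(x,r)$ were $1$, constancy on $(0,\infty)$ would make the \emph{original} $(K,u)$ a cracktip with terminal point at $x$, forcing $m_0=0$ --- that is the contradiction, a global-to-local argument that cannot be recovered from classifying a blow-up. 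One then shows $\lim_{r\downarrow 0}O(x,r)=0$, which constrains the blow-up: either $\{v_0>0\}$ is nonempty and $\nabla v_0$ vanishes on an open set (hence $K_0$ disconnects $\mathbb{R}^2$ and Theorem~\ref{t:David-Leger} gives a pure jump or triple junction), or $\{v_0>0\}$ is empty and a more delicate argument on the rescaled set $\overline{\{v>m_0\}}$ yields the same conclusion. Your blow-up-only approach has no analogue of this monotonicity and so cannot exclude the cracktip scenario.
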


\subsection{Proof of Corollary~\ref{c:m_0=0}}
If $m_0=0$ it follows immediately from our assumptions that $K = \{v=0\}$, it is connected, and it is unbounded. Since $K$ does not disconnect $\mathbb R^2$, $K$ must have at least one terminal point, which without loss of generality we can assume to be $0$. By Proposition~\ref{p:monotonia-0} we know that
\[
\frac{D(r)}{r} = \frac{1}{r} \int_{B_r} |\nabla u|^2
\]
is monotone nondecreasing. Recall that the blow-downs of $(u,K)$ are cracktips by (i)  Proposition~\ref{p:struttura-3}. Observe that, if $\lim_{r\downarrow 0} r^{-1} D(r) =0$, then the blow-ups of $(u,K)$ at $0$ are elementary minimizers. By the density lower bound none of them can be a constant, so they must be either pure jumps or triple junctions. But then the $\varepsilon$-regularity theory developed thus far would imply that $K\cap B_r$ is diffeomorphic either to a diameter of $B_r$ or to three radii meeting at the origin, contradicting the hypothesis that $0$ is a terminal point of $K$. We thus conclude that
$c := \lim_{r\downarrow 0} r^{-1} D(r) > 0$. But then any blow-up $(u_0, K_0)$ at $0$ must have $K_0$ connected and must satisfy
\[
\frac{1}{r} \int_{B_r} |\nabla u_0|^2 \equiv c >0
\]
Thus the second part of Proposition~\ref{p:monotonia-0} implies that $(u_0, K_0)$ is a cracktip. We thus conclude that $r^{-1} D(r)$ is as well constant and nonzero. Hence we can once again appeal to Proposition~\ref{p:monotonia-0} to conclude that $(u,K)$ itself is a cracktip.  

\subsection{Proof of Proposition~\ref{p:componente-speciale}} First of all we address the two consequences (i) and (ii) of the main statement. (ii) is indeed obvious because of the main part of the Proposition. 

Next consider any connected component $G$ of $K$ in $\{v=m_0\}$, 
and note that $G$ is necessarily bounded in view of Assumption~\ref{a:Bonnet-David}.
Let $x\in G$. 
Recall that the connected component $J$ of $\{v=m_0\}\cap \overline{B}_r (x)$ which contains 
$x$ must intersect $\partial B_r (x)$, by Proposition~\ref{p:struttura-2} (otherwise Lemma~\ref{l:little-loop} would provide an open disk $D$ with smooth boundary $\gamma$ containing $J$ and with $\gamma \cap \{v=m_0\}=\emptyset$: such a $D$ would then violate the maximum principle of Proposition~\ref{p:struttura-2}). Therefore,
if $\{x\}$ were a connected component of $K$, then there would be a sequence $x_k \in J\setminus K$ converging to $x$. $v$ would be harmonic in a neighborhood of these points, and since it is nowhere constant because of Corollary~\ref{c:no-vanishing}, $v-m_0$ would need to change sign in these neighborhoods. In particular we could provide a sequence $y_k$ such that $|x_k-y_k|\leq \frac{1}{k}$ with $v (y_k)>m_0$, thus showing that $x\in K\cap \partial \{v>m_0\}$. But then by the first part of the Proposition $x$ would be a pure jump point or a triple junction, showing that $K\cap B_\rho (x)$ is connected in some disk $B_\rho (x)$,
a contradiction.

We next come to the main part of the Proposition, which will be split in several steps. 

\medskip

{\bf Step 1.} Consider the set $V := \{v>m_0\}$ and fix any point $x$ 
in $\R^2$. In this step we show that the functional
\[
O (x,r) := \frac{1}{r} \int_{V \cap B_r (x)} |\nabla u|^2
\]
is monotone in $r$. In this step for convenience of notation we assume that $x=0$ and, 
arguing as in the proof of Proposition~\ref{p:monotonia-0}, we observe that $O(0,\cdot)$ is 
absolutely continuous and its derivative is given by
\[
O' (0,r) = \frac{1}{r} \int_{V \cap \partial B_r} |\nabla u|^2 - \frac{1}{r^2} \int_{V \cap B_r} |\nabla u|^2\, .
\]
We next claim that 
\begin{equation}\label{e:integra-per-parti}
\int_{V \cap B_r} |\nabla u|^2 = \int_{V \cap \partial B_r\setminus K} u \frac{\partial u}{\partial n}\, .
\end{equation}
In order to prove it we observe first that the identity can be justified if $V$ is replaced by $\{v>m\}$ for some $m$ bigger than $m_0$ for which the conclusion of (v) of Proposition~\ref{p:struttura-3} applies, because then we can let $m\downarrow m_0$. Next, for any such $m$ the set $\{v=m\}$ is a nonintersecting infinite curve which does not intersect $K$ (because by definition $v \leq m_0$ on $K$). It then turns out that $\nabla v$ cannot vanish on it: as it is well known, since $v$ is harmonic, around a point $p$ where $\nabla v$ vanishes the level set $\{v= v(p)\}$ cannot be a Jordan arc, and it is rather diffeomorphic to the union of $N\geq 4$ segments joining at $p$. This follows simply from the fact that, in a sufficiently small neighborhood of $p$, the level set $\{v= v(p)\}$ is diffeomorphic to the zero set of the first nontrivial harmonic polynomial in the Taylor expansion of $v-v(p)$ at $p$: up to rotations, the latter is necessarily of the form ${\rm Re} (z-z_0)^k$ for some $k\geq 2$, where $z_0 = x_0+i y_0$ for $p=(x_0, y_0)$, and $z=x+iy$. It thus turns out that $\{v>m\}$ is a smooth set and we can use the relation $\frac{\partial u}{\partial n} =0$ on $\partial \{v>m\}=\{v=m\}$. 

Next observe that 
\[
\int_{\gamma} 
\frac{\partial u}{\partial n} = 0
\]
for every connected component $\gamma$ of $\{v>m_0\}\cap \partial B_{r}$.
Again it suffices to show it first for every connected component of $\{v>m\}\cap \partial B_r$ for the $m>m_0$ to which we can apply Proposition~\ref{p:struttura-3} and then pass to the limit. In fact by varying $m$ we can assume that the intersection of $\{v=m\}$ with $\partial B_r$ is transversal. Then it suffices to observe that for each such $\gamma$ we can find an arc $\eta$ in $\{v=m\}$ with the same endpoints as $\gamma$ such that $\eta\cup \gamma$ bounds a disk $D$. We then use 
\[
0=\int_{\gamma\cup \eta} \frac{\partial u}{\partial n} = \int_\gamma \frac{\partial u}{\partial n}\, ,
\]
where the second identity is due to $\frac{\partial u}{\partial n}=0$ on $\eta$, and the first is implied by the harmonicity of $u$ in $D$, if $D\cap K = \emptyset$. More generally we can use Proposition~\ref{p:variational identities} if $K\cap D \neq \emptyset$, because (taking into account that $\lambda =0$) \eqref{e:outer} is the weak formulation of the Neumann condition $\frac{\partial u^\pm}{\partial n}=0$ on $K$. However in this particular instance we can show that $D\cap K= \emptyset$. Indeed $\{v\geq m\}\subset\R^2\setminus K$ as $m>m_0$ and $m_0\geq v$ on $K$, therefore $K\cap D$ is contained in the endpoints of $\gamma$, which are actually points of $\eta$ and $K\cap\eta=\emptyset$. At any rate we are now in the position of arguing as in Proposition~\ref{p:monotonia-0} to show that 
\[
\frac{1}{r} \int_{V\cap\partial B_r} |\nabla u|^2 \geq \frac{1}{r^2} \int_{V \cap B_r}|\nabla u|^2\, .
\]

\medskip

{\bf Step 2.} Fix now any $x\in \mathbb R^2$ and observe that
$V=\{v>m_0\}\subset\R^2\setminus K$, and that by Proposition~\ref{p:struttura-3}(i), 
\[
\lim_{r\uparrow \infty} O(x,r) = \lim_{r\uparrow \infty} \frac{1}{r} \int_{B_r(x)\setminus K} |\nabla u|^2\, ,
\]
while the latter value equals the constant value
\[
C_\infty := \frac{1}{r} \int_{B_r\setminus K_c} |\nabla u_c|^2 = 1
\]
for a cracktip $(u_c, K_c)$ with terminal point at the origin. Indeed, observe first that 
\[
B_{r-|x|} (0) \subset B_r (x) \subset B_{r+|x|} (0)
\]
and therefore it suffices to show that 
\[
\lim_{r\to\infty} \frac{1}{r} \int_{(\{v> m_0\} \cap B_r) \setminus K} |\nabla u|^2 = C_\infty\, .
\]
Next, introduce $u_r (x) := r^{-1/2} u (rx)$, $K_r := r^{-1} K$, and $v_r (x) := r^{-1/2} v (rx)$ and consider that 
\[
\frac{1}{r} \int_{(\{v> m_0\} \cap B_r) \setminus K} |\nabla u|^2
= \int_{(\{v_r > r^{-1/2} m_0\}\cap B_1)\setminus K_r} |\nabla u_r|^2\, . 
\]
However, up to extraction of subsequences (which for the sake of keeping our notation simple we ignore) $\nabla u_r$ converges strongly in $L^2$ to $\nabla u_c$, while $v_r$ converges (uniformly, given that $\|v_r\|_{C^{1/2}} \leq C$) to the harmonic conjugate $v_c$ of $u_c$ normalized to be $0$ at $0$. Depending on the sign of the cracktip $u_c$, $v_c$ is either nonnegative, or nonpositive. In the first case we have $\{v_c>0\} = \mathbb R^2\setminus K_c$, while in the second we have $\{v_c<0\} = \mathbb R^2\setminus K_c$. In particular $\{v_r > r^{-1/2} m_0\}$ converges to $\{v_c>0\}$. The second case, namely $v_c\leq 0$, is then excluded because the limit of the integrals would be $0$. We thus must have $\{v_c>0\} = \mathbb R^2\setminus K_c$, which in turn implies
\[
\lim_{r\to \infty} \int_{(\{v_r>r^{-1/2} m_0\} \cap B_1)\setminus K_r} |\nabla u_r|^2 
= \int_{B_1\setminus K_c} |\nabla u_c|^2\, .
\]
In particular, if we set 
\[
C (x) := \lim_{r\downarrow 0} O (x,r)\, ,
\]
we then conclude that $C (x)\in [0, 1]$. 
Observe that, arguing as in Proposition~\ref{p:monotonia-0}, if $C(x) = 1$,
then $O'(x,\cdot) \equiv 0$, which in turn implies that:
\begin{itemize}
    \item either $V \cap B_r (x)$ has Lebesgue measure zero for every $r$;
    \item or $B_r(x)\setminus V$ is a straight segment originating at $x$ and $u$ is $\frac{1}{2}$-homogeneous on $V$. 
\end{itemize}
It is immediate to see that the first case is excluded as it would be $C(x)=0$, while we are assuming that $C(x)=1$.
In the second case $(u, K)$ is a cracktip with terminal point at $x$. But then $m_0$ could not be larger than $0$, so the latter conclusion is excluded as well. 
Thus, $C (x)\in [0, 1)$.

Assume now that $C (x)\in (0,1)$. 
Consider then any blow-up $(u_0, K_0, \{p_{kl}\})$ at $x$, limit of some sequence of rescalings $u_{x,r_j}, K_{x,r_j})$ with $r_j\downarrow 0$, and consider the limit $v_0$ of the rescaled harmonic conjugate function 
\[
v_{x,r_j} (y) := r_j^{-\sfrac{1}{2}} (v (r_j y +x) - m_0)\, .
\]
In view of Step~1, it then turns out that 
\[
\frac{1}{r} \int_{B_r \cap \{v_0 >0\}} |\nabla u_0|^2 = C (x) > 0\, \qquad \mbox{for all $r>0$}.
\]
We again argue as in Step 1 and conclude that the last identity yields that either $\{v_0> 0\} \cap B_r$ has measure zero for all $r$ (which is excluded from the fact that $C (x)>0$), or that $u_0$ is the cracktip. In this case, however $v_0$ is the harmonic conjugate of the cracktip, normalized to be $0$ at $0$. Arguing as above, it follows again that either $v_0$ is nonnegative or it is nonpositive. But in the second case $\{v_0>0\}$ would be empty, which in turns gives $C(x)=0$. It thus follows that $v_0$ is nonnegative, which in turn implies $\{v_0>0\} = \mathbb R^2 \setminus K_0$. In particular we have $C (x) =1$, which is also a contradiction.

We thus have established that 
\[
\lim_{r\downarrow 0} O (x,r) = 0
\]
for every $x\in\R^2$.

\medskip

{\bf Step 3.} Consider now $x\in K$ and let $(u_0, K_0, \{p_{kl}\})$ be a blow-up 
of $(u,K)$ at $x$. Let moreover $v_0$ be as above. We distinguish two possibilities:
\begin{itemize}
    \item[(a)] $\{v_0>0\}$ is not empty;
    \item[(b)] $\{v_0>0\}$ is empty.
\end{itemize}
In case (a), being $C(x)=0$, we would have $|\nabla v_0| = |\nabla u_0|=0$ on a nontrivial open set. It therefore follows that $\nabla u_0$ vanishes identically on at least one connected component of $\mathbb R^2\setminus K_0$. Since $K_0$ is nontrivial by the density lower bound, $K_0$ must disconnect $\mathbb R^2$ (cf. Corollary~\ref{c:no-vanishing}). In particular it follows that $(u_0, K_0, \{p_{kl}\})$ is either a pure jump or a triple junction. The $\varepsilon$-regularity theory for these two cases allows then to conclude the main claim of the proposition. 

\medskip
{\bf Step 4.} We are now ready to conclude the proof of the Proposition: we just need to handle case (b). Consider $(u_0, K_0, \{p_{kl}\})$ and $v_0$ as above. Consider in addition the closure $H$ of $\{v> m_0\}$ and observe that $H\cap B_r(x)$ is connected for every $r>0$ by Proposition~\ref{p:struttura-3}, while it also contains $x$. Consider now its rescalings $H_j := r_j^{-1} (H-x)$ and assume, up to subsequences, that it converges locally in the sense of Hausdorff to some closed set $H_0$. $H_0$ is connected 
and moreover it contains the origin. Observe that $v_0\geq 0$ on $H_0$. Observe that, since we are in case (b) above, $v_0 \leq 0$. Therefore, $v_0=0$ on $H_0$.
Now, if exists $z\in H_0\setminus K_0$, then $v_0$ is harmonic in a neighborhood of $z$, and hence constant. We can thus argue as in the previous step to conclude that $(u_0, K_0,\{p_{kl}\})$ is either a pure jump or a triple junction. Otherwise we have $H_0\subset K_0$. Since $H_0$ must be unbounded, we conclude that it contains some regular jump point $z$. Let $B_\rho (z)$ be such that $B_\rho (z)\cap H_0$ is a smooth arc $\gamma$ and denote by $B_\rho^+ (z)$ and $B_\rho^- (z)$ the connected components of $B_\rho (z)\setminus H_0$. By the unique continuation for harmonic functions at smooth boundaries, $\nabla u_0^+$ cannot vanish identically on $\gamma$, unless $u_0$ is constant on $B_\rho^+ (z)$. The latter situation however falls back in what already analyzed in Step 3 and we can ignore it. We thus conclude that $\nabla v_0^+$ does not vanish indentically on $\gamma$ and likewise $\nabla v_0^-$ does not vanish indentically on $\gamma$ either, which means of course that $\frac{\partial v_0^\pm}{\partial n}$ do not vanish identically on $\gamma$. By possibly changing $z$ and making $\rho$ smaller we can assume that they do not vanish on $\gamma$ at all
(recall that $v_0=0$ on $H_0$).

Consider now that, by the $\varepsilon$-regularity theory, $K_{x,r_j} \cap B_\rho (z)$ is converging smoothly to $\gamma\cap B_\rho (z)$. So for a sufficiently large $j$ it is an arc $\gamma_j$ very close to $\gamma$. Now, the value of $v_j$ over $\gamma_j$ is converging to $0$. On the other hand we also know that $v_j \leq 0$ on $\gamma_j$, because $v\leq m_0$ on $K$.

Let now $B_j^+$ and $B_j^-$ be the two connected components in which $B_\rho (z)$ is divided by $\gamma_j$.
Consider that $|\frac{\partial v_j^{\pm}}{\partial n}|\geq c>0$ on $\gamma_j$ for some constant $c$. At the same time the second derivatives $D^2 v_j$ are uniformly bounded on the $B_j^\pm$. Observe that $\frac{\partial v_j^\pm}{\partial n}$ cannot change sign on $\gamma_j$, which is connected. We thus can examine the following cases, depending on their signs:
\begin{itemize}
\item $\frac{\partial v_j^+}{\partial n}\geq c>0$ on $\gamma_j$ or $\frac{\partial v_j^-}{\partial n}\geq c>0$ on $\gamma_j$, for $j$ sufficiently large. Thanks to the uniform bound on the second derivatives and to the smooth convergence towards $\gamma$, we can choose $\rho$ sufficiently small but independent of $j$, so that $B_\rho^+ (z)$ (or $B_\rho^- (z)$), is contained in $H_0$, which is a contradiction, because $H_0$ does not contain interior points. 
\item $\frac{\partial v_j^+}{\partial n}\leq - c$ and $\frac{\partial v_j^-}{\partial n}\leq -c <0$ on $\gamma_j$ for $j$ sufficiently large. Again thanks to the bound on the second derivatives, if we then choose $\rho$ sufficiently small and $j$ sufficiently large, we must have $v_j < 0$ on $B_j^+$ and $B_j^-$. In particular, since $H_j$ is the closure of $\{v_j>0\}$, we conclude that $H_j \cap B_\rho (z) = \emptyset$. But that is also a contradiction because $z$ is in $H_0$, which is the Hausdorff limit of $H_j$ in $\overline{B}_\rho (z)$.
\end{itemize}

\section{Proof of Theorem~\ref{t:Bonnet-David}}\label{s:gira}

In this section we complete the proof of Theorem~\ref{t:Bonnet-David}. We thus fix a nonelementary global minimizer $(u,K)$, an harmonic conjugate $v$ as in Assumption~\ref{a:Bonnet-David}, and set $m_0:= \max_K v$. Because of Corollary~\ref{c:m_0=0} we just need to show that $m_0=0$. Towards a contradiction we assume $m_0>0$ and we can now use Proposition~\ref{p:componente-speciale} to fix a bounded connected component $G$ of $K$ with $\mathcal{H}^1 (G) >0$ and on which $v$ takes the constant value $m_0$. We next recall that, because of Proposition~\ref{p:componente-speciale} and Corollary~\ref{c:non-terminal=regular}, 
every point $p\in G$ can be assigned to one of the following three categories:
\begin{itemize}
    \item Pure jump points $p$: for each of them there is a disk $B_\rho (p)$ such that $B_\rho (p)\cap K = B_\rho (p)\cap G$ is a smooth arc subdviding $B_\rho (p)$ into two topological disks. From Proposition~\ref{p:variational identities 2} we gather immediately that this arc is smooth.
    \item Triple junctions $p$: for each of them there is a disk $B_\rho (p)$ such that $G\cap B_\rho (p)= K\cap B_\rho (p)$ is diffeomorphic to three radii of $B_\rho (p)$ joining at $p$ at $120$ degrees.
    \item Terminal points $p$ (according to Corollary~\ref{c:non-terminal=regular} then $p\notin \partial\{v>m_0\}$ in this case).
\end{itemize}
Obviously the triple junctions are countably many, because they form a discrete set: if $x$ is a triple junction, by the $\varepsilon$-regularity theorem already established there is a punctured disk $B_r (x)\setminus \{x\}$ in which the set $K$ is regular, in particular $x$ is the only triple junction in $B_r (x)$.
However, we caution the reader that for the terminal points the only information available is that they form a compact set of $\mathcal{H}^1$ measure zero.

\subsection{Touring \texorpdfstring{$G$}{G} counterclockwise}
Without loss of generality we will assume that $\mathcal{H}^1 (G) = \pi$ (this can be achieved by a simple rescaling argument). We now wish to find a surjective Lipschitz map $\alpha: \mathbb S^1\to G$ with the following properties:
\begin{itemize}
    \item[(i)] Each jump point of $G$ has exactly two counterimages;
    \item[(ii)] Each triple junction point of $G$ has exactly three counterimages;
    \item[(iii)] Each terminal point has exactly one counterimage.
\end{itemize}
The idea is that such a map ``goes around $G$''. In fact it can be shown that, up to a change of phase in $\mathbb S^1$ and of orientation, the map is unique. We want then to select the one which ``goes around $G$ clockwise''.

Before detailing the construction of this map, it is useful to break down $G$ as the union of maximal smooth open subarcs: a $C^\infty$ open subarc of $G$ is the image of a smooth injective $\eta:(0, \ell)\to K$, parametrized by arc-length, and a maximal one is a $C^\infty$ closed subarc of $G$ which is not a strict subset of any other closed subarc. Observe that any maximal smooth open subarc consist only of jump points and has a unique arc-length parametrization up to orientation. If $\eta: (0, \ell)\to K$ is one such arc-length parametrization, arbitrarily chosen, then $\eta$ can be extended continuously to both $0$ and $\ell$. The corresponding values, which we will call extrema of the subarc, cannot be equal ($G$ contains no loops by Lemma~\ref{l:pockets}) and cannot be jump points (otherwise the subarc would not be a maximal smooth one). Therefore, they are either terminal points, or triple junctions (and of course one of them could be a triple junction, while the other is a terminal point). From now on when treating a maximal smooth subarc of $G$ we will always understand that its parametrization includes the two endpoints, even though the extension is not at all guaranteed to be smooth (the best we can say is that it is Lipschitz continuous at extrema which are terminal points, and $C^2$ at extrema which are triple junctions). The underlying idea about the map $\alpha$ is quite simple to define directly when the number of smooth maximal arcs is finite. We do not give a precise definition of the map in this case, since it can be easily inferred from the general construction below, we rather refer to reader to Figure~\ref{f:giro-attorno} for a visual description of what $\alpha$ does.

\begin{figure}
\begin{tikzpicture}
\draw[very thick] (-0.5,1) to [out=270,in=120] (0,0);
\draw[very thick] (0,0) to [out = 0, in = 180] (0.8,0.3) to [out = 0, in=180] (2,0);
\draw[very thick] (2,0) to [out=60,in = 90] (3,0.4);
\draw[very thick] (-0.75,-1.5) to [out=90, in=240] (0,0);
\draw[very thick] (2,0) -- (2.5,-1);
\draw[-stealth] (-0.1,0.5) -- ({-0.1+0.3*cos{-60}},{0.5+0.3*sin{-60}});
\draw[-stealth] (0.7,0.5) -- (1,0.5);
\draw[-stealth] (2.2,0.4) -- ({2.2+0.3*cos{40}},{0.4+0.3*sin{40}});
\draw[stealth-] (2.3,0.1) -- ({2.3+0.3*cos{40}},{0.1+0.3*sin{40}});
\draw[-stealth] (2.5,-0.5) -- ({2.5+0.3*cos{-60}},{-0.5+0.3*sin{-60}});
\draw[stealth-] (2.1,-0.8) -- ({2.1+0.3*cos{-60}},{-0.8+0.3*sin{-60}});
\draw[stealth-] (1.5,-0.1) -- ({1.5+0.3*cos{-15}},{-0.1+0.3*sin{-15}});
\draw[stealth-] (0.2,-0.2) -- ({0.2+0.3*cos{25}},{-0.2+0.3*sin{25}});
\draw[stealth-] (-0.4,-1.2) -- ({-0.4+0.3*cos{60}},{-1.2+0.3*sin{60}});
\draw[-stealth] (-0.8,-0.8) -- ({-0.8+0.3*cos{60}},{-0.8+0.3*sin{60}});
\draw[stealth-] (-0.6,0.5) -- ({-0.6+0.3*cos{-55}},{0.5+0.3*sin{-55}});
\end{tikzpicture}
\caption{\label{f:giro-attorno} The picture is a visualization of the map $\alpha$ in case the number of maximal smooth open subarcs of $G$ is finite. The map is particularly easy to define if the arcs are finitely many and very regular: in that case $\alpha$ is the limit as $t\downarrow 0$ of the clockwise arc-length parametrizations of the sets $\{x: {\rm dist}\, (x, G) = t\}$.}
\end{figure}
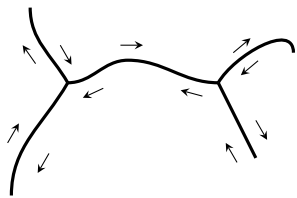

We now describe in the detail the algorithm which produces the map $\alpha$.

\medskip

{\bf Construction of the map $\alpha$.} Fix a maximal smooth subarc (its choice is not important). We denote it by $G_0$ and let $p_0$ the point which divides it into two arcs of equal length. Without loss of generality we assume that $p_0=0$ and that the tangent to $\eta_0$ at $0$ is horizontal. We then fix an arclength parametrization $\eta_0: [-L_0, L_0]\to\R^2$ with $\eta_0 (0) =0$ and $\dot{\eta}_0 (0) = (1,0)$. We start defining a map $\alpha_0: [-L_0, 3L_0] \to \mathbb R^2$ as follows:
\[
\alpha_0 (\theta) = \left\{
\begin{array}{ll}
\eta_0 (\theta) \qquad &\mbox{if $-L_0 \leq \theta \leq L_0$}\\
\eta_0 (2L_0 - \theta) \qquad &\mbox{otherwise}\, .
\end{array}\right.
\]
$\alpha_0$ is a map as in Figure~\ref{f:giro-attorno-2}. 

If both extrema of $G_0$ are terminal points, then $G= \overline{G}_0$ and thus $L_0 = \frac{\pi}{2}$. In this case the map $\alpha$ is given by $\alpha_0$, after we indentify $-\frac{\pi}{2}$ with $\frac{3\pi}{2}$ so that the domain of $\alpha$ is $\mathbb S^1$.

Otherwise, at least one of the extrema of $\eta_0$ is a triple junction. If only one of them is, we consider the two additional maximal subarcs which have that particular extremum in common, if both of them are, then we consider the four additional subarcs. Either way, 
the union of these subarcs and the initial one $\eta_0$ will be denoted by $G_1$. $G_1$ is, schematically, a tree with finitely many nodes, in the second alternative it will look like the set in Figure~\ref{f:giro-attorno}. We now define a new map $\alpha_1: [-a_1, b_1] \to \mathbb R^2$ which is ``going around $G_1$ clockwise'' by putting together pairs of oppositely oriented parametrizations of each maximal subarc forming $G_1$. The parametrizations are chosen and joined in a unique way, once we prescribe that $[-L_0, L_0]\subset [-a_1, b_1]$ and that it must coincide with $\alpha_0$ on $[-L_0, L_0]$. 
Note that $G_1$ has a finite number of terminal points. If all of them are terminal points of $K$ we then stop the procedure. Otherwise we select all the ones which are triple junctions and add to $G_1$ all the maximal subarcs that have one of them as extrema. This forms $G_2$ and we can then construct the corresponding map $\alpha_2$. We keep iterating this algorithm: if it stops after finitely many times $N$, we then set $\alpha =\alpha_N$. If the procedure never stops, then we can easily check that $\alpha_i$ converges to a unique map $\alpha$
(recall that $\mathcal H^1(G)<\infty$, and that the maximal subarcs are parametrized by arc-length).

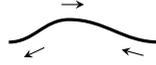
\begin{figure}
\begin{tikzpicture}
\draw[very thick] (0,0) to [out = 0, in = 180] (0.8,0.3) to [out = 0, in=180] (2,0);
\draw[-stealth] (0.7,0.5) -- (1,0.5);
\draw[stealth-] (1.5,-0.1) -- ({1.5+0.3*cos{-15}},{-0.1+0.3*sin{-15}});
\draw[stealth-] (0.2,-0.2) -- ({0.2+0.3*cos{25}},{-0.2+0.3*sin{25}});
\end{tikzpicture}
\caption{\label{f:giro-attorno-2} The map $\alpha_0$ going around $G_0$.}
\end{figure}

\medskip

{\bf Surjectivity of $\alpha$.} Once we show that $\alpha$ is surjective on $G$, it is a simple fact that it satisfies all the properties above. 

In order to show its surjectivity, let $\tilde{G}$ be the image of $\alpha$ and observe at first the following property:
\begin{itemize}
    \item[(R1)] if $p,q\in G$ are two triple junctions extrema of the same maximal open subarc of $G$, then $p\in \tilde{G}$ if and only if $q\in \tilde{G}$. 
\end{itemize}
In fact, assume $p\in \tilde{G}$. If $p\in G_i$ for some $i$, then certainly $q\in G_{i+1}\subset \tilde{G}$ by construction. On the other hand if $p$ were not included in some $G_i$, then there would be a sequence of points $p_i\in G_i\setminus G_{i-1}$ converging to $p$. $p_i$ would belong to an arc $\eta_i$ added at the $i$-th stage. Since the total length of $\tilde{G}$ is finite, the length $L_i$ must converge to $0$ as $i\uparrow \infty$. Note however that each such $L_i$ must have at least one extremum which is a triple junction. But then $p$ would be a limit of triple junctions, which is not possible since the latter are isolated. 
We next observe the following further simple consequence of the same idea:
\begin{itemize}
    \item[(R2)] If $p\in \tilde{G}$ is a triple junction, then $\tilde{G}$ contains all three maximal smooth open subarcs of $K$ which have $p$ as one endpoint.
\end{itemize}
Fix now a point $q\in G$ and observe that, since $G$ is a connected, there is an injective arc $\gamma:[0,L]\to G$ such that $\gamma (0) =0$ and $\gamma (L) = q$, which we can assume to be parametrized by arc-length. If $\gamma ([0,L])$ contains no triple junctions, then $\gamma$ is contained in $\overline{G}_0$ and hence $q\in \overline{G}_0\subset \tilde{G}$. Otherwise let $P\subset [0,L]$ be the subset of points $s$ such that $\gamma (s)$ is a triple junction. This set must be discrete in $[0, L)$: we can order its elements as $p_1<p_2 < \ldots$. Notice that $p_1$ is an extremum of $G_0$ and thus contained in $\tilde{G}$ by (R1). In particular by (R1) and (R2) we must have that $p_2\in \tilde{G}$ as well. By induction all $p_k$ belong to $\tilde{G}$. But then we have two possibilities:
\begin{itemize}
    \item The number of $p_k$'s is finite: if $p_N$ is the largest of them, then $q=\gamma (L)$ is in the closure of a maximal smooth open subarc with endpoint $p_N$: by (R2) this subarc belongs to $\tilde{G}$ and since the latter is closed, $q\in \tilde{G}$;
    \item The number is infinite: in this case $p_k\to q$ and since $\tilde{G}$ is closed and $p_k\in \tilde{G}$ for every $k$, we conclude again $q\in \tilde{G}$.
\end{itemize}

\subsection{Final contradiction} With the map $\alpha$ at hand, we define 
\[
J := \{t\in \mathbb S^1 : \mbox{$\alpha (t)$ is a jump point}\}\, .
\]
For each $t\in J$ we then let $e (t) := \dot\alpha (t)$. Recall that the latter is well defined (as $\alpha$ is indeed smooth on $J$) and has unit norm. We then denote by $n (t)$ its counterclockwise rotation by 90 degrees. Recall that for every $t\in J$ there is one (and only one) other element $s\in J$ such that $\alpha (s) = \alpha (t)$. By construction we then have $e(s)= - e(t)$ and $n(s) = - n (t)$. 
Next recall that for every $t\in J$, if $p= \alpha (t)$, then there is a disk $B_\rho (p)$ in which $G$ is a smooth arc dividing $B_\rho (p)$ in two topological disks. Observe that the restrictions of $v$ and $u$ to any of these two open sets have smooth extensions to its closure (in particular smooth extensions up to $G\cap B_\rho (p)$). We can therefore define
\begin{align}\label{e:tracce}
f(t) &:= \lim_{\delta\downarrow 0} n (t) \cdot \nabla v (\alpha (t) + \delta n (t))\, ,\\
h (t) &:= \lim_{\delta\downarrow 0} u (\alpha (t) + \delta n (t))\, .
\end{align}
Clearly $f(t)$ tells us whether at the point $p$ the function $v$ is decreasing or increasing in the direction $n(t)$. But it is also easy to check that $h' (t) = f(t)$
(recall that $e(t)=\dot{\alpha}(t)$). Observe in addition that $h$ has a continuous extension to $\mathbb S^1$. Indeed for a point $t\not \in J$ we just have two possibilities.
\begin{itemize}
    \item $\alpha (t)$ is a triple junction. In this case $t$ is an isolated point of $\mathbb S^1\setminus J$ and $h$ can be continuously extended at $t$ by the regularity theory at triple junctions.
    \item $\alpha (t)$ is a terminal point. But then by Proposition~\ref{p:continuity} $u$ is continuous at $\alpha (t)$ and 
    \[
    |u (\alpha (t)) - h (s)|\leq C |t-s|^{\sfrac{1}{2}}
    \]
    for all $s\in J$ and a universal constant $C$.
\end{itemize}

The final contradiction argument then hinges on the following pivotal lemma.

\begin{lemma}\label{l:BD-pivotal}
The continuous function $f: J \to \mathbb R$ defined above has the following properties:
\begin{itemize}
\item[(i)] $\{f=0\}$ has empty interior;
\item[(ii)] The closure $S^+$ of $\{f>0\}$ in $\mathbb S^1$ is a connected arc.
\end{itemize}
\end{lemma}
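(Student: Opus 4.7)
For (i), the plan is a unique continuation argument. If $g \equiv 0$ on some open arc $I \subset J$, then $\alpha(I)$ lies in a single maximal smooth subarc of $G$, and on the open region $U$ adjacent to $\alpha(I)$ on the $n(t)$-side, $v$ is a harmonic function satisfying both the Dirichlet condition $v \equiv m_0$ and the Neumann condition $\partial v/\partial n \equiv 0$ on $\alpha(I)$. By Schwarz reflection across the analytic arc $\alpha(I)$ (or equivalently by Cauchy--Kovalevskaya uniqueness for harmonic functions with zero Cauchy data on an analytic curve), $v \equiv m_0$ on an open subset of $U$, and by unique continuation $\nabla v \equiv 0$ on the whole connected component of $\mathbb R^2 \setminus K$ containing $U$. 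Thus $\nabla u = (\nabla v)^\perp \equiv 0$ on an open set, and Corollary \ref{c:no-vanishing} forces $(K,u)$ to be an elementary global minimizer, contradicting our standing nonelementary hypothesis.

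For (ii), I begin with two reductions. First, on each connected open arc $I$ of $J$ the function $g$ has constant sign: the arc $\alpha(I)$ parametrizes one side of a maximal smooth subarc of $G$, on which $v - m_0$ is a harmonic function vanishing on $\alpha(I)$ but not identically zero by (i), so the Hopf strong maximum principle gives $v > m_0$ or $v < m_0$ throughout that side. Second, by Proposition \ref{p:componente-speciale}(ii) every terminal point of $G$ has a neighborhood in which $v \leq m_0$, so the two arcs of $J$ meeting at any terminal point both lie in $\{g < 0\}$, and no terminal point enters $S^+$. These reductions mean that $S^+$ is built by gluing open arcs of $\{g > 0\} \subset J$ at triple junctions where two adjacent sectors of $K^c$ both lie in $\{v > m_0\}$, and the claim is that this gluing produces a single connected sub-arc of $\mathbb S^1$.

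The core of the argument is a topological fact: the open set $W := \{v > m_0\}$ is connected and simply connected. Connectedness of $W$ follows from Proposition \ref{p:struttura-2}(ii) (every connected component of $W$ is unbounded by the maximum principle) combined with the asymptotic cracktip structure of Proposition \ref{p:struttura-3}(iv), which gives a single unbounded end of $W$ at infinity. Simple connectivity is equivalent to connectedness of $\{v \leq m_0\}$: any bounded connected component $C$ of $\{v \leq m_0\}$ would, by the minimum principle applied to $\mathrm{int}(C)$, force $v \equiv m_0$ on $\mathrm{int}(C)$, contradicting Corollary \ref{c:no-vanishing} if $\mathrm{int}(C) \neq \emptyset$; the connected component $C_G$ containing $G$ does have nonempty interior by Proposition \ref{p:componente-speciale}(ii) at a terminal point of $G$ (which exists since $G$ is a bounded connected set with no loops and positive length), so $C_G$ is unbounded and coincides with the unbounded component containing $L$, while any putative bounded component of empty interior would have to be a 1-dimensional subset of $\{v = m_0\}$ surrounded by $\{v > m_0\}$, ruled out by the fact that $v$ changes sign across any smooth component of $\{v = m_0\}\setminus K$. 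Since $G$ is a topological tree (connected with no loops by Proposition \ref{p:struttura-2}(i)), a thin tubular neighborhood $T_\varepsilon = \{\dist(\cdot,G)<\varepsilon\}$ is for small $\varepsilon$ a topological disk with Jordan curve boundary $\partial T_\varepsilon$, naturally identified with $\mathbb S^1$ via the nearest-point projection and the parametrization $\alpha$. Simple connectivity of $W$ together with its single unbounded end then forces $W \cap \partial T_\varepsilon$ to be a single open arc of $\partial T_\varepsilon$; letting $\varepsilon \downarrow 0$ and using the definition of $g$ identifies this arc with $S^+$. The main obstacle I anticipate is the rigorous verification that $W \cap \partial T_\varepsilon$ is a single arc for small $\varepsilon$: this requires combining the simple-connectivity of $W$, the two-end asymptotic structure at infinity, and the tree topology of $G$, with particular care at triple junctions where three sectors of $K^c$ meet at a single point of $G$ and any combination of them can lie in $W$ or its complement.
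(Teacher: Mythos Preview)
Your treatment of (i) is correct and matches the paper's.

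For (ii) there is a genuine gap in your first reduction. You claim $g$ has constant sign on each connected arc $I$ of $J$, invoking the Hopf lemma. But Hopf presumes you already know $v-m_0$ has a sign on the adjacent region; here that region is all of $\mathbb R^2\setminus K$ (which is connected for a nonelementary minimizer), and $v-m_0$ certainly changes sign there. Concretely, the normal derivative of a harmonic function along a boundary arc where it vanishes can change sign: take $v=x_1x_2$ on $\{x_2>0\}$, with $\partial_{x_2}v|_{x_2=0}=x_1$. What survives from (i) is only that the zeros of $g$ on each arc are isolated (by analyticity), not that $g$ keeps a fixed sign. Your second reduction (terminal points lie outside $S^+$) is fine.

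Even granting that $W=\{v>m_0\}$ is connected and simply connected (which is provable along the lines you sketch), the final step does not follow: a simply connected planar domain can meet a Jordan curve in many arcs, and the claim that $W\cap\partial T_\varepsilon$ is a single arc is essentially the statement to be proved. Simple connectivity plus one end at infinity is not enough without further input on how $\partial W$ meets $G$.

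The paper supplies exactly that missing input, via a different route. The key extra step is: if $g(t)>0$ and $s\neq t$ is the other preimage of $\alpha(t)$, then $g(s)\leq 0$. Were both positive, short normal segments on the two sides of $\alpha(t)$ can be joined by an arc of $\{v=m\}$ for $m$ slightly above $m_0$ (Proposition~\ref{p:struttura-3}(v)), producing a disk $D$ with $v\geq m_0$ on $\overline D$; any component of $K\cap D$ would then carry a terminal point into $\partial\{v>m_0\}$, contradicting Proposition~\ref{p:componente-speciale}. With this in hand, assume four cyclically alternating points $t_1,t_2,t_3,t_4$ with $g(t_1),g(t_3)>0>g(t_2),g(t_4)$. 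Join $\alpha(t_1)$ to $\alpha(t_3)$ by an injective arc $\gamma\subset G$, attach short normal segments and an arc $\beta\subset\{v=m\}$ to obtain a Jordan curve bounding a disk $D$ with $v\geq m_0$. There is then a single interval $I\subset\mathbb S^1$ with $\alpha(I)=\gamma$ and $n$ pointing into $D$; since $v\geq m_0$ in $D$, one gets $g\geq 0$ on all of $I$, forcing both $t_2$ and $t_4$ into $\mathbb S^1\setminus I$, a contradiction.
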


Before coming to the proof of Lemma~\ref{l:BD-pivotal} we show how to conclude the proof of Theorem~\ref{t:Bonnet-David} from it. First of all, since $v (\alpha (t)) = m_0$ for every $t$, when $t\in \{f>0\}$ clearly there is a $\varepsilon >0$ with the property that 
$v (\alpha (t) + \sigma n (t))> m_0$ for every $\sigma\in (0, \varepsilon)$. In particular we conclude that $\alpha (t)\in \partial \{v> m_0\}$. Moreover $\{f>0\}$ is dense in $S^+$ and therefore $\alpha (S^+)\subset \partial \{v>m_0\}$. Proposition~\ref{p:componente-speciale} (ii) implies that $\alpha (S^+)$ consists all of jump points and triple junctions. Since $G$ has at least two terminal points (cf. (i) Proposition~\ref{p:struttura-2}), we conclude that the complement of $S^+$ is not empty, and we will denote it by $S^-$. 

At this point it is convenient to change phase to the parametrization $\alpha$ so that $\alpha([0, M]) = S^+$ and $\alpha(M, 2\pi) = S^-$ (cf. (ii) Lemma~\ref{l:BD-pivotal}). 
Clearly, as $h'=f$, the function $h$ is increasing on $S^+$ and decreasing on $S^-$: $h(0)$ is then the minimum and $h (M)$ is the maximum. Observe also that in both intervals $(0, M)$ and $(M, 2\pi)$ the monotonicity of $h$ is strict, because the zero set of $h'$ has empty interior.

Since $[0, M]$ contains no terminal points, we can subdivide $[0, M]$ as $[0, t_1]\cup [t_1,t_2]\cup \ldots \cup [t_N, M]$, where each $t_i$ is a triple junction and each open interval $(t_0, t_1), \ldots , (t_N, M)$ contains no triple junctions. Observe that $\alpha$ is injective on $[0,M]$ (because $\alpha ([0,M])$ contains no terminal point).

The situation is particularly easy when $(0, M)$ itself contains no triple junctions. $\alpha ((0, M))$ has a second, disjoint, counterimage $(s_0, s_0+M)$. This counterimage must be contained in $S^-$. We therefore can draw the following conclusions:
\begin{itemize}
    \item $h$ is strictly decreasing on $[s_0, s_0+M]$. 
    \item $h (0) < h(s_0+M) < h (s_0) < h (M)$ (the inequalities are justified as  $h$ is strictly decreasing on $S^-$, for the first we also use that $h(0)=h(2\pi)$);
    \item $\alpha (s) = \alpha (s_0+M-s)$ (because recall that $\alpha|_{[0,M]}$ and $\alpha|_{[s_0, s_0+M]}$ are arclength parametrizations of the same smooth arc with opposite orientations.
\end{itemize}
Now, define the function
\begin{equation}\label{e:def-k}
k (s) := h (s) - h (s_0+M-s)
\end{equation}
and note that $k$ is strictly increasing on $[0, M]$ while $k (0) < 0 < k(M)$.
In particular, $k(0)=[u](\alpha(0))$ and $k(M)=-[u](\alpha(M))$.
Therefore $k$ must have a zero $z_0$ in $(0, M)$. Such zero $z_0$ corresponds to a pure jump point $p = \alpha (z_0)$ with the property that the two traces $u^+ (p)$ and $u^- (p)$ on the two sides of $K$ at $p$ are equal. But since $u$ is an {\em absolute minimizer} this is not possible
(note that here we are using the property that at a jump point the one-sided traces of $u$ have to differ: this property does not hold for restricted minimizers).

We now analyze the more general case in which $(0, M)$ is not contained in $J$. We then let $t_1, \ldots, t_N$ be as above and set $t_0 =0$ and $t_{N+1} = M$. We then find points $s_{N+1} < s_N^+ < s_N^- < s_{N-1}^+ < s_{N-1}^- < \ldots < s_1^- < s_0$ such that:
\begin{itemize}
    \item $\alpha$ maps $[s_{N+1}, s_N^+]$ onto $\alpha ([t_N, t_{N+1}])$, $[s_1^-, s_0]$ onto $\alpha ([t_0, t_1])$, and $[s_j^-, s_{j-1}^+]$ onto $\alpha ([t_{j-1}, t_j])$;
    \item $\alpha (s_{N+1}) = \alpha (t_{N+1})$, $\alpha (s_0) = \alpha (t_0)$, and $\alpha (s_j^-)= \alpha (s_j^+) = \alpha (t_j)$.
\end{itemize}
Moreover by (ii) Lemma~\ref{l:BD-pivotal}:
\begin{itemize}
    \item $h(t_0) < h (s_0)$;
    \item $h (s_{N+1}) < h (t_{N+1})$;
    \item $h (t_0) < h (t_1) < \ldots < h (t_{N+1})$;
    \item $h (s_0) < h (s_1^-) < h (s_1^+) < \ldots < h (s_N^-) < h (s_N^+) < h (s_{N+1})$.
\end{itemize}
In order to streamline the rest of the discussion we use the convention that $s_0^+=s_0^-=s_0$.

Assume now there is $j\leq N$ such that $h (s_j^-) \leq h (t_j)$. In that case we let $j$ be the smallest of them. Observe then that $h (s_{j-1}^+) \geq h (s_{j-1}^-) > h (t_{j-1})$. We then set $a:= t_{j-1}$, $a+d := t_j$, $b:= s_{j-1}^+$ and observe that $s_j^- = b+d$. Our function $k$ is now defined on $[a, a+d]$ as 
\begin{equation}\label{e:up-and-down}
k (s) = h (s) - h (b+d - (s-a))
\end{equation}
Once again $k$ is strictly increasing on $[a, a+d]$ and moreover $h (a) < 0 \leq h (a+d)$. Then $h$ must have a zero $z_0$ in $(a, a+d]$. If this zero is smaller than $a+d$, then we are in the exact same situation as in the case analyzed when $\alpha ((0, M_0))$ contains no triple junction. In case the zero is $a+d$, we then find a triple junction point at which two of the three traces of $u$ coincide: this again (thanks to the regularity theory at triple junctions) contradicts the absolute minimality of $u$. 

If there is no $j\leq N$ such that $h (s_j^-) \leq h (t_j)$, then obviously $h (s_N^+) > h (s_N^-) > h (t_N)$. But then the exact argument just given can be replicated defining $a=t_N$, $b+d = t_{N+1}$, $s_N^+ = b$. Then $s_{N+1} = b+d$ and defining $k$ as in \eqref{e:up-and-down} we find this time $h (a) < 0 < h (a+d)$. Thus we can repeat the very same argument above and conclude that this case too leads to a contradiction.

\subsection{Proof of Lemma~\ref{l:BD-pivotal}} In order to conclude our proof of Theorem~\ref{t:Bonnet-David} we are thus left with giving an argument for Lemma~\ref{l:BD-pivotal}.
First of all, if $\{f=0\}$ contains an interior point, then there is a jump point $p\in G$ and a neighborhood $B_\rho (p)$ with the properties that:
\begin{itemize}
    \item $G\cap B_\rho (p)$ divides $B_\rho (p)$ into two topological disks, $B^+$ and $B^-$;
    \item the trace of $\nabla u$ on $G\cap B_\rho (p)$ from one of the two sides $B^+$ or $B^-$ vanishes identically (as $\frac{\partial u}{\partial n}^\pm=0$ on $K$). 
\end{itemize}
Assume without loss of generality that the side is $B^+$. The unique continuation of harmonic functions then implies that $\nabla v$ vanishes identically on $B^+$. But then $u$ would be constant on $B^+$ and we know this is not possible in view of Corollary~\ref{c:no-vanishing} being $(u,K)$ nonelementary.

In order to prove the second statement of the lemma, consider first $t\in J$ with $f(t)>0$ and let $s$ be the only other point on $\mathbb S^1$ such that $\alpha (s) = \alpha (t)$. We want to show that $f(s) \leq 0$. Assume indeed $f(s)>0$. We can then select $\delta >0$ such that $v$ is strictly increasing on the two segments $[\alpha (s), \alpha (s) + \delta n (s)]$ and $[\alpha (t), \alpha (t)+\delta n (t)]$. For any $m> m_0= v (\alpha (s))= v (\alpha (t)$ sufficiently close to $m_0$ we find then $a, b\in (0, \delta)$ such that
\[
v (\alpha (t) + a n(t)) = v (\alpha (s) + b n (s)) = m\, .
\]
For an appropriately chosen $m$ we can apply the conclusion (v) of Proposition~\ref{p:struttura-3}. So the two points $p= \alpha (t) + a n(t)$ and $q = \alpha (s) + b n(s)$ lie in the properly embedded unbounded line $\{v=m\}$. In particular $p$ and $q$ determine a bounded Jordan arc $\beta$ on $\{v=m\}$. The union of the arc $\beta$ with the segment $[p,q]$ (which is directed along $n(t)=-n(s)$ and hence contains the point $\alpha (t)=\alpha (s)$) is a simple curve, which by the Jordan's Theorem bounds a disk $D$. Note that on $\partial D$ we have $v\geq m_0$ and thus $v\geq m_0$ on $D$. In particular $D\cap K\subset \partial \{v>m_0\}$. But $K\cap \partial D$ consists only of the point $\alpha (s)$. In particular it would follow that $D\cap K$ contains a terminal point, contradicting Proposition~\ref{p:componente-speciale}. 

Having established the claim above, we are now ready to prove the second conclusion of the lemma. Towards a contradiction we assume that there are points $t_1, t_2, t_3, t_4$ in $J\subset \mathbb S^1$ with the property that $t_2$ and $t_4$ belong to the two distinct arcs of $\mathbb S^1$ delimited by $t_1$ and $t_3$ and at the same time $f(t_3), f(t_1)>0$ and $f(t_2), f(t_4) < 0$. Moreover, because of the first statement of the lemma, by perturbing $t_1$ and $t_3$ we can assume that, if $s_1$ and $s_3$ are the two other points such that $\alpha (s_1) = \alpha (t_1)$ and $\alpha (s_3)= \alpha (t_3)$, then $f(s_1), f(s_3)<0$

Consider now as above a $\delta >0$ such that $v$ is strictly increasing on both segments $S_1 = [\alpha (t_1), \alpha (t_1) + \delta n (t_1)]$ and $S_3 = [\alpha (t_3), \alpha (t_3) + \delta n (t_3)]$. Fix as above an $m>m_0$ to which the conclusion (v) of Proposition~\ref{p:struttura-3} apply and let $p_1$, $p_3$ be the only intersections of $S_1$ and $S_3$ with $\{v=m\}$. Again as above let $\beta\subset \{v=m\}$ be the Jordan arc delimited by $p_1$ and $p_3$. Meanwhile let $\gamma$ be a Jordan arc connecting $\alpha (t_1)$ and $\alpha (t_3)$ in $G$ (which exists by Lemma~\ref{l:connessi_per_archi}). The curve $\gamma \cup \beta \cup [\alpha (t_1), p_1)] \cup [\alpha (t_2), p_2]$ is simple and therefore it delimits a disk $D$. Arguing as above, $v\geq m_0$ in $D$ and thus $D$ cannot intersect $K$. In particular it follows that $\gamma = K\cap \partial D$ contains a finite number of triple junctions. So we can chop $\gamma$ as the union of closed arcs
\[
\gamma_0\cup \gamma_1 \cup \ldots \cup \gamma_N\cup \gamma_{N+1}
\]
such that:
\begin{itemize}
    \item The right endpoint $p_i$ of $\gamma_i$ is the left endpoint $q_{i+1}$ of $\gamma_{i+1}$;
    \item For $i\in \{1, \ldots , N\}$ each $\gamma_i$ is the closure of a maximal smooth open subarc $\gamma_i$ joining the triple junctions $q_i$ and $p_i$;
    \item $\gamma_0$ joins $q_0 = \alpha (t_1)$ to the triple junction $p_0=q_1$ while $\gamma_{N+1}$ joins the triple junction $p_N = q_{N+1}$ the $p_{N+1} = \alpha (t_3)$, but both $\gamma_0$  and $\gamma_{N+1}$ contain no other jump points.
\end{itemize}
We parametrize $\gamma$ by arclength so that, while we are following it from $q_0$ to $p_{N+1}$, the counterclockwise rotation of $\dot\gamma$ by 90 degrees points ``inwards'' with respect to $D$

It is now easy to see that there is an interval $I\subset \mathbb S^1$ over which $\alpha$ is injective and $\alpha (I) = \gamma$ and that we can choose it so that $n(s)$ always agrees with the counterclockwise rotation of $\dot{\gamma} (\sigma)$ for the only $\sigma$ with $\gamma (\sigma) = \alpha (s)$. But then clearly the two extrema of the intervals must be $t_1$ and $t_3$, because $n(s_1)$ and $n(s_3)$ point outwards. We conclude that $g$ is never negative
over the interval $I$. So neither $t_2$ nor $t_4$ can belong to it, and rather we have $t_2, t_4\in \mathbb S^1\setminus I$. This is however precisely in contradiction to our initial assumption.

\chapter{Epsilon regularity at the cracktip}\label{ch:crack}

This chapter is devoted to the final part in the proof of Theorem~\ref{t:main}. 
In view of the previous chapters, and in particular of Corollary~\ref{c:connectedness} and of the $\varepsilon$-regularity of pure jumps, to establish the case (a) of Theorem~\ref{t:main} it will suffice to prove the following statement.
\begin{theorem}\label{t:final-cracktip}\label{T:FINAL-CRACKTIP}
There are positive constants $\varepsilon_0,\alpha_0$ and $r_0$ with the following property. Assume that $(u, K)$ is a critical point of $E_\lambda$ in $B_3$ and that:
\begin{itemize}
    \item[(i)] \eqref{e:g-and-lambda} holds;
    \item[(ii)] $K\cap B_2$ consists of a single Jordan arc $\gamma$ with endpoints $0$ and $p\in \partial B_2$;
    \item[(iii)] $\gamma$ is $C^{1,1}_{{\rm loc}}$ in $B_2\setminus \{0\}$;
    \item[(iv)] $\dist_H (K, [0, p]) \leq \varepsilon_0$.
\end{itemize}
Then, up to a suitable rotation of coordinates, $K\cap [0, r_0]^2$ is given by $\{(t, \psi (t)): t \in [0, r_0]\}$ for some $C^{1,\alpha_0}$ function $\psi: [0, r_0]\to \mathbb R$ with $\psi (0) = \psi' (0) =0$. If $\lambda =0$ then, in addition, $\psi\in C^{2,\alpha_0}$ and $\psi'' (0) =0$.
\end{theorem}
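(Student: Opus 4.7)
My plan is to run a Campanato-type iteration based on a decay estimate for a suitable ``cracktip excess.'' Given the hypotheses, Corollary \ref{c:connectedness} is already packaged into the statement: $K\cap B_2$ is a single Jordan arc emanating from $0$, $C^{1,1}$ away from the tip. I will work, up to a small rotation, with coordinates in which the nominal cracktip direction is $\mathscr{V}_0^+$, and introduce the excess
\[
\mathcal{E}(x,r) := r^{-1}\dist_H\bigl(K\cap\overline{B}_r(x),\,(x+\mathscr{V}_0^+)\cap \overline{B}_r(x)\bigr)
+ \Bigl|\,d(x,r)-1\,\Bigr| + \lambda\|g\|_\infty^2\, r^{1/2},
\]
where $d(x,r)=r^{-1}\int_{B_r(x)\setminus K}|\nabla u|^2$ as in Proposition \ref{p:monotonia-0}, and the constant $1=b^2\pi/2$ is the Dirichlet density of the model cracktip (consistent with Proposition \ref{p:constant-crack-tip}). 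The goal is to prove that if $\mathcal{E}(0,r)$ is sufficiently small, then there exist a tip location $x_r\in B_{r/2}$ and an angle $\theta_r$ such that $\mathcal{E}_{\theta_r}(x_r,\tau r)\le \tau^{\alpha_0}\mathcal{E}(0,r)$ for a fixed $\tau\in(0,1/4)$ and a fixed $\alpha_0>0$.

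To prove the decay I will argue by contradiction and compactness, exactly in the spirit of Lemma \ref{l:decay iteration} in Chapter \ref{ch:salti_e_tripunti}. Assume a sequence $(K_j,u_j)$ of minimizers with $\mathcal{E}_j(0,r_j)=:\varepsilon_j\downarrow 0$ but for which the decay fails. After translating and rescaling by $r_j$ one obtains pairs $(\tilde K_j,\tilde u_j)$ whose $K$-part is a single connected arc from $0$ to $\partial B_1$, $\tilde\varepsilon_j\to 0$, and the fidelity parameter $\lambda_j r_j$ tends to $0$. Applying Theorem \ref{t:minimizers compactness}, extract a generalized minimizer $(\tilde K_\infty,\tilde u_\infty)$ in the limit. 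By assumption, $\tilde K_\infty$ is within Hausdorff distance zero of $\mathscr{V}_0^+$; together with the connectedness hypothesis (b) of Theorem \ref{t:Bonnet-David} and Proposition \ref{p:monotonia-0}, the rigidity Theorem \ref{t:Bonnet-David} forces $(\tilde K_\infty,\tilde u_\infty)$ to be the model cracktip (sign is fixed by Proposition \ref{p:constant-crack-tip}). The smooth convergence of $\tilde K_j$ to $\mathbb R^+$ away from the tip, obtained from the $\varepsilon$-regularity at pure jumps (Theorem \ref{t:eps_salto_puro}) applied on every annulus $B_1\setminus B_\rho$, is standard.

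The hard part is to turn this qualitative compactness statement into a quantitative decay, and this is where the linearization enters. One writes $\tilde K_j$ in $B_1\setminus B_{\rho}$ as the graph of a function $\psi_j$ over $\mathbb R^+$ and, after dividing by $\tilde\varepsilon_j^{1/2}$ and passing to a further subsequence, one obtains a limit profile $(\psi,w)$ that solves the cracktip linearization: $w$ is an odd harmonic function on $\mathbb R^2\setminus \mathbb R^+$ with a Neumann-type boundary condition coupling it to $\psi$ through the Euler--Lagrange system derived from Proposition \ref{p:variational identities 2} and Corollary \ref{c:boundary-variations} (these are precisely the truncated test identities \eqref{e:int-variation-bdry}, \eqref{e:translations}, \eqref{e:rotations}, which survive the rescaling modulo $O(\lambda_j r_j)$ errors that vanish in the limit). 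The key analytic input is that every such linearized solution decays at rate $O(r^{1/2+\alpha_0})$ near the tip — this is proved by expanding $w$ in the odd analogue of the Fourier--Bessel basis $\{r^{k+1/2}\cos((k+1/2)\theta)\}$ and observing that translations, rotations and dilations of the cracktip exhaust the lowest-order modes; the first non-removable mode gives the exponent $\alpha_0$. Modulo these kernel modes (which correspond precisely to the choice of a new tip $x_r$ and angle $\theta_r$), the linearized excess improves by the factor $\tau^{\alpha_0}$ at scale $\tau$, and a contradiction with the failure of decay follows. This linearization is the principal technical obstacle, since unlike the pure-jump case (Lemma \ref{l:decay2}) the harmonic extension argument of Lemma \ref{l:extension} has to be adapted to a slit domain, with the correct orthogonality conditions built in via Corollary \ref{c:boundary-variations}.

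Once the decay is established, a Campanato/dyadic iteration (cf. the conclusion of the proof of Theorem \ref{t:eps_salto_puro}) yields a limit tip $\bar x=\lim x_{2^{-k}}$ and a limit angle $\bar\theta$ such that $\mathcal{E}(\bar x,\rho)\le C\rho^{\alpha_0}$ for all $\rho\le r_0$. After the rotation that takes $\mathcal R_{\bar\theta}(\mathscr{V}_0^+)$ to the positive $x_1$-axis, the $\varepsilon$-regularity at pure jumps applied on dyadic annuli around $\bar x$ gives that $K\cap[0,r_0]^2$ is the graph of a function $\psi\in C^{1,\alpha_0}$ with $\psi(0)=\psi'(0)=0$ and the estimate $|\psi'(t)|\lesssim t^{\alpha_0}$. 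When $\lambda=0$ the Euler--Lagrange condition \eqref{e:Euler curvature g} reduces to $\kappa=-(|\nabla u^+|^2-|\nabla u^-|^2)$; since $u$ is asymptotic to the model cracktip with matched Dirichlet density, the decay of the excess gives $|\nabla u^\pm|^2(t,\psi(t))=\tfrac{1}{2\pi t}(1+O(t^{\alpha_0}))$ on both sides, so the two contributions cancel to leading order and $\kappa(t)\to 0$ as $t\downarrow 0$. Combined with the improved interior regularity of $u$ when $\lambda=0$, this promotes $\psi$ to $C^{2,\alpha_0}$ and gives $\psi''(0)=0$, concluding the proof.
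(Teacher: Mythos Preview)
Your proposal and the paper share the same core idea---linearize around the cracktip and exploit a spectral gap for the linearization, with the inner-variation identities supplying the extra constraint needed to rule out the slowly-decaying ``spiraling'' mode---but the technical frameworks are genuinely different. The paper follows Simon's strategy: it passes to the harmonic conjugate $w$ (which satisfies a \emph{Dirichlet} condition on $K$, not Neumann), introduces the exponential change of variables $t=-\log r$, and derives an evolution system (Lemma \ref{l:nonlinear}) in which decay in $r$ becomes exponential decay in $t$. The decay is then obtained not via a one-step Campanato improvement with recentering but via a \emph{three-annuli dichotomy} (Propositions \ref{p:tre-anelli_even} and \ref{p:tre-anelli_odd}): if a suitable quadratic functional does not decrease by a fixed factor from one unit $t$-interval to the next, it must increase by a fixed factor; combined with the a priori smallness at $t\to\infty$ from Lemma \ref{l:Bonnet}, this forces geometric decay. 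The spectral analysis is also more delicate than a straight Fourier--Bessel expansion: the natural bilinear form on the odd sector has a one-dimensional radical (Lemma \ref{l:autoaggiunto}), the relevant boundary condition is of Ventsel type (Lemma \ref{l:Ventsell}), and the resolvent has a nontrivial Jordan block on the span of $\zeta_0,\zeta_1$ (Proposition \ref{p:spettro}). The integral identity you correctly single out---coming from Corollary \ref{c:AM-variations}---is precisely the condition \eqref{e:condizione_extra} that forces the $a_0$-coefficient to vanish in the limit; without it the linearized system admits the solution $\theta(t)=d(e^t-1)$, which in physical variables is the logarithmic spiral that neither your recentering nor a naive mode count removes. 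Your scheme of recentering the tip at each scale is an alternative route around this kernel; the paper instead never recenters and relies entirely on \eqref{e:condizione_extra}. Either way the analytic crux---the gap $\nu_2>\tfrac32$ of Lemma \ref{l:autovalori}---is the same, and your heuristic ``first non-removable mode'' is exactly this $\nu_2$.
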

{Let us now briefly describe the contents of the Chapter. In Section~\ref{s:exp-rep} we  
rescale and reparametrize $u$ and $\gamma$ to obtain in the new variables a nonlinear evolution system from the outer and inner variation identities satisfied by $(u,K)$. 
Theorem~\ref{t:final-cracktip} is then reduced to prove suitable decay estimates for solutions of the latter. 
To this aim, Section~\ref{s:first linearization} contains a first linearization of the evolution system around the cracktip, the tangent couple to $(u,K)$ dictated by the assumptions in Theorem~\ref{t:final-cracktip}.
Section~\ref{s:spectral analysis} is then devoted to the spectral analysis of the linearized system in order to rewrite conveniently its solutions. Taking advantage of this, in Section~\ref{s:tre-anelli} we establish a notable property for such solutions, called in what follows linear three annuli property, 
that implies a suitable decay of the corresponding coefficients. 
A crucial ingredient for this, is the linearized version of the boundary variations in Proposition~\ref{p:boundary-variations} in order to exclude some slowly decaying solution of the linearized system itself (cf. \eqref{e:zeta_0}). 
In turn, a nonlinear three annuli property for solutions to the nonlinear system is used in Section \ref{s:second linearization} to improve upon the above mentioned first linearization, and get the appropriate decay estimates 
to finally prove Theorem~\ref{t:final-cracktip} in Section \ref{s:exp-rep}. 

For the sake of simplifying the calculations, we will actually not work with $u$, but rather with its harmonic conjugate $w$ if $\lambda=0$, and a suitable substitute if $\lambda>0$.\index{harmonic conjugate@harmonic conjugate}\index{conjugate, harmonic@conjugate, harmonic}}
Therefore, we need some preparatory work.
Thanks to Lemma~\ref{l:V-tecnico1} $u$ is continuous at $0$ and we may assume without loss of generality $u(0)=0$. Thus, by Corollary~\ref{c:connectedness} and the $\varepsilon$-regularity at pure jumps we infer (cf. again Lemma~\ref{l:V-tecnico1}) that\footnote{Here and in what follows we adopt the notation $\varphi\lesssim\psi$ if there is $C>0$ such that $\varphi\leq C\psi$.}  
\begin{equation}\label{e:stima tip 1}
|u(x)|+|x||\nabla u(x)|\lesssim|x|^{\sfrac12}\,.
\end{equation}

Moreover, in view of Proposition~\ref{p:variational identities 2}
$u$ has $C^{1,1^-}$ extensions on each side of $K\cap B_2$\footnote{namely, $u$ has extensions on each side of $K\cap B_2$ that are $C^{1,\alpha}$ for every $\alpha<1$.} and the variational identities \eqref{e:outer}-\eqref{e:inner}  imply the following three conditions
\begin{align}
&\Delta u = \lambda (u-g) \qquad \mbox{on $B_2\setminus K$}\label{e:Euler harmonic g 1}\\
&\frac{\partial u}{\partial \nu} = 0 \qquad \mbox{on $K$}\label{e:Euler Neumann g 1}\\
& \kappa = - (|\nabla u^+|^2- |\nabla u^-|^2) - \lambda (|u^+-g_K|^2-|u^--g_K|^2)\qquad 
\mbox{$\mathcal{H}^1$ a.e. on $K\cap B_2$.} \label{e:Euler curvature g 1}
\end{align}
We stress that the equivalence in Proposition~\ref{p:variational identities 2} has been obtained assuming $K$ to be locally a $C^{1,1}$ graph. We do not know that this property holds at the origin, the loose end, but we know it on $K\setminus \{0\}$.
Indeed, Corollary~\ref{c:connectedness} provides only a parametrization of $K$ in polar coordinates smooth up to the tip excluded, i.e. $\gamma\in C^{1,1}((0,2))$ such that 
\begin{equation}\label{e:K param}
K=\{\gamma:[0,2]\to B_2:\,\gamma(r)=r(\cos\alpha(r),\sin\alpha(r))\}\,.
\end{equation}
On the other hand, it is reductive to focus only on \eqref{e:Euler Neumann g 1}-\eqref{e:Euler curvature g 1} and it turns out that we can deduce more pieces of information from \eqref{e:outer}-\eqref{e:inner} using  
Proposition~\ref{p:boundary-variations}, which will play a key role in our analysis. 
In particular, we need the following simple consequence of 
Corollary~\ref{c:boundary-variations} in which we consider the situation described
in item (ii) above of Theorem~\ref{t:final-cracktip}, i.e.~$K\cap \partial B_r$ consists of a single point. We can then take a suitable linear combination of \eqref{e:translations} and \eqref{e:rotations} (in fact we subtract the second from the first) to derive a boundary integral identity, which does not involve the set $K$ in case $\lambda=0$. Recall the notation $n(x)=\sfrac{x}{|x|}$ and $\tau(x)=\sfrac{x^\perp}{|x|}$ for all $x\in\mathbb{R}^2\setminus\{0\}$.
\begin{corollary}\label{c:AM-variations}
Let $(u,K)$ be as in Theorem~\ref{t:final-cracktip}, and assume that $K\cap \partial B_r=\{p\}$. Then, for a.e. $r\in(0,\dist(0,\partial\Omega))$ it is true that
\begin{align}\label{e:AM-identity}
&0=\int_{\partial B_r\setminus \{p\}} \left(|\nabla u |^2 n \cdot \tau (p) + 2 \frac{\partial u}{\partial n} \nabla u \cdot \left(\tau - \tau (p)\right)\right)\, d\mathcal{H}^1 \notag\\
&-2\param \int_{B_r\setminus K}(u-g)(\tau-\tau(p))\cdot\nabla u\,dx
+\param \int_{B_r\cap K}\big(|u^+-g_K|^2-|u^--g_K|^2\big)\tau(p)\cdot \nu\, d\mathcal{H}^1\,.
\end{align}
\end{corollary}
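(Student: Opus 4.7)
The plan is to obtain \eqref{e:AM-identity} as a direct algebraic consequence of the two boundary variational identities \eqref{e:translations} and \eqref{e:rotations} from Corollary \ref{c:boundary-variations}, as already hinted by the authors' remark preceding the statement. Under the hypotheses of Theorem \ref{t:final-cracktip}, the Jordan-arc parametrization \eqref{e:K param} gives $|\gamma(\rho)|=\rho$ for every $\rho\in(0,2)$, so that for the chosen radius $r$ the intersection $K\cap\partial B_r=\{p\}$ consists of a single point; in particular $\tau(p)=p^\perp/|p|$ is a well-defined \emph{constant} vector of $\mathbb{R}^2$, which makes it an admissible constant test $v$ in \eqref{e:translations}.

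First I would apply \eqref{e:translations} with $y=0$ and $v=\tau(p)$. Since the discrete sum over $K\cap\partial B_r$ collapses to the single term $e(p)\cdot\tau(p)$, this produces
\begin{align*}
0 &= \int_{\partial B_r\setminus\{p\}}\Big(|\nabla u|^2\,\tau(p)\cdot n - 2\tfrac{\partial u}{\partial n}\,\nabla u\cdot\tau(p)\Big)d\mathcal{H}^1 + e(p)\cdot\tau(p) \\
&\quad + 2\lambda\int_{B_r\setminus K}(u-g)\,\nabla u\cdot\tau(p)\,dx + \lambda\int_{B_r\cap K}\big(|u^+-g_K|^2-|u^--g_K|^2\big)\,\tau(p)\cdot\nu\,d\mathcal{H}^1.
\end{align*}
Next I would apply \eqref{e:rotations} with $y=0$, keeping in mind that here $\tau=\tau(x)=x^\perp/|x|$ varies along $\partial B_r$, that $\partial u/\partial\tau=\nabla u\cdot\tau$, and that again the sum over $K\cap\partial B_r$ reduces to $e(p)\cdot\tau(p)$, yielding
\begin{equation*}
0 = -2\int_{\partial B_r\setminus\{p\}}\tfrac{\partial u}{\partial n}\,\nabla u\cdot\tau\,d\mathcal{H}^1 + e(p)\cdot\tau(p) + 2\lambda\int_{B_r\setminus K}(u-g)\,\nabla u\cdot\tau\,dx.
\end{equation*}

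Subtracting the second identity from the first cancels the common term $e(p)\cdot\tau(p)$. In the boundary integrand the two gradient-terms combine linearly, since
\begin{equation*}
-2\tfrac{\partial u}{\partial n}\,\nabla u\cdot\tau(p) - \Big(-2\tfrac{\partial u}{\partial n}\,\nabla u\cdot\tau\Big) = 2\tfrac{\partial u}{\partial n}\,\nabla u\cdot\big(\tau-\tau(p)\big),
\end{equation*}
while the fidelity bulk terms pair up into $-2\lambda\int_{B_r\setminus K}(u-g)(\tau-\tau(p))\cdot\nabla u\,dx$ and the jump term on $K$ survives unchanged with coefficient $\tau(p)\cdot\nu$ (since \eqref{e:rotations} carries no such contribution). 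After using $n\cdot\tau(p)=\tau(p)\cdot n$ this is precisely \eqref{e:AM-identity}. The only non-algebraic point is that \eqref{e:translations} and \eqref{e:rotations} are valid only for almost every radius $r$; however this is exactly the conclusion of Proposition \ref{p:boundary-variations}, from which Corollary \ref{c:boundary-variations} is deduced, and the intersection of the two full-measure sets of admissible radii is still of full measure, so no additional work is required. There is no substantive obstacle here: the statement is essentially a bookkeeping consequence of the two identities already proved, and the key conceptual step—recognising that $\tau(p)$ may be frozen as a constant vector thanks to $K\cap\partial B_r$ being a singleton—has already been built into the hypotheses.
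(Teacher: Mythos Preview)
Your proof is correct and follows exactly the approach the paper indicates: apply \eqref{e:translations} with the constant vector $v=\tau(p)$, apply \eqref{e:rotations}, and subtract the second from the first so that the lone discrete term $e(p)\cdot\tau(p)$ cancels. The paper does not give any further details beyond the parenthetical remark ``in fact we subtract the second from the first'', so your write-up is in fact more explicit than the source.
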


It is also convenient to define a natural substitute of the harmonic conjugate in case $\lambda>0$. \index{harmonic conjugate@harmonic conjugate}\index{conjugate, harmonic@conjugate, harmonic} We argue similarly to Proposition~\ref{p:armonica-coniugata} and
consider an auxiliary function $u_a$ which satisfies
\[
\Delta u_a = -\lambda (u-g) \qquad \mbox{on $B_2$}
\]
and
\begin{align}\label{e:u aux}
u_a(0)=0\,,\quad\nabla u_a(0)=0.
\end{align}
In order to obtain a canonical choice, we fix a cut-off function $\varphi$ supported in $B_3$ and identically $1$ on $B_2$ and we let 
\[
\bar u_a = \lambda \Gamma*(\varphi (u-g))\, ,
\]
where $\Gamma (x) = - \frac{1}{2\pi} \log |x|$ is the fundamental solution of the Laplace equation. 
Clearly $\bar u_a\in H^2 \cap C^{1,1^-} (B_3)$ by elliptic regularity (namely $\bar u_a\in C^{1,\alpha} (B_2)$ for all $\alpha\in(0,1)$).
We then set $u_a (x) = \bar u_a (x) - \bar u_a (0) - \nabla \bar u_a (0) \cdot x$. Observe that, with this canonical choice, $u_a =0$ if $\lambda =0$.

By the very definition of $u_a$, the $L^2$ vector field $\nabla(u+u_a)^\perp$ is curl free 
on $B_2$ in the sense of distributions. By mollification we find a potential $w\in H^1_{{\rm loc}}(B_2)$, i.e.~$\nabla w=\nabla(u+u_a)^\perp$, which is harmonic and smooth on $B_2\setminus K$. The following facts follow from simple modifications of arguments already used in the previous sections, which we anyway report for the reader's convenience. 

\begin{lemma}\label{l:V-tecnico2}\label{L:V-TECNICO2}
$w\in C^{0,\sfrac12} (B_2)$ and we can normalize it so that $w (0) =0$. Let $\gamma\in C^{1,1}((0,2))$ be as in \eqref{e:K param},
and define
\begin{equation}\label{e:h1}
    h_1(r):=\int_0^{r}\nabla u_a(\gamma(\rho))\cdot\dot\gamma^\perp(\rho) d\rho
\end{equation}
and
\begin{align}\label{e:h2}
    h_2&:= 2(\nabla u^+-\nabla u^-)\cdot \nabla u_a - \lambda (|u^+-g_K|^2-|u^--g_K|^2)\notag\\
    &=2(\nabla u^+-\nabla u^-)\cdot \nabla u_a - \lambda (|u^+|^2-|u^-|^2-2(u^+-u^-)g_K)\,.
\end{align}
Then
\begin{equation}\label{e:u +u aux}
\begin{cases}
\Delta w = 0 \qquad \mbox{on $B_2\setminus K$}\\   
w=h_1\qquad \mbox{on $K$}\\
\kappa=-(|\nabla w^+|^2-|\nabla w^-|^2)+h_2\qquad \mbox{$\mathcal{H}^1$ a.e. on $K\cap B_2$,}
\end{cases}
\end{equation}
and the functions $h_1$ and $h_2$ satisfy the growth estimates\footnote{Here and in what follows we write 
$0\leq \varphi(x)\lesssim (\psi(x))^{\beta^-}$ if $\varphi(x)\lesssim (\psi(x))^{\alpha}$ for all $\alpha<\beta$, with a constant depending on $\alpha$ in the last inequality.} 
\begin{align}
& h_1\in C^{1,1^-}_{\mathrm{loc}}([0,2)),
\text{ and}\qquad |h_1|+r|h_1'|\lesssim r^{2^-}
\label{e:proprieta h1}\\
& h_2\in L^\infty_{\mathrm{loc}}(B_2),\text{ and}\qquad |h_2(x)|\lesssim |x|^{\sfrac12^-}.\label{e:proprieta h2}
\end{align}
when $\lambda >0$, while they vanish identically if $\lambda=0$. 

Finally, if $r\in(0,2)$ and
$\partial B_r \cap K = \{p\}$, then the following identity holds
\begin{align}\label{e:AM-identity ua}
& 2\int_{B_r}\nabla u_a^T \cdot D\tau\,\nabla u_a\notag\\
&=\int_{\partial B_r}\Big(|\nabla u_a|^2 n\cdot\tau(p) 
+2\frac{\partial u_a}{\partial n}\nabla u_a\cdot(\tau-\tau(p))\Big)d\mathcal{H}^1
+2\param\int_{B_r}(u-g)(\tau-\tau(p))\cdot\nabla u_a\,,
\end{align}
where $\tau(x)=\frac{x^\perp}{|x|}$ (a function which belongs to $ W^{1,p}_{{\rm loc}}(\R^2;\R^2)$ for every $p\in[1,2)$).
\end{lemma}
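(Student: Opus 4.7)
The plan has four main steps. First, observing that the Neumann condition \eqref{e:Euler Neumann g 1} on $K$ implies $\Delta u = \lambda(u-g)$ as a \emph{distribution} on all of $B_2$ (the singular contribution $(\nabla u^+ - \nabla u^-)\cdot \nu$ on $K$ vanishes), and that $\Delta u_a = -\lambda(u-g)$ by construction, one concludes $\Delta(u+u_a) = 0$ in $\mathcal{D}'(B_2)$. Equivalently, $\nabla(u+u_a)^\perp$ is distributionally curl-free on $B_2$, so it admits a potential $w \in H^1_{\textup{loc}}(B_2)$, unique up to a constant. The H\"older regularity $w \in C^{0,1/2}$ then follows from the Morrey-type bound $\int_{B_r(x)}|\nabla w|^2 \lesssim r$, which is verified using the pointwise estimate \eqref{e:stima tip 1} on $\nabla u$ (treating separately the cases $|x|\geq 2r$ and $|x|<2r$, where in the latter $B_r(x)\subset B_{3r}(0)$) together with $\nabla u_a \in L^\infty(B_2)$. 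The resulting continuity of $w$ allows one to normalize $w(0)=0$.

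Once $w$ is in place, the PDE $\Delta w = 0$ on $B_2\setminus K$ is immediate from $\operatorname{div}(\nabla f^\perp)\equiv 0$ for $f\in C^2$. To compute $w|_K$, I would integrate the tangential trace of $\nabla w$ along $\gamma$ starting from $\gamma(0)=0$: since both one-sided traces $\nabla u^\pm$ are tangent to $K$ (by \eqref{e:Euler Neumann g 1}), the vectors $(\nabla u^\pm)^\perp$ are normal, so $(\nabla u^\pm)^\perp \cdot \dot\gamma = 0$; hence $\nabla w \cdot \dot\gamma = \nabla u_a^\perp \cdot \dot\gamma$, and integrating yields $w\circ \gamma = h_1$ (with the sign convention fixed by \eqref{e:h1}). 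For the curvature identity I would use $\nabla u = -\nabla w^\perp - \nabla u_a$, hence $|\nabla u^\pm|^2 = |\nabla w^\pm|^2 + 2\,\nabla w^{\perp,\pm}\cdot \nabla u_a + |\nabla u_a|^2$; since $\nabla u_a$ is continuous across $K$ the $|\nabla u_a|^2$ term drops from the jump, and the cross term is rewritten via $\nabla u^+ - \nabla u^- = -(\nabla w^{\perp,+} - \nabla w^{\perp,-})$. Substituting into \eqref{e:Euler curvature g 1} yields the third identity of \eqref{e:u +u aux} with $h_2$ as in \eqref{e:h2}.

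The growth estimates \eqref{e:proprieta h1}--\eqref{e:proprieta h2} are then routine. Since $u_a \in C^{1,\alpha}$ for every $\alpha<1$ with $u_a(0) = 0$ and $\nabla u_a(0)=0$, we have $|\nabla u_a(x)|\lesssim |x|^{1^-}$; combined with $|\gamma(\rho)|=\rho$ and local boundedness of $|\dot\gamma|$, this gives $|h_1(r)|+r|h_1'(r)|\lesssim r^{2^-}$. For $h_2$, the first term is bounded by $|\nabla u^+ - \nabla u^-|\,|\nabla u_a| \lesssim |x|^{-1/2}|x|^{1^-} = |x|^{1/2^-}$, while the second is factorized as $(u^+ - u^-)(u^+ + u^- - 2g_K)$ and estimated by $|x|^{1/2}(|x|^{1/2}+M_0)\lesssim |x|^{1/2}$ using \eqref{e:stima tip 1} and $\|g_K\|_\infty \leq M_0$.

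Finally, for \eqref{e:AM-identity ua} I would perform an inner-variation-style integration by parts: test $\Delta u_a = -\lambda(u-g)$ against $\nabla u_a \cdot \eta$ on $B_r$ with $\eta(x) = \tau(x)-\tau(p)$. The standard Noether manipulation produces
\begin{equation*}
2\int_{B_r}\nabla u_a^T D\eta\,\nabla u_a - \int_{B_r}|\nabla u_a|^2 \operatorname{div}\eta = 2\lambda\int_{B_r}(u-g)\nabla u_a \cdot \eta + \mbox{boundary terms on } \partial B_r,
\end{equation*}
which collapses to the claimed formula once one uses $\operatorname{div}\tau \equiv 0$ (so $\operatorname{div}\eta = 0$) and $\tau\cdot n \equiv 0$ (so $(\tau-\tau(p))\cdot n = -\tau(p)\cdot n$). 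The main technical obstacle is that $\tau \in W^{1,p}_{\textup{loc}}$ only for $p<2$ (and is not in $L^\infty$), so the integration by parts must be justified by a cutoff approximation: one replaces $\tau$ by $(1-\varphi_\varepsilon)\tau$ with $\varphi_\varepsilon$ a smooth radial bump supported in $B_{2\varepsilon}$ and sends $\varepsilon\downarrow 0$, using $u_a(0) = \nabla u_a(0) = 0$ to see that the contribution from $B_\varepsilon$ and from the shrinking boundary $\partial B_\varepsilon$ vanishes in the limit. Once this approximation is in place, the identity follows immediately.
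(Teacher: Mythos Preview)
Your proposal is correct and follows essentially the same approach as the paper's (very brief) proof, supplying the details the paper omits: the Morrey bound for $C^{0,1/2}$, the integration of $\nabla w$ along $K$ for the Dirichlet condition, the expansion $|\nabla w^\pm|^2 = |\nabla u^\pm + \nabla u_a|^2$ for the curvature identity, and a Pohozaev-type integration by parts for \eqref{e:AM-identity ua}. One inconsequential slip: $\tau$ \emph{is} in $L^\infty$ (indeed $|\tau|\equiv 1$); what fails is $D\tau\in L^\infty$, and that is precisely what your cutoff at the origin handles, using $\nabla u_a(0)=0$ to kill the error terms.
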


\begin{proof}
Observe that, since $\|\nabla u_a\|_{C^0}\leq C$, we have
\[
\int_{B_r (x)} |\nabla w|^2 = \int_{B_r (x)} |\nabla u + \nabla u_a|^2 \leq 2 \int_{B_r (x)} |\nabla u|^2 + C r^2 \leq C r\, .
\]
In particular $w\in C^{0,\sfrac12}$ follows from the usual Morrey's embedding. The equation $\Delta w =0$ on $B_2\setminus K$ is obvious from the definition, while the Dirichlet condition $w|_K = h_1$ follows from $w(0)=0$ integrating $\nabla w$ along $K$. The last equation in \eqref{e:u +u aux} follows immediately from \eqref{e:Euler curvature g 1} using $|\nabla w^\pm|^2 = |\nabla u^\pm + \nabla u_a|^2$. The estimates \eqref{e:proprieta h1}-\eqref{e:proprieta h2} follow immediately from \eqref{e:stima tip 1} and the formulas for $h_1$ and $h_2$. Finally, \eqref{e:AM-identity ua} is a simple integration by parts using $\Delta u_a = - \lambda (u-g)$.
\end{proof}

\begin{lemma}\label{l:V-tecnico1}\label{L:V-TECNICO1}
Under the assumptions of Theorem~\ref{t:final-cracktip} $u$ is continuous at $0$ and moreover there are constants $C, r_1>0$ such that 
\begin{equation}
|u (x) - u(0)| + |x| |\nabla u (x)| \leq C |x|^{\sfrac12} \qquad \forall x\in B_{r_1}\setminus K\, .    
\end{equation}
\end{lemma}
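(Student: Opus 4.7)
The starting observation is that by the polar parametrization \eqref{e:K param}, each circle $\partial B_r$ meets $K$ in the single point $\gamma(r)$, so $B_r\setminus K$ is a slit disk (simply connected) whose outer boundary arc $\partial B_r\setminus\{\gamma(r)\}$ has length $2\pi r$. The $C^{1,1}_{\mathrm{loc}}$ regularity of $K$ in $B_2\setminus\{0\}$ together with \eqref{e:Euler harmonic g 1}--\eqref{e:Euler Neumann g 1} ensures $C^{1,1^-}$ extensions of $u$ on each side of $K\setminus\{0\}$. I would then proceed in three steps, the first of which is an almost-monotonicity for the Dirichlet energy $D(r):=\int_{B_r\setminus K}|\nabla u|^2$. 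Testing \eqref{e:outer} with a truncation of $u$ supported in $B_r$ (as in Corollary~\ref{c:Bonnet}) and using the identity $\int_{\partial B_r\setminus K}\partial_n u = \lambda\int_{B_r\setminus K}(u-g)$, obtained from $\Delta u=\lambda(u-g)$ on $B_r\setminus K$ and $\partial_\nu u=0$ on $K$, one gets
\[
D(r) = \int_{\partial B_r\setminus K}(u-c_r)\frac{\partial u}{\partial n}\,d\mathcal{H}^1 + \lambda\int_{B_r\setminus K}(u-c_r)(g-u)\,,
\]
where $c_r$ is the mean of $u$ over $\partial B_r\setminus K$. Cauchy--Schwarz combined with the sharp Poincaré--Wirtinger inequality on an arc of length $2\pi r$ (whose constant is $2r$) then yields $D(r)\leq rD'(r)+C\lambda r^2$, i.e.\ $\frac{d}{dr}(D(r)/r)\geq -C\lambda$. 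An initial bound $D(s)\leq C$ at some $s\in(\tfrac12,1)$, available from the $C^{1,1^-}$ extensions of $u$ on $B_2\setminus\overline{B}_{1/4}$ and a smooth truncation to handle the tip, gives $D(r)\leq Cr$ on $(0,s)$.

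\emph{Step 2: continuity and $C^{0,1/2}$ bound at $0$.} A pigeonhole in $\int_r^{2r}D'(\rho)\,d\rho \leq D(2r)-D(r)\leq Cr$ yields a radius $\rho(r)\in[r,2r]$ with $D'(\rho(r))\leq C$. Since $\partial B_{\rho(r)}\setminus K$ is a single arc of length at most $2\pi\rho(r)$, the one-dimensional Morrey embedding gives $\mathrm{osc}(u|_{\partial B_{\rho(r)}\setminus K})\leq C(\rho(r)D'(\rho(r)))^{1/2}\leq Cr^{1/2}$. The classical maximum principle for $\Delta u-\lambda u=-\lambda g$ on the Lipschitz annular domain $B_{\rho(r)}\setminus(K\cup\overline{B}_\epsilon)$ --- with Neumann condition on $K$ and Dirichlet data on the two circular components --- propagates this control inside, and a dyadic bootstrap letting $\epsilon\downarrow 0$ shows that $u$ extends continuously to the tip with $|u(x)-u(0)|\leq C|x|^{1/2}$.

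\emph{Step 3: gradient estimate.} After normalizing $u(0)=0$, for $x\in B_{r_1}\setminus K$ with $r=|x|$ set $u_r(y):=r^{-1/2}u(ry)$ on $B_2\setminus K_r$ with $K_r:=r^{-1}K$. By Step~2, $\|u_r\|_{L^\infty}\leq C$; moreover $u_r$ satisfies $\Delta u_r=\lambda r^{3/2}(u(r\cdot)-g(r\cdot))$ with $|\Delta u_r|\leq C\lambda$ and $\partial u_r/\partial\nu=0$ on $K_r$. Interior and boundary Schauder estimates applied at unit scale around $y=x/r\in\partial B_1$ --- using the $C^{1,1}_{\mathrm{loc}}$ regularity of $K$ away from the tip, whose $C^{1,1}$ norm at scale $r$ is kept under control by the parametrization \eqref{e:K param} --- yield $|\nabla u_r(x/r)|\leq C$, i.e.\ $|\nabla u(x)|\leq C|x|^{-1/2}$.

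\emph{Main obstacle.} The delicate point is the bootstrap in Step~2: the slit disk is not Lipschitz at $0$, so a naive maximum-principle argument on $B_{\rho(r)}\setminus K$ would pick up an $O(1)$ error from the inner boundary $\partial B_\epsilon\setminus K$ as $\epsilon\downarrow 0$. The standard way around this is to iterate the oscillation estimate dyadically: if $\omega(r):=\mathrm{osc}(u|_{B_r\setminus K})$ and we show $\omega(\rho(r))\leq Cr^{1/2}+\omega(\rho(r)/2)$, then summing a geometric series in $r=2^{-k}s$ yields $\omega(r)\leq Cr^{1/2}$ uniformly. A related, more technical subtlety is the uniformity of the boundary Schauder bound in Step~3, since the rescaled arcs $K_r$ may fail to be uniformly $C^{1,1}$ as $r\downarrow 0$ if $K$ spirals near the tip; this must be handled by using closeness (iv) to ensure that $K_r\cap B_2$ remains graphical over the corresponding secant line at every small scale, with a uniform $C^{1,1}$ constant.
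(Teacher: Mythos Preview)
Your outline is largely sound but takes a longer road than the paper and leaves Step~3 genuinely incomplete. The paper reverses your order: it proves the pointwise gradient bound $|\nabla u(x)|\leq C|x|^{-1/2}$ \emph{first}, by a purely local argument in each annulus, after which continuity at $0$ falls out in one line. For $x$ with $|x|\sim r$: if $\mathrm{dist}(x,K)\gtrsim r$, the mean-value inequality together with $\int_{B_{r/8}(x)}|\nabla u|^2\leq Cr$ (available directly from the energy upper bound \eqref{e:upper bound}, so your Step~1 almost-monotonicity is not needed in the minimizer setting) gives $|\nabla u(x)|\leq Cr^{-1/2}$; if $\mathrm{dist}(x,K)\lesssim r$, the paper applies the $\varepsilon$-regularity Theorem~\ref{t:eps_salto_puro} in a ball of radius $\sim r$ centred on a nearby point of $K$, which \emph{outputs} a uniform $C^{1,\alpha}$ graph description of $K$ there, and then standard Neumann estimates for \eqref{e:Euler harmonic g 1}--\eqref{e:Euler Neumann g 1} yield the gradient bound. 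Since $\partial B_r\setminus K$ is a single arc of length $2\pi r$, integrating gives $\mathrm{osc}(u,\partial B_r)\leq Cr^{1/2}$, and the truncation-based maximum principle of Lemma~\ref{l:maximum} on the slit disk $B_r\setminus K$ gives $\mathrm{osc}(u,B_r)\leq Cr^{1/2}$ directly. Your ``main obstacle'' in Step~2 is therefore a non-issue: the only Dirichlet portion of $\partial(B_r\setminus K)$ is the outer arc, $K$ carries Neumann data, and the tip is a single point, so no inner boundary appears and no dyadic bootstrap is required.

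Your Step~3, by contrast, has a real gap. You correctly flag that the hypothesis $K\in C^{1,1}_{\mathrm{loc}}(B_2\setminus\{0\})$ together with the one-scale Hausdorff bound (iv) gives no uniform control on the rescalings $K_r$, but your proposed fix---that (iv) forces $K_r\cap B_2$ to be graphical with uniform $C^{1,1}$ constant at every small scale---does not follow: closeness $\varepsilon_0$ at scale $1$ becomes relative closeness $\varepsilon_0/r$ at scale $r$, which blows up, and nothing in the stated hypotheses rules out slow spiralling of $K$ near the tip. The paper sidesteps this circularity by using Theorem~\ref{t:eps_salto_puro} as a black box that converts (scale-invariant Hausdorff closeness $+$ small scaled Dirichlet energy) into uniform $C^{1,\alpha}$ regularity of $K$; the scale-invariant closeness is precisely what the clause ``because of Corollary~\ref{c:connectedness}'' supplies, via the decay iteration of Lemma~\ref{l:decay iteration 2} in its proof.
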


\begin{proof} Consider a point $x\in (B_r\setminus B_{\sfrac r2})\setminus K$. 
We distinguish two situations:
\begin{itemize}
    \item $B_{\sfrac r8} (x) \cap K = \emptyset$. Recalling that $\int_{B_{\sfrac r8} (x)} |\nabla u|^2 \leq C r$ and using the mean value property for harmonic functions we immediately infer 
    \[
    |\nabla u (x)|\leq \frac{8^2}{\pi r^2} \int_{B_{\sfrac r8} (x)} |\nabla u|
    \leq 
    \frac8r\left(\int_{B_{\sfrac r8} (x)} |\nabla u|^2\right)^{\sfrac12} \leq C r^{-\sfrac12}
    \leq C |x|^{-\sfrac12}\, .
    \]
    \item $B_{\sfrac r8} (x) \cap K \neq \emptyset$. We can then fix a point $\bar{x} \in B_{\sfrac r8} (x) \cap K$. We consider therefore $B_{\sfrac r4} (\bar x)\subseteq B_{\sfrac{3r}2} \setminus B_{\sfrac r8} $ and notice that we can apply the jump case of the $\varepsilon$-regularity theory to $B_{\sfrac r4} (\bar x)$ because of Corollary~\ref{c:connectedness}, provided $|x|\leq r_1$ for a sufficiently small $r_1$. In particular the conclusion of the $\varepsilon$-regularity theory, classical estimates for the PDE \eqref{e:Euler harmonic g 1} and \eqref{e:Euler Neumann g 1}, and the estimate 
    \[
    \int_{B_{\sfrac r4} (\bar x)} |\nabla u|^2 \leq C r\, ,
    \]
    we conclude again
    \[
    |\nabla u (x)|\leq C r^{-\sfrac12} \leq C |x|^{-\sfrac12}\, .
    \]
\end{itemize}
We observe next that, again by the $\varepsilon$-regularity theory at pure jumps, for every $0<r<r_1$ $\partial B_r\cap K$ consists of a single point. In particular, by the estimate on $|\nabla u|$, we conclude that ${\rm osc}\, (u, \partial B_r) \leq C r^{\sfrac12}$. In particular, by the maximum principle Lemma~\ref{l:maximum}, we conclude that 
\[
{\rm osc}\, (u, B_r) \leq C r^{\sfrac12}\, .
\]
In particular $u$ is continuous at $0$ and $|u(x)-u(0)|\leq C |x|^{\sfrac12}$.
\end{proof}

\section{Rescaling and reparametrization}\label{s:exp-rep}

Before starting our considerations, we introduce the model ``tangent function'' of a
minimizer at a loose end. By Theorem~\ref{t:Bonnet-David}, 
in polar coordinates the latter is given by the function \index[simb]{aalRad(phi,r)@$\Rad(\phi,r)$}
\begin{align}
\Rad(\phi, r) &:=\sqrt{\textstyle{\frac{2r}{\pi}}}\cos\textstyle{\frac{\phi}{2}}\label{e:Rad}\, 
\end{align}
with jump set $K_{\Rad}$ equal to the open half line $\{(t, 0): t\in \R^+\}$ (in cartesian coordinates). Observe that $\Rad$ is, up to the prefactor $\sqrt{\frac{2}{\pi}}$, the real part of a branch of the complex square root. We will likewise use the notation for its harmonic conjugate \index[simb]{aalIsq(phi,r)@$\Isq(\phi,r)$}$\Isq$, which is the imaginary part of the same branch, multiplied by the same prefactor, namely  \index[simb]{aalIsq(phi,r)@$\Isq(\phi,r)$}
\begin{equation}\label{e:Isq}
\Isq (\phi, r) := \sqrt{\textstyle{\frac{2r}{\pi}}}\sin\textstyle{\frac{\phi}{2}}\, .
\end{equation}

\subsection{Rescalings}

From now until the very last section, $(u,K)$ will always denote a critical point of $E_\lambda$ 
in $B_2$ satisfying the assumptions of Theorem~\ref{t:final-cracktip}. 
Keeping the notation introduced in \eqref{e:K param}, for $\rho>0$ set
\begin{align}
u^\rho(\phi, r)&:=\rho^{-\sfrac12}\,u(\phi+ \alpha(\rho\,r), \rho\, r),
\label{e:riscala1}\\
\alpha^\rho (r) &:= \alpha (\rho\, r)\, .\label{e:riscala2}
\end{align}
\begin{lemma}\label{l:Bonnet}
 For every $\delta>0$ the following holds.
 
If $\lambda>0$, then for every $\varepsilon>0$ there is $\e_1>0$ such that, 
if $(u,K)$ is as in Theorem~\ref{t:final-cracktip} and $\alpha$ is as in 
\eqref{e:K param} with $\e_0\leq\e_1$, then
\begin{equation}\label{e:Bonnet0}
 \|u^\rho-\Rad\|_{C^{1,1-\varepsilon}([0,2\pi]\times[\sfrac12,2])} 
+ \|\alpha^\rho\|_{C^{1,1-\varepsilon} ([\sfrac 12, 2])}\leq\delta\qquad 
 \forall\,\rho \leq \frac{1}{4}\,\, .
\end{equation}
If $\lambda = 0$, then for every $k\in \mathbb N$ there is $\e_1>0$ such that, under the very same assumptions, 
 \begin{equation}\label{e:Bonnet0 bis}
 \|u^\rho-\Rad\|_{C^k([0,2\pi]\times[\sfrac12,2])} 
+ \|\alpha^\rho\|_{C^k ([\sfrac 12, 2])}\leq\delta\qquad 
 \forall\,\rho \leq \frac{1}{4}\, .
\end{equation}

\end{lemma}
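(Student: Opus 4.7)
The plan is a contradiction-compactness argument, using Theorem~\ref{t:Bonnet-David} to identify any limit of blow-ups as the model cracktip $\Rad$, and then standard Schauder theory to upgrade the convergence to the desired $C^{1,1-\varepsilon}$ (resp.\ $C^k$) regularity.

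Suppose the conclusion fails for some $\delta > 0$. Then one finds sequences $(K_n, u_n, g_n, \lambda_n)$ of critical points of $E_{\lambda_n}$ on $B_3$ satisfying hypotheses (i)--(iv) of Theorem~\ref{t:final-cracktip} with $\varepsilon_{0,n} \downarrow 0$, together with radii $\rho_n \in (0, 1/4]$, for which the left-hand side of \eqref{e:Bonnet0} (resp.\ \eqref{e:Bonnet0 bis}) exceeds $\delta$. First I would consider the rescaled pairs $(\tilde K_n, \tilde u_n) := (K_n^{\rho_n}, u_n^{\rho_n})$ from \eqref{e:riscala1}--\eqref{e:riscala2}: each is a critical point on $B_{3/\rho_n} \supset B_{12}$ with rescaled fidelity parameter $\tilde\lambda_n := \lambda_n \rho_n$ and rescaled fidelity $\tilde g_n$ satisfying $\|\tilde g_n\|_\infty \leq M_0$. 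By the built-in angular shift in \eqref{e:riscala1}, the jump set $\tilde K_n$ in $(\phi,r)$ coordinates is the graph $\{(\tilde\alpha_n(r) - \tilde\alpha_n(r), r)\} = \{0\} \times [0, 3/\rho_n]$ of the zero function, with the tangent measure $\tilde\alpha_n(r) = \alpha_n(\rho_n r)$ controlling the deviation from the straight positive axis; hypothesis (iv), properly rescaled, gives $\dist_H(\tilde K_n \cap B_R, \ello^+ \cap B_R) \to 0$ for every $R>0$ along a subsequence.

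Next, by the compactness Theorems~\ref{t:minimizers compactness} and~\ref{t:compactness-2} (valid for minimizers, but applicable to the critical points considered here via the $C^{1,1}_{loc}$ regularity of $\tilde K_n$ away from the tip and the pointwise estimate \eqref{e:stima tip 1}), and using Lemma~\ref{l:V-tecnico1} to normalize $\tilde u_n(0) = 0$, one extracts a subsequence along which $\tilde u_n \to u_\infty$ in $W^{1,2}_{loc}(\mathbb R^2 \setminus \ello^+)$ and $\tilde K_n \to \ello^+$ locally Hausdorff, toward a global generalized minimizer $(K_\infty, u_\infty, \{p_{kl}\})$ of $E_0$ with $K_\infty = \ello^+$. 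Since $K_\infty$ is a single unbounded connected component intersecting every sufficiently large circle in exactly one point, Theorem~\ref{t:Bonnet-David} identifies $(K_\infty, u_\infty)$ as a cracktip; the normalization and the angular-shift convention fix the sign and position to $u_\infty = \Rad$.

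The final step is to upgrade the convergence on the compact region $[0,2\pi]\times[1/2,2]$, at positive distance from the tip. There $\tilde u_n$ satisfies the Neumann problem \eqref{e:Euler harmonic g 1}--\eqref{e:Euler Neumann g 1} with $L^\infty$-bounded right-hand side $\tilde\lambda_n(\tilde u_n - \tilde g_n)$, while \eqref{e:Euler curvature g 1} reads, after expressing the curvature of the graph of $\tilde\alpha_n$ in polar coordinates, as a semilinear ODE of the form $\tilde\alpha_n'' = F(r, \tilde\alpha_n, \tilde\alpha_n', [\nabla \tilde u_n]^2, \tilde\lambda_n, \tilde g_n)$ for $\tilde\alpha_n$ on $[1/2,2]$. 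Pulling $\tilde u_n$ back via $(\phi,r) \mapsto (\phi - \tilde\alpha_n(r), r)$ to the fixed slit domain complementary to $\ello^+$, one obtains a coupled elliptic-ODE system for $(\tilde u_n, \tilde\alpha_n)$ on fixed geometry with uniformly bounded coefficients, whose limit system is the one satisfied by $(\Rad, 0)$. Schauder estimates up to the boundary, bootstrapped once, yield $C^{1,1-\varepsilon}$ convergence for $\lambda > 0$; for $\lambda = 0$ the system is smooth (indeed analytic by~\cite{KLM}) and iteration delivers convergence in every $C^k$. This contradicts the assumed lower bound $\delta$. The main obstacle is precisely this last bootstrap, which requires the dual use of the Euler--Lagrange conditions---\eqref{e:Euler Neumann g 1} as the boundary condition for the elliptic PDE solved by $\tilde u_n$, \eqref{e:Euler curvature g 1} as the equation for the boundary itself, driven by the jump of $|\nabla \tilde u_n|^2$---with the $C^{1,1^-}$ cap in the case $\lambda>0$ reflecting exactly the $L^\infty$ regularity of $g$.
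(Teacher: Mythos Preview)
Your plan is essentially the paper's approach, with two differences in emphasis. You make the identification of the limit as $\Rad$ explicit via Theorem~\ref{t:Bonnet-David}, whereas the paper merely asserts that any subsequential limit of $(K_\rho, u^\rho)$ is a global minimizer of $E_0$ with $K_0$ a straight segment, leaving the rest implicit (both approaches tacitly use minimality here, although the lemma is stated for critical points). More substantively, the paper replaces your Schauder bootstrap with a direct a priori estimate: the gradient bound $|\nabla u(x)| \leq C|x|^{-1/2}$ from Lemma~\ref{l:V-tecnico1}, inserted into the curvature equation~\eqref{e:Euler curvature g 1}, gives $|\kappa(x)| \leq C|x|^{-1}$, and after rescaling this becomes a \emph{uniform} $C^{1,1}$ bound on $K_\rho$ in the annulus $B_2\setminus B_{1/4}$, valid for all $\rho\leq 1/4$ at once. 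Arzel\`a--Ascoli then upgrades the Hausdorff convergence of the curve to $C^{1,1-\varepsilon}$ directly, and elliptic regularity for the Neumann problem on a uniformly $C^{1,1}$ domain does the same for $u^\rho$. Your bootstrap would work but needs this uniform curvature bound as its starting point: hypothesis~(iii) gives only $C^{1,1}_{loc}$ on $B_2\setminus\{0\}$ with no control near the tip, so without feeding~\eqref{e:stima tip 1} through the curvature equation you cannot assert ``uniformly bounded coefficients'' in your pulled-back system when $\rho_n\to 0$. For $\lambda=0$ the paper further exploits that $(K_\rho, u^\rho)$ is again a Mumford--Shah minimizer, so Theorem~\ref{t:eps_salto_puro} supplies uniform $C^{1,\alpha}$ regularity of $K_\rho$ in the annulus as the base for the alternating bootstrap between elliptic estimates for $u$ and the curvature equation---this is your iteration, made clean by scale-invariance.
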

\begin{proof} First of all, we introduce the function $u^\rho (x) := \rho^{-\sfrac12} u (\rho x)$ and the rescaled singular set $K_\rho := \frac{K}{\rho}$. Recall that $(u_\rho, K^\rho)$ converges, up to subsequences, to a global minimizer of $E_0$, which we denote by $(u_0 , K_0)$. We also know that $|\nabla u (x)|\leq C |x|^{-\sfrac12}$ from Lemma~\ref{l:V-tecnico2}. But then it follows easily that the rescaled set $K_\rho$ satisfies uniform $C^{1,1}$ estimates on $B_1\setminus B_{\sfrac14}$. Hence the estimate of $\alpha^\rho$ in \eqref{e:Bonnet0} simply follows from the fact that the curve parametrized by $\alpha^\rho$ is converging to the straight segment $[0, (1,0)]$, while $\alpha^\rho$ has a uniform $C^{1,1}$ bound. As for the other estimate in \eqref{e:Bonnet0} it follows from classical regularity for the Neumann problem, using $\Delta u = \lambda (u-g)$ (and the uniform $L^\infty$ bounds on $u$ and $g$).

Next, observe that, for the case $\lambda =0$, the rescaled pair $(u_\rho , K^\rho)$ is still a minimizer of the Mumford-Shah functional, and the estimate \eqref{e:Bonnet0 bis} would follow (by compactness) once we show uniform $C^{k, \sfrac12}$ estimates for both $u^\rho$ and $K_\rho$ in $B_1\setminus B_{\sfrac12}$. However, the latter is a simple bootstrapping process using the equation. Note for instance that Theorem~\ref{t:eps_salto_puro} gives a uniform $C^{1,\alpha}$ bound on $K_\rho\cap (B_{2-\mu}\setminus B_\mu)$ for every $\mu\in(0,1)$. But then using the uniform $L^2$ bound for $\nabla u$ and classical estimates for the Neumann problem, we conclude uniform $C^{\alpha}$ estimates for $\nabla u$ in $B_{2-2\mu}\setminus (B_{2\mu}\cup K_\rho)$ for every $\mu\in(0,\sfrac12)$. We can now use the equation for the curvature $\kappa$ to conclude uniform $C^{2,\alpha}$ estimates for $K_\rho \cap (B_{2-3\mu}\setminus B_{3\mu})$ for every $\mu\in(0,\sfrac13)$. In turn this implies uniform $C^{1,\alpha}$ estimates for $\nabla u$ in $B_{2-4\mu}\setminus (B_{4\mu}\cup K_\rho)$
for every $\mu\in(0,\sfrac14)$. This bootstrap argument can be repeated finitely many times, until we achieve $C^{k+1,\alpha}$ estimates. 
\end{proof}

\begin{corollary}\label{c:Bonnet-2}
 For every $\delta>0$ the following holds.
\begin{itemize}
     \item[(a)] If $\lambda>0$, then for every $\varepsilon>0$ there is $\varepsilon_1>0$ such that, if $(u,K)$ and $\alpha$ satisfy the assumptions of Theorem~\ref{t:final-cracktip} with $\e_0\leq\e_1$, then 
 \begin{equation}\label{e:Bonnet2}
\|r^{i-\sfrac12}\partial_\phi^{j}\partial_r^{i}(u(\phi + \alpha (r) ,r)-\Rad(\phi,r))\|_{C^{0,1-\e}([0,2\pi]\times (0,\sfrac12))}
\leq \delta\quad \text{$\forall\, i+j\leq 1$},
\end{equation}
\begin{equation}\label{e:Bonnet2bis}
\|r\alpha^{\prime}(r)\|_{C^{0,1-\e}((0,\sfrac 12))}\leq \delta\, ,
\end{equation}
\item[(b)] If $\lambda=0$, then for every $k\in\N$ there is $\e_1>0$ such that, if $(u,K)$ and $\alpha$ satisfy the assumptions of Theorem~\ref{t:final-cracktip} with $\e_0\leq\e_1$, then 
 \begin{equation}\label{e:Bonnet2 bis}
\sup_{[0,2\pi]\times(0,\sfrac12)}
r^{i-\sfrac12}|\partial_\phi^{j}\partial_r^{i}(u(\phi + \alpha (r),r) -\Rad(\phi,r))|
\leq \delta\quad \text{$\forall\, i+j\leq k$},
\end{equation}
\begin{equation}\label{e:Bonnet2bis bis}
\sup_{(0,\sfrac 12)}r^i|\alpha^{(i)}(r)|\leq \delta 
\quad\qquad\qquad\qquad\qquad \forall\, i\leq k\, . 
\end{equation}
\end{itemize}
\end{corollary}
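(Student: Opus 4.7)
The statements (a) and (b) are scale-invariant reformulations of Lemma~\ref{l:Bonnet}, and the proof is a direct scaling argument. The crucial observation, which exploits the $\sfrac12$-homogeneity of $\Rad$ in $r$, is that if one sets
\[
v(\phi,r) := u(\phi+\alpha(r),r) - \Rad(\phi,r),
\]
then
\[
u^\rho(\phi,r) - \Rad(\phi,r) = \rho^{-\sfrac12}\bigl(u(\phi+\alpha(\rho r), \rho r) - \Rad(\phi,\rho r)\bigr) = \rho^{-\sfrac12}\, v(\phi, \rho r),
\]
whence $\partial_r^i\partial_\phi^j(u^\rho-\Rad)(\phi,r) = \rho^{i-\sfrac12}(\partial_r^i\partial_\phi^j v)(\phi,\rho r)$. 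Similarly, from $\alpha^\rho(r) = \alpha(\rho r)$ one gets $(\alpha^\rho)^{(i)}(r) = \rho^i \alpha^{(i)}(\rho r)$. In particular the weights $r^{i-\sfrac12}$ in \eqref{e:Bonnet2}, \eqref{e:Bonnet2 bis} and the weight $r$ in \eqref{e:Bonnet2bis} are exactly the ones that render the quantities scale-invariant.

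For the pointwise part of the bounds, I would apply Lemma~\ref{l:Bonnet} (choosing the parameter in its conclusion as a suitable fraction of $\delta$) and evaluate the rescaled bounds at $r=1\in[\sfrac12,2]$ with $\rho=\bar r\in(0,\sfrac14]$. The inequalities
\[
|\partial_r^i\partial_\phi^j(u^{\bar r}-\Rad)(\phi,1)|\leq \delta\quad\text{and}\quad |(\alpha^{\bar r})^{(i)}(1)|\leq \delta
\]
supplied by Lemma~\ref{l:Bonnet} immediately translate via the scaling identities into
$\bar r^{i-\sfrac12}|\partial_r^i\partial_\phi^j v(\phi,\bar r)|\leq\delta$ and $\bar r^i|\alpha^{(i)}(\bar r)|\leq\delta$, which is the sup part of all four estimates. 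In the $C^k$ case (b) this already closes the argument since the statement is purely pointwise.

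For case (a), the Hölder component requires combining the rescaled $C^{0,1-\varepsilon}$ estimates of Lemma~\ref{l:Bonnet} with the scaling identities above. I would proceed by a dyadic decomposition $(0,\sfrac12)=\bigcup_{k\geq 2}[2^{-k-1},2^{-k}]$. On each such interval, applying the rescaled bound with $\rho=2^{-k}$ to points whose ratio with $\rho$ lies in $[\sfrac12,2]$, together with the product rule $[fg]_{C^{0,1-\varepsilon}}\leq \|f\|_\infty[g]_{C^{0,1-\varepsilon}}+[f]_{C^{0,1-\varepsilon}}\|g\|_\infty$ to handle the multiplication by the weight $r^{i-\sfrac12}$ (resp.\ by $r$), yields a local Hölder bound with the right dimensional factors canceling. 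The local bounds on adjacent dyadic intervals match at the common endpoint by the pointwise estimate and are therefore stitched into a bound on $(0,\sfrac12)$; for pairs at far-apart scales the inequality $|r-s|\gtrsim\max(r,s)$ combined with the uniform sup bound on $r^{i-\sfrac12}\partial^{i,j}v$ (resp.\ on $r\alpha'(r)$) yields the Hölder control trivially.

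The main technical point that needs care is precisely this Hölder patching: it is the weights $r^{i-\sfrac12}$ and $r$ that make the local Hölder seminorms uniform in the dyadic scale $2^{-k}$, whereas the unweighted derivatives $\partial_r^i\partial_\phi^j v$ and $\alpha'$ have Hölder seminorms that blow up as $r\downarrow 0$. Once this observation is made, the argument is a standard scaling–covering exercise, and both (a) and (b) reduce immediately to a single invocation of Lemma~\ref{l:Bonnet} at each dyadic scale.
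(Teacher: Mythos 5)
Your pointwise argument (case (b)) matches the paper's proof verbatim: the scaling identities $(\alpha^\rho)^{(i)}(r)=\rho^i\alpha^{(i)}(\rho r)$ and $\partial_r^i\partial_\phi^j v(\phi,r)=\rho^{\sfrac12-i}\,\partial_r^i\partial_\phi^j(u^\rho-\Rad)(\phi,r/\rho)$, combined with Lemma~\ref{l:Bonnet} via the substitution $\rho=r$, are exactly what the paper writes down. The paper in fact only spells out the $\lambda=0$ case and declares the H\"older case (a) ``analogous'', so your attempt to fill in the H\"older patching goes beyond what is written — but that attempt contains a genuine gap.

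Your key observation — that the weights $r^{i-\sfrac12}$ (resp.\ $r$) make the local H\"older seminorms uniform across dyadic scales — is off by a full power of the scale. Writing $w(\phi,R):=R^{i-\sfrac12}\partial_R^i\partial_\phi^j v(\phi,R)$ and $s=R/\rho$, the scaling identity gives $w(\phi,\rho s)=s^{i-\sfrac12}\partial_r^i\partial_\phi^j(u^\rho-\Rad)(\phi,s)$, and Lemma~\ref{l:Bonnet} controls the right-hand side in $C^{0,1-\varepsilon}$ \emph{as a function of $s\in[\sfrac12,2]$}. Converting back to $R=\rho s$ introduces the Jacobian $|R_1-R_2|^{\varepsilon-1}=\rho^{\varepsilon-1}|s_1-s_2|^{\varepsilon-1}$, so one actually obtains $[w]_{C^{0,1-\varepsilon}_R([\rho/2,2\rho])}\lesssim\delta\,\rho^{\varepsilon-1}$, which degenerates as $\rho\downarrow 0$ (a concrete instance is $v=\delta\,r^{\sfrac12}\sin(\log r)$, which is admissible for Lemma~\ref{l:Bonnet} yet has $r^{-\sfrac12}v=\delta\sin(\log r)$ with unbounded $C^{0,1-\varepsilon}$ seminorm near $r=0$). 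Correspondingly, your far-apart-scales step fails on the bounded domain $(0,\sfrac12)$: if $r,s$ are both small and $|r-s|\gtrsim\max(r,s)$, then $|r-s|^{1-\varepsilon}\lesssim(\max(r,s))^{1-\varepsilon}\ll1$, so the sup bound $\|w\|_\infty\leq\delta$ yields only $|w(r)-w(s)|\leq2\delta$, far larger than $\delta|r-s|^{1-\varepsilon}$. The way this is actually made to work — and how the Corollary is consumed in Lemma~\ref{l:nonlinear}, estimates \eqref{e:decay_g lambda>0}--\eqref{e:decay_f lambda>0} — is to read the H\"older bound in the logarithmic variable $t=-\log r$: there the factor $\rho^{1-\varepsilon}$ coming from $|t_1-t_2|\sim\rho^{-1}|R_1-R_2|$ cancels the degeneracy, the local seminorms become uniformly $\lesssim\delta$, and the domain $[\sigma,\infty)$ is unbounded so far-apart pairs satisfy $|t_1-t_2|\geq1$ and your sup-bound trick becomes legitimate.
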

\begin{proof}
We discuss only the case $\lambda=0$, the other being analogous. Observe first that
\[
(\alpha^\rho)^{(i)}  (r) = \rho^i \alpha^{(i)} (\rho r)\, .
\]
Taking the supremum in $r\in [\sfrac 12,2]$ in the latter identity, we easily infer 
\[
\rho^i \|\alpha^{(i)}\|_{C^0 ([\sfrac \rho 2, 2\rho])} = \|(\alpha^{\rho})^{(i)}\|_{C^0 ([\sfrac 12, 2])}\,, 
\]
and hence conclude \eqref{e:Bonnet2bis bis} 
from Lemma~\ref{l:Bonnet}. 

Next, from \eqref{e:riscala1} and the $\sfrac{1}{2}$-homogeneity of $\Rad$ we conclude
\begin{align*}
 u (\phi + \alpha (r) , r) - \Rad (\phi, r) = \rho^{\sfrac{1}{2}} \left( u^\rho \left(\phi, \textstyle{\frac{r}{\rho}}\right)
- \Rad \left(\phi, \textstyle{\frac{r}{\rho}}\right)\right)\, .
\end{align*}
Differentiating the latter identity $j$ times in $\theta$ and $i$ times in $r$, we conclude
\begin{align*}
\partial_r^i\partial_\phi^j\left(u(\phi+\alpha(r) , r) -\Rad (\phi, r)\right)
= \rho^{\sfrac{1}{2} -i} \partial_r^i\partial_\phi^j \left(u^\rho - \Rad\right) \left(\phi, \textstyle{\frac{r}{\rho}}\right)
\end{align*}
Substitute first $\rho=r$ and take then the supremum in 
$\phi$ and $r$ to achieve  \eqref{e:Bonnet2}, 
again from Lemma~\ref{l:Bonnet}.
\end{proof}

\subsection{Reparametrization}
Following Simon's insight for studying the uniqueness of tangent cones to minimal surfaces 
\cite{Simon83} we next introduce the functions 
\begin{align}
\vartheta(t):=&\alpha(e^{-t}), 
\label{e:vartheta}\\
\varrho(t) :=& e^{-t}(\cos\vartheta(t),\sin\vartheta(t)),\label{e:varrho},\\
 f (\phi, t):=& e^{\sfrac{t}2}\, w (\phi+\vartheta(t), e^{-t})
=w^{e^{-t}} (\phi, 1)\, ,\label{e:f}\\
\rad (\phi) := &\Rad (\phi, 1),\label{e:radino}\\
\isq (\phi) := &\Isq (\phi, 1),\label{e:radinow}
\end{align}
{where $\Rad$, $\Isq$ are defined respectively in \eqref{e:Rad}, \eqref{e:Isq}.}
\index[simb]{aalrsq(phi)@$\rad(\phi)$}
\index[simb]{aalisq(phi)@$\isq(\phi)$}
Note that the change of phase in the definition of $f$ in \eqref{e:f} maps the set $K$ 
onto the halfline $\{0\}\times[0,\infty)$.

In the next lemma we derive a system of partial differential equations for the functions $f$ and $\vartheta$, exploiting the Euler-Lagrange conditions \index{Euler-Lagrange conditions@Euler-Lagrange conditions} satisfied by $u$ and $K$ (cf. \eqref{e:outer} and \eqref{e:inner}). 
The effect of the negative exponential reparametrization is that we will get an evolution equation for $f$. The claimed regularity for $\gamma$ corresponds to exponential decay estimates in time $t$ for $\varthetad$,
which will be the object of study in the next sections.

We also rewrite the estimates of Corollary~\ref{c:Bonnet-2} in terms of the new functions.
It is more convenient to work with $w$ rather than $u$. This is clear when $\lambda=0$ 
because of the homogeneous Dirichlet boundary condition satisfied by $w$ on $K$ instead 
of its Neumann counterpart satisfied by $u$.

\begin{lemma}\label{l:nonlinear}
 If $(u,K)$ satisfies the assumptions of Theorem~\ref{t:final-cracktip} and 
 $\vartheta, f$ are given by \eqref{e:vartheta} 
 and \eqref{e:f}, then
\begin{equation}
  \begin{cases}\label{e:SIS}
 \displaystyle{f_t =\frac f4+ f_{\phi\phi}+ f_{tt}+\big(\varthetad f_\phi 
 +\varthetad^2 f_{\phi\phi}-2\varthetad f_{t\phi}-\varthetadd f_\phi \big)}\cr\cr
f(0,t)=f(2\pi,t)=H_1(t)
 \cr\cr
 \displaystyle{ \frac{\varthetadd - \varthetad-\varthetad^3}{(1+\varthetad^2)^{\sfrac52}}
 =f_\phi^2(2\pi,t)-f_\phi^2(0,t)+H_2(t)\,}
 \end{cases}
\end{equation}
where (recalling the definition of $h_1$ in \eqref{e:h1} and $h_2$ in \eqref{e:h2}) the functions $H_i$ are given by the following formulas:
\begin{equation}\label{e:H1}
H_1(t):=e^{\frac t2}h_1(e^{-t})\,,
\end{equation}
\begin{equation}\label{e:H2}
H_2(t):=\frac{2\varthetad(t)}{1+\varthetad^2(t)}\Big(\frac{H_1(t)}{2}-\dot{H}_1(t)\Big)
(f_\phi(2\pi,t)-f_\phi(0,t))+\frac{e^{-t}}{1+\varthetad^2(t)}h_2(\vartheta(t),e^{-t})\,.
\end{equation}
Moreover, it is true that 
\begin{equation}\label{e:reg Hi}
H_1\in H^{\sfrac32}\cap C^{1,1^-}((1,+\infty)),\text{ and}\qquad
H_2\in L^\infty((1,+\infty))\,, 
\end{equation}
and for every $\e>0$ there is a constant $C_\e>0$ such that for all $t>0$ and 
$s\geq t$
\begin{align}
& 
|H_1(t)|+|\dot{H}_1(t)|+|H_2(t)|\lesssim (e^{-t})^{\sfrac32^-}\label{e:bounds Hi}\\
&|\dot{H}_1(t)-\dot{H}_1(s)|\leq C_\e e^{-(\frac32-\varepsilon)t}|s-t|^{1-\e}\,.\label{e:C11- dotH1}
\end{align}
Finally, for every fixed $\sigma,\,\delta>0$, $\e\in(0,\e_0)$ and $k\in\N$, the following 
estimates hold provided $\e_0$ in Theorem~\ref{t:final-cracktip} is sufficiently small:
if $\lambda>0$
\begin{equation}\label{e:decay_g lambda>0}
 \|\varthetad\|_{C^{0,1-\e}([\sigma , \infty))}\leq\delta\,,
\end{equation}
\begin{equation}\label{e:decay_f lambda>0}
\|\partial_\phi^i\partial_t^j (f - \isq)\|_{C^{0,1-\e}([0,2\pi]\times[\sigma,\infty))} \leq \delta\,
\qquad\mbox{for all 
$i+j\leq 1$,}
\end{equation}
and if $\lambda=0$ 
\begin{equation}\label{e:decay_g}
 \|\vartheta^{(i)}\|_{C^0([\sigma , \infty))}\leq\delta \qquad \mbox{for all $i\leq k$,}
\end{equation}
\begin{equation}\label{e:decay_f}
\|\partial_\phi^i\partial_t^j (f - \isq)\|_{C^0([0,2\pi]\times[\sigma,\infty))} \leq \delta\,
\qquad\mbox{for all 
$i+j\leq k$}.
\end{equation}
\end{lemma}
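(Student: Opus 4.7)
The strategy is to translate the three Euler--Lagrange conditions for $w$, namely $\Delta w=0$ on $B_2\setminus K$, $w=h_1$ on $K$, and $\kappa=-(|\nabla w^+|^2-|\nabla w^-|^2)+h_2$ from Lemma~\ref{l:V-tecnico2}, through the change of variables $\psi\mapsto\phi:=\psi-\vartheta(t)$, $r\mapsto t:=-\log r$, together with the rescaling $f=e^{t/2}w$. Under this substitution the halfline $K$ is mapped to the two sides $\{\phi=0\}$ and $\{\phi=2\pi\}$ of the strip $[0,2\pi]\times\mathbb{R}$, and radial self-similarities become time translations; the weight $e^{t/2}$ is dictated by the $\sfrac12$-homogeneity of $\Rad$.

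\textbf{Bulk equation and Dirichlet condition.} For the first equation of \eqref{e:SIS} I compute $\Delta w$ in the coordinates $(\phi,t)$. Setting $L:=\partial_t-\dot\vartheta\,\partial_\phi$, the identities $\partial_\psi=\partial_\phi$ and $\partial_r=-e^t L$ give, for $V(\phi,t):=w(\phi+\vartheta(t),e^{-t})$,
\[
\Delta w=e^{2t}\bigl(L^2V+V_{\phi\phi}\bigr),\qquad L^2=\partial_t^2-2\dot\vartheta\,\partial_t\partial_\phi-\ddot\vartheta\,\partial_\phi+\dot\vartheta^2\partial_\phi^2.
\]
Substituting $V=e^{-t/2}f$ and dividing by $e^{-t/2}$ yields precisely the first equation of \eqref{e:SIS}. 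The boundary condition $f(0,t)=f(2\pi,t)=H_1(t)$ then follows directly from $w(\vartheta(t),e^{-t})=h_1(e^{-t})$ combined with the definition of $H_1$.

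\textbf{Curvature ODE for $\vartheta$.} I first compute the signed curvature of $\zeta(t):=e^{-t+i\vartheta(t)}$ via $\kappa=-\mathrm{Im}(\dot\zeta\bar{\ddot\zeta})/|\dot\zeta|^3$; a direct expansion gives $\kappa=\pm e^t(\ddot\vartheta-\dot\vartheta-\dot\vartheta^3)/(1+\dot\vartheta^2)^{3/2}$ with the sign fixed by the convention $\nu=ie$. Next, using $w(\psi,r)=r^{1/2}f(\psi-\vartheta(-\log r),-\log r)$, I obtain on $K$
\[
\partial_r w=e^{t/2}\bigl(\tfrac{f}{2}+\dot\vartheta\,f_\phi-f_t\bigr),\qquad r^{-1}\partial_\psi w=e^{t/2}f_\phi,
\]
and I decompose $\nabla w$ in the (shifted) polar frame $\hat r,\hat\psi$. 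Since $w$ equals the one-sided trace $h_1(e^{-t})$ on both sides of $K$, its tangential derivative is continuous, so the jump $|\nabla w^+|^2-|\nabla w^-|^2$ depends only on the normal component; expanding the squares factors it as
\[
(1+\dot\vartheta^2)\bigl(f_\phi(2\pi,t)^2-f_\phi(0,t)^2\bigr)+2\dot\vartheta(\dot H_1-H_1/2)\bigl(f_\phi(2\pi,t)-f_\phi(0,t)\bigr),
\]
up to the overall factor $e^t/(1+\dot\vartheta^2)$. Plugging into the Neumann jump identity of Lemma~\ref{l:V-tecnico2}, dividing by the factor $(1+\dot\vartheta^2)^{3/2}\cdot(1+\dot\vartheta^2)=(1+\dot\vartheta^2)^{5/2}$ coming from the curvature and from the isolated $(1+\dot\vartheta^2)$, and collecting the cross term together with $e^{-t}h_2/(1+\dot\vartheta^2)$ into a single function yields the third equation of \eqref{e:SIS} with $H_2$ as in \eqref{e:H2}.

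\textbf{Regularity, decay, and main obstacle.} The bounds \eqref{e:reg Hi}--\eqref{e:C11- dotH1} follow from the chain rule applied to $H_1(t)=e^{t/2}h_1(e^{-t})$ together with \eqref{e:proprieta h1}; the decay $|H_2(t)|\lesssim e^{-(3/2-\varepsilon)t}$ then combines \eqref{e:proprieta h2}, the preceding bound on $H_1,\dot H_1$, and the smallness of $\dot\vartheta$ and $f_\phi$ provided by Corollary~\ref{c:Bonnet-2}. Finally, the decay estimates \eqref{e:decay_g lambda>0}--\eqref{e:decay_f} are the reformulations of \eqref{e:Bonnet0}--\eqref{e:Bonnet2bis bis} via $t=-\log\rho$, after observing that the analogue of Corollary~\ref{c:Bonnet-2} for $w=u+u_a$ holds because $u_a\in C^{1,1^-}(B_2)$ vanishes to second order at $0$ and hence its rescalings $(u_a)^{\rho}$ converge to $0$ in $C^{1,1^-}$. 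The main technical obstacle is the algebra leading to the precise form of $H_2$: one must track carefully the sign conventions for $\nu$ and for the labels ``$\pm$'', verify that every contribution that is not $(f_\phi(2\pi))^2-(f_\phi(0))^2$ collects into a function with the claimed $e^{-(3/2-\varepsilon)t}$ decay, and confirm that the overall factor $(1+\dot\vartheta^2)^{5/2}$ (rather than the naive $(1+\dot\vartheta^2)^{3/2}$ from the curvature) matches after the required division.
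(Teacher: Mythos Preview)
Your proposal is correct and follows essentially the same approach as the paper: both derive the bulk equation by computing the polar Laplacian under the substitution $r=e^{-t}$, $\psi=\phi+\vartheta(t)$, $w=e^{-t/2}f$, obtain the boundary condition directly from $w|_K=h_1$, and derive the third equation by combining the explicit curvature formula $\kappa=e^t(\dot\vartheta+\dot\vartheta^3-\ddot\vartheta)/(1+\dot\vartheta^2)^{3/2}$ with the jump of $|\nabla w|^2=e^t[(f/2+\dot\vartheta f_\phi-f_t)^2+f_\phi^2]$, then dividing by $(1+\dot\vartheta^2)$. One minor slip: in your expansion of the jump the cross term should carry $(H_1/2-\dot H_1)$ rather than $(\dot H_1-H_1/2)$, matching \eqref{e:H2}; this is exactly the kind of sign bookkeeping you flag as the main obstacle.
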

\begin{proof}
 Let us first introduce the unit tangent and normal vector fields to $K$ denoted by $e (t)$ 
 and for the normal vector $\nu (t)$, the latter is obtained from $e(t)$ by a counterclockwise rotation of $90$ degrees, that is:
 \begin{align*}
  e (t):= \frac{\etad(t)}{|\etad(t)|},\quad \nu (t):=e^\perp(t).
 \end{align*}
Moreover, we will denote by $\nabla u^+$ and $\nabla u^-$ the traces of $\nabla u$ on $K$ where $\pm$ is identified by the direction in which the vector $\nu$ is pointing. More precisely, if $p\in K$, then
\begin{align*}
\nabla u^+ (p) & = \lim_{s\downarrow 0} \nabla u (p + s \nu (p))\,,\\
\nabla u^- (p) & = \lim_{s\downarrow 0} \nabla u (p - s \nu (p))\, .
\end{align*}
Observe that, under the assumptions of Lemma~\ref{l:Bonnet}, $e (t)$ is pointing ``inward'', i.e. towards the origin, and hence
for $p = \varrho (t) = (e^{-t} (\cos (\vartheta (t)), \sin (\vartheta (t)))$ (cf. \eqref{e:K param}) we have
\begin{align}
\nabla u^+ (p) &= \lim_{\phi \uparrow 2\pi} \nabla u (e^{-t} (\cos (\vartheta (t) + \phi), \sin (\vartheta (t) +\phi))\, ,\\
\nabla u^- (p) &= \lim_{\phi\downarrow 0} \nabla u (e^{-t} (\cos (\vartheta (t) + \phi), \sin (\vartheta (t) +\phi))\, .
\end{align}
We refer to Figure~\ref{figura-5.1} for a visual illustration.

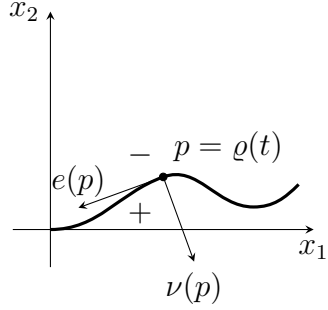
\begin{figure}
\centering
\begin{tikzpicture}
\draw[>=stealth,->] (-0.5,0) -- (3.5,0);
\node[below] at (3.5,0) {$x_1$};
\draw[>=stealth,->] (0,-0.5) -- (0,2.9);
\node[left] at (0,2.9) {$x_2$};
\draw[very thick] (0,0) to [out=0, in=200] (1.5,0.7) to [out=20, in=180] (2.7, 0.3) to [out = 0, in=225] (3.3,0.6);
\draw[>=stealth,->] (1.5,0.7) -- ({1.5-1.2*cos(20)},{0.7-1.2*sin(20)});
\draw[fill] (1.5,0.7) circle [radius=0.05];
\draw[>=stealth,->] (1.5,0.7) -- ({1.5+1.2*sin(20)},{0.7-1.2*cos(20)});
\node[right] at (1.5,1.1) {$p = \varrho (t)$};
\node[above] at ({1.5-1.2*cos(20)},{0.7-1.2*sin(20)}) {$e(p)$};
\node[below] at  ({1.5+1.2*sin(20)},{0.7-1.2*cos(20)}) {$\nu(p)$};
\node[below left] at (1.5,0.5) {$+$};
\node[above left] at (1.5,0.7) {$-$};
\end{tikzpicture}
\caption{The tangent vector $e(p)$ and the normal vector $e(p)$ and a point $p\in K$. Since $t\mapsto |\varrho (t)|$ is a decreasing function, $e(p)$ points towards the origin. Consequently the convention for the symbols $\pm$ on traces of functions is as illustrated in the picture.}
\label{figura-5.1}
\end{figure}

Since $(u,K)$ is a critical point of the $E_\lambda$ energy on $B_2$, the identities
\eqref{e:Euler harmonic g 1}-\eqref{e:Euler curvature g 1} are true. 
Note that the curvature $\kappa$ of $K$ is given by 
\begin{align*}
\kappa(t) =& \frac{1}{|\etad(t)|}\dot e(t)\cdot \nu(t)\, .
\end{align*}
In particular, the auxiliary function $w$ defined in Lemma~\ref{l:V-tecnico2} 
satisfies \eqref{e:u +u aux}, 
which we rewrite for the readers' convenience
\begin{equation}\label{e:firstvar2}
 \begin{cases}
 \triangle w=0 & \mbox{on } B_1\cr
 w=h_1 & \mbox{on } K\cr
 \kappa=-|\nabla w^+|^2+|\nabla w^-|^2+h_2\qquad&\mbox{on }  K\,,
 \end{cases}
\end{equation}
where $h_1$ and $h_2$ are defined in \eqref{e:h1} and \eqref{e:h2}, respectively.
Recalling that
 \begin{equation}\label{e:ftr}
 w(\phi ,r)=r^{\sfrac12}f(\phi-\vartheta(-\ln r) , - \ln r),
  \end{equation}
we compute
\begin{equation}\label{e:utr}
 w_r =r^{-\sfrac12}\left(\frac f2- f_t +\varthetad f_\phi\right),\quad 
 w_\phi =r^{\sfrac 12} f_\phi .
 \end{equation}
Next we recall the formula for the Laplacian in polar coordinates:
\[
\triangle w=0\quad\Longleftrightarrow\quad r^{-2} w_{\phi\phi} +r^{-1} (r w_r )_r =0.
\]
By means of \eqref{e:utr} we get
\[
 r^{-2} w_{\phi\phi} =r^{-\sfrac32} f_{\phi\phi}\, ,
\]
and
\begin{align*}
 r^{-1} (r w_r )_r
=& r^{-1} \left(r^{\sfrac12}\left(\frac f2- f_t +\varthetad f_\phi\right)\right)_r\\
=& r^{-\sfrac32}\left(\frac f4-\frac{f_t}{2}+\frac{\varthetad f_\phi}{2} \right)
+r^{-\sfrac12}\left(- r^{-1}\frac{f_t}{2} +r^{-1} \varthetad\frac{f_\phi}{2} \right)\\
&+r^{-\sfrac{1}{2}}\left(r^{-1} f_{tt} -2r^{-1}\varthetad f_{t\phi} - r^{-1}\varthetadd f_\phi 
+r^{-1}\varthetad^2 f_{\phi\phi}  \right)\\
= &r^{-\sfrac32}\left(\frac f4- f_t+\varthetad f_\phi+ f_{tt} -2\varthetad f_{t\phi}-\varthetadd f_\phi 
+\varthetad^2 f_{\phi\phi} \right).
\end{align*}
In conclusion, we get
\begin{equation}\label{e:laplr}
f_t =\frac f4+ f_{\phi\phi}+ f_{tt}+\big(\varthetad f_\phi 
+\varthetad^2 f_{\phi\phi}-2\varthetad f_{t\phi}-\varthetadd f_\phi \big).
\end{equation}

Next, recalling equality \eqref {e:ftr}, we may  rewrite the Dirichlet condition in the new coordinates simply as 
\begin{equation}\label{e:dir f}
f (0, t)= f(2\pi,t)=e^{\frac t2}h_1(e^{-t})=H_1(t)\, . 
\end{equation}

Finally, we derive the equation satisfied by the scalar curvature $\kappa$. To this end take into account that
\begin{align}\label{e:doteta}
\dot{\varrho} (t) 
=& - \varrho (t) + \dot\vartheta (t) \varrho^\perp (t)\,, 
\end{align}
and thus differentiating \eqref{e:doteta} we get
\begin{align}\label{e:ddoteta}
\etadd (t) =& - \etad + \ddot\vartheta \varrho^\perp + \dot\vartheta \etad^\perp\, .
\end{align}
On the other hand, explicitly we have
\begin{align}
\dot{\varrho} (t)^\perp &= - e^{-t} (-\sin \vartheta (t), \cos \vartheta (t)) - e^{-t} \dot\vartheta (t) (\cos \vartheta (t), \sin \vartheta (t))\nonumber\\
&= - \varrho^\perp (t) - \dot\vartheta (t) \varrho (t) 
\, .
\end{align}
Hence, we conclude
\begin{align}
\kappa(t) =& \frac{1}{|\etad(t)|}\left(\frac{d}{dt}\frac{\etad(t)}{|\etad(t)|}\right)
\cdot\frac{\etad^\perp(t)}{|\etad(t)|}=\frac{\etadd(t)\cdot\etad^\perp(t)}{|\etad(t)|^3}\nonumber\\
=& \frac{(\varthetad+\varthetad^3-\varthetadd) |\varrho (t)|^2}{(1+\varthetad^2)^{\sfrac32}|\varrho (t)|^3}
= e^{t}\,\frac{\varthetad+\varthetad^3-\varthetadd}{(1+\varthetad^2)^{\sfrac32}}.\label{e:formula_curvatura}
\end{align}
As
\[
|\nabla u|^2=|\nabla w|^2=(w_r)^2+r^{-2}(w_\phi)^2=
r^{-1}\left(\frac f2+\varthetad f_\phi - f_t \right)^2+r^{-1} f_\phi^2
\]
we get
\begin{equation}\label{e:curvr}
 \frac{\varthetad+\varthetad^3-\varthetadd}{(1+\varthetad^2)^{\sfrac32}}= -
 \left. \left[\left(\frac f2+\varthetad f_\phi - f_t \right)^2 
 + f_\phi^2\right]\right|_0^{2\pi}+e^{-t}h_2(\vartheta(t),e^{-t})\, .
\end{equation}
Thus, by taking into account \eqref{e:dir f} and \eqref{e:curvr} we conclude 
the third equation in \eqref{e:SIS}.

If $\lambda=0$ we note that in terms of $\vartheta$ the bound of $\alpha$ in \eqref{e:Bonnet2bis bis} reads as
\[
 \sup_{t\in[\sigma,\infty)}
 |\vartheta^{(i)}(t)|\leq C_i\,\delta \qquad \mbox{for every 
 $i\leq k$.}
\]
Indeed, differentiating $i$ times the identity $\vartheta (t) = \alpha (e^{-t})$ we get
\[
\vartheta^{(i)} (t) = \sum_{j=1}^i b_{i,j} e^{-jt} \alpha^{(j)} (e^{-t})\, ,
\]
with $b_{i,j}\in \mathbb R$. Then, \eqref{e:decay_g} follows at once.

Instead, the bound \eqref{e:decay_f} is a consequence of the linearity and elementary arguments, together with the decay \eqref{e:Bonnet2 bis}. Indeed, the latter translates into 
\[
\sup_\phi|\partial_\phi^i\partial_t^j (\xi - \rad)|\leq C_j\,\delta\, 
\qquad\mbox{for every $t\in[\sigma,\infty)$ and $i+j\leq k$,}
\]
having set $\xi (\phi, t):= e^{\sfrac{t}2} u (\phi+\vartheta(t), e^{-t})$.
To prove the latter estimate we argue as follows.
Using the $\sfrac{1}{2}$-homogeneity of $\Rad$, we infer
\begin{align}\label{e:g}
\xi(\phi, t) - \rad (\phi) &=\;  e^{\sfrac{t}{2}} \left(u (\phi + \alpha (e^{-t}), e^{-t}) - \Rad (\phi, e^{-t})\right)
\notag\\
&=:\,  e^{\sfrac{t}{2}} h (\phi, e^{-t})\, .
\end{align}
We conclude that \eqref{e:Bonnet2 bis} can be reformulated as
\[
\sup_{r\in(0,\sfrac12)}r^{i-\sfrac12}\|\partial^j_\theta \partial^i_r h (\cdot, r)\|_{C^0} \leq C_i\,\delta 
\qquad\mbox{for every $t\in[\sigma,\infty)$ and $i+j\leq k$.}
\]
On the other hand, differentiating \eqref{e:g} yields
\[
\partial_\phi^j \partial_t^i (\xi(\phi, t) - \rad (\phi)) = 
\sum_{\ell=0}^i b_{i, \ell}\, e^{\sfrac{t}{2} - \ell t} [\partial_\phi^j \partial_t^\ell h] (\phi, e^{-t})\,, 
\]
for some $b_{i,\ell}\in \mathbb R$. Setting $r= e^{-t}$, we then conclude 
\[
\|\partial_\phi^i\partial_t^j (\xi - \rad)\|_{C^0([0,2\pi]\times[\sigma,\infty))} \leq \delta\,
\qquad\mbox{for all 
$i+j\leq k$}\,,
\]
and thus \eqref{e:decay_f} follows at once, using that the gradient of the harmonic conjugate is the counterclockwise rotation by $90$ degrees of the gradient of $u$.

Instead, if $\lambda>0$ we argue analogously to infer \eqref{e:decay_g lambda>0} and \eqref{e:decay_f lambda>0}, using in addition classical estimates for the function $u_a$.
Finally, being $u_a\in C^{1,1^-}_{\mathrm loc}(B_2)$, in view of the bounds in  \eqref{e:proprieta h1}-\eqref{e:proprieta h2}, 
we infer \eqref{e:reg Hi}-\eqref{e:C11- dotH1}. 
\end{proof}

\section{First linearization}\label{s:first linearization}

In this section we consider a sequence $(u_j, \alpha_j)$ as in Theorem~\ref{t:final-cracktip}
where condition (iv) holds for a vanishing sequence $\e_0 (j)\downarrow 0$. 
Without loss of generality we assume $\alpha_j (1) =0$. 
With fixed $\hat\e\in(0,\sfrac32)$ we define $\delta_j$, $\theta_j$,  and $v_j$ thanks to \eqref{e:vartheta}-\eqref{e:f} as follows:
\begin{align}
\delta_j &:= \|f_j(\cdot,\cdot+j)-\isq\|_{H^2 ((0,2\pi)\times (0,3))} + 
\|\dot\vartheta_j(\cdot+j)\|_{H^1 ((0,3))}
+\lambda e^{-(\sfrac32-\hat\e)j}\,,\label{e:delta}\\
\theta_j (t) &:= \delta_j^{-1} \vartheta_j (t+j)\label{e:theta_j}\,,\\
v_j (\phi, t) &:= \delta_j^{-1} ( f_j (\phi,t+j) - \isq(\phi))\label{e:v_j}\,.
\end{align}
Next, we show that the limit of $(v_j,\theta_j)$ solves a linearization of \eqref{e:SIS},
and in addition it satisfies the linearization of \eqref{e:int-variation-bdry} (actually it suffices to consider \eqref{e:AM-identity}). The latter remark is crucial for our purposes. 
\begin{proposition}\label{p:linearizzazione}
Let $(u_j, \alpha_j)$ as in Theorem~\ref{t:final-cracktip} where the smallness condition 
in item (iv) holds for a vanishing sequence $\e_0 (j)\downarrow 0$. 
Assume $\alpha_j (1) =0$ and define
$\vartheta_j$ and $f_j$ as in \eqref{e:vartheta}-\eqref{e:f} and $v_j$ and 
$\theta_j$ as above. Then, up to subsequences,
\begin{itemize}
\item[(a)] $v_j$ converge to some function $v$ weakly in 
$H^2 ((0, 2\pi)\times (0,3))$ and strongly in $W^{1,p}((0, 2\pi)\times (0,3))$
for all $p\geq 1$;
\item[(b)] $\theta_j$ converge to some $\theta$;
weakly in $H^2 ((0, 2\pi))$ and in $C^{1,\frac12-\varepsilon}([0,3])$ 
for all $\varepsilon \in (0,1)$.
\end{itemize} 
More precisely, the convergences are  
\begin{itemize}
\item[(c)] either if $\lambda=0$: in $C^{2,\alpha} ([0,2\pi]\times [\sigma, 3-\sigma])$ 
for $v_j$, and in $C^{2,\alpha}([\sigma,3-\sigma])$ for $\theta_j$, for all $\sigma\in (0, \frac32)$, $\alpha \in (0,1)$, respectively;

\item[(d)] or if $\lambda>0$: in $C^{1,1-\varepsilon} ([0,2\pi]\times [\sigma, 3-\sigma])$ and strongly in $H^2((0,2\pi)\times (\sigma, 3-\sigma))$ for $v_j$, and strongly in $H^2((\sigma,3-\sigma))$ for $\theta_j$, for all $\sigma\in (0, \frac32)$, $\varepsilon \in (0,1)$, respectively.
\end{itemize}
Moreover, the pair $(v, \theta)$ solves the following linear system of PDEs in 
$(0,2\pi)\times (0,3)$
\begin{equation}\label{e:lineare}
\left\{
\begin{array}{l}
v_t - v_{tt} = \frac{v}{4} + v_{\phi\phi} + (\dot\theta - \ddot\theta) \isq_\phi\\ \\
v (0, t) = v(2\pi, t) = 0\\ \\
\dot\theta (t) - \ddot\theta (t) =  \sqrt{{\textstyle{\frac{2}{\pi}}}} ( v_\phi(2\pi,t)+ v_\phi(0,t))\\ \\
\theta (0) = 0\,,
\end{array}\right.
\end{equation}
and satisfies the following integral condition for every $t\in (0, 3)$:
\begin{equation}\label{e:condizione_extra}
\int_0^{2\pi} v_\phi(\phi,t) \sin\frac{\phi}{2}\, d\phi=0\,.
\end{equation}
\end{proposition}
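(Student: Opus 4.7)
\medskip

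\textbf{Proof plan.} By the very definition of $\delta_j$ in \eqref{e:delta}, the sequences $\{v_j\}$ and $\{\dot\theta_j\}$ are bounded in $H^2((0,2\pi)\times(0,3))$ and $H^1((0,3))$ respectively; since $\vartheta_j(0) = \alpha_j(1) = 0$, only an additive constant is missing to control $\theta_j$ itself in $H^2$, and this constant can be fixed by replacing $\theta_j$ with $\theta_j - \theta_j(0)$ (which is legitimate because the PDE system \eqref{e:lineare} only involves $\dot\theta$ and $\ddot\theta$, and delivers the normalization $\theta(0)=0$). Banach--Alaoglu then gives weak $H^2$ limits $v$ and $\theta$ along a subsequence, while Rellich--Kondrachov upgrades the convergence to strong in $W^{1,p}$ and $C^{1,\sfrac12 -\varepsilon}$ respectively. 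The improved convergence in (c)--(d) on compact subdomains of $(0,2\pi)\times(0,3)$ is a consequence of the bounds \eqref{e:decay_g lambda>0}--\eqref{e:decay_f} of Lemma~\ref{l:nonlinear}: for $\lambda=0$ the Schauder bootstrap applied to the PDE \eqref{e:SIS} promotes the convergence to $C^{k,\alpha}$ for every $k$, while for $\lambda>0$ the uniform $C^{1,1-\varepsilon}$ bounds combined with the linear elliptic estimates for \eqref{e:SIS} yield strong $H^2$ and $C^{1,1-\varepsilon}$ convergence on compact subdomains.

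To pass to the limit in \eqref{e:SIS} we substitute $f_j(\phi,t+j) = \isq(\phi) + \delta_j v_j(\phi,t)$ and $\vartheta_j(t+j) = \delta_j\theta_j(t)$ and exploit the fundamental identity $\isq/4 + \isq_{\phi\phi} = 0$ together with $\isq_t\equiv 0$: the base profile $(\isq,0)$ is thus an exact solution of the homogeneous part of the first equation, and after dividing by $\delta_j$ the nonlinear terms $\dot\vartheta_j^2 f_{j,\phi\phi}/\delta_j = \delta_j\dot\theta_j^2 f_{j,\phi\phi}$ and $\dot\vartheta_j f_{j,t\phi}/\delta_j = \dot\theta_j f_{j,t\phi}$ vanish in the limit, while the remaining linear terms converge, via the strong $W^{1,p}$ convergence of $v_j$ and the $C^{1,\sfrac12-\varepsilon}$ convergence of $\theta_j$, to $(\dot\theta-\ddot\theta)\isq_\phi$. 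For the third equation of \eqref{e:SIS}, one uses the expansions
\[
\frac{\ddot\vartheta_j - \dot\vartheta_j - \dot\vartheta_j^3}{(1+\dot\vartheta_j^2)^{\sfrac52}} = \delta_j(\ddot\theta_j - \dot\theta_j) + O(\delta_j^3),
\]
and $f_{j,\phi}^2(2\pi,t+j)-f_{j,\phi}^2(0,t+j) = 2\delta_j\isq_\phi\,v_{j,\phi}\big|_0^{2\pi} + O(\delta_j^2)$, together with $\isq_\phi(0)=-\isq_\phi(2\pi)=1/\sqrt{2\pi}$, to recover the third equation of \eqref{e:lineare}. The Dirichlet boundary condition $v(0,t)=v(2\pi,t)=0$ and the vanishing of the contributions coming from $H_{1,j}$ and $H_{2,j}$ are consequences of \eqref{e:bounds Hi}, which, together with the explicit dependence of $h_1, h_2$ on $u_a$, imply $|H_{i,j}(t)| \lesssim \lambda_j e^{-(\sfrac32 -\varepsilon)t}$: choosing $\varepsilon>\hat\varepsilon$ and recalling $\delta_j \geq \lambda_j e^{-(\sfrac32 - \hat\varepsilon)j}$, we get
\[
\delta_j^{-1}|H_{i,j}(t+j)| \leq C e^{-(\sfrac32-\varepsilon)t}\,e^{-(\varepsilon-\hat\varepsilon)j} \to 0
\]
uniformly on $(0,3)$.

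The integral identity \eqref{e:condizione_extra} is obtained by linearizing the variational relation \eqref{e:AM-identity} of Corollary~\ref{c:AM-variations} (together with \eqref{e:AM-identity ua}) at radius $r = e^{-(t+j)}$. Writing the boundary integrals in the shifted polar variables in which $K\cap\partial B_r$ corresponds to $\phi=0=2\pi$, one checks first that at the exact cracktip state $f\equiv\isq$, $\vartheta\equiv 0$ the identity is satisfied to zero-th order (the only surviving terms are the explicit $\isq$-integrals, which vanish by the oddness under $\phi\mapsto 2\pi-\phi$). Consequently the first nontrivial contribution is linear in $\delta_j$, the $\lambda$-dependent terms being absorbed in $O(\delta_j^{-1}\lambda_j e^{-(\sfrac32-\hat\varepsilon)j})$ by the argument above; dividing by $\delta_j$ and sending $j\to\infty$, the linearized identity reduces, after the trigonometric simplification $\isq_\phi = \frac{1}{\sqrt{2\pi}}\cos\frac{\phi}{2}$, precisely to \eqref{e:condizione_extra}. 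The main obstacle in this plan is the algebraic bookkeeping required in this last step: one must carefully expand \eqref{e:AM-identity} in powers of $\delta_j$ and the angular shift $\vartheta_j$, keeping track of the Jacobian factors introduced by the change of coordinates $(r,\psi)\mapsto(\phi,t)$ and the cross contributions between $u$ and $u_a$ in $|\nabla u|^2$ via the conjugacy relation $\nabla w = \nabla(u+u_a)^\perp$, in order to identify the linear term as the integral of $v_\phi\sin(\phi/2)$.
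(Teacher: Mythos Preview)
Your plan is essentially the paper's own proof: compactness for (a)--(b), substituting $f_j=\isq+\delta_j v_j$, $\vartheta_j=\delta_j\theta_j$ into \eqref{e:SIS} and dividing by $\delta_j$ to obtain \eqref{e:lineare}, elliptic bootstrap on the $v_j$-equation for (c)--(d), and linearizing \eqref{e:AM-identity} at radius $e^{-(t+j)}$ for \eqref{e:condizione_extra}. Two remarks are in order.

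First, a sign slip in your treatment of $\delta_j^{-1}H_i(t+j)$: with $|H_i(s)|\lesssim e^{-(\sfrac32-\varepsilon)s}$ and $\delta_j\ge \lambda e^{-(\sfrac32-\hat\varepsilon)j}$ one gets
\[
\delta_j^{-1}|H_i(t+j)|\lesssim e^{-(\sfrac32-\varepsilon)t}\,e^{(\hat\varepsilon-\varepsilon)j},
\]
so the correct choice is $\varepsilon<\hat\varepsilon$ (not $\varepsilon>\hat\varepsilon$), which is available since the decay exponent can be taken arbitrarily close to $\sfrac32$. For (c)--(d) the bounds \eqref{e:decay_g lambda>0}--\eqref{e:decay_f} concern $f_j-\isq$, not $v_j=\delta_j^{-1}(f_j-\isq)$; the interior $W^{2,q}$/$C^{2,\alpha}$ estimates on $v_j$ come instead from writing the elliptic equation satisfied by $v_j$ itself (with coefficients depending on $\delta_j\dot\theta_j$) and bootstrapping from the uniform $H^2$ bound, using in addition for $\lambda>0$ a splitting of $v_j$ into a harmonic lift of the boundary datum $\delta_j^{-1}H_1(\cdot+j)$ plus a zero-boundary piece.

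Second, the computation you flag as the ``main obstacle'' is indeed the heart of the argument, and it does not reduce to a one-line parity observation. The paper passes to the conjugate $w$, writes the identity in polar form as $A_{1,j}-A_{2,j}=0$ with
\[
A_{1,j}=t_j\!\int_0^{2\pi}\!\Big(t_jw_{j,r}^2-\tfrac{1}{t_j}w_{j,\phi}^2\Big)\sin\phi\,d\phi,\qquad
A_{2,j}=2\!\int_0^{2\pi}\!w_{j,r}w_{j,\phi}(1-\cos\phi)\,d\phi,
\]
expands $w_{j,r}$ and $w_{j,\phi}$ to first order in $\delta_j$ (the terms $a_j,b_j,c_j,d_j$ in \eqref{e:dr10}--\eqref{e:dphi10}), and computes $\lim_j\delta_j^{-1}(A_{1,j}-A_{2,j})$. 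The $\dot\theta$ and $(v/2-v_t)$ contributions to the limit cancel exactly thanks to the identities $\sin^2\phi=(1+\cos\phi)(1-\cos\phi)$ and $\sin\tfrac{\phi}{2}\sin\phi-\cos\tfrac{\phi}{2}(1-\cos\phi)=0$, and only the $v_\phi$ contribution survives via $\cos\tfrac{\phi}{2}\sin\phi+\sin\tfrac{\phi}{2}(1-\cos\phi)=2\sin\tfrac{\phi}{2}$, yielding precisely \eqref{e:condizione_extra}. For $\lambda>0$ the extra terms $C_{1,j},C_{2,j},C_{3,j}$ coming from $u_a$ and the fidelity contributions are estimated directly by $|\nabla u_a(x)|\lesssim|x|^{1-\hat\varepsilon/2}$, $|\nabla w(x)|\lesssim|x|^{-1/2}$ and $|u^\pm(x)|\lesssim|x|^{1/2}$, which gives $|C_{i,j}|\lesssim r_j^{3/2-\hat\varepsilon/2}=o(\delta_j)$.
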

\begin{proof} The statements in (a) and (b) are obvious consequences of the bounds on $(v_j,\theta_j)$ (and of the fact that 
$H^2 ((0,2\pi)\times(0,3))$, resp. $H^2 ((0,3))$, embeds compactly in $W^{1,p} ((0,2\pi)\times(0,3))$ for all $p\geq 1$, resp. $C^{1,\frac12-\e}([0,3])$ for all $\e\in(0,\frac12)$). 
Observe that, by assumption, $\theta_j (0) =0$ and thus $\theta (0) =0$ is a consequence of the uniform convergence. 

We next observe that the PDE in \eqref{e:SIS} is linear in the unknown $f$. Hence, recalling that $(f_j(\phi,t+j),\vartheta_j(t+j)) = (\isq(\phi)+\delta_j v_j(\phi,t),\delta_j\theta_j(t))$, we infer
\begin{equation}\label{e:odd-PDE}
v_{j,tt} + v_{j,\phi\phi} = -\frac{v_j}{4} + v_{j,t} + (\ddot\theta_j - \dot\theta_j) (\isq_\phi+\delta_j v_{j,\phi}) + 2 \delta_j\dot\theta_j v_{j, t\phi}  - \delta_j \dot\theta^2_j (\isq_{\phi\phi}+\delta_j v_{j,\phi\phi})
\end{equation}
and 
\begin{equation}\label{e:bdry cndntn vj}
v_j(0,t)=v_j(2\pi,t)=\delta_j^{-1}H_1(t+j)\,.
\end{equation}
Passing into the limit we therefore conclude easily that $v$ solves the PDE in the first 
line of \eqref{e:lineare}.
Likewise the boundary condition $v (0, \cdot) = v (2\pi, 0) =0$ is also a consequence of uniform convergence, of the bounds \eqref{e:bounds Hi} and \eqref{e:C11- dotH1}, and of the very definition of $\delta_j$ in
\eqref{e:delta}, in turn implying
for all $\e\in(0,\hat\e)$
\begin{equation}\label{e:Hi scaled}
\|\delta_j^{-1}H_1(\cdot+j)\|_{C^{1,1-\e}([0,3])}+
\|\delta_j^{-1}H_2(\cdot+j)\|_{L^{\infty}((0,3))}\to 0\qquad j\to+\infty\,.
\end{equation}
Moreover, note that again thanks to the bounds in \eqref{e:bounds Hi} and \eqref{e:C11- dotH1} we can also deduce that for all $j\geq 1$
\begin{align*}
[\dot H_1(\cdot+j)]_{H^{\sfrac12}((0,\infty))}^2
=&2\int_{j}^{\infty}dt\int_{t}^{\infty}\frac{|\dot H_1(s)-\dot H_1(t)|^2}{|s-t|^2}ds\\
=&2\int_{j}^{\infty}dt\sum_{k=0}^\infty
\int_{t+k}^{t+k+1}\frac{|\dot H_1(s)-\dot H_1(t)|^2}{|s-t|^2}ds\\
\leq& 2C_\varepsilon^2\int_{j}^{\infty}dt
\sum_{k=0}^\infty\int_k^{k+1}\frac{e^{-(3-2\varepsilon)(t+k)}}{\tau^{2\varepsilon}}d\tau
\leq\tilde C_\varepsilon e^{-(3-2\varepsilon)j}\,.
\end{align*}
Therefore, 
\begin{equation}\label{e:Hi scaled bis}
\|\delta_j^{-1}H_1(\cdot+j)\|_{H^{\sfrac32}((0,\infty))}\to 0\,.
\end{equation}

We next write the third equation in \eqref{e:SIS} in terms of $\theta_j$ and $v_j$:
\begin{align}\label{e:transmission}
\ddot\theta_j -\dot\theta_j = &\delta_j^2 \dot\theta_j^3 + 4 (1+\delta_j^2 \dot\theta_j^2)^{\sfrac{5}{2}}\big(\delta_j( v_{j,\phi}^2 (2\pi,t)- v_{j,\phi}^2 (0,t))
\notag\\ & 
- \sqrt{{\textstyle{\frac{2}{\pi}}}}( v_{j,\phi}(2\pi,t)+ v_{j,\phi}(0,t)) +\delta_j^{-1}H_2(t+j)\big)\, . 
\end{align}
Observe that, by the trace theorems, $v_{j,\phi} (2\pi, \cdot)$ and $v_{j,\phi} (0, \cdot)$ enjoy uniform bounds in $H^{\frac12}$. Clearly, by \eqref{e:Hi scaled} we get that the third equation in \eqref{e:lineare} holds. 

Moreover, by \eqref{e:Hi scaled} and by the Sobolev embedding, we conclude that the right hand side of \eqref{e:transmission} has a uniform control in $L^q$ for every $q<\infty$, in particular the same bound is enjoyed by $\ddot\theta_j - \dot\theta_j$ and, using that $\|\dot \theta_j\|_{C^0}$ is bounded, we conclude that $\dot\theta_j$ has a uniform $W^{1,q}$ bound for every $q<\infty$. 

Next, we distinguish the two cases $\lambda=0$ and $\lambda>0$, the former being considerably simpler 
at least for what computations are concerned. 

\noindent {\bf Case $\lambda=0$.} We start off rewriting the equation in \eqref{e:odd-PDE} above in the following way:
\begin{align}
(1+ \delta_j^2 \dot \theta_j^2) v_{j,\phi\phi} + v_{j,tt} 
-2\delta_j \dot\theta_j v_{j,t\phi}
= \underbrace{-\frac{v_j}{4} + v_{j,t} + (\ddot\theta_j - \dot\theta_j) (\isq_\phi+\delta_j v_{j,\phi})
- \delta_j \dot\theta^2_j \isq_{\phi\phi}}_{=:F_j}
\label{e:bootstrap}
\end{align}
Observe that the left hand side is an elliptic operator with a uniform bound on the ellipticity constants and a uniform bound on the $C^{\frac12-\varepsilon}$ norm of the coefficients, for all $\varepsilon\in(0,\hat\varepsilon)$. 
Thanks to the uniform $W^{1,q}$ bound on $\theta_j$ and $v_j$ for every $q<\infty$, we infer a uniform bound on $\|F_j\|_{L^q ((0,2\pi)\times (0,3))}
$ for every $q<\infty$.

In addition, as $v_j=0$ on $\{0,2\pi\}\times(0,3)$, using elliptic regularity we conclude a uniform bound for $\|v_j\|_{W^{2,q} ((0,2\pi)\times (\sigma, 3-\sigma))}$. We now can use Morrey's embedding to get a uniform estimate on $\|v_j\|_{C^{1,\alpha} ([0,2\pi]\times [2\sigma, 3-2\sigma])}$ for every $\alpha <1$. We now turn again to \eqref{e:transmission}, to conclude that the right hand side has a uniform $C^\alpha$ bound in $[2\sigma, 3-2\sigma]$ for every $\alpha >0$. This gives uniform $C^{1,\alpha}$ bounds on the coefficient of the elliptic operator in the left hand side of \eqref{e:bootstrap} and uniform $C^\alpha$ bounds on the right hand side of  \eqref{e:bootstrap}. We can thus infer a uniform $C^{2,\alpha}$ bound in $[0,2\pi]\times [3\sigma, 3-3\sigma]$ on $v_j$ from elliptic regularity.

It thus remains to prove \eqref{e:condizione_extra}. The latter will come from \eqref{e:AM-identity}. First of all, we fix $t\in(0,3)$, set $t_j := e^{-t-j}$ and observe that $\partial B_{t_j} \cap K_j$ consists of a single point $p_j$. We can thus apply Corollary~\ref{c:AM-variations}. Hence using the relation between harmonic conjugates, we rewrite \eqref{e:AM-identity} as 
\begin{align*}
0=\int_{\partial B_{t_j}\setminus \{p_j\}} \left(|\nabla w_j|^2 n  \cdot \tau (p_j) - 2 \frac{\partial w_j}{\partial \tau} \nabla w_j  \cdot (n - n (p_j))\right)\, d\mathcal{H}^1\, . 
\end{align*}
Next, we assume without loss of generality that $p_j =(t_j,0)$ and rewrite the latter equality using polar coordinates:
\begin{equation}\label{e:A+B=0}
\underbrace{t_j\, \int_0^{2\pi} \Big(t_j w_{j,r}^2 -  \frac{1}{t_j} w_{j,\phi}^2\Big) (\phi, t_j) \sin \phi\, d\phi}_{=:A_{1,j}}
- \underbrace{2 \int_0^{2\pi} \left(w_{j,r} w_{j,\phi}\right) (\phi, t_j) (1 - \cos \phi)\, d\phi}_{=:A_{2,j}} = 0
\end{equation}
We next write $w_j$ in terms of $v_j$, $\gamma_j$ and $\theta_j$  as
\[
w_j (\phi,r) = r^{\sfrac{1}{2}} 
\isq(\phi - \delta_j \theta_j (-\ln r-j))
+ \delta_j r^{\sfrac{1}{2}} v_j (\phi - \delta_j \theta_j (-\ln r-j), -\ln r-j)\, \, ,
\]
Note that, having normalized so that $p_j =(t_j,0)$, we conclude that $\theta_j (-\ln t_j) = \theta_j (t+j) = 0$. Using the latter we compute:
\begin{align}
\frac{\partial w_j}{\partial r} (\phi, t_j) = & \underbrace{t_j^{-\sfrac{1}{2}} \frac{\isq(\phi)}{2}}_{=:a_j (\phi)}
+ \delta_j \underbrace{t_j^{-\sfrac{1}{2}} \left(\frac{\dot \theta_j (t)}{\sqrt{2\pi}} 
\cos \frac{\phi}{2}
+ \frac{v_j (\phi, t)}{2} - v_{j,t} (\phi, t)\right)}_{=: b_j (\phi)} + o (\delta_j)\label{e:dr10}\\
\frac{\partial w_j}{\partial \phi} (\phi, t_j) = & \underbrace{t_j^{\sfrac{1}{2}}\frac{\rad(\phi)}{2}}_{=: c_j (\phi)} + \delta_j\underbrace{t_j^{\sfrac{1}{2}} 
v_{j,\phi} (\phi, t)}_{=:d_j (\phi)}\label{e:dphi10}\, .
\end{align}
Note now that the function $a_j^2 - \frac{1}{t_j} c_j^2$ is even. Since $\sin \phi$ is odd, we thus conclude
\begin{align*}
A_{1,j} &= 2 \delta_j \int_0^{2\pi} \left(t_j a_j (\phi) b_j (\phi) - t_j^{-1} c_j (\phi) d_j (\phi)\right)\, \sin \phi\, d\phi\,  + o (\delta_j)\, . 
\end{align*}
Letting $j\to\infty$ we obtain
\begin{align}
\lim_{j\to\infty} \delta_j^{-1} A_{1,j} = &\sqrt{\frac{2}{\pi}} \int_0^{2\pi} 
 \left(\frac{\dot \theta (t)}{\sqrt{2\pi}} 
 \cos \frac{\phi}{2} +\frac{v (\phi, t)}{2} - v_t (\phi, t)\right)
\sin \frac{\phi}{2}\sin \phi\, d\phi\notag\\
 &-\sqrt{\frac{2}{\pi}} \int_0^{2\pi}   
 v_\phi (\phi, t)
 \cos \frac{\phi}{2}\sin \phi\, d\phi\, .\label{e:limite-A}
\end{align}
By direct computation it follows that 
\[
\int_0^{2\pi} a_j(\phi)  c_j(\phi)  (1 - \cos \phi)\, d\phi 
=\frac18 \int_0^{2\pi} \sin \phi 
  (1 - \cos \phi)\, d\phi = 0\, ,
\]
from which we conclude
\begin{align*}
A_{2,j} & = 2 \delta_j \int_0^{2\pi} (a_j (\phi) d_j (\phi) + b_j (\phi) c_j (\phi)) (1 - \cos \phi)\, d\phi + o (\delta_j)\, .  
\end{align*}
Hence 
\begin{align}
 \lim_{j\to \infty} \delta_j^{-1} A_{2,j} 
= & \sqrt{\frac{2}{\pi}} \int_0^{2\pi} 
v_\phi (\phi, t)
\sin \frac{\phi}{2}(1-\cos \phi)\, d\phi\, \nonumber\\
&+ \sqrt{\frac{2}{\pi}} \int_0^{2\pi} \left(\frac{\dot \theta (t)}{\sqrt{2\pi}} \cos \frac{\phi}{2} +\frac{v (\phi, t)}{2} - v_t (\phi, t)\right)
\cos \frac{\phi}{2} (1-\cos \phi)\, d\phi\, .\label{e:limite-B}
\end{align}
Combining \eqref{e:limite-A} and \eqref{e:limite-B} with \eqref{e:A+B=0} we conclude
\begin{align*}
0 = & \frac{\dot \theta (t)}{2\pi} 
\int_0^{2\pi} \left(\sin^2 \phi - 2\cos^2\frac{\phi}{2} (1-\cos \phi)\right)\, d\phi\\
 & +\sqrt{\frac{2}{\pi}} \int_0^{2\pi} \left( \frac{v(\phi, t)}{2}  - v_t (\phi, t)\right) \left(\sin \frac{\phi}{2}\sin \phi - \cos\frac{\phi}{2} (1-\cos \phi)\right)\, d\phi\\ & - \sqrt{\frac{2}{\pi}} \int_0^{2\pi} 
v_\phi (\phi, t)
\left( \cos\frac{\phi}{2} \sin \phi+ \sin \frac{\phi}{2}(1-\cos \phi) \right)\, d\phi\, .
\end{align*}
Using the identities
\begin{align*}
&\sin^2 \phi - 2\cos^2\frac{\phi}{2} (1-\cos \phi)=
\sin^2 \phi - (1 + \cos\phi)(1-\cos \phi)=0\, , \\
&\sin \frac{\phi}{2}\sin \phi - \cos\frac{\phi}{2} (1-\cos \phi)=
\sin \frac{\phi}{2}\sin \phi +\cos\frac{\phi}{2}\cos \phi - \cos\frac{\phi}{2} 
=0\, , \\
&\cos\frac{\phi}{2} \sin \phi+ \sin \frac{\phi}{2}(1-\cos \phi)=
\cos\frac{\phi}{2} \sin \phi - \sin \frac{\phi}{2} \cos \phi+\sin \frac{\phi}{2}
=2\sin \frac{\phi}{2}\, ,
\end{align*}
we conclude \eqref{e:condizione_extra}.

\noindent{\bf Case $\lambda>0$.} We rewrite the equation in \eqref{e:odd-PDE} above in the following way:
\begin{align}
v_{j,\phi\phi} + v_{j,tt} 
=& \underbrace{-\frac{v_j}{4} + v_{j,t} 
+ (\ddot\theta_j - \dot\theta_j) (\isq_\phi+\delta_j v_{j,\phi})
 - \delta_j \dot\theta^2_j \isq_{\phi\phi}
 - \delta_j^2 \dot \theta_j^2 v_{j,\phi\phi}
 +2\delta_j \dot\theta_j v_{j,t\phi}}_{=:F_j}\,, 
\label{e:bootstrap lambda>0}
\end{align}
and observe that, as before in case $\lambda=0$, we can now get a 
uniform bound on the norms $\|F_j\|_{L^q ((0,2\pi)\times (0, 3))}$ for every $q<\infty$. 

With fixed $\sigma\in(0,\frac32)$ consider a domain $\Lambda$ 
diffeomorphic to the circle such that $(0,2\pi)\times(0,3)\subset\Lambda
\subset(0,2\pi)\times(-\sigma,3+\sigma)$, and extend it to $\Lambda$ with $C^{1,1-\varepsilon}(\partial\Lambda)$ and 
$H^{\sfrac32}(\partial\Lambda)$ norms bounded by a multiple of the corresponding one for $\delta_j^{-1}H_1(\cdot+j)$. Denote by $\widetilde{H}_j$ the extended function, and define the auxiliary functions $v_j^{(1)}$ and $v_j^{(2)}$ to be the solutions respectively of
\begin{equation*}
    \begin{cases}
v_{j,\phi\phi}^{(1)} + v_{j,tt}^{(1)}=0 & \hbox{on $\Lambda$}\cr\cr
v_j^{(1)}=\widetilde{H}_j & \hbox{on $\partial\Lambda$}\, ,
    \end{cases}
\end{equation*}
and of
\begin{equation*}
    \begin{cases}
v_{j,\phi\phi}^{(2)} + v_{j,tt}^{(2)} = F_j\chi_{(0,2\pi)\times(0,3)} & \hbox{on $\Lambda$}\cr\cr
v_j^{(2)}=0 & \hbox{on $\partial\Lambda$}\, .
    \end{cases}
\end{equation*}
\cite[Corollary 8.36, Theorems 8.29 and 8.16]{GT} imply that $v_j^{(1)}\in C^{1,1-\varepsilon}(\overline{\Lambda})$ with  
\[
\|v_j^{(1)}\|_{C^{1,1-\varepsilon}(\overline{\Lambda})}\leq c\|\delta_j^{-1}H_1(\cdot+j)\|_{C^{1,1-\varepsilon}([0,3])}\, ,
\]
where the constant $c$ depends on $\Lambda$. 
In particular, from \eqref{e:Hi scaled} we infer that $v_j^{(1)}$ 
converges strongly to zero in $C^{1,1-\varepsilon}(\overline{\Lambda})$.
Moreover, \cite[Theorem~5.1]{LM68-I} imply that $v_j^{(1)}\in H^2(\Lambda)$ with  
\[
\|v_j^{(1)}\|_{H^2(\Lambda)}\leq c\|\delta_j^{-1}H_1(\cdot+j)\|_{H^{\sfrac32}([0,3])}\, ,
\]
where the constant $c$ depends on $\Lambda$. Thus, from \eqref{e:Hi scaled bis} we infer that $v_j^{(1)}$ converges to zero strongly in $H^2((0,2\pi)\times (0,3))$, as well. 
In addition, \cite[Theorems 9.13 and 9.15]{GT} implies that 
$v_j^{(2)}\in W^{2,p}((0,2\pi)\times (\sigma,3-\sigma))$ for all $p>2$.

The function $v_j-v_j^{(1)}-v_j^{(2)}$ is then harmonic on 
$(0,2\pi)\times(0,3)$ with null Dirichlet boundary conditions on
$\{0,2\pi\}\times(0,3)$. By odd reflection with respect to those segments we find an harmonic function on $(-2\pi,4\pi)\times(0,3)$. By interior estimates for harmonic functions (see for instance \cite[Theorem 2.10]{GT}) we conclude that $v_j-v_j^{(1)}-v_j^{(2)}\in C^{\infty}([0,2\pi]\times[\sigma,3-\sigma])$ with
\[
\|D^\beta(v_j-v_j^{(1)}-v_j^{(2)})\|_{C^0([0,2\pi]\times[\sigma,3-\sigma])}\leq
({n|\beta|}{\sigma^{-1}})^{|\beta|}
\|v_j-v_j^{(1)}-v_j^{(2)}\|_{C^0([0,2\pi]\times[\sigma,3-\sigma])}\, ,
\]
for any multi-index $\beta$, where $|\beta|$ denotes the length of 
$\beta$. 
From this we conclude item (d). 

Hence, $v_j$ turns out to converge in 
$C^{1,1-\varepsilon}([0,2\pi]\times[\sigma,3-\sigma])$ and strongly in 
$H^2((0,2\pi)\times(\sigma,3-\sigma))$.
Finally, from \eqref{e:transmission} 
we then conclude that $\theta_j$ converge strongly in $H^2((\sigma,3-\sigma))$.

Let us now turn to the proof of \eqref{e:condizione_extra}.
Assume $p_j=(r_j,0))$ where $r_j:=e^{-t-j}$
for some $t\in(0,3)$. Then we rewrite  \eqref{e:AM-identity} by taking advantage of \eqref{e:AM-identity ua}, recalling that $\nabla^\perp (u_j+u_{j,a})=\nabla w_j$ we conclude
\begin{align*}
\underbrace{\int_{\partial B_{r_j}\setminus \{p_j\}} \left(|\nabla w_j|^2 n \cdot \tau (p_j) - 2 \frac{\partial w_j}{\partial \tau} \nabla w_j \cdot (n - n (p_j))\right)\, d\mathcal{H}^1}_{=:B_j}
= C_{1,j}+ C_{2,j} + C_{3, j}
\end{align*}
where
\begin{align*}
C_{1,j} &= 2\int_{\partial B_{r_j}\setminus \{p_j\}}\Big(
(\nabla w_j\cdot\nabla^\perp u_{j,a} ) n \cdot \tau (p_j)+  \frac{\partial w_j}{\partial \tau}
\nabla u_{j,a} \cdot(\tau-\tau(p_j))\\
&\qquad\qquad
-\frac{\partial u_{j,a} }{\partial n}\nabla w_j\cdot(n - n (p_j))\Big)d\mathcal{H}^1\\
C_{2,j}&= 2\param \int_{B_{r_j}\setminus K_j}(u_j-g_j)
\nabla w_j\cdot(n-n(p_j))
-2\int_{B_{r_j}}\nabla^T u_{j,a}\cdot D\tau\nabla u_{j,a}\\
C_{3,j} &= -2\param \int_{B_{r_j}\cap K_j}
\big(|u^+_j-g_{j,K_j}|^2-|u^-_j-g_{j,K_j}|^2\big)\tau(p_j)\cdot \nu\, d\mathcal{H}^1\, .
\end{align*}
Since the term $B_j$ equals the sum of the terms $A_{1,j}$ and $A_{2,j}$ already discussed in the proof of the analogous identity when $\lambda=0$, we immediately conclude that 
\begin{equation}\label{e:Aj}
\lim_{j\to+\infty}\delta_j^{-1}B_j =
-2\sqrt{\frac2\pi}\int_0^{2\pi}v_\phi (\phi, t ) \sin\frac \phi 2\, d\phi\,.
\end{equation}
Next, recall that $\nabla u_{j,a} (0) = 0$ while $u_{j,a}\in C^{1,1^-}$, so that in particular we can achieve $\|\nabla u_{j,a}\|_{L^\infty (B_r)} \leq C r^{1-\sfrac{\hat\varepsilon}{2}}$, where $\hat\varepsilon>0$ has been chosen in the definition of $\delta_j$
(cf. \eqref{e:delta}).
Since  $|\nabla w_j (x)|\leq C |x|^{-\sfrac12}$, we thus can estimate 
\[
| C_{1,j}|\leq C\|\nabla w_j\|_{L^\infty(B_{r_j})}
\|\nabla u_{j,a}\|_{L^\infty(B_{r_j})}r_j
\leq Cr_j^{\sfrac32-\sfrac{\hat\varepsilon}{2}}
\]
where $C$ is a universal constant. On the other hand, 
$\delta_j \geq C r_j^{\sfrac32-\hat\varepsilon}$ for some positive constant $C$ and so
\begin{equation}\label{e:Bj}
\lim_{j\to+\infty}\delta_j^{-1} C_{1,j}=0\,.
\end{equation}
As for $C_{2,j}$ we write 
\begin{align*}
|C_{2,j}| &\leq \left|\int_{B_{r_j}\setminus K}(u_j-g_j)
\nabla w_j\cdot(n-n(p_j))\right|
+\left|2\int_{B_{r_j}}\nabla^T u_{j,a}\cdot D\tau\nabla u_{j,a}\right|\\
&\leq C \int_{B_{r_j}} |x|^{-\sfrac12}
\leq C r_j^{\sfrac32}\, . 
\end{align*}
Likewise, 
\begin{align*}
&\left|\int_{B_{r_j}\cap K}\big(|u^+_j-g_{j,K_j}|^2-|u^-_j-g_{j,K_j}|^2\big)\tau(p_j)\cdot \nu\, d\mathcal{H}^1\right|\\
&=\left|\int_{B_{r_j}\cap K}\big(|u^+_j|^2-|u^-_j|^2
-2(u^+_j-u^-_j)g_{j,K_j}\big)\tau(p_j)\cdot \nu\, d\mathcal{H}^1\right|\leq C \|g_j\|_{\infty}r_j^{\sfrac32}
\leq C r_j^{\sfrac32}\, ,
\end{align*}
where we have used that $|u_j^\pm (x)|\leq C |x|^{\sfrac12}$ (cf. \eqref{e:stima tip 1}), 
the estimate $\|g_{j,K_j}\|_{L^\infty(\Omega.\mathcal H^1\restrsmall K_j)}\leq\|g_j\|_{\infty}$, and the density upper bound in \eqref{e:upper bound}).
Therefore, thanks to the very definition of $\delta_j$ in \eqref{e:delta} we conclude that
\begin{equation}\label{e:Cj}
\lim_{j\to+\infty}\delta_j^{-1}(C_{2,j}+C_{3,j})=0\,.
\end{equation}
Collecting \eqref{e:Aj}-\eqref{e:Cj} we deduce again \eqref{e:condizione_extra}.
\end{proof}

\section{Spectral analysis}\label{s:spectral analysis}

In this section we will find a suitable representation of solutions to \eqref{e:lineare} on $[0,2\pi]\times (0, 3)$, based on the spectral analysis of a closely related linear PDE. 

To this aim we introduce the following terminology: a function $h$ on $(0,2\pi)\times (0,3)$ will be called even if $h (\phi, t) = h (2\pi-\phi, t)$ and odd if $h (\phi, t) = - h (2\pi - \phi, t)$. Moreover, a general $h$ can be split into the sum of its odd part $h_j^o (\phi, t) := \frac{h (\phi, t) - h (2\pi-\phi, t)}{2}$ and its even part $h_j^e (\phi, t) = \frac{h (\phi, t) + h (2\pi-\phi, t)}{2}$. Note finally that, if $h$ is even (resp. odd), then $\partial_\phi^j \partial_t^k h$ is odd (resp. even) for $j$ odd, and even (resp. odd) 
for $j$ even for every $k$. \index{odd part@odd part}\index{even part@even part}
\index{part, odd@part, odd}\index{part, even@part, even}
\index[simb]{aalh^o@$h^o$}\index[simb]{aalh^e@$h^e$}\index[simb]{aalv^o@$v^o$}\index[simb]{aalv^e@$v^e$}

In what follows it will be convenient to consider the change of variables
\begin{equation}\label{e:zeta}
\zeta (\phi, t) := v^o (\phi, t) - \theta (t) \isq_\phi (t) = 
v^o (\phi, t) - \frac{\theta (t)}{\sqrt{2\pi}} \cos \frac{\phi}{2}\, .
\end{equation}

\begin{lemma}\label{l:Ventsell}
The pair $(v, \theta)\in H^2((0,2\pi)\times(0,3))\times H^3((0,2\pi))$ solves \eqref{e:lineare} if and only if $v^e,\,\zeta\in H^2((0,2\pi)\times(0,3))$ are such that $v^e(\cdot,t)$ is even for every $t\in(0,3)$ and it solves the partial differential equation with homogeneous boundary conditions 
\begin{equation}\label{e:ve}
\left\{
\begin{array}{l}
v^e_{tt} + v^e_{\phi\phi} + \frac{v^e}{4} - v^e_t = 0\\ \\
v^e (0, t) = v^e (2\pi, t) = 0\,,
\end{array}
\right.
\end{equation} 
$\zeta(\cdot,t)$ is odd for every for $t\in(0,3)$, $\zeta (0,0) = 0$,
$\zeta(0,t)=-\frac{\theta(t)}{\sqrt{2\pi}}$, and $\zeta$ solves
the partial differential equation with Ventsel boundary conditions\index{Ventsel boundary conditions@Ventsel boundary conditions}
\begin{equation}\label{e:Ventsell}
\left\{
\begin{array}{l}
\zeta_{tt} + \zeta_{\phi\phi} + \frac{\zeta}{4} - \zeta_t = 0\\ \\
\zeta_\phi (0, t) + \frac{\pi}{2} \left(\frac{\zeta}{4} (0,t) + \zeta_{\phi\phi} (0,t)\right) = 0\, ,
\end{array}
\right.
\end{equation} 
and in addition
$\theta\in H^3((0,2\pi))$ solves
\begin{equation}\label{e:theta eq}
\left\{
\begin{array}{l}
\dot\theta (t) -\ddot\theta (t)
= 2\sqrt{\frac2\pi}\zeta_\phi(0,t)\\ \\
\theta(0) = 0\, .
\end{array}
\right.
\end{equation} 

\end{lemma}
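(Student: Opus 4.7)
The plan is to exploit the symmetry $\phi \mapsto 2\pi - \phi$ to decouple the even and odd pieces of the equation, and then absorb the forcing term $(\dot\theta - \ddot\theta)\isq_\phi$ into the unknown via the substitution $\zeta = v^o - \theta\isq_\phi$. First I would note that $\isq_\phi(\phi) = \frac{1}{\sqrt{2\pi}}\cos(\phi/2)$ is odd under this reflection, so splitting $v = v^e + v^o$ and applying the operator $L := \partial_t - \partial_{tt} - \partial_{\phi\phi} - \tfrac14$ immediately yields $Lv^e = 0$ and $Lv^o = (\dot\theta - \ddot\theta)\isq_\phi$; the boundary data $v(0,t) = v(2\pi,t) = 0$ forces $v^e(0,t) = v^e(2\pi,t) = v^o(0,t) = v^o(2\pi,t) = 0$, which gives \eqref{e:ve}.

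Next I would compute $L(\theta(t)\isq_\phi(\phi))$ using the crucial ODE fact $\isq_{\phi\phi} + \tfrac14 \isq_\phi \equiv 0$; separability in $t$ and $\phi$ gives exactly $L(\theta\isq_\phi) = (\dot\theta - \ddot\theta)\isq_\phi$, so $\zeta := v^o - \theta\isq_\phi$ satisfies $L\zeta = 0$, i.e.~the PDE in \eqref{e:Ventsell}. Oddness of $\zeta$ is clear since both $v^o$ and $\isq_\phi$ are odd. From $\isq_\phi(0) = \frac{1}{\sqrt{2\pi}}$ and $v^o(0,t) = 0$ we get $\zeta(0,t) = -\theta(t)/\sqrt{2\pi}$, hence $\zeta(0,0) = 0$ from $\theta(0)=0$. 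Using $\isq_{\phi\phi}(0) = 0$ we likewise obtain $\zeta_\phi(0,t) = v^o_\phi(0,t)$; the reflection symmetry makes $v^e_\phi$ odd and $v^o_\phi$ even under $\phi \mapsto 2\pi - \phi$, so
\begin{equation*}
v_\phi(0,t) + v_\phi(2\pi,t) = 2 v^o_\phi(0,t) = 2\zeta_\phi(0,t),
\end{equation*}
and the third line of \eqref{e:lineare} becomes exactly \eqref{e:theta eq}.

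The Ventsel condition is the only step that requires a small manipulation. Evaluating the PDE $\zeta_t - \zeta_{tt} = \zeta_{\phi\phi} + \zeta/4$ at $\phi = 0$ and combining with $\theta(t) = -\sqrt{2\pi}\,\zeta(0,t)$ gives $\dot\theta(t) - \ddot\theta(t) = -\sqrt{2\pi}\bigl(\zeta_{\phi\phi}(0,t) + \zeta(0,t)/4\bigr)$; equating this with $2\sqrt{2/\pi}\,\zeta_\phi(0,t)$ and dividing by $-\sqrt{2\pi}$ yields precisely
\begin{equation*}
\zeta_\phi(0,t) + \frac{\pi}{2}\left(\zeta_{\phi\phi}(0,t) + \frac{\zeta(0,t)}{4}\right) = 0,
\end{equation*}
which is the Ventsel boundary condition in \eqref{e:Ventsell}. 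Conversely, given $v^e$ even solving \eqref{e:ve}, $\zeta$ odd solving \eqref{e:Ventsell}, and $\theta(t) := -\sqrt{2\pi}\,\zeta(0,t)$ (which lies in $H^3$ via $\zeta \in H^2$ and the trace theorem), running the same algebra backwards shows that $v := v^e + \zeta + \theta\,\isq_\phi$ solves \eqref{e:lineare}.

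The only genuinely delicate point is the justification of the trace $\zeta_{\phi\phi}(0,\cdot)$, which a priori is only a distribution when $\zeta \in H^2$; I would handle this by using the PDE itself to interpret $\zeta_{\phi\phi}(0,\cdot) = \zeta_t(0,\cdot) - \zeta_{tt}(0,\cdot) - \zeta(0,\cdot)/4$, all three terms being well-defined as traces in $H^{-\sfrac12}$ (or more regular), so that the Ventsel condition makes sense as an identity in $H^{-\sfrac12}((0,3))$. Beyond this, the argument is essentially linear algebra for differential operators, and the proof reduces to a clean bookkeeping of the reflection symmetry and the single identity $\isq_{\phi\phi} + \frac14\isq_\phi = 0$.
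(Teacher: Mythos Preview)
Your proposal is correct and carries out precisely the elementary computations that the paper leaves to the reader (the paper's own ``proof'' is the single sentence that, modulo standard regularity theory, everything reduces to such computations). One small notational slip: the ODE identity you invoke should read $(\isq_\phi)_{\phi\phi} + \tfrac14\,\isq_\phi = 0$ (equivalently $\isq_{\phi\phi\phi} + \tfrac14\,\isq_\phi = 0$), not $\isq_{\phi\phi} + \tfrac14\,\isq_\phi = 0$, which is false; the computation you actually perform, however, uses the correct identity and is fine.
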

Taking into account standard regularity theory, the lemma is reduced to elementary computations which are left to the reader.

From \eqref{e:theta eq} it is evident that the decay properties of $\theta$ are related only to the odd part $v^o$ of the solution $v$ to \eqref{e:lineare} (cf. \eqref{e:zeta}). 
The analysis of the latter would suffice in case $\lambda=0$. Instead, for $\lambda>0$ it is necessary to discuss the spectral property of the even part $v^e$ of the solutions to the linearized system, as well.

We aim at a representation for $v^e$ and $\zeta$, i.e. a representation as a series of functions in $\phi$ with coefficients depending on $t$, for which we can reduce \eqref{e:ve}, \eqref{e:Ventsell}, respectively, to an independent system of ODEs for the coefficients. To that aim we introduce the spaces 
\begin{equation}\label{e:E}
\mathscr{E} := \{\psi\in H^1 ((0, 2\pi)): 
\psi (\phi) =  \psi (2\pi -\phi)\}\, ,
\end{equation}
and
\begin{equation}\label{e:O}
\mathscr{O} := \{\psi\in H^1 ((0, 2\pi)): 
\psi (\phi) = - \psi (2\pi -\phi)\}\, .
\end{equation}
We start off discussing the spectral analysis for the even part 
which is standard. Instead, that for the odd part is more subtle in view of the Ventsel boundary conditions.
\index[simb]{aalEscr@$\mathscr{E}$}\index[simb]{aalOscr@$\mathscr{O}$}
\index{odd part@odd part}\index{even part@even part}
\index{part, odd@part, odd}\index{part, even@part, even}

\subsection{Spectral analysis of the even part}
The representation for $v^e$ is given in the next proposition.
\begin{proposition}\label{p:even}
If $v^e\in H^2 ((0,2\pi)\times (0, 3))$ is even and $v^e(0,t)=0$ for every $t\in(0,3)$ then
\begin{equation}\label{e:representation_even}
v^e(\phi,t)= \sum_{k=0}^\infty \frac{a_k(t)}{\sqrt\pi}\sin\big((\textstyle{k+\frac 12}) \phi\big)
\end{equation}
where:
\begin{itemize}
\item[(a)] $C^{-1} \sum_k k^2 a_k^2 (t) \leq \|v^e (\cdot, t)\|^2_{H^1}\leq C \sum_k k^2 a_k^2 (t)$ for a universal constant $C$; 
\item[(b)] For $k\geq 0$, the coefficients $a_k$ satisfy $a_k (t) = 
\langle v^e (\cdot, t), \frac1{\sqrt\pi}\sin((k+\frac 12) \cdot) \rangle_{L^2}$. 
\end{itemize}
If additionally $v^e$ solves \eqref{e:ve} then $v^e\in C^\infty ([0,2\pi]\times (0, 3))$ and
for every $k\geq 0$ the coefficients $a_k (t)$ in the expansion satisfy 
\begin{equation}\label{e:a_k(t) 0 eq}
\ddot a_k(t) - \dot a_k (t) -  (k^2 +k) a_k (t)=0\,.
\end{equation}
From \eqref{e:a_k(t) 0 eq} it follows in particular that, for every $k\geq 0$
\begin{equation}\label{e:a_k(t) 0}
 a_{k}(t)=C_{k}\,e^{(k+1) t}+D_{k}\,e^{-k t}
\end{equation}
for some $C_k$, $D_k\in\mathbb R$.
\end{proposition}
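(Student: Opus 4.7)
The strategy is to exploit the symmetries of $v^e$ to identify a natural orthonormal basis of $L^2(0,2\pi)$ in which the problem decouples mode by mode, and then to solve the resulting family of constant-coefficient ODEs explicitly.

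\textbf{Step 1 (the basis).} I will first verify that the family $\{\tfrac{1}{\sqrt\pi}\sin((k+\tfrac 12)\phi)\}_{k\geq 0}$ is a complete orthonormal system in the closed subspace of $L^2(0,2\pi)$ consisting of functions $\psi$ with $\psi(\phi)=\psi(2\pi-\phi)$ and $\psi(0)=0$. Orthonormality is a direct computation. For completeness, note that each such function, extended by odd reflection about $\phi=0$ to $(-2\pi,2\pi)$, becomes a $4\pi$-periodic odd function which, by the assumed symmetry, is also anti-symmetric about $\phi=\pi$; the standard real Fourier series on $(-2\pi,2\pi)$ then reduces to precisely this family of sines. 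Since $v^e(\cdot,t)$ belongs to this subspace for a.e.\ $t$, we obtain the expansion \eqref{e:representation_even} with $a_k(t)=\langle v^e(\cdot,t),\tfrac1{\sqrt\pi}\sin((k+\tfrac12)\cdot)\rangle_{L^2}$. Parseval's identity applied to $v^e$ and to $v^e_\phi$ (whose Fourier coefficients are $(k+\tfrac12)a_k$) yields the norm equivalence (a), after absorbing the bounded factor $(k+\tfrac12)^2\sim k^2+1$.

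\textbf{Step 2 (smoothness).} Assuming now that $v^e$ solves \eqref{e:ve}, smoothness is a standard elliptic bootstrap: the operator $\partial_{tt}+\partial_{\phi\phi}$ is the Laplacian, the zeroth-order perturbation $\tfrac14 v^e-v^e_t$ is harmless, and the homogeneous Dirichlet condition at $\phi=0,2\pi$ together with the evenness allows an odd reflection across both vertical sides, yielding a solution of a linear elliptic equation with smooth coefficients on an open neighbourhood of $[0,2\pi]\times(0,3)$. Interior regularity (e.g.\ \cite[Theorem 2.10]{GT} applied after a Caccioppoli argument, as already used in Proposition \ref{p:linearizzazione}(d)) gives $v^e\in C^\infty([0,2\pi]\times (0,3))$.

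\textbf{Step 3 (the ODE for the coefficients).} With smoothness in hand, one may differentiate under the integral sign to obtain $\dot a_k,\ddot a_k\in C^\infty((0,3))$ and, using integration by parts twice together with the boundary conditions $v^e(0,t)=v^e(2\pi,t)=0$ and $\sin((k+\tfrac12)\cdot 2\pi)=0$,
\[
\int_0^{2\pi} v^e_{\phi\phi}(\phi,t)\,\tfrac{1}{\sqrt\pi}\sin((k+\tfrac12)\phi)\,d\phi
=-(k+\tfrac12)^2 a_k(t).
\]
Testing \eqref{e:ve} against $\tfrac1{\sqrt\pi}\sin((k+\tfrac12)\phi)$ and using $-(k+\tfrac12)^2+\tfrac14=-(k^2+k)$, I obtain \eqref{e:a_k(t) 0 eq}.

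\textbf{Step 4 (solving the ODE).} The characteristic equation $\lambda^2-\lambda-(k^2+k)=0$ has discriminant $1+4(k^2+k)=(2k+1)^2$, so its roots are $\lambda=k+1$ and $\lambda=-k$, yielding at once the explicit form \eqref{e:a_k(t) 0}. The whole argument is essentially a routine separation of variables; the only point requiring any care is verifying that the chosen family of sines really is orthonormal and complete on the even subspace with the Dirichlet condition at $0$, which I expect to be the most delicate check (though still elementary).
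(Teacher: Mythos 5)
Your proposal is correct and follows essentially the same route as the paper's (much terser) proof: identify the spectral family $\{\sin((k+\frac12)\phi)\}_{k\geq 0}$ adapted to the even/Dirichlet boundary conditions, expand, use Parseval, establish smoothness by elliptic regularity, test the PDE against the basis to get a decoupled ODE in $t$, and solve explicitly. The paper arrives at the same family by solving the Sturm--Liouville eigenvalue problem $h_{\phi\phi}=\beta h$, $h(0)=0$ in $\mathscr{E}$, which is a cosmetic variant of your reflection argument. One small slip in Step 1: the odd reflection $\tilde\psi$ of an even $\psi$ (meaning $\psi(\phi)=\psi(2\pi-\phi)$) is \emph{symmetric}, not anti-symmetric, about $\phi=\pi$; indeed $\tilde\psi(2\pi-\phi)=\psi(2\pi-\phi)=\psi(\phi)=\tilde\psi(\phi)$ for $\phi\in[0,2\pi]$. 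This symmetry is precisely what, in the $4\pi$-periodic Fourier expansion $\sum_n b_n\sin(n\phi/2)$, kills the even $n$ modes (which are anti-symmetric about $\pi$) and retains only the odd ones, i.e.\ $\sin((k+\frac12)\phi)$, so the conclusion is unaffected.
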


\begin{remark}
Observe that, as a consequence of the regularity of $v_e$, the coefficients $C_k$ and $D_k$ will satisfy appropriate decay estimates in $k$ as $k\uparrow \infty$, even though this is not explicitly stated.
\end{remark}

\begin{proof}
Consider for $h\in\mathscr{E}$ the eigenvalue problem 
\[
\begin{cases}
h_{\phi\phi} = \beta h\cr\cr
h (0) = 0\,.
\end{cases}
\]
By solving the ODE and imposing the Dirichlet boundary condition it is easy 
to show that $\beta=-\nu^2<0$ and that $2\nu$ is odd. From this one finds 
that necessarily $h(\phi)=A\sin\big((k+\frac 12) \phi\big)$ for some $k\in\mathbb{N}$ and $A\in\mathbb{R}\setminus\{0\}$, and that $a_k$ 
are as in item (b).

The rest of the statement follows easily from standard Fourier analysis, and explicit calculations.
\end{proof}

\subsection{Spectral analysis of the odd part}

The representation for $\zeta$ is detailed in the following 

\begin{proposition}\label{p:odd}
If $\zeta\in H^2 ((0,2\pi)\times (0, 3))$ is odd then
\begin{equation}\label{e:representation_odd}
\zeta(\phi,t)= \sum_{k=0}^\infty a_k(t)\zeta_k (\phi)
\end{equation}
where:
\begin{itemize}
\item[(a)] $C^{-1} \sum_k \nu_k^2 a_k^2 (t) \leq \|\zeta (\cdot, t)\|^2_{H^1((0,2\pi))}\leq C \sum_k \nu_k^2 a_k^2 (t)$ for a universal constant $C$; 
\item[(b)] The functions $\zeta_k$ are defined in Section~\ref{s:spectral}
(cf. \eqref{e:zeta_1}-\eqref{e:zeta_0});
\item[(c)] For $k\geq 2$, the coefficients $a_k$ satisfy $a_k (t) = \langle \zeta (\cdot, t), \zeta_k \rangle$ for the bilinear symmetric form $\langle\cdot, \cdot\rangle$\index[simb]{aalquasiprodotto@$\langle \cdot, \cdot \rangle$} defined in Section~\ref{s:ventsel} (cf. \eqref{e:quasi_prodotto}), 
\item[(d)] the coefficients $a_0 (t)$ and $a_1 (t)$ are given by $a_0 (t) = \mathcal{L}_0 (\zeta (\cdot, t))$ and $a_1 (t) = \mathcal{L}_1 (\zeta (\cdot, t))$ for appropriately defined linear bounded functionals $\mathcal{L}_0, \mathcal{L}_1 : \mathscr{O} \to \mathbb R$. 
\end{itemize}
Next, if additionally $\zeta$ solves \eqref{e:Ventsell} then $\zeta\in C^\infty ([0,2\pi]\times (0, 3))$ and
for every $k\geq 2$ the coefficients $a_k (t)$ in the expansion satisfy 
\begin{equation}\label{e:a_k(t) eq}
\ddot a_k(t) - \dot a_k (t) - (\textstyle{\nu_k^2 - \frac{1}{4}}) a_k (t)=0    
\end{equation}
where the number $\nu_k$'s are given in Lemma~\ref{l:autovalori} 
(cf. \eqref{e:zeros}). In particular, for every $k\geq 2$
\begin{equation}\label{e:a_k(t)}
a_k(t)=C_k e^{\mu_{k,-} t}+D_k e^{\mu_{k,+} t}
\end{equation}
where the $C_k$ and $D_k$ are constants and, 
\[
\mu_{k,\pm} = \frac12\pm \nu_k\,.
\]
\end{proposition}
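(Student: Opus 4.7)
The plan follows the classical program of separation of variables and eigenfunction expansion, but tailored to the non-standard Ventsel boundary condition, and proceeds through three stages: (1) the spectral analysis of the spatial operator $Lh := h_{\phi\phi} + h/4$ with the boundary condition $h'(0) + \tfrac{\pi}{2}(h(0)/4 + h''(0)) = 0$ on the space $\mathscr{O}$ of odd functions about $\pi$; (2) the derivation of the expansion formulas in (a)--(d); (3) the passage to the time-dependent ODE by separation of variables. First, I would note that by oddness about $\pi$ every eigenfunction must take the form $h(\phi) = A\sin(\nu(\phi-\pi))$ with $\mu = \nu^2 - \tfrac14$, and substituting into the Ventsel boundary condition produces the transcendental characteristic equation
\begin{equation*}
\nu\cos(\nu\pi) + \tfrac{\pi}{2}\sin(\nu\pi)\bigl(\nu^2 - \tfrac14\bigr) = 0,
\end{equation*}
whose non-negative roots form the sequence $\{\nu_k\}_{k\geq 0}$ of Lemma \ref{l:autovalori}. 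A direct observation is that $\nu = \tfrac12$ is always a root, corresponding to the ``rescaling mode'' $\zeta \propto \cos(\phi/2)$ which is exactly the one absorbed in the change of variable \eqref{e:zeta}; a second low root accounts for a further symmetry (translation-type) mode. These two special modes explain why the coefficients $a_0, a_1$ in (d) are defined by different linear functionals rather than by the bilinear form.

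Next, I would establish the symmetry of $L$ in the form of \eqref{e:quasi_prodotto}: an integration by parts, using oddness (which forces $v(2\pi) = -v(0)$ and $u'(2\pi) = u'(0)$) together with the Ventsel condition rewritten as $u'(0) = -\tfrac{\pi}{2}(Lu)(0)$, yields
\begin{equation*}
\int_0^{2\pi}\bigl((Lu)v - u(Lv)\bigr)\, d\phi = -\pi\bigl[(Lu)(0)\,v(0) - u(0)\,(Lv)(0)\bigr],
\end{equation*}
which identifies the bilinear form (essentially $\langle u,v\rangle = \int_0^{2\pi} uv\, d\phi + \pi\, u(0)v(0)$ up to normalization) in which $L$ is self-adjoint. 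The spectral theorem for compact self-adjoint operators applied to $(L - \lambda)^{-1}$ on the completion of $\mathscr{O}$ in this quasi-inner-product then yields completeness of $\{\zeta_k\}_{k\geq 0}$, the equivalence of norms in (a) (via the asymptotics $\nu_k \sim k$ and standard Bessel-type estimates), and the orthogonality $\langle \zeta_j, \zeta_k\rangle = 0$ for $\mu_j \ne \mu_k$. Consequently the formula in (c) is just projection onto $\zeta_k$ for the modes where the bilinear form is a genuine inner product. The functionals $\mathcal{L}_0, \mathcal{L}_1$ of (d) are defined via a biorthogonal system to $\zeta_0, \zeta_1$ (constructed by Gram--Schmidt against the remaining eigenfunctions or by an explicit dual construction), making them bounded on $\mathscr{O}$.

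Finally, once the spatial decomposition is in place, the time equation \eqref{e:a_k(t) eq} comes from applying $\langle\cdot,\zeta_k\rangle$ (or $\mathcal{L}_k$) to \eqref{e:Ventsell}, using self-adjointness to replace $L$ acting on $\zeta$ by the eigenvalue $\mu_k = \nu_k^2 - \tfrac14$ acting on $\zeta_k$; the characteristic polynomial $X^2 - X - (\nu_k^2 - \tfrac14) = 0$ has roots $\mu_{k,\pm} = \tfrac12 \pm \nu_k$, giving \eqref{e:a_k(t)}. Smoothness of $\zeta$ on $[0,2\pi] \times (0,3)$ follows by standard interior and boundary elliptic regularity applied to the real-analytic elliptic equation in \eqref{e:Ventsell} (the Ventsel condition is of ``third-order transmission'' type but local, so the bootstrap argument goes through). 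The main obstacle is step (2): verifying that the bilinear form induces a genuine Hilbert space structure modulo the finite-dimensional subspace spanned by the two exceptional modes, and producing explicit continuous functionals $\mathcal{L}_0, \mathcal{L}_1$ that respect the decomposition without relying on the bilinear form on those modes. Once this functional-analytic framework is set up correctly, everything else reduces to standard elliptic and ODE arguments.
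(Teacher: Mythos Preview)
Your overall architecture (spectral analysis of the spatial operator, then separation of variables) matches the paper's, but there is a genuine gap in your understanding of the spectrum that would make your argument fail as written.

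You claim that besides the root $\nu = \tfrac12$ (giving $\zeta_1 = \cos(\phi/2)$) there is ``a second low root [that] accounts for a further symmetry (translation-type) mode,'' and then propose to invoke the spectral theorem for compact self-adjoint operators to obtain the basis $\{\zeta_k\}$. This is incorrect: the characteristic equation has $\nu_1 = \tfrac12$ and then $\nu_2 > \tfrac32$, with no second small root. The function $\zeta_0(\phi) = (\phi-\pi)\sin(\phi/2)$ is \emph{not} an eigenfunction at all; it satisfies $\zeta_{0,\phi\phi} + \tfrac14\zeta_0 = \zeta_1$ together with the Ventsel condition, i.e.\ it is a generalized eigenfunction forming a nontrivial Jordan block with $\zeta_1$. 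Consequently the solution operator $\mathscr{A}$ is \emph{not diagonalizable} on $\mathscr{O}$, and no self-adjoint framework in a genuine inner product can produce it. (Relatedly, your bilinear form has the wrong sign: the symmetry computation forces the boundary weight $-\pi\, u(0)v(0)$, not $+\pi$, and then $\cos(\phi/2)$ lies in the radical, so the form is only positive \emph{semi}-definite --- exactly as in the paper's form $\langle u,v\rangle = \int u_\phi v_\phi - \tfrac14\int uv$.)

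What the paper actually does is: (i) show the semi-definite form $\langle\cdot,\cdot\rangle$ makes $\mathscr{A}$ symmetric and has one-dimensional radical $\mathbb{R}\zeta_1$; (ii) on the closed span $Y$ of $\{\zeta_k\}_{k\geq 2}$ the form \emph{is} an inner product (this requires proving $\zeta_1\notin Y$, which takes a separate argument), so the spectral theorem gives completeness there; (iii) set $Z = \mathrm{span}\{\zeta_0,\zeta_1\}$, show $Z\cap Y = \{0\}$, and then prove $Z+Y = \mathscr{O}$ by analyzing the ``annihilator'' $X^\perp$ of $Z+Y$ in the semi-definite form and showing it reduces to $\mathbb{R}\zeta_1$. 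Step (iii) is the real work and has no analogue in your proposal; your ``biorthogonal/Gram--Schmidt'' remark for $\mathcal{L}_0,\mathcal{L}_1$ presupposes you already know the decomposition $\mathscr{O} = Z\oplus Y$, which is precisely what needs proof. Once you recognize the Jordan structure and replace the spectral-theorem step by this direct-sum argument, the remainder of your plan (the ODE \eqref{e:a_k(t) eq} and the formula for $\mu_{k,\pm}$) goes through as you describe.
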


The proof is an obvious consequence of Proposition~\ref{p:spettro} below, which will be the main focus of the rest of this section.

\subsection{The Ventsel boundary condition}\label{s:ventsel}
For every $\psi\in \mathscr{O}$ we look for solutions 
$h\in \mathscr{O}$ of the following equation:
\begin{equation}\label{e:ventsel}
 \begin{cases}
 \displaystyle{h_{\phi\phi} = \psi}\cr\cr
 \displaystyle{h_\phi (0) = - \frac{\pi}{2} \left(\frac{h (0)}{4} + h_{\phi\phi} (0)\right) \, .}
\end{cases}
\end{equation}
The following is an elementary fact of which we include the proof for the reader's convenience.
\begin{lemma}\label{l:risolvente}
For every $\psi \in \mathscr{O}$ there is a unique solution $h:= \mathscr{K} (\psi)\in \mathscr{O}$ of 
\eqref{e:ventsel}. In fact the linear operator $\mathscr{K} : \mathscr{O} \to \mathscr{O}$ is compact.\index[simb]{aalKscr@$\mathscr{K}$}
\end{lemma}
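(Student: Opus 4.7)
The plan is to solve \eqref{e:ventsel} explicitly by two integrations, using the oddness constraint to kill one constant of integration and the Ventsel boundary condition to pin down the other, and then to deduce compactness from a gain-of-regularity argument.

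Concretely, I would set $\Psi(\phi) := \int_\pi^\phi \psi(s)\,ds$ and write the general solution of $h_{\phi\phi} = \psi$ as
\begin{equation*}
h(\phi) = d + c(\phi - \pi) + \int_\pi^\phi \Psi(s)\,ds, \qquad c = h_\phi(\pi),\quad d = h(\pi).
\end{equation*}
Since $h \in \mathscr{O}$ satisfies $h(\pi) = -h(\pi)$, one gets $d = 0$ immediately. The first substantive step is to verify that with $d = 0$ the oddness constraint is automatic for \emph{every} $c$: the change of variables $s \mapsto 2\pi - s$ combined with $\psi(2\pi - s) = -\psi(s)$ gives $\Psi(2\pi - \phi) = \Psi(\phi)$, so $\Psi$ is even; a second change of variables then yields $\int_\pi^{2\pi - \phi}\Psi = -\int_\pi^\phi \Psi$, and combined with $c(\phi - \pi) + c(\pi - \phi) = 0$ this shows $h(\phi) + h(2\pi - \phi) \equiv 0$. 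So $c$ is the only remaining degree of freedom.

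Next I would use the Ventsel condition to fix $c$. Since $\psi \in H^1((0,2\pi)) \hookrightarrow C^0([0,2\pi])$, the trace $h_{\phi\phi}(0) = \psi(0)$ is meaningful. Substituting the explicit formulas for $h(0)$, $h_\phi(0) = c + \Psi(0)$, and $h_{\phi\phi}(0) = \psi(0)$ into the boundary condition yields a scalar linear equation of the form
\begin{equation*}
\left(1 - \tfrac{\pi^2}{8}\right) c = L(\psi),
\end{equation*}
where $L$ is a bounded linear functional on $H^1$. The coefficient $1 - \pi^2/8$ is nonzero (numerically $\pi^2/8 \approx 1.23$), so $c$ is uniquely determined and satisfies $|c| \lesssim \|\psi\|_{H^1}$. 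This gives existence, uniqueness, and the quantitative estimate $\|\mathscr{A}(\psi)\|_{H^3((0,2\pi))} \lesssim \|\psi\|_{H^1}$, since $h_{\phi\phi} = \psi \in H^1$ directly gives two further derivatives.

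Compactness is then immediate from Rellich–Kondrachov: the inclusion $H^3((0,2\pi)) \hookrightarrow H^1((0,2\pi))$ is compact, so $\mathscr{A}: \mathscr{O} \to \mathscr{O}$ is compact. The only step that requires real care, and which I would expect to be the main obstacle, is checking that the scalar coefficient in front of $c$ after expanding the Ventsel condition is indeed nonzero — its vanishing would correspond to a genuine eigenvalue of the associated operator and would destroy solvability, so the simple arithmetic $\pi^2 \neq 8$ should be verified with the precise normalization used throughout the paper.
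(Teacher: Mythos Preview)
Your proposal is correct and follows essentially the same approach as the paper: write the general solution by two integrations from $\pi$, use oddness to eliminate the constant $d=h(\pi)$, use the Ventsel condition to solve for $c=h_\phi(\pi)$ via the nonvanishing coefficient $1-\pi^2/8$, and deduce compactness from the $H^3\hookrightarrow H^1$ bound. You add the explicit verification that oddness is automatic once $d=0$, which the paper leaves implicit, but otherwise the arguments coincide.
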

\begin{proof}
$h\in \mathscr{O}$ solves the first equation in \eqref{e:ventsel} if and only if
\begin{equation}\label{e:sol_esplicita}
h (\phi) = h_\phi (\pi) (\phi -\pi) + \underbrace{\int_\pi^\phi \int_\pi^\tau \psi (s)\, ds\, d\tau}_{=: \Psi (\phi)}\, .
\end{equation}
On the other hand the initial condition holds if and only if 
\[
h_\phi(\pi) \left(\frac{\pi^2}{8}-1\right) =  \Psi' (0) + \frac{\pi}{2} \left(\frac{\Psi(0)}{4} + \Psi'' (0)\right)\, .
\]
Since $\Psi$ is determined by $\psi$, the latter determines uniquely $h_\phi(\pi)$ and thus shows that there is one
and only one solution $h = \mathscr{K} (\psi) \in \mathscr{O}$ of \eqref{e:ventsel}. Moreover, we obviously have
\[
\|\mathscr{K} (\psi)\|_{H^3} \leq C \|\psi\|_{H^1}\, ,
\]
which shows that the operator is compact.
\end{proof}

We next introduce in $\mathscr{O}$ a continuous bilinear map
\begin{equation}\label{e:quasi_prodotto}
\langle u, v \rangle := \int_0^{2\pi} u_\phi v_\phi - \frac{1}{4} \int_0^{2\pi} uv\, .
\end{equation}
\index[simb]{aalquasiprodotto@$\langle \cdot, \cdot \rangle$}
If $\langle \cdot, \cdot \rangle$ were a scalar product on $\mathscr{O}$, $\mathscr{K}$ would be a self-adjoint operator on $\mathscr{O}$ with respect to it and we would conclude that there is an 
orthonormal base made by eigenfunctions of $\mathscr{K}$. Unfortunately $\langle \cdot, \cdot \rangle$ is 
{\em not} positive definite, it has a one dimensional radical. This causes some technical complications.

\begin{lemma}\label{l:autoaggiunto} 
The bilinear map $\langle \cdot, \cdot\rangle$ satisfies the following properties:
\begin{itemize}
\item[(a)] $\langle v, v \rangle \geq 0$ for every $v\in \mathscr{O}$;
\item[(b)] $\langle v, v\rangle = 0$ if and only if $v (\phi) = \mu \cos \frac{\phi}{2}$ for some constant $\mu$;
\item[(c)] $\langle v, \cos \frac{\phi}{2}\rangle = 0$ for every $v\in \mathscr{O}$;
\item[(d)] $\langle \mathscr{K} (v), w\rangle = \langle v, \mathscr{K} (w)\rangle$ for every $v,w\in \mathscr{O}$.
\end{itemize}
\end{lemma}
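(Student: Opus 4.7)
Items (a) and (b) are a Wirtinger-type inequality on the space $\mathscr{O}$ of odd functions, and the plan is to prove them by explicitly diagonalizing the quadratic form $\langle \cdot, \cdot \rangle$. I would first minimize the Rayleigh quotient $\int v_\phi^2 / \int v^2$ on $\mathscr{O}$ by compactness, and observe that the Euler--Lagrange equation is $-v_{\phi\phi} = \lambda v$ on $(0, 2\pi)$, with boundary conditions coming from the oddness constraint: since variations $\delta v \in \mathscr{O}$ satisfy $\delta v(0) = -\delta v(2\pi)$ and the extremal $v$ has $v_\phi$ even about $\pi$ (so $v_\phi(0) = v_\phi(2\pi)$), the boundary integral $[v_\phi \delta v]_0^{2\pi}$ forces $v_\phi(0) = v_\phi(2\pi) = 0$. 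Writing $v = A\cos(\sqrt{\lambda}\phi) + B\sin(\sqrt{\lambda}\phi)$ and imposing the two natural Neumann conditions together with the oddness about $\pi$, one obtains $\sqrt{\lambda} = (2k+1)/2$ and $v \in \operatorname{span}\{\cos((2k+1)\phi/2)\}$ for $k \geq 0$. The smallest eigenvalue is $1/4$, realized only by $\cos(\phi/2)$, which gives both $\langle v, v\rangle \geq 0$ and the identification of the radical in (b).

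For item (c), I would just integrate by parts: since $(\cos(\phi/2))_\phi = -\tfrac12 \sin(\phi/2)$ vanishes on the symmetric extension boundary terms when paired against odd functions (specifically, $v(2\pi)\sin(\pi) - v(0)\sin(0) = 0$), one finds
\[
\int_0^{2\pi} v_\phi \cdot \bigl(-\tfrac12 \sin(\tfrac{\phi}{2})\bigr)\, d\phi = \tfrac12 \int_0^{2\pi} v \cdot \tfrac12 \cos(\tfrac{\phi}{2})\, d\phi,
\]
which is exactly $\tfrac14 \int v \cos(\phi/2)$; subtracting cancels the second term in $\langle v, \cos(\phi/2)\rangle$. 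Equivalently, this is the statement that $\cos(\phi/2)$ is in the kernel of the operator $\psi \mapsto -\psi_{\phi\phi} - \tfrac14 \psi$ with the boundary conditions satisfied by elements of $\mathscr{O}$.

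For item (d), set $h = \mathscr{A}(v)$ and $g = \mathscr{A}(w)$, so $h_{\phi\phi} = v$, $g_{\phi\phi} = w$ and both $h, g \in \mathscr{O}$ satisfy the Ventsel boundary condition in \eqref{e:ventsel}. Integrating by parts twice gives the identity $\int hw - \int vg = [hg_\phi - h_\phi g]_0^{2\pi}$, while a single integration by parts reduces $\langle h, w\rangle - \langle v, g\rangle$ to the boundary combination $[h_\phi w - v g_\phi]_0^{2\pi} - \tfrac14(\int hw - \int vg)$. Using that elements of $\mathscr{O}$ are odd about $\pi$ (so $h(2\pi) = -h(0)$) and that their derivatives are even about $\pi$ (so $h_\phi(2\pi) = h_\phi(0)$), the boundary terms collapse to expressions in $h(0), h_\phi(0), g(0), g_\phi(0), v(0), w(0)$. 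Substituting the Ventsel relation $v(0) = -\tfrac{2 h_\phi(0)}{\pi} - \tfrac{h(0)}{4}$ (and the analogous one for $w$) into the resulting algebraic identity produces a direct cancellation showing $\langle \mathscr{A}(v), w\rangle = \langle v, \mathscr{A}(w)\rangle$. The only non-routine step is keeping track of the boundary algebra, which is where the precise form of the Ventsel condition enters in an essential way; the rest is bookkeeping.
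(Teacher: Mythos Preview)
Your proposal is correct and follows essentially the same route as the paper. For (a)--(b) the paper writes down the cosine expansion $v = \sum_k \alpha_k \cos\frac{k\phi}{2}$ directly rather than deriving it variationally via the Rayleigh quotient, and for (c)--(d) both proofs integrate by parts and cancel the boundary terms using $z_\phi(0)=z_\phi(2\pi)=0$ (for $z=\cos\frac{\phi}{2}$) and the Ventsel condition, respectively; your organization of the algebra in (d) (computing the difference $\langle h,w\rangle-\langle v,g\rangle$ directly) is a minor variant of the paper's computation.
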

\begin{proof} {\bf (a) \& (b)}
First observe that (a) is equivalent to
\begin{equation}\label{e:sharp}
\frac{1}{4} \int_0^{2\pi} v^2 \leq \int_0^{2\pi} v_\phi^2\, .
\end{equation}
If we write $v$ using the Fourier series expansion
$v (\phi) = \sum_{k=1}^\infty \alpha_k \cos \frac{k\phi}{2}$,
the inequality becomes obvious and
it is also clear that equality holds if and only if $\alpha_k =0$ for every $k\geq 2$. 

\medskip

{\bf (c)} Let $z (\phi) := \cos \frac{\phi}{2}$ and observe that $\frac{z}{4} + z_{\phi\phi}=0$ and that
$z_\phi (0)= z_\phi (2\pi) = 0$. We therefore compute
\begin{align*}
\langle w,z\rangle = & \int_0^{2\pi} w_\phi z_\phi - \frac{1}{4} \int_0^{2\pi} zw =  w z_\phi \Big|_0^{2\pi} - \int_0^{2\pi} w \left(z_{\phi\phi} + \frac{z}{4}\right) = 0\, .
\end{align*}

\medskip

{\bf (d)} Consider $z = \mathscr{K} (v)$ and $u = \mathscr{K} (w)$. We then compute
\begin{align*}
\langle \mathscr{K} (v), w \rangle = & \langle z, u_{\phi\phi}\rangle = \int_0^{2\pi} z_\phi u_{\phi\phi\phi} - \frac{1}{4} \int_0^{2\pi} z u_{\phi\phi}\\
=& z_\phi u_{\phi\phi} \Big|_0^{2\pi} - \int_0^{2\pi} z_{\phi\phi} u_{\phi\phi} - \frac{1}{4} z u_\phi \Big|_0^{2\pi} +
\frac{1}{4} \int_0^{2\pi} z_\phi u_\phi\\
=& z_\phi u_{\phi\phi} \Big|_0^{2\pi} - z_{\phi\phi} u_{\phi}\Big|_0^{2\pi} + \int_0^{2\pi} z_{\phi\phi\phi} u_\phi  - \frac{1}{4} z u_\phi \Big|_0^{2\pi}
+  \frac{1}{4} z_\phi u \Big|_0^{2\pi}- \frac{1}{4} \int_0^{2\pi} z_{\phi\phi} u\\
= & \left. z_\phi \left(u_{\phi\phi} + \frac{u}{4}\right)\right|_0^{2\pi} - \left. u_\phi \left(z_{\phi\phi} + \frac{z}{4}\right)\right|_0^{2\pi} + \langle z_{\phi\phi} , u\rangle\\
= & - \frac{2}{\pi} z_\phi u_\phi\Big|_0^{2\pi} + \frac{2}{\pi} z_\phi u_\phi \Big|_0^{2\pi} + \langle v, \mathscr{K} (w)\rangle =
\langle v , \mathscr{K} (w)\rangle\, . \qedhere
\end{align*}
\end{proof}

\subsection{Spectral decomposition}\label{s:spectral} We are now ready to prove the following spectral analysis. First of all we start with the following

\begin{lemma}\label{l:autovalori} If $\beta$ is a real number and 
$h\in \mathscr{O}$ a solution of the following eigenvalue problem
\begin{equation}\label{e:autovalore}
\left\{
\begin{array}{l}
h_{\phi\phi} = \beta h\\ \\
h_\phi (0) = - \frac{\pi}{2} \left(\frac{h (0)}{4}+ h_{\phi\phi} (0) \right)
\end{array}\right.
\end{equation}
then
\begin{itemize}
\item[(a)] $\beta < 0$ and if we set $\beta = -\nu^2$ for $\nu>0$, then $\nu$ is a positive solution of
\begin{equation}\label{e:zeros}
\nu \cos \nu \pi = \frac{\pi}{2} \left(\frac{1}{4} - \nu^2\right) \sin \nu \pi\, .
\end{equation}
\item[(b)] $h$ is a constant multiple of $\sin (\nu (\phi -\pi))$. 
\item[(c)] The positive solutions of \eqref{e:zeros} are given by an increasing sequence $\{\nu_k\}_k\in \mathbb N$ in which $\nu_1 = \frac{1}{2}$, $\nu_2>\frac{3}{2}$ and 
\begin{equation}\label{e:asintotico}
\lim_{k\to \infty} \frac{\nu_k}{k} = 1\, 
\end{equation}
\end{itemize}
\end{lemma}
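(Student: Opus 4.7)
My plan starts from the observation that $h \in \mathscr{O}$ forces $h(\pi) = 0$, so the second-order ODE $h_{\phi\phi} = \beta h$ leaves at most a one-parameter family of candidates in $\mathscr{O}$, and the entire eigenvalue problem collapses to imposing the Ventsel boundary condition at $\phi = 0$. Concretely, $\beta = -\nu^2 < 0$ gives $h(\phi) = A \sin(\nu(\phi - \pi))$, $\beta = 0$ gives $h(\phi) = A(\phi - \pi)$, and $\beta = \mu^2 > 0$ gives $h(\phi) = A\sinh(\mu(\phi - \pi))$; each expression is automatically odd about $\pi$, hence lies in $\mathscr{O}$. Substituting the negative-$\beta$ formula into the Ventsel condition and simplifying immediately yields \eqref{e:zeros}, proving (a) once (b) is known, and (b) is read off directly from the explicit form of $h$.

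To finish (a) I must rule out $\beta \geq 0$. The case $\beta = 0$ is elementary: the boundary condition reduces to $A(1 - \pi^2/8) = 0$, forcing $A = 0$ since $\pi^2 > 8$. The case $\beta = \mu^2 > 0$ is slightly more delicate and is in my view the main obstacle. The boundary condition becomes $F(\mu) := \mu\cosh(\mu\pi) - \tfrac{\pi}{2}\sinh(\mu\pi)(\mu^2 + 1/4) = 0$; a direct computation yields the clean factorization
\[
F'(\mu) = \cosh(\mu\pi)\Bigl[1 - \tfrac{\pi^2}{2}(\mu^2 + 1/4)\Bigr],
\]
and the bracket equals $1 - \pi^2/8 < 0$ at $\mu = 0$ and only becomes more negative for $\mu > 0$. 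Hence $F$ is strictly decreasing on $[0, \infty)$ and vanishes only at $\mu = 0$, so no positive eigenvalue exists. The critical input here is once more $\pi^2 > 8$.

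For (c) I would first verify by direct substitution that $\nu_1 = 1/2$ solves \eqref{e:zeros}. For other solutions, away from $\nu \in \tfrac{1}{2}\mathbb{Z}$, \eqref{e:zeros} becomes $\nu\cot(\nu\pi) = \tfrac{\pi}{2}(\tfrac{1}{4} - \nu^2)$; differentiating the left-hand side gives $(\tfrac{1}{2}\sin(2\nu\pi) - \nu\pi)/\sin^2(\nu\pi)$, which is strictly negative on each interval $(k, k+1)$ with $k \geq 1$ by the elementary bound $\sin y \leq y$. On each such interval the LHS is thus strictly decreasing from $+\infty$ to $-\infty$ while the RHS is a fixed finite number, so there is exactly one solution $\nu_k \in (k-1, k)$ for each $k \geq 2$, from which $\nu_k/k \to 1$ is immediate. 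The bound $\nu_2 > 3/2$ follows since at $\nu = 3/2$ the LHS equals $0$ while the RHS equals $-\pi < 0$, so the crossing lies to the right. Uniqueness of $\nu = 1/2$ on $(0, 1)$ needs a short separate argument, most cleanly obtained by setting $G(\nu) := \nu\cos(\nu\pi) - \tfrac{\pi}{2}(\tfrac{1}{4} - \nu^2)\sin(\nu\pi)$, computing
\[
G'(\nu) = \cos(\nu\pi)\Bigl[1 + \tfrac{\pi^2}{2}(\nu^2 - \tfrac{1}{4})\Bigr],
\]
and using the resulting sign chart of $G'$ on $(0, 1)$ together with $G(0) = 0$, $G(1) = -1$ to trap the unique zero at $\nu = 1/2$.
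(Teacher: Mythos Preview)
Your argument is largely correct and, in fact, your treatment of the case $\beta > 0$ is cleaner than the paper's: the paper substitutes $x = 2\pi\nu$ and works with $\Phi(x) = e^x(\pi^2 + x^2 - 4x) - \pi^2 - x^2 - 4x$, then needs to compute $\Phi''$ to get convexity before concluding monotonicity; your hyperbolic formulation $F(\mu) = \mu\cosh(\mu\pi) - \tfrac{\pi}{2}(\mu^2 + \tfrac{1}{4})\sinh(\mu\pi)$ yields the factorization $F'(\mu) = \cosh(\mu\pi)\bigl[1 - \tfrac{\pi^2}{2}(\mu^2 + \tfrac{1}{4})\bigr]$ directly, and the sign of the bracket is immediate from $\pi^2 > 8$.

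There is, however, a genuine gap in part (c). When you rewrite \eqref{e:zeros} as $\nu\cot(\nu\pi) = \tfrac{\pi}{2}(\tfrac{1}{4} - \nu^2)$ on $(k, k+1)$, the right-hand side is \emph{not} a fixed number: it is a strictly decreasing function of $\nu$. So although your observation that the left-hand side decreases strictly from $+\infty$ to $-\infty$ gives \emph{existence} of a crossing in each interval, it does not by itself give uniqueness, since both sides are decreasing and their difference is not monotone (its derivative $\cos(\nu\pi)[\sin(\nu\pi) - \nu\pi\cos(\nu\pi)]/\sin^2(\nu\pi)$ changes sign at $\nu = k + \tfrac12$). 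Without uniqueness you cannot conclude $\nu_2 > 3/2$ either, because you have not excluded a crossing in $(1, 3/2)$.

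The remedy is the very computation you already did for $(0,1)$: your formula $G'(\nu) = \cos(\nu\pi)\bigl[1 + \tfrac{\pi^2}{2}(\nu^2 - \tfrac{1}{4})\bigr]$ works on every interval. For $\nu > 1/2$ the bracket is strictly positive, so $G'$ has the sign of $\cos(\nu\pi)$. Hence $G$ is strictly decreasing on $(1/2, 3/2)$ with $G(1/2) = 0$, ruling out solutions there and giving $\nu_2 > 3/2$; and on each $(k, k+1)$ with $k \geq 1$, $G'$ changes sign exactly once (at $k + \tfrac12$) while $G(k)G(k+1) = -k(k+1) < 0$, forcing exactly one zero. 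This is precisely how the paper argues, with $\Psi(x) = 8\pi\, G(x/\pi)$ and $\Psi'(x) = (4x^2 + 8 - \pi^2)\cos x$.
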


We will postpone the proof of the lemma and introduce instead the following notation. For $k=1$ we set 
\begin{equation}\label{e:zeta_1}
\zeta_1 (\phi) = \cos \frac{\phi}{2}\,, 
\end{equation}
while for $k>1$ we let 
\begin{equation}\label{e:zeta_k}
\zeta_k(\phi) := c_k \sin(\nu_k(\phi-\pi))\,,
\end{equation}
where $c_k$ is chosen so that $\langle \zeta_k, \zeta_k\rangle = 1$.  Furthermore, we set 
\begin{equation}\label{e:zeta_0}
\zeta_0 (\phi) := (\phi-\pi) \sin \frac{\phi}{2}\,,    
\end{equation} 
the relevance of the latter function is that it solves 
\begin{equation}\label{e:blocchetto}
\left\{
\begin{array}{l}
\zeta_{\phi\phi} = - \frac{\zeta}{4} + \zeta_1\\ \\
\zeta_\phi (0) = - \frac{\pi}{2} \left(\frac{\zeta (0)}{4} + \zeta_{\phi\phi} (0)\right)\, .
\end{array}\right.
\end{equation}
In particular, if we restrict the second derivative operator on the $2$-dimensional vector space generated by $\zeta_1$ and $\zeta_0$, its matrix representation is given by
\[
\left(
\begin{array}{ll}
-\sfrac14 & 0\\
1 & -\sfrac14
\end{array}
\right)\,.
\]
Consequently, the operator $\mathscr{K}$ is not diagonalizable in $\mathscr{O}$, which is the reason why its spectral analysis is somewhat complicated.

\begin{proposition}\label{p:spettro}
The set $\{\zeta_k \} \subset \mathscr{O}$ is an Hilbert basis for $\mathscr{O}$, namely for every $\zeta \in \mathscr{O}$ there is a unique 
choice of coefficients $\{a_k\}$ such that
\begin{equation}\label{e:espansione}
\zeta = \sum_{k=0}^\infty a_k \zeta_k \, ,
\end{equation}
where the series converges in $H^1$. The coefficients $a_k$ in \eqref{e:espansione} are determined by
\begin{equation}\label{e:coefficienti}
a _k = \langle \zeta, \zeta_k\rangle\qquad \mbox{for all $k\geq 2$,}
\end{equation}
while $a_0$ and $a_1$ are continuous linear functionals on $\mathscr{O}$. 
\end{proposition}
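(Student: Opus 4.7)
The plan is to realize $\{\zeta_k\}_{k\ge 0}$ as the (generalized) eigenfunctions of the compact operator $\mathscr{A}$ on $\mathscr{O}$: the $\zeta_k$ with $k\ge 2$ are genuine eigenfunctions with eigenvalues $-\nu_k^{-2}\to 0$, while $\mathrm{span}(\zeta_0,\zeta_1)$ is a two-dimensional Jordan block at the eigenvalue $-4$. Riesz--Schauder theory then decomposes $\mathscr{O}$ into this block and its $\mathscr{A}$-invariant complement, on which the bilinear form $\langle\cdot,\cdot\rangle$ is non-degenerate and yields a genuine Hilbert basis structure.

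First I would verify by direct substitution into \eqref{e:ventsel} the identities $\mathscr{A}\zeta_k=-\nu_k^{-2}\zeta_k$ for $k\ge 2$ (where the Ventsel compatibility reduces exactly to \eqref{e:zeros}), $\mathscr{A}\zeta_1=-4\zeta_1$, and $\mathscr{A}\zeta_0=-4\zeta_0-16\zeta_1$ (this last using \eqref{e:blocchetto}). Next, a Sturm--Liouville-type integration by parts exploiting the odd-around-$\pi$ symmetry and \eqref{e:zeros} gives $\langle\zeta_j,\zeta_k\rangle=\delta_{jk}$ for $j,k\ge 2$ once the $c_k$'s are normalized as in the statement; Lemma \ref{l:autoaggiunto}(c) gives $\langle\zeta_1,\zeta_k\rangle=0$ for every $k$; and an explicit calculation using $\zeta_0(0)=\zeta_0(2\pi)=0$ together with the ODE for $\zeta_0$ yields $\langle\zeta_0,\zeta_k\rangle=0$ for $k\ge 2$ and $\langle\zeta_0,\zeta_0\rangle=\pi$.

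Given $\zeta\in\mathscr{O}$, I would set $a_k:=\langle\zeta,\zeta_k\rangle$ for $k\ge 2$. Bessel's inequality, combined with the bound $\langle v,v\rangle\le\|v\|_{H^1}^2$ from Lemma \ref{l:autoaggiunto}(a), gives $\sum_{k\ge 2}a_k^2\lesssim\|\zeta\|_{H^1}^2$. The Sturm--Liouville relation $\int\zeta_j\zeta_k=\pi\zeta_j(0)\zeta_k(0)$ for $j\ne k$, together with the asymptotic bounds $|\zeta_k(0)|\lesssim\nu_k^{-2}$ and $\int\zeta_k^2\lesssim\nu_k^{-2}$ (both consequences of \eqref{e:zeros} and \eqref{e:asintotico}), then control the $L^2$ norm of the tails $\sum_{k=M+1}^N a_k\zeta_k$, so that the partial sums $S_N:=\sum_{k=2}^N a_k\zeta_k$ form a Cauchy sequence in $H^1$ and converge to some $S\in\mathscr{O}$; in particular the residual $r:=\zeta-S$ depends continuously on $\zeta$ in the $H^1$ topology.

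The main analytic obstacle is the completeness claim $r\in\mathrm{span}(\zeta_0,\zeta_1)$; once established, $a_0$ and $a_1$ are extracted as the coordinates of $r$ in the basis $(\zeta_0,\zeta_1)$ and inherit continuity in $\zeta$ from the continuity of $\zeta\mapsto r$. To prove it I would apply the Riesz--Schauder theorem to the compact operator $\mathscr{A}$: the generalized eigenspace $G_{-4}:=\bigcup_n\ker(\mathscr{A}+4)^n$ is closed, finite-dimensional and $\mathscr{A}$-invariant, and $\mathscr{O}=G_{-4}\oplus F$ with $F$ closed and $\mathscr{A}$-invariant. An explicit ODE analysis identifies $\ker(\mathscr{A}+4)=\mathrm{span}(\zeta_1)$ (the equation $\psi_{\phi\phi}=-\psi/4$ in $\mathscr{O}$ with Ventsel BC) and $\ker(\mathscr{A}+4)^2=\mathrm{span}(\zeta_0,\zeta_1)$ (by solving $\psi_{\phi\phi}+\psi/4=c\,\zeta_1$ in $\mathscr{O}$ with the Ventsel compatibility on $\mathscr{A}\psi$); a further Fredholm-type check, using that the Ventsel boundary condition on $\mathscr{A}\psi$ is incompatible with a forcing of $(\mathscr{A}+4)\psi$ whose $\zeta_0$-component is non-trivial, closes the ascending chain at $n=2$, giving $G_{-4}=\mathrm{span}(\zeta_0,\zeta_1)$. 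On $F$ the form $\langle\cdot,\cdot\rangle$ is non-degenerate (its radical $\mathrm{span}(\zeta_1)\subset G_{-4}$ has been removed), $\mathscr{A}|_F$ is compact and $\langle\cdot,\cdot\rangle$-symmetric by Lemma \ref{l:autoaggiunto}(d) with spectrum $\{-\nu_k^{-2}\}_{k\ge 2}$ and eigenfunctions $\{\zeta_k\}_{k\ge 2}$, and the classical spectral theorem forces the closed linear span of these eigenfunctions to exhaust $F$; therefore $r\in G_{-4}=\mathrm{span}(\zeta_0,\zeta_1)$ and the proof is complete.
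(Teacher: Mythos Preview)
Your strategy is essentially the dual of the paper's: the paper first defines $Y$ as the $H^1$-closure of $\mathrm{span}\{\zeta_k:k\ge 2\}$, proves $\zeta_1\notin Y$ by a direct contradiction argument, deduces that $\langle\cdot,\cdot\rangle$ is an equivalent inner product on $Y$, and then shows $Y\oplus\mathrm{span}(\zeta_0,\zeta_1)=\mathscr{O}$ by a variational argument on the $\langle\cdot,\cdot\rangle$-orthogonal complement. You instead split off the Jordan block first via the Riesz projector at $-4$, and then run the spectral theorem on the complement $F$. Both routes are valid and rely on the same ingredients (compactness of $\mathscr{A}$ and its $\langle\cdot,\cdot\rangle$-symmetry); your Riesz--Schauder packaging is arguably cleaner conceptually, and your ODE identification of $G_{-4}$ is correct (the chain indeed terminates at $n=2$: the explicit Ventsel check for a would-be $\psi$ with $(\mathscr{A}+4)\psi=\zeta_0$ fails).

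There is, however, one genuine gap. You write that on $F$ the form $\langle\cdot,\cdot\rangle$ is ``non-degenerate'' and then invoke ``the classical spectral theorem''. Non-degeneracy is not sufficient: the spectral theorem for compact self-adjoint operators requires a \emph{Hilbert} space, so you must show that $\langle\cdot,\cdot\rangle$ defines an \emph{equivalent} norm on $F$ (otherwise $(F,\langle\cdot,\cdot\rangle)$ could fail to be complete and the eigenfunctions need not span). This is exactly the role played in the paper by the step ``$\zeta_1\notin Y$''. In your framework the missing argument is: if $v_n\in F$ with $\|v_n\|_{H^1}=1$ and $\langle v_n,v_n\rangle\to 0$, then by the compact embedding $H^1\hookrightarrow L^2$ a subsequence converges weakly in $H^1$ and strongly in $L^2$ to some $v\in F$ (since $F$ is closed), and the structure $\langle v_n,v_n\rangle=\|v_{n,\phi}\|_{L^2}^2-\tfrac14\|v_n\|_{L^2}^2$ forces $\langle v,v\rangle\le 0$, hence $v\in\mathrm{span}(\zeta_1)\cap F=\{0\}$, which then contradicts $\|v_n\|_{H^1}=1$. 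Once this equivalence is in hand, your paragraph on the $H^1$-convergence of $\sum_{k\ge 2}a_k\zeta_k$ (with the explicit Sturm--Liouville estimates on $\zeta_k(0)$ and $\int\zeta_k^2$) becomes unnecessary: Bessel in $(F,\langle\cdot,\cdot\rangle)$ gives the Cauchy property directly.
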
 

\begin{proof}[Proof of Lemma~\ref{l:autovalori}]
First of all, consider $\beta =0$. An odd solution of \eqref{e:autovalore} must then take necessarily the form $c (\phi - \pi)$ and the boundary condition would imply $c=0$. If $\beta >0$ observe that a nontrivial function $\zeta\in \mathscr{O}$ solving \eqref{e:autovalore} would also satisfy $\mathscr{K} (\zeta) = \frac{\zeta}{\beta}$. If $ \beta = \nu^2 >0$ for $\nu>0$, then $h(\phi ) = c (e^{\nu (\phi-\pi)} - e^{-\nu (\phi-\pi)})$ for some constant $c$. If $c\neq 0$ the boundary condition becomes
\begin{equation}
\nu \left(e^{-\nu \pi} + e^{\nu \pi}\right) = - \frac{\pi}{2} \left(\frac{1}{4} + \nu^2\right) \left(e^{-\nu \pi} - e^{\nu \pi}\right)\, .
\end{equation}
The latter identity is equivalent to
\begin{equation}
e^{2\pi \nu} ({\pi + 4\pi \nu^2 - 8 \nu})= {\pi + 4\pi \nu^2 + 8\nu}\, .
\end{equation}
If we make the substitution $x= 2\pi \nu$, we then are seeking for zeros of the function
\[
\Phi (x) = e^x (\pi^2 + x^2 - 4x) - \pi^2 - x^2 - 4x = 0\, .
\]
The derivative is given by 
\[
\Phi' (x) = e^x (x^2 -2x + \pi^2 -4) - 2 (2+x)\, ,
\]
the second derivative by
\[
\Phi'' (x) = e^x (x^2 + \pi^2 -6) -2 \geq 3 e^x -2 > 0\, .
\]
In particular $\Phi$ is convex and $\Phi' (0) = \pi^2 - 8 > 0$. Thus $\Phi$ is strictly increasing and, since $\Phi (0) = 0$, it cannot have positive zeros.

Consider now $\mu = - \beta^2$ for $\nu>0$. A solution of the PDE in \eqref{e:autovalore} must then be a linear combination of $\sin \nu (\phi-\pi)$ and 
$\cos \nu (\phi-\pi)$: the requirement that $h\in \mathscr{O}$ excludes the multiples of $\cos \nu (\phi-\pi)$ in the linear combination. 

For $h (\phi) = \sin \nu (\phi-\pi)$ the boundary condition becomes 
\begin{equation}\label{e:autovalori}
\nu \cos (-\nu \pi) = - \frac{\pi}{2} \left(\frac{1}{4} - \nu^2\right) \sin (-\nu \pi)\, ,
\end{equation}
which is equivalent to \eqref{e:zeros}. If we introduce the unknown $x = \pi \nu$, then the equation becomes
\[
\Psi (x) := 8 x \cos x - \left(\pi^2-4 x^2\right) \sin x = 0\, .
\]
Since $\Psi' (x) = (4x^2 + 8 - \pi^2) \cos x$,
$\Psi'$ has a single zero in the open interval $(0, \frac{\pi}{2})$. Since $\Psi (0) = \Psi (\frac{\pi}{2})=0$, 
we infer that there is no zero of $\Psi$ in the open interval $(0, \frac{\pi}{2})$, i.e. any positive $\nu$ 
satisfying \eqref{e:autovalori} cannot be smaller than $\frac{1}{2}$. Moreover, as $\Psi'$ is strictly negative on 
$(\frac\pi 2,\frac32\pi)$ and $\Psi(\frac32\pi)<0<\Psi(2\pi)$, the next solution $\nu$ lies in 
$(\frac 32,2)$. 

Next, there is a unique solution $\nu_k\in(k-1,k)$, for every $k\geq 3$.
Indeed, $\Psi((k-1)\,\pi)\cdot\Psi(k\,\pi)<0$ and $\Psi'$ has a single zero in the open interval $((k-1)\,\pi, k\pi)$.
Therefore $(\nu_k)_k$ satisfies \eqref{e:asintotico}.
\end{proof}

\begin{proof}[Proof of Proposition~\ref{p:spettro}]
Let $Y$ be the closure in $H^1$ of the vector space $V$ generated by $\{\zeta_k\}_{k\geq 2}$. First of all observe that, for some constant $C$ independent of $k$, 
\begin{equation}\label{e:comparable_1}
1 = \langle \zeta_k, \zeta_k \rangle \geq C^{-1} \|\zeta_k\|_{H^1}^2\qquad \forall k\geq 2\, .
\end{equation}
Indeed set $g_k:= \sin \nu_k (\phi-\pi)$: \eqref{e:comparable_1} is then equivalent to say that the $g_k$'s satisfy the same inequality. An explicit computation shows that this is equivalent to
\begin{align*}
& \int_0^{2\pi} \cos^2 \nu_k (\phi - \pi)\, d\phi  - \frac{1}{4 \nu_k^2} \int_0^{2\pi} \sin^2 \nu_k (\phi-\pi)\, d\phi\\
\geq &\; C^{-1} \left(\int_0^{2\pi} \cos^2 \nu_k (\phi-\pi)\, d\phi +  \frac{1}{\nu_k^2} \int_0^{2\pi} \sin^2 \nu_k (\phi-\pi)\, d\phi\right)\, .
\end{align*}
For each fixed $\nu_k$ the fact that the inequality holds for a sufficiently large constant is an easy consequence of the fact that $\int \cos^2 \nu_k (\phi-\pi)$ is positive while 
$\int \sin^2 \nu_k (\phi-\pi)$ is finite, and $\nu_k\ge\nu_2>\frac32$ for $k\geq2$. 
On the other hand by \eqref{e:asintotico} both integrals converge to $\pi$ as $k\uparrow \infty$ and thus for a sufficiently large $k$ the inequality holds for $C\geq 2$. Now, for $k\neq j$ we have 
\[
\langle \zeta_k, \zeta_j\rangle = - \nu_k^2 \langle \mathscr{K} (\zeta_k), \zeta_j\rangle = - \nu_k^2 \langle \zeta_k, \mathscr{K} (\zeta_j)\rangle 
= \frac{\nu_k^2}{\nu_j^2} \langle \zeta_k, \zeta_j\rangle
\]
implying that $\langle \zeta_k, \zeta_j \rangle =0$.

We next claim that $\zeta_1 (\phi) = \cos \frac{\phi}{2}\not\in Y$. Otherwise there is a sequence $\{v_n\}\subset V$ such that $v_n\to \zeta_1$ strongly in $H^1$. $v_n$ takes therefore the form $v_n = \sum_{k=2}^{N (n)} a_{n,k} \zeta_k$. Using that $\langle v_n, v_n \rangle$ converges to $\langle \zeta_1, \zeta_1\rangle = 0$. Thus we have
\begin{equation}\label{e:si_annulla}
\lim_{n\to \infty} \sum_{k=2}^{N(n)} a_{n, k}^2 = 0\, .
\end{equation}
Now, given that the operator $\mathscr{K}$ is compact we also have that $z_n := \frac{\mathscr{K} (v_n)}{4}$ converges strongly in $H^1$ to 
$\frac{\mathscr{K} (\zeta_1)}{4} = - \cos \frac{\phi}{2}$. On the other hand
\[
z_n = - \sum_{k=2}^{N(n)} \frac{1}{4\nu_k^2} a_{n, k} \zeta_k \, .
\]
We then would have by item (c) of Lemma~\ref{l:autovalori} and \eqref{e:comparable_1}
\begin{align*}
0 < & \|\zeta_1\|^2_{H^1} = \lim_{n\to \infty} \|z_n\|_{H^1}^2 \leq \lim_{n\to \infty} \sum_{k, j=2}^{N(n)} \frac{|a_{n,j}| 
|a_{n,k}|}{16\nu_j^2 \nu_k^2} \|\zeta_k\|_{H^1}\|\zeta_j\|_{H^1}\\
\leq & C \limsup_{n\to\infty} \left(\sum_{k=2}^{N(n)} \frac{|a_{n,j}|}{j^2}\right)^2
\leq C \limsup_{n\to\infty} \sum_{k=2}^{N(n)} \frac{1}{k^4} \sum_{j=2}^{N(n)} a_{n,j}^2
\leq  C \limsup_{n\to\infty}\sum_{j=2}^{N(n)} a_{n,j}^2\stackrel{\eqref{e:si_annulla}}{=} 0\, ,
\end{align*}
Consider now the standard $H^1$ scalar product $(\cdot, \cdot)$ on $\mathscr{O}$ and for every $\zeta\in Y$ let 
$\zeta = \zeta^\perp + \zeta^\parallel$ be the decomposition of $\zeta$ into a multiple of $\zeta_1$ and an element $\zeta^\perp$ orthogonal in the scalar product $(\cdot, \cdot)$ to $\zeta_1$. Since $\zeta_1\not \in Y$ and $Y$ is closed in $H^1$, there is a constant $\alpha >0$ such that $\|\zeta^\perp\|^2_{H^1} \geq \alpha \|\zeta\|^2_{H^1}$. On the other hand using the Fourier expansion of $\zeta$ we easily see that $\langle \zeta, \zeta\rangle = \langle \zeta^\perp, \zeta^\perp\rangle \geq C^{-1} \|\zeta^\perp\|^2_{H^1}$ for some universal constant $C>0$. In particular $\mathscr{K}$ is a compact self-adjoint operator on $Y$, which implies that $\{\zeta_k\}_{k\geq 2}$ is an orthonormal basis on the Hilbert space $Y$ (endowed with the scalar product $\langle \cdot, \cdot \rangle$).

Consider now the $2$-dimensional vector space 
$Z := \{a_0 \zeta_0 +a_1 \zeta_1 : a_i \in \mathbb R\}$. If $a_0 \zeta_0 + a_1\zeta _1 = z\in Z \cap Y$, using Lemma~\ref{l:autoaggiunto} and the fact that $\langle y, \zeta_1\rangle =0$ for every $y\in Y$, we can compute
\[
\langle z, \zeta_j\rangle = a_0 \langle \zeta_0, \zeta_j\rangle = - \nu_j^2 \langle a_0 \zeta_0, \mathscr{K} (\zeta_j)\rangle = -\nu_j^2 \langle a_0 \mathscr{K} (\zeta_0), \zeta_j\rangle
= 4\nu_j^2 \langle a_0 \zeta_0, \zeta_j\rangle = 4 \nu_j^2 \langle z, \zeta_j\rangle\, 
\] 
for every $j\geq 2$. Since $\nu_j> \frac{3}{2}$ we infer that $\langle z, \zeta_j\rangle =0$, i.e. that $z=0$, since $\{\zeta_j\}_{j\geq 2}$ is an orthonormal Hilbert basis of $Y$ with respect to the scalar product $\langle \cdot, \cdot\rangle$. We have thus concluded that $Z\cap Y = \{0\}$. The proof of the proposition will be completed once we show that $Z+Y = \mathscr{O}$. Consider an element $\zeta\in \mathscr{O}$ and define 
\[
\bar\zeta := \frac{\langle \zeta_0, \zeta\rangle}{\langle \zeta_0, \zeta_0\rangle} \zeta_0 + \sum_{j\geq 2} \langle \zeta_j, \zeta\rangle \zeta_j\, .
\]
It turns out that $\bar\zeta \in Z+Y$ and that $\hat{\zeta} := \zeta - \bar\zeta$ satisfies the condition $\langle \hat\zeta, z\rangle = 0$ for every element $z\in Z+Y=:X$. We claim that the latter condition implies that $\hat\zeta$ is a constant multiple of $\cos \frac{\phi}{2}$. Indeed set $X^\perp := \{v : \langle v, w\rangle = 0 \quad \forall w\in X\}$. Then clearly $\mathscr{K} (X^\perp) \subset X^\perp$. Moreover $\mathscr{K}$ on $X^\perp$ has only one eigenvalue, namely $-4$. Consider now $X^\perp \ni v \mapsto Q (v,v) = \langle \mathscr{K} (v), \mathscr{K} (v)\rangle = \langle \mathscr{K}^2 (v), v \rangle$ and set
\begin{equation}\label{e:massimo}
m:= \sup \{Q (v,v): v\in  X^\perp \quad \mbox{and}\quad \langle v, v\rangle =1\}\, ,
\end{equation}
where at the moment $m$ is allowed to be $\infty$ as well. If $m=0$ we then have that $\mathscr{K} (v)$ is a multiple of $\zeta_1$ for every $v$ and this would imply that $v$ itself is a multiple of $\zeta_1$. We therefore assume that $m$ is nonzero.
Using the fact that $Q (v, \zeta_1) = 0$ for every $v$, we can find a maximizing sequence with Fourier expansion
\[
v_k := \sum_{j\geq 1} c_{k,j} \cos \big((\textstyle{j+\frac{1}{2}}) \phi\big)  
\]
for which we easily see that $\langle v_k, v_k \rangle \geq C^{-1} \|v_k\|_{H^1}^2$. We can thus extract a subsequence converging weakly to some $v$. $v$ clearly belongs to $X^\perp$ and, by the compactness of the operator $\mathscr{K}$ is actually a maximizer of \eqref{e:massimo}. The Euler-Lagrange condition implies then that $\mathscr{K}^2 (v) = m v + b \zeta_1$ for some real coefficients $b$. Consider now the vector space $W$ generated by $\zeta_1, v$ and $\mathscr{K} (v)$. $W$ is then either $2$-dimensional or $3$-dimensional and $\mathscr{K}$ maps it onto itself. If $W$ were three-dimensional, then the matrix representation of $\mathscr{K}|_W$ in the basis  $\zeta_1, v$ and $\mathscr{K} (v)$ would be
\[
\left(
\begin{array}{lll}
-4 & 0 & 0\\
0 & 1 & 0\\
\alpha & 0 & m\\
\end{array}
\right)
\]
Since the characteristic polynomial of the latter matrix is $(x-1) (x-m)(x+4)$, $\mathscr{K}$ would have an eigenvalue different from $-4$ on $W\subset X^\perp$, which is not possible. 
On the other hand if $W$ were $2$-dimensional, then $v$ and $\cos \frac{\phi}{2}$ would be a basis and the matrix representation of $\mathscr{K}|_W$ in that basis would be
\[
\left(
\begin{array}{ll}
-4 & 0\\
\alpha & \beta
\end{array}
\right)
\]
Since $\mathscr{K}|_W$ cannot have an eigenvalue different than $-4$ this would force $\beta = -4$. We then would have $\mathscr{K} (v) = -4v + \alpha \zeta_0$. This would imply that $v$ is an odd solution of $v_{\phi\phi} +\frac{v}{4} = \alpha \cos \frac{\phi}{2}$. The general solution of the latter equation is given by $c_1 \cos \frac{\phi}{2} + c_2 \sin \frac{\phi}{2} + \alpha (\phi - \pi) \sin \frac{\phi}{2}$, for real coefficients $c_1$ and $c_2$. The fact that $v$ is odd implies $c_2=0$, namely $c_1 \zeta_1 + \alpha \zeta_0$. The fact that $v$ is not colinear with $\zeta_1$ implies that $\alpha \neq 0$, but on the other hand since $v\in X^\perp$, $\langle v, \zeta_1\rangle =0$, which implies $\alpha =0$. We have reached a contradiction: $X^\perp$ was thus the line generated by $\zeta_1$, proving that indeed $X=\mathscr{O}$.  
\end{proof}

\section{The linear three annuli property}\label{s:tre-anelli}

We now define a functional which will be instrumental in proving a suitable decay property 
for the coefficients of solutions to \eqref{e:lineare} and hence to \eqref{e:SIS}. 
The latter called three annuli property, is a way to encode the presence of positive exponentials among the coefficients $a_k$ defined by \eqref{e:a_k(t) 0}, \eqref{e:a_k(t) eq} 
in the representations of $v^e$, $v^o$, respectively. 

We separate the behaviour of the even and odd parts. We start off with the former.
\begin{definition}\label{d:functionals even}
Consider any $0<\sigma<s<3$ real numbers and functions $v$ such that
$v$ is even, 
$v\in H^2 ((0,2\pi)\times (\sigma,s))$ and $v (0, t)=v(2\pi,t)=0$ for every $t$.

We then define the functional\index[simb]{aalGcal_e@$\mathcal{G}_e$}
\begin{align}\label{e:def_G even}
\mathcal{G}_{e} (v, \sigma,s) &:= 
\int_\sigma^s\|v_{\phi\phi}(\cdot,t)\|^2_{L^2((0,2\pi))}\, dt\,
\end{align}
\end{definition}
\begin{proposition}\label{p:tre-anelli_even}
There is a constant $C>0$ such that for all $v\in H^2 ((0,2\pi)\times (0,3))$ even with $v(0, t) =0$
\begin{equation}\label{e:Ge coerc}
 C^{-1}\int_\sigma^s\|v(\cdot,t)\|_{H^2((0,2\pi))}^2\,dt\leq 
 \mathcal{G}_e(v,\sigma,s)\leq 
 C\int_\sigma^s \|v(\cdot,t)\|_{H^2((0,2\pi))}^2\,dt\, .
\end{equation}
Moreover, there is a constant $\eta\in(0,1)$ such that the following property holds 
\begin{equation}\label{e:Ge annuli}
\mbox{If 
$\mathcal{G}_e (v, 1,2) \geq (1-\eta) \mathcal{G}_e (v, 0,1)$
then 
$\mathcal{G}_e (v, 2,3)\geq (1+\eta) \mathcal{G}_e (v, 1,2)$,}
\end{equation}
for every even solution $v\in H^2((0,2\pi)\times(0,3))$ of \eqref{e:lineare}  satisfying:
\begin{equation}\label{e:mean-zero}
\int_0^{2\pi} v (\phi, t) \sin \frac{\phi}{2}d\phi =0 \,.
\end{equation}
\end{proposition}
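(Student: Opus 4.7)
The proof of the norm equivalence \eqref{e:Ge coerc} can be read off directly from the spectral expansion in Proposition \ref{p:even}: any even $v\in H^2((0,2\pi)\times(0,3))$ with $v(0,t)=0$ (whence $v(2\pi,t)=0$ by symmetry) expands as $v = \sum_{k\ge 0} \tfrac{a_k(t)}{\sqrt{\pi}}\sin((k+\tfrac12)\phi)$, and by orthonormality $\|v(\cdot,t)\|_{H^2}^2 \asymp \sum_k (1+(k+\tfrac12)^4) a_k(t)^2 \asymp \sum_k (k+\tfrac12)^4 a_k(t)^2 = \|v_{\phi\phi}(\cdot,t)\|_{L^2}^2$ because $(k+\tfrac12)^4 \ge (1/2)^4$ for all $k\ge 0$. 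Integrating in $t$ gives \eqref{e:Ge coerc}.

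For \eqref{e:Ge annuli}, the key preliminary observation is that the extra condition \eqref{e:mean-zero} is exactly the statement that the Fourier coefficient of $v(\cdot,t)$ against $\sin(\phi/2)$ vanishes, i.e.\ $a_0(t)\equiv 0$; this rules out the only ``neutral'' mode (where $a_0(t)=C_0 e^t + D_0$ could yield a constant $F$). For the remaining modes $k\ge 1$ one has $a_k(t)=C_k e^{(k+1)t}+D_k e^{-kt}$ by \eqref{e:a_k(t) 0}, and the expansion gives
\[
F(j):=\mathcal{G}_e(v,j,j+1) = \sum_{k\ge 1}(k+\tfrac12)^4\Big[\alpha_k C_k^2\, e^{2(k+1)j}+\beta_k C_k D_k\, e^j+\gamma_k D_k^2\, e^{-2kj}\Big],
\]
with $\alpha_k,\beta_k,\gamma_k>0$ the elementary integrals of $e^{2(k+1)t}$, $2e^t$, $e^{-2kt}$ on $(0,1)$. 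I would argue by contradiction and compactness: if \eqref{e:Ge annuli} fails for every $\eta>0$, there is a sequence $v_n$ (of even solutions of \eqref{e:ve} satisfying \eqref{e:mean-zero}) and $\eta_n\downarrow 0$ violating the implication; after normalizing $\mathcal{G}_e(v_n,1,2)=1$, all three $\mathcal{G}_e(v_n,j,j+1)$ are bounded, forcing a uniform bound $\sum_{k\ge 1}(k+\tfrac12)^4 \int_0^3 a_{n,k}^2\,dt \le C$. Since each $(C_{n,k},D_{n,k})\in\mathbb{R}^2$ is thereby bounded, a diagonal subsequence produces pointwise limits $(C_k,D_k)$, and a standard lower-semicontinuity/tail argument yields a nonzero limit $v$ solving \eqref{e:ve}, satisfying \eqref{e:mean-zero}, and with $F(0)=F(1)=F(2)=1$.

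The heart of the argument, and the main obstacle, is the rigidity step: showing that $F(0)=F(1)=F(2)$ forces $v\equiv 0$. Write $F(j)=A(j)+\mu e^j + B(j)$ with $A(j),B(j)\ge 0$ as above and $\mu=\sum_{k\ge 1}(k+\tfrac12)^4 \beta_k C_k D_k$. Using $\Delta^2(e^{\lambda j})=(1-e^\lambda)^2 e^{\lambda j}$, the second-difference identity $F(2)-2F(1)+F(0)=0$ becomes
\[
\sum_{k\ge 1}(k+\tfrac12)^4 \alpha_k C_k^2 (e^{2(k+1)}-1)^2 + \mu(e-1)^2 + \sum_{k\ge 1}(k+\tfrac12)^4 \gamma_k D_k^2 (1-e^{-2k})^2 = 0,
\]
and since the first and third sums are nonnegative this forces $\mu\le 0$. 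Setting $u_j:=A(j+1)-A(j)\ge 0$ and $w_j:=B(j)-B(j+1)\ge 0$, the first-difference equations give $\mu(e-1)=w_0-u_0$ and $\mu(e-1)e = w_1-u_1$; but the exponential rate separation yields $u_1\ge e^4 u_0$ (because every growing rate $2(k+1)$ is $\ge 4$) and $w_1\le e^{-2} w_0$ (because every decaying rate $2k$ is $\ge 2$), so $u_1=w_1+e(u_0-w_0)$ forces $e^4 u_0 \le e^{-2} w_0 + e u_0$. Combining with $u_0\ge w_0$ (which follows from $\mu\le 0$) produces $u_0\le w_0/(e^6-e^3)<w_0\le u_0$, forcing $u_0=w_0=0$. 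Because the weights $(k+\tfrac12)^4\alpha_k(e^{2(k+1)}-1)$ and $(k+\tfrac12)^4\gamma_k(1-e^{-2k})$ are strictly positive, this gives $C_k=D_k=0$ for all $k\ge 1$, i.e.\ $v\equiv 0$, contradicting $F(1)=1$. The main obstacle in the write-up is handling the strong-convergence issue needed to pass $\mathcal{G}_e(v_n,j,j+1)\to\mathcal{G}_e(v,j,j+1)$ without losing mass in the tail of the series; this is standard but requires care because the $H^2$ convergence is only weak a priori.
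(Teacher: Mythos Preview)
Your argument for \eqref{e:Ge coerc} is correct and is the same as the paper's (the paper phrases it via Poincar\'e using $v(0,t)=v(2\pi,t)=0$ and $v_\phi(\pi,t)=0$, but this is the same content).

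For \eqref{e:Ge annuli} you take a genuinely different route from the paper, and the gap you yourself flag is real. The paper's key simplification is the pointwise inequality
\[
\frac{d^2}{dt^2}\big(a_k(t)^2\big)\;\ge\;a_k(t)^2\qquad(k\ge 1),
\]
which is immediate from $a_k(t)=C_k e^{(k+1)t}+D_k e^{-kt}$ (the cross term $2C_kD_ke^t$ has the same second derivative as itself, and the pure terms gain factors $4(k+1)^2\ge 1$ and $4k^2\ge 1$). Summing with weights $(k+\tfrac12)^4$ gives that the scalar function $h(t):=\sum_{k\ge1}(k+\tfrac12)^4a_k(t)^2$ satisfies $\ddot h\ge h$ on $(0,3)$. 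The three-annuli property then becomes a statement about nonnegative convex functions with $\ddot h\ge h$, and compactness is trivial: the contradiction sequence $h_n$ with $\int_1^2 h_n=1$ is locally uniformly Lipschitz by convexity, so converges locally uniformly to a nontrivial limit, which must be constant by the mean-value argument, contradicting $\ddot h\ge h$.

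Your direct algebraic rigidity for the limit is essentially correct in spirit, though the final chain of inequalities is miscomputed: from $u_1\ge e^4u_0$, $w_1\le e^{-2}w_0$ and $u_1=w_1+e(u_0-w_0)$ you actually get $(e^4-e)u_0\le(e^{-2}-e)w_0$, and since the left side is $\ge0$ and the right side is $\le0$, both vanish immediately; the detour through $u_0\ge w_0$ and the expression $w_0/(e^6-e^3)$ is neither needed nor correct as written. More importantly, the compactness you need to reach that rigidity step is not available: weak $H^2$ convergence only gives lower semicontinuity of $\mathcal G_e$, so mass can escape to high Fourier modes (take $v_n$ supported on the single mode $k=n$) and the limit can be zero, killing the contradiction. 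Fixing this essentially forces you back to the paper's scalar reduction, since the only robust way to prevent high-frequency leakage here is to track the scalar $h_n(t)=\|v_{n,\phi\phi}(\cdot,t)\|_{L^2}^2$, whose convexity gives compactness for free.
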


\begin{proof} Since $v(0, t) =v(2\pi, t) =0$, we have by the Poincar\'e inequality
\[
\|v (\cdot,t )\|_{L^2 ((0, 2\pi))} \leq C \|v_\phi (\cdot, t)\|_{L^2 ((0, 2\pi))}\, .
\]
Moreover, using that $v_\phi(\pi, t)=0 $, we conclude
\[
\|v (\cdot, t)\|_{H^2 ((0, 2\pi))}\leq C \|v_{\phi\phi} (\cdot, t)\|_{L^2 ((0, 2\pi))}\, ,
\]
which clearly implies \eqref{e:Ge coerc}.
 
We now establish \eqref{e:Ge annuli}. Recall that, since $v$ is even
and satisfies \eqref{e:mean-zero}, the Fourier decomposition of $v$ 
reads as (cf. \eqref{e:representation_even})
 \[
 v(\phi,t)=\sum_{k=1}^\infty \frac{a_{k}(t)}{\sqrt\pi}\sin\left(\textstyle{(k+\frac 12)}\phi\right),
\]
where by \eqref{e:a_k(t) 0} the coefficients $a_{k}$'s satisfy
for all $k\in\mathbb{N}$ with $k\geq1$
\[
 a_{k}(t)=C_{k}\,e^{(k+1) t}+D_{k}\,e^{-k t}
\]
for some $C_k$, $D_k\in\mathbb R$. 
A simple calculation then gives for all $k\geq 1$ 
\begin{equation}\label{e:alphakconv}
\frac{d^2}{dt^2}(a_k^2(t))\geq a_k^2(t)\geq 0,
\end{equation}
establishing the convexity of each $a_k^2$ for $k\geq 1$. 
Moreover, note that 
\begin{equation*}
\mathcal{G}_e(v,\sigma,s)=
\int_\sigma^s\underbrace{\sum_{k=1}^\infty \textstyle{(k+\frac12)^4} a_{k}^2(t)}_{h(t):=}\,dt.
\end{equation*}
We now want to argue that, there is a constant $\eta>0$ with the following property. 
If $h\geq 0$ is a nontrivial $L^1$ function such that $\ddot h\geq h$ on $(0, 3)$, in particular $h$ is convex, then 
\[
\int_1^{2} h(t)\, dt \geq (1- \eta) \int_0^1 h(t)\, dt \quad
\Longrightarrow \quad \int_2^3 h(t)\, dt \geq (1+\eta)\int_1^2 h(t)\, dt\, .
\]
Indeed, assume by contradiction this were not true and let $h_j$ be a sequence of nontrivial functions such that
$\ddot h_j \geq h_j\geq 0$ and
 \[
\int_1^2 h_j (t)\, dt \geq \max \left\{(1- \sfrac{1}{j}) \int_0^1 h_j (t)\, dt,
(1+\sfrac{1}{j})^{-1} \int_2^3 h_j(t)\, dt \right\}\, .
\]
After multiplying by a suitable constant we can then assume 
\[
\int_1^2 h_j (t)\, dt = 1\, .
\]
The convexity of the $h_j$ and the uniform bound on $\|h_j\|_{L^1((0,3))}$ implies easily a uniform bound on
$\|h_j\|_{L^\infty((\sigma,s))}$ for any $0<\sigma<s<3$ and therefore (again by convexity) a uniform Lipschitz bound on any compact subset of $(0, 3)$. This ensures the local uniform convergence of a subsequence of $h_j$ (not relabeled) to a nonnegative convex function $h$, which is $L^1$ (and thus locally finite) on the open interval $(0,3)$. 
In particular, $\int_1^2 h (t)\,dt =1$. On the other hand it is also easy to see that 
\[
\int_0^1 h(t)\, dt \leq 1,\quad\text{and}\quad \int_2^3 h(t)\, dt \leq 1\, .
\]
By the mean-value theorem this implies the existence of three points $0 <t_1 < 1<t_2<2<t_3 < 3$ where $h (t_2) \geq 1 \geq
\max \{h (t_1), h(t_3)\}$. But then the convexity of $h$ implies that $h$ must be constantly equal to $1$ on $[t_1, t_3]$.
Since the inequality $\ddot h \geq h$ is verified in the limit in the sense of distributions, this is a contradiction.
\end{proof}
Next we consider the odd part. 
\begin{definition}\label{d:functionals odd}
Fix a constant $c_0>0$ appropriately small (whose choice will be specified in Proposition~\ref{p:tre-anelli_odd} below). Consider now any couple of real numbers $0\leq\sigma<s \leq 3$ and a pair of functions $(v, \theta)$ such that
\begin{itemize}
\item[(i)] $v$ is odd, $v\in H^2 ((0,2\pi)\times (\sigma,s))$ and $v (0, t)=v(2\pi,t)=0$ for every $t$;
\item[(ii)] $\theta\in H^2 ([\sigma,s])$.
\end{itemize}
Define $\zeta$ as in \eqref{e:zeta} and let $a_k (t)$ be the coefficients in the representation \eqref{e:representation_odd} and $\nu_k$ the numbers in Lemma~\ref{l:autovalori}. We then define the functionals\index[simb]{aalGcal_o@$\mathcal{G}_o$}\index[simb]{aalEcal@$\mathcal{E}$}\index[simb]{aalFcal@$\mathcal{F}$}
\begin{align}
\mathcal{E} (v, \theta, \sigma,s) &:= \sum_{k=2}^\infty \int_\sigma^s  ({\nu_k^4 a_k^2 (t) + \ddot a_k^2 (t)})\, dt\, \label{e:def_E}\\
\mathcal{F} (v, \theta,\sigma,s) &:= \int_\sigma^s (\dot\theta^2 (t) + \ddot\theta^2 (t) + a_0^2 (t) + a_1^2 (t) + \ddot a_0^2 (t) + \ddot a_1^2 (t))\, dt\\
\mathcal{G}_o (v, \theta, \sigma,s) &:= \max\{\mathcal{E} (v,\theta, \sigma,s), c_0 \mathcal{F} (v, \theta,\sigma,s)\}
\label{e:def_G}
\end{align}
\end{definition}

\begin{proposition}\label{p:tre-anelli_odd}
There is a constant $\eta\in(0,1)$ such that the following property holds for every solution $(v,\theta)\in H^2((0,2\pi)\times (0,3))\times H^3((0,2\pi))$ of \eqref{e:lineare} with $v$ odd:
\begin{itemize}
\item[(a)] If $\mathcal{E} (v, \theta, 1,2) \geq (1-\eta) \mathcal{E} (v, \theta,0,1)$ then $\mathcal{E} (v, \theta,2,3)\geq (1+\eta) \mathcal{E} (v, \theta,1,2)$.
\end{itemize}
Furthermore, there are positive constants $C$ and $c_0$ such that the following properties hold for every solution $(v,\theta)$ of \eqref{e:lineare} with $v$ odd which satisfies in addition \eqref{e:condizione_extra}:
\begin{itemize}
\item[(b)] If $\mathcal{G}_o (v, \theta, 1,2) \geq (1-\eta) \mathcal{G}_o (v, \theta,0,1)$ then $\mathcal{G}_o (v, \theta,2,3)\geq (1+\eta) \mathcal{G}_o (v, \theta,1,2)$.
\end{itemize}
and
\begin{equation}\label{e:Go growth}
C^{-1} (\|v\|_{H^2 ((0,2\pi)\times(\sigma,s))}^2 + \|\dot\theta\|_{H^1 ((\sigma,s))}^2) \leq \mathcal{G}_o (v, \theta, \sigma,s)
\leq C (\|v\|_{H^2 ((0,2\pi)\times(\sigma,s))}^2 + \|\dot\theta\|_{H^1 ((\sigma,s))}^2)\,,
\end{equation}
for all $0\leq\sigma<s\leq 3$.

\end{proposition}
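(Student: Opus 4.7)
The plan is to handle parts (a) and (c) by adapting the convexity/compactness argument of Proposition~\ref{p:tre-anelli_even} to the spectral decomposition of Proposition~\ref{p:odd}, and to prove part (b) by first identifying the extra assumption \eqref{e:condizione_extra} with the relation $a_0\equiv 0$ and then analyzing the reduced low-mode subsystem.

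For part (a), the coefficients for $k\ge 2$ satisfy $a_k(t)=C_k e^{\mu_{k,-} t}+D_k e^{\mu_{k,+} t}$ with $\mu_{k,\pm}=\tfrac{1}{2}\pm\nu_k$ (cf. \eqref{e:a_k(t)}). Using the crucial identity $\mu_{k,+}+\mu_{k,-}=1$, a direct expansion shows that $(a_k^2)''-c\,a_k^2$ is a quadratic form in $X=C_k e^{\mu_{k,-} t}$, $Y=D_k e^{\mu_{k,+} t}$ whose nonnegativity reduces to the elementary inequality $(\nu_k-1)^2\ge 0$, yielding $(a_k^2)''\ge (2\nu_k^2-1)\,a_k^2$; the same calculation applied with $C_k,D_k$ replaced by $\mu_{k,-}^2 C_k,\mu_{k,+}^2 D_k$ gives $(\ddot a_k^2)''\ge (2\nu_k^2-1)\,\ddot a_k^2$. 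Since $\nu_2>\tfrac{3}{2}$ by Lemma~\ref{l:autovalori}, $H(t):=\sum_{k\ge 2}(\nu_k^4 a_k^2+\ddot a_k^2)$ is nonnegative and satisfies $H''\ge \tfrac{7}{2}H$, so the convexity/compactness argument from the proof of Proposition~\ref{p:tre-anelli_even} applies verbatim. Part (c) is a Parseval-type computation: the $H^1$-orthogonality of $\{\zeta_k\}_{k\ge 2}$ from Proposition~\ref{p:spettro} makes $\sum_{k\ge 2}\nu_k^4 a_k^2(t)$ comparable to the squared $H^2$-norm of the projection of $\zeta(\cdot,t)$ onto those modes; the ODEs for $a_k$ control the time derivatives similarly; and the Ventsel boundary identity $\zeta(0,t)=-\theta(t)/\sqrt{2\pi}$ from \eqref{e:Ventsell} yields $\theta=-\sqrt{2\pi}\,[a_1-\sum_{k\ge 2}c_k\sin(\nu_k\pi)\,a_k]$, so that $\theta$ and its derivatives are controlled by $a_1$ plus an $\ell^2$-tail bounded by $\mathcal{E}$. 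Choosing $c_0$ small enough to absorb this tail in $\mathcal{E}$ yields \eqref{e:Go growth}.

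For part (b), the pivotal step is identifying \eqref{e:condizione_extra} with $a_0\equiv 0$. After integrating by parts in $\phi$ and using $v(0,t)=v(2\pi,t)=0$, \eqref{e:condizione_extra} becomes $\int_0^{2\pi}\zeta\cos(\phi/2)\,d\phi=-\sqrt{\pi/2}\,\theta$; the explicit values $\int_0^{2\pi}\zeta_k\cos(\phi/2)\,d\phi=\pi,\,-\pi$, and $2c_k\nu_k\cos(\nu_k\pi)/(\nu_k^2-\tfrac 14)$ for $k=1,\,0,\,\ge 2$ respectively, combined with the eigenvalue equation \eqref{e:zeros} which simplifies the $k\ge 2$ integral to $-\pi c_k\sin(\nu_k\pi)$, cause the tail to cancel exactly against the corresponding tail in $\theta$, leaving $-\pi a_0=0$. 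With $a_0\equiv 0$ the low-mode ODE collapses to $\ddot a_1=\dot a_1$, so $a_1(t)=C_1 e^t+D_1$; the initial condition $\theta(0)=0$ then couples $D_1$ to $C_1$ and to $a_k(0)$ for $k\ge 2$. A direct integration shows that the corresponding contribution to $\mathcal{F}$, whose integrand is $(4\pi+2)C_1^2 e^{2t}$ up to lower-order terms, satisfies the three-annuli property with an $\eta$ comparable to that of part (a), the residual tail from $k\ge 2$ being controlled by $\mathcal{E}$. Finally, the three-annuli property for $\mathcal{G}_o=\max(\mathcal{E},c_0\mathcal{F})$ follows from the elementary observation that if $\mathcal{E}$ and $c_0\mathcal{F}$ each satisfy it with the same $\eta$ then so does their pointwise maximum: on the middle annulus one of the two attains the maximum, and the three-annuli inequality for that functional transfers to the maximum.

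The main obstacle is the clean identification of \eqref{e:condizione_extra} with $a_0\equiv 0$: it is the linearized counterpart of Proposition~\ref{p:constant-crack-tip} (the uniqueness of the cracktip coefficient) and relies on a nontrivial cancellation between the low modes and the tail $(a_k)_{k\ge 2}$, driven precisely by the eigenvalue equation \eqref{e:zeros}. Without this constraint, an $e^t$-growing $a_0$ would contribute to $\mathcal{F}$ at the same exponential rate as the ``dangerous'' $D_1$-mode of $a_1$, so that the three-annuli property would genuinely fail. Once $a_0$ has been eliminated, the remaining analysis reduces to careful but elementary bookkeeping of explicit exponentials, to be performed uniformly in the tail contribution from the modes $k\ge 2$.
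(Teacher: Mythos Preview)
Your treatment of part (a) and of the growth estimate \eqref{e:Go growth} is essentially the paper's: both reduce to a differential inequality $H''\ge \hat c\,H$ for the integrand of $\mathcal E$ and then invoke the convexity argument from the proof of Proposition~\ref{p:tre-anelli_even}. Your explicit constant $2\nu_k^2-1$ is more than what is needed, but harmless.

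Your identification of \eqref{e:condizione_extra} with $a_0\equiv 0$ is correct, and genuinely cleaner than the paper's version. The paper only verifies this in the limit of a compactness argument, where the $k\ge 2$ tail has already disappeared and the computation reduces to evaluating $\int_0^{2\pi}(\sin\tfrac\phi2+\tfrac{\phi-\pi}{2}\cos\tfrac\phi2)\sin\tfrac\phi2\,d\phi=\tfrac\pi2$. Your computation, exploiting the eigenvalue relation \eqref{e:zeros} to cancel the tail against the corresponding tail in $\theta$, shows that $a_0\equiv 0$ holds for \emph{every} solution of \eqref{e:lineare} satisfying \eqref{e:condizione_extra}, not just the limiting one.

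However, your final step for (b) has a real gap. The ``elementary observation'' that the pointwise maximum of two three-annuli functionals is again three-annuli is correct, but it requires $c_0\mathcal F$ to satisfy the property \emph{independently}, and it does not. The obstruction is exactly the coupling you yourself note: $\dot\theta$ and $\ddot\theta$ carry the tail $\sum_{k\ge 2}\dot a_k\,c_k\sin(\nu_k\pi)$ via $\zeta(0,t)=-\theta(t)/\sqrt{2\pi}$, producing cross terms in $\mathcal F$ of size $\sqrt{\mathcal E\,\mathcal F}$ that obey no useful differential inequality; and the constraint $\theta(0)=0$ forces $D_1=-C_1+\sum_{k\ge 2}a_k(0)\,c_k\sin(\nu_k\pi)$, so even the constant part of $a_1$ is contaminated by the tail. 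A pure $D_1$-mode (constant $\mathcal F$-integrand) would violate three-annuli and is only excluded through this same coupling. Saying the residual is ``controlled by $\mathcal E$'' does not by itself turn the max into a three-annuli functional; one would need a careful perturbative argument that you have not supplied and that is not obviously shorter than the paper's route.

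The paper avoids all of this by compactness: assume (b) fails along a sequence with $c_0=1/j$, normalize $\mathcal F(1,2)=1$, and use \eqref{e:preponderante} to force $\mathcal E\to 0$ on all three annuli. In the limit the tail vanishes, so $\zeta=a_0\zeta_0+a_1\zeta_1$; the boundary condition $v(0,t)=0$ gives $\bar a_1=0$, the extra condition \eqref{e:condizione_extra} then gives $a_0=0$ (now a one-line computation), hence $v\equiv 0$ and $\theta(t)=d(e^t-1)$. An explicit evaluation of $\mathcal F$ on the three intervals yields $e^2\ge (1+\eta)^{-1}e^4$, a contradiction for any $\eta\in(0,1)$. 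The compactness step is precisely what absorbs the tail and makes the low-mode analysis clean.
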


\begin{proof} In order to prove claim (a) consider any of the functions $a_k (t)$ and $\ddot a_k (t)$ and call it $\omega (t)$, and observe we know $k\geq 2$ by assumption. From Proposition~\ref{p:odd} and Lemma~\ref{l:autovalori} it follows that $\omega$ solves then the ODE
\[
\ddot \omega (t) - \dot \omega (t) - c\, \omega (t) = 0\, ,
\]
where $c$ is a constant which depends on $k$, but it satisfies the bound $c \geq \bar c>0$ for some positive $\bar c$ independent of $k$. The polynomial $x^2 -x - c$ has then a positive and a negative solution $\alpha^+$ and $-\alpha^-$ (also depending on $k$) with $\alpha^\pm \geq \alpha_0 >0$. The function $\omega (t)$ is then given by $D e^{\alpha^+ t} + C e^{-\alpha^- t}$. A simple computations shows that
\[
\frac{d^2}{dt^2} (\omega^2 (t)) \geq \hat{c}\, 
\omega^2 (t)\, ,
\]
where the positive constant $\hat{c}$ can be chosen to depend on $\alpha_0$ and in particular independent of $k$. Summing the square of all the coefficients involved in the computation of $\mathcal{E}$ we find a non negative function $h(t)$ with the property that $\ddot h (t) \geq \hat{c}\, h(t)$ and $\mathcal{E} (v, \theta, s, \sigma) = \int_s^\sigma h(t)\, dt$. 
The rest of the argument follows the lines of that employed for proving the analogous property for $\mathcal{G}_e$ in Proposition~\ref{p:tre-anelli_even}, to which we refer.

Having shown (a) we now turn to (b).  We claim that (b) holds for $c_0$ sufficiently small.
Observe that if $\mathcal{E} (v, \theta, 1,2)\geq c_0\mathcal{F} (v, \theta,1,2)$, then (b) is simply implied by (a). 
Thus we may assume $\mathcal{G}_o (v, \theta,1,2) = c_0\mathcal{F} (v, \theta,1,2)$.
We argue by contradiction: for $c_0 = \sfrac{1}{j}$ choose $(v_j, \theta_j)$ such that 
\[
 \mathcal{G}_o (v_j, \theta_j, 1,2) \geq \max \{ (1-\eta) \mathcal{G}_o (v_j, \theta_j, 0,1),(1+\eta)^{-1} \mathcal{G}_o (v_j, \theta_, 2,3)\}\,.
\]
Using the linearity we can normalize it so that $\mathcal{F} (v_j, \theta_j,1,2) =1$. Observe that we have the inequalities
\begin{align}
1 = \mathcal{F} (v_j, \theta_j,1,2) &\geq \max \{ (1-\eta) \mathcal{F} (v_j, \theta_j, 0,1), (1+\eta)^{-1} \mathcal{F} (v_j, \theta_j, 2,3)\}\label{e:preponderante 0}\\
1 = \mathcal{F} (v_j, \theta_j,1,2) &\geq j \max \{\mathcal{E} (v_j, \theta_j, 1,2),  (1-\eta) \mathcal{E} (v_j, \theta_j, 0,1), 
(1+\eta)^{-1} \mathcal{E} (v_j, \theta_j, 2,3)\}\, .\label{e:preponderante}
\end{align}
From Proposition~\ref{p:odd} we gain a uniform bound on $\|v_j\|_{H^2 ([0,2\pi)\times (0,3))}
$ and $\|\dot\theta_j\|_{H^1 ((0,3))}$ and consequently (since $\theta_j (0) =0$) on $\|\theta_j\|_{H^2 ((0,3))}$. We then extract a sequence converging weakly to $(v, \theta)\in H^2$ which satisfies \eqref{e:lineare} and \eqref{e:condizione_extra}. Consider the functions $v$ and $\zeta$, which are the limit of the corresponding maps constructed from $v_j$. From \eqref{e:preponderante} and \eqref{e:def_E} we 
conclude that $\zeta (\phi, t) = a_0 (t) \zeta_0 (\phi) + a_1 (t) \zeta_1 (\phi)$. Unraveling the definition of $\zeta$ we infer 
\[
v (\phi, t) = a_0 (t) (\phi -\pi) \sin \frac{\phi}{2} + \bar{a}_1 (t) \cos \frac{\phi}{2}\, ,
\]
where $\bar{a}_1 (t) = a_1 (t) + \frac{\theta (t)}{\sqrt{2\pi}}$. 
However the boundary conditions $v (0, t) = v (2\pi, t) =0$ imply $\bar{a}_1 \equiv 0$. We are thus left with the formula $v (\phi, t) = a_0 (t) (\phi -\pi) \sin \frac{\phi}{2}$. Inserting in \eqref{e:lineare} we get:
\begin{equation}\label{e:a0 theta}
\left\{
\begin{array}{l}
\ddot a_0(t) - \dot a_0 (t) =0\\
\dot\theta (t) - \ddot\theta (t) = -\sqrt{2\pi} a_0 (t)\\
\theta (0) =0\, .
\end{array}\right.
\end{equation}
From the first equation we find $a_0 (t) = c_1 + c_2 e^t$, while from the second we find $\theta (t) = d_1 - \sqrt{2\pi} c_1 t + d_2 e^t -c_2\sqrt{2\pi} t e^t$, 
i.e. $\theta (t) = -\sqrt{2\pi} t a_0 (t) + d_1+d_2 e^t$. Using $\theta (0) =0$ we thus get $\theta (t) = - \sqrt{2\pi} t a_0 (t) + d (e^t-1)$. We next use \eqref{e:condizione_extra} to derive that $a_0$ is actually identically null. 
Indeed, the latter reads as
\[
a_0 (t) \underbrace{\int_0^{2\pi} \left(\sin \frac{\phi}{2} + \frac{\phi-\pi}{2} 
\cos \frac{\phi}{2} \right) \sin \frac{\phi}{2}\, d\phi}_{=:I} = 0\, .
\]
We compute the integral $I$ as 
\begin{align*}
I &= \int_0^{2\pi} \left(\sin^2 \frac{\phi}{2} + \frac{\phi-\pi}{2} \cos \frac{\phi}{2} \sin \frac{\phi}{2} \right)\, d\phi\\
&= \int_0^{2\pi} \left(\frac{1}{2} (1-\cos \phi) + \frac{\phi - \pi}{4} \sin \phi\right)\, d\phi\\
&= \pi - \left. \frac{\phi-\pi}{4} \cos \phi \right|^{2\pi}_0 + \frac{1}{4} \int_0^{2\pi} \cos \phi\, d\phi= \frac{\pi}{2}\, . 
\end{align*}
So we actually infer $a_0 (t)=0$, which in turn implies $\theta (t) = d (e^t-1)$. 

Using the convergences established for $(v_j,\theta_j)$ and passing to the limit into \eqref{e:preponderante 0} we find
\[
1 = \mathcal{F} (v, \theta,1,2) \geq \max \{ (1-\eta) \mathcal{F} (v, \theta, 0,1), (1+\eta)^{-1} \mathcal{F} (v, \theta, 2,3)\}\, ,
\]
to get by an explicit computation
\[
1 = 2d^2 \int_1^2 e^{2t}\, dt \geq 2d^2 \max \left\{ (1-\eta) \int_0^1 e^{2t}\, dt, (1+\eta)^{-1} \int_2^3 e^{2t}\, dt\right\}\, .
\]
Thus, $d\neq 0$ and the latter inequality is equivalent to
\[
e^4-e^2 \geq \max \{(1-\eta) (e^2-1), (1+\eta)^{-1} (e^6-e^4)\}\, ,
\]
which in turn is equivalent to
\[
e^2 \geq \max \{(1-\eta), (1+\eta)^{-1} e^4\}\, .
\]
Since $0<\eta<1$, the latter would imply $e^2 \geq \frac{e^4}{2}$, which is clearly a contradiction.

The growth conditions in \eqref{e:Go growth} easily follow from Proposition~\ref{p:odd} by taking into account that $(v,\theta)$ solves \eqref{e:lineare}.
\end{proof}

\section{Second linearization and proof of Theorem~\ref{t:final-cracktip}}\label{s:second linearization}

The three annuli property of the previous section allows us to improve upon Proposition~\ref{p:linearizzazione}. 

\begin{proposition}\label{p:linearizzazione-2}
Let $v_j$ and $\theta_j$ be as in Proposition~\ref{p:linearizzazione}. 
Then, there is a pair $(v, \theta)\in C^2_{{\rm loc}} ([0,2\pi]\times [0,\infty))$ and a subsequence, not relabeled, such that $(v_j, \theta_j)$ converges in $C^2 ([0,2\pi]\times [0, T))$ to $(v, \theta)$ for every $T>0$. Moreover, $(v, \theta)$ solves \eqref{e:lineare}, satisfies \eqref{e:condizione_extra} and there are positive constants $\varpi$ and $C$ such that:
\[
\|v^e-\isq\|_{C^2 ( (0,2\pi)\times [k,k+1])} +
\|v^o\|_{C^2 ((0,2\pi)\times[k,k+1])} + 
\|\dot\theta\|_{C^1 ([k,k+1])} \leq C e^{-\varpi k} 
\] 
for all $k\in \mathbb N\setminus \{0\}$ if $\lambda =0$, while 
\[
\|v^e-\isq\|_{H^2 ((0,2\pi)\times(k,k+1))} +
\|v^o\|_{H^2 ((0,2\pi)\times(k,k+1))} + 
\|\dot\theta\|_{H^1 ((k,k+1))} 
\leq C e^{-\varpi k}
\] 
for all $k\in \mathbb N\setminus \{0\}$ if $\lambda>0$.
\end{proposition}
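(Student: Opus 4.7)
The strategy is to iterate the first linearization (Proposition~\ref{p:linearizzazione}) at every integer time-shift $k\in\mathbb N$ and invoke the three annuli property (Propositions~\ref{p:tre-anelli_even} and~\ref{p:tre-anelli_odd}) to force exponential decay. For each $k$, define the scale-$k$ analogue of the renormalization in~\eqref{e:delta}, namely
\[
\delta_j^{(k)} := \|f_j(\cdot, \cdot + j + k) - \isq\|_{H^2((0,2\pi)\times(0,3))} + \|\dot\vartheta_j(\cdot+j+k)\|_{H^1((0,3))} + \lambda e^{-(\sfrac{3}{2}-\hat\varepsilon)(j+k)},
\]
with associated rescalings $(v_j^{(k)}, \theta_j^{(k)})$ defined as in \eqref{e:theta_j}--\eqref{e:v_j} with $\delta_j$ replaced by $\delta_j^{(k)}$, so that $(v_j^{(0)},\theta_j^{(0)})=(v_j,\theta_j)$ and each $(v_j^{(k)},\theta_j^{(k)})$ is $H^2\times H^1$-normalized on $(0,2\pi)\times(0,3)$. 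Proposition~\ref{p:linearizzazione} applied at shift $j+k$, combined with a diagonal extraction in both $k$ and $j$, produces for each $k$ a limit $(v^{(k)},\theta^{(k)})$ of unit norm solving \eqref{e:lineare} and \eqref{e:condizione_extra} on $(0,2\pi)\times(0,3)$.

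Setting $r_k^j := \delta_j^{(k+1)}/\delta_j^{(k)}$, the key step is to show that $\limsup_j r_k^j \leq 1-\eta/2$ for every $k$, where $\eta$ is the constant from Propositions~\ref{p:tre-anelli_even} and~\ref{p:tre-anelli_odd}. The three annuli dichotomy, applied to $(v^{(k)},\theta^{(k)})$ separately via Proposition~\ref{p:tre-anelli_even} on $v^{(k),e}-\isq$ and Proposition~\ref{p:tre-anelli_odd} on $(v^{(k),o},\theta^{(k)})$, yields on each such piece either decay $\mathcal{G}(\cdot,1,2)\leq(1-\eta)\mathcal{G}(\cdot,0,1)$ or growth $\mathcal{G}(\cdot,2,3)\geq(1+\eta)\mathcal{G}(\cdot,1,2)$. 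Via the norm equivalences \eqref{e:Ge coerc} and \eqref{e:Go growth} and the scaling identity $v_j(\phi,t+k)=(\delta_j^{(k)}/\delta_j)\,v_j^{(k)}(\phi,t)$, decay at the linear limit corresponds to $r_k^j \leq 1-\eta/2$ in the pre-limit. Growth, once triggered at some scale $k_0$, would propagate: the growth conclusion of the dichotomy at step $k_0$ implies its hypothesis at step $k_0+1$, which again forces growth, and so on, giving $\delta_j^{(k_0+m)} \geq c(1+\eta)^{m/2}\delta_j^{(k_0)}$. This contradicts the uniform bound $\delta_j^{(m)} \leq C$ valid for all $m$ (a consequence of Corollary~\ref{c:Bonnet-2} and the density upper bound applied to $f_j-\isq$) once $m$ is large enough.

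Iterating the decay, $\delta_j^{(k)}/\delta_j \leq C(1-\eta/2)^k$ for every $k$, whence by the scaling identity we obtain uniform $H^2$ bounds
\[
\|v_j\|_{H^2((0,2\pi)\times(k,k+3))}+\|\dot\theta_j\|_{H^1((k,k+3))}\leq C(1-\eta/2)^k
\]
on every compact time-interval. A diagonal extraction, together with the compactness and elliptic-regularity arguments already exploited in the proof of Proposition~\ref{p:linearizzazione}(c)--(d), yields a subsequence converging in $C^2_{loc}$ when $\lambda=0$ and in $H^2_{loc}\cap C^{1,1-\varepsilon}_{loc}$ when $\lambda>0$, to a smooth (by elliptic regularity for the linear system \eqref{e:lineare}) limit $(v,\theta)$ on $[0,2\pi]\times[0,\infty)$ solving \eqref{e:lineare} and \eqref{e:condizione_extra}. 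The exponential bound passes to the limit with $\varpi := -\tfrac12\log(1-\eta/2) > 0$, and the claimed $C^2$ (resp.\ $H^2$) estimates on each $[k,k+1]$ follow from \eqref{e:Ge coerc}, \eqref{e:Go growth} and interior regularity. The main obstacle is the dichotomy step: the three annuli property is available only at the level of the linear limits, and transferring it into the quantitative decay $r_k^j\leq 1-\eta/2$ for the pre-limit sequence requires a careful diagonal argument matching the convergences $(v_j^{(k)},\theta_j^{(k)})\to(v^{(k)},\theta^{(k)})$ across all scales with uniform comparisons between consecutive ones.
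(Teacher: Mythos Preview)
Your strategy is essentially the paper's, but you organize the contradiction argument in a way that creates the very gap you acknowledge at the end. The paper does \emph{not} pass to a separate linear limit $(v^{(k)},\theta^{(k)})$ at every scale and then try to transfer decay backwards. Instead it proves a single \emph{nonlinear} three annuli property directly for $(f^o(\cdot,\cdot+k),\vartheta(\cdot+k))$ (and analogously for the even part), by contradiction: if the implication fails along some sequence of shifts $k_j$, one rescales so that $k_j=j$, applies Proposition~\ref{p:linearizzazione} once, and obtains in the limit a nontrivial solution of \eqref{e:lineare}--\eqref{e:condizione_extra} violating the linear three annuli property of Propositions~\ref{p:tre-anelli_even}--\ref{p:tre-anelli_odd}. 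The point is that the functionals $\mathcal{G}_e,\mathcal{G}_o$ are quadratic and pass to the limit under the strong $H^2$ convergences in Proposition~\ref{p:linearizzazione}(c)--(d); this is exactly the ``transfer'' you are missing. Once the nonlinear dichotomy is in hand, growth is excluded because $f^o(\cdot,\cdot+k)\to 0$ and $\dot\vartheta(\cdot+k)\to 0$ as $k\to\infty$, and decay iterates to the exponential bound. Your scheme of comparing $\delta_j^{(k+1)}/\delta_j^{(k)}$ via limits at each fixed $k$ would require uniformity in $k$ of the convergence in Proposition~\ref{p:linearizzazione}, which you do not have.

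Two further points. First, for $\lambda>0$ the paper does not work with $\mathcal{G}$ alone but with $\max\{\mathcal{G},\lambda e^{-\beta k}\}$ for a suitable $\beta>\ln 2$; this is needed so that the exponential fidelity correction cannot by itself satisfy the growth branch, and so that the normalization $\delta_j$ (which includes the $\lambda e^{-(3/2-\hat\varepsilon)j}$ term) is compatible with the contradiction. Second, Proposition~\ref{p:tre-anelli_even} requires the orthogonality \eqref{e:mean-zero} to $\sin(\phi/2)$, which $v^e-\isq$ (or $f^e-\isq$) does not satisfy in general; the paper first subtracts the $\sin(\phi/2)$-projection, setting $g^e:=f^e-\langle f^e,\pi^{-1/2}\sin(\phi/2)\rangle\,\pi^{-1/2}\sin(\phi/2)$, and runs the argument on $g^e$.
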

\begin{proof}
We prove the estimates claimed above  separately for the odd and even parts by showing in both cases a nonlinear three annuli property. 

We start with the case $\lambda>0$ by observing that by Proposition~\ref{p:linearizzazione} and by Proposition~\ref{p:tre-anelli_odd} we can prove: given $\beta>\ln 2$ (to be chosen suitably in what follows), if $\varepsilon_0$ in Theorem~\ref{t:final-cracktip} is sufficiently small and 
$(u,K)$ satisfies the assumptions of Theorem~\ref{t:final-cracktip}, for every 
$k\in \mathbb N$ we have
that if 
\begin{align}\label{e:Gfo}
&\max\{\mathcal{G}_o (f^o(\cdot,\cdot+k), \vartheta(\cdot+k),1,2), \lambda e^{-\beta(1+k)}\}\notag\\
&\qquad\geq (1-{\textstyle{\frac\eta 2}}) \max\{\mathcal{G}_o (f^o(\cdot,\cdot+k), \vartheta(\cdot+k),0,1), \lambda e^{-\beta k}\}\notag\\
\Longrightarrow&\max\{\mathcal{G}_o (f^o(\cdot,\cdot+k), \vartheta(\cdot+k), 2,3), 
\lambda e^{-\beta(2+k)}\}\notag\\
&\qquad\geq (1+{\textstyle{\frac\eta 2}}) 
\max\{\mathcal{G}_o (f^o(\cdot,\cdot+k),\vartheta(\cdot+k), 1,2), 
\lambda e^{-\beta(1+k)}\}\,.
\end{align}
where $\mathcal G_o$ is the functional defined in \eqref{e:def_G}, $\vartheta, f$ are given by \eqref{e:vartheta} and \eqref{e:f}, respectively. 

Indeed, assume the claim is false, no matter how small $\varepsilon_0$ in Theorem~\ref{t:final-cracktip} is chosen, and 
let thus $f^o_j,\, \vartheta_j$ be a sequence which violates it for some $k_j\in\mathbb{N}$ when we choose $\varepsilon_0 = \frac{1}{j}$. By rescaling the time variable $t$ (which just implies a rescaling of the variable $r$ in the 
original problem), we can assume $k_j=j$. Furthermore, by adding a rotation we can assume that $\vartheta_j (0) =0$, so that we can apply Proposition~\ref{p:linearizzazione}. 
Assume by contradiction that
\begin{align}\label{e:contra G bis}
\max&\{\mathcal{G}_o (f_{j}^o(\cdot,\cdot+j), \vartheta_{j}(\cdot+j), 1,2), \lambda e^{-\beta(1+j)}\}\notag \\
&\geq \max \left\{\left(1-{\textstyle{\frac\eta 2}}\right) 
\max\{\mathcal{G}_o (f_j^o(\cdot,\cdot+j), \vartheta_j(\cdot+j), 0,1), \lambda e^{-\beta j}\},\right.\notag\\ 
&\qquad\qquad\left.\left(1+{\textstyle{\frac\eta 2}}\right)^{-1} \max\{\mathcal{G}_o (f_j^o(\cdot,\cdot+j), \vartheta_{j}(\cdot+j), 2,3), \lambda e^{-\beta(2+j)}\}\right\}\, .
\end{align}
Note that if $\lambda >0$ and $\beta>\ln 2$, then necessarily 
\begin{equation}\label{e:G_o vs exp}
\mathcal{G}_o (f_j^o(\cdot,\cdot+j), \vartheta_j(\cdot+j), (1,2))
\geq \lambda e^{-\beta (1+j)}
\end{equation}
for all $j\geq 0$, since otherwise from \eqref{e:contra G bis} 
we would conclude for some $i\geq 0$ that  
\[
e^{-\beta(1+i)}\geq \max\left\{\left(1-{\textstyle{\frac\eta 2}}\right) e^{-\beta i}, 
\left(1+{\textstyle{\frac\eta 2}}\right)^{-1} e^{-\beta(2+i)}\right\}\,,
\]
which is equivalent to  
\[
\left(1+{\textstyle{\frac\eta 2}}\right)^{-1}\leq e^{\beta}\leq
\left(1-{\textstyle{\frac\eta 2}}\right)^{-1}\,, 
\]
in turn implying $e^\beta\in(\sfrac23,2)$, as $\eta\in(0,1)$.
Clearly, this is a contradiction in view of the choice $\beta>\ln 2$.

Therefore, from this observation, from the definition of $v_j$ in \eqref{e:v_j},
from \eqref{e:contra G bis}, and since the functional $\mathcal{G}_o$ is quadratic, 
we immediately obtain that
\begin{equation}\label{e:contra G ter}
\mathcal{G}_o (v_j^o, \theta_j, 1,2)\geq \max \left\{\left(1-{\textstyle{\frac\eta 2}}\right) 
\mathcal{G}_o (v_j^o, \theta_j, 0,1),\left(1+{\textstyle{\frac\eta 2}}\right)^{-1} 
\mathcal{G}_o (v_j^o, \theta_j, 2,3)\right\}\, .
\end{equation}
Upon choosing $\beta>\sfrac32-\hat\varepsilon(>\ln2)$, where $\hat\varepsilon$ has been fixed in the definition of $\delta_j$ (cf. \eqref{e:delta}), from \eqref{e:G_o vs exp}
we conclude that $\liminf_j \|v_j^o\|_{H^2 ((0,2\pi)\times [1,2])} > 0$,
because of \eqref{e:Go growth}.

Thus, we can apply Proposition~\ref{p:linearizzazione}
(d) (recall that $\theta_j (0)=0$) to extract a subsequence converging to some $(v, \theta)$ strongly in 
$H^2((0,2\pi)\times(\sigma,3-\sigma))\times H^2((\sigma,3-\sigma))$
for all $\sigma\in(0,\sfrac32)$. 
Therefore, we have that 
\[
\mathcal{G}_o (v^o,\theta, 1,2) = \lim_j \mathcal{G}_o (v_j^o, \theta_j, 1,2)\, ,
\]
and in particular we conclude that the pair $(v^o, \theta)$ is nontrivial.
On the other hand, the functional $\mathcal{G}_o$ is lower semicontinuous with respect to the mentioned convergences, and we thus infer from \eqref{e:contra G ter} 
\[
\mathcal{G}_o (v^o, \theta, 1,2) \geq \max \left\{\left(1-{\textstyle{\frac\eta 2}}\right) \mathcal{G}_o (v^o, \theta, 0,1), 
\left(1+{\textstyle{\frac\eta 2}}\right)^{-1} \mathcal{G}_o(v^o, \theta, 2,3)\right\}\,,
\]
contradicting Proposition~\ref{p:tre-anelli_odd} 
(b) being $(v^o,\theta)$ nontrivial. 

Therefore, having completed the proof of \eqref{e:Gfo}, if for some $k_0\in\N$ we were to have
\begin{align*}
\max&\{\mathcal{G}_o (f^o(\cdot,\cdot+k_0), \vartheta(\cdot+k_0), 1,2), 
\lambda e^{-\beta(1+k_0)}\}\\ &\geq \left(1-{\textstyle\frac\eta2}\right) 
\max\{\mathcal{G}_o (f^o(\cdot,\cdot+k_0), \vartheta(\cdot+k_0), 0,1),
\lambda e^{-\beta k_0}\}\, ,
\end{align*}
then from \eqref{e:Gfo} itself, and the choice $\beta>\ln 2$, we would infer that for all $j \geq k_0+1$
\[
\mathcal{G}_o (f^o(\cdot,\cdot+j), \vartheta, 0,1) \geq \left(1+{\textstyle\frac\eta2}\right)^{j-(k_0+1)}  \mathcal{G}_o (f^o(\cdot,\cdot+k_0), \vartheta(\cdot+k_0), 0,1) \, .
\]
However the latter contradicts the fact that $f^o (\cdot,\cdot+j)$ and $\dot\vartheta (\cdot+j)$ converge smoothly to $0$ for $j\to \infty$.

We thus conclude that for every $k\in\N$
\begin{align*}
\max&\{\mathcal{G}_o (f^o(\cdot,\cdot+k), \vartheta(\cdot+k), 1,2), 
\lambda e^{-\beta(k+1)}\} \\
&\leq \left(1-{\textstyle\frac\eta2}\right) 
\max\{\mathcal{G}_o (f^o(\cdot,\cdot+k), \vartheta(\cdot+k), 0,1),
\lambda e^{-\beta k}\}
\, ,
\end{align*}
in turn implying, by iteration and by \eqref{e:Go growth}, the existence of constants $C>0$ and $\varpi\in(0,\ln 2)$ such that
\[
\|f^o\|_{H^2 ((0,2\pi)\times (k,k+1))}^2 + \|\dot\vartheta\|_{H^1 ((k,k+1))}^2 \leq C e^{-\varpi k} \left(\|f^o\|_{H^2 ((0,2\pi)\times (0,1))}^2 + \|\dot\vartheta\|_{H^1 ((0,1))}^2+\lambda\right)\, .
\]
In turn, if $(v_j, \theta_j)$ are as in the statement of the proposition, we infer
\begin{align}
\|v_j^o\|_{H^2 ((0,2\pi)\times (k,k+1))}^2 &+ \|\dot\theta_j\|_{H^1 ((k,k+1))}^2\notag\\ 
\leq & C e^{-\varpi k} \left(\|v_j^o\|_{H^2 ((0,2\pi)\times (0,1))}^2 + \|\dot\theta_j\|_{H^1 ({(0,1)})}^2+\lambda\right) 
= & C e^{-\varpi k}\, .\label{e:decay v_j^o}
\end{align}

The estimate for the even part follows analogously. Indeed, 
one first shows the nonlinear three annuli property for
\[
g^e(\phi,t+k):=f^e(\phi,t+k)-\langle f^e(\cdot,t+k),\textstyle{\frac{1}{\sqrt{\pi}}}\sin({\textstyle{\frac{\phi}{2}}})\rangle_{L^2}
\textstyle{\frac{1}{\sqrt{\pi}}}\sin({\textstyle{\frac{\phi}{2}}})\,.
\]
Note that $g^e(\cdot,\cdot+j)$ is still even and rescaled by $\delta_j$ is converging to an even solution to \eqref{e:lineare} satisfying \eqref{e:mean-zero}. Hence, by 
using Proposition~\ref{p:even} and by arguing as above one deduces that if 
\begin{align*}
\max&\{\mathcal{G}_e (g^e(\cdot,\cdot+k),1,2), \lambda e^{-\beta(1+k)}\}\geq (1-{\textstyle{\frac\eta 2}}) \max\{\mathcal{G}_e (g^e(\cdot,\cdot+k), \lambda e^{-\beta k}\}\notag\\
\end{align*}
then
\begin{align*}
\max&\{\mathcal{G}_e (g^e(\cdot,\cdot+k),  2,3), 
\lambda e^{-\beta(2+k)}\}\geq 
(1+{\textstyle{\frac\eta 2}}) 
\max\{\mathcal{G}_e (g^e(\cdot,\cdot+k), 1,2), 
\lambda e^{-\beta(1+k)}\}\,.
\end{align*}
where $\mathcal G_e$ is the functional defined in \eqref{e:def_G even}. 

By assumption $f^e (\cdot,\cdot+j)$ converges smoothly to $\isq$ for $j\to \infty$, so that for every $k\in\N$
\[
\max\{\mathcal{G}_e (g^e(\cdot,\cdot+k), 1,2), 
\lambda e^{-\beta(k+1)}\} 
\leq \left(1-{\textstyle\frac\eta2}\right) 
\max\{\mathcal{G}_e (g^e(\cdot,\cdot+k), \vartheta(\cdot+k), 0,1),
\lambda e^{-\beta k}\}
\, .
\]
In turn implying, by iteration and by \eqref{e:Ge coerc}, the existence of constants $C>0$ and $\varpi\in(0,\ln 2)$ such that
\[
\|g^e\|_{H^2 ((0,2\pi)\times (k,k+1))}^2\leq C e^{-\varpi k} \left(\|g^e\|_{H^2 ((0,2\pi)\times (0,1))}^2 +\lambda\right)\, .
\]
Therefore, we conclude that 
\begin{align}\label{e:decay v_j^e}
&\|v_j^e-\langle v_j^e,\textstyle{\frac{1}{\sqrt{\pi}}}\sin({\textstyle{\frac{\phi}{2}}})\rangle_{L^2}\textstyle{\frac{1}{\sqrt{\pi}}}\sin({\textstyle{\frac{\phi}{2}}})\|_{H^2 ((0,2\pi)\times (k,k+1))}^2\notag\\
&\leq C e^{-\varpi k} \left(\|v_j^e-\langle v_j^e,\textstyle{\frac{1}{\sqrt{\pi}}}\sin({\textstyle{\frac{\phi}{2}}})\rangle_{L^2}\textstyle{\frac{1}{\sqrt{\pi}}}\sin({\textstyle{\frac{\phi}{2}}})\|_{H^2 ((0,2\pi)\times (0,1))}^2 +\lambda\right)\, .
\end{align}

Having fixed $k\in\mathbb N$, in view of \eqref{e:decay v_j^o} and 
\eqref{e:decay v_j^e}, the conclusion of the proposition is then a simple application of Proposition~\ref{p:linearizzazione} with $[0,1]$ replaced by $[k,k+1]$, that is 
equivalently by applying it to $f(\cdot,\cdot+k+j)$ and $\vartheta(\cdot+k+j)$ on $[0,1]$, together with a  diagonal argument over $k$ and $j$.

The proof of the case $\lambda=0$ proceeds similarly: the inequality analogous to \eqref{e:Gfo} in this setting is obtained by revisiting the argument outlined above, upon normalizing $\mathcal{G}_o (v_j^o, \theta_j, 1,2)=1$ for every $j$. Then, the conclusion follows by taking advantage of \eqref{e:decay v_j^o}, of elliptic regularity and of the stronger 
convergences described in Proposition~\ref{p:linearizzazione} (c).
\end{proof}

Using the second linearization procedure 
in Proposition~\ref{p:linearizzazione-2} and again the spectral analysis for solutions of \eqref{e:lineare} we will then conclude the decay for the curvature at the tip when $\lambda = 0$.

\begin{corollary}\label{c:decay_curvature}
There is a constant $\delta_0$ with the following property. 
Assume $(u,K)$ is as in Theorem~\ref{t:final-cracktip} and $\vartheta$ 
as in \eqref{e:vartheta}. 
Then there are constants
$C,\,\delta_0>0$ and
$\delta_1\in(0,1)$ such that, if $\lambda=0$, 
\begin{align}
\|f^o(\cdot,t)\|_{C^2([0,2\pi])}+
|\dot \vartheta (t)| + |\ddot \vartheta (t)| &\leq C e^{-(1+\delta_0) t}\, , 
\label{e:decay f^o theta lambda=0}\\
\|f^e(\cdot,t)-\isq\|_{C^2([0,2\pi])} &\leq C e^{-(1-\delta_1) t} \,, \label{e:decay f^e lambda=0}
\end{align}
while, if $\lambda>0$ for every $\varepsilon\in(0,1)$ there is a constant $C_\varepsilon>0$ such that
\begin{align}
\|f^o(\cdot,t)\|_{C^{1,1-\varepsilon}([0,2\pi])}+
|\dot \vartheta (t)| 
& \leq C_\varepsilon e^{-\delta_0 t} \, ,\label{e:decay f^o theta lambda>0}\\
\|f^e(\cdot,t)-\isq\|_{C^{1,1-\varepsilon}([0,2\pi])} &\leq C_\varepsilon e^{-\delta_0 t} \, .\label{e:decay f^e lambda>0}
\end{align}
In particular, in case $\lambda =0$ we also have
\begin{equation}\label{e:decay kappa t}
\left|\displaystyle{ \frac{\varthetadd (t)- \varthetad(t)-\varthetad^3(t)}{(1+\varthetad^2(t))^{\sfrac52}}}\right|
\leq C e^{-(1+\delta_0) t}\, .
\end{equation}
\end{corollary}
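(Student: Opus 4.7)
\medskip
\noindent\emph{Proof proposal.} The plan is to bootstrap the exponential decay of Proposition \ref{p:linearizzazione-2} to the sharp rates predicted by the spectral analysis of Propositions \ref{p:even} and \ref{p:odd}. The proof will follow a contradiction/renormalization scheme analogous to the one used for Proposition \ref{p:linearizzazione-2}, but this time calibrated against the slowest admissible eigenmode of the linearized system rather than against the three-annuli inequality.

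\medskip
\noindent\textbf{Sharp decay for the odd part.} The decaying eigenmodes for the odd sector of \eqref{e:lineare} have rates $|\mu_{k,-}|=\nu_k-\tfrac12$ for $k\geq 2$, and Lemma \ref{l:autovalori} guarantees $\nu_2>\tfrac{3}{2}$, so every such mode decays faster than $e^{-(1+\delta_0) t}$ with $\delta_0:=\nu_2-\tfrac32>0$. Arguing by contradiction, assume there is a sequence $(K_j,u_j)$ satisfying the hypotheses of Theorem \ref{t:final-cracktip} with $\varepsilon_0\downarrow 0$ for which the odd part of $f_j$ or $\dot\vartheta_j$ does not enjoy the claimed decay at some time $t_j\uparrow\infty$. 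Setting
\[
\delta_j := \|f_j^o(\cdot,\cdot+t_j)\|_{H^2((0,2\pi)\times(0,3))} + \|\dot\vartheta_j(\cdot+t_j)\|_{H^1((0,3))},
\]
we rescale as in \eqref{e:theta_j}--\eqref{e:v_j}. By Propositions \ref{p:linearizzazione} and \ref{p:linearizzazione-2} a subsequence converges to a nontrivial solution $(v^o,\theta)$ of \eqref{e:lineare}--\eqref{e:condizione_extra} on the strip $(0,2\pi)\times\mathbb R$ that, by the a priori decay of Proposition \ref{p:linearizzazione-2}, grows no faster than $e^{\varpi_0 |t|}$ for some $\varpi_0$ small. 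Writing $\zeta=v^o-\theta\,\isq_\phi$ and using Proposition \ref{p:odd}, the coefficients $a_k(t)$, $k\geq 2$, satisfy \eqref{e:a_k(t) eq} and the a priori growth bound forces $C_k=0$, i.e.\ $a_k(t)=D_k e^{\mu_{k,-} t}$. The modes $a_0,a_1$ and $\theta$ satisfy \eqref{e:a0 theta}; exactly as in the proof of Proposition \ref{p:tre-anelli_odd}, the boundary condition $v(0,t)=v(2\pi,t)=0$ and condition \eqref{e:condizione_extra} together with the nongrowth property force $a_0\equiv a_1\equiv 0$ and $\theta\equiv0$. Consequently $(v^o,\theta)$ decays at least as $e^{-(\nu_2-\frac12)t}=e^{-(1+\delta_0)t}$, contradicting the normalization $\|v_j^o(\cdot,\cdot)\|_{H^2((0,2\pi)\times(0,3))}+\|\dot\theta_j\|_{H^1((0,3))}\equiv 1$ once $t_j$ is large enough. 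In the case $\lambda=0$ the same scheme combined with the smooth convergence of Proposition \ref{p:linearizzazione}(c) and elliptic regularity promotes the bound to $C^2$; in the case $\lambda>0$ the source terms $H_1,H_2$ satisfy \eqref{e:bounds Hi}--\eqref{e:C11- dotH1}, so one must additionally include $\lambda e^{-(\sfrac32-\hat\varepsilon)t_j}$ in $\delta_j$, yielding the weaker $C^{1,1-\varepsilon}$ rate $e^{-\delta_0 t}$ claimed.

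\medskip
\noindent\textbf{Sharp decay for the even part.} The even sector of the linearized problem is governed by \eqref{e:ve}, with decaying eigenmodes $a_k(t)=D_k e^{-kt}$, $k\geq 0$. The $k=0$ mode is a constant multiple of $\sin(\phi/2)$, hence of $\isq$: it corresponds to the freedom to rescale the cracktip amplitude and is precisely what Proposition \ref{p:linearizzazione-2} isolates through the projection onto $\tfrac1{\sqrt\pi}\sin(\tfrac\phi2)$. Any actual $k=0$ contribution in $f^e-\isq$ must therefore arise from higher-order corrections controlled by the already established $e^{-\varpi t}$ decay, which bounds the coefficient by $Ce^{-\varpi t}$. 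Arguing by contradiction as above, with the slowest decaying admissible mode now being $e^{-t}$ from $k=1$, one produces a nontrivial limit $v^e$ whose Fourier coefficients $a_k(t)$ solve \eqref{e:a_k(t) 0 eq}; the a priori decay kills the $e^{(k+1)t}$ branch and forces $a_k(t)=D_k e^{-kt}$. The $k=1$ mode yields the expected rate $e^{-t}$, while the residual $k=0$ contribution is bounded by $Ce^{-\varpi t}$ as explained. Choosing $\delta_1:=1-\min\{1,\varpi\}/2$ (or any strictly positive number slightly below $\min\{1,\varpi\}$) gives \eqref{e:decay f^e lambda=0}; in the case $\lambda>0$ the source $H_1$ again caps the gain at $e^{-\delta_0 t}$ in $C^{1,1-\varepsilon}$, yielding \eqref{e:decay f^e lambda>0}.

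\medskip
\noindent\textbf{Curvature decay.} Once \eqref{e:decay f^o theta lambda=0} is known, \eqref{e:decay kappa t} follows directly by plugging the decomposition $f=\isq+(f^e-\isq)+f^o$ into the third equation of \eqref{e:SIS}. In the case $\lambda=0$ one has $H_2\equiv 0$, so
\[
\frac{\ddot\vartheta-\dot\vartheta-\dot\vartheta^3}{(1+\dot\vartheta^2)^{5/2}}
= f_\phi^2(2\pi,t)-f_\phi^2(0,t);
\]
since $\isq_\phi(2\pi)=-\isq_\phi(0)$ one expands $f_\phi^2(2\pi,t)-f_\phi^2(0,t)=4\isq_\phi(0)\,f^o_\phi(0,t)+O\bigl((f^o_\phi)^2+(f^e_\phi-\isq_\phi)f^o_\phi\bigr)$, and then the odd part estimate \eqref{e:decay f^o theta lambda=0} gives the right hand side $O(e^{-(1+\delta_0)t})$, as required.

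\medskip
\noindent\textbf{Main obstacle.} The delicate point is to propagate the contradiction/rescaling argument uniformly in $\lambda$: the source terms $H_1,H_2$ in \eqref{e:SIS} only decay like $e^{-\sfrac32 t}$, so including $\lambda e^{-(\sfrac32-\hat\varepsilon) t_j}$ in the normalization $\delta_j$ is essential to avoid concentration onto the inhomogeneity, and one must verify that the rescaled forcings tend to zero in the appropriate norm (a computation already carried out for \eqref{e:Hi scaled}--\eqref{e:Hi scaled bis}). A second subtlety lies in rigorously excluding the $k=0$ even mode in the limit: because this mode encodes a genuine scaling symmetry, one cannot kill it outright and must instead absorb it into the slightly worse rate $e^{-(1-\delta_1)t}$, which accounts for the asymmetry between the decay statements for the odd and even parts.
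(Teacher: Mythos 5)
Your overall plan is the right one and you have correctly identified the spectral decay rates, the role of the $k=0$ even mode, and the mechanism for the curvature estimate (this last piece is fine). But the central contradiction/renormalization argument, as written, does not actually produce a contradiction, and this is a genuine gap.

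You normalize by $\delta_j := \|f_j^o(\cdot,\cdot+t_j)\|_{H^2}+\|\dot\vartheta_j(\cdot+t_j)\|_{H^1}$ on the strip $(0,3)$, pass to a limit $(v^o,\theta)$ solving the linear system, classify its spectral content, and conclude it decays like $e^{-(1+\delta_0)t}$. You then assert this ``contradicts the normalization $\equiv 1$.'' It does not: a nontrivial solution with unit $H^2$--norm on $(0,2\pi)\times(0,3)$ is perfectly compatible with decay like $e^{-(1+\delta_0)t}$ as $t\to\infty$. The normalization only pins the size in a fixed window, while the claimed sharp decay is asymptotic; nothing in your set-up compares the window $[t_j,t_j+3]$ with an earlier or later window. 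In other words, you have shown that the \emph{limit} has the right spectral structure but have never translated that into a quantitative improvement of the nonlinear solution between consecutive scales, which is exactly where the content of the corollary lives. (This is also why you cannot take $\delta_0=\nu_2-\tfrac32$ exactly: the iteration, which you are skipping, always loses a little, so $\delta_0$ must be taken strictly below $\mu_2-1$.)

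The paper's argument, by contrast, establishes an explicit decay inequality for the linearized limit on a window $[T,2T]$ in terms of the norm on $[0,1]$, with the constant $Ce^{-\mu_2 T}$ (and $Ce^{-T}$ for the even part). It then transfers this to the nonlinear pair $(f,\vartheta)$ by Proposition \ref{p:linearizzazione-2}, paying a fixed factor of $2$, and, choosing $T$ so large that $\bar C e^{-\mu_2 T}\le e^{-(1+\delta_0)T}$, iterates over intervals $[kT,(k+1)T]$. Iteration, not a single compactness step, is what delivers the sharp exponent. To repair your proof you would need to set up a genuine comparison between consecutive windows, either by iterating as above or by incorporating an exponentially weighted normalization (so that the failure of decay propagates into the rescaled sequence as a growth statement), and your current choice of $\delta_j$ does neither.

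Two more minor remarks. First, in the even case the paper's computation forces $D_0=\sqrt{2}$ for the limiting $v^e$, i.e. the $k=0$ amplitude is exactly $\isq$; your phrasing about ``the residual $k=0$ contribution bounded by $Ce^{-\varpi t}$'' is murkier than the clean cancellation and obscures why the rate ends up being $e^{-(1-\delta_1)t}$ with any $\delta_1\in(0,1)$. Second, your formula $f_\phi^2(2\pi,t)-f_\phi^2(0,t)=4\isq_\phi(0)f^o_\phi(0,t)+\cdots$ carries a sign error (it should be $-4\isq_\phi(0)f^o_\phi(0,t)+\cdots$), though this does not affect the magnitude estimate in \eqref{e:decay kappa t}.
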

\begin{proof}
First of all consider any limit $(v, \theta)$ as in Proposition~\ref{p:linearizzazione-2}. We discuss separately the behavior of the odd and even parts of $v$. Let $\zeta$ be as in \eqref{e:zeta}. 
Recalling that $v$ satisfies \eqref{e:condizione_extra},
Proposition~\ref{p:odd} yields the expansion
\begin{align*}
\zeta (\phi, t)  =
 \bar{a}_1(t) \cos \frac{\phi}{2} + 
\sum_{k=2}^\infty (\bar{a}_{k,-} e^{\mu_{k,-} t}+\bar{a}_{k,+} e^{\mu_{k,+} t})\zeta_k (\phi)\, ,
\end{align*}
where $\zeta_k(\phi)=c_k\sin(\nu_k(\phi-\pi))$, $c_k$ is such that $\langle \zeta_k,\zeta_k\rangle=1$
for every $k\geq 2$ (cf. \eqref{e:zeta_k}), where the $\bar{a}_{k,\pm}$'s 
are constants, and $\mu_{k,\pm}$ is the 
positive/negative zero of the quadratic polynomial $x^2-x-(\nu_k^2-\frac{1}{4})$, i.e. (cf. \eqref{e:a_k(t) eq}) 
\[
\mu_{k,\pm} = \frac12\pm \nu_k\,.
\]
Recalling item (c) in Lemma~\ref{l:autovalori}, we have $\nu_k \geq \nu_2 > \frac{3}{2}$ when $k\geq 2$ and thus we conclude that $\mu_{k,+} \geq \mu_{2,+} > 2$ and $\mu_{k,-}\leq\mu_{2,-}<-1$ for all $k\geq 2$.
Therefore, by the decay properties of $v^o$ and $\dot \theta$ in Proposition~\ref{p:linearizzazione-2},  
we easily infer that $\bar{a}_{k,+}=0$ for every $k\geq 2$, so that
\begin{equation*}
v^o (\phi, t) =
\zeta (\phi, t)  + \frac{\theta(t)}{\sqrt{2\pi}}\cos\frac\phi2
= a_1(t) \cos \frac{\phi}{2} + \sum_{k=2}^\infty \bar{a}_k e^{-\mu_k t} \zeta_k (\phi) 
\,, 
\end{equation*}
where we have set $\mu_k=|\mu_k^-|>1$,
$\bar{a}_k:=\bar{a}_{k,-}$, and $a_1(t):=\bar{a}_1(t)+\frac{\theta(t)}{\sqrt{2\pi}}$.
In particular, note that $\mu_k^2+\mu_k=\nu_k^2-\frac{1}{4}$.
From the boundary conditions $v^o(0,t)=v^o(2\pi,t)=0$ we conclude that 
\begin{equation}\label{e:a_1(t)}
a_1(t) = -\sum_{k=2}^\infty \bar{a}_k \zeta_k (0) e^{-\mu_k t}\,.
\end{equation}
An elementary computation together with the Ventsel boundary condition satisfied by the $\zeta_k$'s (cf. \eqref{e:autovalore}) and the definition of $\nu_k$ 
(cf. Lemma~\ref{l:autovalori} (a)) imply that 
\begin{equation}\label{e:v^o(phi,t)}
v^o_\phi(0,t)=\frac\pi2\sum_{k=2}^\infty
\bar{a}_k\textstyle{(\nu_k^2-\frac14)}\zeta_k (0)
e^{-\mu_k t}\,.
\end{equation}
Thus, using the ODE in \eqref{e:theta eq} 
and $\mu_k^2+\mu_k=\nu_k^2-\frac{1}{4}$
we get that
\begin{align*}
    \theta(t)=A + B e^t -\sqrt{2\pi}\sum_{k=2}^\infty
    \bar{a}_k \zeta_k (0) e^{-\mu_k t}\,.
\end{align*}
As $\lim_{t\to+\infty}\theta(t)\in\mathbb R$ we infer that $B=0$, 
in turn implying $A=\lim_{t\to+\infty}\theta(t)$.
Finally, as $\theta(0)=0$ we deduce that 
\begin{align}\label{e:theta(t)}
    \theta(t)=\sqrt{2\pi} \sum_{k=2}^\infty
    \bar{a}_k \zeta_k (0) (1-e^{-\mu_k t})\,.
\end{align}
Note that the latter together with \eqref{e:a_1(t)} easily imply that
$\bar{a}_1(t)=-\frac{A}{\sqrt{2\pi}}$.

We argue similarly for the even part.
Namely, Proposition~\ref{p:even} yields the expansion
\begin{equation*}
v^e (\phi, t)  =
\sum_{k=0}^\infty
(C_k e^{(k+1)t} + D_k e^{-kt})
{\textstyle{\frac{1}{\sqrt{\pi}}}}
\sin({\textstyle{(k+\frac12)\phi}})\, ,
\end{equation*}
and by the decay properties of $v^e$ in Proposition~\ref{p:linearizzazione-2},
we infer that $C_k=0$ for all $k\geq 0$, and moreover that $D_0=\sqrt{2}$, namely
\begin{equation}\label{e:v^e(phi,t)}
v^e (\phi, t)  = \isq(\phi)
+\frac{1}{\sqrt{\pi}}\sum_{k=1}^\infty D_k e^{-kt}
\sin({\textstyle{(k+\frac12)\phi}})\,.
\end{equation}

\medskip

{\bf Case $\lambda =0$.}
From \eqref{e:a_1(t)}, \eqref{e:v^o(phi,t)}, \eqref{e:theta(t)} and \eqref{e:v^e(phi,t)}
it is then easy to check that for every $T>0$ we have the estimate
\begin{align*}
\|v^o\|_{C^2 ([0,2\pi]\times [T, 2T])} + \|\dot\theta\|_{C^1 ([T, 2T])} 
\leq C e^{-\mu_2 T} \left(
\|v^o\|_{H^2 ([0,2\pi]\times[0,1])} + \|\dot\theta\|_{H^1 ([0,1])}\right)\, ,
\end{align*}
and
\begin{align*}
\|v^e-&\isq\|_{C^2 ([0,2\pi]\times [T, 2T])}
\leq C e^{-T}
\|v^e-\isq\|_{H^2 ([0,2\pi]\times [0,1])}\,,
\end{align*}
where $C$ is a constant independent of $T$. Fix now $T$, whose choice will be specified a few paragraphs below. Using the conclusions of Proposition~\ref{p:linearizzazione-2} if $\lambda=0$ we then conclude that, if $u$ is as in Theorem~\ref{t:final-cracktip} and $\vartheta$ and $f$ as in Lemma~\ref{l:nonlinear} and $\varepsilon_0$ sufficiently small (depending on $T$), then
\begin{align*}
\|f^o\|_{C^2 ([0,2\pi]\times [T, 2T])} + \|\dot\vartheta\|_{C^1 ([T, 2T])} 
&\leq 2 C e^{-\mu_2 T}\left(
\|f^o\|_{H^2 ([0,2\pi]\times[0,1])} + \|\dot\vartheta\|_{H^1 ([0,1])}\right)\\
&\leq \bar{C} e^{-\mu_2 T}\left(
\|f^o\|_{C^2 ([0,2\pi]\times[0,T])} + \|\dot\vartheta\|_{C^1 ([0,T])}\right)\,,
\end{align*}
and
\begin{align*}
\|f^e-&
\isq\|_{C^2 ([0,2\pi]\times [T, 2T])} \leq 2 C e^{-T}
\|f^e-
\isq\|_{H^2 ([0,2\pi]\times [0,1])} \leq
\bar{C} e^{-T}\|f^e-
\isq\|_{C^2 ([0,2\pi]\times [0,T])}\, , 
\end{align*}
where the constant $\bar C$ is independent of $T$.
By a simple rescaling argument, this actually implies that for all $k\in \mathbb N$ 
\begin{align*}
\|f^o\|_{C^2 ([0,2\pi]\times [(k+1) T, (k+2)T]} &+ \|\dot\vartheta\|_{C^1 ([(k+1)T, (k+2) T])} \\
\leq & \bar{C} e^{-\mu_2 T}\left(
\|f^o\|_{C^2 ([0,2\pi]\times[kT,(k+1)T])} + \|\dot\vartheta\|_{C^1 ([kT,(k+1)T])}\right)\,.
\end{align*}
and
\begin{align*}
\|f^e&-
\isq\|_{C^2 ([0,2\pi]\times [(k+1) T, (k+2)T])} \leq
\bar{C} e^{-T}\|f^e-
\isq\|_{C^2 ([0,2\pi]\times [kT, (k+1)T])}\,.    
\end{align*}
We stress that the constant $\bar C$ is independent of $T$. On the other hand, recalling that $\mu_2>1$ 
(because $\mu_2=\nu_2-\frac12>1$, cf. (c) Lemma~\ref{l:autovalori}), while
given $\delta_0\in(0,\mu_2-1)$ and $\delta_1\in
(0,1)$,
we can choose $T$ itself large enough so that
\[
\bar C e^{-\mu_2 T} \leq e^{-(1+\delta_0) T},\qquad
\bar C e^{- T}\leq e^{-(1-\delta_1) T}\,.
\] 
We then can iterate the latter inequalities to infer
\begin{align*}
 \|f^o\|_{C^2 ([0,2\pi]\times [(k+1) T, (k+2)T])} +& \|\dot\vartheta\|_{C^1 ([(k+1)T, (k+2) T])}\\  
\leq &e^{-(1+\delta_0) kT}\left(
\|f^o\|_{C^2 ([0,2\pi]\times[0,T])} + \|\dot\vartheta\|_{C^1 ([0,T])}\right)\,, \end{align*}
and 
\begin{align*}
\|f^e&-
 \isq\|_{C^2 ([0,2\pi]\times [(k+1) T, (k+2)T])}\leq
e^{-(1-\delta_1) kT}\|f^e-
\isq\|_{C^2 ([0,2\pi]\times [0, T])}\, .
\end{align*}
This easily gives the conclusions \eqref{e:decay f^o theta lambda=0},
\eqref{e:decay f^e lambda=0} and, in particular, \eqref{e:decay kappa t}.
\medskip 

\noindent{\bf Case $\lambda>0$.}

The proof of the estimates in \eqref{e:decay f^o theta lambda>0} and \eqref{e:decay f^e lambda>0} follows as in the previous case by using the conclusions of  Proposition~\ref{p:linearizzazione-2} for $\lambda>0$ rather than those for $\lambda=0$ 
there. 
\end{proof}

The latter Corollary~\ref{c:decay_curvature} implies easily Theorem~\ref{t:final-cracktip}.

\subsection{Proof of Theorem~\ref{t:final-cracktip}}
Corollary~\ref{c:decay_curvature} gives a $C^{2, \delta_0}$ estimate for 
the parametrization of $K\cap B_2$ if $\lambda=0$ and a $C^{1, \delta_0}$ 
estimate if $\lambda>0$. 
More precisely, the unit tangent $\tau (r)$ to $K\cap B_2$ at the point $\gamma(r)=r (\cos \alpha (r), \sin \alpha (r))$ (cf. \eqref{e:K param})
is given by the expression
\[
\tau (r) = \frac{1}{\sqrt{1+ r^2\alpha' (r)^2}} \big( (\cos \alpha (r), \sin \alpha (r)) +
r\, \alpha' (r)\,(-\sin \alpha (r), \cos \alpha (r)) \big)\,.
\]
using the relation $r= e^{-t}$ and $\alpha(r)=\vartheta(|\ln r|)$ (cf. \eqref{e:vartheta}) and
\eqref{e:decay f^o theta lambda=0} if $\lambda=0$, respectively \eqref{e:decay f^o theta lambda>0} if $\lambda>0$, we easily check that 
$|\tau'' (r)|\leq C r^{\delta_0-1}$, respectively $|\tau' (r)|\leq C r^{\delta_0-1}$. Integrating the latter inequalities between 
$r_1$ and $r_2$ we reach the estimates
\begin{equation*}
|\tau' (r_2) - \tau' (r_1)| \leq C(r_2-r_1)^{\delta_0} \qquad \forall\, 0<r_1<r_2< \sfrac{1}{2}\, ,
\end{equation*}
for $\lambda=0$, and for $\lambda>0$
\begin{equation*}
|\tau (r_2) - \tau (r_1)| \leq C(r_2-r_1)^{\delta_0} \qquad \forall\, 0<r_1<r_2< \sfrac{1}{2}\, .
\end{equation*}
In turn, the latters imply respectively $C^{1, \delta_0}$ and 
$C^{0, \delta_0}$ estimates on the tangent $\tau (r)$ to $K$ at the point $\gamma(r)$,
and moreover that $\tau(r)$ has a limit $\tau_0\in\mathbb S^1$ as $r\to 0^+$ in both cases.

In addition, we get a decay estimate for the curvature when $\lambda =0$. Indeed, using \eqref{e:formula_curvatura}, namely
\[
\kappa(r)=r^{-1}\,
\frac{\varthetad(|\ln r|)+\varthetad^3(|\ln r|)-\varthetadd(|\ln r|)}{(1+\varthetad^2(|\ln r|))^{\sfrac32}}\, , 
\]
from estimate \eqref{e:decay kappa t} in Corollary~\ref{c:decay_curvature} 
we conclude that
\[
|\kappa(r)|\leq Cr^{\delta_0}\, 
\]
when $\lambda =0$. Note that the denimonator of the quantity estimated in \eqref{e:decay kappa t} differs from the denominator appearing in the formula of the curvature by a multiplicative factor which is $1+\dot{\theta}^2 (|\ln r|)$: the latter however converges to $1$ as $r\to 0$, and in fact according to our estimates is bounded above by an absolute constant on the interval of interest. 

Moreover, it follows easily that there is an $\eta>0$, depending only upon $C$ and $\varpi$, such 
that $B_\eta \cap K$ is a graph $\{t\,\tau_0  + \psi (t)\, \tau_0^\perp\}\cap B_\eta$
for some function $\psi: [0, \eta] \to \mathbb R$. If $\lambda =0$ the latter is $C^{2, \delta_0}$ smooth,
with $\psi (0)= \psi' (0) = \psi'' (0)= 0$ and $\|\psi\|_{C^{2,\delta_0}}\leq \bar{C}$. When $\lambda >0$, $\psi$ in $C^{1, \delta_0}$ with $\psi (0)= \psi' (0) = 0$ and $\|\psi\|_{C^{1,\delta_0}} \leq \bar{C}$. 

On the other hand, if $\varepsilon_0$ is sufficiently small, since 
$|\alpha'(r)| \leq \varepsilon_0/\eta$ for $r\in (\eta, \sfrac12)$
(recall that $\gamma\in C^{1,1}((0,2))$), we conclude that $K\cap B_{\sfrac 14}$ is a graph in the coordinates induced by the orthonormal base 
$\{\tau_0,\tau_0^\perp\}$.
Finally, since such graph will have
to be sufficiently close to the line $\{(s,0): s\geq 0\}$ by assumption (iv), we conclude that $\tau_0$
must be close to $(1,0)$. 
Therefore, $K \cap B_{\sfrac 14}$ is in fact a graph in the standard coordinates, as claimed in Theorem~\ref{t:final-cracktip}.

\chapter{Some consequences of the epsilon-regularity theory}\label{ch:finale}

\newcommand{\dimh}{\mathrm{dim}_{\mathcal{H}}}
\newcommand{\diam}{{\rm diam}}
\newcommand{\Sigmauno}{K^{(t)}}
\newcommand{\Sigmadue}{K^{(c)}}
\newcommand{\Sigmatre}{\Sigma\setminus (K^{(t)}\cup K^{(c)})}
\def\Per{\mathrm{Per}}
\newcommand\res{\mathop{\hbox{\vrule height 7pt width .3pt depth 0pt
\vrule height .3pt width 5pt depth 0pt}}\nolimits}

\section{Main statements}

In this chapter, we use the $\varepsilon$-regularity theory and some more ideas to prove several structural results about the set $K$ and a more refined analysis of the behavior 
of $K$ around some particular points. 

{More precisely, \index{subset of jump points@subset of jump points}\index{jump points, subset of@jump points, subset of}
\index[simb]{aalK^j@$K^{(j)}$} consider the subset $K^{(j)}\subset K$ of points $p$ such that $K$ is a regular arc in a neighborhood of $p$ and $p$ is not an endpoint of the arc. This is in fact the set of pure jumps, also called in what follows jump points\index{pure jump@pure jump}\index{jump point@jump point}. Through the $\epsilon$-regularity criterion of Theorem~\ref{t:eps_salto_puro} we can characterize its complement $K\setminus K^{(j)}$ in $K$ according to whether the scaled Dirichlet energy or the scaled mean flatness or none of them is infinitesimal. In this respect, we shall show that triple junctions \index{triple junction@triple junction} are the only points where the scaled Dirichlet energy decays while the scaled mean flatness does not, \index{regular loose end@regular loose end} \index{loose end, regular@loose end, regular} and \index{cracktip@cracktip} regular loose ends (or cracktips) are the only points where the scaled mean flatness decays but the scaled Dirichlet energy does not. On the remaining set, which turns out to coincide with $K^{(i)}$, both the scaled mean flatness and the scaled Dirichlet energy are above some positive thresholds {\em at all scales}. 

\medskip

The Mumford-Shah conjecture is thus equivalent to showing that $K^{(i)}$ is indeed the empty set. While this is still open,
it is possible to estimate the size of the latter set in Section~\ref{s:higher integrability} assuming that some $L^p$ norm of $\nabla u$ is finite, 
following the approach introduced by Ambrosio, Fusco, and Hutchinson in \cite{AFH}. Indeed a suitable implementation of the latter idea shows that  the Mumford Shah conjecture is equivalent to a sharp integrability property of $\nabla u$, namely that it belongs to the weak $L^4$ space. 

That $\nabla u$ is higher integrable (namely in $L^{2+\varepsilon}$) was in fact conjectured by De Giorgi for minimizers in all dimensions. We shall give two proofs of this fact. In Section \ref{ss:reverse holder} we show how it follows from a suitable reverse H\"older inequality, which can be inferred from the $\varepsilon$-regularity theory and the compactness result of Theorem~\ref{t:minimizers compactness}. The validity of the latter argument has not been explored in higher dimensions. A second proof, due to De Philippis and Figalli and which can be extended to higher dimensions, establishes first the porosity of $K$, in the spirit of David, again as a consequence of the $\varepsilon$-regularity at jump points,  cf. Section~\ref{ss:porosity}. It then shows the higher integrability of $\nabla u$ leveraging the latter property.

\medskip

The final Section~\ref{s:Bonnet-plus} is dedicated to proving that the Mumford-Shah conjecture holds when the number of connected components of $K$ is limited apriori. This follows a classical argument of Bonnet, who proved in \cite{B96} that each connected components is formed by finitely many arcs whose common endpoints are triple junctions. Note however that the $\varepsilon$-regularity theorem for cracktips was missing at the time and therefore Bonnet could not exclude that such arcs could form slowly rotating spirals at an endpoint which is not in common with another arc.}

\subsection{Structure of \texorpdfstring{$K$}{K}}\label{s:structure}

We consider the set $K^{(j)}$ of pure jump points and as discussed above note that, according to Theorem~\ref{t:eps_salto_puro}, they can be characterized as those points where the scaled Dirichlet energy and the flatness of $K$ both converge to $0$. Its complement 
$\Sigma:=K\setminus K^{(j)}$, usually called in the literature the 
\emph{singular set}\index{singular set@singular set}\index[simb]{aagSigma@$\Sigma$},
can then be decomposed as follows as the disjoint union of $\Sigmauno$, $\Sigmadue$, and $\Sigmatre$, 
\index{subset of triple junctions@subset of triple junctions}\index[simb]{aalK^t@$\Sigmauno$} \index{subset of cracktips@subset of cracktips}\index[simb]{aalK^c@$\Sigmadue$}\index{cracktips, subset of@cracktips, subset of}\index{triple junctions, subset of@triple junctions, subset of}
where 
\begin{eqnarray*}
&&\Sigmauno:=\{x\in\Sigma:\,\liminf_{r\downarrow 0} 
d(x,r)=0\},\\
&&\Sigmadue:=\{x\in\Sigma:\,\liminf_{r\downarrow 0} 
\beta(x,r)=0\},\\
&&\Sigmatre:=\{x\in\Sigma:\,
\limsup_{r\downarrow 0}d(x,r)>0,\, \mbox{and}\, 
\limsup_{r\downarrow 0}\beta(x,r)>0\}.
\end{eqnarray*}
According to the Mumford-Shah conjecture, we should have $\Sigma\setminus (\Sigmauno\cup\Sigmadue)=\emptyset$. Note also that the set $K^\sharp$ 
of high energy points is in fact given by $\Sigma\setminus \Sigmauno$. 

We will use this subdivision to prove Theorem~\ref{t:structure}. A starting point will be the following one.

\begin{theorem}\label{t:structure-2}\label{T:STRUCTURE-2}
Let $(u,K)$ be an absolute, or generalized global minimizer of $E_\param$ in $\Omega$.
Then the following holds:
\begin{itemize}
    \item[(i)] $\Sigmauno$ consists of the triple junctions of $K$ and is discrete.
    \item[(ii)] $\Sigmadue$ consists of the regular loose ends of $K$ and is discrete. 
    \item[(iii)] $\Sigmatre$ consists of those connected components of $K$ which are singletons and of irregular terminal points of connected components with positive length. Every $\Sigmatre$ can also be characterized as an accumulation point of infinitely many connected components of $K$. 
   \end{itemize}
\end{theorem}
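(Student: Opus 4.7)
The strategy for all three items is to analyze blow-up limits $(K_\infty,u_\infty)$ at $x$ (furnished by Theorem~\ref{t:minimizers compactness}) along subsequences realizing the defining conditions of $\Sigma^{(i)}$, to classify them via Theorems~\ref{t:class-global}, \ref{t:uniqueness}, and~\ref{t:Bonnet-David}, and then to upgrade this tangent-type information to pointwise regularity at $x$ via the appropriate $\varepsilon$-regularity theorem (Theorems~\ref{t:eps_salto_puro}, \ref{t:eps_tripunto}, or~\ref{t:final-cracktip}, the last combined with Corollary~\ref{c:connectedness}). The density lower bound (Theorem~\ref{t:dlb}) keeps blow-ups nontrivial, and Corollary~\ref{c:non-terminal=regular} supplies the terminal/nonterminal dichotomy needed for (iii).

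For (i) I choose $r_j\downarrow 0$ with $d(x,r_j)\to 0$. The Dirichlet-energy convergence in Theorem~\ref{t:minimizers compactness}(i) makes the blow-up $(K_\infty,u_\infty)$ elementary; since $0\in K_\infty$, Theorem~\ref{t:class-global} leaves only the straight line or the propeller. In the line case one would have $\Omega^j(x,r_j)\to 0$ and Theorem~\ref{t:eps_salto_puro} would force $x\in K^j$, against $x\in\Sigma$. Hence $K_\infty$ is a propeller, so $\Omega^t(x,r_j)\to 0$ and Theorem~\ref{t:eps_tripunto} furnishes a $C^{1,\alpha}$-diffeomorphism of $K$ near $x$ onto a propeller; the junction point is forced to coincide with $x$ by sending $r_j\downarrow 0$, using that triple junctions are isolated. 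Discreteness of triple junctions follows directly from Theorem~\ref{t:eps_tripunto}.

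For (ii) I pick $r_j\downarrow 0$ with $\beta(x,r_j)\to 0$, so the blow-up satisfies $K_\infty\subset L$ for some line $L$. As $x\notin\Sigma^{(1)}$ one has $\liminf_{r\downarrow 0}d(x,r)>0$, so $(K_\infty,u_\infty)$ is nonelementary by the Dirichlet-energy convergence. Theorem~\ref{t:David-Leger} then rules out $K_\infty=L$ (which would disconnect $\mathbb R^2$), and Corollary~\ref{c:global-two-connected-components} leaves at most one unbounded component of $K_\infty$, which as a closed connected subset of $L$ must be a ray. Controlling the possible bounded components of $K_\infty$ via the density upper and lower bounds shows that for $R$ large enough $K_\infty\setminus\overline B_R$ is a single ray and $K_\infty\cap\partial B_R$ is a single point, and Theorem~\ref{t:Bonnet-David} identifies $(K_\infty,u_\infty)$ as a cracktip. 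Corollary~\ref{c:connectedness} and Theorem~\ref{t:final-cracktip}, applied at a small enough scale where the Hausdorff closeness holds, then conclude that $x$ is a regular loose end; discreteness is immediate.

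For (iii), Corollary~\ref{c:non-terminal=regular} forces points of $\Sigma^{(3)}$ to be terminal, since nonterminal points of $K$ are pure jumps or triple junctions, hence in $K^j\cup\Sigma^{(1)}$. Arguing by contradiction, assume that only finitely many connected components $C_1,\ldots,C_N$ of $K$ meet some $B_{r_0}(x)$: any blow-up $K_\infty$ then sits inside the Hausdorff limit of the rescaled $C_i$'s, and since Hausdorff limits of connected sets are connected, $K_\infty$ has at most $N$ components. By Corollary~\ref{c:global-two-connected-components} at most one of these is unbounded, so the others lie in a common compact set, and Theorem~\ref{t:uniqueness} classifies $(K_\infty,u_\infty)$ as elementary or a cracktip. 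In either case the argument of (i) or (ii) forces $x\in K^j\cup\Sigma^{(1)}\cup\Sigma^{(2)}$, contradicting $x\in\Sigma^{(3)}$. Hence $x$ is accumulated by infinitely many components of $K$; either $\{x\}$ is itself a component (singleton case) or $x$ is a terminal point of a component of positive length, necessarily an irregular terminal point since regular loose ends lie in $\Sigma^{(2)}$. The main obstacle is the cracktip identification in (ii): verifying the hypotheses of the Bonnet-David Theorem~\ref{t:Bonnet-David} — a single unbounded component of $K_\infty$ meeting $\partial B_R$ at a single point — requires careful control of the bounded components of $K_\infty\subset L$, and distinguishes (ii) from the comparatively softer arguments of (i) and (iii).
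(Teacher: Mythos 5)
Your outline for (i) matches the paper's proof exactly: choose $r_j$ with $d(x,r_j)\to 0$, conclude the blow-up is elementary via the energy convergence in Theorem~\ref{t:minimizers compactness}, exclude the constant (since $0\in K_\infty$) and the pure jump (else Theorem~\ref{t:eps_salto_puro} would give $x\in K^j$), and then argue the junction point must be the origin by a diagonal blow-up. Your treatment of (iii) also tracks the paper's: terminal via Corollary~\ref{c:non-terminal=regular}, then a blow-up with finitely many components classified by Corollary~\ref{c:global-two-connected-components} and Theorem~\ref{t:uniqueness}, contradicting $x\in\Sigma^{(3)}$.

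The genuine gap is in (ii), precisely at the sentence ``Controlling the possible bounded components of $K_\infty$ via the density upper and lower bounds shows that for $R$ large enough $K_\infty\setminus\overline B_R$ is a single ray and $K_\infty\cap\partial B_R$ is a single point.'' Ahlfors regularity does \emph{not} rule out bounded nontrivial components of the blow-up: a closed interval $[a,b]\subset L$ easily satisfies the lower density bound $\mathcal H^1(K_\infty\cap B_\rho(y))\geq\epsilon\rho$ at every point $y$ in it when $\epsilon$ is small (the constant from Theorem~\ref{t:dlb} is certainly $<1$), and the set $K_\infty$ could a priori be a ray together with a sequence of such intervals marching off to infinity, in which case the Bonnet--David hypotheses (a) and (b) fail for every $R$. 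Excluding this is exactly the non-soft content of the step. The paper handles it by establishing the claim that every nontrivial connected component of $K_\infty\subset L$ is unbounded; the proof is not a density argument but an inner-variation identity --- the specialization \eqref{e:Leger-magic-12} of L\'eger's formula obtained by testing \eqref{e:inner-C1} with a vector field of the form $\varphi(|x|)(x_1,-x_2)$ and sending the outer radius to infinity --- which shows that $\partial_{x_1}u_\infty^\pm$ cannot vanish in the interior of a component that has a left endpoint, so the traces $u^\pm_\infty$ cannot reattach at a right endpoint. Once one knows every component is unbounded, ruling out two half-lines via a blow-down to a pure jump is the comparatively soft step. You correctly flag this as ``the main obstacle,'' but as written your proof does not close it; citing only density bounds here would be an error.

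A secondary, smaller caveat in (iii): the inference ``Hausdorff limits of connected sets are connected'' is reliable for sets of uniformly bounded diameter, but the rescaled components $C_{i,r}:=(C_i-x)/r$ may have diameters diverging as $r\downarrow 0$, and local Hausdorff limits of connected unbounded sets need not be connected. What you actually need is the weaker fact that the number of connected components of $K_\infty$ that intersect any fixed ball is at most $N$ (since each one arises from a distinct $C_i$), which is still enough to invoke Corollary~\ref{c:global-two-connected-components} and Theorem~\ref{t:uniqueness}; but the phrasing in terms of connectedness being preserved should be adjusted.
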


In particular, we conclude that $\Sigma\setminus (\Sigmauno\cup\Sigmadue)$ is the set of irregular points $K^{(i)}$ and coincides with the set $K^\sharp\setminus \Sigmadue$, i.e. those high energy points which are not regular loose ends. In view of the above theorem we will refer to $\Sigmauno$ as the set of triple junctions and to $\Sigmadue$ 
as the set of cracktips, or regular loose ends. Observe that the discreteness of the sets immediately implies their countability. Note also that the case of restricted minimizers is not covered in Theorem~\ref{t:structure-2} for the following reason:
\begin{itemize}
    \item If the number of connected components of a minimizer is infinite, then the ``restrictedness'' is in fact an empty condition, and the minimizer is actually an absolute minimizer;
    \item If the number of connected components is finite, it follows from the theory exposed so far that the conclusion of the Mumford-Shah conjecture holds since any blow-up at a point of $K$ is either a global pure jump, or a global triple junction, or a cracktip (this is the content of Bonnet's theorem).  
\end{itemize}
\begin{remark}\label{r:struttura quantitativa}
Finally, we remark that the conclusions of the structure theorem can be slightly strengthened, in the sense that $\Sigmauno$ (resp. $\Sigmadue$) can be also characterized as those points in $K\setminus K^{(j)}$ where the liminf of $d(x,r)$ (resp. of $\beta (x,r)$) is below a certain universal threshold $\varepsilon$, cf. Proposition~\ref{p:structure-3} below.
\end{remark}

\subsection{Higher integrability of the gradient and the size of the singular set}\label{s:higher integrability}

As already remarked, the Mumford-Shah conjecture is equivalent to proving that the set $K^{(i)}$ is empty for an absolute minimizer. An obvious consequence of the identity $K^{(i)} = \Sigmatre = K^\sharp\setminus \Sigmadue$ is the following

\begin{corollary}\label{c:semplice}
$\mathcal{H}^1 (K^{(i)}) = \mathcal{H}^1 (K^\sharp) = 0$.
\end{corollary}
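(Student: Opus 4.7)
The proof is essentially a bookkeeping exercise combining two facts already at hand: Corollary \ref{c:a.e.regularity}, which says that pure jump points form a relatively open subset of full $\mathcal{H}^1$ measure in $K$, and the structural Theorem \ref{t:structure-2}, which gives the decomposition $\Sigma = \Sigma^{(1)} \sqcup \Sigma^{(2)} \sqcup \Sigma^{(3)}$ together with the identifications $\Sigma^{(1)}$ = triple junctions, $\Sigma^{(2)}$ = regular loose ends (both discrete), and $\Sigma^{(3)} = K^i$.

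The plan is as follows. First, I would write $\Sigma := K \setminus K^j$ and observe that by Corollary \ref{c:a.e.regularity} one has $\mathcal{H}^1(\Sigma) = 0$. Since Theorem \ref{t:structure-2}(iii) identifies $K^i$ with $\Sigma^{(3)} \subseteq \Sigma$, this yields $\mathcal{H}^1(K^i) \leq \mathcal{H}^1(\Sigma) = 0$, giving the first conclusion.

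Next, by the very Definition \ref{d:internal points} of $K^\sharp$, we have $K^\sharp = K^i \cup \{\text{regular loose ends}\}$, and by Theorem \ref{t:structure-2}(ii) the set of regular loose ends coincides with $\Sigma^{(2)}$ and is discrete; in particular $\mathcal{H}^1(\Sigma^{(2)}) = 0$. Combining,
\[
\mathcal{H}^1(K^\sharp) \leq \mathcal{H}^1(K^i) + \mathcal{H}^1(\Sigma^{(2)}) = 0,
\]
which concludes the proof. Since the statement is a direct corollary of results already in place, there is no substantive obstacle: the only point where one has to be careful is that the identification $K^i = \Sigma^{(3)}$ and the discreteness of $\Sigma^{(2)}$ are both genuine outputs of Theorem \ref{t:structure-2}, so both should be cited explicitly rather than treated as definitional.
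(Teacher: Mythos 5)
Your proof is correct and takes essentially the same route as the paper, which states the corollary as an immediate consequence of the identity $K^i = \Sigma^{(3)} = K^\sharp \setminus \Sigma^{(2)}$ from Theorem \ref{t:structure-2}. The only difference is that you make explicit the appeal to Corollary \ref{c:a.e.regularity} to get $\mathcal{H}^1(\Sigma)=0$, a step the paper leaves tacit.
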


Using a porosity argument it was first shown by David \cite{david1996} that indeed the Hausdorff dimension of $K^\sharp$ must be strictly smaller than $1$. 
We will show the latter theorem differently. Following the approach of Ambrosio, Fusco, and Hutchinson in \cite{AFH}, we relate a higher integrability estimate of $\nabla u$ to a suitable dimension estimate for $K^\sharp$. More precisely, using again $K^{(i)}= K^\sharp \setminus \Sigmadue$ we can immediately conclude the following

\begin{corollary}\label{c:semplice-2}\label{C:SEMPLICE-2}
If $\nabla u \in L^p$, then $\mathcal{H}^{2-\frac{p}{2}} (K^{(i)}) =0$.
\end{corollary}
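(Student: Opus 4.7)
The plan is to reduce the claim to a statement about $K^\sharp$ and then run a standard H\"older-plus-covering argument. First, from the decomposition $K^i = K^\sharp \setminus K^{(2)}$ (immediate from Definition~\ref{d:internal points}) and from Theorem~\ref{t:structure-2}(ii), the set $K^{(2)}$ of regular loose ends is discrete, hence countable, and contributes nothing to any positive-dimensional Hausdorff measure. It therefore suffices to bound $\mathcal{H}^{s}(K^\sharp)$ at the appropriate exponent $s$.

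The key step is to upgrade the pointwise lower bound in Theorem~\ref{t:structure}(iv), namely $\liminf_{r\downarrow 0} r^{-1}\int_{B_r(x)}|\nabla u|^2>0$ at every $x\in K^\sharp$, to a uniform one: I would like to find a geometric constant $c_0>0$ (and a scale $r_0(x)>0$) such that
\[
\int_{B_r(x)}|\nabla u|^2 \geq c_0\, r \qquad \forall\, x\in K^\sharp,\ \forall\, r\leq r_0(x).
\]
This follows by contradiction-and-compactness: if no such $c_0$ existed, one could extract $x_k\in K^\sharp$ and $r_k\downarrow 0$ along which the scaled Dirichlet energy tends to zero, so by Theorem~\ref{t:minimizers compactness} the rescalings $(K_{x_k,r_k},u_{x_k,r_k})$ would subconverge to a generalized global minimizer of vanishing Dirichlet energy; by Theorem~\ref{t:class-global} this limit must be elementary, and then the $\varepsilon$-regularity results of Chapter~\ref{ch:salti_e_tripunti} would force $x_k$ to be either a pure jump or a triple junction for large $k$, contradicting $x_k\in K^\sharp$. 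Combined with H\"older's inequality $\int_{B_r(x)}|\nabla u|^2 \leq |B_r|^{1-2/p}(\int_{B_r(x)}|\nabla u|^p)^{2/p}$, the uniform bound translates into
\[
\int_{B_r(x)}|\nabla u|^p \geq c_1\, r^{2-p/2}\qquad \forall\, x\in K^\sharp,\ r\leq r_0(x).
\]

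The Hausdorff estimate is then obtained by a Vitali covering argument coupled with the absolute continuity of the $L^p$-integral. Since $K^i\subset K$ has zero Lebesgue measure, for every $\varepsilon>0$ there is an open $V\supset K^i$ with $\int_V|\nabla u|^p<\varepsilon$; covering $K^i$ by balls $B_{r_i}(x_i)\subset V$ centered at $x_i\in K^i$ with $r_i\leq\delta$, extracting a disjoint Vitali subfamily whose $5$-enlargement still covers $K^i$, and summing the lower bound above yields $\sum_i r_i^{2-p/2}\leq c_1^{-1}\varepsilon$, so that $\mathcal{H}^{2-p/2}_{5\delta}(K^i)\lesssim\varepsilon$, and letting $\delta,\varepsilon\downarrow 0$ gives the vanishing of the Hausdorff measure at the relevant dimension.

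The main obstacle I expect is the uniform energy density lower bound; the covering step is routine. Matching the sharper exponent $1-p/2$ stated in the corollary (rather than the naive $2-p/2$ produced by the bare H\"older argument) will require exploiting that blow-up limits at points of $K^i$ are forbidden from being elementary global minimizers \emph{or} cracktips, and must therefore display strictly stronger Dirichlet concentration than the borderline linear scaling $\int_{B_r}|\nabla u|^2\sim r$ realized by the cracktip—plausibly promoting the energy lower bound to something like $\int_{B_r(x)}|\nabla u|^2\geq c_0\, r^{\alpha}$ with $\alpha<1$ at points of $K^i$, which via H\"older then yields the announced exponent.
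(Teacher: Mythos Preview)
Your argument through the Vitali step is correct and yields $\mathcal{H}^{2-p/2}(K^i)=0$. That is in fact the intended conclusion: the exponent $1-\tfrac{p}{2}$ in the statement is a typo (it is negative for $p>2$), and the paper's own proof, read carefully, also lands on $s=2-\tfrac{p}{2}$ --- this is the unique value for which H\"older turns $\int_{B_r}|\nabla u|^p=o(r^s)$ into $r^{-1}\int_{B_r}|\nabla u|^2\to 0$, and it is what combines with Theorem~\ref{t:hi} to give the bound $\dim_{\mathcal H}K^i\le 1-\varepsilon$ announced in Theorem~\ref{t:structure}(i). Your final paragraph, speculating about super-linear energy concentration at points of $K^i$ in order to reach $1-\tfrac{p}{2}$, is therefore chasing a phantom and should be dropped.

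On the substance, your route and the paper's are the same idea in different packaging. The paper runs the contrapositive: if $x\notin\Lambda_s:=\{\limsup_{r\to 0}r^{-s}\int_{B_r(x)}|\nabla u|^p>0\}$ then H\"older gives $d(x,r)\to 0$, so by Theorem~\ref{t:structure-2} $x$ is a pure jump or a triple junction; hence $K^i\subset\Lambda_s$, and since the Radon measure $\mu=|\nabla u|^p\,\mathcal L^2$ vanishes on $\Lambda_s\subset K$, the standard density lemma \cite[Theorem~2.56]{AFP00} gives $\mathcal H^s(\Lambda_s)=0$. This bypasses both your compactness step (which, incidentally, is already recorded as Proposition~\ref{p:structure-3}) and the explicit Vitali-plus-absolute-continuity argument, but the content is identical.
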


It was indeed conjectured by De~Giorgi (in all space dimensions) that 
$\nabla u\in L^p_{{\rm loc}}$ for all $p<4$ (cf. with \cite[conjecture 1]{DG}).
So far only a first step into this direction has been established. 
\begin{theorem}\label{t:hi}
There is $p>2$ such that $\nabla u\in L^p_{\mathrm{loc}}(\Omega)$ for all
$(u,K)$ either absolute minimizers of 
$E_\param$ in $\Omega$, and for all open sets $\Omega\subseteq\R^2$. The same conclusion holds for generalized global minimizers in $\mathbb R^2$.
\end{theorem}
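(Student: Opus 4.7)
The plan is to establish a reverse Hölder inequality for $|\nabla u|$ on small balls and then invoke the classical Gehring lemma to upgrade the local $L^2$ integrability to $L^p$ for some $p>2$. Concretely, the goal is to prove the existence of an exponent $q\in[1,2)$ and a constant $C>0$ (depending only on universal quantities and $M_0$) such that
\[
\left(\fint_{B_r(x)}|\nabla u|^2\right)^{1/2}\leq C\left(\fint_{B_{2r}(x)}|\nabla u|^q\right)^{1/q}+C
\]
for every $B_{2r}(x)\subset\subset\Omega$ with $r\leq 1$. The additive constant is harmless for a Gehring-type iteration, since the energy upper bound \eqref{e:upper bound} already forces the left-hand side to be $O(r^{-1})$ at small scales.

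The reverse Hölder inequality would be obtained via a competitor argument. For a.e.\ $\rho\in(r,2r)$ one can arrange that $K\cap\partial B_\rho(x)$ is finite and that $u|_{\partial B_\rho(x)\setminus K}\in W^{1,2}$. Define the competitor $(J,v)$ by setting $J=(K\setminus B_\rho(x))\cup\partial B_\rho(x)$, and, on each connected component $\Omega_i$ of $B_\rho(x)\setminus J$, letting $v=c_i+h_i$, where $c_i$ is the mean of $u$ on $\Omega_i\cap\partial B_\rho(x)$ and $h_i$ is the harmonic extension of $u-c_i$ from that arc into $\Omega_i$, truncated at height $\|g\|_\infty$. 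By Lemma~\ref{l:ciao_componenti} this is an admissible competitor for all the notions of minimizer considered. Minimality, together with $\mathcal{H}^1(J\cap B_\rho)\leq 2\pi\rho+\mathcal{H}^1(K\cap B_\rho)$ and the sharp Dirichlet estimate of Lemma~\ref{l:extension}, yields
\[
\int_{B_\rho(x)}|\nabla u|^2\leq C\rho\int_{\partial B_\rho(x)\setminus K}\Bigl(\frac{\partial u}{\partial\tau}\Bigr)^{\!2}d\mathcal{H}^1+C\rho+C\lambda\|g\|_\infty^2\rho^2.
\]
Averaging over $\rho\in(r,2r)$ and applying the coarea formula converts the boundary integral into a volume bound over the annulus $B_{2r}(x)\setminus B_r(x)$.

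To then lower the exponent below $2$, one applies a Poincaré–Sobolev inequality component by component on $\Omega_i$: in $\mathbb{R}^2$ the critical embedding $W^{1,1}\hookrightarrow L^2$ reads $\inf_c\|u-c\|_{L^2(\Omega_i)}\leq C_{\Omega_i}\|\nabla u\|_{L^1(\Omega_i)}$. Running the comparison above with the sharper control on $u-c_i$ afforded by this embedding produces the reverse Hölder inequality with $q=1$, whence the classical Gehring lemma delivers $p>2$ with $\nabla u\in L^p_{\mathrm{loc}}(\Omega)$.

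The main obstacle is obtaining a \emph{uniform} bound on the Poincaré–Sobolev constants $C_{\Omega_i}$: \emph{a priori}, the connected components of $B_\rho(x)\setminus K$ may be very irregular and these constants can blow up. The remedy is to exploit the Ahlfors regularity of $K$---the upper density bound Lemma~\ref{l:upper-bound} together with the lower bound Theorem~\ref{t:dlb}---which forces each component to satisfy a quantitative non-degeneracy property yielding a uniform Poincaré–Sobolev constant (in particular, a careful choice of $\rho$ is needed so that the arcs of $\partial B_\rho\setminus K$ interact well with the density bounds). For generalized global minimizers the same strategy applies thanks to the compactness result Theorem~\ref{t:compactness-2}, which ensures stability of the reverse Hölder inequality under blow-down sequences and hence covers the second assertion of the theorem.
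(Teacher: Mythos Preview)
Your overall strategy---reverse H\"older plus Gehring---matches the paper's first proof (Theorem~\ref{t:rH}), but the execution has a genuine gap at the step you yourself flag as the ``main obstacle.''

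First, the competitor is not well-defined. With $J=(K\setminus B_\rho(x))\cup\partial B_\rho(x)$ the set $B_\rho(x)\setminus J$ is simply $B_\rho(x)$, a single component, so there are no ``$\Omega_i$'' corresponding to distinct arcs; Lemma~\ref{l:extension} cannot be applied arc-by-arc, and the displayed inequality involving $\int_{\partial B_\rho}(\partial_\tau u)^2$ does not follow from this competitor. If instead you keep $K$ inside the ball (so $J=K\cup\partial B_\rho$), the components $\Omega_i$ of $B_\rho\setminus K$ may each meet several arcs of $\partial B_\rho\setminus K$, and in any case the best choice is then $v\equiv c_i$, which only reproduces the trivial upper bound \eqref{e:upper bound}.

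Second, and more seriously, the assertion that Ahlfors regularity of $K$ yields uniform Poincar\'e--Sobolev constants on the components $\Omega_i$ is not justified and is not true in general: Ahlfors regularity controls $\mathcal{H}^1(K\cap B_r)$ but says nothing about the \emph{shape} of $B_\rho\setminus K$. Two nearby pieces of $K$ can pinch a component into a narrow neck, or a single component can touch several widely separated arcs of $\partial B_\rho$, and in either case the Poincar\'e constant degenerates. Equivalently, in the trace-extension formulation (cf.\ Lemma~\ref{l:traceext}) the constant blows up like $(2\pi-\mathcal{H}^1(\gamma))^{-(1-1/q)}$ when an arc nearly fills the circle, and nothing in Ahlfors regularity prevents this.

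The paper resolves exactly this difficulty by a contradiction/compactness argument: rescale so that the ball has unit radius, pass to a subsequential limit via Theorem~\ref{t:minimizers compactness}, and use Corollary~\ref{c:no-vanishing} plus Theorem~\ref{t:class-global} to conclude that the limit is elementary (empty, a line, or a propeller). This forces, at a possibly smaller but comparable radius, $\partial B_\rho\setminus K_j$ to consist of at most three arcs each bounded away from $2\pi\rho$ in length. Only then can one build a competitor (via a minimal connection of the boundary points) and apply the $W^{1,q}(\gamma)\hookrightarrow H^{1/2}$ extension with a uniform constant. The classification of elementary global minimizers is the missing ingredient in your sketch; Ahlfors regularity alone is not a substitute for it.
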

Theorem~\ref{t:hi} has been proven first by the authors in \cite{DLF13} using a reverse H\"older inequality (cf. Section~\ref{ss:reverse holder}). Shortly after 
De~Philippis and Figalli established the result without any dimensional limitation in 
\cite{DePFig14}, as a more direct application of the $\varepsilon$-regularity theory at pure jumps (cf. Section~\ref{ss:porosity}). In this chapter we will present both proofs.

\subsection{An equivalent formulation of the Mumford-Shah conjecture}

Note that, because cracktip is indeed a minimizer, we certainly cannot expect $\nabla u \in L^4$, while on the other hand the Mumford-Shah conjecture would easily imply $\nabla u\in L^p_{{\rm loc}}$ for every $p<4$.
Introducing a finer scale of spaces, it is in fact possible to characterize the conjecture in terms of a sharp integrability result for $\nabla u$. The relevant statement was given in the introduction in Theorem~\ref{t:summability}, which for the reader convenience we recall here.
\index[simb]{aalL^4,infty@$L^{4,\infty}$}\index{weakL4@weak $L^4$}
\begin{theorem}\label{t:summability-2}\label{T:SUMMABILITY-2}
Let $(u,K)$ be an absolute, or generalized global minimizer of $E_\param$ in $\Omega$. The set $K^{(i)}$ is empty if and only if $\nabla u \in L^{4,\infty}_{{\rm loc}}$, namely if and only if for every compact set $U\subset \Omega$ there is a constant $C = C(U)$ such that
\begin{equation}\label{e:L4infty}
|\{x\in U : |\nabla u (x)|^4\leq M\}|\leq \frac{C}{M}
\qquad \forall M \geq 1\, .
\end{equation}
\end{theorem}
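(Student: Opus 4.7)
I would prove the two implications separately, with the bulk of the work in the reverse direction.

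\textbf{Forward direction ($K^i=\emptyset \Rightarrow \nabla u\in L^{4,\infty}_{loc}$).} If $K^i$ is empty, then Theorem \ref{t:structure-2} yields the decomposition $K = K^j \cup \Sigma^{(1)} \cup \Sigma^{(2)}$ into pure jumps (an open set), a discrete set of triple junctions, and a discrete set of regular loose ends. Fix a compact $U\Subset\Omega$: only finitely many triple junctions and cracktips lie in $U$. The $\varepsilon$-regularity Theorems \ref{t:eps_salto_puro} and \ref{t:eps_tripunto}, combined with standard elliptic regularity for the Neumann-type problem solved by $u$ on each side of $K$, imply that $\nabla u$ is locally bounded on $U$ away from $\Sigma^{(2)}\cap U$. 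Near each cracktip $q\in\Sigma^{(2)}\cap U$, Theorem \ref{t:final-cracktip} produces a $C^{1,\alpha}$-diffeomorphism conjugating $(K,u)$ to the cracktip model $\Rad$, from which one reads off the pointwise bound $|\nabla u(x)|\leq C|x-q|^{-1/2}$ on a small ball $B_{r_q}(q)$. Since $x\mapsto |x-q|^{-1/2}$ belongs to $L^{4,\infty}_{loc}$ and only finitely many such singular points occur in $U$, summing the contributions yields $|\{|\nabla u|>t\}\cap U|\leq Ct^{-4}$ for every $t\geq 1$, which is the weak-$L^4$ estimate \eqref{e:L4infty}.

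\textbf{Reverse direction ($\nabla u\in L^{4,\infty}_{loc}\Rightarrow K^i=\emptyset$).} By the standard Lorentz-space embedding, $\nabla u\in L^p_{loc}$ for every $p\in(2,4)$. Corollary \ref{c:semplice-2} then provides the Hausdorff-measure vanishing $\mathcal{H}^{1-p/2}(K^i)=0$ throughout the entire range $p\in(2,4)$, with the exponent approaching $-1$ as $p\uparrow 4$, which amounts to very strong dimensional control on $K^i$. To convert this into the conclusion $K^i=\emptyset$, I would combine it with the structural characterization in Theorem \ref{t:structure-2}(iii): every point $p\in K^i$ is the accumulation of infinitely many connected components of $K$. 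Each such component $K_n$ has a terminal point $q_n\to p$; by a blow-up argument rooted in Theorem \ref{t:final-cracktip}, each sufficiently small $K_n$ either harbors a cracktip contributing a gradient singularity of the form $|\nabla u|\gtrsim |x-q_n|^{-1/2}$, or itself accumulates further $\Sigma^{(3)}$ points. The resulting infinite family of disjoint cracktip-like singularities in any neighborhood of $p$ produces a total contribution to $|\{|\nabla u|>t\}|$ which, once the scales are tracked compatibly, contradicts the weak-$L^4$ bound.

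\textbf{Main obstacle.} The substantive difficulty lies in the reverse direction, where one must transform the single scale-invariant weak-$L^4$ estimate into a quantitative obstruction to infinite accumulation of components. The key ancillary tool is the scale-invariance of the weak-$L^4$ norm under the cracktip rescaling $u_{p,r}(x)=r^{-\sfrac12}u(p+rx)$: any blow-up limit at a hypothetical $p\in K^i$ is a global generalized minimizer inheriting the same global weak-$L^4$ bound, and the classification Theorems \ref{t:David-Leger} and \ref{t:Bonnet-David} sharply restrict the geometric possibilities — forcing the blow-up to be elementary or a cracktip, and hence forcing $p$ itself to be a regular point by the $\varepsilon$-regularity theory, a contradiction. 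The bookkeeping challenge is to coordinate the vanishing scales of the accumulating components with the threshold $t$ in \eqref{e:L4infty}, so that the $t^{-4}$ contributions of the individual cracktips aggregate into a genuine violation of the weak-$L^4$ bound on any fixed compact set.
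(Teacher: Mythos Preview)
Your forward direction is essentially the paper's argument and is fine.

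Your reverse direction has a genuine gap. The paper does \emph{not} proceed by accumulating cracktip singularities or by classifying blow-ups. Instead, the key step is a direct measure-theoretic lemma (Lemma \ref{l:L4infty}): if $f\in L^{4,\infty}_{loc}$, then for every $\varepsilon>0$ the set
\[
D_\varepsilon=\Bigl\{x:\liminf_{r\downarrow 0}\tfrac{1}{r}\int_{B_r(x)}f^2\geq\varepsilon\Bigr\}
\]
is locally finite. The proof is elementary: from the weak-$L^4$ bound one shows that each $x\in D_\varepsilon$ contributes, at every small scale $r$, a set $\{|f|\gtrsim r^{-1/2}\}\cap B_r(x)$ of area $\gtrsim r^2$; disjointness of small balls around $N$ such points then forces $N\lesssim\varepsilon^{-4}$. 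Combined with Proposition \ref{p:structure-3}, which identifies $K^\sharp=\Sigma^{(2)}\cup\Sigma^{(3)}$ with the set where $\liminf d(x,r)\geq\varepsilon$ for a universal $\varepsilon$, this gives that $K^\sharp$ is locally finite. A short connectivity argument then shows $K$ has locally finitely many components, so $K^i=\emptyset$ by Theorem \ref{t:structure-2}(iii).

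Your proposed route cannot close as stated. The claim that each small accumulating component ``harbors a cracktip'' is precisely what is not known: terminal points of those components could themselves lie in $K^i$, and Theorem \ref{t:final-cracktip} presupposes the connectivity hypothesis of Corollary \ref{c:connectedness}, which is not available near an irregular point. The blow-up idea is also inconclusive: while you correctly note that the weak-$L^4$ norm is invariant under the cracktip rescaling, the blow-up limit is only known to be some global generalized minimizer, and there is no classification theorem asserting that a global minimizer with $\nabla u\in L^{4,\infty}$ must be elementary or a cracktip --- that would essentially be the Mumford--Shah conjecture itself. What you call the ``bookkeeping challenge'' is exactly what Lemma \ref{l:L4infty} resolves cleanly, bypassing both the classification and the component-by-component analysis.
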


\subsection{The case of finitely many connected components} Finally, we recall the statement of Corollary \ref{c:Bonnet-plus}, 
namely that the Mumford-Shah conjecture holds when $K$ has finitely many connected components, even for reduced minimizers.

\begin{corollary}\label{c:Bonnet-plus-2}\label{C:BONNET-PLUS-2}
Conjecture~\ref{c:MS} hold for restricted minimizers $(u,K)$ such that $K$ has a finite number of connected components. 
\end{corollary}

It is in fact a consequence of the work of Bonnet \cite{B96} that Theorem \ref{t:main} is applicable at all points of $K$ under the latter assumption and we include a proof in Section \ref{s:Bonnet-plus}.


\section{Proof of Theorem~\ref{t:structure-2}}

By the $\varepsilon$-regularity theory, in order to show (i) and (ii) it suffices to show, respectively, that:
\begin{itemize}
    \item[(i')] If $x\in \Sigmauno$, then there is one blow-up $(u_\infty, \Sigma_\infty)$ at $x$ which is a triple junction centered at the origin.
    \item[(ii')] If $x\in \Sigmadue$, then there is one blow-up $(u_\infty, \Sigma_\infty)$ at $x$ which is a cracktip centered at the origin.
\end{itemize}
In fact, in both cases it would follow that $K$ is regular in some punctured disk $B_\rho (x)\setminus \{x\}$, implying that both $\Sigmauno$ and $\Sigmadue$ are discrete sets. 

In both cases, we will assume, without loss of generality, that $x=0$. 
We start with (i'). Hence, we fix a sequence of radii $r_j\downarrow 0$ with the property that 
\[
\lim_{j\to\infty} \frac{1}{r_j} \int_{B_{r_j}} |\nabla u|^2=0\, 
\]
and assume, upon extraction of a subsequence, that the corresponding rescaled functions and sets as in Theorem~\ref{t:minimizers compactness} converge to a generalized global minimizer $(u_\infty, K_\infty)$. By Theorem~\ref{t:minimizers compactness}(i) we know that the Dirichlet energy of $u_\infty$ vanishes identically, and so by Theorem~\ref{t:class-global} we know that $(u_\infty, K_\infty)$ is either a constant, or a global pure jump, or a global triple junction. On the other hand, by the Hausdorff convergence of the rescaled sets $K_{0, r_j}$ to $K_\infty$, $K_\infty$ must contain the origin, hence the constant is excluded. If it were a global pure jump, then the $\varepsilon$-regularity theory would imply that $0\in K^{(j)}$, a contradiction to the assumption $0\in\Sigma$. Hence, $(u_\infty, K_\infty)$ must be a global triple junction. Now, if $0$ were not the meeting point of the three half lines forming $K_\infty$, then it would be a pure jump point of $K_\infty$, and a simple diagonal argument would imply the existence of a blow-up of $(u,K)$ at $0$ which is a pure jump, a possibility which has already been excluded. 

In case (ii') we choose our sequence so that $\beta (r_k)$ converges to $0$ and we see immediately that the corresponding blow-up $(u_\infty, K_\infty)$ has the property that $K_\infty$ is contained in a line. As above observe also that $K_\infty$ contains the origin. A result by L\'eger, cf. \cite{Leger}, implies then that $K_\infty$ is either the whole line or a half-line. In Chapter 4 (cf. Theorems~\ref{t:Bonnet-David} and \ref{t:David-Leger}) we already saw the arguments leading to the conclusion that in the first case, we have a pure jump and in the second we have a cracktip. On the other hand, the pure jump case is excluded because the $\varepsilon$-regularity theory would imply that $0$ is a pure jump of $(u,K)$.

For completeness, we give a simple self-contained argument that $K_\infty$ is either a line or a half-line. Even though we follow L\'eger's ideas, rather than relying on his ``magic formula'' in Proposition~\ref{p:Leger-magic}, we give a direct elementary derivation from the inner variations.

Without loss of generality we assume that $K_\infty \subset \{x: x_2=0\}$. We will show below that
\begin{itemize}
    \item[(Cl)] any nontrivial connected component of $K_\infty$ is infinite.  
\end{itemize}
Taking (Cl) for granted, observe that, since the $\varepsilon$-regularity theory implies that regular pure jumps are dense in $K_\infty$, the existence of a connected component of $K_\infty$ which is a singleton implies the existence of infinitely many connected components of $K_\infty$ which are closed intervals of bounded positive length. Therefore, (Cl) implies that $K_\infty$ is either a line or a half-line or the union of two half-lines. We then just need to exclude the possibility that $K_\infty$ contains two connected components which are half-lines. To exclude the latter, observe that we can 
use Theorem~\ref{t:compactness-2} to ``blow-down'' the pair $(u_\infty, K_\infty)$ introducing 
\[
K'_R := \{R^{-1} x: x\in K_\infty\} \qquad u'_R (x):= R^{-\sfrac12} u (Rx)
\]
and letting $R\uparrow \infty$. For an appropriate sequence $R_k$ the pair $(u'_{R_k}, K'_{R_k})$ converges then to a global minimizer $(v_\infty, J_\infty)$ for which $J_\infty$ is a full line, which implies that the global minimizer is a pure jump. But then the $\varepsilon$-regularity theory would imply that for all sufficiently large $k$ the set $K'_{R_k}\cap B_1$ is connected, which is a contradiction. 

We now come to the core of the argument for (ii'), which is the proof of (Cl). 
Consider a connected component $K'$ of $K_\infty$ with positive length: it is either a closed interval or a closed half-line or the full line. By symmetry, it suffices to show that, if $K'$ has a left extremum, which we will denote by $a$, then it cannot have a right extremum. Observe that any point $p= (x_1, 0)\in K'$ which is not an extremum is necessarily a regular jump point (since any blow-up at such points would be a global pure jump) and we define
\[
u^\pm_\infty (p) = \lim_{t\downarrow 0} u_\infty (x_1, \pm t)\, .
\]
It is also the case that, since we are assuming that $K'$ is not the full line, $K$ does not disconnect $\mathbb R^2$. It then follows from Proposition~\ref{p:continuity} that $u_\infty$ is continuous at $a$. Recall that $\frac{\partial u^\pm_\infty}{\partial x_2} = 0$ at all $p\in K'$ which are not extrema, while (since the curvature of $K_\infty$ is $0$),
\[
\left|\frac{\partial u^+_\infty}{\partial x_1} (p)\right| =
\left|\frac{\partial u^-_\infty}{\partial x_1} (p)\right|\, . 
\]
The above derivatives can only vanish at isolated points: if the zeros were to accumulate to a point $q$ which is not an extremum, the classical theory of harmonic functions would imply that both $u^\pm$ would be constant in a neighborhood of $q$. But then unique continuation would imply that $u_\infty$ is actually constant on $\mathbb R^2\setminus K$. If we next fix a point $p$ where
\[
\frac{\partial u^+_\infty}{\partial x_1} (p) \neq 0\, ,
\]
then in a neighborhood of it, we either have 
\begin{equation}\label{e:wrong-alternative}
\frac{\partial u^+_\infty}{\partial x_1} = \frac{\partial u^-_\infty}{\partial x_1}  
\end{equation}
or we have
\begin{equation}\label{e:correct-alternative}
\frac{\partial u^+_\infty}{\partial x_1} = - \frac{\partial u^-_\infty}{\partial x_1}\, .  
\end{equation}
But then, again the unique continuation theory for harmonic function would tell that one of the two alternatives holds on $K'$ with the exception of its extrema. However, if the first alternative were to hold, integrating it starting from the left extremum $a$ we would conclude that $u^+_\infty=u^-_\infty$ on $K'$: we could then remove it and decrease the energy, violating the minimality. We must therefore have \eqref{e:correct-alternative}. We next claim that $\frac{\partial u^+_\infty}{\partial x_1}$ never vanishes in the interior of $K'$: that, and \eqref{e:correct-alternative} would imply that there cannot be a right extremum of $K'$, because at such extremum $u_\infty$ would be continuous, and hence $u^+_\infty$ and $u^-_\infty$ would have to coincide, while integrating \eqref{e:correct-alternative} from $a$ to $b$ we would conclude that $u^+_\infty (b)\neq u^-_\infty (b)$. 

We thus are left to show that, if $(x_1, 0)$ is an interior point of $K_\infty$, then $\frac{\partial u^+_\infty}{\partial x_1} (x_1, 0) \neq 0$. By translation we can assume that $(x_1, 0) = (0,0)$ and we will then show that 
\begin{equation}\label{e:Leger-magic-12}
2 \pi \left(\frac{\partial u^+_\infty}{\partial x_1}\right)^2 (0)
= \int_{\{(t, 0)\in (\mathbb R\times \{0\}) \setminus K_\infty\}} \frac{dt}{t^2}\, .
\end{equation}
L\'eger in \cite{Leger} derives \eqref{e:Leger-magic-12} directly from his ``magic formula'' \eqref{e:Leger-magic}, while here we will show how it follows immediately from the inner variation formula \eqref{e:inner-C1}, using the same test of the proof of Proposition~\ref{p:Leger-magic}. 

We fix positive radii $\rho<R$ and consider the vector field $\psi (x,y) = \varphi (|(x,y)|) (x, -y)$ where 
\[
\varphi (t) = \left\{
\begin{array}{ll}
\rho^{-2} - R^{-2} \qquad &\mbox{if $t\leq \rho$}\\
t^{-2} - R^{-2} \qquad &\mbox{if $\rho \leq t \leq R$}\\
0 &\mbox{otherwise}\, .
\end{array}\right.
\]
Strictly speaking, the latter is not a valid test in the inner variation formula, which in our case would read
\begin{equation}\label{e:internal-again}
\int_{\mathbb R^2\setminus K} (2 \nabla^T u_\infty \cdot D\psi\, \nabla u_\infty - |\nabla u_\infty|^2 {\rm div}\, \psi) = \int_{K_\infty} e^T\cdot D\psi\, e\, d\mathcal{H}^1\, ,
\end{equation}
because $\psi$ is not $C^1$.
However, if we assume that $\mathcal{H}^1 (K_\infty \cap (\partial B_\rho \cup \partial B_R))=0$, it is easily seen that the left hand side \eqref{e:internal-again} makes sense because $\psi$ is $\mathcal{H}^1$-a.e. differentiable on $K_\infty$, while a standard regularization argument shows the validity of the formula. Next we compute $D\psi$ in the two relevant domains where it does not vanish:
\begin{align}
D\psi & = \left(\frac{1}{\rho^2}- \frac{1}{R^2}\right)
\left(\begin{array}{ll}
1 & 0\\
0 & -1
\end{array}\right)
\qquad \mbox{on $B_\rho$}\\
D\psi &=  -\frac{1}{R^2} \left(\begin{array}{ll}
1 & 0\\
0 & -1
\end{array}\right) + \frac{1}{(x^2+y^2)^2} 
\left(\begin{array}{ll}
y^2-x^2 & 2xy\\
-2xy & y^2-x^2
\end{array}
\right) \qquad \mbox{on $B_R\setminus \bar B_\rho$}\, .
\end{align}
Inserting on \eqref{e:internal-again} we immediately get
\begin{align*}
& \frac{2}{\rho^2} \int_{B_\rho\setminus K_\infty} \left(\left(\frac{\partial u_\infty}{\partial x_1}\right)^2 - \left(\frac{\partial u_\infty}{\partial x_2}\right)^2\right) -
\frac{2}{R^2} \int_{B_R\setminus K_\infty} \left(\left(\frac{\partial u_\infty}{\partial x_1}\right)^2 - \left(\frac{\partial u_\infty}{\partial x_2}\right)^2\right)\\
= &\frac{1}{\rho^2} \mathcal{H}^1 (K_\infty\cap B_\rho) - \frac{1}{R^2} \mathcal{H}^1 (K_\infty\cap B_R) - \int_{\{(t,0)\in (B_R\setminus B_\rho)\cap K_\infty\}} \frac{dt}{t^2}\, .
\end{align*}
We choose $\rho$ sufficiently small so that $B_\rho \cap K_\infty = B_\rho \cap 
(\mathbb R \times \{0\} )$. Then we let $R\uparrow \infty$ and use
\[
\int_{B_R\setminus K_\infty} |\nabla u_\infty|^2 + \mathcal{H}^1 (B_R\cap K_\infty) \leq 2\pi R\, ,
\]
to conclude
\[
\frac{2}{\rho^2} \int_{B_\rho\setminus K_\infty} \left(\left(\frac{\partial u_\infty}{\partial x_1}\right)^2 - \left(\frac{\partial u_\infty}{\partial x_2}\right)^2\right) = \frac{2}{\rho} - \int_{\{(t,0)\in K_\infty\setminus B_\rho\}} \frac{dt}{t^2}\, .
\]
Since, however, the choice of $\rho$ yields
\[
\frac{2}{\rho} = \int_{\{(t, 0)\in (\mathbb R\times \{0\})\setminus B_\rho\}} 
\frac{dt}{t^2}\, ,
\]
we arrive to
\begin{equation}\label{e:Leger-magic-13}
\frac{2}{\rho^2} \int_{B_\rho\setminus K_\infty} \left(\left(\frac{\partial u_\infty}{\partial x_1}\right)^2 - \left(\frac{\partial u_\infty}{\partial x_2}\right)^2\right)= \int_{\{(t, 0)\in (\mathbb R\times \{0\}) \setminus K_\infty\}} \frac{dt}{t^2}\, .
\end{equation}
We now observe that, because $0$ is a regular jump point, the function $\left(\frac{\partial u_\infty}{\partial x_1}\right)^2 - \left(\frac{\partial u_\infty}{\partial x_2}\right)^2$
is actually continuous over $B_\rho$ and its value at $0$ is
$\left(\frac{\partial u^+_\infty}{\partial x_1}\right)^2 (0)$.
So, letting $\rho\downarrow 0$ in \eqref{e:Leger-magic-13}, we obtain \eqref{e:Leger-magic-12}.

It finally remains to show (iii). It is however a straightforward consequence of the material in Chapter 4 that, if a point $p\in K$ is not the accumulation point of infinitely many connected components of $K$, then the blow-ups at $K$ are either cracktips or global pure jumps or triple junctions, while it is a straightforward consequence of the $\varepsilon$-regularity theory that if one of the blow-ups belong to these subsets, then $K\cap B_\rho (p)$ is connected for all sufficiently small radii $\rho$.  

\begin{remark}\label{r:Leger}
Along the proof of the previous result we have given a different argument for a rigidity property originally 
due to L\'eger \cite{Leger}: if the jump set of a global generalized minimizer is contained in a line, 
then necessarily it is either a cracktip or a pure jump. 
An analogous rigidity property in higher dimensions is not yet known as it is pointed out at 
the end of David's book \cite{DavidBook}, though some interesting results in this direction have been obtained 
by Lemenant in \cite{Lemenant2015}.
\end{remark}

\section{Proofs of Corollary~\ref{c:semplice-2} and Theorem~\ref{t:summability-2}}

Corollary~\ref{c:semplice-2} is a simple consequence of the structure theorem and standard results in measure theory. 

\begin{proof}[Proof of Corollary~\ref{c:semplice-2}]
Suppose that $|\nabla u|\in L^p_{{\rm loc}}(\Omega)$ for some $p> 2$. For every $s\in [0,2]$ consider the set 
\[
\Lambda_s:=\left\{x\in\Omega:\,
\limsup_{r\to 0}r^{-s}\int_{B_r(x)}|\nabla u|^p > 0\right\}\,,
\]
then $\Lambda_s$ is a subset of $K$. Choose $s:= 2-\frac{p}{2}$, then H\"older's inequality, yields that
\[
\lim_{r\downarrow 0} \frac{1}{r} \int_{B_r (x)\setminus K} |\nabla u|^2 = 0\,
\]
 for every $x\in K\setminus \Lambda_s$. 
In particular, Theorem~\ref{t:structure-2} implies that every point $x\in K\setminus \Lambda_s$ is either a pure jump point or a triple junction. We thus conclude that 
$K^{(i)}\subset \Lambda_s$.
Consider the Radon measure defined on any subset $E\subseteq\Omega$ by
\[
\mu(E):=\int_E |\nabla u|^p\,.
\]
As $\Lambda_s\subset K$, then $\mu (\Lambda_s) =0$. We can employ
\cite[Theorem~2.56]{AFP00} to infer that $\mathcal{H}^s (\Lambda_s) = 0$, 
which concludes the proof.  
\end{proof}

For the proof of Theorem~\ref{t:summability-2} we first make the following preliminary observation.

\begin{lemma}\label{l:L4infty}
Let $f\in L^{4,\infty}_{{\rm loc}}(\Omega)$, $\Omega\subseteq\R^2$, then for all $\e>0$ the set
\begin{equation}\label{e:ctip}\
D_\e:=\left\{x\in\Omega:\,\liminf_r\frac 1r\int_{B_r(x)}f^2(y)\ge\e\right\}
\end{equation}
is locally finite.
\end{lemma}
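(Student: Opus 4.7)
My plan is to show that for any compact set $U\subset\subset \Omega$ the intersection $D_\varepsilon\cap U$ is finite, with an explicit cardinality bound depending only on $\varepsilon$ and the weak-$L^4$ semi-norm of $f$ on a slightly enlarged neighborhood $U\subset\subset U'\subset\subset\Omega$. The idea is purely a layer-cake plus disjointification estimate that uses the defining bound \eqref{e:L quattro debole} for $L^{4,\infty}_{\mathrm{loc}}$, namely $|\{x\in U':|f(x)|^4>M\}|\leq C/M$.

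First I would take $N$ arbitrary distinct points $x_1,\dots,x_N\in D_\varepsilon\cap U$ and choose $r>0$ so small that the balls $B_r(x_i)$ are pairwise disjoint, contained in $U'$, and satisfy $\int_{B_r(x_i)}f^2\geq \frac{\varepsilon}{2}r$ for every $i=1,\dots,N$ (which is possible by the very definition of $D_\varepsilon$, since each $x_i$ satisfies $\liminf_{\rho\downarrow 0}\rho^{-1}\int_{B_\rho(x_i)}f^2\geq\varepsilon$). Summing over $i$ and writing $E_r:=\bigcup_i B_r(x_i)$, which has measure $N\pi r^2$, this yields
\[
\frac{N\varepsilon}{2}\,r\leq \int_{E_r}f^2\,dx.
\]

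The next step is to estimate the right-hand side from above via the layer-cake formula together with the weak-$L^4$ hypothesis. For every $T>0$ one has
\[
\int_{E_r}f^2\,dx=\int_0^\infty |\{x\in E_r:f^2(x)>t\}|\,dt\leq T\,|E_r|+\int_T^\infty |\{x\in U':|f(x)|^4>t^2\}|\,dt,
\]
and hence, using \eqref{e:L quattro debole} to bound the last integrand by $C/t^2$,
\[
\int_{E_r}f^2\,dx\leq T N\pi r^2+\frac{C}{T}.
\]
Optimizing in $T$ (choose $T=\sqrt{C/(N\pi r^2)}$) gives
\[
\int_{E_r}f^2\,dx\leq 2r\sqrt{C N\pi}.
\]

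Combining the two inequalities produces $\frac{N\varepsilon}{2}r\leq 2r\sqrt{CN\pi}$, which simplifies to $N\leq 16C\pi/\varepsilon^2$. Since this universal bound is independent of the $N$ chosen points, $D_\varepsilon\cap U$ is finite and $D_\varepsilon$ is locally finite. There is no real obstacle here; the only point of care is handling the boundary effect, which is dealt with by picking the slightly larger relatively compact set $U'$ so that the weak-$L^4$ bound applies uniformly to all balls $B_r(x_i)$ entering the argument once $r$ is taken small enough.
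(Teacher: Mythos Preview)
Your argument is correct and follows the same overall strategy as the paper's proof: pick finitely many points of $D_\varepsilon$, pass to a common small radius $r$ so that the balls are disjoint and each carries at least $\tfrac{\varepsilon}{2}r$ of $\int f^2$, and then play this lower bound against a layer-cake upper bound coming from the weak-$L^4$ hypothesis.

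The execution differs, however, and yours is cleaner. The paper works on each ball separately: it splits $\int_{B_r(x_i)}f^2$ into a high-level part (bounded by $C/\lambda^2$ via layer-cake) and a low-level part, and from this extracts a lower bound $|\{y\in B_r(x_i):|f(y)|\geq (\varepsilon/8\pi r)^{1/2}\}|\geq \tfrac{\varepsilon^2 r^2}{32C}$. Summing these disjoint superlevel sets and applying the $L^{4,\infty}$ bound once more yields $N\leq 2^{11}C^2\pi^2/\varepsilon^4$. You instead sum the integrals first and apply the layer-cake splitting directly on the union $E_r$, optimizing the cutoff $T$; this avoids the intermediate superlevel-set step and produces the sharper bound $N\leq 16C\pi/\varepsilon^2$. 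Both routes are elementary, but yours trades the paper's two-pass use of the weak-$L^4$ estimate for a single optimized application, which is why the $\varepsilon$-dependence improves from $\varepsilon^{-4}$ to $\varepsilon^{-2}$.
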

\begin{proof}
We shall show in what follows that if $f\in L^{4,\infty}(\Omega)$ then $D_\e$
is finite. An obvious localization argument then gives the general case.

Let $\e>0$ and consider the set $D_\e$ in \eqref{e:ctip} above.
First note that for any $B_r (x)\subset \Omega$ and any $\lambda>0$ we have the estimate
\begin{align}\label{e:stimasublev}
\int_{\{y\in B_r(x):\,|f(y)|\geq \lambda\}}f^2(y)&\leq
\int_{\{y\in\Omega:\,|f(y)|\geq \lambda\}}f^2(y)\notag\\
&=2\int_\lambda^{\infty}t\, \left|\{y\in\Omega:\,|f(y)|\geq t\}\right|\, dt
\leq\int_\lambda^{\infty}\frac{2C}{t^3}dt=\frac C{\lambda^2}\,,
\end{align}
where $C>0$ is the constant introduced in \eqref{e:L4infty}.
If $x\in D_\e$ and $r>0$ satisfy 
\begin{equation}\label{e:stimamisura}
\int_{B_r(x)}f^2(y)\geq \frac \e2 r,
\end{equation}
by choosing $\lambda=2(C/r\e)^{\sfrac12}$ in \eqref{e:stimasublev} we conclude 
\begin{equation}\label{e:stimasuplev}
\int_{\{y\in B_r(x):\,|f(y)|<\,2( \frac C{r\e})^{\sfrac12}\}}f^2(y)\geq
\frac \e4 r.
\end{equation}
Furthermore, the trivial estimate
\[
\int_{\{y\in B_r(x):\,|f(y)|<\lambda\}}f^2(y)<\pi\lambda^2r^2,
\]
implies for $\lambda=(\e/8\pi r)^{\sfrac12}$
\begin{equation}\label{e:stimasublev1}
\int_{\{y\in B_r(x):\,|f(y)|<(\frac \e{8\pi r})^{\sfrac12}\}}f^2(y)<\frac\e 8r.
\end{equation}
By collecting \eqref{e:stimasuplev} and \eqref{e:stimasublev1}
we infer
\[
\int_{\{y\in B_r(x):\,(\frac \e{8\pi r})^{\sfrac12}\leq
|f(y)|<\,2(\frac C{r\e})^{\sfrac12}\}}f^2(y)\geq\frac\e 8r,
\]
that in turn implies
\begin{equation}\label{e:stimafond}
\big|\{y\in B_r(x):\,|f(y)|\geq (\frac \e{8\pi r})^{\sfrac12}\}\big|
\geq\frac{\e^2r^2}{32C}.
\end{equation}
Let $\{x_1,\ldots,x_N\}\subseteq D_\e$ and $r>0$ be a radius such 
that the balls $B_r(x_i)\subseteq\Omega$ are disjoint and 
\eqref{e:stimamisura} holds for each $x_i$. Then, from \eqref{e:stimafond} 
and the fact that $f\in L^{4,\infty}(\Omega)$, we infer
\[
N\frac{\e^2r^2}{32C}\leq  \left|\{y\in\Omega:\,
|f(y)|\geq(\frac \e{8\pi r})^{\sfrac12}\}\right|
\leq\frac {C(8\pi r)^2}{\e^2}\Longrightarrow 
N\leq\frac {2^{11}C^2\pi^2}{\e^4},
\]
and the conclusion follows at once.
\end{proof}

Another ingredient in the proof of Theorem~\ref{t:summability-2} is the following strengthened version of the first conclusion in Theorem~\ref{t:structure-2}.
We in fact only need \eqref{e:soglia-per-dir}, but for completeness we also prove statement \eqref{e:soglia-per-flat}, which has its own independent interest.

\begin{proposition}\label{p:structure-3}
There is a universal constant $\varepsilon >0$ with the following property. If $(u,K)$ is an absolute or generalized minimizer of $E_\lambda$, then 
\begin{align}
\Sigmauno\cup K^{(j)} &= \left\{x\in K: \liminf_{r\downarrow 0} d(x,r) < \varepsilon\right\}\,\label{e:soglia-per-dir} \\
\Sigmadue\cup K^{(j)} & = \left\{x\in K: \liminf_{r\downarrow 0} \beta (x,r) < \varepsilon\right\}\, .\label{e:soglia-per-flat}
\end{align}
\end{proposition}
\begin{proof}
We start by addressing \eqref{e:soglia-per-dir}.
We first of all observe that there is an absolute constant $\bar \varepsilon >0$ such that, if $(u, K)$ is a generalized global minimizer with $0\in K$ and $\int_{B_8\setminus K} |\nabla u|^2 < \bar \varepsilon$, then either $B_1\cap K$ is diffeomorphic to a pure jump or $B_4\cap K$ is diffeomorphic to a triple junction. This can be done using the compactness Theorem~\ref{t:compactness-2} and the $\varepsilon$-regularity theory. Indeed, assume by contradiction $(u_j, K_j)$ is a sequence of generalized global minimizers which satisfy
\[
\int_{B_8\setminus K_j} |\nabla u_j|^2 < \frac{1}{j}\, 
\]
but violate our claim. Then up to subsequences they converge to a global generalized minimizer $(u_\infty, K_\infty)$ with $0\in K_\infty$ and such that 
\[
\int_{B_8\setminus K_\infty} |\nabla u_\infty|^2=0\,.
\]
It follows from the theory of Chapter 5 that $u_\infty$ is an elementary global minimizer. If it is a pure jump, then it would follow from the $\varepsilon$-regularity theory that $K_j\cap B_1$ is diffeomorphic to a line for a sufficiently large $j$. If it is a triple junction, but the meeting point of the three half-lines is not contained in $B_2$, then the very same conclusion can again be drawn. If on the other hand, it is a triple junction with a meeting point at $x\in B_2$, then we can use the $\varepsilon$-regularity theory to say that $B_6 (x) \cap K_j$ is diffeomorphic to a triple junction for a sufficiently large $j$.

Given the first part, consider now $\varepsilon := \frac{\bar \varepsilon}{8}$ and fix a point where $\liminf_{r\downarrow 0} d(x,r) < \varepsilon$ for some absolute minimizer or generalized global minimizer $(u,K)$. Then we draw from the above argument that there is at least one blow-up $(u_\infty, K_\infty)$ at $x$ with the property that $0\in K_\infty$ is either a pure jump point or a triple junction. A further blow-up with a simple diagonal argument implies then that there is a blow-up of $(u,K)$ at $x$ which is either a pure jump or a triple junction. This concludes the proof.

\medskip

{The argument for the second statement is analogous with a slight twist. We claim that, for an appropriate choice of $\varepsilon>0$ sufficiently small, if $\beta (0, 1)<\varepsilon^4$ and $0\in K$, then one of the following (non-exclusive) alternatives holds:
\begin{itemize}
    \item[(i)] $K\cap B_\varepsilon$ is a $C^1$ arc with both endpoints contained in $\partial B_\varepsilon$;
    \item[(ii)] $K\cap B_{5\varepsilon}$ is a $C^1$ arc with one endpoint in $\partial B_{5\varepsilon}$ and the other endpoint in $\overline{B}_{3\varepsilon}$. 
\end{itemize}
Obviously this is enough to prove \eqref{e:soglia-per-flat}.

We again argue by contradiction and assume to have a sequence $(u_j, K_j)$ which goes against the latter statement but where each pair satisfies the assumption with $\eta=\frac{1}{j}$. Let $(v_j, \tilde{K}_j)$ be suitable rescalings of the pairs by a factor $\eta^{-1}$ and note that, looking at the definition of $\beta$, we find a sequence of affine lines $\ell_j$ with the property that 
\[
\int_{B_{\eta}^{-1}\cap \tilde{K}_j} \dist^2 (x, \ell_j) d\mathcal{H}^1 (x) \leq \frac{1}{j}\, .
\]
In particular, since $0\in \ell_j$ and using the density lower bound, $\dist (0, \ell_j)$ converges to $0$.
Extracting suitable subsequences, we can assume that $(v_j, \tilde{K}_j)$ converges to a global generalized minimizer $(u_\infty, K_\infty)$ with the property that $K_\infty$ is a subset of a line $\ell_\infty$. The latter line must, moreover, pass through $0$, which is also an element of $K$. However by Remark \ref{r:Leger} this implies that either $K_\infty=\ell_\infty$, and hence the global minimizer is a pure jump, or that the global minimizer is a cracktip ending at some point $p_0$, in particular $K_\infty$ is an half-line originating at $p_0$ and containing the origin. 

If we are in the first case, or we are in the second case but the point $p_0\not\in B_3$, then clearly the sets $\tilde{K}_j\cap \overline{B}_2$ converge, in the Haudorff distance, to $\ell_\infty \cap \overline{B}_2$. But then, from the $\varepsilon$-regularity theory it follows that, for $j$ sufficiently large, the sets $\tilde{K}_j \cap B_{3/2}$ are $C^1$ arcs converging in $C^1$ to the segment $\ell_\infty \cap B_{3/2}$, in particular $K_j \cap B_{1/j}$ is a $C^1$ arc with both endpoints in $\partial B_{1/j}$, falling in alternative (i). If we are in the second alternative and the point $p_0\in B_3$, we now conclude that $\tilde{K}_j \cap \overline{B}_{20} (p_0)$ is converging in the Hausdorff distance to a segment $[p_0, p_1]$ containing the origin and with $p_1\in \partial B_{20} (p_0)$. In particular, again by the $\varepsilon$ regularity theorem, the sets $\tilde{K}_j\cap B_{10} (p_0)$ are, for $j$ sufficiently large, $C^1$ arcs converging in $C^1$ to $[p_0, p_1]\cap B_{10} (p_0)$. This implies that $\tilde{K}_j\cap B_5$ is, for $j$ large enough, a $C^1$ arc with one endpoint in $B_4$ and the other endpoint in $\partial B_5$. Scaling back, $\tilde{K}_j$ falls under case (ii) above. This provides a contradiction and completes the proof.} 
\end{proof}

\begin{proof}[Proof of Theorem~\ref{t:summability-2}]
First of all assume that $(u,K)$ is an absolute minimizer of $E_\lambda$ (or a generalized minimizer of $E$) in some $\Omega$ and that $\nabla u\in L^{4,\infty}_{{\rm loc}}(\Omega)$. Without loss of generality we can assume that $\Omega$ is the unit ball $B_1$ and that $\nabla u\in L^{4, \infty} (B_1)$. Let $\varepsilon$ be the positive number of Proposition~\ref{p:structure-3} and define 
\[
D_\varepsilon := \left\{x: \liminf_{r\downarrow 0} \frac{1}{r} \int_{B_r (x)\setminus K} |\nabla u|^2 \geq \varepsilon \right\}\, .
\]
By Lemma~\ref{l:L4infty} $D_\varepsilon$ is a finite set, which we enumerate as $\{p_1, \ldots , p_N\}$ and by Proposition~\ref{p:structure-3} any point $p\in K\setminus \{p_1, \ldots , p_N\}$ is either a pure jump or a triple junction. It then turns out that, if $p\in K\setminus \{p_1, \ldots, p_N\}$ there are only two possible alternatives: either there is a Lipschitz curve $\gamma\subset K$ connecting $p$ with a point $q\in \partial B_1$, or there is a Lipschitz curve $\gamma \subset K$ connecting $p$ to one of the $p_i$'s. So, we can partition $K$ into a collection of those finitely many connected components which contain at least one of the $p_i$'s and a countable number of connected components which do not, but whose closure must contain a point of $\partial B_1$. A point $q\in \partial B_1$ cannot be the accumulation point of an infinite number of the latter, because otherwise, we would have $\mathcal{H}^1 (K)=\infty$. It follows therefore from Theorem~\ref{t:structure-2} that each $p_i$ is necessarily a regular loose end of $K$.

Consider on the contrary an absolute minimizer $(u,K)$ in $\Omega$ for which the Mumford-Shah conjecture holds. Let $U\subset\subset \Omega$. Then there is a finite subset $\{p_1, \ldots, p_N\} \subset U\cap K$ of regular loose ends, while all the other points of $K\cap U$ are regular jump points or triple junctions. From the regularity theorem at regular loose ends, we infer the existence of balls $B_{r_i} (p_i)$ and of an absolute constant $C$ with the properties that $|\nabla u(x)| \leq C |x-p_i|^{-\sfrac12}$. On the other hand by the regularity theory at triple junctions and regular jump points, $|\nabla u|$ is bounded on $U\setminus \bigcup_i B_{r_i} (p_i)$. Hence, we conclude that there is a constant $C (U)$ such that 
\[
|\nabla u(x)|\leq C \max \left\{ |x-p_i|^{-\sfrac12}\right\}\, .
\]
Since the function on the right-hand side belongs to $L^{4,\infty} (U)$, this completes the proof.
\end{proof}

\section{Reverse H\"older inequality for \texorpdfstring{$\nabla u$}{Du}}\label{ss:reverse holder}

Following a classical path, the key ingredient used in \cite[Theorem 1.3]{DLF13} 
to establish Theorem~\ref{t:hi} (for $E_0$) is a reverse H\"older inequality\index{reverse H\"older inequality@reverse H\"older inequality} for the gradient, which we state independently. 
\begin{theorem}\label{t:rH}
For all 
$q\in(1,2)$ there exist $C_0>0$, $r_0\in(0,1)$ such that if $(u,K)$ is either a restricted, or an absolute, or a generalized global minimizer of $E_\param(\cdot,\cdot,B_1,g)$, with $\lambda\leq 1$, $\|g\|_{\infty}\leq M_0$ (we use the notation introduced in  Assumption~\ref{a:blow-up}), then for every $x\in B_{\sfrac12}$ and $r\in(0,r_0)$
\begin{equation}\label{e:rH}
\left(\fint_{B_r(x)\setminus K}|\nabla u|^2\right)^{\sfrac 12}
\leq C_0\left(\fint_{B_{2r}(x)\setminus K}|\nabla u|^q
\right)^{\sfrac1q}+C_0\|g\|_{\infty}\,.
\end{equation}
\end{theorem}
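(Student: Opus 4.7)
My plan is to obtain Theorem~\ref{t:rH} via the classical two-step structure for reverse H\"older inequalities: a Caccioppoli-type estimate followed by a Sobolev-Poincar\'e inequality. The subtlety is that both steps must be carried out in the $SBV$ setting of Mumford-Shah, where $u$ can jump across $K$; the key enabling observation is that the outer variation \eqref{e:outer} accepts as test functions any $\varphi\in W^{1,2}(\Omega\setminus K)$ with compact support, which allows us to transplant the classical Caccioppoli argument with minimal modifications.

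For the Caccioppoli step, I would test \eqref{e:outer} with $\varphi = \eta^2(u - f)$, where $\eta\in C_c^\infty(B_{2r}(x))$ is a standard cutoff with $\eta\equiv 1$ on $B_r(x)$ and $|\nabla\eta|\leq C/r$, and $f\colon B_{2r}(x)\setminus K\to\mathbb R$ is locally constant on $B_{2r}(x)\setminus K$ (i.e.\ constant on each connected component), to be chosen in the second step. Local constancy of $f$ ensures $\nabla\varphi = 2\eta\nabla\eta\,(u-f)+\eta^2\nabla u$ on $B_{2r}(x)\setminus K$ with no additional jump contribution. A Cauchy--Schwarz absorption, combined with the $L^\infty$-bound $\|u\|_\infty\leq\|g\|_\infty$ from Lemma~\ref{l:maximum} to handle the fidelity contribution, yields the Caccioppoli-type inequality
\[
\int_{B_r(x)\setminus K}|\nabla u|^2 \,\leq\, \frac{C}{r^2}\int_{B_{2r}(x)\setminus K}(u-f)^2 \,+\, C\,\lambda\|g\|_\infty^2\, r^2.
\]
For the Sobolev-Poincar\'e step, one chooses $f$ to be the mean of $u$ on each connected component of $B_{2r}(x)\setminus K$ and applies the classical Sobolev embedding with exponent $q^\star = 2q/(2-q)>2$ componentwise, followed by H\"older to trade $L^{q^\star}$ for $L^2$; using the elementary inequality $\sum_i a_i^{2/q}\leq\bigl(\sum_i a_i\bigr)^{2/q}$, valid for $2/q\geq 1$ and $a_i\geq 0$, one sums the componentwise estimates to obtain
\[
\int_{B_{2r}(x)\setminus K}(u-f)^2 \,\leq\, C\,r^{\,4-4/q}\Bigl(\int_{B_{2r}(x)\setminus K}|\nabla u|^q\Bigr)^{2/q}.
\]
Combining the two inequalities and passing to averages via $|B_\rho|=\pi\rho^2$ yields precisely \eqref{e:rH}.

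The principal obstacle lies in the Sobolev-Poincar\'e step, where a uniform embedding constant on each connected component $U_i$ of $B_{2r}(x)\setminus K$ must be ensured, despite the fact that the $U_i$'s can in principle have very wild geometry. To address this I would exploit the uniform Ahlfors regularity $\epsilon r\leq\mathcal{H}^1(K\cap B_r(x))\leq Cr$ (from Theorem~\ref{t:dlb} and~\eqref{e:upper bound}) together with the structural results of Theorem~\ref{t:structure-2} (in particular the discreteness of triple junctions and regular loose ends, and the dimension bound on $K^\sharp$), to show that at each scale $r$ the connected components of $B_{2r}(x)\setminus K$ satisfy a uniform John-domain condition, for which the Sobolev-Poincar\'e constant is universal. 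A cleaner alternative, which circumvents the geometric analysis of the $U_i$'s, is to apply an $SBV$-truncation procedure producing a function $\tilde u\in W^{1,q}(B_{2r}(x))$ that differs from $u$ on a set of Lebesgue measure controlled by the jump length, with $\|u-\tilde u\|_\infty\leq C\|g\|_\infty$ thanks to Lemma~\ref{l:maximum}; the classical $W^{1,q}$ Sobolev-Poincar\'e applied to $\tilde u$ then yields the desired oscillation estimate up to a universal error of order $\|g\|_\infty$, which is already present on the right-hand side of the target inequality.
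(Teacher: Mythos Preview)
Your approach differs fundamentally from the paper's. You attempt a direct Caccioppoli-plus-Sobolev--Poincar\'e argument; the paper instead argues by contradiction and compactness. Assuming \eqref{e:rH} fails along a sequence, the paper rescales so that the offending balls become $B_1\subset B_2$ and passes to the limit via Theorem~\ref{t:minimizers compactness}; the contradiction hypothesis forces the $L^q$-norm of the limit gradient to vanish, so by Corollary~\ref{c:no-vanishing} the limit is an elementary global minimizer (constant, line, or propeller). At that stage the geometry of $K_\infty$ is explicit, and one builds a competitor by replacing $K_j$ inside a well-chosen ball with a minimal connection of $\widetilde K_j\cap\partial B_r$ and extending the boundary trace via Lemma~\ref{l:traceext}; minimality then yields the contradiction. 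The point is that one never needs a Poincar\'e inequality on components of $B\setminus K$ with uncontrolled geometry---compactness reduces everything to the three elementary models.

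Your Caccioppoli step is fine (the test $\eta^2(u-f)$ with $f$ componentwise constant is admissible in \eqref{e:outer}), but the Sobolev--Poincar\'e step has a genuine gap: the componentwise inequality with a \emph{uniform} constant is false for arbitrary open subsets of $B_{2r}$ (a component with a thin neck has arbitrarily bad constant), and nothing you invoke rules this out. For workaround (a), Theorem~\ref{t:structure-2} gives no control on the geometry of $B\setminus K$ near the set $\Sigma^{(3)}$ of irregular terminal points, where components can a priori pinch---ruling that out is essentially the Mumford--Shah conjecture itself. For workaround (b), the standard $SBV$ Poincar\'e/truncation results require $\mathcal{H}^1(S_u\cap B_{2r})\le\gamma r$ with $\gamma$ \emph{small}, whereas the Ahlfors upper bound only gives a constant of order one; moreover, for generalized minimizers one has $\lambda=0$, so Lemma~\ref{l:maximum} yields no $L^\infty$ bound on $u$ and there is no $\|g\|_\infty$ term on the right of \eqref{e:rH} into which to absorb the truncation error. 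The original proof in \cite{DLF13} is direct but goes through the theory of Caccioppoli partitions rather than componentwise Sobolev--Poincar\'e, precisely to sidestep this obstacle.
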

Theorem~\ref{t:hi} is then a consequence of a by now classical result 
by Giaquinta and Modica \cite{GM}. 
\begin{theorem}
Let $v\in L^q_{{\rm loc}}(\Omega)$, $q>1$, be nonnegative such that for some
constants $\beta>0$, $t\geq 1$ and $R_0>0$
 \[
  \left(\fint_{B_r(z)}v^q\right)^{\sfrac 1q}\leq 
  \beta\fint_{B_{t\,r}(z)}v+\fint_{B_{t\,r}(z)}h
 \]
for all $z\in\Omega$, $r\in\left(0,R_0\wedge \mathrm{dist}(z,\partial\Omega)\right)$, 
and with $h\in L^s(\Omega)$ for some $s>1$.

Then $v\in L^p_{{\rm loc}}(\Omega)$ for some $p>q$ and there is $C=C(\beta,n,q,p,\lambda)>0$
 such that 
 \[
 \left(\fint_{B_r(z)}v^p\right)^{\sfrac 1p}\leq C\,
 \left(\fint_{B_{2r}(z)}v^q\right)^{\sfrac 1q}
   +C\left(\fint_{B_{2r}(z)}h^q\right)^{\sfrac 1q}.
 \]
\end{theorem}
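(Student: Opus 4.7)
The plan is to follow the classical self-improving argument of Gehring--Giaquinta--Modica, based on a Calder\'on--Zygmund stopping-time decomposition combined with a layer-cake representation. By a standard covering reduction, it suffices to prove the conclusion on a fixed concentric subball $B_{R/2}(z_0)\subset\subset\Omega$ with $R< R_0\wedge\dist(z_0,\partial\Omega)$, with constants depending only on $q$, $p$, $\beta$ and $t$. Throughout, extend $v$ and $h$ to be zero outside $B_R(z_0)$.

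First I would fix a threshold $\lambda_0$ proportional to $\big(\fint_{B_R}v^q\big)^{\sfrac1q}+\big(\fint_{B_R}h^q\big)^{\sfrac1q}$ and, for each $\lambda>\lambda_0$, apply a Vitali/Calder\'on--Zygmund type stopping argument to the level set of the $L^q$-maximal function of $v$ localized to $B_R$. This produces disjoint balls $B_{r_i}(x_i)$ with $B_{tr_i}(x_i)\subset B_R$ such that
\[
\fint_{B_{r_i}(x_i)}v^q\simeq \lambda^q,\qquad \fint_{B_{\rho}(x_i)}v^q\lesssim \lambda^q\quad\text{for all }\rho\in[r_i,tr_i],
\]
and such that, outside $\bigcup_i B_{r_i}(x_i)$, the Lebesgue differentiation theorem forces $v\leq \lambda$ a.e.

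Second, on each such ball I would plug the reverse H\"older hypothesis and split each of $v$ and $h$ at the height $\eta\lambda$, for a small parameter $\eta$ to be fixed later. Since the part below $\eta\lambda$ contributes at most $C(\beta,t)\eta^q\lambda^q$ to the right-hand side, and since this bound beats the stopping value $\simeq\lambda^q$ for $\eta$ small, one obtains the good-$\lambda$ inequality
\[
\big|\{v>A\lambda\}\cap B_{R/2}\big|\leq C_1\,\eta^q\,\big|\{v>\lambda\}\cap B_R\big|+C_1\,\big|\{h>\eta\lambda\}\cap B_R\big|,\qquad \lambda>\lambda_0,
\]
where $A$ and $C_1$ depend only on $\beta$, $t$ and $q$.

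Finally, I would multiply this inequality by $p\lambda^{p-1}$ and integrate in $\lambda$ over $(\lambda_0,\infty)$, using the layer-cake formula for $\int v^p$ and $\int h^p$. The term containing $\{v>\lambda\}$ yields, after changing variables, a contribution of the form $C_1 A^p\eta^q\int v^p$, which can be absorbed into the left-hand side provided $C_1 A^p\eta^q<1$; this is the condition that fixes how close to $q$ the exponent $p$ has to be, since $A$ depends on $p$ through the layer-cake change of variables. The remaining term is bounded by a constant times $\int h^p\leq\|h\|_{L^s}^{p}$ via H\"older (choosing $p\leq s$), plus the contribution $\lambda_0^{p-q}\int v^q$ from the $[0,\lambda_0]$ slice. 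Rewriting the resulting estimate in averaged form yields the stated inequality.

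The main obstacle is the quantitative interplay between three parameters: the constant $\beta$, the enlargement factor $t$, and the smallness of $\eta$, all of which conspire to determine the admissible gain $p-q>0$. One must balance them so that (i) the enlarged balls $B_{tr_i}(x_i)$ from the stopping argument remain inside $B_R$ with controlled overlap, (ii) the truncation parameter $\eta$ is small enough that the reverse H\"older hypothesis produces a useful good-$\lambda$ inequality, and (iii) $p-q$ is chosen small enough that the absorption $C_1A^p\eta^q<1$ succeeds while still being uniform in $z_0$ and $R$. This balancing is delicate but entirely elementary and is carried out in full generality in \cite{GM}; since our application only needs $q=1$ with $h=\|g\|_\infty\mathbf{1}_{B_1}$ and the conclusion for some $p>2$, we may simply invoke their theorem.
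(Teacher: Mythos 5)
The paper does not actually prove this theorem; it is stated as a "by now classical result by Giaquinta and Modica" and the proof is deferred entirely to \cite{GM}. Your proposal, which sketches the classical Gehring/Giaquinta--Modica argument (stopping-time decomposition, good-$\lambda$ inequality, layer-cake integration with absorption) and then ultimately invokes \cite{GM} directly, is therefore consistent with what the paper does. One caveat worth flagging in your sketch: the absorption step as written does not close, because the good-$\lambda$ inequality you state has the superlevel sets of $v$ on $B_{R/2}$ on the left but on $B_R$ on the right, so multiplying by $p\lambda^{p-1}$, integrating, and trying to absorb $C_1A^p\eta^q\int_{B_R}v^p$ into $\int_{B_{R/2}}v^p$ fails for lack of a common domain; one needs either a Widman-type hole-filling iteration over nested balls with radius-dependent constants, or a decomposition engineered to keep all enlarged balls inside the original domain. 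You acknowledge that the balancing is "carried out in full generality in \cite{GM}," which is accurate, so this does not undermine the proposal, but it is the single non-trivial point in the argument and should not be waved at.
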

We provide here a new proof of Theorem~\ref{t:rH} relying on 
the compactness properties established in Theorem~\ref{t:minimizers compactness} rather than on the theory of Caccioppoli partitions as originally done in \cite{DLF13}. Nevertheless, the overall strategy is the same. 
\begin{proof}[Proof of Theorem~\ref{t:rH}]
Assume by contradiction that there is $q\in(1,2)$ such that for every $j\in\N$ one can find $(u_j,K_j)$ either restricted, or absolute, or generalized global minimizer of $E_{\param_j}(\cdot,\cdot,B_1,g_j)$ with ${\param_j}\leq 1$ and $\|g_j\|_{\infty}\leq M_0$, radii $r_j\downarrow 0$, and 
points $x_j\in B_{\sfrac12}$ such that
\begin{equation}\label{e:hpcontrad}
j\left(\fint_{B_{2r_j}(x_j)\setminus K_j}
|\nabla u_j|^q\right)^{\sfrac1q}+j\|g_j\|_{\infty}\leq
\left(\fint_{B_{r_j}(x_j)\setminus K_j}|\nabla u_j|^2\right)^{\sfrac 12}
\end{equation}
Consider the rescalings $(\widetilde{u}_j,\widetilde{K}_j):=((u_j)_{x_j,r_j}, (K_j)_{x_j,r_j})$, the conditions 
in Assumption~\ref{a:blow-up} are satisfied, then by Theorem~\ref{t:minimizers compactness} $(\widetilde{u}_j,\widetilde{K}_j)$ converge up to a subsequence to 
a blow-up limit $(u_\infty,K_\infty)$ which is a global generalized (restricted) minimizer of $E_0$ (in case $(u_j,K_j)$ are restricted minimizers). 
Moreover, by \eqref{e:hpcontrad} and the density upper bound \eqref{e:upper bound} 
we get
\[
\left(\fint_{B_2\setminus \widetilde{K}_j}|\nabla \widetilde{u}_j|^q\right)^{\sfrac1q}\leq j^{-1}
\left(\fint_{B_1\setminus \widetilde{K}_j}|\nabla \widetilde{u}_j|^2\right)^{\sfrac 12}
\leq j^{-1}(2\pi+\lambda_j\pi M_0^2r_j)^{\sfrac 12}\,.
\]
The latter inequality and Theorem~\ref{t:minimizers compactness} 
yield that
\[
\lim_j\int_{B_2\setminus \widetilde{K}_j}|\nabla \widetilde{u}_j|^2=
\int_{B_2\setminus K_\infty}|\nabla u_\infty|^2=0.
\]
Therefore, Corollary~\ref{c:no-vanishing} implies that $(u_\infty,K_\infty)$ is an elementary global minimizer; in turn, this and Theorem~\ref{t:minimizers compactness} imply that for every $R>0$
\begin{equation}\label{e:enrg conv to glob min}
    \lim_j\int_{B_R\setminus \widetilde{K}_j}|\nabla \widetilde{u}_j|^2=0\,.
\end{equation}
In particular, $(u_\infty,K_\infty)$ is either a constant, or a global pure jump or a global triple junction (cf. Theorem~\ref{t:class-global}). In any case, for all $R>0$
\begin{equation*}\label{e:enrg conv to glob min 2}
\mathcal{H}^0(K_\infty\cap\partial B_R)\leq 3,\qquad
\mathcal{H}^1(K_\infty\cap B_R)\leq 3R\,.
\end{equation*}
With fixed $R>0$ we shall now choose a radius $\rho\in[\frac R4,R]$ conveniently as outlined in what follows
\begin{itemize}
    \item[(a$_1$)] if $\mathcal{H}^0(K_\infty\cap\partial B_R)=0$ choose $\rho=R$; 
    \item[(a$_2$)] if $\mathcal{H}^0(K_\infty\cap\partial B_R)=2$ then $K_\infty\cap B_R$ is a segment and $\partial B_R\setminus K_\infty$ is the union of two arcs. Then, either each of them has a length less or equal to $\frac{4\pi}3R$ or $K\cap B_{\frac R2}=\emptyset$. In the first instance set $\rho=R$, in the second $\rho=\frac R2$;
    \item[(a$_3$)] if $\mathcal{H}^0(K_\infty\cap\partial B_R)=3$, then $K_\infty\cap B_R$ is a (possibly off-centered) propeller and $\partial B_R\setminus K_\infty$ is the union of three arcs. 
    Then, either each of them has length less or equal to $(2\pi-\frac18)R$, in this case set $\rho=R$, or $\mathcal{H}^0(K_\infty\cap\partial B_{\frac R2})=2$.
    In the last event, we are back to the setting of item (a2) with the radius $\frac R2$ playing the role of $R$. Thus, $K_\infty\cap B_{\frac R2}$ is a segment, and $\partial B_{\frac R2}\setminus K_\infty$ is the union of two arcs, and either each of them has a length less or equal to $\frac{2\pi}3R$ or $K\cap B_{\frac R4}=\emptyset$. In the first instance set $\rho=\frac R2$, in the second $\rho=\frac R4$.
\end{itemize}
By \eqref{e:enrg conv to glob min} and Theorem~\ref{t:minimizers compactness} we may select $j_0\in\N$ such that for all $j\geq j_0$
\[
\int_{B_\rho\setminus \widetilde{K}_j}|\nabla \widetilde{u}_j|^2+\dist^2_H(\widetilde{K}_j\cap B_\rho,K_\infty\cap B_\rho)\leq\e \rho\,.
\]
Therefore, thanks to Theorem~\ref{t:eps_salto_puro} and
Theorem~\ref{t:eps_tripunto} for all $j\geq j_0$ and for some $\beta\in(0,\frac13)$ one of the following alternatives is true
\begin{itemize}
    \item[(b$_1$)] $\widetilde{K}_j\cap B_\rho=\emptyset$;
    \item[(b$_2$)] For each $t\in ((1-\beta)\rho,\rho)$,  
  $\partial B_{t}\setminus \widetilde{K}_j$ is the union of two arcs 
  $\gamma_1^j$ and $\gamma_2^j$ each with length $< (2\pi-\frac{1}{9})t$, 
  whereas $\widetilde{K}_j\cap B_t$ is connected and divides $B_t$ in two components 
  $B_1^j$, $B_2^j$ with $\partial B_i^j=\gamma_i^j\cup 
  (\widetilde{K}_j\cap\overline{B_t})$;

  \item[(b$_3$)] For each $t\in ((1-\beta)\rho, \rho)$, $\partial B_{t}\setminus \widetilde{K}_j$ 
is the union of three arcs 
  $\gamma_1^j$, $\gamma_2^j$ and $\gamma_3^j$ each with length $<(2\pi-\frac{1}{9})t$, whereas
  $B_t\cap \widetilde{K}_j$ is connected and divides $B_t$ in three connected 
  components $B_1^j$, $B_2^j$ and $B_3^j$ with 
  $\partial B_i^j\subset\gamma_i^j\cup  (\widetilde{K}_j\cap \overline{B_t})$.
\end{itemize}
We finally choose $r\in((1-\beta)\rho,\rho)$ and a subsequence, not relabeled, such that
\begin{itemize}
 \item[(A)] $h_j:=\widetilde{u}_j|_{\partial B_r}$ belongs to $W^{1,2}(\gamma)$ 
 for any connected component $\gamma$ of $\partial B_r\setminus \widetilde{K}_j$;

 \item[(B)] $h_j$ satisfies 
 \[
\int_{\partial B_r\setminus \widetilde{K}_j}
|h_j^\prime|^q d\HH^1
\leq\frac 1{\beta \rho}\int_{B_{\rho}}|\nabla \widetilde{u}_j|^q\,.
 \]
\end{itemize}
Let us conclude our argument by showing that \eqref{e:hpcontrad} is 
violated for $j$ sufficiently big. To this aim, we note first that 
the choice $\beta<\frac13$ yields that $r>\frac23\rho$.

In case (b$_1$) holds, $\partial B_r\cap \widetilde{K}_j=\emptyset$ and $\widetilde{u}_j$ 
is the harmonic extension of its boundary trace $h_j$. Hence, the embedding $W^{1,q}(\partial B_r)\to H^{\sfrac12}(\partial B_r)$ implies for some constant $C>0$ (independent of $j$)
\begin{align*}
\int_{B_{\frac23\rho}} |\nabla \widetilde{u}_j|^2& \leq \int_{B_r}|\nabla \widetilde{u}_j|^2
\leq C\min_{c\in\mathbb R} \|h_j-c\|_{H^{\sfrac12}(\partial B_r)}^2\\ 
&\leq C\,\left(\int_{\partial B_r}|h_j^\prime|^q\,d\HH^1\right)^{\sfrac2q}
\stackrel{(B)}{\leq} C\,
\left(\frac 1{\beta \rho}\int_{B_\rho}
|\nabla \widetilde{u}_j|^q\right)^{\sfrac2q}\,.
\end{align*}
Choosing $R=\rho\in[\frac32,2]$
and scaling back, we get a contradiction to \eqref{e:hpcontrad}.

In case (b$_2$) or (b$_3$) hold the construction is similar.
Denote by $J_j$ a \emph{minimal connection} 
\index{minimal connection@ minimal connection} relative to $\widetilde{K}_j\cap\partial B_r$, i.e. a closed and connected set in $\overline{B}_r$ containing 
$\widetilde{K}_j\cap\partial B_r$ with shortest length (cf. \cite[Theorem~4.4.20]{AmbTil}).
Then $J_j$ splits $\overline{B_r}$ into two (case (b$_2$)) or three (case (b$_3$)) regions denoted by $B^i_r$.
Let $\gamma^i$ be the arc of $\partial B_r$ contained in the boundary of $B^i_r$. 
The ensuing Lemma~\ref{l:traceext} provides a function $w_j^i\in W^{1,2}(B_r)$ with boundary trace $h_j$ and satisfying for some absolute constant $C>0$
\begin{equation}\label{e:wij}
\int_{B_r}|\nabla w_j^i|^2\,\leq C
\left(\int_{\gamma^i}|h_j^\prime|^q\,d\HH^1\right)^{\sfrac2q}.
\end{equation}
Denote by $w_j$ the function equal to $w^i_j$ on $B^i_j$, equal to $\widetilde{u}_j$ on $B_{r_j^{-1}}(-x_j)\setminus \cup_i B^i_j$, and extend $J_j$ as 
$J_j\setminus B_r=\widetilde{K}_j\setminus B_r$. 
Clearly, $w_j\in H^1(B_{r_j^{-1}}(-x_j)\setminus J_j)$. 
Scaling back $(w_j,J_j)$ to $B_1$ we obtain a pair admissible to test the 
minimality of $(u_j,K_j)$. 
On setting $\widetilde{g}_j:=r_j^{-\sfrac12}g_j(x_j+r_j \cdot)$, 
for some constant $C>0$ we then deduce that
\begin{align*}
\int_{B_{\frac23\rho}\setminus \widetilde{K}_j}& |\nabla \widetilde{u}_j|^2 
\leq \int_{B_r\setminus J_j}|\nabla \widetilde{u}_j|^2
\notag\\ &
\leq\int_{B_r\setminus J_j}|\nabla w_j|^2 +
\underbrace{\HH^1 (J_j\cap B_r) - \HH^1 (\widetilde{K}_j\cap B_r)}_{\leq 0}\notag\\
&\qquad\qquad
+\lambda r_j^2\int_{B_r\setminus J_j}|w_j-\widetilde{g}_j|^2
-\lambda r_j^2\int_{B_r\setminus \widetilde{K}_j}|\widetilde{u}_j-\widetilde{g}_j|^2\\
&\leq\int_{B_r\setminus J_j}|\nabla w_j|^2+C
\|\widetilde{g}_j\|_{\infty}^2\rho^2\,r_j^2
\stackrel{(\ref{e:wij})}{\leq} C\left(\int_{\partial B_r\setminus \widetilde{K}_j}|h_j^\prime|^q\,d\HH^1\right)^{\sfrac2q}
+C\|g_j\|_{\infty}^2\rho^2r_j\notag\\
&\stackrel{(B)}{\leq} 
C\left(\frac 1{\beta \rho}\int_{B_\rho\setminus\widetilde{K}_j}|\nabla \widetilde{u}_j|^q\,\right)^{\sfrac2q}
+C\|g_j\|_{\infty}^2\rho^2r_j\,.
\end{align*}
Choosing $\rho\in[\frac32,2]$ and $R$ accordingly, scaling back 
we find a contradiction to \eqref{e:hpcontrad}.
\end{proof}
In the proof above we have used an elementary extension result that we prove in what follows.
\begin{lemma}\label{l:traceext}
For any $q\in(1,2)$ there exists $C=C(q)> 0$ such that
the following holds. For any arc $\gamma\subseteq\partial B_1$ 
and any $h\in W^{1,q}(\gamma)$, there exists 
$w\in W^{1,2}(B_1)$ with trace $g$ on $\gamma$ and
\begin{equation}\label{e:extension}
\|\nabla u_\infty\|_{L^{2}(B_1)}\leq \frac{C}{(2\pi - \HH^1(\gamma))^{1-\frac{1}{q}}}\, 
\|h^\prime\|_{L^{q}(\gamma)}.
\end{equation}
In addition, if $h\in L^\infty(\gamma)$, then $\|w\|_{L^\infty(B_1)}\leq 2\|h\|_{L^\infty(\gamma)}$.
\end{lemma}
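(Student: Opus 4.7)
The strategy I would follow is to reduce the statement to a boundary extension problem on $\partial B_1$ and then build $w$ as a harmonic extension. Setting $\ell:=\mathcal H^1(\gamma)$, $\ell^c:=2\pi-\ell$, and letting $a,b$ denote the endpoints of $\gamma$, the first step is to produce a continuous extension $\tilde h\in W^{1,q}(\partial B_1)$ of $h$ by setting $\tilde h=h$ on $\gamma$ and taking $\tilde h$ to be the affine interpolation (in arclength) between $h(b)$ and $h(a)$ on the complementary arc $\gamma^c$. On $\gamma^c$ the derivative of $\tilde h$ is the constant $(h(a)-h(b))/\ell^c$, and the fundamental theorem of calculus together with H\"older's inequality give $|h(a)-h(b)|\leq \ell^{1-1/q}\|h'\|_{L^q(\gamma)}$; a short computation then yields
\[
\|\tilde h'\|_{L^q(\partial B_1)} \leq \frac{C(q)}{(\ell^c)^{1-1/q}}\|h'\|_{L^q(\gamma)}\,.
\]
The precise singular factor appearing in \eqref{e:extension} is thus already captured at the boundary level.

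The next step is to take $w$ to be the harmonic extension of $\tilde h$ into $B_1$, which automatically belongs to $W^{1,2}(B_1)$ and has trace $h$ on $\gamma$. The Dirichlet-minimizing property of the harmonic extension gives $\|\nabla w\|_{L^2(B_1)}^2\leq C\,[\tilde h]_{H^{1/2}(\partial B_1)}^2$ (as is seen immediately in Fourier coefficients), and combining this with the continuous embedding $W^{1,q}(\partial B_1)\hookrightarrow H^{1/2}(\partial B_1)$ and the Poincar\'e inequality on $\mathbb S^1$ produces
\[
\|\nabla w\|_{L^2(B_1)} \leq C(q)\,\|\tilde h'\|_{L^q(\partial B_1)}\,.
\]
Chaining this with the previous estimate yields \eqref{e:extension}. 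The $L^\infty$ bound is then immediate: the affine interpolation keeps $\tilde h$ inside the interval $[\min_\gamma h,\max_\gamma h]$, so $\|\tilde h\|_{L^\infty(\partial B_1)}=\|h\|_{L^\infty(\gamma)}$, and the maximum principle for harmonic functions gives $\|w\|_{L^\infty(B_1)}\leq \|h\|_{L^\infty(\gamma)}$, which is in fact sharper than the factor $2$ stated in the lemma.

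The only mildly delicate point is the embedding $W^{1,q}(\mathbb S^1)\hookrightarrow H^{1/2}(\mathbb S^1)$ in the range $q\in(1,2)$, with a constant depending only on $q$. This is \emph{not} the standard Morrey embedding, which would only provide $H^{1/2}$ control for $q>2$; rather, it is the subcritical fractional Sobolev embedding (corresponding to $1-1/q>0=\tfrac12-\tfrac12$), which can be derived from the general fact that $W^{s_1,p_1}\hookrightarrow W^{s_2,p_2}$ whenever $s_1\geq s_2$ and $s_1-n/p_1\geq s_2-n/p_2$, applied on the one-dimensional compact manifold $\mathbb S^1$. Equivalently, one may argue on the Fourier side: Hausdorff--Young applied to $\tilde h'\in L^q$ with $q\in(1,2]$ gives $\{k\hat{\tilde h}_k\}\in \ell^{q'}$, and a H\"older estimate on $\sum_k|k||\hat{\tilde h}_k|^2=\sum_k (k|\hat{\tilde h}_k|)^2/k$ closes the gap because the auxiliary series $\sum_k k^{-q'/(q'-2)}$ converges for every $q'>2$. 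Once this ingredient is in hand, the rest of the argument is routine.
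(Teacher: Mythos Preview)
Your proof is correct and follows essentially the same approach as the paper: extend $h$ to $\partial B_1$ by affine interpolation on the complementary arc, control $\|\tilde h'\|_{L^q(\partial B_1)}$ via H\"older, invoke the embedding $W^{1,q}(\partial B_1)\hookrightarrow H^{1/2}(\partial B_1)$, and take $w$ to be the harmonic extension. You are in fact more careful than the paper in two respects: you justify the embedding $W^{1,q}(\mathbb S^1)\hookrightarrow H^{1/2}(\mathbb S^1)$ for $q\in(1,2)$ (the paper simply asserts it), and you spell out the $L^\infty$ bound via the maximum principle (the paper omits this entirely), correctly noting that one actually gets the sharper constant $1$ instead of $2$.
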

\begin{proof}
Let $\alpha$, $\beta\in\partial B_1$ denote the extreme points of 
$\gamma$. By the H\"older inequality
\[
|h(\alpha)-h(\beta)|=\left|\int_\gamma h^\prime d\HH^1\right|
\leq(\HH^1(\gamma))^{1-\frac 1q}\|h^\prime\|_{L^q(\gamma)}\,.
\]
Linearly interpolating $h$ on $\partial B_1\setminus\gamma$, 
we get an extension $\widetilde{h}\in W^{1,q}(\partial B_1)$ of $h$ satisfying the estimate
\begin{equation}\label{e:una}
\|\widetilde{h}^\prime\|^q_{L^q(\partial B_1\setminus \gamma)}
=(2\pi - \HH^1(\gamma))^{1-q}|h(\alpha)-h(\beta)|^q
\leq\left(\frac{\HH^1(\gamma)}{2\pi - \HH^1(\gamma)}\right)^{q-1}
\|h^\prime\|^q_{L^q(\gamma)}\,.
\end{equation}
In turn, if we set $\hat{h}:=\widetilde{h}-\fint_{\partial B_1} \widetilde{h}$, the 
Poincar\'e inequality and (\ref{e:una}) yield
\begin{equation}\label{e:due}
\|\hat{h}\|^q_{L^q(\partial B_1)}\leq C\|\widetilde{h}^\prime\|^q_{L^q(\partial B_1)}
\leq C\left(\frac{2\pi}{2\pi - \HH^1(\gamma)}\right)^{q-1}
\|h^\prime\|^q_{L^q(\gamma)}\,.
\end{equation}
The embedding $W^{1,q}(\partial B_1)\to H^{\sfrac12}(\partial B_1)$ 
provides us with a function $v\in W^{1,2}(B_1)$ with boundary trace $\hat{h}$ and
such that
\[
\|\nabla v\|_{L^2(B_1)}
\leq C\|\hat{h}\|_{H^{\sfrac12}(\partial B_1)}\leq
C\|\hat{h}\|_{W^{1,q}(\partial B_1)}\stackrel{(\ref{e:due})}{\leq}
\frac{C}{\left(2\pi - \HH^1(\gamma)\right)^{1-\frac 1q}}\,
\|h^\prime\|_{L^q(\gamma)}.
\]
By the latter inequality the function 
$w:=v+\fint_{\partial B_1}\widetilde{h}$ fulfills the assertions of the Lemma. 
\end{proof}

\section{Higher integrability of \texorpdfstring{$\nabla u$}{Du} via the porosity of \texorpdfstring{$K$}{K}}\label{ss:porosity}

A central role in the approach by De~Philippis and Figalli \cite{DePFig14} to establish 
the higher integrability of the gradient in any dimension is played by 
the ensuing improvement of Theorem~\ref{t:eps_salto_puro} due to David \cite{david1996}, and in higher dimension to Rigot \cite{Rig00}, and Maddalena and Solimini \cite{MadSol01}. Here, we provide a proof in the two-dimensional setting using again Theorem~\ref{t:minimizers compactness}.

\begin{theorem}\label{t:RMS}
There are constants $C_1,\,r_1,\,\e_1>0$ such that for every 
$\e\in(0,\e_1)$ there exists $t_\e\in(0,\frac 12)$ with the following property. If $(u,K)$ is either a restricted, or an absolute, or a generalized global minimizer of $E_\param(\cdot,\cdot,B_2,g)$, with $\lambda\leq 1$, $\|g\|_{\infty}\leq M_0$ (we use the notation introduced 
in  Assumption~\ref{a:blow-up}), then for every $x\in K\cap B_{\sfrac12}$ and for every $r\in(0,r_1)$ there exist $y\in K\cap B_r(x)$ and $\rho\in(t_\e r,r)$ such that $B_\rho(y)\subset B_r(x)$ and the assumptions of Theorem~\ref{t:eps_salto_puro} are satisfied in $B_\rho(y)$, namely for some $\theta\in[0,2\pi]$
\[
\Omega^j(\theta,y,\rho)+\lambda\|g\|_{\infty}^2 \rho^{\frac12}< \e_0
\]
($\e_0>0$ being the constant in Theorem~\ref{t:eps_salto_puro}).

In addition, it is true that
\begin{itemize}
 \item[(i)] $ K\cap B_\rho(y)$ is a $C^{1,\alpha}$ graph;
 \item[(ii)] and,
\begin{equation}\label{e:neureg}
 \rho\|\nabla u\|^2_{L^\infty(B_\rho(y))}\leq C_1\, \e.
\end{equation} 
\end{itemize}
\end{theorem}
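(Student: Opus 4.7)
The plan is to argue by contradiction and blow-up, imitating the compactness scheme used in the proofs of Lemmas \ref{l:decay1} and \ref{l:decay2}. Fix $\varepsilon \in (0, \varepsilon_0)$, with $\varepsilon_0$ the threshold of Theorem \ref{t:eps_salto_puro}, and suppose no $t_\varepsilon \in (0, \sfrac12)$ works. Then for every $j \in \mathbb{N}$ we obtain a minimizer $(K_j, u_j)$ of $E_{\lambda_j}(\cdot,\cdot,B_2,g_j)$ with $\lambda_j \leq 1$ and $\|g_j\|_\infty \leq M_0$, a point $x_j \in K_j \cap B_{\sfrac12}$, and a scale $r_j \in (0, r_1)$, such that every candidate ball $B_\rho(y)$ with $y \in K_j \cap B_{r_j}(x_j)$, $B_\rho(y) \subset B_{r_j}(x_j)$, and $\rho \in (r_j/j, r_j)$ violates the conclusion for every $\theta$. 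Choosing $r_1 = r_1(\varepsilon, M_0)$ small enough and extracting a subsequence we may assume $r_j \to 0$, so condition (iv) in Assumption \ref{a:blow-up} holds. The rescalings $(\widetilde K_j, \widetilde u_j) := ((K_j)_{x_j, r_j}, (u_j)_{x_j, r_j})$ then converge, by Theorem \ref{t:minimizers compactness}, to a global generalized minimizer $(K_\infty, u_\infty)$ of $E_0$ with $0 \in K_\infty$.

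The key step is to locate a \emph{flatness window} in the limit and pull it back. Since by Corollary \ref{c:a.e.regularity} the pure jumps of $K_\infty$ form a relatively open subset of full $\mathcal{H}^1$-measure, there exist $y_\infty \in K_\infty \cap B_{\sfrac12}$, a direction $\theta_\infty$, and a fixed radius $\rho_\infty \in (0, \sfrac18)$ with $B_{4\rho_\infty}(y_\infty) \subset B_1$ and $\Omega^j_\infty(\theta_\infty, y_\infty, 2\rho_\infty) < \varepsilon/4$ (shrinking $\rho_\infty$ exploits that at a pure jump $K_\infty$ is a $C^{1,\alpha}$ arc and the scale-invariant Dirichlet energy vanishes as the scale shrinks). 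By Hausdorff convergence of $\widetilde K_j$ to $K_\infty$ and convergence of Dirichlet energies on compact sets (Theorem \ref{t:minimizers compactness}(i)), for $j$ large we find $\widetilde y_j \in \widetilde K_j$ close to $y_\infty$ with $\Omega^j(\theta_\infty, \widetilde y_j, 2\rho_\infty) < \varepsilon/2$, while the rescaled fidelity piece $\widetilde\lambda_j \|\widetilde g_j\|_\infty^2 (2\rho_\infty)^{\sfrac12} = \lambda_j r_j \|g_j\|_\infty^2 (2\rho_\infty)^{\sfrac12} = o(1)$. Un-rescaling, $y_j := x_j + r_j \widetilde y_j$ and $\rho_j := 2\rho_\infty r_j$ satisfy $B_{\rho_j}(y_j) \subset B_{r_j}(x_j)$ (for $j$ large, since $\rho_\infty < \sfrac18$) and $\rho_j > r_j/j$ for $j > 1/(2\rho_\infty)$, contradicting the standing hypothesis. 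This forces the existence of the claimed $y, \rho$ with $t_\varepsilon := \rho_\infty$.

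Conclusion (i) is then immediate from Theorem \ref{t:eps_salto_puro} applied at scale $2\rho$, which produces the graph on $B_{2\rho}(y) \supset B_\rho(y)$. For (ii), once $K \cap B_{2\rho}(y)$ is a $C^{1,\alpha}$ graph with $C^{1,\alpha}$ norm bounded by $\varepsilon^{\sfrac12}$, on each side of $K$ the function $u$ solves the Neumann problem $\Delta u = \lambda(u-g)$ with $\partial_\nu u = 0$ on $K$. Standard $L^\infty$--$L^2$ gradient bounds for this mixed problem (via local flattening of the graph and reflection, or directly by the mean-value argument in Lemma \ref{l:V-tecnico1}) yield
\[
\|\nabla u\|_{L^\infty(B_\rho(y))}^2 \lesssim \rho^{-2} \int_{B_{2\rho}(y) \setminus K} |\nabla u|^2 + \lambda^2 \rho^2 (\|u\|_\infty^2 + \|g\|_\infty^2).
\]
The first summand is bounded by $\rho^{-1} \cdot \Omega^j(\theta, y, 2\rho) \leq \rho^{-1} \varepsilon$; the relevant $L^\infty$ norm of $u$ is controlled by $\|g\|_\infty \leq M_0$ via the maximum principle Lemma \ref{l:maximum} (modulo additive constants on each side of $K$, which do not affect $\nabla u$), so the second summand contributes $O(\rho^2 M_0^2)$. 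Multiplying through by $\rho$ and shrinking $r_1$ if necessary delivers $\rho \|\nabla u\|_\infty^2 \leq C_1 \varepsilon$. The principal subtlety throughout is uniformity of $t_\varepsilon$ across the four minimizer classes (absolute, restricted, generalized, generalized restricted), which is exactly what Theorem \ref{t:minimizers compactness} delivers: the limit pair lies in the class of global generalized minimizers of $E_0$ regardless of the class of the approximating sequence, and the rest of the argument uses only properties of this limit class.
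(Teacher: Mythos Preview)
Your proof is correct and follows essentially the same contradiction-and-blow-up scheme as the paper: rescale a putative failing sequence via Theorem~\ref{t:minimizers compactness}, locate a pure jump point in the limit through Corollary~\ref{c:a.e.regularity}, and pull back the resulting flatness to contradict the hypothesis. You are in fact slightly more explicit than the paper about why the $\varepsilon$-dependence of $t_\varepsilon$ arises (you shrink $\rho_\infty$ until $\Omega^j_\infty < \varepsilon/4$ rather than merely $< \varepsilon_0$, which is exactly what is needed for the gradient bound in (ii)); the only cosmetic slip is the phrase ``$r_1 = r_1(\varepsilon, M_0)$''---since $r_1$ is existentially quantified in the theorem, its negation lets you take $r_1 = 1/j$ and hence $r_j \to 0$ without any $\varepsilon$-dependence.
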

\begin{proof}
Assume by contradiction that 
we can find sequences $\e_j\downarrow 0$, $r_j\downarrow 0$, $(u_j,K_j)$ of either restricted, or absolute, or generalized global minimizers of $E_{\param_j}(\cdot,\cdot,B_2,g_j)$, with $\param_j\leq 1$, $\|g_j\|_{\infty}\leq M_0$, 
points $x_j\in K_j\cap B_{\sfrac12}$, and
scalars $t_j\in(0,\frac12)$ such that for every 
$y\in K_j\cap B_{r_j}(x_j)$ and $\rho\in(t_jr_j,r_j)$ with $B_{\rho}(y)\subset B_{r_j}(x_j)$, then  
\begin{equation}\label{e:contra porosity}
    \rho^{-2}\dist_H^2 (K_j \cap B_{2\rho}(y), \mathcal{R}_{\theta} (\ello))\cap B_{2\rho}(y)) + \rho^{-1} \int_{B_{2\rho}(y)\setminus K_j} |\nabla u_j |^2+\lambda_j\|g_j\|_{\infty}^2 \rho^{\frac12}\geq \e_0\,
\end{equation}
for every $\theta\in[0,2\pi]$ (we are using the notation introduced at the beginning of Section~\ref{s:epsilon reg}).

Consider the rescalings $(\widetilde{u}_j,\widetilde{K}_j):=((u_j)_{x_j,r_j},(K_j)_{x_j,r_j})$, being the conditions 
in Assumption~\ref{a:blow-up} satisfied, by Theorem~\ref{t:minimizers compactness} $(\widetilde{u}_j,\widetilde{K}_j)$ converge up to a subsequence to 
a blow-up limit $(u_\infty,K_\infty)$ which is a global generalized (restricted) minimizer of $E_0$ (in case $(u_j,K_j)$ are restricted minimizers). 
Moreover, Corollary~\ref{c:a.e.regularity} yields that the subset of $K_\infty$ of pure jump points is relatively open and has full $\mathcal{H}^1$ measure. 
Let $z\in K_\infty$ be one such point, then necessarily 
Theorem~\ref{t:eps_salto_puro} implies that 
\[
 t^{-2}\dist_H^2 (K_\infty \cap B_{2t}(z), \mathcal{R}_{\theta_z} (\ello))\cap B_{2t}(z)) + 
 t^{-1} \int_{B_{2t}(z)\setminus K_\infty} 
 |\nabla u_\infty|^2< \e_0
 \]
for some $\theta_z\in[0,2\pi]$ and for every $t>0$ sufficiently small. 
In turn, Theorem~\ref{t:minimizers compactness} implies 
the existence of points $z_j\in K_j\cap B_{r_j}(x_j)$ such that
\begin{align*}
    (t r_j)^{-2}&\dist_H^2 (K_j \cap B_{2{t r_j}}(z_j), \mathcal{R}_{\theta_z} (\ello))\cap B_{2t r_j}(z_j)) \\ 
    &+ (t r_j)^{-1} 
    \int_{B_{2t r_j}(z_j)\setminus K_j} |\nabla u_j |^2+\lambda_j\|g_j\|_{\infty}^2 (t r_j)^{\frac12}< \e_0\,
\end{align*}
contradicting \eqref{e:contra porosity}.

Item (i) follows directly from Theorem~\ref{t:eps_salto_puro}; instead estimate \eqref{e:neureg} is a consequence of item (i) and classical elliptic regularity results for the Neumann problem since $u$ satisfies \eqref{e:Euler Neumann g} and \eqref{e:Euler curvature g} (cf. \cite[Theorem 7.53]{AFP00}).
\end{proof}

Theorem~\ref{t:RMS} can be rephrased by asserting that the set $K\setminus K^{(j)}$ is $(t_\e,1)$-\emph{porous} in $K$ (cf. for instance \cite{mattila}).
\index{porosity @porosity}
Hence, following the papers by David \cite{david1996}, Rigot \cite{Rig00} and Maddalena and Solimini \cite{MadSol01}, one can estimate the Minkowski dimension
\index{Minkowski dimension @Minkowski dimension}
(and thus the Hausdorff dimension\index{Hausdorff dimension@Hausdorff dimension}) of the set $K\setminus K^{(i)}$ using
Theorem~\ref{t:RMS}, a by now classical result by David and Semmes~\cite{DavSem97}, and the Alfohrs regularity of $K$
(cf. Lemma~\ref{l:upper-bound} and Theorem~\ref{t:dlb}), which we restate for convenience: if $(u,K)$ is either an absolute or a restricted, or a generalized minimizer of $E_\param$ on $B_2$ then
\begin{equation}\label{e:SuAR}
\epsilon\,r \leq \HH^1( K\cap B_r(x)) \leq \e_\param r
\end{equation}
for some constant $\epsilon>0$, for $\e_\param:=(2  + \lambda \|g\|_\infty^2 )\pi$, for every $x\in  K$, and for every $r\in(0,1\wedge \mathrm{dist}(x,\partial\Omega))$.

Instead, in Proposition~\ref{p:alfpor} below, an estimate of the Minkowski dimension will be obtained directly. The key technical results to prove the higher integrability of $\nabla u$ are contained in Proposition~\ref{p:alfpor} for which we need two preparatory lemmas. 
The first one provides the selection of suitable families of radii obtained via the De~Giorgi's slicing/averaging principle.
\begin{lemma}\label{l:technical}
 There are positive constants $M_1$, $C_1$ such that if $M\geq M_1$ for every $(u,K)$ (absolute, restricted, generalized, or generalized restricted) minimizer of $E_\param$ on $B_2$ we can find three decreasing sequences of radii such that 
 \begin{itemize}
  \item[(i)] $1\geq R_h> S_h> T_h> R_{h+1}$;
  \item[(ii)] $8M^{-(h+1)}\leq R_h-R_{h+1}\leq M^{-\frac{(h+1)}2}$, and $S_h-T_h=T_h-R_{h+1}=4M^{-(h+1)}$;
  \item[(iii)] $\HH^1( K\cap( \overline{B}_{S_h}\setminus \overline{B}_{R_{h+1}}))\leq C_1M^{-\frac{(h+1)}2}$;
  \item[(iv)] $R_\infty=S_\infty=T_\infty\geq \sfrac12$.
  \end{itemize}
\end{lemma}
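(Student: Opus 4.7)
The plan is to construct the three sequences inductively via the De~Giorgi slicing/averaging principle, the essential input being the Ahlfors upper bound \eqref{e:SuAR}: there is a constant $\epsilon_\lambda$ (uniform in $(K,u)$ thanks to \eqref{e:g-and-lambda}) such that $\mathcal{H}^1(K\cap B_1)\leq\epsilon_\lambda$. I would set $R_0:=1$ and, assuming $R_h$ has been defined, look for $R_{h+1}$ in the radial interval
\[
J_h := \bigl[\, R_h - M^{-(h+1)/2},\ R_h - 8\,M^{-(h+1)}\,\bigr].
\]
For $M$ sufficiently large (e.g.\ $M\geq 256$), the length of $J_h$ is at least $\tfrac12 M^{-(h+1)/2}$, and $J_h$ can be partitioned into $N_h$ disjoint subintervals of length exactly $8\,M^{-(h+1)}$ with $N_h\geq \tfrac{1}{32}M^{(h+1)/2}$.

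Each subinterval $[a_k, a_k+8M^{-(h+1)}]$ corresponds to the radial shell $B_{a_k+8M^{-(h+1)}}\setminus B_{a_k}$; these shells are pairwise disjoint and contained in $B_1$, so by countable additivity and the Ahlfors upper bound,
\[
\sum_{k=1}^{N_h}\mathcal{H}^1\bigl(K\cap(B_{a_k+8M^{-(h+1)}}\setminus B_{a_k})\bigr)\leq \mathcal{H}^1(K\cap B_1)\leq \epsilon_\lambda.
\]
By the pigeonhole principle, at least one index $k$, whose left endpoint I declare to be $R_{h+1}$, satisfies
\[
\mathcal{H}^1\bigl(K\cap(\overline B_{R_{h+1}+8M^{-(h+1)}}\setminus \overline B_{R_{h+1}})\bigr)\leq \frac{\epsilon_\lambda}{N_h}\leq 32\,\epsilon_\lambda\, M^{-(h+1)/2} =: C_1\,M^{-(h+1)/2}.
\]
Setting $T_h:=R_{h+1}+4M^{-(h+1)}$ and $S_h:=R_{h+1}+8M^{-(h+1)}$, items (i), (ii) and (iii) follow directly from the construction, since the condition $R_{h+1}\in J_h$ forces $R_h-R_{h+1}\in[8M^{-(h+1)},M^{-(h+1)/2}]$, and one has $R_h>S_h>T_h>R_{h+1}$ automatically from the definitions of $T_h,S_h$ plus $R_h-R_{h+1}\geq 8M^{-(h+1)}$.

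For (iv), the upper bound in (ii) gives a geometric estimate for the total telescoping sum,
\[
1-R_\infty = \sum_{h=0}^{\infty}(R_h-R_{h+1})\leq \sum_{h=0}^{\infty}M^{-(h+1)/2}=\frac{1}{\sqrt M - 1},
\]
so choosing $M_1$ large enough (say $M_1=1024$, which simultaneously enforces that $J_h$ is long enough at every step and that the geometric series above is $\leq 1/2$) yields $R_\infty\geq 1/2$. Finally, $|S_h-R_{h+1}|+|T_h-R_{h+1}|\leq 12\,M^{-(h+1)}\to 0$, which forces $S_\infty=T_\infty=R_\infty\geq 1/2$. I do not expect a real obstacle: this is a clean pigeonhole/covering argument in which the only tension is the double calibration of $M_1$ to ensure, at one and the same time, sufficiently many subintervals in $J_h$ for every $h\geq 0$ and the summability of the telescoping series to less than $\tfrac12$, and both requirements are met by taking $M_1$ large.
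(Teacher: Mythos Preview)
Your proof is correct and follows essentially the same approach as the paper: both construct the radii inductively by partitioning a radial interval of width about $M^{-(h+1)/2}$ into roughly $M^{(h+1)/2}/8$ sub-annuli of width $8M^{-(h+1)}$, apply pigeonhole against the Ahlfors upper bound to select a good sub-annulus, and then take $R_{h+1}$, $T_h$, $S_h$ as the left endpoint, midpoint, and right endpoint of that sub-annulus. The only differences are cosmetic (your indexing starts at $h=0$ rather than $h=1$, and you bound against $\mathcal{H}^1(K\cap B_1)$ globally rather than $\mathcal{H}^1(K\cap B_{R_h})$, leading to $C_1=32\epsilon_\lambda$ instead of $8\epsilon_\lambda$).
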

\begin{proof}
Let $R_1=1$, given $R_h$ we construct $S_h$, $T_h$ and $R_{h+1}$ as follows.

Set $N_h:=\lfloor \frac{M^{\frac{h+1}2}}8\rfloor\in\N$ and fix $M_1\in\N$ such that 
$N_h\geq \lfloor \frac{M^{\frac{h+1}2}}{16}\rfloor$ for $M\geq M_1$. Here, 
$\lfloor t\rfloor$ denotes the integer part of $t\in\R$. 

The annulus $B_{R_h}\setminus  \overline{B}_{R_h-8M^{-\frac{h+1}2}}$ contains the $N_h$ disjoint sub annuli 
$ \overline{B}_{R_h-8(i-1)M^{-(h+1)}}\setminus  \overline{B}_{R_h-8iM^{-(h+1)}}$, $i\in\{1,\ldots,N_h\}$, of equal 
width $8\,M^{-(h+1)}$. By averaging we can find an index $i_h\in\{1,\ldots,N_h\}$ such that
\begin{multline*}
\HH^1\big(K\cap( \overline{B}_{R_h-8(i_h-1)M^{-\frac{h+1}2}}\setminus \overline{B}_{R_h-8i_h\,M^{-\frac{h+1}2}})\big)\\
\leq \frac 1{N_h}\,\HH^1\big(K\cap ( \overline{B}_{R_h}\setminus  \overline{B}_{R_h-8M^{-\frac{h+1}2}})\big)
\stackrel{\eqref{e:SuAR}}{\leq} 
\varepsilon_\lambda\,\frac{R_h}{N_h}
\leq 8 \varepsilon_\lambda\,M^{-\frac{h+1}2},
\end{multline*}
so that (iii) is established with $C_1:=8\varepsilon_\lambda$. Finally, set 
\[
 S_h:=R_h-8(i_h-1)M^{-(h+1)},\,\,R_{h+1}:=R_h-8i_h\,M^{-(h+1)},\,\,T_h:=\frac12(S_h+R_{h+1}),
\]
then items (i) and (ii) follow by the very definition, and item (iv) from (ii) if 
$M_1$ is sufficiently big.
\end{proof}
The second lemma has a geometric flavor.
\begin{lemma}\label{l:grafico}
Let $f:\R^{n-1}\to\R$ be Lipschitz with
\begin{equation}\label{e:lip}
f(0)=0,\quad\|\nabla f\|_{L^\infty(\R^{n-1},\R^{n-1})}\leq\eta.
\end{equation}
If $G:=\mathrm{graph}(f)\cap B_{2}$ and $\eta\in(0,\sfrac{1}{15}]$, then 
for all $\delta\in(0,\sfrac12)$ and $x\in (\overline{B}_{1+\delta}\setminus B_{1})\cap G$
\[
\mathrm{dist}(x,(\overline{B}_{1+2\delta}\setminus B_{1+\delta})\cap G)\leq \frac 32 \delta.
\]
\end{lemma}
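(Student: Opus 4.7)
The plan is to move the point $x=(x',f(x'))$ along the graph of $f$ in the direction of increasing $|x'|$ and to show that the norm of the resulting curve grows essentially at unit speed, so that reaching the target annulus requires only a short displacement. First I would observe that $x\neq 0$, since $|x|\ge 1$, and that the Lipschitz bound together with $f(0)=0$ gives $|f(x')|\le\eta|x'|$ and hence
\[
|x'|\;\ge\;\frac{|x|}{\sqrt{1+\eta^2}}\;>\;0.
\]
Then I would set $v:=x'/|x'|\in\R^{n-1}$ and define the Lipschitz curve $\gamma(t):=(x'+tv,\,f(x'+tv))\in G$ for $t\ge 0$. Its ambient speed is at most $\sqrt{1+\eta^2}$, so for every $t\ge 0$ one has
\[
|\gamma(t)-x|\;\le\;t\sqrt{1+\eta^2}.
\]

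Next I would control $\phi(t):=|\gamma(t)|$. Writing $y'(t)=x'+tv$, so that $|y'(t)|=|x'|+t$, and differentiating $\phi^2$, the chain rule gives
\[
\tfrac{d}{dt}\phi^2(t)\;=\;2(|x'|+t)+2f(y'(t))\,\nabla f(y'(t))\!\cdot\! v.
\]
Bounding $|f(y'(t))|\le \eta(|x'|+t)$ and $|\nabla f(y'(t))\!\cdot\! v|\le\eta$, and using also $\phi(t)\le\sqrt{1+\eta^2}(|x'|+t)$, I would obtain the uniform lower bound
\[
\dot\phi(t)\;\ge\;\frac{(1-\eta^2)(|x'|+t)}{\phi(t)}\;\ge\;\frac{1-\eta^2}{\sqrt{1+\eta^2}}\;=:\;c_\eta.
\]
In particular $\phi$ is strictly increasing and continuous, so there exists a unique $t_1>0$ with $\phi(t_1)=1+\delta$, and
\[
t_1\;\le\;\frac{(1+\delta)-|x|}{c_\eta}\;\le\;\frac{\delta}{c_\eta}.
\]

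The conclusion then follows by choosing $y:=\gamma(t_1)$. Indeed $|y|=1+\delta$, so $y\in\overline B_{1+2\delta}\setminus B_{1+\delta}$; moreover $|y|=1+\delta<2$ (since $\delta<1/2$), which places $y$ in $G$. Combining the two speed bounds,
\[
|y-x|\;\le\;t_1\sqrt{1+\eta^2}\;\le\;\delta\,\frac{1+\eta^2}{1-\eta^2},
\]
and for $\eta\le\tfrac{1}{15}$ the ratio $(1+\eta^2)/(1-\eta^2)=226/224<3/2$, yielding the desired estimate. The proof is essentially straightforward; the only step requiring a little care is the differential inequality for $\phi$, but the numerical verification that the hypothesis $\eta\le 1/15$ leaves ample room inside the $3\delta/2$ threshold is elementary.
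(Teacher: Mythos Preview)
Your proof is correct and follows essentially the same idea as the paper: both move the base point radially outward along the ray through $x'$ (your curve $x'+tv$ with $v=x'/|x'|$ is exactly the paper's scaling $\lambda x'$ under the reparametrization $\lambda=1+t/|x'|$). The only difference is bookkeeping---you control $|\gamma(t)|$ via a differential inequality and land exactly on $\partial B_{1+\delta}$, whereas the paper picks a specific $\lambda$ and estimates $|\hat x - x|$ directly; your version yields a sharper constant but the argument is the same.
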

\begin{proof}
Clearly by \eqref{e:lip} we get
\[
 \|f\|_{W^{1,\infty}(B_2)}\leq 3\eta.
\]
Let $x=(y,f(y))\in (\overline{B}_{1+\delta}\setminus B_{1})\cap G$ and $\hat{x}:=(\lambda\,y,f(\lambda\,y))$,
with $\lambda$ to be chosen suitably. Note that as $|x|\geq 1$ we have
\begin{align*}
 |f(\lambda\,y)-\lambda\,f(y)|&\leq |f(\lambda\,y)-f(y)|+|\lambda-1|\,|f(y)|\\
 &\leq |\lambda-1|\,(\|\nabla f\|_{L^\infty(\R^{n-1},\R^{n-1})}\,|y|+\|f\|_{L^\infty(\R^{n-1})})\leq 3\eta|\lambda-1|\,|x|.
\end{align*}
Hence,
\[
 |\hat{x}-x|\leq |\hat{x}-\lambda\,x|+|\lambda-1||x| \leq(3\eta+1)|\lambda-1|\,|x|.
\]
It is easy to check that the choice $\lambda=1+\frac{5}{4}\delta|x|^{-1}$ gives the conclusion.
\end{proof}

Following De~Philippis and Figalli \cite{DePFig14}, we prove next a version of 
the mentioned porosity result by David and Semmes~\cite{DavSem97} that is suitable for our purposes.
In what follows, $(E)_r$ denotes the open $r$-neighborhood of a given 
set $E$, i.e. 
$(E)_r=\{x\in\R^2:\,\dist(x,E)< r\}$.
\index[simb]{aalEr@$(E)_r$}
\begin{proposition}\label{p:alfpor}
There exist constants $C_2,\, M_2>0$ and $\alpha,\beta\in(0,\sfrac 14)$ 
such that for every $M\geq M_2$, for every $(u,K)$ (absolute, restricted, generalized, or generalized restricted) minimizer of $E_\param$ on $B_2$, 
we can find families $ \mathcal{F}_j$ of disjoint balls
 \[
  \mathcal{F}_j=\left\{B_{\alpha M^{-j}}(y_i):\,y_i\in  K,\,1\leq i\leq N_j\right\}
 \]
such that for all $h\in\N$ 
\begin{itemize}
 \item[(i)] if $B$, $B^\prime\in\cup_{j=1}^h \mathcal{F}_j$ are distinct balls, then 
 $(B)_{4M^{-(h+1)}}\cap(B^\prime)_{4M^{-(h+1)}}=\emptyset$;
 \item[(ii)] if $B_{\alpha M^{-j}}(y_i)\in \mathcal{F}_j$, then 
 $ K\cap B_{2\alpha M^{-j}}(y_i)$ is a $C^{1,\gamma}$ graph, $\gamma\in(0,1)$, 
 containing $y_i$,
 \[
\inf_\theta\Omega^j(\theta,y_i,2\alpha M^{-j})
+\lambda\|g\|^2_\infty(\alpha M^{-j})^{\frac12} 
< \e_0\,,
 \]
\begin{equation}\label{e:gradbdd}
\|\nabla u\|_{L^\infty(B_{2\alpha M^{-j}}(y_i))}<M^{j+1}; \end{equation}
 \item[(iii)] let $\{R_h\}_{h\in\N}$, $\{S_h\}_{h\in\N}$, $\{T_h\}_{h\in\N}$ be the sequences of radii in 
 Lemma~\ref{l:technical}, and let
 \[
  K_h:=( K\cap\overline{B}_{S_h})\setminus\left(\cup_{j=1}^h\cup_{ \mathcal{F}_j}B\right),
 \]
 (by construction $K_{h+1}\subset K_h\setminus \cup_{ \mathcal{F}_{h+1}}B$), and 
 \begin{equation}\label{e:widetilde K h}
 \widetilde{K}_h:= ( K\cap\overline{B}_{T_h})\setminus
\left(\cup_{j=1}^h\cup_{ \mathcal{F}_j}(B)_{2M^{-(h+1)}}\right)\subset K_h.
 \end{equation}
 Then, there exists a finite set of points $ \mathcal{C}_h:=\{x_i\}_{i\in I_h}\subset\widetilde{K}_h$ such that
 \begin{equation}\label{e:centres}
  |x_j-x_k|\geq 3M^{-(h+1)}\quad\forall j,k\in I_h,\,j\neq k;
 \end{equation}
 \begin{equation}\label{e:ingro}
  (K_h\cap  \overline{B}_{R_{h+1}})_{M^{-(h+1)}}\subset 
	\cup_{i\in I_h}B_{8M^{-(h+1)}}(x_i);
 \end{equation}
 \begin{equation}\label{e:stimaKh}
  \HH^1(K_h)\leq C_1\,h\,M^{-2h\beta};
 \end{equation}
 \begin{equation}\label{e:measureingro}
 \left|(K_h\cap  \overline{B}_{R_{h+1}})_{M^{-(h+1)}}\right|\leq C_2\,h\,M^{-h(1+2\beta)-1}.
 \end{equation}
 
 \item[(iv)] $\Sigma\cap B_{\sfrac12}\subset K_h$ for all $h\in\N$ and 
 \begin{equation}\label{e:sigmaur}
\left|(\Sigma\cap B_{\sfrac12})_{r}\right|\leq C_2\,r^{1+\beta}\qquad\forall r\in(0,\sfrac 12].
 \end{equation}
In particular, 
$\dim_{\MM}(\Sigma\cap B_{\sfrac12})\leq 1-\beta$.
 \end{itemize}
\end{proposition}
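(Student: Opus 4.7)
The plan is to construct the families $\mathcal{F}_j$ inductively on $j$, using the porosity result Theorem~\ref{t:RMS} as the main engine at each scale. First I would fix $M\geq M_2$ large (to be chosen at the end), let $\alpha$ be a small fraction of the constant $t_\e$ from Theorem~\ref{t:RMS} (with $\e$ chosen so that $C_1\e\leq\alpha^{-1}$, ensuring \eqref{e:gradbdd} via \eqref{e:neureg}), and take the radii $\{R_h\}, \{S_h\}, \{T_h\}$ from Lemma~\ref{l:technical}. For the base case $j=1$: cover $K\cap\overline{B}_{T_0}$ by finitely many balls at scale $M^{-1}$, and at each point where the cover hits $K$, Theorem~\ref{t:RMS} produces a ball $B_{\rho}(y)\subset B_{M^{-1}}$ with $\rho\in(t_\e M^{-1},M^{-1})$ on which the flatness hypothesis of Theorem~\ref{t:eps_salto_puro} holds; shrinking to $B_{\alpha M^{-1}}(y)$ and extracting a Vitali subfamily yields $\mathcal{F}_1$, automatically satisfying (i) and (ii) once $\alpha$ is small enough to give the required buffer $4M^{-2}$.

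For the inductive step, assume $\mathcal{F}_1,\dots,\mathcal{F}_h$ have been constructed with (i)--(iii). The set $K_h$ from \eqref{e:widetilde K h} is what remains after removing neighborhoods of the already-captured regular balls. I would cover $(K_h\cap\overline{B}_{R_{h+1}})_{M^{-(h+1)}}$ by a Vitali family centered at a maximal $3M^{-(h+1)}$-separated subset $\mathcal{C}_h\subset \widetilde{K}_h$, yielding \eqref{e:centres}--\eqref{e:ingro}. At each $x_i\in\mathcal{C}_h$, apply Theorem~\ref{t:RMS} in $B_{M^{-(h+1)}}(x_i)$ to obtain a regular sub-ball of radius $\geq t_\e M^{-(h+1)}$, and pick the shrunken ball $B_{\alpha M^{-(h+1)}}(y_i)$ to go into $\mathcal{F}_{h+1}$. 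The disjointness (i) at scale $h+1$ follows because the centers $x_i$ are $3M^{-(h+1)}$-separated and $\alpha$ is chosen small (one also uses the buffer $2M^{-(h+1)}$ in \eqref{e:widetilde K h} to avoid old balls). For (ii) the graph property comes directly from Theorem~\ref{t:eps_salto_puro}, while \eqref{e:gradbdd} is \eqref{e:neureg}. The crucial quantitative point is that for each $x_i\in \mathcal{C}_h$ the length $\HH^1(K\cap B_{\alpha M^{-(h+1)}}(y_i))\geq \epsilon\alpha M^{-(h+1)}$ by Ahlfors regularity \eqref{e:SuAR}, and this ball lies in $K_h\setminus K_{h+1}$. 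Hence $\HH^1(K_h\setminus K_{h+1})\geq \epsilon\alpha(\#\mathcal{C}_h)M^{-(h+1)}$, while the cardinality controls the enlarged set via \eqref{e:ingro}: $(\#\mathcal{C}_h)\leq C M^{h+1}\HH^1(K_h\cap\overline{B}_{R_{h+1}})$ using Lemma~\ref{l:technical}(iii) to handle the thin layer $\overline{B}_{S_h}\setminus\overline{B}_{R_{h+1}}$. Combining gives $\HH^1(K_{h+1})\leq (1-c_0)\HH^1(K_h)+C_1 M^{-(h+1)/2}$ for some $c_0>0$ independent of $h$; choosing $M$ large and iterating then produces \eqref{e:stimaKh} with $\beta$ determined by $c_0$ and $M$. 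The Minkowski estimate \eqref{e:measureingro} follows by multiplying the cardinality bound by $M^{-2(h+1)}$ (the area of each enlarged ball), and estimate \eqref{e:sigmaur} in (iv) drops out by dyadic summation once we observe $\Sigma\cap B_{\sfrac12}\subset K_h$ for all $h$ (because $\Sigma$ contains no regular jump points and the balls in $\mathcal{F}_j$ are made of regular jump points, while $T_h\geq \sfrac12$).

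The main obstacle is the bookkeeping in the inductive step, namely simultaneously guaranteeing (a) the disjointness condition (i) across \emph{all} scales $1,\dots,h+1$ with the enlargement by $4M^{-(h+2)}$, (b) that the Vitali extraction preserves enough mass of $K_h$ to force geometric decay, and (c) that the annular correction term $C_1 M^{-(h+1)/2}$ in Lemma~\ref{l:technical}(iii) does not destroy the geometric rate. The latter two are balanced by choosing $M$ large enough that $(1-c_0)$ dominates the annular error and by taking $\beta$ strictly smaller than $\frac{1}{2}\log_M(1-c_0)^{-1}$. Finally, the Minkowski dimension bound $\dim_\MM(\Sigma\cap B_{\sfrac12})\leq 1-\beta$ is a direct translation of \eqref{e:sigmaur}: if $|E_r|\leq C r^{1+\beta}$ for all small $r$, then the number of $r$-balls needed to cover $E$ is at most $C r^{-(1-\beta)}$, which is the definition of upper Minkowski dimension.
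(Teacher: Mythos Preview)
Your approach is essentially the same as the paper's: inductive construction of $\mathcal{F}_{h+1}$ from a maximal $3M^{-(h+1)}$-separated set $\mathcal{C}_h\subset\widetilde K_h$, application of Theorem~\ref{t:RMS} inside each $B_{M^{-(h+1)}}(x_i)$, and the recurrence $\HH^1(K_{h+1})\le(1-\eta)\HH^1(K_h)+C_1M^{-(h+1)/2}$ followed by iteration. Two points deserve attention. First, the cardinality inequality you wrote is reversed: what is needed for the decay is the \emph{lower} bound $(\#\mathcal C_h)\,M^{-(h+1)}\ge c\,\HH^1(K_h\cap\overline B_{R_{h+1}})$, which follows from the covering \eqref{e:ingro} together with the density \emph{upper} bound in \eqref{e:SuAR}; the upper bound on $\#\mathcal C_h$ (from the density lower bound and disjointness) is instead what yields \eqref{e:measureingro}. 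Second, in order to claim that the removed balls $B_{\alpha M^{-(h+1)}}(y_i)$ carry $K_h$-length at least $\epsilon\alpha M^{-(h+1)}$, you must verify that $K\cap B_{M^{-(h+1)}}(x_i)=K_h\cap B_{M^{-(h+1)}}(x_i)$, i.e.\ that no earlier $\mathcal F_j$-ball has already eaten into this region; this follows from $x_i\in\widetilde K_h$ and the buffer in \eqref{e:widetilde K h}, and the paper isolates it as a separate step. The paper also takes $\mathcal F_1=\emptyset$ as base case, which is harmless, and uses Lemma~\ref{l:grafico} to handle, in the proof of \eqref{e:ingro}, points of $K_{h+1}\cap\overline B_{R_{h+2}}$ lying in the collar $(B)_{2M^{-(h+2)}}\setminus B$ of an old ball.
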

\begin{proof}
In what follows we shall repeatedly use Theorem~\ref{t:RMS} 
with $ \e\in(0,1)$ fixed and sufficiently small. 
We split the proof into several steps.

In addition, consider the constants $\epsilon,\,C_1,\,M_1$ introduced in \eqref{e:SuAR} and 
Lemma~\ref{l:technical}, respectively.

\smallskip

\noindent{\bf Step 1.} \emph{Inductive definition of the families $ \mathcal{F}_h$.}

For $h=1$ we define 
 \[
 \mathcal{F}_1:=\emptyset,\,\, K_1=K\cap \overline{B}_{S_1},\,\,\widetilde{K}_1=K\cap \overline{B}_{T_1}, 
\] 
and choose $ \mathcal{C}_1$ to be a maximal family of points in $\widetilde{K}_1$ 
at distance $3M^{-2}$ from each other. 
Of course, properties in items (i) and (ii) and in \eqref{e:centres} are satisfied. 
To check the others, one can argue as in the verification below.
 
 Suppose that we have built the families $\{ \mathcal{F}_j\}_{j=1}^h$ as in the statement, to construct $\mathcal{F}_{h+1}$ we argue as follows. 
 Let $\mathcal{C}_h=\{x_i\}_{i\in I_h}\subset\widetilde{K}_h$ be a maximal family of points satisfying \eqref{e:centres}, i.e.~$|x_i-x_k|\geq 3M^{-(h+1)}$ for all $j,\,k\in I_h$ 
 with $j\neq k$. Consider 
 \[
 \mathcal{G}_{h+1}:=\{B_{M^{-(h+1)}}(x_i)\}_{i\in I_h}.
 \]
For every ball $B_{M^{-(h+1)}}(x_i)\in \mathcal{G}_{h+1}$ we can find a sub-ball $B_{2\alpha\,M^{-(h+1)}}(y_i)\subset B_{M^{-(h+1)}}(x_i)$,
 $\alpha\in(0,\sfrac 14)$ for which the theses of Theorem~\ref{t:RMS} are satisfied. 
 Then, define for $M\geq M_1$
 \[
  \mathcal{F}_{h+1}:=\{B_{\alpha\,M^{-(h+1)}}(y_i)\}_{i\in I_h}.
 \]
 By condition \eqref{e:centres}, the balls $B_{\frac32 M^{-(h+1)}}(x_i)$ are disjoint and
 do not intersect
 \[
  \cup_{j=1}^h\cup_{ \mathcal{F}_j}(B)_{\frac12M^{-(h+1)}}
 \]
 by the very definition of $\widetilde{K}_h$ in \eqref{e:widetilde K h}. 
 Thus, item (i) and (ii) are satisfied for $M_2$ sufficiently large. 
  Hence, we can define $K_{h+1}$, $\widetilde{K}_{h+1}$ and $ \mathcal{C}_{h+1}$ as in the statement. 
  
  We verify next the rest of the conclusions.  
\smallskip

 \noindent{{\bf Step 2.}} \emph{Proof of \eqref{e:ingro}.}

Let $x\in (K_{h+1}\cap  \overline{B}_{R_{h+2}})_{M^{-(h+2)}}$ and let $z$ be a point of minimal distance from $K_{h+1}\cap  \overline{B}_{R_{h+2}}$. 
In case $z\in \widetilde{K}_{h+1}$, by maximality of $\mathcal{C}_{h+1}$ there is some point $x_i\in  \mathcal{C}_{h+1}$ such that 
$|z-x_i|< 3M^{-(h+2)}$ and thus we conclude $x\in B_{4M^{-(h+2)}}(x_i)$.
Instead, if $z\in(K_{h+1}\cap  \overline{B}_{R_{h+2}})\setminus\widetilde{K}_{h+1}$, the definitions of ${K_{h+1}}$ and $\widetilde{K}_{h+1}$ yield the existence of a ball $\widetilde{B}\in\cup_{j=1}^{h+1} \mathcal{F}_j$ 
for which $z\in (K\cap(\widetilde{B})_{2M^{-(h+2)}})\setminus \widetilde{B}$. 
Item (ii) and a scaled version of Lemma~\ref{l:grafico} 
(applied with $\delta=2M^{-(h+2)}$, and radii $\rho$ (that of $\widetilde{B}$) and $\rho+2M^{-(h+2)}$ in place of $1$ and $1+\delta$) give a point $y$ satisfying
\[
y\in \left(K\cap(\widetilde{B})_{4M^{-(h+2)}}\right)\setminus (\widetilde{B})_{2M^{-(h+2)}},\quad
\text{and}\quad |z-y|\leq 3M^{-(h+2)}.
\]
Therefore, as $z\in \overline{B}_{R_{h+2}}$ and $T_{h+1}=R_{h+2}+4M^{-(h+2)}$ 
we get by property (i) and the definition of $\widetilde{K}_{h+1}$ 
\[
y\in \left(K\cap B_{T_{h+1}}\cap(\widetilde{B})_{3M^{-(h+2)}}\right)
\setminus (\widetilde{B})_{2M^{-(h+2)}}\subset \widetilde{K}_{h+1}.
\]
Finally, by maximality of $\mathcal{C}_{h+1}$ we may find $x_i\in \mathcal{C}_{h+1}$ 
such that $|y-x_i|< 3M^{-(h+2)}$. In conclusion, we have
\[
|x-x_i|\leq|x-z|+|z-y|+|y-x_i|\leq 7M^{-(h+2)},
\]
so that  \eqref{e:ingro} follows at once.
\smallskip

\noindent{{\bf Step 3.}} \emph{For every $h\in\N$ and for every $x_i\in  \mathcal{C}_{h}$
\begin{equation}\label{e:dlbKh}
 K_{h}\cap B_{M^{-(h+1)}}(x_i)=K\cap B_{M^{-(h+1)}}(x_i)\,.
\end{equation}}
Indeed, assume by contradiction that we can find $x_i\in  \mathcal{C}_{h}$ and $x\in (K\setminus K_{h})\cap B_{M^{-(h+1)}}(x_i)$. 
As $x_i\in\widetilde{K}_{h}$ then $x_i\in B_{T_{h}}$, in turn implying 
$x\in B_{S_{h}}$ since $S_h-T_h=4M^{-(h+1)}$.
Therefore $x\in(K\setminus K_{h})\cap \overline{B}_{S_{h}}$, and by definition of 
$K_{h}$ we can find a ball $B\in  \mathcal{F}_{j}$, $j\leq h$, such that $x\in B$. 
We conclude that
\[
\textrm{dist}(x_i,B)\leq|x-x_i|\leq M^{-(h+1)},
\] 
contradicting the assumption that $x_i\in\widetilde{K}_{h}$.
\smallskip

\noindent{{\bf Step 4.}} \emph{Proof of \eqref{e:stimaKh}.}

We get first a lower bound for $\mathcal{H}^0(I_h)$: use \eqref{e:ingro} and the density upper bound in \eqref{e:SuAR} to get
 \[
  \HH^1(K_h\cap  \overline{B}_{R_{h+1}})=
  \HH^1(K_h\cap  \overline{B}_{R_{h+1}}
  \cap\cup_{i\in I_h}B_{8M^{-(h+1)}}(x_i))
  \leq \e_\param 8M^{-(h+1)}\,\mathcal{H}^0(I_h)
 \]
where we recall that $\e_\param=(2+\lambda \|g\|_\infty^2)\pi$.
Equivalently, we have that 
 \begin{equation}\label{e:Ihbound}
 \mathcal{H}^0(I_h)M^{-(h+1)}\geq \frac1{8\e_\param} 
 \HH^1(K_h\cap  \overline{B}_{R_{h+1}}).
 \end{equation}
 Thus, we estimate as follows by taking into account that by item (i) the balls in the family $\mathcal{F}_{h+1}$ are disjoint 
 \begin{align}\label{e:measure}
  \HH^1(K_{h+1})&\leq\HH^1\left(K_h\setminus\cup_{ \mathcal{F}_{h+1}}B\right)
  =\HH^1(K_h)-\sum_{ \mathcal{F}_{h+1}}\HH^1(K_h\cap B)\notag\\
 &\stackrel{\eqref{e:SuAR},\,\eqref{e:dlbKh}}{\leq}
	\HH^1(K_h)-\epsilon \alpha M^{-(h+1)}\mathcal{H}^0(I_h) 
  \stackrel{\eqref{e:Ihbound}}{\leq}\HH^1(K_h)-
  \frac\epsilon{8\e_\param}\alpha
  \HH^1(K_h\cap  \overline{B}_{R_{h+1}})\notag\\
  &=(1-\eta)\HH^1(K_h)+\eta\left(\HH^1(K_h)-\HH^1(K_h\cap  \overline{B}_{R_{h+1}})\right)\notag\\
  &\leq(1-\eta)\HH^1(K_h)+\eta\,\HH^1(K\cap(\overline{B}_{S_h} \setminus  \overline{B}_{R_{h+1}}))\leq(1-\eta)\HH^1(K_h)+C_1M^{-\frac{h+1}2},
 \end{align}
where we have set $\eta:=\frac\epsilon{8\e_\param}\alpha\in(0,1)$, and we have used the very definition of $K_h$ in the last but one inequality, and item of (iii) Lemma~\ref{l:technical} in the last one.
By taking into account (iii) in Lemma~\ref{l:technical}
and \eqref{e:Ihbound}, an iteration of \eqref{e:measure} implies
\[
 \HH^1(K_h)\leq C_1\sum_{i=1}^h(1-\eta)^{h-i}M^{-\sfrac i2}
 \leq C_1\,h\,(\max\{1-\eta,M^{-\sfrac 12}\})^h.
\]
Choose $\beta\in(0,\sfrac 14)$ such that $(1-\eta)\leq M^{-2\beta}$, 
the previous estimate then yields \eqref{e:stimaKh}, 
\begin{equation*}
 \HH^1(K_h)\leq C_1\,h\,\max\{M^{-2h\beta},M^{-\sfrac h2}\}=C_1\,h\,M^{-2h\beta}.
\end{equation*}

\smallskip

 \noindent{{\bf Step 5.}} \emph{Proof of \eqref{e:measureingro}.}

We use \eqref{e:ingro} to get for some dimensional constant $C$
 \begin{align*}
\left|(K_{h+1}\cap  \overline{B}_{R_{h+2}})_{M^{-(h+2)}}\right| &\leq 
 \left|\cup_{i\in I_{h+1}}B_{8M^{-(h+2)}}(x_i)\right|\leq 
	C M^{-2(h+2)}\mathcal{H}^0(I_{h+1})\notag\\
  &\stackrel{\eqref{e:SuAR},\, \eqref{e:dlbKh}}{\leq} C\epsilon M^{-(h+2)}\sum_{i\in 
	I_{h+1}}\HH^1(K_{h+1}\cap B_{M^{-(h+2)}}(x_i))\notag\\
  &\leq C\epsilon M^{-(h+2)}\HH^1(K_{h+1})
  \stackrel{\eqref{e:stimaKh}}{\leq} C\epsilon C_1\,(h+1)\,M^{-2(h+1)\beta-(h+2)}.
  \end{align*}
where in the last but one inequality we have used that the balls $B_{M^{-(h+2)}}(x_i)$ 
are disjoint by construction.
\smallskip

\noindent{{\bf Step 6.}} \emph{Proof of \eqref{e:sigmaur}}
  
By construction we have that $\Sigma\cap B_{\sfrac 12}\subseteq K_h$. 
Therefore, 
\eqref{e:measureingro} gives as $R_h\geq R_\infty\geq\sfrac 12$
\begin{equation*}
\left|(\Sigma\cap B_{\sfrac12})_{M^{-(h+1)}}\right|\leq
\left|(K_{h}\cap\overline{B}_{R_{h+1}})_{M^{-(h+1)}}\right|
\leq C_2\,h\,M^{-h(1+2\beta)-1}. 
\end{equation*}
Hence, if $r\in (M^{-(h+2)},M^{-(h+1)}]$ we get
\[
\left|(\Sigma\cap B_{\sfrac12})_{r}\right|\leq 
 C_2\,h\,M^{-h(1+2\beta)-1}\leq  C_2\,M^{-h(1+\beta)-1}\leq C_2\,r^{1+\beta}.\qedhere
\]
\end{proof}

We are now ready to establish the higher integrability of 
the gradient.
\begin{theorem}\label{t:hiDePF}

There is $p>2$ such that for all open sets $\Omega\subseteq\R^2$ and for all 
$(u,K)$ either a restricted, or an absolute, or a generalized global minimizer of $E_\param(\cdot,\cdot,\Omega,g)$ then $\nabla u\in L^p_{\mathrm{loc}}(\Omega\setminus K)$ .
\end{theorem}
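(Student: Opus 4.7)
The plan is to combine the porosity-type estimate \eqref{e:sigmaur} for the singular set $\Sigma:=K\setminus K^j$ with a pointwise bound of the form $|\nabla u(x)|\lesssim \dist(x,\Sigma)^{-\sfrac12}$, and then conclude via a standard distribution-function argument. By a routine localization I reduce to the setting of Proposition \ref{p:alfpor}, proving $\nabla u\in L^p(B_{\sfrac12})$ for $(K,u)$ a minimizer on $B_2$.

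The central step is to establish that, for geometric constants $C,r_0>0$,
\begin{equation}\label{e:pointwise_plan}
|\nabla u(x)|\leq C\max\{\dist(x,\Sigma),\,r_0\}^{-\sfrac12}\qquad\forall\,x\in B_{\sfrac12}\setminus K.
\end{equation}
Writing $s:=\dist(x,\Sigma)$, the regime $s\geq r_0$ reduces to a uniform gradient bound on a closed subset of $B_{\sfrac12}$ that meets $K$ only on its regular part; this follows from classical Neumann regularity applied to \eqref{e:Euler harmonic g 1}--\eqref{e:Euler curvature g 1}. For $s<r_0$ I split into two cases. If $B_{s/4}(x)\cap K=\emptyset$, then $u$ solves $\Delta u=\lambda(u-g)$ on $B_{s/4}(x)$, and interior gradient estimates combined with the energy upper bound \eqref{e:upper bound} yield $|\nabla u(x)|^2\lesssim s^{-1}$. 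Otherwise I pick $y\in K\cap B_{s/4}(x)$, so $\dist(y,\Sigma)\geq 3s/4$ and $y\in K^j$; applying Theorem \ref{t:RMS} at $y$ with radius $s/8$ produces $z\in K\cap B_{s/8}(y)$ and $\rho\in(t_\e s/8,s/8)$ such that $B_\rho(z)$ is a good ball with $\rho\|\nabla u\|_{L^\infty(B_\rho(z))}^2\leq C_1\e$. Since $\Sigma\cap B_{s/2}(y)=\emptyset$, the set $K\cap B_{s/2}(y)\subset K^j$ is a disjoint union of $C^{1,\alpha}$ arcs with no endpoints or triple junctions inside $B_{s/2}(y)$ (each such singular point would otherwise lie in $\Sigma$). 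Chaining $O(1)$ good balls provided by Theorem \ref{t:RMS} along the component of $B_{s/2}(y)\setminus K$ containing $x$, and using $C^{0,\alpha}$ Neumann-type bounds across the smooth pieces of $K$, propagates the $L^\infty$ gradient bound of order $s^{-\sfrac12}$ from $B_\rho(z)$ to the point $x$ itself.

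Granted \eqref{e:pointwise_plan}, for every $\lambda\geq\lambda_0:=C r_0^{-\sfrac12}$ the inclusion $\{x\in B_{\sfrac12}:|\nabla u(x)|>\lambda\}\subset(\Sigma\cap B_{\sfrac12})_{C^2\lambda^{-2}}$ holds, hence \eqref{e:sigmaur} yields $|\{|\nabla u|>\lambda\}\cap B_{\sfrac12}|\leq C'\lambda^{-2(1+\beta)}$. By the layer-cake formula,
\[
\int_{B_{\sfrac12}}|\nabla u|^p=p\int_0^\infty\lambda^{p-1}\bigl|\{|\nabla u|>\lambda\}\cap B_{\sfrac12}\bigr|\,d\lambda\leq C_p\Bigl(1+\int_{\lambda_0}^{\infty}\lambda^{p-3-2\beta}\,d\lambda\Bigr),
\]
which is finite for every $p<2+2\beta$. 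Since $\beta>0$ by Proposition \ref{p:alfpor}, any choice $p\in(2,2+2\beta)$ will give the conclusion.

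The hard part will be the pointwise bound \eqref{e:pointwise_plan}. The families $\mathcal{F}_j$ built in Proposition \ref{p:alfpor} cover only $K^j$, so a generic point $x\in B_{\sfrac12}\setminus K$ with $\dist(x,\Sigma)\sim s$ need not lie inside any good ball at scale $s$; what makes the chaining argument work is the uniformity built into Theorem \ref{t:RMS}, which guarantees a flat ball of comparable size around \emph{every} point of $K$ and at \emph{every} sufficiently small scale. An alternative would be a compactness/blow-up at scale $s_j\to 0$ around a would-be counterexample sequence, producing a generalized global minimizer whose singular set avoids $B_1$; one would then need to argue regularity of the limiting $K_\infty$ in $B_1$ (ruling out that limits of regular points with diverging excess create new singularities), which is again underwritten by Theorem \ref{t:RMS}.
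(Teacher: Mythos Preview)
Your strategy differs from the paper's in a substantive way, and the difference is exactly where the gap lies. The paper does \emph{not} prove the pointwise bound \eqref{e:pointwise_plan}. Instead it shows the weaker inclusion
\[
\{|\nabla u|^2>M^{h+3}\}\cap B_{R_{h+2}}\subset (K_h\cap B_{R_{h+1}})_{M^{-(h+1)}},
\]
where $K_h$ is the \emph{scale-$h$ approximate singular set} from Proposition~\ref{p:alfpor} (namely $K$ with the good balls of $\bigcup_{j\le h}\mathcal F_j$ removed), and then applies the measure estimate \eqref{e:measureingro} rather than \eqref{e:sigmaur}. This inclusion needs only two ingredients: interior elliptic estimates when $\dist(x,K)>M^{-(h+2)}$, and the uniform gradient bound \eqref{e:gradbdd} on the good balls $B_{2\alpha M^{-j}}(y_i)$, $j\le h$. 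No chaining is involved.

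Your sharper inclusion $\{|\nabla u|>\lambda\}\subset(\Sigma)_{C\lambda^{-2}}$ would follow from \eqref{e:pointwise_plan}, but the argument you give for that bound is incomplete. The problematic step is the case $B_{s/4}(x)\cap K\neq\emptyset$. Knowing that $\Sigma\cap B_{s/2}(y)=\emptyset$ tells you every point of $K\cap B_{s/2}(y)$ is a pure jump, hence $K$ is locally a $C^{1,\alpha}$ arc there, but it gives no quantitative control on the \emph{scale} at which this regularity holds: the radius at which Theorem~\ref{t:eps_salto_puro} applies near a given $p\in K^j$ need not be comparable to $\dist(p,\Sigma)$. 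Consequently your Neumann-type bounds ``across the smooth pieces of $K$'' are not uniform at scale $s$, and the good ball $B_\rho(z)$ produced by Theorem~\ref{t:RMS} has radius $\rho\sim t_\e s$ which may be much smaller than $|x-z|$; the assertion that $O(1)$ such balls suffice to reach $x$, with uniformly controlled overlap and Neumann geometry, is exactly what is missing. Nor is it clear that $K\cap B_{s/2}(y)$ has a bounded number of components. The blow-up alternative you sketch at the end runs into the same obstruction: regular points of $K_j$ with shrinking regularity radius can in principle converge to a singular point of $K_\infty$, so one cannot conclude $\Sigma_\infty\cap B_1=\emptyset$ from $\dist(\tilde x_j,\Sigma_j)=1$ without further work.

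The paper's route sidesteps all of this: by comparing $x$ only to the explicitly constructed good balls of $\mathcal F_j$ (on which \eqref{e:gradbdd} holds by construction), one never needs to control $K$ at scale $\dist(x,\Sigma)$ near points not covered by those balls. The price is working with the larger sets $K_h$ and the bound \eqref{e:measureingro}, but that bound is already part of Proposition~\ref{p:alfpor}.
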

\begin{proof} 
Clearly, it is sufficient for our purposes to prove a localized estimate. Hence,
for the sake of simplicity, we suppose that $ \Omega=B_2$. 

We keep the notation of Proposition~\ref{p:alfpor} and furthermore denote for all $h\in\N$ 
 \begin{equation}\label{e:Ah}
  A_h:=\{x\in B_2\setminus K:\,|\nabla u(x)|^2>M^{h+1}\}\,.
 \end{equation}
We claim that 
\begin{equation}\label{e:claimAh}
A_{h+2}\cap B_{R_{h+2}}\subset (K_h\cap B_{R_{h+1}})_{M^{-(h+1)}}.
\end{equation}
Given this, for granted we conclude as follows: we use \eqref{e:measureingro} to deduce that 
\begin{equation}\label{e:estmeas}
|A_{h+2}\cap B_{R_{h+2}}|\leq \left|(K_h\cap B_{R_{h+1}})_{M^{-(h+1)}}\right|\leq
C_2\,h\,M^{-h(1+2\beta)-1}.
\end{equation}
Therefore, recalling that $\frac 12\leq R_\infty\leq R_h$, in view of \eqref{e:estmeas} 
and Cavalieri's formula for $q>1$ we get that 
\begin{align*}
 \int_{B_{\frac12}}|\nabla u|^{2q}&=q\int_0^{\infty}t^{q-1}
|\{x\in B_{\sfrac12}\setminus K:\,|\nabla u(x)|^2>t\}|\, dt\\
&\leq q\sum_{h\geq 3}\int_{M^h}^{M^{h+1}}t^{q-1}
|\{x\in B_{\sfrac12}\setminus K:\,|\nabla u(x)|^2>t\}|\, dt+M^{3q} |B_{\sfrac12}|\\
&\leq \sum_{h\geq 0}M^{(h+4)q} |A_{h+2}\cap B_{\frac 12}|
+M^{3q} |B_{\frac12}|\\
&\leq C_2\,\sum_{h\geq 0}h\,M^{(h+4)q-h(1+2\beta)-1}+M^{3q} |B_{\frac12}|.
 \end{align*}
 The conclusion follows at once by taking $q\in(1,1+2\beta)$ and $p=2q$.
\smallskip

 To conclude we prove formula \eqref{e:claimAh} in two steps. 
 
 \noindent{{\bf Step 1.}} \emph{For every $M> 0$ large enough, for every $h\in\N$ and for every $R\in(0,1]$ we have that}
 \begin{equation}\label{e:prelimAh}
  A_h\cap B_{R-2M^{-h}}\subset(K\cap {B_R})_{M^{-h}}\,. 
 \end{equation}
Indeed, for $x\in A_h\cap B_{R-2M^{-h}}$ let  $z\in K$ be such that $\mathrm{dist}(x,K)=|x-z|$.
If $|x-z|>M^{-h}$ then $B_{M^{-h}}(x)\cap K=\emptyset$ so that $u$ 
solves \eqref{e:Euler harmonic g}, i.e. $\triangle u=\lambda(u-g)$, on $B_{M^{-h}}(x)$.
Therefore, being the right-hand side of the PDE in $L^\infty$, by standard Lipschitz bounds in elliptic regularity \cite[estimate (4.45)]{GT} and the density upper bound in \eqref{e:SuAR}, as $x\in A_h$ we infer that
\[
 M^{h+1}\leq|\nabla u(x)|^2\leq 
 C\,M^h
\]
with $C$ depending on $\|g\|_{\infty}$. The latter estimate is clearly impossible for $M$ large enough. 
Furthermore, as $x\in B_{R-2M^{-h}}$ and $|x-z|\leq M^{-h}$ we conclude that $z\in B_R$.
\smallskip

\noindent{{\bf Step 2.}} \emph{Proof of \eqref{e:claimAh}.}

Since $R_{h+1}-R_{h+2}\geq 8M^{-(h+2)}$ (cf. (i) Lemma~\ref{l:technical}), we apply Step~1 to $A_{h+2}$ 
and $R=R_{h+1}$ and then \eqref{e:prelimAh} implies that 
\[
A_{h+2}\cap B_{R_{h+2}}\subset\big(K\cap B_{R_{h+1}}\big)_{M^{-(h+1)}}.
\]
Let $x\in A_{h+2}\cap B_{R_{h+2}}$, $z\in K\cap B_{R_{h+1}}$ be a point of minimal distance, and suppose that $z\in K\setminus K_h$. 
Being $R_{h+1}\leq S_h$, by the very definition of $K_h$ there is a ball 
$B\in\cup_{j=1}^h \mathcal{F}_j$ such that $z\in B$. In turn, since $B=B_\rho(y)$ for some $y$ and radius $\rho\geq t\,M^{-h}$, then $x\in B_{2t}(y)$ as $|x-z|\leq M^{-(h+1)}$ for $M$ sufficiently large. 
Thus, estimate $|\nabla u(x)|^2<M^{h+1}$ follows from \eqref{e:gradbdd} in item (ii) of Proposition~\ref{p:alfpor}. 
This is in contradiction with $x\in A_{h+2}$.
\end{proof}

\section{Proof of Corollary \ref{c:Bonnet-plus-2}}\label{s:Bonnet-plus}

Consider a pair $(u,K)$ which is a restricted minimizer in some open set $\Omega$. We wish to show that at every point $x_0\in K$ any possible blow up $(v, J)$ is either an elementary minimizer, or a cracktip. Fix such an $x_0$ and, without loss of generality, assume that it is the origin. We select the connected component $K^0$ of $K$ which contains $0$ and observe that, since $K$ consists of finitely many connected components, we find $r_0$ such that $B_{r_0} \cap K = B_{r_0}\cap K^0$. By the monotonicity formula of Proposition \ref{p:monotonia-0}, $r\mapsto \frac{1}{r} \int_{B_r\setminus K} |\nabla u|^2$ is then monotone nondecreasing for $r\in (0, r_0]$. In particular, the function has a limit as $r\downarrow 0$, which in turn is the constant value of the function $r\mapsto \frac{1}{r} \int_{B_r\setminus J} |\nabla v|^2$. If we can argue that $J$ is itself connected, we can then apply Proposition \ref{p:monotonia-0} again and conclude, from its analysis of the equality case, that either $\nabla v =0$ (and hence $(v, J)$ is an elementary minimizer) or $(v,J)$ is a cracktip. 

We thus focus on proving the connectedness of $J$, which is the limit, in the local Hausdorff topology, of the rescaled sets $(K^0)_{0,r_j}$ for some sequence $r_j\downarrow 0$. The argument follows closely that of \cite[Lemma 2.8]{B96}.

\medskip

Recall the classical fact that closed connected sets with finite $\mathcal{H}^1$ measure are arcwise connected (see Lemma \ref{l:connessi_per_archi}). In particular, for every pair of points $x,y\in K^0$ we can introduce the intrinsic distance $d (x,y)$, defined as the length of a shortest path in $K^0$ connecting $x$ and $y$. We wish to show the following chord-arc property:
\begin{itemize}
\item[(Ch)] There are $\bar{r}>0$ and $C>0$ such that $d(x,0)\leq C |x|$ for every $x\in K^0\cap B_{\bar{r}}$.  
\end{itemize}
With the latter property at hand we conclude immediately that $J$ is connected: if we fix any $y\in J\setminus \{0\}$, we know it is the limit of points $y_j \in (K^0)_{0, r_j}$. It turn this means that there are $\bar y_j \in K^0$ such that $y_j = \frac{\bar y_j}{r_j}$. In particular $|\bar y_j|\leq 2|y| r_j$ for sufficiently large $j$. We then find arcs $\bar \gamma_j\subset K^0$ connecting $0$ and $\bar y_j$, with length bounded by $2C|y| r_j$. In turn this means that we find arcs $\gamma_j\subset (K^0)_{0, r_j}$ connecting $0$ and $y_j$ of length bounded by $2C|y|$. In particular, by extraction of a converging subsequence, we find an arc $\gamma\subset J$ connecting $0$ and $y$.

To prove (Ch) we argue by contradiction and assume that there is a sequence of points $\{y_j\}\subset K^0$ with $y_j\to 0$ and such that $|y_j|^{-1} d (0, y_j) \to \infty$. We then distinguish two cases, to which we can restrict after extraction of a subsequence: \begin{itemize}
    \item[(a)] $d (0, y_j)\downarrow 0$
    \item[(b)] $\liminf_j d(0, y_j) > 0$.
\end{itemize}

\medskip

{\bf Case (a).} We let $\gamma_j\subset K^0$ be an arc connecting $0$ and $y_j$ with minimal length. Because of the minimality, $\gamma_j$ is an injective arc. Let $z_j\in \gamma_j$ be a point which is furthest away from the origin and note that it therefore subdvides $\gamma_j$ in two arcs $\gamma^1_j$ and $\gamma^2_j$, one connecting $0$ and $z_j$, and the other connecting $z_j$ and $y_j$, with no other point in common except for $z_j$. Since 
$|y_j|^{-1} d (0, y_j) \to \infty$ and 
$\mathcal{H}^1 (B_r \cap \gamma_j) \leq \mathcal{H}^1 (B_r \cap K)$ for $r>0$ sufficiently small, by the energy upper bound $\frac{|z_j|}{|y_j|}$ must converge to infinity. Let $\rho_j := |z_j|$ and consider the rescaled maps $(u_{0, \rho_j}, K_{0, \rho_j})$, the rescaled points $\bar z_j = \frac{z_j}{\rho_j}$ and $\bar y_j := \frac{y_j}{\rho_j}$, and the rescaled arcs $\bar{\gamma}^1_j$ and $\bar{\gamma}^2_j$. Without loss of generality, we can extract a subsequence and assume that 
\begin{itemize}
\item $(u_{0, \rho_j}, K_{0, \rho_j})$ converges to a global generalized restricted minimizer $(\bar u, \bar K)$;
\item $\bar{\gamma}^1_j$ and $\bar{\gamma}^2_j$ converge to two arcs, $\bar\gamma^1$ and $\bar\gamma^2$, included in $\bar K$ and connecting $0$ with some point $\bar{z}\in \partial B_1$.
\end{itemize}
There are then two possibilities: 
\begin{itemize}
\item[(i)] $\bar K$ contains a Jordan curve. The latter would however bound a ``pocket'', i.e. a bounded connected component of $\mathbb R^2\setminus \bar K$. One could then argue as in Lemma~\ref{l:pockets} to get a contradiction. Note that the argument in there, which is written for an absolute minimizer, is, strictly speaking, not applicable to $(\bar u, \bar K)$ which is a restricted minimizer: however it is easy to see that the idea leads to the construction of a competitor which does not increase the number of connected components of $\bar K$.
\item[(ii)] $\bar{\gamma}^1$ and $\bar{\gamma}^2$ are the same arc $\bar\gamma$. In this case we know that $\bar\gamma$ must contain a regular jump point $y'$ with $0<|y'|<1$. In a sufficiently small ball $B_\tau (y')$ the set $\bar K$ is then a single smooth arc. But then, by case (a) of Theorem~\ref{t:main}, for a sufficiently large $j$ and a sufficiently small $\sigma$, $K_{0, \rho_j}\cap B_{\sigma} (y')$ is also a single smooth arc. However, this is not possible since, for a sufficiently large $j$, the two arcs $\bar\gamma^1_j$ and $\bar\gamma^2_j$ both intersect $B_{\sigma} (y')$. 
\end{itemize}

\medskip

{\bf Case (b)} Again we let $\gamma_j\subset K^0$ be an arc connecting $0$ and $y_j$ with minimal length, concluding that $\gamma_j$ is an injective arc. Parametrize it by arclength so that $\gamma_j (0) = y_j$, fix an arbitrary positive $\varepsilon$ and focus on the arc  $\gamma_j ([0, \varepsilon])$. $\gamma_j$ converges to a map $\gamma : [0, \varepsilon] \to K$ with $\gamma (0)=0$, which however cannot be the constant map, because otherwise $\gamma_j ([0, \varepsilon]) \subset B_{\delta_j}$ for some sequence $\delta_j\downarrow 0$, contradicting the energy upper bound. Hence $\mathcal{H}^1 (\gamma ([0, \varepsilon]))>0$ and so there must be a regular jump point $y'\in \gamma ([0, \varepsilon])$ with the property that, in a sufficiently small ball $B_\sigma (y')$, $K\cap B_\sigma (y')$ is a smooth arc and $K\cap B_\sigma (y')\subset \gamma ([0, \varepsilon])$. But then for $j$ sufficiently large we must necessarily have that $K\cap B_{\sigma/2} (y') \subset \gamma_j ([0, \varepsilon])$. In other words, the arcs $\gamma ([0, \varepsilon])$  and $\gamma_j ([0, \varepsilon])$ share a point, for $j$ sufficiently large. But then there is an arc in $K$ connecting $0$ and $y_j$ and whose length is at most $2\varepsilon$. Since $\varepsilon$ is arbitrary and $\liminf_j d (0, y_j)>0$, this would contradict the minimality of $\gamma_j$.

\appendix

\chapter{Variational identities}\label{a:variational identities}
We give here the proofs of Proposition~\ref{p:variational identities} and Proposition~\ref{p:variational identities 2}. 
We recall that $\nu$ denotes the counterclockwise rotation by $90$ degrees of a $C^0$ unit tangent vector $e$ locally orienting $K$, while $\kappa$ is the curvature of the curve, namely $\ddot\gamma \cdot \nu$, for an arclength parametrization $\gamma$ such that $\dot\gamma = e$. 
Finally, $w^+$ and $w^-$ are the one-sided traces of the relevant function $w$ on $K$ (following the obvious convention that $w^+$ is the trace on the side which $\nu$ is pointing to).

\begin{proof}[Proof of Proposition~\ref{p:variational identities}]
The proofs of the outer variations formula \eqref{e:outer} is standard, 
and we leave it to the reader.

Let $\psi \in C^1_c (\Omega, \mathbb R^2)$, and consider the 
Cauchy problems (parametrized in terms of the initial condition $x\in\Omega$)
\[
\begin{cases}
\displaystyle{\frac{\partial}{\partial t}\Phi_t(x)=\psi(\Phi_t(x))}\cr
  \Phi_0(x)=x\,,
\end{cases}
\]
then the map $\Phi_t(x)$ 
is a diffeomorphism of $\Omega$ onto itself for $t\in(-t_0,t_0)$, $t_0>0$ 
sufficiently small. 
We recall the notation $(u_t,K_t(x))=(\Phi_t(K),u(\Phi^{-1}_t(x)))$, and define $f(t):=E_0(u_t,K_t)$.
We claim that $f$ is differentiable on $(-t_0,t_0)$ with
\begin{equation}\label{e:f'}
f'(t)=\int_{\Omega\setminus \Phi_t(K)} \big(|\nabla u_t|^2 {\rm div}\,\psi 
-2 \nabla u_t^T \cdot D\psi\, 
\nabla u_t\big)
+\int_{\Phi_t(K)} e_t^T \cdot D\psi\, e_t \, d\mathcal{H}^1\,,
\end{equation}
where $e_t:\Phi_t(K)\to \mathbb S^1$ is a Borel vector field tangent to 
the rectifiable set $\Phi_t(K)$. 
Given this, if $(u,K)$ is either an absolute or a restricted or a generalized minimizer 
of $E_0$ we get \eqref{e:inner} (for $\param =0$) as necessarily $f'(0)=0$.

First, note that for all $\varepsilon$ small 
\begin{align*}
&\nabla\Phi_\varepsilon^{-1}(\Phi_\varepsilon(x))
=[\mathrm{Id}+\varepsilon D\psi(x)]^{-1}=
\mathrm{Id}-\varepsilon D\psi(x)+o(\varepsilon)\\
&\det\nabla\Phi_\varepsilon(x)=\det[\mathrm{Id}+\varepsilon D\psi(x)]
=1+\varepsilon\,\mathrm{div}\psi(x)+o(\varepsilon),
\end{align*}
$o(\varepsilon)$ uniform with respect to $x\in\Omega$.
Thus, by changing variables as $u_{t+\varepsilon}=u_t( \Phi_\varepsilon^{-1})$,
for $t\in(-t_0,t_0)$ and $\varepsilon\in\mathbb{R}$ sufficiently small, we get 
\begin{align}\label{e:volume t}
&\int_{\Omega\setminus \Phi_{t+\varepsilon}(K)}|\nabla u_{t+\varepsilon}|^2=
\int_{\Omega\setminus \Phi_t(K)}|\nabla u_t\cdot D \Phi_{\varepsilon}^{-1}(\Phi_{\varepsilon})|^2|\det D\Phi_\varepsilon|\notag\\
&=\int_{\Omega\setminus \Phi_t(K)}|\nabla u_t-\varepsilon\nabla u_t\cdot D\psi|^2(1+\varepsilon\,\mathrm{div}\psi)+o(\varepsilon)\notag\\
&=\int_{\Omega\setminus \Phi_t(K)}|\nabla u_t|^2+\varepsilon\int_{\Omega\setminus \Phi_t(K)} \big(|\nabla u_t|^2 {\rm div}\, \psi -2 \nabla u_t^T \cdot D \psi\, \nabla u_t\big)+o(\varepsilon)\,.
\end{align}
In addition, as $\Phi_{t+\varepsilon}(K)=\Phi_t(\Phi_\varepsilon(K))$, 
from the coarea formula \cite[Theorem 2.93]{AFP00} we infer that
\begin{align}\label{e:surface t}
\mathcal{H}^1(\Phi_{t+\varepsilon}(K))=
\mathcal{H}^1(\Phi_{t}(K))
+\varepsilon e_t^T\cdot D\psi\, e_t+o(\varepsilon)
\end{align}
In conclusion, we deduce from \eqref{e:volume t} and \eqref{e:surface t} that $f$ is differentiable in $t\in(-t_0,t_0)$ and that \eqref{e:f'} holds.

To prove \eqref{e:inner} in the general case we need to consider further the fidelity term. To this aim, for $t\in(-t_0,t_0)$ set
\[
h_g(t):=\int_{\Omega\setminus\Phi_t(K)}|u_t-g|^2\,.
\]
Let us first assume that $g\in C^1(\Omega)$, and prove that $h_g$ is differentiable on $(-t_0,t_0)$ with 
\begin{equation}\label{e:h'g}
h'_g(t)=-2\int_{\Omega}(u_{t}-g)\psi\cdot \nabla u_t
-\int_{\Phi_t(K)}(|u_{t}^+-g|^2-|u_{t}^--g|^2)
\psi\cdot\nu_{\Phi_t(K)}d\mathcal{H}^1\,,
\end{equation}
where $\nu_{\Phi_t(K)}$ denotes the counterclockwise rotation by $90$ degrees of the 
Borel unit tangent vector $e_t$ to $\Phi_t(K)$.

Indeed, by changing variables, the fact that 
$g\in C^1(\Omega)$ implies that, for some function $o(\varepsilon)$ that is uniform 
with respect to $x\in\Omega$,
\begin{align*}
&h_g(t+\varepsilon)=\int_{\Omega\setminus\Phi_{t+\varepsilon}(K)}|u_{t+\varepsilon}-g|^2=
\int_{\Omega\setminus\Phi_t(K)}|u_{t}-g(\Phi_\varepsilon)|^2|\det D\Phi_\varepsilon|\\
&=\int_{\Omega\setminus\Phi_t(K)}|u_{t}-g-\varepsilon\nabla g\cdot\psi+o(\varepsilon)|^2 (1+\varepsilon\,\mathrm{div}\psi)+o(\varepsilon)\\
&=h_g(t)+\varepsilon\int_{\Omega\setminus\Phi_t(K)}\big(|u_{t}-g|^2\mathrm{div}\psi-2(u_{t}-g)\nabla g\cdot\psi\big)+o(\varepsilon).
\end{align*}
Therefore, 
\[
h'_g(t)=\int_{\Omega\setminus\Phi_t(K)}\big(|u_{t}-g|^2\mathrm{div}\psi-2(u_{t}-g)\nabla g\cdot\psi\big)\,,
\]
and the identity in \eqref{e:h'g} follows from an integration by parts by taking into account that, under the standing assumptions, $|u-g|^2\in BV_{{\rm loc}}\cap L^\infty_{{\rm loc}}(\Omega)$.

If $g\in L^\infty(\Omega)$, let $g_\varepsilon\in C^1\cap L^\infty(\Omega)$ be such that 
$g_\varepsilon\to g$ in $L^2(\Omega)$. Consider therefore the functions
\[
f_{\varepsilon} (t) := f (t) + \param h_{g_\varepsilon} (t)
\]
and observe that, by the above formulas $|f'_\varepsilon (t)|$ is uniformly bounded. Thus $f_\varepsilon (t)$ converge uniformly to the Lipschitz function $f_0(t) = f(t) + \param h_g (t)$, which has a minimum in $0$. We rewrite
formula \eqref{e:h'g} for the derivative of $h_{g_\varepsilon}$ as follows
\begin{align*}
h'_{g_\varepsilon} (t) :=&\underbrace{-2\int_{\Omega\setminus\Phi_t(K)}(u_t-g_\varepsilon)\psi\cdot \nabla u_t }_{=:\Lambda_\varepsilon (t)}
\underbrace{-\int_{\Phi_t(K)}((u^+_t)^2-(u^-_t)^2)\psi\cdot \nu_{\Phi_t(K)}d\mathcal{H}^1}_{=: \Gamma (t)}\\
&+\underbrace{2\int_{\Phi_t(K)}(u^+_t-u^-_t)g_\varepsilon\,\psi\cdot \nu_{\Phi_t(K)}d\mathcal{H}^1}_{=: L_{t, \varepsilon} (\psi)}
\,.
\end{align*}
Note that $\Lambda_\varepsilon(t)$ converge uniformly as $\varepsilon\to 0$ to 
\[
\Lambda (t) = -2\int_{\Omega\setminus\Phi_t(K)}(u_t-g)\psi\cdot \nabla u_t \, ,
\]
while $\Gamma\in C^0((-t_0,t_0))$, because
$u\in W^{1,2}(\Omega\setminus K)$ and we can use the Generalized Area Formula \cite[Theorem 2.91]{AFP00}.
We use the Generalized Area Formula also to rewrite
\[
L_{t, \varepsilon} (\psi) 
= 2 \int_K (u^+ - u^- ) g_\varepsilon (\Phi_t )
\psi (\Phi_t ) \cdot \nu_{\Phi_t (K)} (\Phi_t )\,
J_t \, d\mathcal{H}^1 \, ,
\]
where $J_t (x) = |D\Phi_t (x) (e (x))|$, and $e(x)$ is the unit tangent to $K$ at $x$.

If we consider the vector-valued measures 
\[
\mu_t := 2 (u^+-u^-)\, \nu_{\Phi_t (K)} \circ \Phi_t\, J_t\, \mathcal{H}^1 \restr K\, ,
\]
and the pushforward $\alpha_t := (\Phi_t)_\sharp\mu_t$,
we can then write 
\[
L_{t, \varepsilon} (\psi) = \int g_\varepsilon\, \psi \cdot d\alpha_t \, .
\]
Next, consider the measures $g_\varepsilon d\alpha_t \otimes dt$ on $\Omega \times (-t_0, t_0)$ and, given the uniform boundedness of $g_\varepsilon$ (in a pointwise sense), we can assume that a suitable subsequence, not relabeled, converges to a measure of the form
\[
\bar g d\alpha_t\otimes dt\, ,
\]
for some Borel function $\bar g$ with $\|\bar g\|_{L^\infty(\Omega\times (-t_0, t_0),d\alpha_t\otimes dt)} \leq \|g\|_{\infty}$.
So we can rewrite 
\[
f_0 (t) = f_0 (0) + \int_0^t f' (s)\, ds + \param \int_0^t (\Lambda (s) + \Gamma (s))\, ds + \param \int_0^t \int \bar{g} \psi\cdot d\alpha_s\, ds\, .
\]
Since $f_0$ is Lipschitz and has a minimum in $0$, while $f'$, $\Lambda$, and $\Gamma$ are continuous, we conclude that, for every positive $s_0$
\[
|f' (0) + \param (\Lambda (0) + \Gamma (0))|\leq 
\param \|g\|_{\infty}  \sup_{|s|\leq s_0}  \left|\int 
\psi \cdot d\alpha_s\right|
\]
We now use the fact that $\psi$ is continuous and that so are the maps $t\mapsto \nu_{\Phi_t (K)} (\Phi_t (x)) = \frac{D\Phi_t (\nu (x))}{|D\Phi_t (\nu (x))|}$ and $t \mapsto J_t (x)$ to conclude that
\[
\liminf_{s_0\downarrow 0} \sup_{|s|\leq s_0} \left|\int 
\psi \cdot d\alpha_s\right| \leq
2 \int_K |u^+-u^-| |\psi \cdot \nu| d\mathcal{H}^1\, .
\]
Since the map $\psi \mapsto (f'(0) + \param (\Lambda (0)+\Gamma (0)))$ is linear in $\psi$, we conclude from the Riesz representation theorem that there exists a function $g_K \in L^\infty(\Omega,\mathcal{H}^1\restr K)$ with 
$\|g_K\|_{L^\infty(\Omega,\mathcal{H}^1\restrsmall K)} \leq \|g\|_\infty$ such that 
\[
f' (0) + \param (\Lambda (0) + \Gamma (0)) = 2\param \int_K (u^+-u^-) g_K \psi \cdot \nu \, d\mathcal{H}^1\, .
\]
\end{proof}
We show next the second form of the Euler-Lagrange conditions 
together with the higher regularity of $K$.
\begin{proof}[Proof Proposition~\ref{p:variational identities 2}]
Assume that $K\cap A$ is a graph for some open subset $A\subseteq\Omega$. Up to a rotation, there are an open interval $I\subset\mathbb{R}$ and $\phi:I\to \mathbb{R}$
such that 
\[
K\cap A=\{(t,\phi(t)):\,t\in I\}\,.
\]
Let $A^\pm:=\{(t,s)\in A:\,\pm s>\phi(t)\}$, and let $\varphi\in C^1(\bar{A})$ be such that $\varphi=0$ in a neighbourhood of $\partial A^+\setminus K$. Let
$v=u$ on $A^-$ and $v=u+\varepsilon\varphi$ on $A^+$, then from the (restricted) minimality of $(u,K)$ for $E_\param$ we infer 
\[
\int_{A^+}\big(\nabla u\cdot\nabla \varphi
+\param \varphi(u-g)\big)=0\,.
\]
Clearly, a similar identity can be obtained on $A^-$.
Therefore, $u$ is a weak solution to 
\begin{align}\label{e:weak solution}
\begin{cases}
\triangle u= \param (u-g)\qquad A^\pm\\
\frac{\partial u}{\partial\nu}=0\qquad K\cap A\,.
\end{cases}
\end{align}
From elliptic regularity theory we infer that if 
$\phi\in C^{1,\alpha}(I)$ then $u\in C^{1,\alpha}(A^\pm)$
(see \cite[Theorem 7.49]{AFP00}). The conclusions in item (a), in \eqref{e:Euler harmonic g} and in 
\eqref{e:Euler Neumann g} then follow at once.

We assume next that $u\in W^{2,2}(A^+\cup A^-)$, and moreover that $\nu$ is the interior normal vector to $A^+$. Let $\psi\in C^1_c(A;\mathbb{R}^2)$, then integrating twice by parts give
\begin{align*}
\int_{A^\pm}|\nabla u|^2\mathrm{div}\psi&=
-2\int_{A^\pm}\nabla u\cdot\nabla^2 u\,\psi
\mp\int_{K\cap A}|\nabla u^\pm|^2\psi\cdot\nu d\mathcal{H}^1\\
&=2\int_{A^\pm}\big(\triangle u\,\nabla u\cdot\psi+
\nabla u^T\cdot D\psi\,\nabla u\big)\\
&\qquad \mp\int_{K\cap A}\big((\nabla u\cdot\psi)
\frac{\partial u}{\partial\nu}
+|\nabla u^\pm|^2\psi\cdot\nu \big)d\mathcal{H}^1\\
&\stackrel{\eqref{e:weak solution}}{=}
2\int_{A^\pm}\big(\param(u-g)\nabla u\cdot\psi+
\nabla u^T\cdot D\psi\,\nabla u\big)
\mp\int_{K\cap A}|\nabla u^\pm|^2\psi\cdot\nu d\mathcal{H}^1\,.
\end{align*}
Therefore, from \eqref{e:inner} we infer that 
\begin{align}\label{e:curvature}
\int_{K\cap A}&e^T\cdot D\psi\, e\, d\mathcal{H}^1\notag\\
&=\int_{K\cap A}(|\nabla u^+|^2+\param|u^+-g_K|^2-|\nabla u^-|^2-\param|u^--g_K|^2)
\psi\cdot\nu d\mathcal{H}^1\,.
\end{align}
The last formula still holds even if $u\in W^{2,2}_{{\rm loc}}(A^+\cup A^-)$, which is actually 
the known regularity for $u$. Indeed, it suffices to deform smoothly $K$ inside 
$A^\pm$ and to take into account \eqref{e:weak solution} and the fact that both $u$ and 
$\nabla u$ are bounded to conclude.

Consider $\varphi\in C^1_c(A)$ and let $\psi(x)=(0,\varphi(x))$, as 
$K\cap A=\{(t,\phi(t)):\, t\in I\}$, we infer that 
\[
e(t,\phi(t))^T\cdot D\psi(t,\phi(t))\, e(t,\phi(t))
=\frac{d}{dt}\left(\varphi(t,\phi(t))\right)\frac{\phi'(t)}{1+|\phi'(t)|^2}\,,
\]
and thus
\begin{align*}
\int_{K\cap A}&e^T\cdot D\psi\, e\,d\mathcal{H}^1=
\int_I\frac{d}{dt}\left(\varphi(t,\phi(t))\right)\frac{\phi'(t)}{\sqrt{1+|\phi'(t)|^2}}dt\,.
\end{align*}
Moreover, if $\varphi(t,s)=\zeta(t)\eta(s)$, where $\zeta\in C^1_c(I)$ and $\eta\in C^1_c(\mathbb{R})$ such that $\eta=1$
on $[-2\|\phi\|_{L^\infty(I)},2\|\phi\|_{L^\infty(I)}]$,  
on setting $H:=|\nabla u^+|^2+\param |u^+-g_K|^2-|\nabla u^-|^2-\param |u^--g_K|^2\in L^{\infty}(K\cap A)$, \eqref{e:curvature} rewrites as 
\begin{align}\label{e:distributional curvature}
\int_I&\zeta'(t)\frac{\phi'(t)}{\sqrt{1+|\phi'(t)|^2}}dt
=\int_IH(t,\phi(t))\,\zeta(t)dt\,.
\end{align}
In particular, $\frac{\phi'(t)}{\sqrt{1+|\phi'(t)|^2}}\in W^{1,\infty}(I)$, and 
in turn this implies $\phi'\in W^{1,\infty}(I)$, as $\phi'\in L^\infty(I)$. 
Item (b) is then established.

Eventually, \eqref{e:distributional curvature} yields that 
the distributional curvature of the graph of $\phi$ satisfies
\[
\frac{d}{dt}\left(\frac{\phi'(t)}{\sqrt{1+|\phi'(t)|^2}}\right)=-H(t,\phi(t))\qquad
\textrm{$\mathcal{L}^1$ a.e. on $I$},
\] 
\eqref{e:Euler curvature g} then follows at once.
\end{proof}

\chapter{Equivalence of \texorpdfstring{$SBV$}{SBV} and classical formulations}\label{a:strong vs SBV}

In this section, we prove the density lower bound for $K$, where $(u,K)$ is a minimizer of $E_\param$, as well as
the same property for the jump set $S_u$ of a minimizer $u$ 
of the $SBV$ counterpart $\widetilde{E}_\lambda$. 
The equivalence of the weak and strong formulations of the problem then follows easily from the latter property. 
We also add one important consequence on the local compactness of bounded minimizers which will then be used to prove Theorem \ref{t:minimizers compactness}.

\section{Interior density lower bound}

This subsection aims at giving a direct simple proof of the density lower bound estimate valid for (restricted) minimizers of the functionals $E_\param$, and also for minimizers of the corresponding $SBV$ relaxed versions.
In addition, we prove such property also for minimizers of $E_\param$ subject 
to Dirichlet boundary conditions. The latter result will be crucial for the applications in section~\ref{ss:compactness through SBV}, in turn instrumental for the proof of the compactness result  Theorem~\ref{t:minimizers compactness}.

We start with proving the following decay result.
\begin{lemma}\label{l:decay}
For every $\tau>0$ there are constants $\varepsilon(\tau)>0$ and $\vartheta(\tau)\in(0,1)$ such that 
if $(u,K)$ is a (restricted) minimizer of $E_\param$ on $\Omega$ then the following property holds. 

If $\mathcal{H}^1(K\cap B_\rho(x))\leq\varepsilon\rho$ for some $x\in\Omega$ and 
$\rho\in(0,\mathrm{dist}(x,\partial\Omega))$ with $\param \in [0,1]$ and $\|g\|_\infty \leq M_0$, 
then
\begin{equation}\label{e:Eg decay}
E_0(u,K,B_{\tau\rho}(x))\leq \max\{\tau^{\sfrac 32}E_0(u,K,B_{\rho}(x)),
\textstyle{\frac{8\pi \param}\vartheta}
M_0^2\rho^2\}\,.
\end{equation}
\end{lemma}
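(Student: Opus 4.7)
Plan: The strategy is the classical De Giorgi-Carriero-Leaci compactness-contradiction argument, combined with harmonic decay in the limit. Assume the conclusion fails for some $\tau\in(0,1)$. Then, taking $\varepsilon_n = \vartheta_n = 1/n$, for each $n$ there exist a (restricted) minimizer $(K_n, u_n)$ of $E_{\lambda_n}$ on some $\Omega_n$ with $\lambda_n \in [0,1]$, $\|g_n\|_\infty \leq M_0$, a point $x_n$, and a radius $\rho_n \in (0, \dist(x_n, \partial\Omega_n))$ such that $\mathcal H^1(K_n \cap B_{\rho_n}(x_n)) \leq \rho_n/n$ while both
\[
E_0(K_n,u_n,B_{\tau\rho_n}(x_n)) > \tau^{3/2} E_0(K_n,u_n,B_{\rho_n}(x_n)) \quad \text{and} \quad E_0(K_n,u_n,B_{\tau\rho_n}(x_n)) > 8\pi n\lambda_n\|g_n\|_\infty^2 \rho_n^2
\]
hold.

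Rescaling by $(x_n, \rho_n)$ to produce $K'_n := \rho_n^{-1}(K_n-x_n)$ and $v_n(y) := \rho_n^{-1/2} u_n(x_n + \rho_n y)$, the pair $(K'_n, v_n)$ is still a (restricted) minimizer of a rescaled energy whose effective fidelity coefficient is $\alpha_n := \lambda_n \rho_n \|g_n\|_\infty^2$. The density upper bound Lemma \ref{l:upper-bound} combined with the second strict inequality above yields $\alpha_n \leq C/n \to 0$, so Assumption \ref{a:blow-up}(iv) is in force. Theorem \ref{t:minimizers compactness} then produces a subsequential limit $(K_\infty, u_\infty, \{p_{kl}\})$, a generalized minimizer of $E_0$ on $B_1$; the length convergence of item (i) together with the normalization forces $K_\infty \cap B_1 = \emptyset$, so $u_\infty$ is harmonic on $B_1$.

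Passing the rescaled decay hypothesis to the limit — using Theorem \ref{t:minimizers compactness}(i) for the convergence of $E_0(K'_n, v_n, B_\tau)$ and lower semicontinuity plus exhaustion by $B_s$ with $s<1$ for the $\liminf$ of $E_0(K'_n, v_n, B_1)$ — yields $\int_{B_\tau}|\nabla u_\infty|^2 \geq \tau^{3/2}\int_{B_1}|\nabla u_\infty|^2$. Coupling this with the standard harmonic decay $\int_{B_\tau}|\nabla u_\infty|^2 \leq \tau^2 \int_{B_1}|\nabla u_\infty|^2$ and the elementary fact $\tau^{3/2} > \tau^2$ for $\tau \in (0,1)$ forces $\int_{B_1}|\nabla u_\infty|^2 = 0$. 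Thus $u_\infty$ is constant and, again by Theorem \ref{t:minimizers compactness}(i), $E_0(K'_n, v_n, B_\tau) \to 0$. When $\limsup_n n\alpha_n > 0$ the rescaled second inequality $E_0(K'_n, v_n, B_\tau) > 8\pi n\alpha_n$ is immediately contradicted, giving the conclusion in this nondegenerate regime.

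The main obstacle is the degenerate regime $n\alpha_n \to 0$, in which the fidelity contribution becomes negligible even compared to $\vartheta_n = 1/n$ and both sides of the rescaled decay hypothesis tend to zero — the pure compactness argument is inconclusive there. To close this case I would use a quantitative finite-$n$ comparison: for $\varepsilon < (1-\tau)/2$, the coarea formula produces a good radius $r \in (\tau\rho_n, \rho_n)$ with $K_n \cap \partial B_r = \emptyset$ and $\int_{\partial B_r}|\nabla u_n|^2 \lesssim (1-\tau)^{-1}\rho_n^{-1} E_0(K_n, u_n, B_{\rho_n})$; testing minimality on $B_r$ against the harmonic extension of $u_n|_{\partial B_r}$ (modified by minimizing against the fidelity for $\lambda_n>0$ and using Lemma \ref{l:maximum} to control the $L^\infty$ norm of the extension) and invoking the harmonic decay $\int_{B_{\tau\rho_n}}|\nabla v|^2 \leq (\tau\rho_n/r)^2 \int_{B_r}|\nabla v|^2$ together with a Poincar\'e-type or cut-and-paste interpolation across the arcs of $K_n$ (in the spirit of Proposition \ref{p:lip-approx}) delivers a quantitative bound contradicting the strict decay hypothesis for $n$ large. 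This quantitative step is the technical heart of the argument.
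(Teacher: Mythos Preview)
Your contradiction setup misses the key rescaling and the correct role of the parameter $\vartheta$, and this is precisely what produces your ``degenerate regime''. The paper does \emph{not} use the standard Mumford--Shah rescaling $v_n(y)=\rho_n^{-1/2}u_n(x_n+\rho_n y)$ together with Theorem~\ref{t:minimizers compactness}. Instead one sets $E_j:=E_0(K_j,u_j,B_{\rho_j}(x_j))$ and rescales by the \emph{energy}, $v_j(x):=E_j^{-1/2}u_j(x_j+\rho_j x)$, $H_j:=\rho_j^{-1}(K_j-x_j)\cap B_1$. This normalizes the rescaled functional $F_j(H_j,v_j,B_1)=1$ exactly, while the contradiction hypothesis becomes $F_j(H_j,v_j,B_\tau)>\tau^{3/2}$. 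Now an SBV compactness argument (Ambrosio's theorem, not Theorem~\ref{t:minimizers compactness}) produces a limit $v\in W^{1,2}(B_r)$; minimality is used to show $v$ is harmonic \emph{and} that the energies converge, so one passes to the limit in the strict inequality and contradicts the $\tau^2$-decay of harmonic functions. There is no degenerate regime because the normalization keeps the energy at unit scale.

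The parameter $\vartheta$ is not just a placeholder in the second term of the max to be sent to zero: it encodes a dichotomy. One first asks whether $(K,u)$ is a $(1-\vartheta)$--quasiminimizer of $E_0$ on $B_\rho(x)$, i.e.\ whether $(1-\vartheta)E_0(K,u,B_\rho)\le E_0(J,w,B_\rho)$ for all competitors with $\|w\|_\infty\le\|g\|_\infty$. If this fails for some competitor, subtracting and using minimality of $(K,u)$ for $E_\lambda$ gives directly $E_0(K,u,B_{\tau\rho})\le \tfrac{8\pi\lambda}{\vartheta}\|g\|_\infty^2\rho^2$, i.e.\ the second alternative. If it holds, one runs the energy-normalized contradiction argument above, now with $\vartheta_j\to 0$ entering only as a harmless $o(1)$ in the minimality inequality. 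Your sketch of a ``quantitative finite-$n$ comparison'' is on the right track for producing competitors, but without the energy normalization you will never be able to pass the strict inequality to a nontrivial limit.

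Finally, note that invoking Theorem~\ref{t:minimizers compactness} here is circular: its proof uses the density lower bound (Theorem~\ref{t:dlb}), which in turn rests on this very lemma. The argument must go through Ambrosio's SBV compactness directly.
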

\begin{proof}
We distinguish the case $\param=0$ from the case $\param>0$.

\noindent{\bf Case $\param=0$.}
Assume by contradiction that for some $\tau>0$ there is a sequence of (restricted) minimizers $(u_j,K_j)$ on $\Omega$,
points $x_j\in\Omega$ and radii $\rho_j$ such that $\rho_j^{-1}\mathcal{H}^1(K_j\cap B_{\rho_j}(x_j))$ is infinitesimal 
and, on setting $E_j:=E_0 (u_j,K_j,B_{\rho_j}(x_j))$,
\[
E_0 (u_j,K_j,B_{\tau\rho_j}(x_j))> \tau^{\sfrac 32}E_j\,.
\]
Define next $v_j:B_1\to\mathbb{R}$ by $v_j(x):=E_j^{-\sfrac12}u_j(x_j+\rho_jx)$ and 
$H_j:=B_1\cap\textstyle{\frac 1{\rho_j}}(K_j-x_j)$. By scaling we get
\[
\int_{B_1\setminus H_j}|\nabla v_j|^2=E_j^{-1}\int_{B_{\rho_j}(x_j)\setminus K_j}|\nabla u_j|^2\leq 1,\qquad
\mathcal{H}^1(H_j)=\rho_j^{-1}\mathcal{H}^1(K_j\cap B_{\rho_j}(x_j))\,.
 \]
As $\mathcal{H}^1(H_j)$ is infinitesimal, for a subsequence not relabeled, by the coarea formula  
\cite[Theorem 2.93]{AFP00} and by the Mean-value theorem we may find a radius $r\in(\tau^{\sfrac14},1)$ 
(independent from $j$) such that 
\begin{equation}\label{e:raggio r}
H_j\cap\partial B_r=\emptyset\,,\qquad 
\int_{\partial B_r}|\nabla v_j|^2\leq (1-\tau^{\sfrac14})^{-1}\,.
\end{equation}
In particular, $v_j\in W^{1,2}(\partial B_r)$ with $\mathrm{osc}_{\partial B_r}v_j\leq C$ (independent 
from $j$). 
Therefore, up to subtracting the mean value of $v_j$ over $\partial B_r$ and relabeling the sequence $v_j$, 
we may conclude that $\|v_j\|_{L^\infty(\partial B_r)}\leq C$. 
In turn, scaling back to $u_j$ and using Lemma~\ref{l:maximum} yields that $\|v_j\|_{L^\infty(B_r)}\leq C$.
Define 
for an admissible pair $(w,J)$ in $B_1$ and for all open subsets $A\subseteq B_1$ 
\begin{equation}\label{e:Fj}
 F_j(w,J,A):=\int_{A\setminus J}|\nabla w|^2+\frac{\rho_j}{E_j}\mathcal{H}^1(J\cap A)\,.
\end{equation}
Then, the conditions above are rewritten as 
\begin{equation}\label{e:bounds vj}
F_j(v_j,H_j,B_1)= 1,\qquad F_j(w,J,B_1)\geq F_j(v_j,H_j,B_1)\,,
\end{equation}
for all admissible pairs $(w,J)$ such that $\{v_j\neq w\}\cup(H_j\triangle J)\subset\subset B_1$
(with the number of connected components less than that of $K_j$ in case $u_j$ is a restricted minimizer of $E_0$), and 
\begin{equation}\label{e:contra}
F_j(v_j,H_j,B_\tau)
>\tau^{\sfrac32}\,.
\end{equation}
Moreover, $v_j\in SBV(B_1)$ with $S_{v_j}\subseteq H_j$ being $v_j$ harmonic on $B_1\setminus H_j$.
Hence, thanks to the first condition in \eqref{e:bounds vj}, and the $L^\infty(B_r)$ bound on $v_j$, Ambrosio's compactness theorem \cite[Theorem 4.8]{AFP00} implies 
that there exists a subsequence (not relabeled) and a function $v\in SBV(B_r)$ such that $v_j$ converge to 
$v$ in $L^2(B_r)$, and that for all open subsets $A\subseteq B_r$
\begin{equation}\label{e:v Sobolev}
\mathcal{H}^1(S_v\cap A)\leq
\liminf_j\mathcal{H}^1(S_{v_j}\cap A)=0,\quad
\int_A|\nabla v|^2\leq\liminf_j\int_A|\nabla v_j|^2\leq 1\,.
\end{equation}
In particular, $v\in W^{1,2}(B_r)$. Actually, we claim that $v$ turns out to be harmonic on $B_r$ and
satisfying for all $s\in(0,r)$
\begin{equation}\label{e:energy convergence v_j}
\lim_jF_j(v_j,H_j,B_s)=
\lim_j\int_{B_s}|\nabla v_j|^2=\int_{B_s}|\nabla v|^2\,.
\end{equation}
Given this for granted, we conclude as follows: on one hand from \eqref{e:contra} and the energy convergence 
in \eqref{e:energy convergence v_j} and being $\tau<r^4<r<1$, we infer that 
\begin{equation}\label{e:contradiction tau}
\int_{B_\tau}|\nabla v|^2\geq\tau^{\sfrac32}\,;
\end{equation}
but on the other hand being $v$ harmonic on $B_r$ and using the first condition in \eqref{e:bounds vj}, 
we conclude that
\[
\int_{B_\tau}|\nabla v|^2\leq\frac{\tau^2}{r^2}
\int_{B_r}|\nabla v|^2\leq\frac{\tau^2}{r^2}\,,
\] 
therefore we get a contradiction recalling that $\tau<r^4<r<1$.

We finally establish \eqref{e:energy convergence v_j} together with the harmonicity of $v$ on $B_r$. 
To this aim let $w\in W^{1,2}(B_r)$ with $\{v\neq w\}\subset\subset B_s$, $s\in(0,r)$. 
Let $s<t\in(0,r)$ and $\varphi\in C_c^\infty(B_t)$ be such that $\varphi=1$ on $B_s$.
Define functions $\zeta_j=\varphi w+(1-\varphi) v_j$ and sets $J_j$ by 
$J_j\cap B_s=\emptyset$, $J_j\cap(B_t\setminus\overline{B_s})=H_j\cap(B_t\setminus\overline{B_s})$.
Then, $(\zeta_j,J_j)$ is a pair to test the (restricted) minimality of $(v_j,H_j)$ for $F_j$
(note that the number of connected components of $J_j$ is less than that of $H_j$, i.e. of $K_j$).
The locality of the energy leads to
\begin{align*}
F_j&(v_j,H_j,B_s)\leq F_j(v_j,H_j,B_t)
\leq F_j(\zeta_j,J_j,B_t)\\
&\leq F_j(w,\emptyset,B_s)
+C\,F_j(w,H_j,B_t\setminus\overline{B_s})+C\,F_j(v_j,H_j,B_t\setminus\overline{B_s})
+\frac C{(t-s)^2}\int_{B_t\setminus\overline{B_s}}|v_j-w|^2\\
&\leq\int_{B_s}|\nabla w|^2+C\int_{B_t\setminus\overline{B_s}}|\nabla v|^2
+ C F_j(v_j,H_j,B_t\setminus\overline{B_s})
+\frac C{(t-s)^2}\int_{B_t\setminus\overline{B_s}}|v_j-v|^2\,.
\end{align*}
As the sequence of Radon measures $(F_j(v_j,H_j,\cdot))_{j\in\N}$
is equi-bounded in mass on $B_1$ in view of \eqref{e:bounds vj}, 
it converges to some Radon measure $\mu$ on $B_1$ up to a subsequence not relabeled for convenience. 
Assume that $\mu(\partial B_s)=\mu(\partial B_t)=0$, by passing to the limit as 
$j\uparrow\infty$ and by Ambrosio's lower semicontinuity result we find
\begin{align*}
\int_{B_s}|\nabla v|^2&\leq\liminf_jF_j(v_j,H_j,B_s)\leq
\limsup_jF_j(v_j,H_j,B_s)\\
&\leq\int_{B_s}|\nabla w|^2+C\int_{B_t\setminus\overline{B_s}}|\nabla v|^2
+C\,\mu(B_t\setminus\overline{B_s})\,.
\end{align*}
Thus, by letting $t\downarrow s^+$ along values satisfying $\mu(\partial B_t)=0$ 
we conclude that for all but a countable set of radii in $(0,r)$ we have
\begin{equation}\label{e:harmonicity}
\int_{B_s}|\nabla v|^2\leq\liminf_jF_j(v_j,H_j,B_s)\leq
\limsup_jF_j(v_j,H_j,B_s)\leq\int_{B_s}|\nabla w|^2\,.
\end{equation}
Moreover, the latter inequality is extended to all radii $s\in(0,r)$ as the Dirichlet energies 
of $v$ and of $w$, as set functions, are the trace of a Radon measure on open sets.

Eventually, equality \eqref{e:energy convergence v_j} follows by taking $w=v$ in \eqref{e:harmonicity}, and by taking into account \eqref{e:v Sobolev}.
\medskip
 
\noindent{\bf Case $\param>0$.}
The proof of the general case needs a further argument in addition to those used for $\param=0$. We fix $\param \in (0,1]$ and 
we claim that 
\begin{equation}\label{e:decay E}
E_0(u,K,B_{\tau\rho}(x))\leq \tau^{\sfrac 32}E_0(u,K,B_{\rho}(x)),
\end{equation}
if in addition
\begin{equation}\label{e:additional}
(1-\vartheta)E_0(u,K,B_\rho(x))\leq E_0 (w,J,B_\rho(x))
\end{equation}
for all $(w,J)$ with $\{u\neq w\}\cup(K\triangle J)\subset\subset B_\rho(x)$ and $\|w\|_{\infty}\leq M_0$
(where $\vartheta(\tau)\in(0,1)$ is the parameter in the statement).
In case the condition in \eqref{e:additional} is not satisfied for some admissible pair $(w,J)$ with $\{u\neq w\}\cup(K\triangle J)\subset\subset B_\rho$  and $\|w\|_{\infty}\leq M_0
$, then we have
\[
E_0(u,K,B_\rho(x))<\textstyle{\frac1\vartheta}(E_0(u,K,B_\rho(x))-E_0(w,J,B_\rho(x)))\,.
\]
Using that $(u,K)$ is a (restricted) minimizer of $E_\param$ on $\Omega$ and that
$\|w\|_{\infty}\leq M_0
$, we get from the latter inequality
\begin{align*}
E_0&(u,K,B_{\tau\rho}(x)) \leq E_0(u,K,B_\rho(x))
\leq {\textstyle{\frac1\vartheta}} (E_0 (u,K, B_\rho (x)) -
E_0 (w,J, B_\rho (x))\\
&\leq {\textstyle{\frac1\vartheta}} (E_\param (u,K, B_\rho (x)) -
E_\param (w,J, B_\rho (x))) + {\textstyle{\frac\param\vartheta}} 
\int_{B_{\rho} (x)} |w-g|^2
\leq\textstyle{\frac{8\pi\param}\vartheta} M_0^2
\rho^2\,.
\end{align*}
Thus, to conclude \eqref{e:Eg decay} we are left with proving \eqref{e:decay E} 
under the further condition \eqref{e:additional}. 
In what follows we highlight only the necessary changes in the argument used 
for the case $\param=0$.
We argue by contradiction and consider $\tau>0$ for which we can find (restricted) minimizers $(u_j,K_j)$ of 
$E_{\param_j}(\cdot,\cdot,\Omega,g_j)$, with $\param_j\in[0,1]$ and $\|g_j\|_\infty\leq M_0$ (cf. Assumption \ref{a:blow-up}), and infinitesimal sequences $\varepsilon_j$ and $\vartheta_j$, points $x_j\in\Omega$ and 
radii $\rho_j$ so that $\mathcal{H}^1(K_j\cap B_{\rho_j}(x_j))\leq\varepsilon_j\rho_j$ and, on setting $E_j:=E_0 (u_j,K_j,B_{\rho_j}(x_j))$, we have
\begin{equation}\label{e:quasi min}
(1-\vartheta_j)E_j 
\leq E_0(w,J,B_{\rho_j}(x_j)),\qquad E_0 (u_j,K_j,B_{\tau\rho_j}(x_j))
> \tau^{\sfrac 32}E_j\,.
\end{equation}
for all $(w,J)$ with $\{u_j\neq w\}\cup(K_j\triangle J)\subset\subset B_{\rho_j}(x_j)$ and $\|w\|_{\infty} \leq M_0
$.
Consider next the functions $v_j(x):=E_j^{-\sfrac12}u_j(x_j+\rho_j x)$, the sets 
$H_j:=B_1\cap\textstyle{\frac1{\rho_j}}(K_j-x_j)$, and the functionals $F_j$ as defined in \eqref{e:Fj}.
By rescaling \eqref{e:quasi min}
\begin{equation}\label{e:quasi min vj}
F_j(v_j,H_j,B_1)
=1\,,\qquad 1-\vartheta_j\leq F_j(w,J,B_1)
\end{equation}
for all $(w,J)$ with $\{v_j\neq w\}\cup(H_j\triangle J)\subset\subset B_1$ and 
$\|w\|_{\infty} \leq E_j^{-\sfrac12}M_0$, and moreover
\begin{equation}\label{e:contra bis}
F_j(v_j,H_j,B_\tau)
> \tau^{\sfrac 32}\,,
\end{equation}
(cf. with \eqref{e:bounds vj} and \eqref{e:contra}). 
Note that $E_j$ is actually infinitesimal by the energy upper bound 
in Lemma~\ref{l:upper-bound} and $\rho_j\to 0$ as $j\to\infty$.
Thus, we may find a radius $r\in(\tau^{\sfrac14},1)$ such that \eqref{e:raggio r} hold, 
$v_j\in W^{1,2}(\partial B_r)$ with $\mathrm{osc}_{\partial B_r}v_j\leq C$, and, up to subtracting the mean value 
of $v_j$ over $\partial B_r$ and relabeling the sequence $v_j$, we infer that $\|v_j\|_{L^\infty(\partial B_r)}\leq C$. 
Consider next the function
\[
\tilde{v}_j:=\begin{cases}
              \min\{\|v_j\|_{L^\infty(\partial B_r)},\max\{v_j,-\|v_j\|_{L^\infty(\partial B_r)}\}\} & B_r \cr
              v_j & B_1\setminus B_r
             \end{cases}
\]
Thus, $\tilde{v}_j\in SBV(B_1)$ with  $\mathcal{H}^1(S_{\tilde{v}_j}\setminus S_{v_j})=0$. 
In particular, $\mathcal{H}^1(S_{\tilde{v}_j})\leq\mathcal{H}^1(S_{v_j})\leq \mathcal{H}^1(H_j)$.
Ambrosio's compactness theorem implies that there exists a subsequence (not relabeled) and a function $v\in SBV(B_r)$
such that $\tilde{v}_j$ converge to $v$ in $L^2(B_r)$, and that for all open subsets $A\subseteq B_r$
\[
\mathcal{H}^1(S_v\cap A)\leq
\liminf_j\mathcal{H}^1(S_{\tilde{v}_j}\cap A)=0,\quad
\int_A|\nabla v|^2\leq\liminf_j\int_A|\nabla \tilde{v}_j|^2\leq 1\,.
\]
Therefore, $v\in W^{1,2}(B_r)$. 
Moreover, for every $t\in(0,r)$ we have
\begin{equation}\label{e:energy v_j tildev_j}
\lim_j\Big(\int_{B_t}|\nabla v_j|^2-\int_{B_t}|\nabla \tilde{v}_j|^2\Big)=0\,.
\end{equation}
Indeed, on one hand being $\tilde{v}_j$ a truncation of $v_j$, we have for every Borel subset $B$ of $B_1$
\[
\int_B|\nabla\tilde{v}_j|^2\leq \int_B|\nabla v_j|^2\,.
\]
In turn, from this and \eqref{e:quasi min vj} applied to the pair $(\tilde{v}_j,H_j)$,
we deduce that for all $t\in(0,r)$  
\[
\int_{B_t}|\nabla v_j|^2\leq \int_{B_t}|\nabla\tilde{v}_j|^2+\vartheta_j\,.
\]
Therefore, we conclude \eqref{e:energy v_j tildev_j}.
In turn, arguing as for the case $\param=0$, through \eqref{e:energy v_j tildev_j} 
one can prove that for all $s\in(0,r)$
\[
\lim_jF_j(v_j,H_j,B_s)=
\lim_j\int_{B_s}|\nabla v_j|^2=\int_{B_s}|\nabla v|^2
\] 
together with the fact that $v$ is harmonic on $B_r$ by exploiting the (restricted) almost minimality of $v_j$ for $F_j$ on $B_1$, namely the second condition in \eqref{e:quasi min vj}. Eventually, the final contradiction follows again from \eqref{e:contra bis}.
\end{proof}
An immediate consequence of Lemma~\ref{l:decay} are the density lower bound estimates for (restricted) minimizers 
of $E_\param$. We follow the argument in \cite[Theorem~7.21]{AFP00}. 

\begin{proof}[Proof of Theorem~\ref{t:dlb}] We establish the estimates separately in the two cases $\param=0$ and $\param >0$. 
In both cases we denote by $\epsilon:=\varepsilon(\sfrac12)$ the parameter 
in Lemma~\ref{l:decay} having fixed $\tau=\sfrac12$.

\noindent{\bf Case $\param=0$.}
Let $\sigma>0$ be such that $2\pi\sigma^{\sfrac12}<\epsilon$, and consider the set 
\[
\Omega_u:=\{x\in\Omega:\,\mathcal{H}^1(K\cap B_\rho(x))<\varepsilon(\sigma)\rho\textrm{ for some $\rho\in(0,\mathrm{dist}(x,\partial\Omega))$}\}.
\]
Note that if $x\in\Omega_u$ and $\rho\in(0,\mathrm{dist}(x,\partial\Omega))$ is the corresponding radius, then 
$\mathcal{H}^1(K\cap B_\rho(x))<(1-\mu)\varepsilon(\sigma)\rho$ for some $\mu\in(0,1)$. 
It is then easy to check that $B_{(1-\sfrac\mu2)\rho}(x)\subset\Omega_u$, 
so that $\Omega_u$ is an open subset of $\Omega$.

We claim that if $x\in\Omega_u$, namely
$\mathcal{H}^1(K\cap B_\rho(x))<\varepsilon(\sigma)\rho$ for some $\rho\in(0,\mathrm{dist}(x,\partial\Omega))$, 
then for all $j\in\mathbb{N}$
\begin{equation}\label{e:decay iteration}
 E_0(u,K,B_{\sigma 2^{-j}\rho}(x))\leq\epsilon 2^{-\sfrac j2}\,(\sigma 2^{-j}\rho)\,.
\end{equation}
In particular, given this for granted, we get 
\[
 \lim_{j\to\infty}\frac{E_0(u,K,B_{\sigma 2^{-j}\rho}(x))}{\sigma2^{1-j}\rho}=0,
\]
so that $x\notin K_1$, where $K_1:=\{x\in K: \lim_r\textstyle{\frac{1}{2r}}\mathcal{H}^1(K\cap B_r(x))=1\}$.
As the rectifiability of $K$ implies that $K_1$ is dense in $K$, we conclude that 
$K=\overline{K_1}\subseteq\Omega\setminus\Omega_u$, being $\Omega_u$ open.
On the other hand, $u$ is harmonic on $\Omega\setminus K$, so that $\Omega\setminus K\subseteq \Omega_u$, and 
\eqref{e:dlbE} thus follows at once.

Let us now prove \eqref{e:decay iteration} by an induction argument. Indeed, the case $j=0$ follows immediately by from Lemma~\ref{l:decay} (applied with $\tau=\sigma$), the energy upper bound and the choice of $\sigma$
\[
E_0(u,K,B_{\sigma\rho}(x))\leq \sigma^{\sfrac32} E_0(u,K,B_\rho(x))\leq 2\pi\sigma^{\sfrac32}\rho<\epsilon\sigma\rho\,.
\]
Assume now that \eqref{e:decay iteration} is true for some $j\in\mathbb{N}$, we show then that it holds for 
$j+1$. Indeed, in such a case Lemma~\ref{l:decay} (applied with $\tau=\sfrac12$) and the induction assumption imply that 
\[
E_0(u,K,B_{\sigma 2^{-j-1}\rho}(x))\leq 2^{-\sfrac32} E_0(u,K,B_{\sigma 2^{-j}\rho}(x))\leq
\epsilon 2^{-\sfrac{(j+1)}2}\,(\sigma 2^{-j-1}\rho)\,.
\]
\medskip

\noindent{\bf Case $\param>0$.}
The proof is very similar to that in the case $\param=0$, we highlight only the necessary changes.
Moreover, we will prove the estimate, without loss of generality, for radii smaller than some suitably chosen constant $R$.

Choose $\sigma\in(0,1)$ such that 
$2\pi\sigma^{\sfrac12}<\textstyle{\frac\epsilon2}$, and $R>0$ such that  
$\textstyle{\frac{8\pi\param}{\vartheta(\sfrac12)}}M_0^2
R<\epsilon\sigma$.
Note that, since $\|g\|_\infty \leq M_0$, $\param\leq 1$ and the parameter $\vartheta(\sfrac12)$ 
is fixed from Lemma~\ref{l:decay}, $R$ can be indeed chosen to be an absolute constant. 

We claim that if $\mathcal{H}^1(K\cap B_\rho(x))<\varepsilon(\sigma)\rho$ for some 
$\rho\in(0,\min\{R,\mathrm{dist}(x,\partial\Omega)\})$, 
then \eqref{e:decay iteration} holds true. Indeed, if $j=0$, Lemma~\ref{l:decay}, the energy upper bound in Lemma~\ref{l:upper-bound}, and the choices of $\sigma$ and $R$ give
\begin{align*}
 E_0(u,K,B_{\sigma\rho}(x))&\leq \max\{\sigma^{\sfrac32}E_0(u,K,B_{\rho}(x)),
\textstyle{\frac{8\pi\param}{\vartheta(\sfrac12)}}M_0^2
\rho^2\}\\
&\leq \max\{\sigma^{\sfrac32}E_\param(u,K,B_{\rho}(x)),
\textstyle{\frac{8\pi\param}{\vartheta(\sfrac12)}}M_0^2
R\rho\}\\
&\leq\max\{\sigma^{\sfrac32}2\pi(1+\param\|g\|_\infty^2R),
\textstyle{\frac{8\pi\param}{\vartheta(\sfrac12)}}M_0^2
R\}\rho<\epsilon\sigma\rho\,.
\end{align*}
Assume now that \eqref{e:decay iteration} is true for some $j\in\mathbb{N}$, we show that then it holds for 
$j+1$. Indeed, in such a case the induction assumption, Lemma~\ref{l:decay} and the choices of $\sigma$ 
and $R$ imply that 
\begin{align*}
E_0(u,K,B_{\sigma 2^{-j-1}\rho}(x))&\leq \max\{2^{-\sfrac32}E_0(u,K,B_{\sigma 2^{-j}\rho}(x)),
\textstyle{\frac{8\pi\param}{\vartheta(\sfrac12)}}M_0^2
(\sigma2^{-j}\rho)^2\}\\
&\leq\max\{2^{-\sfrac32} 2^{-\sfrac j2}\,(\sigma 2^{-j}\rho),
\textstyle{\frac{8\pi\param}{\vartheta(\sfrac12)}}M_0^2
R(\sigma 2^{-j})^2\rho\}\\
&\leq\max\{\epsilon 2^{-\sfrac{(j+1)}2}\,(\sigma 2^{-j-1}\rho),
\epsilon\sigma(\sigma 2^{-j})^2\rho\}\leq\epsilon 2^{-\sfrac{(j+1)}2}\,(\sigma 2^{-j-1}\rho)\,.
\end{align*}
Therefore, \eqref{e:decay iteration} holds. Set now 
\[
\Omega_u:=\{x\in\Omega:\,\mathcal{H}^1(K\cap B_\rho(x))<\varepsilon(\sigma)\rho
\textrm{ for some $\rho\in(0,\min\{R,\mathrm{dist}(x,\partial\Omega))\}$}\}\,.
\]
It is easy to check that $\Omega_u$ is an open subset of $\Omega$, that together with 
\eqref{e:decay iteration} give $K\subseteq\Omega\setminus\Omega_u$.
Eventually, by taking into account that $u\in W^{2,p}_{{\rm loc}}(\Omega\setminus K)$ by elliptic regularity we conclude \eqref{e:dlbEg}. 
\end{proof}
Recall that the density lower bound in Theorem~\ref{t:dlb} together with the energy upper bound in Lemma~\ref{l:upper-bound} establish the Ahlfors-David regularity of $K$ (cf. \eqref{e:Ahlfors-David regularity}).

\section{Boundary density lower bound}

We prove next similar results for minimizers of $E_\param$ subject to 
Dirichlet boundary conditions. This fact has been first established by Carriero and 
Leaci for $\param=0$ in \cite{CL90} under slightly more general assumptions on the reference set $U$. 
\index{boundary density lower bound@boundary density lower bound}
\index{density lower bound, boundary@density lower bound, boundary}
More precisely, given a bounded open set $U$ with $C^1$ boundary, a closed subset 
$\Sigma$ of $\partial U$ with $\mathcal{H}^1(\Sigma)=0$, a boundary datum $w\in C^1\cap L^\infty
(\mathbb{R}^2\setminus \Sigma)$, we assume that there exists $(u,K)$ minimizing $E_\param(\cdot,\cdot,\overline{U},g)$ over the set of admissible pairs $(v,J)$, $J$ closed and $1$-rectifiable in $\overline{U}$ and $v\in C^1(U\setminus J)\cap C^0(\overline{U}\setminus(\Sigma\cup J))$, such that
\begin{equation}\label{e:Dirichlet pb}
v=w \text{ on }\partial U\setminus(J\cup \Sigma)\,.
\end{equation}
In fact, the existence of such an optimal pair can be inferred from the analogous density lower bound property for minimizers of the weak formulation of the problem, namely for minimizers of $\widetilde{E}_\param$ (the relaxed counterpart of $E_\param$, cf. \eqref{e:tilde Eg}) satisfying the Dirichlet boundary condition, as will be discussed in Section~\ref{ss:dlb SBV}. In particular, the $L^\infty$ assumption on $w$ is needed to show the existence of a minimizer for $\widetilde{E}_\param$. 

We start fixing some notation. For any admissible pair $(v,J)$ satisfying \eqref{e:Dirichlet pb} we
extend $v$ to $\R^2\setminus U$ by $w$ and, for simplicity, denote such an extension still by $v$.
With this agreement, the quantity 
$E_\param(v,J,B_r(x),g)$ is well defined for every $x\in\partial U$ and $r>0$.

Being $U$ bounded and open with $C^1$ boundary, for every $x\in\partial U$ there 
is a radius $\rho_x>0$ such that $\partial U\cap B_{\rho_x}(x)$ is the graph of a function 
$\varphi_x\in C^1(\mathbb{R})$ is some system of coordinates. 
In case $x\in\partial U\setminus \Sigma$ it is not restrictive 
to assume that $B_{\rho_x}(x)\cap \Sigma=\emptyset$, being $\Sigma$ closed.
Finally, fixed $x\in\partial U$, up to rigid motions, we may always reduce 
to the following situation: $\varphi_x(0)=\varphi_x'(0)=0$, and
\[
U\cap B_{\rho_x}(x)=x+\{y=(y_1,y_2)\in B_{\rho_x}:\, y_2>\varphi_x(y_1)\}\,.
\] 
We prove first a decay result for the energy $E_0$ at boundary points, analogous to the interior case treated in Lemma~\ref{l:decay}. We keep the notation introduced there and outline only the necessary changes in the argument.
\begin{lemma}\label{l:decay Dirichlet}
For every $\tau>0$ and $M_1>0$ there are constants $\varepsilon(\tau,M_1)>0$ and $\vartheta(\tau,M_1)\in(0,1)$ such that if $(u,K)$ is a minimizer of $E_\lambda(\cdot,\cdot,\overline{U},g)$ subject to the Dirichlet boundary condition \eqref{e:Dirichlet pb}, then the following property holds. 

If $\mathcal{H}^1(K\cap B_\rho(x))\leq\varepsilon\rho$ and $\|\nabla w\|_{L^\infty(B_\rho(x))}\leq M_1$
for some $x\in\partial U\setminus \Sigma$ and $\rho\in(0,\min\{1,\rho_x\})$, with $\param \in [0,1]$ and $\|g\|_\infty \leq M_0$, then
\begin{align}\label{e:Eg decay Dirichlet}
E_0&(u,K,B_{\tau\rho}(x))\notag\\
&\leq \max\{\tau^{\sfrac 32}E_0(u,K,B_{\rho}(x)),
\textstyle{\frac{\tau^{\sfrac32}}\vartheta}
\|\nabla w\|^2_{L^\infty(B_{\rho}(x))},
\textstyle{\frac{\tau^{\sfrac32}}\vartheta}
\|\varphi_x'\|_{L^\infty((-\sfrac{\rho}{2},\sfrac{\rho}{2}))},
\textstyle{\frac{8\pi \param}\vartheta}M_0^2\rho^2\}\,.
\end{align}
\end{lemma}
\begin{proof}
\noindent{\bf Case $\param=0$.}
Fixed $x\in\partial U\setminus \Sigma$, we proceed with the same proof of 
Lemma~\ref{l:decay} up to \eqref{e:energy convergence v_j}, for 
which now we claim that $v$ minimizes the Dirichlet energy on the set 
$v+W^{1,2}_0(B_r\cap\{x_n>0\})$. Clearly, the claim implies that $v$ is harmonic on 
$B_r\cap\{x_n>0\}$. 
To prove such a claim, note first that $v$ is constant on $B_r\cap\{x_n<0\}$. 
Indeed, besides the conditions in \eqref{e:bounds vj} and \eqref{e:contra}, contradicting
\eqref{e:Eg decay Dirichlet} gives also
\begin{equation}\label{e:contra wj}
\tau^{\sfrac32}\|\nabla w_j\|^2_{L^\infty(B_{\rho_j}(x_j))}
+\tau^{\sfrac32}\|\varphi_{x_j}'\|_{L^\infty((-\sfrac{\rho_j}2,\sfrac{\rho_j}2))}
\leq E_0(u_j,K_j,B_{\tau\rho_j}(x_j))\vartheta_j
\leq E_j\vartheta_j\,,
\end{equation}
where we recall that $E_j=E_0(u_j,K_j,B_{\rho_j}(x_j))$. Note that
$E_j\leq C\rho_j$, for some constant independent from $j$, using 
$\|\nabla w_j\|_{L^\infty(B_{\rho_j}(x_j))}\leq M_1$ and arguing as for the energy upper bound. 
Thus, since $v_j=E_j^{-\sfrac12}w_j(x_j+\rho_j\cdot)$ on $B_1\setminus \frac{U-x_j}{\rho_j}$, 
$\rho_j\leq1$ and \eqref{e:contra wj} yield that $\|\nabla v_j\|_{L^\infty(B_1\setminus\frac{U-x_j}{\vartheta_j})}=O(\vartheta_j)\to 0$ as $j\to\infty$. 
On the other hand, $B_1\setminus\frac{U-x_j}{\rho_j}=\{z\in B_1:\,z_2\leq\rho_j^{-1}\varphi_{x_j}(\rho_jz_1)\}$, so that $\varphi_{x_j}(0)=0$ and \eqref{e:contra wj} imply that 
$\rho_j^{-1}\varphi_{x_j}(\rho_j\cdot)$ converges uniformly to $0$ on $(-\sfrac12,\sfrac12)$, and thus we conclude that $v$ is constant on $B_1\cap\{z_2<0\}$.

Given any function $w\in v+C^\infty_c(B_r\cap\{x_n>0\})$ with 
$\{w\neq v\}\subset\subset B_s\cap\{x_n\geq\frac1i\}=:B_{s,i}$, $s\in(0,r)$ 
and $i\in\mathbb{N}$ such that $i>s^{-1}$, consider $\varphi\in C_c^\infty(B_{t,i+1})$,
$t\in(s,r)$, with $\varphi=1$ on $B_{s,i}$.
Note that $B_{s,i}\subset\subset B_{t,i+1}\subset\subset\frac{U-x_j}{\rho_j}$ for $j$ sufficiently large,
therefore the pair $(\zeta_j,J_j)$ defined by $\zeta_j:=\varphi w+(1-\varphi)v_j$, $J_i\cap B_{s,i}:=\emptyset$ and $J_j\cap (B_r\setminus B_{s,i})=H_j\cap(B_r\setminus B_{s,i})$, is admissible  
to test the minimality of $v_j$ for $F_j$.
Thus, inequalities \eqref{e:energy convergence v_j} and \eqref{e:harmonicity} for 
$w\in v+C^\infty_c(B_r\cap\{x_n>0\})$ follows by arguing as in Lemma~\ref{l:decay}. 
A density argument concludes the proof for every $w\in v+W^{1,2}_0(B_r\cap\{x_n>0\})$. 
Finally, being $v\in W^{1,2}(B_r)$, harmonic on $B_r\cap\{x_n>0\}$ and equal to a constant $C$ on $B_r\cap\{x_n<0\}$, $v\in C^0(B_r)$ by elliptic regularity. Therefore, the odd extension $\tilde{v}$ of $v-C$ across $B_r\cap\{x_n=0\}$ is an harmonic function on $B_r$ satisfying 
\[
2\tau^{\sfrac32}\leq2\int_{B_\tau\cap\{x_n>0\}}|\nabla v|^2=
\int_{B_\tau}|\nabla \tilde{v}|^2\leq2\frac{\tau^2}{r^2}\int_{B_r}|\nabla v|^2\leq 2\frac{\tau^2}{r^2}\,,
\]
thanks to \eqref{e:bounds vj}, \eqref{e:contra} and \eqref{e:energy convergence v_j}
(cf. \eqref{e:contradiction tau}). The contradiction follows by the choice $\tau<r^4<r<1$. 
\medskip

\noindent{\bf Case $\param>0$.} The proof in this case is completely analogous to that in 
Lemma~\ref{l:decay}. The necessary changes are related to the arguments outlined above for $\param=0$.
\end{proof}
We are now ready to establish the density lower bound for minimizers of $E_\param$ with Dirichlet 
boundary conditions.
\begin{theorem}\label{t:dlb Dirichlet}
There exists a geometric constant $\epsilon>0$ with the following property.
If $(u,K)$ is a minimizer of $E_\lambda(\cdot,\cdot,\overline{U},g)$ 
with Dirichlet boundary conditions as in \eqref{e:Dirichlet pb}, then 
\begin{equation}\label{e:dlbEg Dirichlet}
\mathcal{H}^1(K\cap B_{\rho_x}(x))\geq\epsilon\rho\qquad
\text{for all $x\in K\cap(\partial U\setminus\Sigma)$, $\rho\in(0,\min\{1,\rho_x\})$\,.}
\end{equation}
\end{theorem}
\begin{proof}
We follow the proof of Theorem~\ref{t:dlb}. If $\param=0$, let 
$x\in K\cap(\partial U\setminus\Sigma)$ 
and choose $\sigma>0$ such that 
\[
\max\{\sigma^{\sfrac 32}E_0(u,K,B_{\rho}(x)),
\textstyle{\frac{\sigma^{\sfrac32}}{\vartheta(\sfrac12)}}
\|\nabla w\|^2_{L^\infty(B_{\rho_x}(x))},
\textstyle{\frac{\sigma^{\sfrac32}}{\vartheta(\sfrac12)}}
\|\varphi_x'\|_{L^\infty((-\sfrac{\rho_x}{2},\sfrac{\rho_x}{2}))}\}<\epsilon\sigma\rho\,,
\]
where $\epsilon:=\varepsilon(\sfrac12)$. To this aim, note that arguing as in the energy 
upper bound, there is a constant $C>0$ depending on $U$ and 
$\|\nabla w\|_{L^\infty(B_{\rho_x}(x))}$ such that $E_0(u,K,B_{\rho}(x))\leq C\rho_x$.
Define the set
\[
\widehat{\Omega}_u:=\{x\in\partial U\setminus\Sigma:\,\mathcal{H}^1(K\cap B_\rho(x))<\varepsilon(\sigma)\rho\quad
\text{ for some }\rho\in(0,\min\{1,\rho_x\})\}\,
\]
and note that $\widehat{\Omega}_u$ is relatively open in $\partial U\setminus\Sigma$, and 
$K\cap(\partial U\setminus\Sigma)\subset\partial U\setminus \widehat{\Omega}_u$. 
Indeed, if $x\in\widehat{\Omega}_u$ one can use Lemma~\ref{l:decay Dirichlet} inductively with $M_1:=\|\nabla w\|_{L^\infty(B_{\rho_x}(x))}$. The first induction step is a consequence of the definition of $\widehat{\Omega}_u$ itself (Lemma~\ref{l:decay Dirichlet} is applied with $\tau=\sigma$), the others of the choice of $\sigma$ 
(Lemma~\ref{l:decay Dirichlet} is applied with $\tau=\sfrac12$). In addition, 
$\partial U\setminus(\Sigma\cup K)\subset\widehat{\Omega}_u$ by elliptic regularity.

Finally, the proof of the case $\lambda>0$ follows analogously by mimicking that  
in Theorem~\ref{t:dlb} for $\lambda>0$ using Lemma~\ref{l:decay Dirichlet} in place of 
Lemma~\ref{l:decay}. 
\end{proof}

\section{Density lower bounds for \texorpdfstring{$SBV$}{SBV} minimizers}\label{ss:dlb SBV}

Results similar to those in the previous section hold for minimizers of the corresponding weak formulations of the problem.
To this aim we define for every $\param\geq 0$, for every Borel subset $B$ of $\Omega$, and 
for every $v\in SBV_{{\rm loc}}(\Omega)$ the (formal) extensions of the functionals $E_\param$ as follows:
\index{Mumford Shah energy SBV@ Mumford Shah energy SBV}\index[simb]{aalE_p@$\widetilde{E}_\param$}\index[simb]{aalE_0@$\widetilde{E}_0$}
\begin{equation}\label{e:tilde Eg}
\widetilde{E}_\param (v,B):=\int_B|\nabla v|^2+\mathcal{H}^1(S_v\cap B)+\param \int_B|v-g|^2\,.
\end{equation}
We say that a function $u\in SBV(\Omega)$ is an (absolute) minimizer of $\widetilde{E}_\param$ 
on $\Omega$, if $\widetilde{E}_\param(u,A)<\infty$ and $\widetilde{E}_\param(u,A)\leq \widetilde{E}_\param(v,A)$ for every $v\in SBV_{{\rm loc}}(\Omega)$ with $\{v\neq u\}\subset\subset A\subset\subset\Omega$, $A$ open. We warn the reader that in literature the terminology local 
minimizer is rather used (cf. \cite{AFP00}), we employ the chosen one for the sake of consistency with Definition~\ref{d:definitions}. 

Dirichlet boundary value problems in the $SBV$ setting are introduced as follows: 
if $U$ is a bounded open set with $C^1$ boundary in $\Omega$, and if $w$ is the boundary datum in \eqref{e:Dirichlet pb}, we consider any function $\widetilde{w}$ in 
$W^{1,1}\cap L^\infty(\mathbb{R}^2\setminus U)$ extending $w$.  
A function $u\in SBV(\mathbb{R}^2)$ such that $u=\widetilde{w}$ $\mathcal{L}^2$-a.e. 
on $\mathbb{R}^2\setminus\overline{U}$ is an absolute minimizer of $\widetilde{E}_\param$ 
on $U$ subject to Dirichlet boundary conditions if
\[
\widetilde{E}_\param(u,\overline{U})=\inf\{\widetilde{E}_\param(v,\overline{U}):\,
v\in SBV(\mathbb{R}^2),\,v=\widetilde{w}\, \text{ $\mathcal{L}^2$-a.e. on }
\mathbb{R}^2\setminus\overline{U}\}\,,
\]
which is clearly equivalent to minimizing
\[
\int_U|\nabla v|^2+\mathcal{H}^1(S_v)+\param\int_U|v-g|^2
\]
on the same class of functions.
The same arguments in Lemma~\ref{l:decay} and Theorem~\ref{t:dlb} can be used to infer 
corresponding density lower bound estimates.
\begin{theorem}\label{t:dlb SBV}
Let $u\in SBV(\Omega)$. There exists a constant $\epsilon>0$ such that 
\begin{itemize}
 \item[(a)] 
 if $u$ is an absolute minimizer of 
 $\widetilde{E}_0$ on $\Omega$, then for all $x\in \overline{S_u}$
\begin{equation}\label{e:dlb tilde E}
\mathcal{H}^1(\overline{S_u}\cap B_\rho(x))\geq\epsilon\rho\qquad
\text{for all $\rho\in(0,\mathrm{dist}(x,\partial\Omega))$.}
\end{equation}
\item[(b)] 
if $u$ is an absolute minimizer of 
$\widetilde{E}_\param$ on $\Omega$, $\param>0$, then for all $x\in \overline{S_u}$
\begin{equation}\label{e:dlb tilde Eg}
\mathcal{H}^1(\overline{S_u}\cap B_\rho(x))\geq\epsilon\rho\qquad
\text{for all $\rho\in(0,\min\{1,\mathrm{dist}(x,\partial\Omega)\})$.}
\end{equation}
\item[(c)] 
if $u$ is an absolute minimizer of $\widetilde{E}_\param$ on $U$ with Dirichlet boundary conditions, 
$\param\geq0$, with $u=\widetilde{w}$ $\mathcal{L}^2$-a.e. on $\mathbb{R}^2\setminus\overline{U}$, 
then for all $x\in \overline{S_u}\setminus\Sigma$
\begin{equation}\label{e:dlb tilde Eg Dirichlet}
\mathcal{H}^1(\overline{S_u}\cap B_\rho(x))\geq\epsilon\rho\qquad
\text{for all $\rho\in(0,\min\{1,\rho_x\})$.}
\end{equation}
\end{itemize}
\end{theorem}
The conclusions in Theorem~\ref{t:dlb SBV} and standard density estimates for measures imply the essential closure of the jump set in the $SBV$ setting (cf. \cite[Theorem~2.56]{AFP00}). 
\begin{corollary}\label{c:essential closure}
The following properties hold.
\begin{itemize}
  \item[(a)] if $u\in SBV(\Omega)$ is a local minimizer of $\widetilde{E}_\param$ on $\Omega$, 
  then
 \[
  \mathcal{H}^1((\overline{S_u}\setminus S_u)\cap\Omega)=0\,.
 \]
\item[(b)] if $u\in SBV(U)$ is a minimizer of $\widetilde{E}_\param$ on $U$ subject to 
Dirichlet boundary conditions, then 
 \[
  \mathcal{H}^1((\overline{S_u}\setminus S_u)\cap\overline{U})=0\,.
 \]
 \end{itemize}
\end{corollary}

\section{Equivalence of \texorpdfstring{$SBV$}{SBV} and classical formulations}\label{ss:strong vs SBV}

The starting point to compare the strong and weak formulations of the problem is the following result due to De~Giorgi, Carriero, and Leaci \cite[Lemma~2.3]{DGCL89} (see also \cite[Proposition~4.4]{AFP00}, and \cite[Lemma~2.3]{CL90} for the Dirichlet problem).
\begin{lemma}\label{l:DGCL}
Let $(v,J)$ be an admissible pair with $v\in L^\infty_{{\rm loc}}(\Omega)$. Then, $v\in SBV_{{\rm loc}}(\Omega)$ and
$\overline{S_v}\cap \Omega\subseteq J$. 
\end{lemma}
{
\begin{proof}
Since $J$ is $1$-rectifiable with finite Hausdorff measure,
for every $j\in\N$ there is a covering of $J$, either finite or countable, with balls $B_{\rho_{i,j}}^{j}$ such that $\rho_{i,j}\leq \sfrac1j$ and 
\[
2\sum_i\rho_{i,j}\leq \mathcal{H}^1(J)+1.
\]
Define $v_j:=v\chi_{\Omega\setminus \cup_iB_{\rho_{i,j}}^j}$, then $v_j\in BV_{{\rm loc}}(\Omega)$ with 
\[
|Dv_j|(A)\leq \int_{A\setminus K}|\nabla v| +2\pi\|v\|_{L^\infty(A)} ( \mathcal{H}^1(J)+1)\,,
\]
for every open set $A$ compactly contained in $\Omega$. 
Moreover, by construction $v_j\to v$ in $L^1_{{\rm loc}}(\Omega)$, so that by the lower semicontinuity of the total variation
$v\in  BV_{{\rm loc}}(\Omega)$, and $S_v\subseteq J$ as $v\in W^{1,2}_{\rm loc}(\Omega\setminus J)$. In turn, being $J$ closed, this yields that $\overline{S_v}\cap \Omega\subseteq J$, and that 
\begin{align*}
|Dv|(A)&=\int_{A\setminus J}|\nabla v|+
|Dv|(A\cap J)=\int_{A}|\nabla v|+ \int_{A\cap S_v}|v^+-v^-|d\mathcal H^1\\
&\leq \int_{A}|\nabla v|+ \int_{A\cap S_v}|v^+-v^-|d\mathcal H^1
\end{align*}
where in the second equality we have used that 
$|Dv|(A\cap J\setminus S_v)=0$. This clearly implies that 
$v\in SBV_{\rm loc}(\Omega)$.
\end{proof}
}
From the latter lemma and the density lower bound, it is not difficult to derive that,
for bounded open sets and when the $L^\infty$ norm of the minimizer can be controlled a-priori, minimizing over pairs $(u,K)$ in the classical formulation or in the $SBV$ formulation is equivalent. A subtle point for our purposes is that this equivalence holds even if we impose Dirichlet boundary conditions on a bounded set $U$ which is sufficiently smooth.

We report next the contributions of De~Giorgi, Carriero, and Leaci in \cite[Theorem~1.1]{DGCL89} and \cite[Remark~4.3]{CL90}.
\begin{proposition}\label{p:SBV implies strong DGCL}
Assume that $\param\in[0,1]$ and $g\in L^\infty(U)$, with $U$ open and bounded, then 
\begin{itemize}
\item[(a)] for every $\param>0$ there exists $u\in SBV\cap L^\infty(U)$ such that 
$\displaystyle \widetilde{E}_\param(u,U)=\inf_{SBV(U)}\widetilde{E}_\param(\cdot,U)$,
and 
\begin{align*}
E_\param(u,\overline{S_u}\cap U,U,g)=\inf\{&E_\param(v,J,U,g):\,\\
&v\in W^{1,2}_{{\rm loc}}(U\setminus K),\,K\subset U\text{ closed, $1$-rectifiable}\}\,.
\end{align*}
(in particular, $(u,\overline{S_u}\cap U)$ is an absolute minimizer for $E_\param$). 
\item[(b)] for every $\param>0$, if $(u,K)$, with $u\in L^\infty(U)$, satisfies
\begin{align*}
E_\param(u,K,U,g)=\inf\{&E_\param(v,J,U,g):\,\\
&v\in W^{1,2}_{{\rm loc}}(U\setminus K),\,K\subset U\text{ closed, $1$-rectifiable}\}\,,
\end{align*}
then $\displaystyle \widetilde{E}_\param(u,U)=\inf_{SBV(U)}\widetilde{E}_\param(\cdot,U)$,
and moreover $K=\overline{S_u}\cap U$.
\item[(c)] for every $\param\geq0$ analogous conclusions as those in items (a) (and (b)) hold
for minimizers (in $L^\infty$) of $E_\param(\cdot,\cdot,\overline{U},g)$ ($\widetilde{E}_\param$) 
on $U$ subject to Dirichlet boundary conditions, provided $U$ has $C^1$ boundary.
\end{itemize}
\end{proposition}
\begin{proof}
An elementary truncation argument yields that the $L^\infty(U)$ norms of minimizing sequences 
of $\widetilde{E}_\param$ are equi-bounded by $\|g\|_\infty$. 
Next, the lower semicontinuity of the functional $\widetilde{E}_\param$ in $SBV$ and the compactness theorem of Ambrosio (cf. \cite[Theorems~4.7 and 4.8]{AFP00}) yield the existence 
of a minimizer $u$ for $\widetilde{E}_\param$ on $SBV(U)$.
From item (a) in Corollary~\ref{c:essential closure} we deduce that $u\in W^{1,2}_{{\rm loc}}(U\setminus\overline{S_u})$, in turn implying that $u\in C^1(U\setminus\overline{S_u})$.
Indeed, $u\in W^{2,p}_{{\rm loc}}(U\setminus \overline{S_u})$ for every $p<\infty$ and solves 
$\Delta u = \param (u-g)$ on each open subset $U'\subset\subset U\setminus \overline{S_u}$ using the outer variation equation \eqref{e:outer} and elliptic regularity theory.
Finally, item (a) in Corollary~\ref{c:essential closure} and Lemma~\ref{l:DGCL} imply that 
$(u,\overline{S_u}\cap U)$ is a ``classical'' minimum for $E_\param$ with 
$\widetilde{E}_\param(u)=E_\param(u,\overline{S_u}\cap U)$. 
This proves all the conclusions in (a).

For what item (b) is concerned, if $\tilde{u}\in SBV(U)$ were such that $\widetilde{E}_\param(\tilde{u};U)<\widetilde{E}_\param(u,U)$, then by item (a) we would conclude that
\[
E_\param(\tilde{u},\overline{S_{\tilde{u}}}\cap U,U,g)=
\widetilde{E}_\param(\tilde{u},U)<\widetilde{E}_\param(u,U)\leq E_\lambda(u,K,U,g)\,,
\]
where in the second inequality we have used Lemma~\ref{l:DGCL}. This is a contradiction. 

We show next that $K = \overline{S_u}\cap U$. Recall that we already noticed that 
$\overline{S_u}\cap U\subseteq K$. Clearly, $\mathcal{H}^1(K\setminus \overline{S_u}) = 0$, since otherwise if $\mathcal{H}^1(K\setminus\overline{S_u})>0$, we would have $E_\param(u,\overline{S_u})<E_\param(u,K)$. 
Thus, as $\overline{S_u}$ is closed by definition, $1$-rectifiable thanks to Corollary~\ref{c:essential closure}, and $u\in C^1(U\setminus\overline{S_u})$, arguing as in item (a) we would get a contradiction.
Finally, assume that $x\in K\setminus \overline{S_u}$ and let $r\in(0,\mathrm{dist}(x,\partial U))$ be such that $B_r(x)$ does not intersect $\overline{S_u}$. Then, we have
\[
\mathcal{H}^1(K\cap B_r(x))\leq \mathcal{H}^1(K\setminus\overline{S_u})+
\mathcal{H}^1(\overline{S_u}\cap B_r(x))=0,
\]
which is a contradiction because of the density lower bound for $K$ in Theorem~\ref{t:dlb}.

The proof of the claims in item (c) are completely analogous using item (b) in Corollary~\ref{c:essential closure}.
\end{proof}
We establish next the analogous statement for absolute minimizers of $E_\param$. 
This property will be used in the proof of Corollary~\ref{c:SBV-convergence} below, 
which, in turn, is a crucial ingredient in the proof of Theorem~\ref{t:minimizers compactness}.
\begin{theorem}\label{t:equivalence-2}
Assume $(u, K)$ is an absolute minimizer of $E_\param$ in some open set $\Omega$, with $u\in L^\infty_{{\rm loc}}(\Omega)$. Then $u\in SBV_{{\rm loc}}(\Omega)$ and 
for every $C^1$ domain $U\subset\subset \Omega$ with the property that $\mathcal{H}^1 (K\cap \partial U)=0$ and let $v\in SBV_{{\rm loc}}(\Omega)$ be a function such that $v=u$  $\mathcal{L}^2$-a.e. on $\Omega\setminus \overline{U}$, then 
\[
\int_U |\nabla v|^2 + \mathcal{H}^1 (S_v \cap \overline{U}) + \param \int_U |v-g|^2 
\geq E_\param (u, K, U, g)\, .
\]
Moreover, $(K\triangle \overline{S_u})\cap\Omega=0$.
\end{theorem}
\begin{proof} 
That $u\in SBV_{{\rm loc}}(\Omega)$ follows immediately from Lemma~\ref{l:DGCL}.
Assume now that the inequality above fails. By Proposition~\ref{p:SBV implies strong DGCL} we can find one 
such $v$ which minimizes the energy in the space of $SBV_{{\rm loc}}(\Omega)$ functions which coincide 
with $u$ on $\mathcal{L}^2$-a.e. on $\Omega\setminus \overline{U}$. 

It follows from item (b) in Corollary~\ref{c:essential closure} that 
$\mathcal{H}^1 ((\overline{S_v}\setminus S_v)\cap \overline{U})=0$, while at the same time $v\in C^1(U\setminus \overline{S_v})\cap C^0(\overline{U}\setminus \overline{S_v})$. 
Indeed, $u\in C^1(\Omega\setminus K)$ and by assumption $U\subset\subset \Omega$ with 
$\mathcal{H}^1 (K\cap \partial U)=0$, therefore the conclusions in Proposition~\ref{p:SBV implies strong DGCL} hold.
If we let $K':= (\overline{S_v}\cap \overline{U}) \cup (K \cap (\Omega\setminus \overline{U}))$ and $u':= v \mathbf{1}_{U\setminus K'} + u \mathbf{1}_{\Omega\setminus (\overline{U}\cup K)}$, we then conclude that 
$K'$ is closed and $u'$ is locally Lipschitz on $\Omega\setminus K'$, while $\nabla u'\in L^2_{{\rm loc}} (\Omega\setminus K')$. In particular, the pair $(u', K')$ would contradict the minimizing property of $(u,K)$. 

The last property follows from Proposition \ref{p:SBV implies strong DGCL} for an invading sequence of domains compactly contained in $\Omega$ and satisfying the assumptions in the statement.
\end{proof}

{
\section{Proof of Corollary \ref{c:normalized}}\label{s:SBV-equivalence}
Fix $(u,K)$ as in the first part of the statement. 
The fact that $u\in SBV$ and that $\overline{S_u}\subset K$ follows from Lemma~\ref{l:DGCL}.
The fact that $K\triangle \overline{S_u}=\emptyset$ for absolute minimizers is
established in item (b) of Proposition~\ref{p:SBV implies strong DGCL}.


Eventually, let $(u,K)$ be a (restricted) minimizer of $E_\param$
with $\mathcal{H}^1 (K\cap U) = 0$ in an open set $U\subset\subset\Omega$. 
Then $\mathcal{H}^1 (\overline{S_u}\cap U) = 0$, therefore
$u\in W^{1,2}_{{\rm loc}}(U)$. 
Hence, $u$ extends to a harmonic function when $\param =0$, resp. to a function in $W^{2,p}_{{\rm loc}}(U)$ when $\param >0$, in view of \eqref{e:outer} and standard elliptic regularity theory.
}

\section{Compactness of bounded minimizers through the \texorpdfstring{$SBV$}{SBV} formulation}\label{ss:compactness through SBV}

As it is customary in the literature which addresses the Mumford-Shah variational problem using the ``classical'' formulation 
through pairs $(u, K)$, the compactness statements which are relevant to the purposes of this book could be proved using the 
``uniform concentration property'' of Dal~Maso, Morel, and Solimini, see \cite{DME92}.
A simple corollary of Theorem~\ref{t:equivalence-2} is the fact that, on any region where there is a suitable $L^\infty$ 
bound for the sequence $(u_j, K_j)$ in Theorem~\ref{t:minimizers compactness}, it converges in the $SBV$ sense to a minimizer 
of $E_0$ which at the same time is also a classical minimizer. 
\begin{corollary}\label{c:SBV-convergence}
Assume $(u_j, K_j)$ is a sequence as in Theorem~\ref{t:minimizers compactness}, let $K$ be the (local) Hausdorff limit of $K_j$, 
$u$ be the local $W^{1,2}$ limit of $u_j$ on the complement of $K$, and let $U$ be any bounded $C^2$ open set satisfying the following properties:
\begin{itemize}
\item[(a)] $K\cap \partial U$ is finite and the cardinality of $K_j\cap \partial U$ is uniformly bounded;
\item[(b)] $\displaystyle\int_{\partial U\setminus K} |\nabla u|^2 < \infty$ and $\limsup_j \displaystyle\int_{\partial U \setminus K_j} |\nabla u_j|^2<\infty$;
\item[(c)] $\|u_j - m_j\|_\infty$ is uniformly bounded for some sequence of constants $m_j$.
\end{itemize}
Then $u$ is an $SBV$ absolute minimizer of $\widetilde{E}_0$ in $U$ with $K\cap U=\overline{S_u}\cap U$, and moreover 
$\nabla u_j \to \nabla u$ strongly in $L^2 (U)$, while $\mathcal{H}^1 (K_j\cap U)\to \mathcal{H}^1 (K\cap U)$.
\end{corollary}
\begin{proof} We give the proof under the assumption that $U$ is the disk $B_1$, the general case is a straightforward modification of the idea using a tubular neighborhood of $\partial U$.
First of all $u$ is certainly an $SBV$ function in $U$ with $\overline{S_u} \cap U \subset K$. Moreover, passing to a subsequence we can assume, without loss of generality, that $u_j-m_j$ converges to $u$ in $L^2 (\partial B_1)$. Using a simple interpolation formula between the functions $u_j-m_j$ and $u$ on the disk $B_1\setminus B_{1-\varepsilon}$ it is possible to find closed sets $J_j$ and functions $w_j\in W^{1,2}_{{\rm loc}} (B_1\setminus B_{1-\varepsilon})$ with the properties that 
\[
\limsup_j \mathcal{H}^1 (J_j) +
\limsup_j \int_{B_1\setminus B_{1-\varepsilon}} |\nabla w_j|^2 \leq C \varepsilon\, 
\] 
and
$w_j  = u_j $ on $\partial B_1$ while $w_j = u+m_j$ on $\partial B_{1-\varepsilon}$. 
This is achieved using the explicit formula
\[
w_j (x) = \tfrac{1-|x|}{\varepsilon} \left[u \left(\tfrac{x}{|x|}\right) +m_j\right]+ \tfrac{|x|- (1-\varepsilon)}{\varepsilon} u_j \left(\tfrac{x}{|x|}\right)
\]
and letting $J_j$ be the union of the segments $\{\param p: \param \in [1-\varepsilon, 1]\}$ where $p$ ranges in the set $S_j := (K\cup K_j)\cap \partial B_1$.
Note indeed that:
\begin{itemize}
\item[(i)] $J_j$ is formed by finitely many segments of length $\varepsilon$, whose number is uniformly bounded in $j$ by assumption (a).
\item[(ii)] The Dirichlet energy of $w_j$ on $B_1\setminus (B_{1-\varepsilon}\cup J_j)$ is estimated by
\[
\frac{C_0}{\varepsilon} \int_{\partial B_1} |u_j -u|^2 + C_0 \varepsilon \int_{\partial B_1\setminus S_j} (|\nabla u_j|^2 + |\nabla u|^2)\, ,
\]
where $C_0$ is a geometric constant. Note that the first term converges to $0$ as $j\uparrow \infty$, while the second is bounded by $C\varepsilon$ for a constant $C$ independent of $j$ by assumption (b).
\end{itemize}
In order to handle the fidelity term, recall that $\|u\|_\infty \leq\sup_j\|u_j -m_j\|_\infty \leq C$ by assumption (c). On the other hand, $|m_j| \leq \|u_j\|_\infty + \|u_j - m_j\|_\infty \leq \frac{C}{r_j^{\sfrac{1}{2}}} \|u^j\|_\infty + \|u_j -m_j\|_\infty$. Recall that $\|u^j\|_\infty$ is uniformly bounded by the assumptions of Theorem \ref{t:minimizers compactness}. A simple rescaling argument shows that $u_j$ is a minimizer of the functional
\[
E_0(\cdot,\cdot,B_1) + \param_j r_j^2 \int_{B_1} \left|\cdot - \tfrac{g_j}{r_j^{\sfrac12}}\right|^2\, . 
\]
However, given the above estimates, we immediately see that 
\[
\param_j r_j^2 \int_{B_1} \left|w_j - \tfrac{g_j}{r_j^{\sfrac12}}\right|^2
\leq C \param_j r_j\, 
\]
vanishes in the limit.
 In particular, we conclude that $u$ must be an absolute minimum of $\widetilde{E}_0$: a competitor $\tilde{u}$ which gains energy (and does not increase the $L^\infty$ norm) would lead to a competitor $\tilde{u}_j$ for $u_j$ by setting $\tilde{u}_j = w_j$ on the corona $B_1\setminus B_{1-\varepsilon}$ and $\tilde{u}_j = \tilde{u}(\frac{\cdot}{1-\varepsilon})$ in $B_{1-\varepsilon}$. Tuning the parameter $\varepsilon$ appropriately would then show that $\tilde{u}_j$ improves the energy $\widetilde{E}_{\lambda_j}$ of $u_j$ for a sufficiently large $j$, contradicting 
 Theorem~\ref{t:equivalence-2}.

This very argument also implies convergence of the energies $\widetilde{E}_{\lambda_j}$ of $u_j$ to $\widetilde{E}_0$ of $u$, which in turn, using the lower semicontinuity theorem of Ambrosio \cite[Theorem~4.7]{AFP00}, implies in addition that 
\[
\nabla u_j \to \nabla u \mbox{ in $L^2(B_1)$ }
\]
and that
\begin{equation}\label{e:length-convergence}
\lim_j \mathcal{H}^1 (S_{u_j}\cap B_1) =  \mathcal{H}^1 (S_u\cap B_1)\, .
\end{equation}
Theorem \ref{t:equivalence-2} also implies that $K_j = \overline{S_{u_j}}\cap U$, so it remains to show that $K = \overline{S_u}\cap U$. However, we already noticed that $\overline{S_u}\cap U\subseteq K$, so, given the density lower bound, 
$\mathcal{H}^1 ((K\setminus \overline{S_u})\cap B_1) = 0$. Assume that $x\in K\setminus \overline{S_u}$ and let $\bar r$ be such that $B_{\bar r} (x)$ does not intersect $\overline{S_u}$. 
By the density lower bound in Theorem \ref{t:dlb SBV} we have 
\[
\liminf_j \mathcal{H}^1 (S_{u_j}\cap B_r (x))>0\, 
\]
for every $r< \bar r$.
On the other hand, the arguments above apply with $B_r (x)$ in place of $B_1$ provided we choose $r< \bar r$ appropriately. Indeed, for $\mathcal{L}^1$-a.e. $r\in(0,\mathrm{dist}(x,\partial B_1))$ the set $B_r (x)$ will satisfy the assumptions (a), (b) and (c) of the lemma. But we then reach a contradiction to \eqref{e:length-convergence}.
\end{proof}

\chapter{Useful results from elementary topology}

This section collects some useful elementary results, mostly of topological nature, which have been used extensively in the notes

\begin{lemma}\label{l:little-loop}
Consider a closed set $K\subset \mathbb R^2$ and let $J$ be a bounded connected component of $K$. Then, for every $\delta>0$ there is a smooth Jordan curve $\gamma$ such that
\begin{itemize}
\item[(a)] ${\rm dist}\, (y, J) < \delta$ for every $y\in \gamma$;
\item[(b)] $\gamma \cap K = \emptyset$;
\item[(c)] $J$ is contained in the bounded connected component of $\mathbb R^2\setminus \gamma$.
\end{itemize}
\end{lemma}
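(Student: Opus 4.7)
The plan is to construct $\gamma$ in three stages: first produce a topological ``separating'' open set $U$ around $J$, then smooth its boundary, then extract the outer Jordan curve. The main obstacle is the first stage: a priori other connected components of $K$ could accumulate on $J$, so there need not be a positive lower bound on the distance from $J$ to $K\setminus J$. The smoothing and extraction are routine once $U$ is in hand.

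For the separation step I would exploit the classical fact that in a compact Hausdorff space the quasi-component of a connected component equals the component itself. Apply this to the compact set $K':= K\cap \overline{N_{\delta/2}(J)}$. Since any connected subset of $K'$ containing $J$ is a connected subset of $K$ containing $J$, and $J$ is a connected component of $K$, the connected component of $J$ in $K'$ is exactly $J$. Hence for the open (in $K'$) neighborhood $N_{\delta/4}(J)\cap K'$ of $J$ there is a clopen (in $K'$) set $V$ with $J\subset V\subset N_{\delta/4}(J)$. The compact sets $V$ and $K'\setminus V$ are disjoint, so $2d:=\dist(V,K'\setminus V)>0$ (if $V=K'$ pick $d=\delta/8$). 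Choose $\eta=\tfrac12\min\{d,\delta/4\}$ and set $U:=N_\eta(V)$. Then $\partial U\subset\{x:\dist(x,V)=\eta\}\subset N_\delta(J)$; moreover $\partial U$ avoids $K$ because any $x\in \partial U\cap K'$ would have to lie in $K'\setminus V$ (forcing $\dist(x,V)\geq 2d>\eta$, a contradiction), while any $x\in \partial U\cap (K\setminus K')$ would satisfy $\dist(x,J)>\delta/2$ but also $\dist(x,J)\le \dist(x,V)+\delta/4<\delta/2$, again a contradiction.

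Once $U$ is fixed, set $\mu:=\dist(\partial U,K)>0$ and $\nu:=\dist(J,\partial U)>0$, and mollify: let $f:=\chi_U*\rho_\tau$ with $\tau$ so small that $\tau<\min\{\mu,\nu\}$ and $N_\tau(\partial U)\subset N_\delta(J)$. Then $f$ is smooth, $f\equiv 1$ on $J$, $\operatorname{supp} f$ is compact, and $\{0<f<1\}\subset N_\tau(\partial U)$. By Sard's theorem, for a.e.\ $s\in(0,1)$ the level set $f^{-1}(s)$ is a compact smooth one-dimensional manifold, hence a disjoint finite union of smooth Jordan curves $\gamma_1,\dots,\gamma_N$, all contained in $N_\delta(J)\setminus K$. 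Let $W:=\{f>s\}$ and let $W_0$ be the connected component of $W$ containing $J$; then $W_0$ is bounded, open and connected, and $\partial W_0$ is a disjoint union of some of the $\gamma_i$'s. By elementary planar topology, the unbounded connected component $E$ of $\mathbb{R}^2\setminus \overline{W_0}$ has boundary consisting of exactly one of these curves (because $\partial E\subset \partial W_0$ is a disjoint union of Jordan curves and $E$ is connected and unbounded); call this curve $\gamma$. Then $W_0\subset \mathbb{R}^2\setminus (E\cup\gamma)$, so $J\subset W_0$ lies in the bounded component of $\mathbb{R}^2\setminus\gamma$, while $\gamma\subset \partial W\subset N_\delta(J)\setminus K$, giving properties (a), (b), (c).
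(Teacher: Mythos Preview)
Your proof is correct and follows essentially the same architecture as the paper's: isolate $J$ from the rest of $K$ by an open set whose boundary avoids $K$ and lies in $N_\delta(J)$, mollify its indicator, apply Sard, and extract the outermost Jordan curve of the appropriate superlevel set. The smoothing and extraction steps are virtually identical to the paper's.

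The one substantive difference is in the separation step. The paper takes $V_\eta$ to be the connected component of $\{\dist(\cdot,K)<\eta\}$ containing $J$ and argues, via a nested-intersection compactness argument, that $V_\eta$ shrinks to $J$ as $\eta\downarrow 0$; along the way it asserts without proof the existence of an open $Z\supset J$ with $\partial Z\cap K=\emptyset$. Your route is cleaner here: you compactify by intersecting with $\overline{N_{\delta/2}(J)}$ and then invoke the classical fact that components equal quasi-components in compact Hausdorff spaces to produce the clopen separator $V$ directly. This makes explicit exactly the topological input the paper is tacitly using, and avoids the intermediate ``nested $V_\eta$'' machinery.
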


\begin{lemma}\label{l:connessi_per_archi}
Let $K\subset \mathbb R^2$ be a closed connected set with locally finite Hausdorff measure. Then $K$ is arcwise connected. Moreover, for every $x,y\in K$ there is an injective Lipschitz path $\gamma : [0,1]\to K$ such that $\gamma (0) =x$, $\gamma (1) = y$, and its length is minimal among all paths in $K$ connecting $x$ and $y$.
\end{lemma}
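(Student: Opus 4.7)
The proof has three natural steps: (a) produce one Lipschitz path in $K$ from $x$ to $y$ of finite length; (b) minimize length over all such paths; (c) extract an injective minimizer. Only step (a) has genuine content; (b) and (c) are soft.

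Step (a) is essentially a classical result. The Eilenberg--Harrold arcwise-connectivity theorem (as presented, e.g., in Falconer's book on fractal geometry) states that any compact connected set $K'\subset\mathbb R^n$ with $\mathcal{H}^1(K')<\infty$ is arcwise connected, and any two of its points can be joined by a rectifiable Jordan arc of length at most $\mathcal{H}^1(K')$. I would apply this after reducing to the compact setting: using the connectedness of $K$ together with a standard $\varepsilon$-chain/clopen argument (the set of points reachable from $x$ by a finite $\varepsilon$-chain in $K$ is clopen in $K$ and nonempty, hence equals $K$) and the observation that any $\varepsilon$-chain from $x$ to $y$ in $K$ automatically lies in some bounded region, one shows that for $R$ sufficiently large $x$ and $y$ lie in the same connected component $K'$ of the compact set $K\cap\overline{B}_R$. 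Eilenberg--Harrold applied to $K'$ then furnishes a Lipschitz injective path in $K$ from $x$ to $y$ of finite length.

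For step (b), let
\[
L^*:=\inf\{\mathrm{length}(\gamma):\gamma\colon[0,1]\to K\text{ Lipschitz},\,\gamma(0)=x,\,\gamma(1)=y\},
\]
which is finite by step (a). A minimizing sequence reparametrized by constant speed is uniformly $(L^*+1)$-Lipschitz and confined to a fixed bounded region, so Ascoli--Arzelà extracts a uniform limit $\gamma^*$; closedness of $K$ forces $\gamma^*([0,1])\subset K$, and lower semicontinuity of length under uniform convergence of uniformly Lipschitz curves yields $\mathrm{length}(\gamma^*)=L^*$. For step (c), let $f\colon[0,L^*]\to K$ be the arc-length reparametrization of $\gamma^*$: if $f(s_1)=f(s_2)$ with $s_1<s_2$, concatenating $f|_{[0,s_1]}$ with $f|_{[s_2,L^*]}$ would be a Lipschitz competitor of length $L^*-(s_2-s_1)<L^*$, contradicting minimality; hence $f$ is injective, and rescaling to $[0,1]$ gives the desired path.

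\textbf{Main obstacle.} The only real obstacle is step (a): extracting a \emph{rectifiable} curve (with controlled length) from the purely topological hypothesis of connectedness combined with a one-dimensional measure bound. This is the substantive content of the Eilenberg--Harrold theorem, whose proof relies on Gołąb's lower semicontinuity of $\mathcal{H}^1$ under Hausdorff convergence of continua, combined with a Blaschke selection argument to realize the arc as an $\mathcal{H}^1$-minimizing subcontinuum joining $x$ and $y$. The compact-subcontinuum reduction, the Ascoli--Arzelà minimization, and the arc-length reparametrization in the remaining steps are essentially bookkeeping.
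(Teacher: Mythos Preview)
Your overall architecture matches the paper's: reduce to a compact connected subset, produce a rectifiable arc there, then minimize length and extract an injective geodesic via Ascoli--Arzel\`a. Steps (b) and (c) are correct and essentially identical to the paper's Step~3; citing Eilenberg--Harrold for the compact case is a legitimate shortcut for what the paper proves by hand in Step~1.

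The gap is in your reduction to the compact case. An $\varepsilon$-chain from $x$ to $y$ in $K$ does lie in some bounded ball $\overline{B}_{R(\varepsilon)}$, but this only shows that $x$ and $y$ are $\varepsilon$-chain connected in $K\cap\overline{B}_{R(\varepsilon)}$, not that they lie in the same \emph{connected component}. You would need a single $R$ that works for all $\varepsilon$, and the chain argument does not give this. In fact the conclusion fails without the locally finite measure hypothesis: take $K=\{0\}\times[0,\infty)\cup\bigcup_{n\ge1}\{1/n\}\times[0,\infty)\cup\bigcup_{n\ge1}[1/(n+1),1/n]\times\{n\}$. This set is closed and connected, but $(0,0)$ and $(1,0)$ never lie in the same component of $K\cap\overline{B}_R$, because the vertical ray $\{0\}\times[0,\infty)$ is only reached ``at infinity'' through the accumulation of the other rays. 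Of course $\mathcal{H}^1(K\cap\overline{B}_R)=\infty$ here, so the example is consistent with the lemma, but it shows that your clopen argument cannot work on its own.

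The paper's Step~2 supplies exactly this missing piece, and it genuinely uses the locally finite measure. One lets $K_N$ be the component of $K\cap\overline{B}_N$ containing $x$, sets $\tilde K=\bigcup_N K_N$, and argues that $\tilde K$ is clopen in $K$. The delicate half is closedness: one shows that for each $R$ there is $N(R)$ with $K_{N'}\cap\overline{B}_R=K_{N(R)}\cap\overline{B}_R$ for all $N'>N(R)$. If this failed, one could build infinitely many pairwise disjoint arcs of length at least $1$ inside $K\cap\overline{B}_R$ (each connecting $x$ to $\partial B_R$ through successive new components), contradicting $\mathcal{H}^1(K\cap\overline{B}_R)<\infty$. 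So the locally finite measure is what rules out the ``connected only at infinity'' phenomenon in the example above; this is the substantive input you are missing.
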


\begin{proof}[Proof of Lemma~\ref{l:little-loop}]
Consider the open set $U_\eta := \{y: {\rm dist}\, (y, K) < \eta\}$ and let $V_\eta$ be the connected component of $U_\eta$ which contains $J$. Clearly, each point $y\in \partial V_\eta$ has distance $\eta$ from $K$. We next claim that for $\eta$ sufficiently small $\overline{V}_\eta$ is compact. Indeed first of all observe that there is an open set $Z$ such that $J \subset Z$ and
$\partial Z \cap K = \emptyset$. Next notice that, since $J$ is bounded, it is contained in some open ball $B_R (0)$. If we take $Z' := Z\cap B_R (0)$, then $J\subset Z'$ and moreover 
$\partial Z' \subset (B_R (0) \cap \partial Z) \cup (Z\cap \partial B_R (0))$, hence it does not intersect $K$. Since $\partial Z'$ is compact, it means that there is some positive constant $c_0$ such that ${\rm dist}\, (y, K) \geq c_0$ for every $y\in Z'$. In particular, for $\eta$ sufficiently small, $\partial U_\eta \cap \partial Z' = \emptyset$. Thus $Z'\cap U_\eta$ is an open subset of $U_\eta$ which contains $J$, implying that $V_\eta \subset Z'$.  

Next we claim that, if $\eta$ is sufficiently small, then we have 
\begin{equation}\label{e:V eta}
V_\eta \subset \{z: {\rm dist}\, (z, J) < \frac{\delta}{2}\}\,.
\end{equation}
Indeed, if the latter were not true, then $\{\overline{V}_\eta\}_{\eta > 0}$ would be a nested family of connected closed sets, each of which contains a point $z_\delta\in K$ at distance at least $\frac{\delta}{2}$ from $J$. Moreover, for $\eta$ sufficiently small the sets would be bounded and hence compact. Their intersection $K_\infty$ would then be a compact connected subset of $K$ and since it contains $J$, it must be $J$. On the other hand, any accumulation point of $\{z_\delta\}_{\delta >0}$ would be an element of $K_\infty$ at a positive distance from $J$, which is a contradiction.
 
Consider next $\eta>0$ for which \eqref{e:V eta} holds and 
a standard family of mollifiers $\varphi_\varepsilon$, the function $\psi_\varepsilon := \mathbf{1}_{V_\eta} * \varphi_\varepsilon$ and the open sets $V_{\varepsilon, t}:=\{y : \psi_\varepsilon (y) > t\}$. For $\varepsilon$ sufficiently small we have that:
\begin{itemize}
\item $J\subset V_{\varepsilon, t}$ for every $1> t\geq 0$;
\item $\partial V_{\varepsilon, t} \cap K = \emptyset$ for every $1 > t\geq \frac{1}{4}$;
\item $V_{\varepsilon, t} \subset \{y: {\rm dist}\, (y, J) < \delta\}$ for every $t\geq \frac{1}{4}$.
\end{itemize}
Fix such a small $\varepsilon$ use Sard's theorem to select a $t\in (\frac{1}{4}, 1)$ such that $V_{\varepsilon, t}$ has smooth boundary. Furthermore pick the connected component $W$ of $V_{\varepsilon, t}$ which contains $J$. By smoothness $\partial W$ consists of a finite number of disjoint smooth Jordan curves and by classical differential topology there is one which is uttermost, i.e. such that $W$ is contained in the topological disk bounded by it. The latter is a curve $\gamma$ which satisfies all the requirements of the lemma. 
\end{proof}

\begin{proof}[Proof of Lemma~\ref{l:connessi_per_archi}]
{\bf Step 1} We prove the first claim of the lemma when $K$ is compact. In that case $\mathcal{H}^1 (K)$ is finite. Fix now $\delta>0$ and let $\mathscr{C}:= \{B_{r_i} (x_i)\}_{i\in \mathbb N}$ be a cover of $K$ such that $\sup_i r_i < \delta$ and $\sum_i 2 r_i < \mathcal{H}^1 (K)+\delta$. By compactness, we can assume, without loss of generality that $\mathscr{C}$ is finite and we can also assume that $B_{r_i} (x_i)\cap K\neq\emptyset$ for all $i$. A chain of $\mathscr{C}$ is given by a choice of balls in $\mathscr{C}$ with radii $\{r_{i(j)}\}_{j\in \{1, \ldots N\}}$ where the $i(j)$ are all distinct and $B_{r_{i(j)}}\cap B_{r_{i(j+1)}} \neq \emptyset$. We say that $B_{r_K} (x_K)$ and $B_{r_J} (x_J)$ are chain-connected if there is a chain such that $K = i(1)$ and $J = i(N)$. Assume now, without loss of generality, that $B_{r_1} (x_1)$ contains $x$ and let $\mathscr{C}'\subset \mathscr{C}$ be the set of balls $B_{r_i} (x_i)$ which are chain connected to $B_{r_1} (x_1)$. $\mathscr{C}'$ must coincide with $\mathscr{C}$ otherwise the two open sets 
\begin{align}
U &:= \bigcup_{i\in \mathscr{C}'} B_{r_i} (x_i)\\
V &:= \bigcup_{i\in \mathscr{C}\setminus \mathscr{C}'} B_{r_i} (x_i)
\end{align}
would be disjoint and would disconnect $K$. Upon reindexing our balls we can thus assume that $\{B_{r_i} (x_i)\}_{i\in \{1, \ldots , N\}}$ is a chain such that $x\in B_{r_1} (x_1)$ and $y\in B_{r_N} (x_N)$. Set $z_0=x$, $z_N = y$, and choose $z_i \in B_{r_i} (x_i)\cap K$ for every other $i$. Consider then the piecewise linear curve consisting of joining the segments $[z_i, z_{i+1}]$. Since $|z_{i+1}-z_i|\leq 2 (r_{i+1} + r_i)$, such curve has length at most $2 \mathcal{H}^1 (K) + 2\delta$. Observe moreover that, since $r_i < \delta$ for all $i$, each point of the curve has distance at most $2\delta$ from $K$.

Let now $\delta:= 1/j$ and let $\gamma_j: [0,1] \to \mathbb R^2$ be a constant speed parametrization of the latter curve. It turns out that $\Lip (\gamma_j)\leq 2 \mathcal{H}^1 (K) +2$ and thus by Ascoli-Arzel\`a we can extract a subsequence converging uniformly to a Lipschitz curve $\gamma:[0,1]\to K$ such that $\gamma (0) =x$ and $\gamma (1) = y$. 

\medskip

{\bf Step 2} We next prove the first claim for $K$ closed. We fix $x,y\in K$ and we seek for an arc connecting $x$ and $y$ in $K$. By Step 1 it suffices to show that for a sufficiently large $R>0$ the points $x$ and $y$ must be contained in the same connected component of $K\cap \overline{B}_R$. Now, for each $N\in \mathbb N$ with $N\geq \max \{|x|, |y|\}$ let $K_N$ be the connected component of $K\cap \overline{B}_N$ which contains $x$ and assume by contradiction that $y\not\in K_N$ for every $N$. We set $\tilde{K}:= \bigcup_N K_N$. The latter set does not contain $y$: if we can show that it is at the same time open and closed in $K$, we have contradicted the connectedness of $K$.

First of all observe that, by Step 1, $K_N \subset K_{N+1}$, which in turn implies that $\tilde{K}$ is open in $K$. We next claim that:
\begin{itemize}
\item[(S)] for every fixed $R> |x|+1$ there is $N(R)> R$ such that
$K_{N'}\cap \overline{B}_R = K_{N(R)} \cap \overline{B}_R$ for all $N'> N(R)$.
\end{itemize}
This would imply that 
$\overline{B}_R\cap \tilde{K} = \overline{B}_R \cap K_{N(R)}$ is closed, in turn implying that $\tilde{K}$ is closed.

If (S) is false for some $R$, then we can find a monotone sequence $N_k\geq R$ such that $K_{N_{k+1}} \cap \overline{B}_R$ is strictly larger than $K_{N_k}\cap \overline{B}_R$. In particular, there is a point $x_{k+1}\in K_{N_{k+1}}\cap \overline{B}_R$ which is not contained in $X_{N_k}\cap \overline{B}_R$. Let $\gamma_k: [0,1]\to K_{N_{k+1}}$ be a curve which connects $x$ to $x_{k+1}$ in $K_{N_{k+1}}$. Such curve cannot be contained in $\overline{B}_{R}$ and cannot intersect $K_{N_k}$, otherwise $x_{k+1}$ would belong to $K_{N_k}$. Let now $s_k$ be the smallest positive number such that $|\gamma_k (s_k)| = R$. It then turns out that the arc $\gamma_{k+1} ([0,s_k])$ is contained in $K_{N_{k+1}}\cap \overline{B}_{R}$, but it has empty intersection with $K_{N_k}\cap \overline{B}_{R}$. Since such arc has length at least $1$ (recall that $|x|\leq R-1$), we conclude
\[
\mathcal{H}^1 (N_{k+1}\cap \overline{B}_{R}) \geq \mathcal{H}^1 (N_k \cap \overline{B}_R)+1\, .
\]
Letting $k\uparrow \infty$ we would conclude $\mathcal{H}^1 (K\cap \overline{B}_{R}) = \infty$.

\medskip

{\bf Step 3} To prove the last claim, fix $x,y\in K$. By Step 1 and 2 we know the existence of a Lipschitz curve $\gamma:[0,1]\to K$ joining $x$ and $y$ in $K$. Let $L (\gamma):= \int |\dot \gamma (t)|\, dt$ and assume without loss of generality that it is parametrized at constant speed. Consider now the set $\mathscr{S}$ of Lipschitz curves $\gamma:[0,1]\to K$ joining $x$ and $y$ and parametrized at constant speed $L (\gamma)$. If $L_0$ is the infimum of $L (\gamma)$ for $\gamma \in \mathscr{S}$, then there is indeed a curve $\bar\gamma$ which attains it: this follows easily from Ascoli-Arzel\`a and the fact that every Lipschitz curve can be reparametrized to constant speed. $\bar\gamma$ is easily seen to be injective, otherwise it could not be a minimizer of the functional $L$.
\end{proof}

\chapter{Proof of Theorem~\ref{t:minimizers compactness}}

With the
tools in Appendix~\ref{a:strong vs SBV} at hand, the conclusions of Theorem~\ref{t:minimizers compactness} would be immediate as soon as we had at our disposal some apriori $L^\infty$ bound. This would be the case if, for instance, the complement of $K$ were connected. However in many situations this is certainly not the case. For this reason we include a detailed proof with the necessary workarounds. 

\begin{proof}[Proof of Theorem~\ref{t:minimizers compactness}]
{\bf Step 0}
Assume $(u_j, K_j)$ is as in Theorem~\ref{t:minimizers compactness} and let $K$ be the local Hausdorff limit of the sequence $K_j$. Then 
\begin{equation}\label{e:twosided-Hausdorff}
\frac{1}{C} \limsup_{j\to\infty} \mathcal{H}^1 (K_j\cap B_{r} (x)) \leq \mathcal{H}^1 (K\cap \overline{B}_r (x)) \leq 
C\liminf_{j\to\infty} \mathcal{H}^1 (K_j\cap B_{2r} (x))
\end{equation}
for every ball $B_r (x) \subset \mathbb R^2$, which is a direct consequence of the density lower bound. 

Fix a positive $\delta\in(0,r]$ and consider any covering of disks $\{B_{r_k} (x_k)\}$ of $K\cap \overline{B}_r (x)$ with $\sup_k r_k \leq \delta$ and $x_k\in K\cap \overline{B}_r (x)$. By compactness extract a finite subcover and using Vitali's covering Lemma, let $\{B_{r_\ell} (x_\ell)\}$ be a subfamily of pairwise disjoint disks such that $\{B_{5r_\ell} (x_\ell)\}$ is still a cover. Since the family is finite, for every $j$ large enough we can find $y_\ell^j\in K_j\cap B_{r_\ell/2} (x_\ell)$. In particular, using the density lower bound (which under our assumption holds with a uniform constant independent of $j$) we have
\begin{align*}
\mathcal{H}^1_\delta (K\cap B_r (x) ) &\leq 10 \pi \sum_\ell r_\ell \leq 
C \sum_\ell \mathcal{H}^1 (K_j\cap B_{r_\ell/2} (y_\ell^j) )\\
&\leq C \mathcal{H}^1 (K_j\cap B_{2r} (x))\, .
\end{align*}
Since $\delta$ is arbitrary, this implies the right inequality in \eqref{e:twosided-Hausdorff}. 
Next, fix $\delta>0$ and let $\{E_k\}$ be a family of closed sets covering $K\cap \overline{B}_r (x)$ such that
\[
\sum_k \textrm{diam}\, (E_k) \leq \mathcal{H}^1 (K \cap \overline{B}_r (x)) + \delta\, .
\]
Let $r_k:= \textrm{diam}\, (E_k)$ and consider disks $B_{2r_k} (x_k)$ containing $E_k$. By Hausdorff convergence, for $j$ large enough $K_j \cap B_{r} (x) \subset \bigcup_\ell B_{2r_\ell} (x_\ell)$. We can thus estimate
\[
\mathcal{H}^1 (K_j\cap B_{r} (x)) \leq \sum_\ell \mathcal{H}^1 (K_j \cap B_{r_k} (x_k))
\leq 4\pi \sum_k r_k \leq C \mathcal{H}^1 (K\cap \overline{B}_r (x)) + C \delta\, .
\]
The arbitrariness of $\delta$ completes the proof. 

\medskip

{\bf Step 1}
Our 
second step is to show that $K$ is rectifiable and we first give the argument for $(u_j,K_j)$ absolute minimizers.
Assume that $W\subset\subset U$ is an open set whose closure is a closed topological disk with the property that $\partial W \cap K = \emptyset$. By a standard approximation theorem there is a second topological disk $W \subset V\subset\subset U$ with $\partial V \cap K = \emptyset$ and which has smooth boundary. Then for a sufficiently large $j$ we have as well that $\partial V \cap K_j=\emptyset$. Set $m_j := \min_{\partial V} u_j$ and $M_j:= \max_{\partial V} u_j$ and use the PDE $\Delta u_j = \param_j(u_j- g_j)$ in a neighborhood of $\partial V$ (independent of $j$) to conclude that $M_j - m_j$ is bounded uniformly independently of $j$. Moreover, by the maximum principle in Lemma~\ref{l:maximum}  (b) it turns out that $\min\{-\|g_j\|_{\infty} , m_j\} \leq u_j|_V \leq \max\{M_j , \|g_j\|_{\infty}\}$. 
In particular, $u_j - m_j$ has a uniform $BV$ bound on $V$ and, up to subsequences, converges to an $SBV$ function $\tilde{u}$ by the $SBV$ closure theorem \cite[Theorem~4.7]{AFP00}. We can then use Corollary \ref{c:SBV-convergence} and conclude that $\overline{S_{\tilde{u}}}=K$ and coincides with $S_{\tilde{u}}$ up to an $\mathcal{H}^1$-null set, thus proving rectifiability.

We next argue that in fact all of $K$ is rectifiable. Let $K^\sharp$ be the union of the connected components of $K$ which are singletons. If $y\in K^\sharp$, then there is a closed topological disk $D$ with smooth boundary and containing $y$ in the interior and such that $\partial D\cap K =\emptyset$ (cf. Lemma~\ref{l:little-loop}). We can therefore apply the argument above to conclude that $K^\sharp$ is rectifiable. Consider now $K\setminus K^\sharp$. Then every $y\in K\setminus K^\sharp$ is contained in a nontrivial connected component of $K$: since each such component has positive length, there are countably distinct ones. As it is well known each connected closed set with finite Hausdorff measure is rectifiable, this completes the proof of our claim (cf. \cite[Theorem 4.4.7]{AmbTil}).

In the case of restricted minimizers we consider again the situation in which $W\subset\subset U$ is  an open set whose boundary is a closed topological disk with the property that $\partial W \cap K = \emptyset$. Choose $V$ as above and denote by $N (j)$ the number of connected components of $V\cap K_j$. If $\bar N := \liminf_j N (j) < \infty$, then $K\cap V$ consists of at most $\bar N$ components and it is therefore rectifiable. If $\lim_j N (j) = \infty$, it is not difficult to see that we can apply the same argument above, i.e. the $SBV$ function $\tilde{u}$ is a minimizer. The reason is that any $SBV$ function $\tilde{u}$ can be approximated in the Mumford-Shah energy with an $SBV$ function whose jump set is closed and has a finite number of connected components, see for instance \cite{DME92}. 

\medskip

{\bf Step 2} We next wish to show that
\[
\liminf_{j\to \infty} \mathcal{H}^1 (K_j\cap A) \geq \mathcal{H}^1 (K\cap A)
\]
for every open set $A$. Consider the measures $\mu_j (E) := \mathcal{H}^1 (K_j\cap E)$, $E$ Borel, and assume that, up to subsequences, 
$\mu_j\rightharpoonup^\star \mu$ for some measure $\mu$. Observe that, by Step 1,
\[
C^{-1} \mathcal{H}^1 (K\cap E) \leq \mu (E) \leq C \mathcal{H}^1 (K\cap E) \qquad \mbox{for every Borel $E$.}
\]
We thus have
\[
\mu (K\cap E) = \int_{K\cap E} \theta (x) d\mathcal{H}^1 (x)
\]
for some Borel function $\theta$ taking values in $[C^{-1}, C]$. Our claim is thus equivalent to $\theta (x) \geq 1$ for $\mathcal{H}^1$-a.e. $x\in K$. 
Assume not, since $K$ is rectifiable we can choose a point $x\in K$ where the approximate tangent to $K$ exists, the $1$-dimensional density of $K$ 
equals $1$ and moreover
\[
\lim_{\rho\downarrow 0} \frac{\mu (B_\rho (x))}{2\rho} = \theta (x) < 1\, .
\]
Using the density lower bounds for $K$, it then follows that the rescaled sets $K_{x, \rho} := \{\frac{y-x}{\rho}: y\in K\}$ converge locally in the Hausdorff distance to a $1$-dimensional subspace $\ell$ of $\mathbb R^2$. Without loss of generality we can assume that $\ell = \{(x_1, 0): x_1\in \mathbb R\}$.
 In addition, by taking a diagonal sequence, we can assume that
$\tilde{K}_j := (K_j)_{x, \rho_j}$ has the following properties:
\[
\lim_j \mathcal{H}^1 (\tilde{K}_j \cap B_1) = 2 \theta (x) <2\, .
\]
Introducing the appropriate rescalings of the functions $u_j$ we then have the following situation:
\begin{itemize}
\item $(\tilde{u}_j, \tilde{K}_j)$ are minimizers of the Mumford-Shah functional in $B_2$ with appropriate fidelity functions $\tilde{g}_j$ and fidelity constants $\tilde\param_j$;
\item $\tilde{K}_j$ converges in the Hausdorff sense to the line $\{x_2=0\}$;
\item $\mathcal{H}^1 (\tilde{K}_j\cap B_1) \leq 2\theta < 2$ for some $\theta$ and for all $j$;
\item $\tilde \param_j \|\tilde{g}_j\|^2_\infty \to 0$.
\end{itemize}
Observe that the following holds:
\begin{itemize}
    \item[(R)] if we replace the sequence $\rho_j$ with any sequence $\tilde{\rho}_j$ such that $0 < \liminf_j \frac{\rho_j}{\tilde{\rho}_j} \leq \limsup_j \frac{\rho_j}{\tilde{\rho}_j} < \infty$, then all the properties above remain true. 
\end{itemize}
Now, for each $\gamma$ sufficiently small compared to $2-2\theta$,  
it is easy to see that there is $\rho\in (\gamma, 1)$ and a subsequence such that
\begin{align*}
&\lim_{\ell\to \infty} \int_{\partial B_\rho} |\nabla \tilde{u}_{j_\ell}|^2 < \infty\\
&\lim_{\ell\to \infty} \mathcal{H}^0 (\tilde{K}_{j_\ell}\cap \partial B_\rho) \leq 2\,.
\end{align*}
On the other hand, by possibly changing the sequence $\rho_j$ to a new sequence $\tilde{\rho}_j$ satisfying (R) above, and after extracting a further subsequence, we can assume that $\rho \in (1-\gamma, 1)$.

Moreover, again by a Fubini argument, if $\gamma$ is sufficiently small compared to $2-2\theta$, we can find a $t\in (-(1-\gamma), 1-\gamma)$ such that
\begin{align*}
&\lim_{\ell\to\infty} \int_{-1}^1 |\nabla \tilde{u}_j (t, x_2)|^2\, dx_2 < \infty\\
&\tilde{K}_{j_\ell} \cap \{(t, x_2): |x_2|\leq 1\}= \emptyset \quad \forall \ell\, .
\end{align*}
In particular, from this, it is again simple to see that $M_\ell - m_\ell := \sup_{\partial B_\rho} \tilde{u}_{j_\ell} - \inf_{\partial B_\rho} \tilde{u}_{j_\ell}$ is uniformly bounded in $\ell$. We thus conclude that
$\tilde{u}_{j_\ell}$ converges, up to subsequences, to an $SBV$ function $\tilde{u}$. Again the latter is a minimizer of the Mumford-Shah functional and $K\cap B_\rho$ must be the closure of $J_{\tilde{u}}$ and 
\[
2 (1-\gamma) \leq 2\rho = \mathcal{H}^1 (K\cap B_\rho) \leq \liminf_{\ell\to\infty} \mathcal{H}^1 (\tilde{K}_{j_\ell} \cap B_\rho) \leq 2\theta
\, .
\]
Since $\gamma$ is arbitrary, the latter is a contradiction.

In the case of restricted minimizers we again consider the number of connected components $N (j)$ of $\tilde{K}_j\cap B_\rho$. If the latter goes to infinity, we see that, as in the previous step, $\tilde{u}$ is a minimizer of the Mumford-Shah functional. Otherwise there is a uniform upper bound on the number of connected components and then the inequality
\[
\mathcal{H}^1 (K\cap B_\rho) \leq \liminf_{\ell\to\infty} \mathcal{H}^1 (\tilde{K}_{j, \ell} \cap B_\rho) 
\]
follows from the classical Golab's theorem (cf. \cite[Theorem 4.4.17]{AmbTil}).

\medskip

{\bf Step 3} Next observe that 
\[
\lim_{j\to \infty} \int_O |\nabla u_j|^2 = \int_O |\nabla v|^2
\]
for every open set $O\subset\subset U\setminus K$. The latter in fact is a consequence of the Hausdorff convergence of $K_j$ to $K$ and of standard regularity properties of harmonic functions. 

Next, if we fix any open set $O\subset U$, choosing a sequence $O_\ell\uparrow O\setminus K$ of open sets with $\overline{O}_\ell \cap K=\emptyset$ we easily conclude
\[
\int_{O\setminus K} |\nabla v|^2 = \lim_{\ell\to\infty} \int_{O_\ell\setminus K} |\nabla v|^2 \leq \liminf_{j\to\infty} \int_O |\nabla u_j|^2\, .
\]
In particular, from the last inequality and Step 3, in order to conclude point (i) in the statement of the theorem we just need to show
\[
\limsup_{j\to\infty} \left(\int_{O\setminus K_j} |\nabla u_j|^2 +\mathcal{H}^1 (K_j\cap O)\right)\leq
\int_{O\setminus K} |\nabla v|^2 + \mathcal{H}^1 (K\cap O)
\]
under the assumption that $\mathcal{H}^1 (\partial O\cap K) = 0$. Assume that the inequality fails and fix a subsequence, not relabeled, for which the limsup on the left is a limit. After possibly extracting a further subsequence, we can assume that the measures $\mu_j$ defined through
\[
\mu_j (E) := \int_{E\setminus K_j} |\nabla u_j|^2 +\mathcal{H}^1 (K_j\cap E)
\]
converge weakly$^\star$ to some measure $\mu$ and so far we can conclude that
\begin{align*}
\mu (E) &= \int_{E\setminus K} |\nabla v|^2 \qquad \mbox{if $E$ is Borel and $K\cap E=\emptyset$.}\\
\mu (E) &\geq \mathcal{H}^1 (K \cap  E) \qquad \mbox{if $E$ is Borel and $E\subset K$}\\
\mu (F) &> \mathcal{H}^1 (K \cap F) \qquad \mbox{for some $F\subset K$ Borel}.
\end{align*}
Note however that from the upper bound \eqref{e:upper bound} we immediately conclude $\mu (B_r (x))\leq 2\pi r$ for every disk $B_r (x)\subset U$. 
In particular
\[
\mu (E) = \int_{E\setminus K} |\nabla v|^2 + \int_{K\cap E} \theta(x)\, d\mathcal{H}^1 (x)\,,
\]
for some density $\theta$ with $1\leq \theta \leq \pi$, but which must be strictly larger than $1$ on a set of positive $\mathcal{H}^1$ measure. Arguing as in the previous step, we pick a Lebesgue point $x\in K$ for $\theta$ with respect to the measure $\mathcal{H}^1$ and we can assume this is a point where the approximate tangent to the rectifiable set $K$ exists. We can thus use the procedure in the previous step to produce a new sequence $(\tilde{u}_j , \tilde{K}_j)$ in $B_3$ with corresponding limits $\tilde{v}$ and $\tilde{K}$ and corresponding measures $\tilde{\mu}_j$ converging to $\tilde{\mu}$, with the additional properties that $\tilde{K}$ is a straight segment and the corresponding density $\tilde{\theta}$ is a constant strictly larger than $1$. In order to simplify our notation we drop the $\tilde{}$, and we assume that the segment is $\sigma_0:=\{(x_1, 0): |x_1|< 2\}$. Next we choose a vanishing sequence $\varepsilon_j$ with the property that
\begin{align}
&K_j \cap B_{2} \subset \{|x_2|\leq \varepsilon_j\} \label{e: Kj trapped}\\
&\lim_{j\to \infty} \mu_j ((a_j,b_j)\times (-\varepsilon_j, \varepsilon_j)) = \theta (b-a)
\label{e: massa muj}
\end{align}
for sequences $a_j\to a$ and $b_j \to b$ in $[-2,2]$ defined as follows:
for each $j$ choose two points $a_j\in [-2, -\frac32]$ and $b_j \in [\frac32,2]$ with the property that, upon setting
$L_j:= ([a_j, a_j +\varepsilon_j)\cup (b_j -\varepsilon_j, b_j]) \times [-1,1]$, then
\begin{equation}\label{e:piccolezza Lj}
\mu_j (L_j) \leq C \varepsilon_j \mu_j ([-2,2]\times [-1,1]) \leq 6 C \varepsilon_j\, ,
\end{equation}
where $C$ is a geometric constant independent of $j$. 
Construct then the following Lipschitz deformation $\Phi_j:Q_j\to Q_j$, $Q_j := [a_j, b_j]\times [-1,1]$, 
defined as $\Phi_j (x_1, x_2):= (x_1, \varphi_j (x_1, x_2))$ where, upon setting 
$f_j (x_1):= \min\{|x_1-a_j|, |b_j-x_1|, \varepsilon_j\}$, the second component is given by
\begin{align}
\varphi_j (x_1, x_2) =
\left\{
\begin{array}{ll}
0 \qquad &\mbox{if $x\in \Sigma_j:= \{|x_2|\leq f_j (x_1)\}$}\\
\frac{x_2 - f_j (x_1)}{1- f_j (x_1)} \qquad &\mbox{if $x\in T_j^+:=\{f_j (x_1) < x_2\leq 1\}$}\\
\frac{x_2 + f_j (x_1)}{1- f_j (x_1)} \qquad &\mbox{if $x\in T_j^- :=\{-1\leq x_2 < -f_j(x_1)\}$}
\end{array}
\right.
\end{align}
(for a similar construction see the definition of the map $\Phi_j$ in the proof of Lemma~\ref{l:decay2}).
Note that $\Phi_j$ is the identity on $\partial Q_j$. Moreover, if we introduce the regions $Q_j^\pm:= Q_j\cap \{\pm x_2 > 0\}$, then 
\begin{itemize}
\item$\Phi_j$ is a bi-Lipschitz map of $T^\pm_j$ onto $Q_j^\pm$, with a uniform bound of the Lipschitz constant of both the map and its inverse; 
\item $\Phi_j$ maps the rectangles $R_j^\pm:=[a_j+\varepsilon_j,b_j-\varepsilon_j]\times\{\varepsilon_j<\pm x_2\leq 1\}$ onto the rectangles  
$[a_j+\varepsilon_j,b_j-\varepsilon_j]\times\{0<\pm x_2\leq 1\}$
and in these regions both $\Lip (\Phi_j)$ and $\Lip (\Phi_j^{-1})$ are bounded by $1+ C \varepsilon_j$ for a universal 
constant $C$;
\item The restriction of $\Phi_j$ on $\Sigma_j$ is the orthogonal projection onto the horizontal axis.
\end{itemize}
Let thus $(v_j, J_j)$ be the pair:
\begin{itemize}
\item $J_j = ([a_j, b_j]\times \{0\}) \cup \Phi_j (K_j\setminus \Sigma_j)$;
\item $v_j = u_j \circ \Phi_j^{-1}$ on $Q_j \setminus J_j$.
\end{itemize}
It easy to estimate the Mumford-Shah energy of $(v_j, J_j)$ in the rectangle $Q_j$ as
\begin{align*}
E_j &:= \int_{Q_j\setminus J_j} |\nabla v_j|^2 + \mathcal{H}^1 (J_j\cap Q_j)\\
&\leq (1+C\varepsilon_j)^3 \int_{R_j^+\cup R_j^-}
|\nabla u_j|^2 + (b_j-a_j - 2\varepsilon_j) + C \mu_j (L_j)\\
&\leq (1+C\varepsilon_j)^3 \int_{Q_j\setminus K_j}|\nabla u_j|^2 + (b_j-a_j - 2\varepsilon_j) + C \mu_j (L_j)\, .
\end{align*}
Then from \eqref{e: massa muj}, \eqref{e:piccolezza Lj} and the convergences $b_j\to b$ and $a_j\to a$, we easily conclude that
\[
\lim_{j\to\infty}  \left(\int_{Q_j\setminus K_j} |\nabla u_j|^2 + \mathcal{H}^1 (K_j\cap Q_j) - E_j \right)
\geq  (\theta-1) (b-a)  
\]
contradicting the minimality of $(u_j, K_j)$ for $j$ large enough, recalling that $\theta>1$ 
and considering that after rescaling the fidelity terms converge to $0$ for our competitors.

Note that the competitor does not increase the number of connected components and therefore the argument remains valid for restricted minimizers.

\medskip

{\bf Step 4} We now prove property (ii) (and come back later to \eqref{e:varifold-convergence} to complete the proof of (i)). Fix thus an open set $O$ and a competitor $(w,J)$ as in the statement and observe that the competitor keeps the same property if we replace $O$ with a larger open set. In particular, without loss of generality, we can assume that $O$ has a smooth boundary. Consider now a tubular neighborhood of $\partial O$ where the distance function to $\partial O$ gives a smooth foliation. By a Fubini-type argument, we can change slightly the set and assume that for a subsequence, not relabeled, we have
\begin{align*}
&\sup_j \left( \int_{\partial O\setminus K_j} |\nabla u_j|^2 + \mathcal{H}^0 (K_j \cap \partial O)\right) < \infty\\
&\int_{\partial O\setminus K} |\nabla v|^2 + \mathcal{H}^0 (K\cap \partial O) < \infty\, .
\end{align*}
Denote by $O_\delta$ the set $O_\delta:=\{x\in O : \dist (x, \partial O) > \delta\}$. We leave as an exercise to the reader to show that for each $\delta$ sufficiently small there is a diffeomorphism $\Phi_\delta$ of $O_\delta$ onto $O$ with the property that $\|D\Phi_\delta - {\rm Id}\|_{C^0} + \|D\Phi_\delta^{-1} - {\rm Id}\|_{C^0}$
is infinitesimal as $\delta\to 0$. Enumerate the connected components of $U\setminus K$ which intersect $\partial O$ and denote them by $U_1, \ldots , U_N$. 
We leave to the reader to prove the simple fact that, by our definition of the $v^i$'s, there is a choice of constants $p_{ik}$'s  with the property that, if we define the map $\hat{v}_j$ on each $U_i$ to be equal to $v^i+p_{ik}$, then there is a suitable sequence $\delta_j\downarrow 0$ and a suitable interpolating pair $(z_j, J_j)$ on $O\setminus O_{\delta_j}$ such that
\begin{itemize}
\item $z_j = u_j$ on $\partial O$ and $J_j \cap \partial O = K_j \cap \partial O$;
\item $z_j = \hat{v}_j\circ \Phi_{\delta_j}$ on $\partial O_{\delta_j}$ and $J_j \cap \partial O_{\delta_j} = \Phi_{\delta_j}^{-1} (\partial O \cap J)$;
\item $\int_{O\setminus O_{\delta_j}} |\nabla z_j|^2 + \mathcal{H}^1 (J_j \cap(O\setminus O_{\delta_j}))  \to 0$ as $j\uparrow \infty$.
\end{itemize}
Now, the technical condition in item (ii) of Definition~\ref{d:competitors} on the competitor makes sure that, if $x, y\in \partial O$ belong to two distinct connected components $\Omega_i$ and $\Omega_k$ of $U\setminus K$, then $x,y$ are in distinct connected components of $U\setminus J$. Therefore, given $q\in U\setminus J$, we define $\hat{w}_j (q) := w(q) +p_{ik}$ if the connected component of $U\setminus J$ containing $q$ intersects $\partial O$ in $\Omega_i$, while we define $\hat{w}_j (q):= w(q)$ if the connected component of $U\setminus J$ does not intersect $\partial O$. We are now ready to define $(z_j, J_j)$ inside $O_{\delta_j}$ as well: we put $J_j \cap O_{\delta_j} := \Phi_{\delta_j}^{-1} (J \cap O)$ and we define $z_j := \hat{w}_j \circ \Phi_{\delta_j}$. Observe now that, if the energy of $(w, J)$ were smaller than that of $(v,K)$, then for sufficiently large $j$ the energy of $(z_j, J_j)$ in $O$ would be smaller than that of $(u_j, K_j)$, which is a contradiction.

In the case of restricted minimizers we split again into two situations, one in which the number of connected components in $O$ converges to infinity (in which case it can be shown that the limit is an absolute minimizer) or the one in which the number of connected components stays bounded. Since the number of connected components of the limit is at most the limit of the number of connected components of the approximating sequence, the limit is a restricted minimizer.

\medskip

{\bf Step 5} We next prove (iii). To this aim we first draw some conclusions from (ii). First of all consider an open set $O\subset\subset U$ and assume 
there is a connected component $A$ of $O\setminus K$ whose closure does not intersect $\partial O$. Now observe that, if we change the value of the constant in $A$, 
any topological competitor for the new modified function is an topological competitor for the old function, and viceversa. We can thus assume without loss of generality 
that $v$ is identically equal to $0$ on any such connected component, and ultimately on any bounded connected component of $U\setminus K$ whose closure does not have 
some portion of the boundary in common with $\partial U$.

Assume now that $O$ intersects $K$ in a finite number of points, that its boundary $\partial O$ is regular and that the restriction of $v$ on each connected component of 
$\partial O \setminus K$ belongs to $W^{1,2}$. Observe that all these properties imply the boundedness of $v$ on $\partial O\setminus K$. If $O'$ is a connected 
component of $O\setminus K$ which is not compactly contained in $O$, $v|_{O'}$ minimizes the Dirichlet energy among all $W^{1,2}$ functions which agree with it on 
$O'\cap \partial O$. By the maximum principle we conclude that $v$ is bounded on every such $O'$. Since we have normalized $v$ to be $0$ on the remaining ones 
(which do not ``touch'' $\partial O$), we conclude that $v$ is bounded in $O$. Ultimately, since for every open $A\subset\subset U$ we can easily find a slightly
larger $O$ with all the properties above, we conclude that $v$ is locally bounded. 

Consider now $\Omega_{\mathscr{A}}$ as in the statement of point (iii). It then turns out that, for every $A\subset\subset \Omega_{\mathscr{A}}$, the function $u_{\mathscr{A}}$ is bounded and has bounded variation in $A$. Using analogous reasonings, it is not difficult to see that, upon subtraction of a suitable constant 
$c_j$, we conclude as well that $u_j - c_j$ is uniformly bounded and has uniform bound on its $BV$ norm. Moreover $u_j - c_j$ converge to $v-c$ for some suitable 
constant $c$. In particular the conclusion of point (iii) that $u_{\mathscr{A}}$ is a minimizer in $A$ follows from the SBV existence theory of minimizers, as already argued in Step 2.

In the case of restricted minimizers we argue similarly.

\medskip

{\bf Step 6} In order to show \eqref{e:varifold-convergence}, and hence complete the proof of (i), consider the measures $\alpha_j$ on $\mathbb P^1 \mathbb R\times \mathbb R^2$ given by
\[
\int \varphi (\pi, x) d\alpha_j (\pi, x) := \int_{K_j} \varphi (T_x K_j,x) d\mathcal{H}^1 (x)\, ,
\]
for every $\varphi\in C_c(\mathbb{P}^1\R\times U)$. From now on we use the notation $\alpha_j = \delta_{T_x K_j} \otimes \mathcal{H}^1 \restr K_j$.
The convergence in \eqref{e:varifold-convergence} is equivalent to say that $\alpha_j$ converges weakly$^\star$ to the measure 
$\alpha = \delta_{T_x K} \otimes \mathcal{H}^1 \restr K$.
First of all, up to subsequences we can assume that $\alpha_j \rightharpoonup^\star \beta$ for some measure $\beta$. Secondly, by what proved so far 
$\beta (\mathbb P^1 \mathbb R\times E) = \mathcal{H}^1 (K\cap E)$.
In particular, by the classical theorem on disintegration of measures, we can write
$\beta = \beta^x \otimes \mathcal{H}^1 \restr K$,
where $\beta^x$ is a weakly$^\star$ measurable family of probability measures on $\mathbb P^1 \mathbb R$. Thus, \eqref{e:varifold-convergence} is equivalent to $\beta^x = \delta_{T_x K}$ for $\mathcal{H}^1$-a.e. $x\in K$. If the latter conclusion were false, we could then find a point $x_0$ where $\beta^{x_0}\neq \delta_{T_{x_0} K}$, the approximate tangent $T_{x_0} K$ to $K$ at $x_0$ exists, and where the rescaled measures $\beta^{x_0,r}$ defined by
\[
\int \varphi (\pi,y) d \beta^{x_0,r} (\pi,y)= \frac{1}{r} \int \varphi \left(\pi, \frac{y-x_0}{r}\right) d\beta (\pi, y) 
\]
converges to the product measure $\beta^{x_0} \times \mathcal{H}^1 \restr T_{x_0} K$. Observe moreover that, since
\[
\lim_{r\downarrow 0} \frac{1}{r} \int_{B_r (x)} |\nabla u|^2 =0 \qquad \mbox{for $\mathcal{H}^1$-a.e. $x\in K$\,,}
\]
we can assume to have selected $x_0\in K$ such that the last condition holds, as well. 

We can now extract a diagonal sequence and find a new sequence of minimizers $(\tilde{u}_j, \tilde{K}_j)$, with the property that $\tilde{K}_j$ converges to the line $\ell = T_{x_0} K$ uniformly on compact subsets, but the corresponding measures $\tilde{\alpha}_j = \delta_{T_x \tilde{K}_j}\otimes \mathcal{H}^1 \restr \tilde{K}_j$ converge weakly$^\star$ to $\beta^{x_0} \times \mathcal{H}^1 \restr\ell$, with $\beta^{x_0} \neq \delta_{\ell}$. Moreover, up to subsequences, $(\tilde{u}_j, \tilde{K}_j)$ converge to a global generalized minimizer $(\ell, \tilde{u}, p_{12})$. Note that necessarily $|\nabla \tilde{u}|=0$ vanishes identically. Therefore $p_{12} = \pm \infty$ (as it follows from Theorem~\ref{t:class-global} that the global minimizer is a pure jump). 

To fix ideas rotate the coordinates so that $\ell = \{(t,0): t\in \mathbb R\}$. For each $t\in [-2,2]$ consider the segment $\sigma_t := \{(t, s): -1\leq s\leq 1\}$ and define
\[
E_j := \{t\in [-2,2]: \sigma_t \cap \tilde{K}_j \neq \emptyset\}\, .
\]
Since for sufficiently large $j$ the $\tilde{u}_j$ will be harmonic on $[-1,1]\times ([-2, -\frac{1}{2}]\cup [\frac{1}{2}, 2])$ and by Chebyshev there is always a $t\in [-2,2]\setminus E_j$ such that
\[
\int_{\sigma_t} |\nabla u_j|^2 \leq \frac{1}{4-|E_j|} \int_{[-2,2]^2} |\nabla \tilde{u}_j|^2\, ,
\]
we must necessarily have $|E_j|\to 4$, otherwise we would conclude $p_{12} =0$. 

Next for each $x\in \tilde{K}_j$ let $\theta (x)$ be the angle between the lines $\ell$ and $T_x \tilde{K}_j$. 
Recall that, by the generalised coarea formula \cite[Theorem 2.93]{AFP00},
\begin{equation}\label{e:coarea formula}
\mathcal{H}^1 (\tilde{K}_j \cap [-2,2]^2) = \int_{E_j} \sum_{x\in \sigma_t \cap \tilde{K}_j} (\cos \theta (x))^{-1}\, dt\, ,
\end{equation}
while by the conclusions in item (i)
\begin{equation}\label{e: convergenza a 4}
    \lim_{j\to \infty} \mathcal{H}^1 ( \tilde{K}_j \cap [-2,2]^2) =4\,  .
\end{equation}

Define thus $F_j :=\{t\in E_j : \sharp ( \sigma_t \cap \tilde{K}_j) =1 \}$ and $G_j := \tilde{K}_j \cap (F_j\times [-2,2])$. Then 
from \eqref{e:coarea formula} we have
\[
\mathcal{H}^1 (\tilde{K}_j \cap [-2,2]^2 ) \geq |F_j| + \mathcal{H}^1 (\tilde{K}_j \cap ([-2,2]^2\setminus G_j)) 
\geq |F_j| + 2 |E_j\setminus F_j|\, .
\]
In particular, from \eqref{e: convergenza a 4}
and $|E_j|\to 4$, we conclude $|F_j|\to 4$ and $\mathcal{H}^1 ( \tilde{K}_j \cap ([-2,2]^2\setminus G_j)) \to 0$.
Next, for each $\delta >0$ consider the set
\[
H_j := \{x\in G_j : \cos \theta (x) < 1-\delta\} 
\]
and its projection $\pi_\ell (H_j)$ on $\ell$. We then have from \eqref{e:coarea formula}
\[
\mathcal{H}^1 (\tilde{K}_j \cap [-2,2]^2) \geq |F_j\setminus \pi_\ell (H_j)| + \mathcal{H}^1 (H_j) \geq |F_j\setminus \pi_\ell (H_j)| + \frac{1}{1-\delta} |\pi_\ell (H_j)|\, . 
\]
In particular, we again conclude from \eqref{e: convergenza a 4} and $|F_j|\to 4$, that $|\pi_\ell (H_j)|\to 0$ and $\mathcal{H}^1 (H_j) \to 0$. 
Summarizing, for any positive $\eta>0$ we have
\[
\lim_{j\to \infty} \mathcal{H}^1 (\{x\in \tilde{K}_j \cap [-2,2]^2 : |T_x \tilde{K}_j - \ell| > \eta\}) = 0\, .
\]
This however easily implies that $\delta_{T_x \tilde{K}_j} \otimes \mathcal{H}^1 \restr \tilde{K}_j$ converges to $\delta_\ell \times \mathcal{H}^1 \restr \ell$ on the open set $(-2,2)^2$, contradicting our assumption that $\beta^{x_0} \neq \delta_\ell$.
\end{proof}

\chapter{Hirsch's coarea inequality for H\"older maps}

In this section we include an elementary observation by Jonas Hirsch, which leads to a coarea inequality for H\"older maps. The argument is similar to that of \cite[Theorem 2.10.25]{federer} and, in fact, what we need could be concluded directly from the very statement of \cite[Theorem 2.10.25]{federer} by selecting an appropriate target metric space $Y$ in there. However the interesting point is not so much in the argument, but rather in the realization that it is indeed possible to have a coarea inequality for H\"older maps, a fact which we have not seen anywhere in the literature.

\begin{proposition}\label{p:Jonas}
Le $f\in C^\alpha (\mathbb R^m)$ and $A\subset \mathbb R^m$ closed. For every $\beta\geq 0$ there is then a constant $C(\alpha, \beta)$ such that
\begin{equation}\label{e:coarea-Jonas}
\int \mathcal{H}^\beta (f^{-1} (\{t\})\cap A)\, dt \leq C [f]_\alpha \mathcal{H}^{\beta+\alpha} (A)\, .
\end{equation}
\end{proposition}
\begin{proof}
Without loss of generality we assume $\mathcal{H}^{\alpha +\beta} (A)<\infty$. Fix $i\in \mathbb N\setminus \{0\}$ an almost optimal $\frac{1}{i}$ cover $A$ with compact sets $\{B^i_j\}$, i.e.
\begin{align}
{\rm diam}\, (B^i_j) &\leq \frac{1}{i}\\
\sum_j ({\rm diam}\, (B^i_j))^{\alpha+\beta} &\leq C (\alpha, \beta) 
\mathcal{H}^{\alpha+\beta}_{i^{-1}} (A) + \frac{1}{i}\, . 
\end{align}
The functions 
\[
g_j^i (y) := ({\rm diam}\, (B^i_j))^\beta {\mathbf 1}_{f (B^i_j)} (y)
\]
are nonnegative and measurable and so is
\[
g^i (y) := \sum_j g^i_j (y)\, .
\]
So
\begin{align*}
\int g^i (y)\, dy &= \sum_j ({\rm diam}\, (B^i_j))^\beta |f (B^i_j)|
\leq \sum_j [f]_\alpha ({\rm diam}\, (B^i_j))^{\alpha +\beta}\\
&\leq C [f_\alpha] (\mathcal{H}^{\alpha+\beta}_{i^{-1}} (A) + {\textstyle{\frac{1}{i}}})\, .
\end{align*}
Note however that 
\[
\mathcal{H}^\beta_{i^{-1}} (A\cap f^{-1} (\{y\}) \leq g^i (y)\, .
\]
Since $\mathcal{H}^\beta_{i^{-1}} (A\cap f^{-1} (\{y\})\uparrow \mathcal{H}^\beta (A \cap f^{-1} (\{y\})$
and $\mathcal{H}^{\alpha+\beta}_{i^{-1}} (A) \uparrow \mathcal{H}^{\alpha+\beta} (A)$ monotonically, the desired inequality follows from letting $i\uparrow \infty$.
\end{proof}

\backmatter

\bibliography{document}
\bibliographystyle{plain}

\printindex[simb]
\printindex
\end{document}